\DeclareMathOperator\Std{Std}
\DeclareMathOperator\res{res}
\newtheorem{thm}{Theorem}[section]
\theoremstyle{plain}
\newtheorem{lem}[thm]{Lemma}
\newtheorem{prop}[thm]{Proposition}
\newtheorem{cor}[thm]{Corollary}
\theoremstyle{definition}
\newtheorem{defn}[thm]{Definition}
\newtheorem{example}[thm]{Example}
\theoremstyle{remark}
\newtheorem{rem}[thm]{Remark}
\definecolor{A}{rgb}{.75,1,.75}
\numberwithin{equation}{section}
\newcommand{\Z}{\mathbb Z}
\newcommand{\N}{\mathbb N}
\newcommand{\mHcn}{\mathcal{H}_{c}(n)} % affine Hecke-Clifford superalgebra
\newcommand{\mHfcn}{\mathcal{H}^f_{c}(n)} % cyclotomic Hecke-Clifford superalgebra
\newcommand{\mhcn}{\mathfrak{H}_{c}(n)} % affine Sergeev superalgebra
\newcommand{\mhgcn}{\mathfrak{H}^g_{c}(n)} % cyclotomic Sergeev superalgebra
\newcommand{\mt}{\mathfrak{t}}
\newcommand{\ms}{\mathfrak{s}}
\newcommand{\mfku}{\mathfrak{u}}
\newcommand{\mfkv}{\mathfrak{v}}
\newcommand{\supp}{\text{supp}}
\newcommand{\undla}{\underline{\lambda}}
\newcommand{\undmu}{\underline{\mu}}
\newcommand{\undQ}{\underline{Q}}
\newcommand{\Add}{{\rm Add}}
\newcommand{\Rem}{{\rm Rem}}
{\vskip-\lastskip\medskip
	\noindent
	{\em #1.}\enspace
}%
{\qed\par\medskip
}
\newlist{caselist}{enumerate}{1}
\setlist[caselist,1]{
	label=\textbf{Case \arabic{section}.\arabic{subsection}.\arabic*:},
	ref=\arabic{section}.\arabic{subsection}.\arabic*,
	before=\setcounter{caselisti}{0}
}
\newcounter{case}
\newenvironment{symbols}{
    \section*{Index of notation}
    \begin{description}
}{
    \end{description}
}
\newcommand{\symitem}[3]{\item[#1] #2 \hfill\rlap{\pageref{#3}}}
\newcommand{\symitemtwo}[4]{\item[#1] #2 \hfill\rlap{\pageref{#3}, \pageref{#4}}}
\begin{document}

	\title[cyclotomic Hecke-Clifford algebras]{On (super)symmetrizing forms and Schur elements of cyclotomic Hecke-Clifford algebras}
	\subjclass[2010]{20C08, 16W55, 16G99}
	\keywords{cyclotomic Hecke-Clifford algebras, cyclotomic Sergeev algebras, Schur elements, seminormal bases, symmetrizing forms, supersymmetrizing forms}
	\author{Shuo Li}\address{School of Mathematics and Statistics\\
		Beijing Institute of Technology\\
		Beijing, 100081, P.R. China}
	\email{shuoli1203@163.com}

    \author{Lei Shi}\address{Academy of Mathematics and Systems Science\\
    	Chinese Academy of Sciences, Beijing 100190\\
    	P.R.China}
    	\address{Max-Planck-Institut f\"ur Mathematik\\
	    Vivatsgasse 7, 53111 Bonn\\
	    Germany}
    \email{leishi202406@163.com}

	\begin{abstract}
	In this paper, we introduce Schur elements for supersymmetrizing superalgebras. We show that the cyclotomic Hecke-Clifford algebra $\mHfcn$ is supersymmetric if $f=f^{(\mathtt{0})}_{\undQ}$ and, symmetric if $f=f^{(\mathtt{s})}_{\undQ}$ and an invertibility condition holds. In the semisimple case, we compute the Schur elements for both $\mHfcn$ and the cyclotomic Sergeev algebra $\mhgcn$. As applications, we define new symmetrizing forms on the Hecke-Clifford algebra $\mathcal{H}(n)$ and on the cyclotomic quiver Hecke algebras of types $A^{(1)}_{e-1}$ and $C^{(1)}_e$.
%and compute Schur elements with respect to these symmetrizing forms.
	\end{abstract}
	\maketitle
	
	\setcounter{tocdepth}{1}
	\tableofcontents
	
	\section{Introduction}
	
		Let ${\rm K}$ be a field, ${\rm A}$ be a finite dimensional ${\rm K}$-algebra with a symmetrizing form $\tau$. Suppose ${\rm A}$ is semisimple over ${\rm K}$ and ${\rm Irr}({\rm A})$ is the complete set of non-isomorphic irreducible ${\rm A}$-modules. Then we have $$
		\tau=\sum_{V\in{\rm Irr}({\rm A})}\frac{1}{s_V} \chi_V,
		$$ where $\chi_V$ is the character of $V$ and $s_V \in {\rm K}$ is called the {\bf Schur element} of $V$ with respect to $\tau$.
		
		 \label{pag:N}
		 Let ${\rm R}$ be an integral domain, $\N:=\{1,2,\ldots\},$ $r,n\in \N$ and $\mathscr{H}^{\rm R}_{r,n}$ be the cyclotomic Hecke algebra of level $r$, length $n$. This algebra was first introduced in \cite{AK,BM:cyc,C} and plays important roles in the modular representation theory of finite groups of Lie types over fields of non-defining characteristics \cite{BM:cyc} and
		 representation theory of affine Hecke algebras \cite{Ariki:can}. It is well-known that $\mathscr{H}^{\rm R}_{r,n}$ is a symmetric algebra with the symmetrizing form $\tau^{\bf MM}$ \cite{MM}. When $\mathscr{H}^{\rm R}_{r,n}$ is semisimple, many formulae for the Schur elements $s_V$ of $\mathscr{H}^{\rm R}_{r,n}$ have been obtained independently by several authors in the literatures \cite{CJ,GIM,HLL,Ma2}. In \cite{EM} and \cite{HL}, the Schur elements for cyclotomic quiver Hecke algebra which has a content system (in particular, affine type $A$ and type $C$) are also well-studied. These Schur elements are important in the moduar representation theory. For example, a semisimplicity criterion for $\mathscr{H}^{\rm R}_{r,n}$ has been recovered by Chlouveraki and Jacon \cite[Section 4.1]{CJ} using the Schur elements. Many ($\Z$-graded) integral bases for the quiver Hecke algebras \cite{EM,HL} have been obtained through certain semisimple deformations, where the Schur elements play a key role in the construction.
		
		 From now on, let ${\rm R}$ \label{pag:R} be an integral domain of characteristic different from $2,$ ${\rm R^\times}$ be the group of units in ${\rm R}$, $\mathbb{K}$ \label{pag:K} be an algebraically closed field of characteristic different from $2$ and $\mathbb{K}^*:=\mathbb{K}\setminus\{0\}$. In this paper, we shall consider two superalgebras: the cyclotomic Hecke-Clifford algebra $\mHfcn$ and the cyclotomic Sergeev algebra $\mhgcn$. They were first introduced by Brundan and Kleshchev \cite{BK}
to categorify the crystals of twisted type $A^{(2)}$ and to study the modular representation theory of the spin symmetric groups. For their connections with crystals of twisted type $D^{(2)}$, see also \cite{T}. These algebras admit non-trivial $\Z$-graded structure by \cite{KKT} and furthermore, categorify the certain highest weight modules of (super)quantum groups of types $A^{(1)}$, $C^{(1)}$ and twisted types $A^{(2)}$ and $D^{(2)}$ \cite{KKO1,KKO2}.

To exlpain the main result, we recall the following definition.
		
		  \begin{defn}\label{(super)symmetrizing form}\cite[Definition 2.1]{LS1}, \cite[Section 4.1, 5.1]{WW2}
		  	Let ${\mathcal{A}}={\mathcal{A}}_{\overline{0}}\oplus {\mathcal{A}}_{\overline{1}}$ be an ${\rm R}$-superalgebra
		  	which is free and of finite rank over ${\rm R},$ $|\cdot|: {\mathcal{A}} \rightarrow \Bbb Z_{2}$ be the parity map.
		  	
		  	(i) We call an ${\rm R}$-linear map $t:\mathcal{A} \rightarrow {\rm R}$ is non-degenerate if there is a $\Z_2$-homogeneous basis $\mathcal{B}$ such that the determinant ${\rm det}\left(t(b_1b_2)\right)_{b_1,b_2\in \mathcal{B}}\in {\rm R}^\times.$
		  	
		  	(ii) The superalgebra $\mathcal{A}$ is called symmetric if there is an evenly, non-degenerate ${\rm R}$-linear map $t:\mathcal{A} \rightarrow {\rm R}$
		  	 such that $t(xy)=t(yx)$ for any $x, y\in \mathcal{A}$. In this case, we call $t$ a symmetrizing form on $\mathcal{A}.$
		  	
		  	(iii) The superalgebra $\mathcal{A}$ is called supersymmetric if there is an an evenly, non-degenerate ${\rm R}$-linear map $t:\mathcal{A} \rightarrow {\rm R}$
		   such that  $t(xy)=(-1)^{|x| |y|}t(yx)$ for any homogeneous $x, y\in \mathcal{A}$. In this case, we call $t$ a supersymmetrizing form on $\mathcal{A}.$
		  \end{defn}
%		
%		  Note that as the right $\mathcal{A}$-module, ${\rm Hom}_{{\rm R}}({\mathcal{A}}, {\rm R})\cong {\mathbf{Hom}}_{{\rm R}}({\mathcal{A}}, {\rm R})$. The difference between ${\rm Hom}_{{\rm R}}(\mathcal{A}, {\rm R})$ and ${\mathbf{Hom}}_{{\rm R}}(\mathcal{A}, {\rm R})$ lies in the distinct left  $\mathcal{A}$-module stuctures. We refer the readers to Subsection \ref{basics} for further details.
		  Let $\mHfcn$ be the cyclotomic Hecke-Clifford algebra defined over ${\rm R}$, where $f=f^{(\bullet)}_{\undQ}$ is a certain polynomial determined by $\undQ=(Q_1,Q_2,\ldots,Q_m)\in({\rm R}^\times)^m$ and $\bullet\in\{\mathsf{0},\mathsf{s}\}.$ In particular, when $f=f^{(\mathsf{s})}_{\emptyset}=X_1-1$, then $\mHfcn=\mathcal{H}(n)$ is the Hecke-Clifford algebra \cite{O}. A symmetrizing form on $\mathcal{H}(n)$ was given in \cite[Theorem 5.8]{WW2} when ${\rm R}=\mathbb{C}(q)$. It's an open problem whether $\mHfcn$ is symmetric or supersymmetric in general. Let $r:=\deg f$ and $\tau_{r,n}: \mHfcn\rightarrow {\rm R}$ be the standard Frobenius form introduced in \cite{BK}, which were obtained from the cyclotomic Mackey decomposition (Section \ref{frobenius}).
Our first main result of this paper is the following, where we refer the readers to Sections \ref{basic-Non-dege}, \ref{basic-cyc-Non-dege} for unexplained notations used here.

\begin{thm}\label{Nondengerate}
		  	
(i) If $\bullet=\mathsf{0},$ then the cyclotomic Hecke-Clifford algebra $\mHfcn$ is supersymmetric with the supersymmetrizing form
$$t_{r,n}:=\tau_{r,n}\Bigl(-\cdot (X_1X_2\cdots X_n)^{m} \Bigr);$$
		  	
(ii) If $\bullet=\mathsf{s},$ and $(1+X_1)(1+X_2)\cdots (1+X_n)$ is invertible in $\mHfcn,$ then the cyclotomic Hecke-Clifford algebra $\mHfcn$ is symmetric with the symmetrizing form
$$t_{r,n}:=\tau_{r,n}\Bigl(-\cdot (X_1X_2\cdots X_n)^{m} (1+X_1)(1+X_2)\cdots (1+X_n)\Bigr).$$
\end{thm}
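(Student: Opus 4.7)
My plan is to deduce both parts from the non-degenerate Frobenius form $\tau_{r,n}$ of Section \ref{frobenius} by a Nakayama-automorphism argument. Since $\tau_{r,n}$ makes $\mHfcn$ into a Frobenius algebra, there is a unique ${\rm R}$-linear automorphism $\nu$, the Nakayama automorphism, characterized by $\tau_{r,n}(xy)=\tau_{r,n}(y\,\nu(x))$ for all $x,y\in\mHfcn$. Writing $U:=(X_1\cdots X_n)^m$ in case (i) and $U':=(X_1\cdots X_n)^m(1+X_1)\cdots(1+X_n)$ in case (ii), I would first reduce the theorem to the explicit formulas
$$\nu(x)=(-1)^{|x|}\,U^{-1}xU\quad\text{(case (i))},\qquad \nu(x)=(U')^{-1}xU'\quad\text{(case (ii))},$$
for homogeneous $x$. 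Granted these, a direct manipulation gives $t_{r,n}(xy)=\tau_{r,n}(xyU)=\tau_{r,n}(yU\,\nu(x))=(-1)^{|x|}\tau_{r,n}(yxU)=(-1)^{|x|}t_{r,n}(yx)$ in case (i); together with the parity-vanishing $\tau_{r,n}(\mHfcn_{\bar{1}})=0$, which forces the nonvanishing contribution to satisfy $|x|=|y|$, this upgrades to the supersymmetric condition $(-1)^{|x||y|}t_{r,n}(yx)$. Case (ii) is the analogous computation without signs. The superbimodule isomorphism property of $\hat{t}_{r,n}$ demanded by Definition \ref{(super)symmetrizing form} then follows from the corresponding property of $\hat{\tau}_{r,n}$ together with the invertibility of $U$ (automatic in (i), since each $X_j$ is a unit) and of $U'$ (by hypothesis in (ii)).

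The technical heart is thus the computation of $\nu$ on the Brundan-Kleshchev generators $T_i,C_j,X_j$. I would carry this out using the cyclotomic Mackey description of $\tau_{r,n}$: it picks out a prescribed top-degree coefficient on the standard monomial basis $\{T_w X^{\alpha} C^{\beta}\}$ of $\mHfcn$, so the effect of left multiplication by a generator on this coefficient can be read off directly. I expect the formulas
\[
\nu(T_i)=T_i,\qquad \nu(C_j)=\pm C_j,\qquad \nu(X_j)=\sigma_j(X_1,\ldots,X_n),
\]
where the sign on $C_j$ depends on the parity of the distinguished top monomial and differs between the cases $\bullet=\mathtt{0}$ and $\bullet=\mathtt{s}$, and where each $\sigma_j$ is a rational monomial in the $X_k$ determined by the leading $X_j$-degree forced by $f^{(\bullet)}_{\undQ}$ (directly for $j=1$, and for $j>1$ by conjugating through the $T_i$ using the braid-type relation between $T_i$ and $X_j$).

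With the generator formulas in hand, verifying the two target identities reduces to comparing two superalgebra automorphisms on generators, using the defining (anti)commutation relations of $\mHfcn$. I expect the main obstacle to lie in the $X_j$-calculation: the cyclotomic relation directly constrains only $X_1$, and propagating it through the braid action to each $X_j$ must be done carefully enough to identify both the exponent $m$ and, in case (ii), the extra even factor $\prod_j(1+X_j)$. Conceptually the distinction between the two cases is governed by the Clifford parity of the implementing element: $U=(X_1\cdots X_n)^m$ has a non-trivial super-commutation discrepancy against the odd $C_j$ that is exactly absorbed by the super-sign $(-1)^{|x|}$ in case (i); in case (ii), the additional even factor $\prod_j(1+X_j)$ is introduced precisely to cancel this discrepancy and deliver an honest symmetric form, explaining why its invertibility must be assumed.
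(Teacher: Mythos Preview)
Your reduction is sound: if $\tau_{r,n}$ is Frobenius with Nakayama automorphism $\nu$, then $t_{r,n}=\tau_{r,n}(-\cdot U)$ is (super)symmetric iff $\nu(x)=(-1)^{|x|}U^{-1}xU$ (resp.\ $\nu(x)=(U')^{-1}xU'$), and non-degeneracy transfers since $U$ (resp.\ $U'$) is invertible. The gap is in your computation of $\nu$. First, $\tau_{r,n}$ does not pick out a top-degree coefficient: by Proposition~\ref{closed formula frob} it extracts the \emph{constant} term, $\tau_{r,n}(X^\alpha C^\beta T_w)=\delta_{(\alpha,\beta,w),(0,0,1)}$. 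Second, and more seriously, your expected formulas $\nu(T_i)=T_i$, $\nu(C_j)=\pm C_j$ are inconsistent with the target identity. Since $U=(X_1\cdots X_n)^m$ is even and commutes with all $X_k$, the target gives $\nu(X_j)=X_j$; but $C_jX_j=X_j^{-1}C_j$ forces $U^{-1}C_jU=X_j^{-2m}C_j$, so $\nu(C_j)=-X_j^{-2m}C_j$, not $\pm C_j$. Likewise a direct calculation from \eqref{PX1}--\eqref{PX2} shows $T_i(X_iX_{i+1})=(X_iX_{i+1})T_i+\epsilon C_iC_{i+1}(1-X_iX_{i+1})$, so $T_i$ does not commute with $U$ and $\nu(T_i)\neq T_i$. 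Your closing heuristic about the ``Clifford parity'' of $U$ is also off: $U$ is even, and the obstruction is the inversion $X_j\mapsto X_j^{-1}$ under $C_j$-conjugation, not a sign. With the correct target formulas in hand, verifying $\tau_{r,n}(T_iy)=\tau_{r,n}(y\,U^{-1}T_iU)$ for all $y$ directly from the monomial basis is far from a read-off; the interaction of the cyclotomic relation (which constrains only $X_1$) with the $T_i$--$X_j$ commutation is precisely the difficulty, and you have not supplied a mechanism to control it.

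The paper bypasses this entirely. It passes to the generic base ring, where the algebra is semisimple, and computes $\tau_{r,n}$ on the primitive idempotents $F_{\rm T}$ of the seminormal basis via an inductive Mackey-decomposition argument (Sections~\ref{frobenius}--\ref{dot=s}); the key outputs are Theorems~\ref{mainthm type0 nondege} and \ref{mainthm types nondege}. The (super)symmetry identity is then checked block-by-block on the seminormal basis (Corollaries~\ref{supersym. form. non-dege} and \ref{sym. form. non-dege}), and specialization gives the result over arbitrary ${\rm R}$. Your Nakayama-automorphism route is a legitimate alternative in principle, but to carry it out you would need to establish the correct (and more complicated) formulas for $\nu$ on generators and verify them against $\tau_{r,n}$; nothing in your proposal indicates how to do that without a computation of comparable depth to the paper's.
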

		
		  A priori, the above Theorem \ref{Nondengerate} can be viewed as a non-degenerate analog to \cite[Theorem 1.2]{LS1}. However, there are two main differences between Theorem \ref{Nondengerate} and  \cite[Theorem 1.2]{LS1}. In degenerate case, both the symmetrizing form and supersymmetrizing form $\mathtt{t}_{r,n}$ are exactly the standard Frobenius forms, which were obtained from the degenerate cyclotomic Mackey decomposition in \cite{BK}.
Unfortunately, in the non-degenerate case, the standard Frobenius forms $\tau_{r,n}$ are no longer symmetric or supersymmetric. Actually, we need to modify $\tau_{r,n}$ by some certain invertible elements in $\mHfcn$ to make the resulted Frobenius form $t_{r,n}$ being symmetric or supersymmetric, see Corollary \ref{supersym. form. non-dege} and Corollary \ref{sym. form. non-dege} for the details. The second difference lies in the proof. The proof of \cite[Theorem 1.2]{LS1} is quite elementary, while the proof of Theorem \ref{Nondengerate} in this paper makes essential use of the seminormal bases theory developed in \cite{LS2}. In fact, we can reduce the (super)symmetrizing property to the generic case, (i.e., semisimple) which leads to the computation of standard Frobenius forms $\tau_{r,n}$ on each element of the seminormal basis.

When ${\rm R}=\mathbb{K}$ is the field and $\bullet=\mathsf{s},$ we can give another description for the invertibility condition in Theorem \ref{Nondengerate} (ii), see Proposition \ref{Gamma condition}.
By \cite[Subsection 4.4]{KKT}, for $f=f^{(\mathsf{s})}_{\undQ}$, we can associate $f$ with a diagram which is a disjoint union of types $A_{\infty},$ $B_{\infty},$ $C_{\infty},$ $A^{(1)},$ $C^{(1)},$ $A^{(2)}$ and $D^{(2)}$ quivers. Now the invertibility condition is equivalent to saying that any twisted type $D^{(2)}$ quiver doesn't appear in that diagram.

 As an application of Theorem \ref{Nondengerate}, we give a new symmetrizing form on the Hecke-Clifford algebra $\mathcal{H}(n),$ i.e., the level $1$ non-degenerate case. \label{pag:[n]} For $n\in\mathbb{N},$ let $[n]:=\{1,2,\ldots,n\}.$
		
\begin{prop}\label{Hecke-Cliffod sym}
	Suppose that $q$ is not a primitive $4l$-th root of unity for $l\in [n],$ then $\mathcal{H}(n)$ is a symmetric superalgebra with the symmetrizing form $t_{1,n}$.
\end{prop}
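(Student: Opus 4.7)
The plan is to specialize Theorem \ref{Nondengerate}(ii) to the level one situation $f=f^{(\mathtt{s})}_{\emptyset}=X_1-1$, for which $\mHfcn=\mathcal{H}(n)$, $r=\deg f=1$, and $m=|\undQ|=0$. Since $(X_1X_2\cdots X_n)^{0}=1$, the symmetrizing form produced by Theorem \ref{Nondengerate}(ii) collapses to precisely
\[
t_{1,n}=\tau_{1,n}\bigl(-\cdot (1+X_1)(1+X_2)\cdots(1+X_n)\bigr),
\]
so the only remaining task is to verify the invertibility hypothesis of part (ii): that $(1+X_1)(1+X_2)\cdots(1+X_n)$ is a unit in $\mathcal{H}(n)$.

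For this I would invoke Proposition \ref{Gamma condition}, which converts the invertibility into the purely combinatorial statement that the quiver attached to $f$ in the sense of \cite[Subsection 4.4]{KKT} has no twisted type $D^{(2)}$ component. Because $\undQ$ is empty, this quiver is governed entirely by the multiplicative order of $q$, and the $D^{(2)}$ obstruction is precisely the presence of $-1$ among the $q$-residues that can occur as generalized eigenvalues of $X_1,\ldots,X_n$. By the semisimple representation theory recalled in Section \ref{basic-cyc-Non-dege}, such eigenvalues lie among the $q$-contents $q^{2c}$ of boxes in Young diagrams of size $n$; hence the verification reduces to showing $q^{2c}\ne -1$ for every $c$ in the resulting finite range.

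The final step is the elementary arithmetic observation that $q^{2c}=-1$ holds for some $c$ in the relevant range if and only if the order of $q$ equals $4l$ for some $l\in[n]$, the minimal witness $c=l$ being realized exactly when $q$ is a primitive $4l$-th root of unity. Under the hypothesis of the proposition this possibility is excluded, so $(1+X_1)(1+X_2)\cdots(1+X_n)$ is invertible, and Theorem \ref{Nondengerate}(ii) immediately produces the desired symmetrizing form $t_{1,n}$. The principal delicate point, and therefore the main obstacle in writing out the argument cleanly, is the bookkeeping that identifies the precise index set of admissible residues and matches it against $[n]$; once Proposition \ref{Gamma condition} is available this is the only nontrivial check required.
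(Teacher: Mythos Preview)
Your approach is the same as the paper's: the proof there is the single line ``This follows from Proposition~\ref{Gamma condition}'', and the arithmetic you outline --- that for $m=0$, $Q_0=1$ the polynomial $\Gamma_n^{(\mathsf{s})}(q^2,\emptyset)$ is a product of factors $q^{2k}+1$ with $|k|\le n$, vanishing exactly when $q$ is a primitive $4l$-th root of unity for some $l\in[n]$ --- is precisely what is being left implicit. One minor caveat: the generalized eigenvalues of $X_j$ are $\mathtt{b}_\pm(q^{2c})$ rather than $q^{2c}$ itself, and the eigenvalue bound in Proposition~\ref{Gamma condition} does not use semisimplicity (it tracks eigenvalues on the regular module via \cite[Lemma 4.4]{BK}); but since you invoke that proposition as a black box, this imprecision in your intermediate exposition does not affect the validity of your argument.
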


We remark that Wan and Wang \cite{WW2} also gave a symmetrizing form $\tau^{\bf WW}_n$ on $\mathcal{H}(n)$ over $\mathbb{C}(q),$ where $q$ is an indeterminate.
%However, the proof in \cite{WW2} only implies that $\tau^{\bf WW}_n$ is non-degenerate over $\mathbb{C}(q),$ where $q$ is an indeterminate. We don't know whether $\tau^{\bf WW}_n$ is non-degenerate when $\mathcal{H}(n)$ is not semisimple.
It would be interesting to compare the form $t_{1,n}$ in Theorem \ref{Nondengerate} with $\tau^{\bf WW}_n$.

As another application, we give new symmetrizing forms on the cyclotomic quiver Hecke algebras of types $A^{(1)}_{e-1}$ and $C^{(1)}_e,$ using the Kang-Kashiwara-Tsuchioka's isomorphism \cite{KKT}, see Proposition \ref{new trace form}. By \cite{BK:GradedKL}, the cyclotomic quiver Hecke algebra ${\rm R}^\Lambda_n$ of type $A^{(1)}_{e-1}$ is isomorphic to the cyclotomic Hecke algebra $\mathscr{H}^{\rm \mathbb{K}}_{n,r}.$ Consequently, our construction also gives a new symmetrizing form on $\mathscr{H}^{\rm \mathbb{K}}_{n,r}$.

Now we return to the Schur elements. In \cite{WW2}, Wan and Wang defined the Schur elements for a symmetric superalgebra and computed the corresponding Schur elements for $\mathcal{H}(n)$ over $\mathbb{C}(q)$. In this paper, we define the Schur elements for a supersymmetric superalgebra and compute the Schur elements for both cyclotomic Hecke-Clifford algebra and cyclotomic Sergeev algebra in the semisimple case. The semisimple representation theory for cyclotomic Hecke-Clifford algebra has been systematically studied in \cite{SW} and \cite{LS2}. Let $P^{(\bullet)}_{n}(q^2,\undQ)$ be the Poincar\'e polynomial in \cite[before Proposition 4.11]{SW}. By \cite[Theorem 1.1]{SW}, when $P^{(\bullet)}_{n}(q^2,\undQ)\neq 0$, the algebra $\mHfcn$ is split semisimple over $\mathbb{K}$, and the complete set of non-isomorphic simple modules is parametrized by the set $\mathscr{P}^{\bullet,m}_{n}$. We use $\mathbb{D}(\undla)$ to denote the simple module indexed by $\undla\in\mathscr{P}^{\bullet,m}_{n}$. The following Theorem \ref{Schur-Non-dengerate} is the second main result of this paper. Again, the unexplained notations can be found in Section \ref{Seminormal form}.
		
		  \begin{thm}\label{Schur-Non-dengerate}
		  	Suppose that $\bullet\in\{\mathsf{0},\mathsf{s}\}$ and  $P^{(\bullet)}_{n}(q^2,\undQ)\neq 0$ holds. Let $\undla\in\mathscr{P}^{\bullet,m}_{n},$ then the Schur element $s_{\undla}$ of simple $\mHfcn$-module $\mathbb{D}(\undla)$ with respect to $t_{r,n}$ is given by
		  	\begin{align*}
		  		s_{\undla}
		  		=\begin{cases}
		  			\prod\limits_{\alpha\in\undla}\left(\mathtt{b}_{-}(\res(\alpha))-\mathtt{b}_{+}(\res(\alpha))\right)
		  			\cdot q(\undla)^{-1},& \text{ if } \bullet=\mathsf{0},\\
		  			2^{-\lceil \sharp\mathcal{D}_{\undla}/2 \rceil}	\cdot q(\undla)^{-1},& \text{ if } \bullet=\mathsf{s},
		  			\end{cases}
		  	\end{align*}
	where $q(\undla)\in\mathbb{K}^*$ is a certain element defined in Proposition \ref{prop. qt}, which only depends on $\undla$.
		  	
		  	\end{thm}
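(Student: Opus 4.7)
My approach is to exploit the semisimple seminormal theory developed in \cite{LS2}. Under the hypothesis $P^{(\bullet)}_n(q^2,\undQ)\neq 0$, the algebra $\mHfcn$ is split semisimple and admits a complete seminormal basis $\{f^{\undla}_{\ms\mt}\}$ indexed by $\undla\in\mathscr{P}^{\bullet,m}_n$ and pairs $(\ms,\mt)$ of standard tableaux of shape $\undla$. Combined with the defining identity for Schur elements in a (super)symmetric algebra, namely that $t(e)=1/s_{\mathbb{D}(\undla)}$ (up to an appropriate super-normalization in the supersymmetric case) for any primitive idempotent $e$ projecting onto $\mathbb{D}(\undla)$, the calculation of $s_{\undla}$ reduces to the explicit evaluation of $t_{r,n}$ on the diagonal elements $f^{\undla}_{\mt\mt}$ and their appropriate combinations.

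The computation has two substeps. First, the multiplication rule $f^{\undla}_{\ms\mt}f^{\undmu}_{\mfku\mfkv}=\delta_{\undla\undmu}\delta_{\mt\mfku}\gamma_{\mt}f^{\undla}_{\ms\mfkv}$ (with a Clifford twist in the $\mathsf{s}$ case) from \cite{LS2} shows that $\gamma_{\mt}^{-1}f^{\undla}_{\mt\mt}$ is idempotent, and suitable sums of these idempotents over $\mt$ yield primitive (super)idempotents. Second, the Jucys--Murphy-type generators $X_1,\ldots,X_n$ act on $f^{\undla}_{\mt\mt}$ by scalars expressed in terms of $\mathtt{b}_{\pm}(\res(\alpha))$ at the boxes $\alpha$ of $\mt$. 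Hence
\[
t_{r,n}(f^{\undla}_{\mt\mt})=\tau_{r,n}(f^{\undla}_{\mt\mt})\cdot\prod_{\alpha\in\mt}E(\alpha),
\]
where $E(\alpha)$ is the eigenvalue of $X_i^{m}$ (in the $\mathsf{0}$ case) or $X_i^{m}(1+X_i)$ (in the $\mathsf{s}$ case) at the box $\alpha$, and this product depends only on the residue multiset of $\undla$. In the $\mathsf{0}$ case the product evaluates directly to $\prod_{\alpha\in\undla}(\mathtt{b}_{-}(\res(\alpha))-\mathtt{b}_{+}(\res(\alpha)))$, while $\tau_{r,n}(f^{\undla}_{\mt\mt})$ computed via the cyclotomic Mackey decomposition of \cite{BK} contributes the normalization that, after summing over $\mt$ of shape $\undla$, assembles into the factor $q(\undla)^{-1}$ from Proposition \ref{prop. qt}.

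The main obstacle is the $\mathsf{s}$ case, where the Clifford superstructure splits simple supermodules into types $\mathtt{M}$ and $\mathtt{Q}$, and the seminormal idempotents attached to residue-sequence-fixed tableaux occur in twin pairs. The insertion of $\prod_i(1+X_i)$ in the symmetrizing form of Theorem \ref{Nondengerate}(ii) is tailored precisely so that the eigenvalue product from the second substep, when combined with twin-pair identifications and the $\gamma$-normalizations, collapses to the clean expression $2^{-\lceil\sharp\mathcal{D}_{\undla}/2\rceil}q(\undla)^{-1}$. The delicate part is to reconcile the $2$'s coming from the factors $(1+X_i)$ against the compensating $2$'s arising from the type $\mathtt{Q}$ twin-pair identifications, and to match the resulting power exactly with the combinatorial count $\sharp\mathcal{D}_{\undla}$; this bookkeeping is the combinatorial core of the proof and is where most of the work will lie.
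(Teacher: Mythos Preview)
Your overall architecture is right: relate $s_{\undla}$ to $1/t_{r,n}(F_{\rm T})$ for a primitive idempotent $F_{\rm T}$, split $t_{r,n}=\tau_{r,n}(-\cdot Z)$ where $Z$ is the central element $(X_1\cdots X_n)^m$ (or its $\mathsf{s}$-variant), and use that $Z$ acts on $F_{\rm T}$ by an explicit eigenvalue product. This is exactly the paper's framework (see Lemmas~\ref{Schur elements and tauF}, \ref{Schur elements' and tauF} and the proof of Corollary~\ref{supersym. form. non-dege}). But two things go wrong.

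First, a factual slip: the eigenvalue of $X_i^m$ on $F_{\rm T}$ is $\mathtt{b}_{\mt,i}^{\pm m}$, not $\mathtt{b}_{-}(\res(\alpha))-\mathtt{b}_{+}(\res(\alpha))$. That difference does \emph{not} come from the modifying factor $Z$; it comes from $\tau_{r,n}(F_{\rm T})$ itself (see Theorem~\ref{mainthm type0 nondege}, where the denominator $\mathtt{b}_{\mt,k}^{\nu}-\mathtt{b}_{\mt,k}^{-\nu}$ appears). The role of $Z$ is only to cancel the $\mathtt{b}_{\mt,k}^{\nu_{\beta_\mt}(k)m}$ in the numerator so that the result becomes independent of $\beta_{\mt}$ up to sign.

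Second, and more seriously, the sentence ``$\tau_{r,n}(f^{\undla}_{\mt\mt})$ computed via the cyclotomic Mackey decomposition of \cite{BK} contributes the normalization that, after summing over $\mt$, assembles into $q(\undla)^{-1}$'' is not a proof --- it is the statement of the problem. There is no sum over $\mt$; one needs $\tau_{r,n}(F_{\rm T})$ for a \emph{single} ${\rm T}$, and this value is not available from \cite{BK}. Computing it is the entire content of Sections~\ref{frobenius}--\ref{dot=s}: one must (i) reduce the Clifford part of $F_{\rm T}$ (Subsection~\ref{reduction}), (ii) express $\iota_{n-1}$ of $(n-1)$-seminormal elements in the $n$-seminormal basis (Theorem~\ref{maththm of embedding}), (iii) set up a $2N\times 2N$ linear system from the Mackey decomposition \eqref{nondegMadecomequation} whose coefficient matrix is Cauchy-like, and (iv) invert it via Lemmas~\ref{TechLem1}--\ref{TechLem2} to extract $a_0=\varepsilon_n(F_{\rm T})/F_{{\rm T}\downarrow_{n-1}}$. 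The inductive product of these $a_0$'s is what produces both $q(\undla)$ and the $(\mathtt{b}_--\mathtt{b}_+)$ factors simultaneously. In the $\mathsf{s}$ case the four subcases of Subsections~\ref{type=s, case(1) ,section}--\ref{type=s, case(4) ,section} are needed to track the diagonal boxes and produce the power of $2$. None of this machinery is hinted at in your proposal, and without it the computation cannot be carried out.
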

		
		 Theorem \ref{Schur-Non-dengerate} has the degenerate analog. To state the result, let $\mhgcn$ be the cyclotomic Sergeev algebra defined over ${\mathbb{K}}$, where $g=g^{(\bullet)}_{\undQ}$ is a certain polynomial determined by $\undQ=(Q_1,Q_2,\ldots,Q_m)\in\mathbb{K}^m$ and $\bullet\in\{\mathsf{0},\mathsf{s}\}.$ Let $r:=\deg g$ and $P^{(\bullet)}_{n}(1,\undQ)$ be the Poincar\'e polynomial in \cite[before Proposition 5.12]{SW}. Recall the (super)symmetrizing form $\mathtt{t}_{r,n}$ on $\mhgcn$ (\cite[Theorem 1.2]{LS1}). Then the Schur elements for cyclotomic Sergeev algebra are the following, where the unexplained notations can be found in Section \ref{dege basic}.
		  	  \begin{thm}\label{Schur-dengerate}
		  	  		Suppose that $\bullet\in\{\mathsf{0},\mathsf{s}\}$ and  $P^{(\bullet)}_{n}(1,\undQ)\neq 0$ holds. Let $\undla\in\mathscr{P}^{\bullet,m}_{n},$ then the Schur element $\mathtt{s}_{\undla}$ of simple $\mhgcn$-module $D(\undla)$ with respect to $\mathtt{t}_{r,n}$ is given by
		  	  			\begin{align*}
		  	  			\mathtt{s}_{\undla}
		  	  			=\begin{cases}
		  	  				(-2)^{n}\prod_{\alpha\in\undla}\mathtt{u}_{+}(\res(\alpha))
		  	  				\cdot \mathtt{q}(\undla)^{-1},& \text{ if } \bullet=\mathsf{0},\\
		  	  				2^{n-\lceil \sharp\mathcal{D}_{\undla}/2 \rceil} \cdot \mathtt{q}(\undla)^{-1},& \text{ if } \bullet=\mathsf{s},
		  	  			\end{cases}
		  	  		\end{align*}
		  	  		where $\mathtt{q}(\undla)\in\mathbb{K}^*$ is a certain element defined in Remark \ref{prop. qt'}, which only depends on $\undla$.
		  	  		\end{thm}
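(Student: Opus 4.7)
My plan is to prove Theorem \ref{Schur-dengerate} in close parallel with the non-degenerate Theorem \ref{Schur-Non-dengerate}, with the seminormal basis theory of \cite{LS2} replaced by its degenerate analog and the Hecke-Clifford computations replaced by the analogous ones for the Sergeev algebra $\mhgcn$.

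First, the hypothesis $P^{(\bullet)}_{n}(1,\undQ) \neq 0$ together with the results of \cite{SW} guarantees that $\mhgcn$ is split semisimple over $\mathbb{K}$ with simple modules parametrized by $\mathscr{P}^{\bullet,m}_{n}$. Since the degenerate analog of Theorem \ref{Nondengerate} (established in \cite{LS1}) tells us that $\mathtt{t}_{r,n}$ is (super)symmetric, the general theory of (super)symmetric superalgebras reduces the computation of $\mathtt{s}_{\undla}$ to evaluating $\mathtt{t}_{r,n}$ on any primitive idempotent projecting onto $D(\undla)$, through the identity $\mathtt{t}_{r,n}(\mathtt{e}_{\undla}) = \mathtt{s}_{\undla}^{-1}$ (with the appropriate factor of $2$ in the type-Q case built into the definition of Schur element for a supersymmetric superalgebra). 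The degenerate seminormal basis theory supplies a family of orthogonal matrix units $\{\mathtt{f}^{\undla}_{\mathfrak{s}\mathfrak{t}}\}_{\mathfrak{s},\mathfrak{t}\in\Std(\undla)}$; after rescaling by the constant $\mathtt{q}(\undla)$ of Remark \ref{prop. qt'}, the diagonal entry $\mathtt{f}^{\undla}_{\mathfrak{t}\mathfrak{t}}/\mathtt{q}(\undla)$ becomes a genuine primitive idempotent, so the problem collapses to computing $\tau_{r,n}(\mathtt{f}^{\undla}_{\mathfrak{t}\mathfrak{t}} \cdot c)$ for the corrective element $c$ relating $\mathtt{t}_{r,n}$ to the standard Frobenius form $\tau_{r,n}$.

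The core calculation then invokes the degenerate cyclotomic Mackey decomposition of \cite{BK}: the form $\tau_{r,n}$ extracts the top Sergeev monomial of degree $(r-1,\ldots,r-1)$ in $x_1,\ldots,x_n$ together with the Clifford product $c_1\cdots c_n$. Because each $\mathtt{f}^{\undla}_{\mathfrak{t}\mathfrak{t}}$ is a joint eigenvector for $x_1,\ldots,x_n$ with eigenvalues given by the contents $\res(\alpha)$ of the boxes of $\undla$, substituting into $g^{(\bullet)}_{\undQ}$ and using the Clifford relations $c_i^2 = 1$ produces an explicit product. In the case $\bullet=\mathsf{0}$, each of the $n$ Clifford generators contributes a factor of $-2$ after tracing, and the polynomial $g^{(\mathsf{0})}_{\undQ}$ evaluated on the contents gives the factor $\prod_{\alpha\in\undla}\mathtt{u}_{+}(\res(\alpha))$, yielding $(-2)^n\prod_{\alpha}\mathtt{u}_{+}(\res(\alpha))\cdot\mathtt{q}(\undla)^{-1}$ after dividing by the idempotent normalization. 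In the case $\bullet=\mathsf{s}$, the symmetry built into $g^{(\mathsf{s})}_{\undQ}$ collapses the polynomial contribution to $1$ and leaves a clean power of $2$; the reduction from $2^n$ to $2^{n-\lceil\sharp\mathcal{D}_{\undla}/2\rceil}$ comes from the type-Q correction attached to the combinatorial set $\mathcal{D}_{\undla}$, since each paired residue forces two matrix units to amalgamate into a single Q-type block with the ceiling accounting for the parity of $\sharp\mathcal{D}_{\undla}$.

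The main obstacle, exactly as in the non-degenerate case, is the careful bookkeeping of parities and of type-M versus type-Q simple supermodules in the $\bullet=\mathsf{s}$ setting; in particular, one must verify that $\mathtt{q}(\undla)$ as specified in Remark \ref{prop. qt'} correctly absorbs both the scalar normalizations of the seminormal idempotents and the superalgebra factor of $2$ coming from the super Schur convention. To manage this, I would first treat $\bullet=\mathsf{0}$, where every simple module has the same parity type and the computation proceeds without ambiguity, and then deduce the $\bullet=\mathsf{s}$ statement by repeating the seminormal evaluation with the twin-pair structure on $\mathcal{D}_{\undla}$ made explicit, following the strategy already used for Theorem \ref{Schur-Non-dengerate}.
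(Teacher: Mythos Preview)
Your reduction to evaluating the (super)symmetrizing form on a primitive idempotent is correct, but the core computation you sketch is not the one that works, and several of your claims about it are mistaken. First, in the degenerate case there is no corrective element: the paper (following \cite{LS1}) states explicitly that $\mathtt{t}_{r,n}$ \emph{is} the standard Frobenius form obtained from the Mackey decomposition, unlike the non-degenerate case where one must twist by $(X_1\cdots X_n)^m$. Second, the Frobenius form does not extract a top-degree monomial together with $c_1\cdots c_n$; on the PBW basis it extracts the coefficient of the identity (see Proposition~\ref{closed formula frob} for the non-degenerate analog). Third, $\mathtt{q}(\undla)$ is not an idempotent normalization: the primitive idempotent $\mathcal{F}_{\rm T}$ is already honestly idempotent, and $\mathtt{q}(\undla)$ emerges from the combinatorics of addable and removable boxes in the step-by-step evaluation of $\varepsilon_1\circ\cdots\circ\varepsilon_n(\mathcal{F}_{\rm T})$.

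The real gap is that your ``substitute eigenvalues and use Clifford relations'' paragraph does not compute anything: the idempotent $\mathcal{F}_{\rm T}$ is a complicated element in the PBW basis, and knowing its $x_i$-eigenvalues does not let you read off its constant term. The paper's proof (Sections~\ref{frobenius}--\ref{dot=s}) proceeds by an inductive Mackey decomposition: one shows $\varepsilon_n(\mathcal{F}_{\rm T})=a_0\,\mathcal{F}_{{\rm T}\downarrow_{n-1}}$ for an explicit scalar $a_0$, obtained by embedding the $(n-1)$-seminormal basis into $\mhgcn$ (Theorem~\ref{maththm of embedding. dege}), writing down a linear system for the unknown Mackey coefficients, and solving it via the determinant identities of Lemmas~\ref{TechLem1}--\ref{TechLem2}. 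The factor $(-2)^{-n}\prod_\alpha\mathtt{u}_+(\res(\alpha))^{-1}$ for $\bullet=\mathsf{0}$ and the factor $2^{\lceil\sharp\mathcal{D}_{\undla}/2\rceil-n}$ for $\bullet=\mathsf{s}$ then arise from the accumulated $a_0$-scalars (the degenerate analogs of $\eta_{\mt,\beta_\mt}(n)$ in Definition~\ref{eta. type=s}), not from any direct trace of Clifford generators. Your proposal skips this entire mechanism, and without it there is no computation.
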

		  	  		
		  	  		The proofs of Theorem \ref{Schur-Non-dengerate} and \ref{Schur-dengerate} are inspired by \cite{HL} and \cite{HLL}. However, the computation in our super case is much more complicated than the non-super case in the following sense.
		  	  		
		  	  		(i) We need to reduce the Clifford part in primitive idempotents first to make sure the linear equation we deal with being as easy as possible, see Subsection \ref{reduction}.
		  	  		
		  	  		(ii) In our computation for Frobenius form $\tau_{r,n}$ on certain primitive idempotents, the choices of elements in Mackey decomposition will be much more subtle, see Subsections \ref{type=s, case(3) ,section}, \ref{type=s, case(4) ,section}.
		  	  		
In the forthcoming paper, we plan to use our results in this paper to give some nice $\Z$-graded integral bases as in \cite{EM}, which can be viewed as a generalization of $\Z$-graded cellular bases defined in \cite{HM1}.

Here is the layout of this paper. In Section \ref{preli}, we first recall some basics on superalgebras, and define the Schur elements for a supersymmetric algebra in Subsection \ref{basics}. Then we recall the combinatorics we will use in this paper and the notion of affine Hecke-Clifford algebra $\mHcn$, cyclotomic Hecke-Clifford algebra $\mHfcn$ as well as the separate conditions in Subsections  \ref{basic-Non-dege}, \ref{basic-cyc-Non-dege}.
In Section \ref{Seminormal form}, we recall the semisimple representation theory, including the structure of simple modules in Subsection \ref{Non-dege-simplemodule}, the construction of primitive idempotents and seminormal bases in Subsection \ref{primitiveidem}, \ref{Seminormalbase}. We also give some combinatorial formulae for the important coefficients $\mathtt{c}_{\mt}^{\mt^{\undla}}$ and
$\mathtt{c}_{\mt}^{\mt_{\undla}}$ in Subsection \ref{gamma coefficient}.
In Subsection \ref{dege basic}, we summarize the above analog results to the cyclotomic Sergeev algebra.
In Section \ref{embedding}, we study the embedding of the $(n-1)$-seminormal basis in $\mHfcn$ (or $\mhgcn$) via the $n$-seminormal basis.
In Section \ref{frobenius}, we study the standard Frobenius form $\tau_{r,n}$ and, after several reductions, we reduce the computation of $\tau_{r,n}$ on seminormal bases to certain primitive idempotents whose ``Clifford part'' is as little as possible.
Sections \ref{dot=0} and \ref{dot=s} are the core of this paper, where we compute the value of $\tau_{r,n}$ on the primitive idempotents according to different choices of $\bullet\in\{\mathsf{0},\mathsf{s}\}.$ We also complete the proof of our main results Theorem \ref{Nondengerate}, Theorem \ref{Schur-Non-dengerate} and Theorem \ref{Schur-dengerate} in these two Sections.
Finally, in Section \ref{application}, we give two applications of our main results. We first study the symmetrizing form on the Hecke-Clifford algebra $\mathcal{H}(n)$ and compare the symmetrizing forms and the Schur elements of $\mathcal{H}(n)$ with the Sergeev algebra $\mathfrak{H}(n):=\mathcal{C}_n \rtimes \mathbb{C} \mathfrak{S}_n$ after specializing $q$ to $1$. Then we use the Kang-Kashiwara-Tsuchioka's isomorphism to give new symmetrizing forms on the cyclotomic quiver Hecke algebras of types $A^{(1)}_{e-1}$ and $C^{(1)}_e$.

\bigskip
\centerline{\bf Acknowledgements}
\bigskip
The research is supported by the National Natural Science Foundation of China
(No. 12431002).
The second author is partially supported by the Postdoctoral Fellowship Program of CPSF under Grant Number GZB20250717.
Both authors thank Huansheng Li for his many valuable discussions in the computations of Schur elements.
Additionally, they thank Jinkui Wan for her suggestions and feedback.
%the Natural Science Foundation of Beijing Municipality (No. 1232017).
%The authors would like to express their gratitude to the referee for the thorough report, which greatly enhanced the quality of this paper.
\bigskip

	\section{Preliminary}\label{preli}

	\subsection{Some basics about superalgebras}\label{basics}
   	We recall some basic notions of superalgebras, referring the
   reader to~\cite[\S 2-b]{BK}. Let us denote by
   $|v|\in\mathbb{Z}_2$ \label{pag:||} the parity of a homogeneous vector $v$ of an
   ${\rm R}$-vector superspace. By a superalgebra, we mean a
   $\mathbb{Z}_2$-graded associative ${\rm R}$-algebra.  Let $\mathcal{A}$ be an
   ${\rm R}$-superalgebra. By an $\mathcal{A}$-module, we mean a $\mathbb{Z}_2$-graded
   left $\mathcal{A}$-module. A homomorphism $f:V\rightarrow W$ of
   $\mathcal{A}$-modules $V$ and $W$ means a linear map such that $
   f(av)=(-1)^{|f||a|}af(v).$  Note that this and other such
   expressions only make sense for homogeneous $a, f$ and the meaning
   for arbitrary elements is to be obtained by extending linearly from
   the homogeneous case. An non-zero element $e\in\mathcal{A}$ is called a (super) primitive idempotent if
	it is an idempotent with $|e|=\bar{0}$ and it cannot be decomposed as the sum of
	two nonzero orthogonal idempotents with parity $\bar{0}.$
	
	Let $\mathcal{A}$ be a finite dimesional superalegbra over $\mathbb{K}$ and $V$ be a finite dimensional
	$\mathcal{A}$-module. Let $\Pi
	V$ \label{pag:parity shift} be the same underlying vector space but with the opposite
	$\mathbb{Z}_2$-grading. The new action of $a\in\mathcal{A}$ on $v\in\Pi
	V$ is defined in terms of the old action by $a\cdot
	v:=(-1)^{|a|}av$. Note that the identity map on $V$ defines
	an isomorphism from $V$ to $\Pi V$. A superalgebra analog of Schur's Lemma states that the endomorphism
	algebra of a finite dimensional irreducible module over $\mathcal{A}$ is either one dimensional or two dimensional. In the
	former case, we call the module of {\em type }\texttt{M} while in
	the latter case the module is called of {\em type }\texttt{Q}.

	\begin{example}\label{simple algebra}
		(1) Let $V$ be a superspace with superdimension $(m,n)$ over field $\mathbb{K}$, then $\mathcal{M}_{m,n}:={\text{End}}_{\mathbb{K}}(V)$ is a simple superalgebra with simple module $V$ of {\em type }\texttt{M}.  \\
		(2) Let $V$ be a superspace with superdimension $(n,n)$ over field $\mathbb{K}$. We define
$\mathcal{Q}_n:=\left\{\begin{pmatrix}
A  & B\\
-B & A
\end{pmatrix}\biggm| A,B\in  M_n\right\}\subset \mathcal{M}_{n,n},$ then $\mathcal{Q}_n$ is a simple superalgebra with simple module $V$ of {\em type }\texttt{Q}.
\end{example}
Denote by $\mathcal{J}(\mathcal{A})$  the usual (non-super) Jacobson radical of $\mathcal{A}.$ We call $\mathcal{A}$ is semisimple if $\mathcal{J}(\mathcal{A})=0$.

\begin{lem}\cite[Lemma 12.2.9]{K2}\label{semisimple}
	A finite dimensional superalgebra $\mathcal{A}$ over $\mathbb{K}$ is semisimple if and only if it is a direct sum of some simple superalgebras. Moreover, any finite dimensional simple superalgebra  over $\mathbb{K}$ is isomorphic to some $\mathcal{M}_{m,n}$ or $\mathcal{Q}_n$.
\end{lem}

Recall the definition of (super)symmetrizing form in Definition \ref{(super)symmetrizing form}.

Now suppose $\mathcal{A}$ is (super)symmetric with a (super)symmetrizing form $t$. Let $\mathcal{B}$ be a $\Z_2$-homogeneous basis of $\mathcal{A}.$ We denote by $\mathcal{B}^{\vee}=\{b^{\vee}\mid b\in \mathcal{B}\}$ the dual basis, which is also homogeneous and satisfies $t(b^\vee b')=\delta_{b,b'}$ for any $b,b'\in\mathcal{B}.$ Suppose ${V,V'}$ are two $\mathcal{A}$-modules. For any homogeneous map $f\in{\rm Hom}_{{\rm R}}(V, {V'})$, we define $I(f)\in {\rm Hom}_{{\rm R}}(V, { V'})$ by letting $$
I(f)(v):=\begin{cases} \sum_{b\in \mathcal{B}}(-1)^{|f||b|}b^{\vee} f(bv), &\text{if $t$ is symmetric};\\
	\sum_{b\in \mathcal{B}}(-1)^{|f||v|+|b|}b^{\vee} f(bv), &\text{if $t$ is supersymmetric},
\end{cases}
$$ for $v\in V$.

As noted in \cite{WW2}, when $t$ is symmetric, $I(f)$ is independent of the choice of the homogeneous basis $\mathcal{B}$ and $I(f)\in{\rm Hom}_{\mathcal{A}}(V, {V'})$. Similarly, we can follow essentially the same proof as for \cite[Lemma 7.1.10]{GF} to see that when $t$ is supersymmetric, $I(f)$ is independent of the choice of the homogeneous basis $\mathcal{B}$ and $I(f)\in{\rm Hom}_{\mathcal{A}}(V, { V'})$.

{\bf In the rest of this subsection, we assume $\mathcal{A}$ is a finite dimesional superalegbra over $\mathbb{K}$.}

\begin{defn}\cite[Section 1.1.2]{CW}
	Let $V$ be a superspace with superdimension $(m,n)$ over field $\mathbb{K}$. For $f\in{\rm End}_{\mathbb{K}}(V)$, we write $f=f_{\bar{0}}+f_{\bar{1}}$, where $f_{a}$ is the $a$-th component of $f$. We define the supertrace of $f$ as follows$$
	{\rm suptr}(f):={\rm tr}(f_{\bar{0}}\downarrow_{V_{\bar{0}}})-{\rm tr}(f_{\bar{0}}\downarrow_{V_{\bar{1}}}).
	$$
\end{defn}

\begin{rem}
It's clear that ${\rm suptr}(f)=0$ for any $f\in{\rm End}_{\mathbb{K}}(V)_{\bar{1}}$ and ${\rm suptr}(g^{-1}fg)=(-1)^{|g|}{\rm suptr}(f)$
for any homogeneous isomorphism $g\in {\rm Hom}_{{\mathbb{K}}}(W,V),$ where $W$ is a superspace.
\end{rem}

We denote by ${\rm Irr}(\mathcal{A})$ the complete set of non-isomorphic irreducible $\mathcal{A}$-modules.
\begin{defn}	Suppose $\mathcal{A}$ is a split semisimple superalgebra over $\mathbb{K}$, then we can write $\mathcal{A}$ as a direct sum of simple superalgebras:$$
	\mathcal{A}=\bigoplus_{V\in{\rm Irr}(\mathcal{A})}H(V).
	$$ For any split irreducible $\mathcal{A}$-module $V$, we define $$
	\chi_V(a):={\rm tr}(a\downarrow_{V}),\qquad \chi'_V(a):={\rm suptr}(a\downarrow_{V}),\qquad \forall a\in\mathcal{A}.
	$$
\end{defn}	

By definition, $\chi_V$ is the usual character of an irreducible $\mathcal{A}$-module $V$.

\begin{lem}\cite[Lemma 5.2]{WW2}
	Suppose $\mathcal{A}$ is symmetric with a symmetrizing form $t$.	Let $V$ be a split irreducible $\mathcal{A}$-module. Then there exists a unique element $s_V\in \mathbb{K}$ such that  \begin{align}\label{schur1}I(f)=s_V{\rm tr}(f){\rm id}_V,\qquad\text{for $f\in{{\rm End}_{\mathbb{K}}(V)}_{\bar{0}}$}.
	\end{align} Furthermore, $s_V$ only depends on the isomorphism class of $V$.
\end{lem}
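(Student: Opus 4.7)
The plan is to combine the already-established $\mathcal{A}$-linearity of $I(f)$ with the superalgebra version of Schur's lemma, and then to identify the resulting linear functional with a scalar multiple of the trace via a Wedderburn reduction.

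First, when $f$ is even one has $(-1)^{|f||b|}=1$ for every homogeneous $b$, so $I(f)(v)=\sum_{b\in\mathcal{B}}b^{\vee}f(bv)$, and in particular $I(f)$ is itself even. Combined with the $\mathcal{A}$-linearity of $I(f)$ recalled just before the statement, this gives $I(f)\in\mathrm{End}_{\mathcal{A}}(V)_{\bar{0}}$. Since $V$ is split irreducible, the superalgebra version of Schur's lemma tells us that $\mathrm{End}_{\mathcal{A}}(V)_{\bar{0}}=\mathbb{K}\cdot\mathrm{id}_{V}$ in both type $\mathtt{M}$ and type $\mathtt{Q}$ cases: the extra generator appearing in the type $\mathtt{Q}$ case is odd and therefore contributes nothing to the even part. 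Hence there is a well-defined linear functional $\lambda_{V}\colon\mathrm{End}_{\mathbb{K}}(V)_{\bar{0}}\to\mathbb{K}$ with $I(f)=\lambda_{V}(f)\,\mathrm{id}_{V}$, and uniqueness of the eventual scalar $s_V$ will be immediate from specialising $f=\mathrm{id}_V$.

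To identify $\lambda_{V}$ with $s_{V}\cdot\mathrm{tr}$, I would appeal to the split semisimple Wedderburn decomposition $\mathcal{A}=\bigoplus_{W\in\mathrm{Irr}(\mathcal{A})}H(W)$ guaranteed by the super Wedderburn theorem recalled after Example \ref{simple algebra}, together with the induced decomposition of $t$ as a sum of symmetrizing forms on the simple components. Choosing a homogeneous $\mathcal{B}$ compatibly with this direct sum, only the $H(V)$-summand contributes to $I(f)$ acting on $V$. Since $H(V)\cong\mathcal{M}_{m,n}$ or $\mathcal{Q}_{n}$ (Example \ref{simple algebra}), any symmetrizing form on $H(V)$ is a non-zero scalar multiple of the standard trace form; writing $t|_{H(V)}=s_{V}^{-1}\,\mathrm{tr}_{V}$ defines $s_{V}\in\mathbb{K}^{*}$, and a short direct computation with matrix units (type $\mathtt{M}$) or with the $\mathcal{Q}_{n}$-basis (type $\mathtt{Q}$) yields $I(f)=s_{V}\,\mathrm{tr}(f)\,\mathrm{id}_{V}$. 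Independence of the isomorphism class of $V$ then follows by transporting this identity through any $\mathcal{A}$-module isomorphism $V\to V'$, using the basis-independence of $I$ recalled just before the statement.

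I expect the main obstacle to be the type $\mathtt{Q}$ computation: the dual basis of a homogeneous basis of $\mathcal{Q}_{n}$ with respect to a symmetrizing form mixes even and odd generators, so one must carefully check that the odd-basis contributions in $I(f)$ cancel rather than produce either an odd endomorphism or a non-trace-like functional on $\mathrm{End}_{\mathbb{K}}(V)_{\bar{0}}$. This sign book-keeping relies crucially on the hypothesis $|f|=\bar{0}$ together with the explicit block form of the odd elements in the $\mathcal{Q}_{n}$-basis; once it is settled, the remainder of the argument is formal.
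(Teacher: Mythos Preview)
The paper does not give its own proof of this lemma; it is cited from \cite{WW2}. The closest comparison is the paper's proof of the supersymmetric analogue immediately following, which proceeds by a direct matrix-unit computation and makes no appeal to semisimplicity.

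Your proposal has a genuine gap. The lemma assumes only that $\mathcal{A}$ is symmetric and $V$ is split irreducible; semisimplicity is \emph{not} hypothesised (it is added separately in Proposition~\ref{schur formula 1}). Yet your second paragraph invokes the Wedderburn decomposition $\mathcal{A}=\bigoplus_{W}H(W)$ and the classification of simple blocks, which are only available for semisimple $\mathcal{A}$. So as written, you have proved a weaker statement than the one claimed.

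Your first paragraph is correct and is precisely how one should begin: Schur's lemma gives $I(f)=\lambda_V(f)\,\mathrm{id}_V$ for a linear functional $\lambda_V$ on $\mathrm{End}_{\mathbb{K}}(V)_{\bar 0}$. To finish without structure theory, argue directly with matrix units, as the paper does in the supersymmetric case. Fix a homogeneous basis $\{e_i\}$ of $V$, write $b\,e_k=\sum_s b_{sk}e_s$, and expand $I(E_{ij})(e_k)$ for even $E_{ij}$; comparing with $c_{ij}e_k$ yields $\sum_{b}b^{\vee}_{si}b_{jk}=\delta_{sk}c_{ij}$. Since $(\mathcal{B}^{\vee},\mathcal{B})$ is an equally valid dual pair and $I$ is basis-independent, one obtains a companion relation $\sum_{b}b_{si}b^{\vee}_{jk}=\delta_{sk}c_{ij}$, and playing the two off against each other forces $c_{ij}=0$ for $i\neq j$ and $c_{ii}=c_{jj}$ for all $i,j$. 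This handles types $\mathtt{M}$ and $\mathtt{Q}$ uniformly and requires nothing beyond the basis-independence of $I$ already recalled before the lemma.
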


The element $s_V$ is called the {\bf Schur element} of $V$ with respect to $t$. \label{pag:Schur element of V}
For any $V\in{\rm Irr}(\mathcal{A}),$ we write
$$
\delta(V):=\begin{cases}
	0, &\text{if $V$ is of {\em type }\texttt{M}};\\
	1, &\text{if $V$ is of {\em type }\texttt{Q}}.
	\end{cases}
$$

\begin{prop}\label{schur formula 1}\cite[Proposition 5.4]{WW2}
Suppose $\mathcal{A}$ is a split semisimple superalgebra over $\mathbb{K}$ and $\mathcal{A}$ is symmetric with a symmetrizing form $t$. Then the Schur element $s_V$ for every irreducible $\mathcal{A}$-module $V$ is non-zero. Moreover,
\begin{align*}
t=\sum_{V\in{\rm Irr}(\mathcal{A})}\frac{1}{2^{\delta_V}s_V}\chi_V.
\end{align*}
\end{prop}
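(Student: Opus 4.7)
The plan is to reduce the identity to a blockwise computation via the super Wedderburn theorem, recognize the restriction of $t$ to each block as a scalar multiple of the character, and then pin down that scalar using the defining identity of the Schur element. By Wedderburn, decompose $\mathcal{A}=\bigoplus_{V\in{\rm Irr}(\mathcal{A})} H(V)$ with $H(V)\cong\mathcal{M}_{m_V,n_V}$ in type $\mathtt{M}$ and $H(V)\cong\mathcal{Q}_{n_V}$ in type $\mathtt{Q}$. Since each character $\chi_{V'}$ is supported on the single block $H(V')$, the claim is equivalent, blockwise, to showing $t|_{H(V)}=\frac{1}{2^{\delta_V}s_V}\chi_V|_{H(V)}$.

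The first step is a uniqueness statement for even trace forms on each simple super-factor: the space of $\mathbb{K}$-linear maps $\phi\colon H(V)\to\mathbb{K}$ which vanish on $H(V)_{\bar{1}}$ and satisfy $\phi(xy)=\phi(yx)$ is one-dimensional, spanned by $\chi_V|_{H(V)}$. For $\mathcal{M}_{m,n}$ this is the classical matrix-algebra fact; for $\mathcal{Q}_n$ it is a direct computation using the explicit model of Example~\ref{simple algebra}. Consequently $t|_{H(V)}=c_V\,\chi_V$ for some $c_V\in\mathbb{K}$, and the non-degeneracy of the symmetrizing form, which restricts to non-degeneracy in each block, forces $c_V\neq 0$ throughout.

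To identify $c_V$ in terms of $s_V$, I would evaluate $I({\rm id}_V)$ in two different ways. On the one hand, applying (\ref{schur1}) with $f={\rm id}_V$ gives $I({\rm id}_V)=s_V\dim(V)\cdot{\rm id}_V$. On the other hand, pick a $\Z_2$-homogeneous basis $\mathcal{B}$ of $\mathcal{A}$ adapted to the Wedderburn decomposition; only the part $\mathcal{B}\cap H(V)$ contributes to $I({\rm id}_V)(v)=\sum_b b^{\vee}bv$ for $v\in V$. Using the matrix-unit basis of $\mathcal{M}_{m_V,n_V}$ (respectively, a basis of $\mathcal{Q}_{n_V}$ splitting into the even pieces $X_{ij}$ and odd pieces $Y_{ij}$ from Example~\ref{simple algebra}) together with the explicit $t$-dual basis with respect to $c_V\chi_V$, the Casimir-type sum $\sum_{b\in\mathcal{B}\cap H(V)}b^{\vee}b$ acts on $V$ as $\frac{\dim V}{2^{\delta_V}c_V}\,{\rm id}_V$. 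Comparing the two expressions for $I({\rm id}_V)$ yields $s_V=\frac{1}{2^{\delta_V}c_V}$, which both establishes that $s_V\neq 0$ and proves the decomposition.

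The main obstacle is the type $\mathtt{Q}$ Casimir computation, where the extra factor $2^{\delta_V}=2$ must be accounted for. Concretely, the even dual basis has the form $X_{ij}^{\vee}=\frac{1}{2c_V}X_{ji}$ while the odd dual basis has the form $Y_{ij}^{\vee}=-\frac{1}{2c_V}Y_{ji}$, and these two families contribute \emph{equally} (not cancelingly) to $\sum b^{\vee}b$ because $Y_{ji}Y_{ij}$ differs from $X_{ji}X_{ij}$ by a sign which is absorbed into the sign in $Y_{ij}^{\vee}$. Tracking this sign bookkeeping, together with the doubling of $\chi_V$ coming from the $(n,n)$-superdimension of the type $\mathtt{Q}$ simple module, is the delicate point of the argument.
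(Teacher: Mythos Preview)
Your proposal is correct. The paper does not supply its own proof of this proposition---it is quoted verbatim from \cite[Proposition~5.4]{WW2}---but your argument coincides with the strategy the paper invokes when proving the supersymmetric analogue (Proposition~\ref{supersym. schur formula 2}): decompose $\mathcal{A}$ into simple blocks, write $t$ as a linear combination of characters via uniqueness of trace functionals, use non-degeneracy to get $c_V\neq 0$, and then identify $c_V$ with $1/(2^{\delta_V}s_V)$ by a Casimir computation (which is precisely what \cite[Example~5.3 and Proposition~5.4]{WW2} do, and what the paper points to). Your explicit type~$\mathtt{Q}$ bookkeeping---the dual bases $X_{ij}^{\vee}=\frac{1}{2c_V}X_{ji}$, $Y_{ij}^{\vee}=-\frac{1}{2c_V}Y_{ji}$ and the observation that $\sum b^{\vee}b$ acts as $\frac{n}{c_V}\,{\rm id}_V=\frac{\dim V}{2c_V}\,{\rm id}_V$---is exactly the computation that produces the factor $2^{\delta_V}$.
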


Now we consider the analog in supersymmetric algebra $\mathcal{A}.$
\begin{lem}
Suppose $\mathcal{A}$ is supersymmetric with a supersymmetrizing form $t$.	Let $V$ be a split irreducible $\mathcal{A}$-module of {\em type }\texttt{M}. Then there exists a unique element $s_V\in \mathbb{K}$ such that \begin{align}\label{schur2}
	I(f)=s_V{\rm suptr}(f){\rm id}_V,\qquad\text{for $f\in{{\rm End}_{\mathbb{K}}(V)}_{\bar{0}}$}.
\end{align}Furthermore, up to sign, $s_V$ only depends on the isomorphism class of $V$.
	\end{lem}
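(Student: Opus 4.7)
The plan is to proceed in three steps.

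First, I will verify that $I$ restricted to ${\rm End}_{\mathbb{K}}(V)_{\bar{0}}$ takes values in $\mathbb{K}\cdot{\rm id}_V$. Since $t$ is $\mathbb{Z}_2$-homogeneous of degree $\bar{0}$, one has $|b^{\vee}|=|b|$ for all elements $b$ of a homogeneous basis $\mathcal{B}$, so each summand $(-1)^{|f||v|+|b|}b^{\vee}f(bv)$ has parity $|v|$ whenever $|f|=\bar{0}$, whence $I(f)$ is an even map and the discussion preceding the statement yields $I(f)\in{\rm End}_{\mathcal{A}}(V)_{\bar{0}}$. Since $V$ is of type $\mathtt{M}$, ${\rm End}_{\mathcal{A}}(V)=\mathbb{K}\cdot{\rm id}_V$ is one-dimensional and concentrated in degree $\bar{0}$, so $I(f)=\lambda(f)\,{\rm id}_V$ for a unique $\lambda(f)\in\mathbb{K}$, defining a $\mathbb{K}$-linear functional $\lambda$ on ${\rm End}_{\mathbb{K}}(V)_{\bar{0}}$.

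Second, I will show that $\lambda=s_V\cdot{\rm suptr}$ for a unique scalar $s_V\in\mathbb{K}$. The kernel of ${\rm suptr}$ on ${\rm End}_{\mathbb{K}}(V)_{\bar{0}}$ is the $\mathbb{K}$-span of even supercommutators $gh-(-1)^{|g||h|}hg$ (with $|g|=|h|$), so it suffices to verify the super-trace property $\lambda(gh)=(-1)^{|g||h|}\lambda(hg)$ for homogeneous $g,h$ of equal parity. Because $V$ is split irreducible of type $\mathtt{M}$, the super-analog of the Jacobson density theorem supplies a surjection $\rho:\mathcal{A}\twoheadrightarrow{\rm End}_{\mathbb{K}}(V)$; lifting $g=\rho(x)$, $h=\rho(y)$ for homogeneous $x,y\in\mathcal{A}$, the claim reduces to a super-Casimir identity, namely that the element $C_{xy}:=\sum_{b\in\mathcal{B}}(-1)^{|b|}b^{\vee}xyb\in\mathcal{A}$ satisfies $\rho(C_{xy})=(-1)^{|x||y|}\rho(C_{yx})$. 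This is established by expanding via $a'=\sum_b t(b^{\vee}a')b$ (valid since $\{b^{\vee}\}$ is $t$-dual to $\{b\}$), applying the supersymmetry $t(uv)=(-1)^{|u||v|}t(vu)$, and reindexing the sum to produce exactly the factor $(-1)^{|x||y|}$.

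For the isomorphism-class assertion, any $\mathcal{A}$-linear isomorphism $\phi:V\to V'$ between type $\mathtt{M}$ modules is homogeneous of some parity $|\phi|$. A direct calculation using the $\mathcal{A}$-linearity of $\phi$ yields $I_V(\phi^{-1}f\phi)=\phi^{-1}I_{V'}(f)\phi$, hence $\lambda_V(\phi^{-1}f\phi)=\lambda_{V'}(f)$, which combined with ${\rm suptr}(\phi^{-1}f\phi)=(-1)^{|\phi|}{\rm suptr}(f)$ (the remark above) forces $s_{V'}=(-1)^{|\phi|}s_V$. This is exactly the ``up to sign'' dependence.

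The main obstacle is the super-trace identity in Step 2: the sign bookkeeping in the Casimir-type rearrangement---ensuring that the supersymmetry of $t$ produces precisely one factor of $(-1)^{|x||y|}$---is the delicate step, and it is where the argument departs from the purely symmetric case of \cite[Lemma 5.2]{WW2}.
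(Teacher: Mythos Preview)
Your approach is correct but genuinely different from the paper's. The paper argues by direct matrix-unit computation: fixing a homogeneous basis $\{e_i\}$ of $V$, it evaluates $I(E_{ij})(e_k)$ explicitly, extracts orthogonality relations among the matrix coefficients of $b$ and $b^{\vee}$, and from these shows that $I(E_{ij})=c_{ij}\,{\rm id}_V$ with $c_{ij}=0$ for $i\neq j$ and $c_{ii}=(-1)^{|e_i|+|e_s|}c_{ss}$, whence $I(f)=c_{11}\,{\rm suptr}(f)\,{\rm id}_V$. No density theorem or Casimir manipulation is invoked; everything is read off from two displayed identities in the entries. Your route instead factors through the supercocenter: you reduce to the identity $C_{xy}=(-1)^{|x||y|}C_{yx}$ (for $|x|=|y|$) on the Casimir-type element $C_z=\sum_b(-1)^{|b|}b^{\vee}zb$, which you can indeed prove by first establishing the shift relation $\sum_b(-1)^{|b|}b^{\vee}a\otimes b=(-1)^{|a|}\sum_b(-1)^{|b|}b^{\vee}\otimes ab$ via the dual-basis expansion and supersymmetry of $t$, and then applying it once to pull $x$ across. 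This gives $C_{xy}=(-1)^{|x|}C_{yx}$, which equals $(-1)^{|x||y|}C_{yx}$ precisely because $|x|=|y|$. Your argument is more conceptual and explains \emph{why} the supertrace appears (it is the unique functional on ${\rm End}_{\mathbb K}(V)_{\bar 0}$ killing even supercommutators), while the paper's has the virtue of being entirely self-contained and avoiding any appeal to density or the structure of the supercocenter. For Step~3 the two proofs coincide.
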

The element $s_V$ is called the {\bf Schur element} of $V$ with respect to $t$.

\begin{proof}
Let	$\mathcal{B}$ be a $\Z_2$-homogeneous basis for $\mathcal{A}$ , and $\mathcal{B}^{\vee}$ be the dual basis, which is also homogeneous and satisfies $t(b^\vee b')=\delta_{b,b'}$. Suppose $V=\mathbb{K}^l$ and $\{e_i|~1\leq i\leq l\}$ is the standard basis of $V$. For any $a\in\mathcal{A}$, we write $ae_i=\sum_{s=1}^l a_{si}e_s$ for $1\leq i\leq l$. For $i,j\in[l]$ such that the parities $|e_i|=|e_j|$, we can compute \begin{align*}
	I(E_{ij})(e_k)&=\sum_{\substack{b\in\mathcal{B}\\|b|=|e_i|+|e_k|}}	(-1)^{|b|} b^{\vee}E_{ij}(be_k)\\
	&=\sum_{\substack{b\in\mathcal{B}\\|b|=|e_i|+|e_k|}}	(-1)^{|b|} b^{\vee}E_{ij}(\sum_{s=1}^lb_{sk}e_s)\\
	&=\sum_{\substack{b\in\mathcal{B}\\|b|=|e_i|+|e_k|}}	(-1)^{|b|} b^{\vee}b_{jk}e_i\\
	&=\sum_{\substack{b\in\mathcal{B}\\|b|=|e_i|+|e_k|}}	(-1)^{|b|} \sum_{s=1}^lb^{\vee}_{si}b_{jk}e_s.
	\end{align*}
	On the other hand, since $I(E_{ij})\in {\rm End}_{\mathcal{A}}(V)\cong \mathbb{K}$, we deduce that $I(E_{ij})(e_k)=c_{ij}e_k$ for some $c_{ij}\in \mathbb{K}$. Comparing coefficients of $e_s$, we deduce \begin{align}\label{orthogonal 1}
		\sum_{\substack{b\in\mathcal{B}\\|b|=|e_i|+|e_k|}}	(-1)^{|b|} b^{\vee}_{si}b_{jk}=\delta_{sk}c_{ij},\qquad i,j,s,k\in[l].
		\end{align}
		Similarly, we substitute $\mathcal{B}$ and $\mathcal{B}^\vee$ by $(\mathcal{B}^\vee)':=\{(-1)^{|b|}b^\vee|~b\in\mathcal{B}\}$ and $\mathcal{B}$ respectively. Following the same computation, we obtain\begin{align}\label{orthogonal 2}
			\sum_{\substack{b\in\mathcal{B}\\|b|=|e_i|+|e_k|}}	 b_{si}b^{\vee}_{jk}=\delta_{sk}c_{ij},\qquad i,j,s,k\in[l].
		\end{align}
	
	We claim that $c_{ij}=0$ if $i\neq j$. Actually, we can choose $s=k$ in \eqref{orthogonal 1}, then it deduces that \begin{align}\label{orthogonal 3}
		(-1)^{|e_i|+|e_s|}\cdot\sum_{\substack{b\in\mathcal{B}\\|b|=|e_i|+|e_s|}} b^{\vee}_{si}b_{js}=c_{ij}.
	\end{align} On the other hand, by \eqref{orthogonal 2}, we have  \begin{align*}
	\sum_{\substack{b\in\mathcal{B}\\|b|=|e_s|+|e_j|}}	 b^{\vee}_{si}b_{js}=\delta_{ij}c_{ss}=0.
	\end{align*} This, combined with \eqref{orthogonal 3} and the assumption $|e_i|=|e_j|,$ proves our claim.
	
	Next, we compare $c_{ii}$ with $c_{ss}$ for $i\neq s$. We choose $i=j,s=k$ in \eqref{orthogonal 1}. Since $\mathcal{B}$ is a homogeneous basis, we deduce that \begin{align}\label{orthogonal 4}
		(-1)^{|e_i|+|e_s|}\cdot\sum_{\substack{b\in\mathcal{B}\\|b|=|e_i|+|e_s|}}	b^{\vee}_{si}b_{is}=c_{ii}.
	\end{align} On the other hand, by \eqref{orthogonal 2}, we have \begin{align*}
	\sum_{\substack{b\in\mathcal{B}\\|b|=|e_i|+|e_s|}}	b^{\vee}_{si}b_{is}=c_{ss}.
	\end{align*} This, combined with \eqref{orthogonal 4}, implies that $c_{ii}=c_{ss}$ for $|e_i|=|e_s|$ and $-c_{ii}=c_{ss}$ for $|e_i|\neq|e_s|.$
	
%	Similarly, we choose $i=j,s=k$ with $|i|\neq |s|$ in \eqref{orthogonal 1}. Since $\mathcal{B}$ is a homogeneous basis, we deduce that \begin{align}\label{orthogonal 5}
%		\sum_{b\in\mathcal{B},|b|=\bar{1}}	-b^{\vee}_{si}b_{is}=c_{ii}.
%	\end{align} On the other hand,  by \eqref{orthogonal 2}, we have \begin{align*}
%	\sum_{b\in\mathcal{B},|b|=\bar{1}}	b^{\vee}_{si}b_{is}=c_{ss}.
%	\end{align*} This implies that $c_{ii}=-c_{ss}$ for $|i|\neq|s|$.
	Now we set $s_V:=c_{11}$. Then the above claim shows that \eqref{schur2} holds for $V=\mathbb{K}^l$. Now suppose $V\cong W$ with homogeneous isomorphism $f\in{{\rm Hom}_{\mathcal{A}}(V,W)}.$ Since for $\forall h\in{\rm End}_{\mathbb{K}}(W)_{\bar{0}},$ $I(h)\in {\rm End}_{\mathcal{A}}(W)\cong \mathbb{K}$, we have $$I(h)=f^{-1}I(h)f=I(f^{-1}hf)=s_V{\rm suptr}(f^{-1}hf)=(-1)^{|f|}s_V{\rm suptr}(h).
	$$  This implies that up to sign, $s_V$ only depends on the isomorphism class of $V$.
	\end{proof}
	
	\begin{example}\label{supersym schur}
		We consider $\mathcal{M}_{m,n}$ in Example \ref{simple algebra} (1). It's easy to check that ${\rm suptr}: A\mapsto {\rm suptr}(A)$ is a supersymmetrizing form on $\mathcal{M}_{m,n}$ and any other supersymmetrizing form is a scalar of ${\rm suptr}$. Then $\mathcal{B}^\vee=\{E_{ij}\mid i,j\in [m]\}\bigsqcup \{-E_{ij}\mid m+1\leq i,j\leq m+n\}\bigsqcup \{-E_{ij}\mid m+1\leq i\in m+n,1\leq j\leq m\}\bigsqcup \{E_{ij}\mid 1\leq i\leq m,\,m+1\leq j\leq m+n\}$ is a dual basis of $\mathcal{B}=\{E_{ij}\mid i,j\in [m]\}\bigsqcup \{E_{ij}\mid m+1\leq i,j\leq m+n\}\bigsqcup \{E_{ij}\mid 1\leq i\leq m,\,m+1\leq j\leq m+n\}\bigsqcup \{E_{ij}\mid m+1\leq i\in m+n,1\leq j\leq m\}$. Now for any $A\in(\mathcal{M}_{m,n})_{\bar{0}}$, we can compute \begin{align*}I(A)(e_k)=&\sum_{i,j\in[m]}E_{ji}A(E_{ij}e_k)+ \sum_{m+1\leq i,j\leq m+n}(-E_{ji})A(E_{ij}e_k)\\
			&\qquad+\sum_{\substack{1\leq i\leq m,\\m+1\leq j\leq m+n}}E_{ji}A(E_{ij}e_k)+\sum_{\substack{1\leq j\leq m,\\m+1\leq i\leq m+n}}(-E_{ji})A(E_{ij}e_k)\\
			&\qquad\qquad={\rm suptr}(A) e_k.
			\end{align*} Hence we conclude $s_V=1$ and $I(A)={\rm suptr}(A)$ for any $A\in(\mathcal{M}_{m,n})_{\bar{0}}$.
		\end{example}
	
%Recall that ${\rm R}$ is an integral domain of characteristic different from $2$ and
%$\mathcal{A}=\mathcal{A}_{\overline{0}}\oplus \mathcal{A}_{\overline{1}}$ an ${\rm R}$-superalgebra which is free and of finite rank over ${\rm R}.$
The supercocenter of $\mathcal{A}$ is defined by
$${\rm SupTr}(\mathcal{A}):=\mathcal{A}/[\mathcal{A}, \mathcal{A}]^{-},$$
where $[\mathcal{A}, \mathcal{A}]^{-}$ is the $\mathbb{K}$-span of all supercommutators $[x, y]^{-}:=xy-(-1)^{|x| |y|}yx,$ $x, y \in \mathcal{A}.$
\begin{prop}\label{supersym. schur formula 2}
	Suppose $\mathcal{A}$ is a split semisimple superalgebra over $\mathbb{K}$ and $\mathcal{A}$ is supersymmetric with a supersymmetrizing form $t$. Then every irreducible $\mathcal{A}$-module $V$ is of {\em type }\texttt{M}, and the Schur element $s_V$ is non-zero.
Moreover, we have \begin{align*}
		t=\sum_{\substack{V\in{\rm Irr}(\mathcal{A})}}\frac{1}{s_V}\chi'_V.
	\end{align*}
\end{prop}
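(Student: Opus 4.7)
The plan is to use the super-analogue of Wedderburn's theorem to decompose $\mathcal{A}=\bigoplus_{V\in{\rm Irr}(\mathcal{A})}H(V)$ as a direct product of simple superalgebras, and then reduce the analysis to the two possibilities $\mathcal{M}_{m,n}$ and $\mathcal{Q}_n$ from Example~\ref{simple algebra}. First I would verify that each restriction $t_V:=t|_{H(V)}$ is again a supersymmetric form and, crucially, is non-degenerate: since $H(V)H(W)=0$ for $W\neq V$, given $0\neq x\in H(V)$ one picks $y\in\mathcal{A}$ with $t(yx)\neq 0$, decomposes $y=\sum_W y_W$, and observes that $yx=y_V x$, so $t(y_V x)\neq 0$ with $y_V\in H(V)$.

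The key step is to rule out type~\texttt{Q} summands. Any supersymmetric form $s$ on $\mathcal{Q}_n$ vanishes on the odd part by definition; on the even part $(\mathcal{Q}_n)_{\bar 0}\cong M_n$ the supersymmetry condition degenerates to ordinary symmetry, so $s({\rm diag}(A,A))=c\cdot{\rm tr}(A)$ for some $c\in\mathbb{K}$. Choosing two odd elements $x=\bigl(\begin{smallmatrix}0&B\\ -B&0\end{smallmatrix}\bigr)$ and $y=\bigl(\begin{smallmatrix}0&C\\ -C&0\end{smallmatrix}\bigr)$, the products $xy$ and $yx$ are even with diagonal blocks $-BC$ and $-CB$, so the relation $s(xy)=-s(yx)$ reads $c\cdot{\rm tr}(BC+CB)=0$ for all $B,C\in M_n$, forcing $c=0$ and hence $s\equiv 0$. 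Combined with the first step this rules out type~\texttt{Q} summands, so every $V\in{\rm Irr}(\mathcal{A})$ is of type~\texttt{M} and $H(V)\cong\mathcal{M}_{m_V,n_V}$.

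By Example~\ref{supersym schur}, any supersymmetric form on $\mathcal{M}_{m_V,n_V}$ is a scalar multiple of ${\rm suptr}$; hence $t_V=\lambda_V\cdot{\rm suptr}$ for some $\lambda_V\in\mathbb{K}^*$ (non-zero by the non-degeneracy established in Step~1). A short scaling argument then relates $\lambda_V$ to $s_V$: if a (super)symmetrizing form is rescaled by $\lambda$, the dual basis scales by $\lambda^{-1}$, so $I(-)$, and therefore the Schur element, scales by $\lambda^{-1}$. Since Example~\ref{supersym schur} shows that the Schur element of $\mathcal{M}_{m,n}$ with respect to ${\rm suptr}$ itself is $1$, we conclude $\lambda_V=s_V^{-1}$, which in particular proves $s_V\neq 0$ (with a fixed representative pinning down the sign).

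Finally I would assemble the pieces: for $a\in\mathcal{A}$, writing $a=\sum_V a_V$ with $a_V\in H(V)$, we have $t(a)=\sum_V t_V(a_V)=\sum_V s_V^{-1}\,{\rm suptr}(a_V)=\sum_V s_V^{-1}\chi'_V(a)$, since $a_W$ acts as $0$ on $V$ for $W\neq V$ while on $H(V)$ the module $V$ is the defining representation of $\mathcal{M}_{m_V,n_V}$. The hard part will be Step~2, the exclusion of type~\texttt{Q} summands via the direct matrix computation; the remaining pieces are largely bookkeeping once one accepts the super-Wedderburn decomposition and Example~\ref{supersym schur}.
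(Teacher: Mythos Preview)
Your proof is correct and follows essentially the same route as the paper: Wedderburn decomposition into simple superalgebras, exclusion of type~\texttt{Q} summands via the vanishing of every supersymmetric form on $\mathcal{Q}_n$ (the paper phrases this dually as ${\rm SupTr}(\mathcal{Q}_n)=0$, citing \cite[Example~2.5]{LS1}), and identification of the block coefficient with $s_V^{-1}$ via Example~\ref{supersym schur}. Your version is slightly more self-contained---you do the $\mathcal{Q}_n$ matrix calculation and the scaling argument explicitly rather than invoking \cite{LS1} and \cite{WW2}---but the strategy is identical.
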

\begin{proof}
Since $\mathcal{A}$ is a split semisimple superalgebra over $\mathbb{K}$, we can write $\mathcal{A}$ as a direct sum of simple superalgebras:$$
	\mathcal{A}=\bigoplus_{V\in{\rm Irr}(\mathcal{A})}H(V).
	$$
By definition, $t\in  {\rm Hom}_{\mathbb{K}}({\rm SupTr}({\mathcal{A}}),\mathbb{K})$. Moreover, by \cite[Example 2.5]{LS1} and Example \ref{supersym schur}, we deduce that $\{\chi'_{V}\mid V\in{\rm Irr}(\mathcal{A}),\text{$V$ is of \em type }\texttt{M} \}$ forms a $\mathbb{K}$-basis of ${\rm Hom}_{\mathbb{K}}({\rm SupTr}({\mathcal{A}}),\mathbb{K})$. Hence $$
t=\sum_{\substack{V\in{\rm Irr}(\mathcal{A})\\ \text{$V$ is of \em type }\texttt{M}}}d_V\chi'_V,
$$
where $d_V\in \mathbb{K}$ and the restriction of $t$ on block $H(V)$ is $d_V\chi'_V$. This implies that $d_V\neq 0$ for each $V$ of {\em type }\texttt{M} and there is no irreducible $\mathcal{A}$-module of {\em type }\texttt{Q}, since $t$ is non-degenerate. We only need to prove $d_Vs_V=1$ for each $V$ of {\em type }\texttt{M}. Replacing \cite[Example 5.3]{WW2} with Example \ref{supersym schur} in the computation of \cite[Proposition 5.4]{WW2},  one can
prove the above equation by the same argument as that used in the proof of \cite[Proposition 5.4]{WW2}.
	\end{proof}

\subsection{Combinatorics}
\label{pag:The types of combinatorics}
In this subsection, we shall introduce combinatorics we will use in this paper. For $n\in \N$, let $\mathscr{P}_n$ be the set of partitions of $n$ and denote by $\ell(\mu)$ the number of nonzero parts in the partition $\mu$ for each $\mu\in\mathscr{P}_n$. Let $\mathscr{P}^m_n$ be the set of all $m$-multipartitions of $n$ for $m\geq 0$, where we use convention that $\mathscr{P}^0_n=\{\emptyset\}$. Let $\mathscr{P}^\mathsf{s}_n$ be the set of strict partitions of $n$. Then for $m\geq 0$, set
$$
\mathscr{P}^{\mathsf{s},m}_{n}:=
\cup_{a=0}^{n}( \mathscr{P}^{\mathsf{s}}_a\times \mathscr{P}^{m}_{n-a}),\qquad \mathscr{P}^{\mathsf{ss}, m}_{n}:=
\cup_{a+b+c=n}(\mathscr{P}^{\mathsf{s}}_a \times \mathscr{P}^{\mathsf{s}}_b\times \mathscr{P}^{m}_{c}).$$
We will formally write  $\mathscr{P}^{\mathsf{0},m}_{n}=\mathscr{P}^m_n$.  In convention, for any \label{pag:multipartition} $\undla\in  \mathscr{P}^{\mathsf{0},m}_{n}$, we write  $\undla=(\lambda^{(1)},\cdots,\lambda^{(m)}),$ while for any $\undla\in  \mathscr{P}^{\mathsf{s},m}_{n}$, we write  $\undla=(\lambda^{(0)},\lambda^{(1)},\cdots,\lambda^{(m)})$, i.e., we shall put the strict partition in the $0$-th component. Moreover, for any $\undla\in  \mathscr{P}^{\mathsf{ss},m}_{n}$, we write  $\undla=(\lambda^{(0_-)},\lambda^{(0_+)},\lambda^{(1)},\cdots,\lambda^{(m)})$, i.e., we shall put two strict partitions in the $0_-$-th component and the $0_+$-th component.

We will also identify the (strict) partition with the corresponding (shifted) young diagram.  For any  $\undla\in\mathscr{P}^{\bullet,m}_{n}$ with $\bullet\in\{\mathsf{0},\mathsf{s},\mathsf{ss}\}$ and $m\in \N$, the box in the $l$-th component with row $i$, column $j$ will be denoted by $(i,j,l)$  with $l\in\{1,2,\ldots,m\},$ or $l\in\{0,1,2,\ldots,m\}$ or $l\in\{0_-,0_+,1,2,\ldots,m\}$ in the case $\bullet=\mathsf{0},\mathsf{s},\mathsf{ss},$ respectively. We also use the notation $\alpha=(i,j,l)\in \undla$ if the diagram of $\undla$ has a box $\alpha$ on the $l$-th component of row $i$ and column $j$. We use $\Std(\undla)$ \label{pag:standard tableaux} to denote the set of standard tableaux of shape $\undla$. One can also regard each $\mathfrak{t}\in\Std(\undla)$ as a bijection $\mathfrak{t}:\undla\rightarrow \{1,2,\ldots, n\}$ satisfying $\mathfrak{t}((i,j,l))=k$ if the box occupied by $k$ is located in the $i$-th row, $j$-th column in the $l$-th component $\lambda^{(l)}$.
For $0\leq k\leq n,$ let $\mt\downarrow_{k}$ be the subtableau of $\mt$ that contains the numbers $\{1, 2,\dots,k\}$.
In particular, $\mt\downarrow_{0}$ is the empty tableau.
We use $\mathfrak{t}^{\undla}$ (resp. $\mathfrak{t}_{\undla}$) to denote the standard tableaux obtained by inserting the symbols $1,2,\ldots,n$ consecutively by rows (resp. column) from the first (resp. last) component of $\undla$.

We use $\Add(\undla)$ and $\Rem(\undla)$ to denote the set of addable boxes of $\undla$ and the set of removable boxes of $\undla$ respectively. For $\mt\in\Std(\undla),$ we define $\Add(\mt):=\Add(\undla)$ and $\Rem(\mt):=\Rem(\undla).$ \label{pag:Add and Rem}

\label{pag:diag of undlam}
\begin{defn}(\cite[Definition 2.5]{SW})
	Let $\undla\in\mathscr{P}^{\bullet,m}_{n}$ with $\bullet\in\{\mathsf{0},\mathsf{s},\mathsf{ss}\}$.  We define
	$$
	\mathcal{D}_{\undla}:=\begin{cases} \emptyset,&\text{if $\undla\in\mathscr{P}^{\mathsf{0},m}_n$,}\\
		\{(a,a,0)|(a,a,0)\in \undla,\,a\in\N\}, &\text{if $\undla\in\mathscr{P}^{\mathsf{s},m}_{n}$,}\\
		\big\{(a,a,l)|(a,a,l)\in \undla,\,a\in\N, l\in\{0_-,0_+\}\big\}, &\text{if $\undla\in\mathscr{P}^{\mathsf{ss}, m}_{n}.$}
	\end{cases}
	$$
	For any $\mathfrak{t}\in\Std(\undla), $ we define
	\begin{align}\label{Dt}
		\mathcal{D}_{\mt}:=\{\mathfrak{t}(a,a,l)|(a,a,l)\in\mathcal{D}_{\undla}\}.
	\end{align}
\end{defn}

\begin{example} Let $\undla=(\lambda^{(0)},\lambda^{(1)})\in \mathscr{P}^{\mathsf{s},1}_{5}$, where via the identification with strict Young diagrams and Young diagrams:
	$$
	\lambda^{(0)}=\young(\,\, ,:\,), \lambda^{(1)}=\young(\,,\,).
	$$
	Then
	$$
	\mathfrak{t}^{\undla}=\Biggl(\young(12,:3), \young(4,5)\Biggr).
	$$
	and  an example of standard tableau is as follows:
	$$
	\mathfrak{t}=\Biggl(\young(13,:5), \young(2,4)\Biggr)\in \Std(\undla).
	$$ We have $$\mathcal{D}_{\undla}=\{(1,1,0),(2,2,0)\},\qquad \mathcal{D}_{\mathfrak{t}}=\{1,5\}.
	$$
\end{example}

\label{pag:adimssible}
Let $\mathfrak{S}_n$ \label{pag:Sn} be the symmetric group on ${1,2,\ldots,n}$ with basic transpositions $s_1,s_2,\ldots, s_{n-1}$.  And $\mathfrak{S}_n$ acts on the set of tableaux of shape $\underline{\lambda}$ in the natural way.  For any standard tableaux $\mathfrak{t}\in \Std(\undla)$, if $s_l \mathfrak{t}$ is still standard, we call the simple transposition $s_l$ is {\bf admissible} with respect to $\mathfrak{t}$.

\begin{lem}\label{admissible transposes}\cite[Lemma 2.8]{SW}
	Let $\undla\in\mathscr{P}^{\bullet,m}_{n}$ with $\bullet\in\{\mathsf{0},\mathsf{s},\mathsf{ss}\}$. For any $\ms,\mt \in \Std(\undla),$ we denote by $d(\ms,\mt)\in \mathfrak{S}_n$ the unique element
	such that $\ms=d(\ms,\mt)\mt.$ Then we have
	$$s_{k_i}\text{ is admissible with respect to } s_{k_{i-1}}\ldots s_{k_1}\mathfrak{t},\quad \forall i=1,2,\ldots,p$$
	for any reduced expression $d(\ms,\mt)=s_{k_p}\cdots s_{k_1}.$
\end{lem}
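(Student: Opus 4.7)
The plan is to proceed by induction on the length $p=\ell(d(\ms,\mt))$, with the base case $p=0$ being trivial since then $\ms=\mt$. For the inductive step, the key reduction is to first establish that $s_{k_1}$ is admissible with respect to $\mt$; once this is known, setting $\mt':=s_{k_1}\mt\in\Std(\undla)$ gives $\ms=(s_{k_p}\cdots s_{k_2})\mt'$, and $s_{k_p}\cdots s_{k_2}$ is automatically a reduced expression of $d(\ms,\mt')$ as a subword of a reduced expression of $d(\ms,\mt)$. The induction hypothesis applied to the pair $(\ms,\mt')$ then yields the admissibility of each $s_{k_i}$ with respect to $s_{k_{i-1}}\cdots s_{k_1}\mt$ for $i\geq 2$, completing the induction.

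The heart of the argument is the claim that $s_{k_1}$ is admissible with respect to $\mt$. I would use the identification $d(\ms,\mt)=\ms\circ \mt^{-1}$ arising from the left action $(w\cdot\mt)(x)=w(\mt(x))$. Since $s_{k_p}\cdots s_{k_1}$ is reduced, the multiplication $d(\ms,\mt)s_{k_1}=s_{k_p}\cdots s_{k_2}$ has length $p-1<p$, and the standard length characterization in $\mathfrak{S}_n$ forces $d(\ms,\mt)(k_1)>d(\ms,\mt)(k_1+1)$. Writing $\alpha:=\mt^{-1}(k_1)$ and $\beta:=\mt^{-1}(k_1+1)$ for the corresponding boxes of $\undla$, this translates to $\ms(\alpha)>\ms(\beta)$.

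Now suppose for contradiction that $s_{k_1}$ is not admissible with respect to $\mt$; then $k_1$ and $k_1+1$ lie in the same row or same column of a single component of $\mt$. Since $\mt$ is standard, $k_1+1$ must be strictly to the right of $k_1$ (in the row case) or strictly below $k_1$ (in the column case), so in either case $\alpha$ precedes $\beta$ in the natural row or column order within that component. Applying the row/column monotonicity of the standard tableau $\ms$ to the same pair of boxes gives $\ms(\alpha)<\ms(\beta)$, contradicting the inequality from the previous paragraph. This contradiction establishes the admissibility of $s_{k_1}$, and the induction delivers the full statement. The argument is uniform across $\bullet\in\{\mathsf{0},\mathsf{s},\mathsf{ss}\}$ because the only input from the combinatorics of $\undla$ is the strict increase of entries along rows and columns, which holds equally for ordinary and shifted (multi-)Young diagrams. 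The main delicacy lies in keeping the direction of the $\mathfrak{S}_n$-action consistent with the right-multiplication form of the length-reduction criterion; once that bookkeeping is fixed, the proof is essentially immediate.
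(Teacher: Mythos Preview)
Your proof is correct and self-contained. The paper itself does not give a proof of this lemma but simply cites \cite[Lemma 2.8]{SW}; your induction on $\ell(d(\ms,\mt))$ together with the length-reduction criterion $\ell(ws_i)<\ell(w)\Leftrightarrow w(i)>w(i+1)$ and the row/column monotonicity of standard tableaux is exactly the standard argument one would expect in the cited reference, and it works uniformly for ordinary and shifted shapes.
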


For a finite set $S$, we denote $\sharp S$ the number of elements in $S$. \label{pag:cardinality}
{\bf In the rest of this subsection, we fix $\undla\in\mathscr{P}^{\bullet,m}_{n}$ for $\bullet\in\{\mathsf{0},\mathsf{s},\mathsf{ss}\}$.
Let $t:=\sharp \mathcal{D}_{\undla}$.}
\begin{defn}
	We denote \begin{align}
		\mathcal{D}_{\mt^{\undla}}&:=\{\mt^{\undla}(a,a,l)\mid (a,a,l)\in\mathcal{D}_{\undla}\}=\{i_1<i_2<\cdots<i_t\}\label{stanard D},\\
		\mathcal{OD}_{\mt^{\undla}}&:=\{i_1,i_3,\cdots,i_{2{\lceil t/2 \rceil}-1}\}\subset \mathcal{D}_{\mt^{\undla}}\label{standard OD}
	\end{align} and \label{pag:dundla}
	\begin{align*}
		d_{\undla}:=
		\begin{cases}
			1, & \text{ if $t$ is odd}, \\
			0, & \text{ otherwise}.
	\end{cases}
\end{align*}
\end{defn}
%\begin{rem}
%	By Lemma \ref{important condition1} (1), it's easy to check that $\mathcal{D}_{\mt^{\undla}}=\mathcal{D}_{\res(\mt^{\undla})}$ if we set $\underline{\iota}=\res(\mt^{\undla})$ in \eqref{nondeg.Diota}. Hence we keep the same notation $i_1<i_2<\cdots<i_t$ here.
%%{\bf We emphasize that from now on, we fix the notation \eqref{stanard D} for this special case.}
%\end{rem}
%Recall the notations in Definition \ref{lem. Dtauiota}.
\label{pag:Dt,ODt,Z2ODt}		
\begin{defn}\label{Dt,ODt,Z2ODt}
  For each $\mt\in \Std(\undla),$ we define
	\begin{align*}
		\mathcal{D}_{\mt}&:=d(\mt,\mt^{\undla}) (\mathcal{D}_{\mt^{\undla}}),\nonumber\\
		\mathcal{OD}_{\mt}&:=d(\mt,\mt^{\undla}) (\mathcal{OD}_{\mt^{\undla}}),\nonumber\\
		\Z_2(\mathcal{OD}_{\mt})&:=\{\alpha_{\mt} \in \mathbb{Z}_{2}^{n} \mid \supp(\alpha_{\mt}) \subseteq \mathcal{OD}_{\mt}\},\\
		\Z_2([n]\setminus \mathcal{D}_{\mt})&:=\{\beta_{\mt} \in \mathbb{Z}_{2}^{n} \mid \supp(\beta_{\mt}) \subseteq [n]\setminus \mathcal{D}_{\mt}\}.\nonumber
	\end{align*}
\end{defn}

\label{pag:decomposition of OD}
\begin{defn}\label{decomposition of OD}
	For any $\mt\in\Std(\undla),$ let ${d(\mt,\mt^{\undla})}\in \mathfrak{S}_n$ such that $\mt={d(\mt,\mt^{\undla})}\mt^{\undla}$. We define
	\begin{align}
		\mathbb{Z}_2(\mathcal{OD}_{\mt})_{\bar{0}}:=\{\alpha_{\mt} \in\Z_2(\mathcal{OD}_{\mt}) \mid d(\mt,\mt^{\undla})(i_t)\notin \supp(\alpha_{\mt})\},\nonumber\\
		\mathbb{Z}_2(\mathcal{OD}_{\mt})_{\bar{1}}:=\{\alpha_{\mt} \in\Z_2(\mathcal{OD}_{\mt}) \mid d(\mt,\mt^{\undla})(i_t)\in \supp(\alpha_{\mt})\}.\nonumber
	\end{align}
\end{defn}

That is, if $d_{\undla}=0$ (i.e., $t$ is even), then $\Z_2(\mathcal{OD}_{\mt})_{\bar{0}}=\Z_2(\mathcal{OD}_{\mt})$ and $\Z_2(\mathcal{OD}_{\mt})_{\bar{1}}=\emptyset;$  if $d_{\undla}=1$ (i.e., $t$ is odd), then $\Z_2(\mathcal{OD}_{\mt})_{\bar{0}}$ and $\Z_2(\mathcal{OD}_{\mt})_{\bar{1}}$ are both non-empty and there is a natural bijection between $\Z_2(\mathcal{OD}_{\mt})_{\bar{0}}$ and $\Z_2(\mathcal{OD}_{\mt})_{\bar{1}}$ which sends $\alpha_{\mt}\in \Z_2(\mathcal{OD}_{\mt})_{\bar{0}}$ to $\alpha_{\mt}+e_{{d(\mt,\mt^{\undla})}(i_t)}\in \Z_2(\mathcal{OD}_{\mt})_{\bar{1}}.$ In particular, we have
$$ \Z_2(\mathcal{OD}_{\mt})=\Z_2(\mathcal{OD}_{\mt})_{\bar{0}}\sqcup \Z_2(\mathcal{OD}_{\mt})_{\bar{1}}.$$ For any $\alpha_{\mt} \in \Z_2(\mathcal{OD}_{\mt})_{\bar{0}},$ we also use $\alpha_{\mt,\bar{0}}=\alpha_{\mt}$  to emphasize that $\alpha_{\mt}\in Z_2(\mathcal{OD}_{\mt})_{\bar{0}}$ and if $d_{\undla}=1$, we define $\alpha_{\mt,\bar{1}}:=\alpha_{\mt}+e_{{d(\mt,\mt^{\undla})}(i_t)}\in \Z_2(\mathcal{OD}_{\mt})_{\bar{1}}.$

\begin{defn}
	Let $\undla\in\mathscr{P}^{\mathsf{\bullet},m}_{n}$, where $\bullet\in\{\mathsf{0},\mathsf{s},\mathsf{ss}\}$ and ${\rm T}=(\mt,\alpha_{\mt},\beta_{\mt})\in{\rm Tri}(\undla).$
	We define $\alpha_{\mt\downarrow_{n-1}}\in \mathbb{Z}_2(\mathcal{OD}_{\mt\downarrow_{n-1}})_{\overline{0}},$ such that $\alpha_{\mt}\downarrow_{n-1}=\alpha_{\mt\downarrow_{n-1},a},$ for some $a\in\Z_2.$ Suppose $\mt\downarrow_{n-1}\in \Std(\underline{\mu})$
for some $\underline{\mu}\in\mathscr{P}^{\mathsf{\bullet},m}_{n-1}$. We define
	$${\rm T}\downarrow_{n-1}:=(\mt\downarrow_{n-1},\alpha_{\mt\downarrow_{n-1}},\beta_{\mt}\downarrow_{n-1})\in{\rm Tri}_{\overline{0}}(\underline{\mu}).$$
	Then $$({\rm T}\downarrow_{n-1})_a=(\mt\downarrow_{n-1},\alpha_{\mt\downarrow_{n-1},a},\beta_{\mt}\downarrow_{n-1})=(\mt\downarrow_{n-1},\alpha_{\mt}\downarrow_{n-1},\beta_{\mt}\downarrow_{n-1})\in{\rm Tri}_{a}(\underline{\mu}).$$
\end{defn}

Hence for any $\undla\in\mathscr{P}^{\mathsf{\bullet},m}_{n}$, where $\bullet\in\{\mathsf{0},\mathsf{s},\mathsf{ss}\},$ we have the map:
\begin{align*}
	\downarrow_{n-1}:	{\rm Tri}(\undla)&\rightarrow \bigsqcup_{\substack{\underline{\mu}\in\mathscr{P}^{\mathsf{\bullet},m}_{n-1}\\\underline{\mu}\subset \undla}}{\rm Tri}_{\bar{0}}(\underline{\mu})\\
	{\rm T}&\mapsto {\rm T}\downarrow_{n-1}.
\end{align*}
In particular, let ${\rm T}=(\mt,\alpha_{\mt},\beta_{\mt})\in{\rm Tri}(\undla)$. Suppose $\mt\downarrow_{n-1}\in \Std(\underline{\mu})$ and $d_{\underline{\mu}}=0,$ we have $${\rm T}\downarrow_{n-1}=(\mt\downarrow_{n-1},\alpha_{\mt}\downarrow_{n-1},\beta_{\mt}\downarrow_{n-1})\in{\rm Tri}_{\overline{0}}(\underline{\mu}).$$

\begin{example}
Let $\undla=(\lambda^{(0)},\lambda^{(1)})\in \mathscr{P}^{\mathsf{s},1}_{5}$, where via the identification with strict Young diagrams and Young diagrams:
$$
\lambda^{(0)}=\young(\,\, ,:\,),\qquad \lambda^{(1)}=\young(\,,\,).
$$
Then
$$
\mathfrak{t}^{\undla}=\Biggl(\young(12,:5), \young(3,4)\Biggr)\in\Std(\undla).
$$
	We have $\alpha_{\mathfrak{t}}=\left(\bar{1},\bar{0},\bar{0},\bar{0},\bar{0}\right)\in\Z_2(\mathcal{OD}_{\mathfrak{t}})_{\bar{1}}$ and $\beta_{\mathfrak{t}}=\left(\bar{0},\bar{1},\bar{0},\bar{0},\bar{0}\right)\in	\Z_2([n]\setminus \mathcal{D}_{\mathfrak{t}}).$ Then ${\rm T}=(\mathfrak{t},\alpha_{\mathfrak{t}},\beta_{\mathfrak{t}})\in {\rm Tri}_{\bar{1}}(\undla)$ and ${\rm T}\downarrow_{4}=(\mathfrak{t}\downarrow_{4},\alpha_{\mathfrak{t}\downarrow_{4}},\beta_{\mathfrak{t}}\downarrow_{4})\in{\rm Tri}_{\overline{0}}(\underline{\mu}),$ where $\undmu=\left(\young(\,\,),\young(\,,\,)\right)\in \mathscr{P}^{\mathsf{s},1}_{4}$, $\mathfrak{t}\downarrow_{4}=\Biggl(\young(12), \young(3,4)\Biggr)\in\Std(\undla),$ $\alpha_{\mathfrak{t}\downarrow_{4}}=\left(\bar{0},\bar{0},\bar{0},\bar{0}\right)\in\Z_2(\mathcal{OD}_{\mathfrak{t}\downarrow_{4}})_{\bar{0}}$ and $\beta_{\mathfrak{t}}\downarrow_{4}=\left(\bar{0},\bar{1},\bar{0},\bar{0}\right)$.
\end{example}

	\subsection{Affine Hecke-Clifford algebra $\mHcn$}\label{basic-Non-dege}
	
	Let $q\in{\rm R^\times}\setminus\{\pm 1\}$ such that $q+q^{-1}\in{\rm R^\times}.$
    We define $\epsilon:=q-q^{-1}\in{\rm R}\setminus\{0\}.$
    The non-degenerate affine Hecke-Clifford algebra $\mHcn$ \label{pag:AHCA} is
	the superalgebra over ${\rm R}$ generated by even generators
	$T_1,\ldots,T_{n-1},$ $X_1^{\pm 1},\ldots,X_n^{\pm 1}$ and odd generators
	$C_1,\ldots,C_n$ subject to the following relations\footnote{We remark that there is a typo in \cite[Relation (2.6)]{LS2}, where the ``$1+C_iC_{i+1}$'' should be ``$1-C_iC_{i+1}$'' as here \eqref{PX2}. In fact, the relation \eqref{PX2} is equivalent to the \eqref{PX1}, see \cite[Before (2.19)]{BK}.}
	\begin{align}
		T_i^2=\epsilon T_i +1,\quad T_iT_j =T_jT_i, &\quad
		T_iT_{i+1}T_i=T_{i+1}T_iT_{i+1}, \quad|i-j|>1,\label{Braid}\\
		X_iX_j&=X_jX_i, X_iX^{-1}_i=X^{-1}_iX_i=1, \quad 1\leq i,j\leq n, \label{Poly}\\
		C_i^2=1,C_iC_j&=-C_jC_i, \quad 1\leq i\neq j\leq n, \label{Clifford}\\
		T_iX_i&=X_{i+1}T_i-\epsilon(X_{i+1}+C_iC_{i+1}X_i),\label{PX1}\\
		T_iX_{i+1}&=X_iT_i+\epsilon(1-C_iC_{i+1})X_{i+1},\label{PX2}\\
		T_iX_j&=X_jT_i, \quad j\neq i, i+1, \label{PX3}\\
		T_iC_i=C_{i+1}T_i, T_iC_{i+1}&=C_iT_i-\epsilon(C_i-C_{i+1}),T_iC_j=C_jT_i,\quad j\neq i, i+1, \label{PC}\\
		X_iC_i=C_iX^{-1}_i, X_iC_j&=C_jX_i,\quad 1\leq i\neq j\leq n.
		\label{XC}
	\end{align}
	
	For $\alpha=(\alpha_1,\ldots,\alpha_n)\in\mathbb{Z}^n$ and
	$\beta=(\beta_1,\ldots,\beta_n)\in\mathbb{Z}_2^n$, we set
	$X^{\alpha}=X_1^{\alpha_1}\cdots X_n^{\alpha},$
	$C^{\beta}=C_1^{\beta_1}\cdots C_n^{\beta_n}$ and define
	$\supp(\beta):=\{1 \leq k \leq n:\beta_{k}=\bar{1}\},$ $|\beta|:=\Sigma_{i=1}^{n}\beta_i \in \mathbb{Z}_2.$ \label{pag:suppot and sum}
	Then we have the
	following.
	\begin{lem}\cite[Theorem 2.2]{BK}\label{lem:PBWNon-dege}
		The set $\{X^{\alpha}C^{\beta}T_w~|~ \alpha\in\mathbb{Z}^n,
		\beta\in\mathbb{Z}_2^n, w\in \mathfrak{S}_n\}$ forms an ${\rm R}$-basis of $\mHcn.$
	\end{lem}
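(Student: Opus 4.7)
The plan is to establish the claim in two steps: spanning and linear independence. For spanning, I would argue that every word in the generators $T_i,\,X_j^{\pm 1},\,C_k$ can be rewritten as an ${\rm R}$-linear combination of monomials of the form $X^\alpha C^\beta T_w$. The strategy is to push all $X^{\pm 1}$'s to the left and all $T_i$'s to the right. Relation (\ref{XC}) lets one pull $X_i$'s past $C_j$'s (inverting $X_i$ when $j=i$); relation (\ref{PC}) pulls $C$'s past $T$'s, producing correction terms with a strictly shorter $T$-factor; relations (\ref{PX1})--(\ref{PX3}) pull $X$'s past $T$'s, producing correction terms whose $T$-factor is strictly shorter (together with some Clifford corrections from (\ref{PX1})--(\ref{PX2})). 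By induction on the pair (length of $w$ in $\mathfrak{S}_n$, total degree of the $X$-monomial), all such correction terms are themselves reducible. The braid relations (\ref{Braid}) together with the quadratic relation allow any product $T_{i_1}\cdots T_{i_\ell}$ to be expressed as a single $T_w$ via Matsumoto-type reduction, while (\ref{Poly}) and (\ref{Clifford}) consolidate the $X$'s and $C$'s respectively.

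For linear independence, the cleanest route is to build the free ${\rm R}$-module $M$ with basis $\{X^\alpha C^\beta T_w \mid \alpha \in \mathbb{Z}^n,\,\beta \in \mathbb{Z}_2^n,\,w \in \mathfrak{S}_n\}$ and construct a left action of $\mHcn$ on $M$ by specifying each generator's action by the same rewriting rules used in the spanning step: $X_j$ acts by left multiplication on the $X^\alpha$-factor, $C_k$ acts through the Clifford relations (\ref{Clifford}) combined with (\ref{XC}), and $T_i$ acts by the formulas dictated by (\ref{PX1})--(\ref{PX3}), (\ref{PC}), and (\ref{Braid}) once the factors in front are rearranged. One then checks directly that these operators satisfy every defining relation of $\mHcn$, so $M$ becomes a left $\mHcn$-module. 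Evaluating $a\cdot (X^0 C^0 T_e) \in M$ for $a \in \mHcn$ produces an inverse to the natural surjection $M \twoheadrightarrow \mHcn$, which forces the proposed monomials to be ${\rm R}$-linearly independent.

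The main obstacle is verifying the braid and mixed relations for the explicit $T_i$-action in the linear independence step: the $\epsilon$-deformation in (\ref{PX1})--(\ref{PX2}) contains Clifford correction terms of the form $C_iC_{i+1}X_i$, and one must carefully track how these interact when $T_iT_{i+1}T_i$ is applied to a basis element. An alternative that sidesteps most of this bookkeeping is a Bergman diamond lemma argument on the rewriting system given by the defining relations: one verifies that all critical overlaps among the rules (e.g.\ $T_i X_i C_i$, $T_i T_{i+1} T_i$, $X_i C_i C_j$) resolve uniquely, which simultaneously yields spanning and linear independence. Either way, the result matches \cite[Theorem 2.2]{BK}.
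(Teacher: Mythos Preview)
The paper does not prove this lemma; it simply cites \cite[Theorem~2.2]{BK} without further argument. Your outline is the standard two-step PBW strategy (straightening for spanning, and either a faithful-module construction or a diamond-lemma argument for independence), which is precisely how such results are established in the literature, including in \cite{BK}; so your proposal is correct and in line with the cited source.
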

	
	Let $\mathcal{A}_n$ \label{pag:subalg An} be the subalgebra generated by even generators $X_1^{\pm 1},\ldots,X_n^{\pm 1}$ and odd generators $C_1,\ldots,C_n$. By Lemma~\ref{lem:PBWNon-dege}, $\mathcal{A}_n$ actually can be identified with the superalgebra generated by even generators $X^{\pm 1}_1,\ldots,X^{\pm 1}_n$ and odd generators $C_1,\ldots,C_n$ subject to relations \eqref{Poly}, \eqref{Clifford}, \eqref{XC}. {\bf Clifford algebra} $\mathcal{C}_n$ \label{pag:Clifford algebra} is the subalgebra of $\mathcal{A}_n$ generated by  odd generators $C_1,\ldots,C_n$ subject to relations \eqref{Clifford}.
	
Now we quickly review the representation theory of Clifford superalgebra. In the rest of this subsection, we assume that
${\rm R}=\mathbb{K}$ is the algebraically closed field of characteristic different from $2$. For any $a\in \mathbb{K}$, we fix a solution of the equation $x^2=a$ and denote it by $\sqrt{a}$.

Let $A$ be any algebra and $a_1,a_2,\ldots,a_p\in A$, we define the ordered product \label{pag:ordered product} as
$$\overrightarrow{\prod_{i=1,2,\ldots, p}}a_i:=a_1 a_2 \ldots a_p.$$ Then we have the following.
\begin{lem}\cite[Lemma 2.4]{LS2}\label{lem:clifford rep}
	
%	Let $n\in \N=\{1,2,\ldots\}$.
%	
	(1)  $\mathcal{C}_n$ is a simple superalgebra with the unique simple (super)module of type $\texttt{Q}$ if $n$ is odd, of type $\texttt{M}$ if $n$ is even. Let
	$$I_n:=\begin{cases}
		\{1\}, &\text{if $n=1$;}\\
		\Biggl\{2^{-\lfloor n/2 \rfloor}\cdot\overrightarrow{\prod}_{k=1,\cdots,{\lfloor n/2 \rfloor}}(1+(-1)^{a_k} \sqrt{-1}C_{2k-1}C_{2k})\Biggm|a_k\in\Z_2,\,1\leq k\leq {\lfloor n/2 \rfloor} \Biggr\}, &\text{if $n>1$,}
	\end{cases}$$
	where $\lfloor n/2 \rfloor$ \label{pag:round down} denotes the greatest integer less than or equal to $n/2.$
	Then the set $I_n$ forms a complete set of super primitive idempotents for $\mathcal{C}_n.$

	(2) Let $\gamma_1,\gamma_2\in I_n$. There is a unique monomial of the form $C^{b_1}_1C^{b_3}_3\cdots C^{b_{2{\lfloor n/2 \rfloor}-1}}_{2{\lfloor n/2 \rfloor}-1}$, where $b_1,b_3,\cdots, b_{2{\lfloor n/2 \rfloor}-1}\in \Z_2$ such that $\gamma_1,\gamma_2$ are conjugate via $C^{b_1}_1C^{b_3}_3\cdots C^{b_{2{\lfloor n/2 \rfloor}-1}}_{2{\lfloor n/2 \rfloor}-1}$. Moreover, if $n$ is odd, there is also a unique monomial of the form $C^{b_1}_1C^{b_3}_3\cdots C^{b_{2{\lfloor n/2 \rfloor}-1}}_{2{\lfloor n/2 \rfloor}-1}C_{2{\lceil n/2 \rceil}-1}$, where $b_1,b_3,\cdots, b_{2{\lfloor n/2 \rfloor}-1}\in \Z_2$ such that $\gamma_1,\gamma_2$ are conjugate via $C^{b_1}_1C^{b_3}_3\cdots C^{b_{2{\lfloor n/2 \rfloor}-1}}_{2{\lfloor n/2 \rfloor}-1}C_{2{\lceil n/2 \rceil}-1},$
where $\lceil n/2 \rceil$ \label{pag:round up} denotes the smallest integer greater than or equal to $n/2$.
	
\end{lem}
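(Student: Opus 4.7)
The plan is to verify that $I_n$ is a complete family of pairwise orthogonal primitive even idempotents summing to $1$, then apply the super Wedderburn theorem together with a dimension count to identify the type of the unique simple module. Part (2) then reduces to an explicit conjugation calculation based on how each $C_j$ interacts with a pair product $C_{2k-1}C_{2k}$.

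First I would establish two elementary identities: $(\sqrt{-1}\,C_{2k-1}C_{2k})^2 = 1$ for each $k$ (using $C_i^2 = 1$ together with $C_{2k-1}C_{2k} = -C_{2k}C_{2k-1}$), and the mutual commutativity of the pair products $C_{2k-1}C_{2k}$ and $C_{2k'-1}C_{2k'}$ for $k \neq k'$ (four anticommutations contribute $+1$). These imply that the elements of $I_n$ are mutually orthogonal even idempotents summing to $1$. To check primitivity, I would compute the corner $e_{\vec a}\mathcal{C}_n e_{\vec a}$ using the PBW basis $\{C^\beta\}$: if $\beta$ has odd restriction to some pair $\{2k-1,2k\}$, left-multiplication by $C^\beta$ produces $e_{\vec a'}C^\beta$ with $\vec a' \neq \vec a$ (since one $C_i$-left-multiplication flips the corresponding sign), and so $e_{\vec a} C^\beta e_{\vec a} = 0$ by orthogonality; otherwise $C^\beta$ commutes with $e_{\vec a}$ up to a scalar and $e_{\vec a}C^\beta e_{\vec a}\in \mathbb{K} e_{\vec a}$. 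For $n$ even, this makes the corner one-dimensional, and the count $|I_n|\cdot 1 = 2^{n/2}$ compared with $\dim\mathcal{C}_n = 2^n$, together with super Wedderburn, forces $\mathcal{C}_n \cong \mathcal{M}_{2^{n/2-1},2^{n/2-1}}$, with unique simple module of type \texttt{M}. For $n$ odd, the leftover generator $C_n$ commutes with every pair product, is odd, and squares to $1$, so $e_{\vec a}\mathcal{C}_n e_{\vec a}$ is spanned by $e_{\vec a}$ and $C_n e_{\vec a}$, making it super-isomorphic to $\mathcal{C}_1$; this shows $e_{\vec a}$ remains primitive as an even idempotent and identifies $\mathcal{C}_n \cong \mathcal{Q}_{2^{(n-1)/2}}$ with unique simple module of type \texttt{Q}.

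For Part (2), the key calculation is the conjugation action of a single $C_j$ on a pair product: for $j \notin \{2k-1,2k\}$ both anticommutations contribute a sign, giving $C_j\,C_{2k-1}C_{2k}\,C_j^{-1} = C_{2k-1}C_{2k}$, while for $j \in \{2k-1,2k\}$ a single anticommutation (together with $C_j^2 = 1$) gives $C_j\,C_{2k-1}C_{2k}\,C_j^{-1} = -C_{2k-1}C_{2k}$. Hence conjugation by a monomial $C_1^{b_1}C_3^{b_3}\cdots C_{2\lfloor n/2\rfloor-1}^{b_{2\lfloor n/2\rfloor-1}}$ sends $e_{\vec a} \mapsto e_{\vec a'}$ where $a'_k = a_k + b_{2k-1}$ in $\mathbb{Z}_2$, and the exponents $b_{2k-1}$ are thereby uniquely determined by the pair $(\gamma_1,\gamma_2)$. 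When $n$ is odd, $C_n$ also commutes with every pair product and hence with every $e_{\vec a}$, so appending $C_n$ to the monomial produces a second conjugating element of opposite parity with uniquely determined exponents. The main obstacle I anticipate is keeping the super structure clean in the type identification for odd $n$: one must distinguish super-primitive from ordinarily-primitive idempotents, which is delicate precisely because $e_{\vec a}\mathcal{C}_n e_{\vec a}\cong\mathcal{C}_1$ splits further as an ungraded algebra but is simple as a superalgebra, and this is exactly what underlies the appearance of type \texttt{Q}.
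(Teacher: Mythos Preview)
The paper does not supply its own proof of this lemma; it is quoted verbatim from \cite[Lemma 2.4]{LS2} and used as a black box. So there is nothing in the present paper to compare your argument against.

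That said, your outline is sound and is the standard route to the result. A couple of small points worth tightening: (i) your appeal to super Wedderburn to identify $\mathcal{C}_n$ with a single $\mathcal{M}_{p,p}$ or $\mathcal{Q}_p$ tacitly uses that $\mathcal{C}_n$ is \emph{simple} as a superalgebra, which you have not yet established at that point---the cleanest fix is to observe that your Part (2) conjugation calculation already shows all the $e_{\vec a}$ are conjugate, hence there is a single isomorphism class of simple module and the algebra is simple; (ii) in the even case you should also record why the simple module has superdimension $(2^{n/2-1},2^{n/2-1})$ rather than some other split, e.g.\ by noting that $C_1$ acts as an odd involution on $\mathcal{C}_n e_{\vec a}$, or by the standard inductive construction of the spin module. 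Neither of these is a real obstacle, and your handling of the delicate odd-$n$ case (corner $\cong \mathcal{C}_1$, primitive as an even idempotent but not as an ordinary one) is exactly right.
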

For $\undla\in\mathscr{P}^{\bullet,m}_{n}$ ($\bullet\in\{\mathsf{0},\mathsf{s},\mathsf{ss}\}$), we associate $\mt\in\Std(\undla)$
with a certain idempotent $\gamma_{\mt}\in\mathcal{C}_n,$ \label{pag:gamma t} which will be used in the rest of this paper.
\begin{align}\label{gamma t}
\gamma_{\mt}:=2^{-\lfloor t/2 \rfloor}\cdot\overrightarrow{\prod_{k=1,\cdots,{\lfloor t/2 \rfloor}}}\biggl(1+ \sqrt{-1}C_{{d(\mt,\mt^{\undla})}(i_{2k-1})}C_{{d(\mt,\mt^{\undla})}(i_{2k})}\biggr)\in\mathcal{C}_n,
\end{align}
where $t=\sharp \mathcal{D}_{\undla}.$
	
Now we recall the intertwining elements in $\mHcn$.
\label{pag:example}
	Given $1\leq i<n$, one can define the intertwining element $\tilde{\Phi}_i$ \label{pag:BK intertwining element} in $\mHcn$ as follows:
\begin{align}\label{intertwinNon-dege}
\tilde{\Phi}_i:=z^2_i T_i+\epsilon\frac{z^2_i}{X_i X^{-1}_{i+1}-1}-\epsilon\frac{z^2_i}{X_i X_{i+1}-1}C_i C_{i+1},
\end{align}
where $z_i:= X_i+X^{-1}_i-X_{i+1}-X^{-1}_{i+1}= X^{-1}_i(X_iX_{i+1}-1)(X_iX^{-1}_{i+1}-1).$

These elements satisfy the following properties (cf. \cite[(3.7),Proposition 3.1]{JN} and \cite[(4.11)-(4.15)]{BK})
	\begin{align}
		\tilde{\Phi}^2_i&=z^2_i\bigl(z^2_i-\epsilon^2 (X^{-1}_i X^{-1}_{i+1}(X_i X_{i+1}-1)^2-X^{-1}_iX_{i+1}(X_i X^{-1}_{i+1}-1)^2)\bigr)\label{Sqinter},\\
		\tilde{\Phi}_i X^\pm_i&=X^\pm_{i+1}\tilde{\Phi}_i, \tilde{\Phi}_iX^\pm_{i+1}=X^\pm_i\tilde{\Phi}_i,
		\tilde{\Phi}_i X^\pm_l=X^\pm_l\tilde{\Phi}_i \label{Xinter},\\
		\tilde{\Phi}_i C_i&=C_{i+1}\tilde{\Phi}_i, \tilde{\Phi}_i C_{i+1}=C_i \tilde{\Phi}_i,
		\tilde{\Phi}_iC_l=C_l\tilde{\Phi}_i \label{Cinter},\\
		\tilde{\Phi}_j \tilde{\Phi}_i&=\tilde{\Phi}_i \tilde{\Phi}_j,
		\tilde{\Phi}_i\tilde{\Phi}_{i+1}\tilde{\Phi}_i=\tilde{\Phi}_{i+1}\tilde{\Phi}_i \tilde{\Phi}_{i+1}\label{Braidinter}
	\end{align}
	for all admissible $i,j,l$ with $l\neq i, i+1$ and $|j-i|>1$.
	Define $\tilde{\Phi}_{w}:=\tilde{\Phi}_{i_1}\cdots \tilde{\Phi}_{i_p}$ for
	$w\in \mathfrak{S}_n$ and $w=s_{i_1}\cdots s_{i_p}$ is a reduced expression.

Jones and Nazarov (\cite[(3.6)]{JN}) also introduced the intertwining elements of the following version, \label{pag:JN intertwining element}
	\begin{align}\label{universal-Phi}
\Phi_i:= z_{i}^{-2} \tilde{\Phi}_i&=T_{i}+\frac{\epsilon}{X_{i}X_{i+1}^{-1}-1}-\frac{\epsilon}{X_{i}X_{i+1}-1}C_{i}C_{i+1}\\
&\in \mathbb{K}(X_1,\ldots,X_n)\otimes_{\mathbb{K}[X_1^{\pm 1},\ldots, X_n^{\pm 1}]} \mHcn, \nonumber
    \end{align}
	for $i=1,\ldots,n-1.$
	For any $i=1,2,\ldots,n-1$ and $x,y \in \mathbb{K}^*$ satisfying $y\neq x^{\pm 1},$ let (\cite[(3.13)]{JN})
    \label{pag:Phi function}
	\begin{align}\label{Phi function}
		\Phi_i(x,y):=T_{i}+\frac{\epsilon}{x^{-1}y-1}-\frac{\epsilon}{xy-1}C_{i}C_{i+1} \in \mHcn.
	\end{align}

	For any pair of $(x,y)\in (\mathbb{K}^*)^2$ and $y\neq x^{\pm1}$, we consider the following idempotency condition on $(x,y)$
	\begin{align}\label{invertible}
		\frac{x^{-1}y}{(x^{-1}y-1)^2}+\frac{xy}{(xy-1)^2}=\frac{1}{\epsilon^2}.
	\end{align}
	Note that the equation \eqref{invertible} holds for the pair $(x,y)$ if and only if it holds for one of these four pairs $(x^{\pm 1},y^{\pm 1}).$

For any invertible $\iota \in \mathbb{K}$, we define
\label{pag:q-function and b-function}
\begin{align}\label{substitution0}
	\mathtt{q}(\iota):=2\frac{q\iota+(q\iota)^{-1}}{q+q^{-1}}, \quad \mathtt{b}_{\pm}(\iota):=\frac{\mathtt{q}(\iota)}{2}\pm \sqrt{\frac{\mathtt{q}(\iota)^2}{4}-1}.
\end{align}
Clearly, $\mathtt{b}_{\pm}(\iota)$ is the solution of equation $x+x^{-1}=\mathtt{q}(\iota),$ thus $\mathtt{b}_{+}(\iota)\mathtt{b}_{-}(\iota)=1.$
According to \cite{JN}, via the substitution
\begin{align}\label{substitute}
x+x^{-1}=2\frac{qu+q^{-1}u^{-1}}{q+q^{-1}}=\mathtt{q}(u),\qquad\qquad y+y^{-1}=2\frac{qv+q^{-1}v^{-1}}{q+q^{-1}}=\mathtt{q}(v)
\end{align}
the condition \eqref{invertible}  is  equivalent to the condition which states that $u,v$ satisfy one of the following four equations
\begin{align}\label{invertible2}
v=q^2u,\quad v=q^{-2}u,\quad v=u^{-1},\quad v=q^{-4}u^{-1}.
\end{align}

	\subsection{Cyclotomic Hecke-Clifford algebra $\mHfcn$}\label{basic-cyc-Non-dege}
	
	To define the cyclotomic Hecke-Clifford algebra $\mHfcn$ over ${\rm R},$ \label{pag:CHCA} we fix $m\geq 0,$ $\underline{Q}=(Q_1,Q_2,\ldots,Q_m)\in({\rm R^\times})^m$ \label{pag:Q-parameters} and take a $f=f(X_1)\in {\rm R}[X_1^\pm]$ satisfying \cite[(3.2)]{BK}.
%Since we are working over algebraically closed field $\mathbb{K}$,
It is noted in \cite{SW} that we only need to consider $f(X_1)\in {\rm R}[X_1^\pm]$ to be one of the following three forms:
 $$\begin{aligned}
 f=\begin{cases}
     f^{\mathsf{(0)}}_{\underline{Q}}=\prod_{i=1}^m \biggl(X_1+X^{-1}_1-\mathtt{q}(Q_i)\biggr), \\ % \mbox{if $r:=\deg(f)=2m$};
      f^{\mathsf{(s)}}_{\underline{Q}}=(X_1-1)\prod_{i=1}^m \biggl(X_1+X^{-1}_1-\mathtt{q}(Q_i)\biggr), \\  % \mbox{if $r:=\deg(f)=2m+1$};
     f^{\mathsf{(ss)}}_{\underline{Q}} = (X_1-1)(X_1+1)\prod_{i=1}^m \biggl(X_1+X^{-1}_1-\mathtt{q}(Q_i)\biggr).%& \mbox{if $r:=\deg(f)=2m+2$}
    \end{cases}
    \end{aligned}$$
In each case, the degree $r$ of the polynomial $f$ is $2m,\,2m+1,\,2m+2$ respectively. We will see that the different choices of $f\in \{f^{\mathsf{(0)}}_{\underline{Q}},\,f^{\mathsf{(s)}}_{\underline{Q}},\,f^{\mathsf{(ss)}}_{\underline{Q}}\}$ corresponds to different combinatorics $\mathscr{P}^{\mathsf{0},m}_{n},\mathscr{P}^{\mathsf{s},m}_{n},\mathscr{P}^{\mathsf{ss},m}_{n}$ respectively in the representation theory of $\mHfcn$.
	
	The non-degenerate cyclotomic Hecke-Clifford algebra $\mHfcn$ is defined as $$\mHfcn:=\mHcn/\mathcal{I}_f,
	$$ where $\mathcal{I}_f$ is the two sided ideal of $\mHcn$ generated by $f(X_1)$. The degree $r$ \label{pag:nondege level} of $f$ is called the level of $\mHfcn.$ We shall denote the images of $X^{\alpha}, C^{\beta}, T_w$ in the cyclotomic quotient $\mHfcn$ still by the same symbols. Then we have the following due to \cite{BK}.
	
	\begin{lem}\cite[Theorem 3.6]{BK}
		The set $\{X^{\alpha}C^{\beta}T_w~|~ \alpha\in\{0,1,\cdots,r-1\}^n,
		\beta\in\mathbb{Z}_2^n, w\in {\mathfrak{S}_n}\}$ forms an ${\rm R}$-basis of $\mHfcn$.
	\end{lem}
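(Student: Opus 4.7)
The plan is to establish spanning and linear independence separately, starting from the affine PBW basis of $\mHcn$ recorded in Lemma~\ref{lem:PBWNon-dege}.

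For spanning, I would first reduce powers of $X_1$. In each of the three admissible cases $f\in\{f^{\mathsf{(0)}}_{\underline{Q}},\,f^{\mathsf{(s)}}_{\underline{Q}},\,f^{\mathsf{(ss)}}_{\underline{Q}}\}$, a suitable power $X_1^{m}f(X_1)$ is an honest polynomial in $X_1$ of degree $r$ whose leading coefficient equals $1$ and whose constant term (namely $1$, $-1$, or $-1$ respectively) is invertible in ${\rm R}$, using that each $Q_i\in {\rm R}^\times$. The relation $f(X_1)=0$ in $\mHfcn$ then expresses both $X_1^r$ and $X_1^{-1}$ as ${\rm R}$-linear combinations of $\{1,X_1,\ldots,X_1^{r-1}\}$, so by iterating we obtain the desired reduction for every $X_1^k$, $k\in\mathbb{Z}$. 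I would next propagate this from $X_1$ to each $X_j$ by induction on $j$, using \eqref{PX1}--\eqref{PX3} together with \eqref{PC} and \eqref{XC} to commute $T_i$ past powers of $X_i, X_{i+1}$; each commutation produces ``correction terms'' that, when re-expanded in the PBW basis of Lemma~\ref{lem:PBWNon-dege}, are of strictly smaller word length in the symmetric-group generators, so an induction on $\ell(w)$ terminates and yields that every affine PBW element equals an ${\rm R}$-combination of the proposed set modulo $\mathcal{I}_f$.

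For linear independence, I would invoke (a non-degenerate analogue of) the cyclotomic Mackey decomposition of Brundan--Kleshchev to bound $\mathrm{rank}_{{\rm R}}\,\mHfcn$ above by $r^n\cdot 2^n\cdot n!$. Concretely, one can build a faithful $\mHfcn$-module as the induced module from a free rank-$r$ module over the Laurent quotient $\langle X_1^{\pm 1}\rangle/\langle f(X_1)\rangle$ tensored with the regular representation of the Clifford subalgebra $\mathcal{C}_n$ and of the finite Hecke subalgebra generated by $T_1,\ldots,T_{n-1}$, following the strategy of \cite[Theorem 3.6]{BK}. The cardinality of the proposed spanning set equals this upper bound, so the set is forced to be ${\rm R}$-linearly independent.

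The main obstacle is the bookkeeping inside the spanning argument: because \eqref{XC} gives $X_iC_i=C_iX_i^{-1}$, one cannot freely commute Clifford and polynomial generators, and the Laurent-polynomial nature of $f$ means large positive and large negative powers of each $X_i$ must be controlled simultaneously. Designing a well-founded ``polynomial complexity'' order on PBW monomials that strictly decreases under each reduction step (and is compatible with the $T_i$-conjugation used to transfer from $X_1$ to $X_i$) is where most of the real work lies. The linear independence half, while conceptually cleaner, rests on a careful construction of the cyclotomic Mackey decomposition in the non-degenerate super setting, and is the other substantial technical input.
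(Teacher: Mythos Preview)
The paper does not supply its own proof of this lemma; it simply cites \cite[Theorem 3.6]{BK} and moves on. So there is nothing in the paper to compare your argument against directly.

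That said, your outline is broadly in the spirit of the Brundan--Kleshchev proof, but be aware of one genuine subtlety in the spanning step. You propose to ``propagate'' the degree-$r$ reduction from $X_1$ to each $X_j$ by conjugating through the $T_i$. In this super setting that is not as straightforward as in the non-super cyclotomic Hecke case: the relations \eqref{PX1}--\eqref{PX2} mix $X_i$, $X_{i+1}$, and $C_iC_{i+1}$ together, so conjugating $f(X_1)$ by $T_1$ does \emph{not} simply yield $f(X_2)$ modulo lower terms. Brundan--Kleshchev handle this not by a direct polynomial-degree induction on each $X_j$ individually but by an inductive construction passing through the parabolic subalgebras $\mathcal{H}^f_c(k)\subset\mathcal{H}^f_c(k+1)$ and the Mackey decomposition (the very decomposition recalled in Lemma~\ref{BK. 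Lemma 3.8} of the present paper), which simultaneously gives the spanning set and the rank bound. Your ``polynomial complexity'' order idea may be workable, but as you yourself note, that is where all the real difficulty lies, and the proof in \cite{BK} sidesteps it by the inductive Mackey approach rather than by a direct monomial-reduction argument.
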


We set $Q_0=Q_{0_+}=1, Q_{0_-}=-1$. Now we can define the residues of boxes.
\begin{defn}
	Suppose  $\undla\in\mathscr{P}^{\bullet,m}_{n}$ with $\bullet\in\{\mathsf{0},\mathsf{s},\mathsf{ss}\}$ and $(i,j,l)\in \undla,$ we define the residue of box $(i,j,l)$ with respect to the parameter $\undQ=(Q_1,Q_2,\ldots,Q_m)\in({\rm R^\times})^m$ as follows:
	\label{pag:nondeg residue}
	\begin{equation}\label{eq:residue}
		\res(i,j,l):=Q_lq^{2(j-i)}\in {\rm R^\times}.
	\end{equation}
	If $\mathfrak{t}\in \Std(\undla)$ and $\mathfrak{t}(i,j,l)=k$, we set
	\begin{align}
		\res_\mathfrak{t}(k)&:=Q_lq^{2(j-i)}\in {\rm R^\times},\label{resNon-dege-1}\\
		\res(\mathfrak{t})&:=(\res_\mathfrak{t}(1),\cdots,\res_\mathfrak{t}(n))\in ({\rm R^\times})^m,\label{resNon-dege-2}
	\end{align}
	then the $\mathtt{q}$-sequence of $\mt$ is
	\begin{align}\label{resNon-dege-3}
		\mathtt{q}(\res(\mathfrak{t})):=(\mathtt{q}(\res_{\mathfrak{t}}(1)), \mathtt{q}(\res_{\mathfrak{t}}(2)),\ldots, \mathtt{q}(\res_{\mathfrak{t}}(n)))\in {\rm R}^m.
	\end{align}
\end{defn}

Recall $[n]=\{1,2,\ldots,n\}$ for $n\in\mathbb{N}.$ In the rest of this subsection, we assume that ${\rm R}=\mathbb{K}$ be the field. We will recall the separate condition \cite[Definition 3.9]{SW} on the choice of the parameters $\underline{Q}$ and $f=f^{(\bullet)}_{\underline{Q}}$ with $\bullet\in\{\mathsf{0},\mathsf{s},\mathsf{ss}\}$, where $r=\deg(f)$.
\begin{defn}\label{defn:separate}\cite[Definition 3.9]{SW}
Let $\bullet\in\{\mathsf{0},\mathsf{s},\mathsf{ss}\}$ and $\undQ=(Q_1,\ldots,Q_m)\in(\mathbb{K}^*)^m$.  Assume $\undla\in\mathscr{P}^{\bullet,m}_{n}$. Then $(q,\undQ)$ is said to be {\em separate} with respect to $\undla$ if for any $\mathfrak{t}\in \Std(\undla)$, the $\mathtt{q}$-sequence for $\mathfrak{t}$ defined via \eqref{resNon-dege-3} satisfies the following condition:
$$
\mathtt{q}(\res_{\mathfrak{t}}(k))\neq\mathtt{q}(\res_{\mathfrak{t}}(k+1)) \text{ for any } k=1,\ldots,n-1.
$$
\end{defn}

The separate condition holds for any $\underline{\mu} \in \mathscr{P}^{\bullet,m}_{n+1}$ can be reformulated via the condition $P^{(\bullet)}_{n}(q^2,\undQ)\neq 0$ (\cite[Proposition 3.11]{SW}), where $P^{(\bullet)}_{n}(q^2,\undQ)$ ($\bullet\in\{\mathsf{0},\mathsf{s},\mathsf{ss}\}$) \label{pag:nondege Pioncare poly} is an explicit (Laurent) polynomial on $(q,\undQ).$ Since we will not use the explicit expression of $P^{(\bullet)}_{n}(q^2,\undQ)$ in this paper, we skip the definition of $P^{(\bullet)}_{n}(q^2,\undQ)$ here.
\begin{prop}\label{separate formula}\cite[Proposition 3.11]{SW}
Let $n\geq 1,\,m\geq 0$,  $\undQ=(Q_1,\ldots,Q_m)\in(\mathbb{K}^*)^m$ and  $\bullet\in\{\mathsf{0},\mathsf{s},\mathsf{ss}\}$. Then $(q,\undQ)$ is separate with respect to $\underline{\mu}$ for any $\underline{\mu}\in\mathscr{P}^{\bullet,m}_{n+1}$ if and only if $P_n^{(\bullet)}(q^2,\undQ)\neq 0$.
\end{prop}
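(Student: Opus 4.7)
The plan is to unpack both sides in terms of coincidences among residues of boxes of multipartitions of $n+1$, and then check that the Poincar\'e polynomial $P_n^{(\bullet)}(q^2,\undQ)$ factors as a product of exactly the ``obstructing'' factors.

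First I would note that by \eqref{substitution0}, the equality $\mathtt{q}(\iota)=\mathtt{q}(\iota')$ holds if and only if $\iota'=\iota$ or $q^{2}\iota\iota'=1$. Hence, by Definition \ref{defn:separate}, $(q,\undQ)$ fails to be separate with respect to some $\undmu\in\mathscr{P}^{\bullet,m}_{n+1}$ if and only if there exists a standard tableau $\mt\in\Std(\undmu)$ and an index $1\leq k\leq n$ with either $\res_{\mt}(k)=\res_{\mt}(k+1)$ or $\res_{\mt}(k)\res_{\mt}(k+1)=q^{-2}$.

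Next I would extract the finite list of ``bad ordered pairs'' of residues $(r,r')$: those realised by some admissible adjacent pair of boxes $(\alpha,\alpha')$ in some $\undmu\in\mathscr{P}^{\bullet,m}_{n+1}$, meaning that there is an $\mt\in\Std(\undmu)$ with $\mt(\alpha)=k$ and $\mt(\alpha')=k+1$ for some $k$. Working case by case on $\bullet\in\{\mathsf{0},\mathsf{s},\mathsf{ss}\}$, one describes these pairs combinatorially from the relative positions of $\alpha$ and $\alpha'$ (same component or different components, same row, same column, shifted-diagonal neighbours, etc.), together with the additional residues coming from the conventions $Q_0=Q_{0_+}=1$ and $Q_{0_-}=-1$.

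Finally I would match this list with an explicit factorisation of $P_n^{(\bullet)}(q^2,\undQ)$ as a product (up to a nonvanishing unit) of linear factors of the form $(\res(\alpha)-\res(\alpha'))$ and $(q^{2}\res(\alpha)\res(\alpha')-1)$ indexed precisely by bad ordered pairs. Granted such a factorisation, $P_n^{(\bullet)}(q^2,\undQ)\neq 0$ is equivalent to no bad-pair equality being forced, which is exactly the separate condition for every $\undmu\in\mathscr{P}^{\bullet,m}_{n+1}$. The main obstacle is producing and verifying this explicit factorisation uniformly in the three cases $\mathsf{0},\mathsf{s},\mathsf{ss}$; the cases $\mathsf{s}$ and $\mathsf{ss}$ are more delicate because the shifted components introduce additional ``diagonal'' adjacencies and the auxiliary residues $\pm 1$, so the bookkeeping of which bad pairs actually occur and with what multiplicity requires care.
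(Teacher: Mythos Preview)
The paper does not prove this statement at all: Proposition~\ref{separate formula} is stated with an explicit citation to \cite[Proposition~3.11]{SW}, and the surrounding text even says that the definition of $P_n^{(\bullet)}(q^2,\undQ)$ is skipped because its explicit form is not needed. So there is no proof in the paper to compare your proposal against.

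That said, your outline is the natural strategy and matches how such results are established in the source \cite{SW}. Your reduction of $\mathtt{q}(\iota)=\mathtt{q}(\iota')$ to $\iota=\iota'$ or $q^2\iota\iota'=1$ is correct, and the idea of listing all ``bad'' ordered residue pairs that can occur for adjacent entries in a standard tableau, then matching them to the irreducible factors of $P_n^{(\bullet)}$, is exactly the content of the cited proposition. The honest acknowledgement that the bookkeeping in the $\mathsf{s}$ and $\mathsf{ss}$ cases is delicate is accurate; since the paper does not record the explicit polynomial, you would need to consult \cite{SW} directly to carry out the matching, so as written your argument remains a plan rather than a proof.
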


\begin{lem}\cite[Lemma 2.7]{LS2}\label{important condition1}
		Let $\undQ=(Q_1,\ldots,Q_m)\in(\mathbb{K}^*)^m$ and $\bullet\in\{\mathsf{0},\mathsf{s},\mathsf{ss}\}$. Suppose $P_n^{(\bullet)}(q^2,\undQ)\neq 0$.
		Then
	\begin{enumerate}
		\item
		 For any $\undla\in\mathscr{P}^{\bullet,m}_{n},$ $\mathfrak{t}\in\Std(\undla)$, we have  $\mathtt{b}_{\pm}(\res_{\mathfrak{t}}(k))\notin \{\pm 1\}$ for $k\notin \mathcal{D}_{\mathfrak{t}}$;
		\item For any $\undla,\underline{\mu} \in\mathscr{P}^{\bullet,m}_{n},$ $\mt\in\Std(\undla), \mt' \in \Std(\underline{\mu}),$
if $\mt\neq \mt',$ then we have $\mathtt{q}(\res(\mt))\neq \mathtt{q}(\res(\mt'))$;
		\item For any $\undla\in\mathscr{P}^{\bullet,m}_{n},$ $\mathfrak{t}\in\Std(\undla)$ and $k\in [n-1]$, the four pairs $(\mathtt{b}_{\pm}(\res_{\mathfrak{t}}(k)),\mathtt{b}_{\pm}(\res_{\mathfrak{t}}(k+1)))$ do not satisfy \eqref{invertible} if $k,k+1$ are not in the adjacent diagonals of $\mathfrak{t}$.
	\end{enumerate}
	\end{lem}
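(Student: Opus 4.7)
The plan is to deduce all three statements from the hypothesis $P_n^{(\bullet)}(q^2,\undQ)\neq 0$, which by Proposition~\ref{separate formula} is equivalent to $(q,\undQ)$ being separate with respect to every $\undmu\in\mathscr{P}^{\bullet,m}_{n+1}$. The unifying strategy is to negate each conclusion and construct an explicit $\undmu\in\mathscr{P}^{\bullet,m}_{n+1}$ together with a standard tableau $\mfku\in\Std(\undmu)$ exhibiting two consecutive entries of equal $\mathtt{q}$-residue, contradicting separateness.

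For (2), I would proceed by induction on $n$. Given $\mt\neq\mt'$ with $\mathtt{q}(\res(\mt))=\mathtt{q}(\res(\mt'))$, let $k$ be least with $\mt^{-1}(k)\neq(\mt')^{-1}(k)$; by minimality, the restrictions $\mt\downarrow_{k-1}=\mt'\downarrow_{k-1}$ agree on a common shape $\underline{\nu}\in\mathscr{P}^{\bullet,m}_{k-1}$ with two distinct addable boxes $\alpha,\alpha'$ of equal $\mathtt{q}$-residue. A routine check in each of the three diagram types (ordinary, strict, doubly strict) shows that two distinct addable boxes of a (multi-)shape are always simultaneously addable, so one can extend $\mt\downarrow_{k-1}$ to a standard tableau of size $n+1$ by placing $k$ at $\alpha$, $k+1$ at $\alpha'$, and filling the remaining entries arbitrarily. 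The resulting tableau has equal $\mathtt{q}$-residues at positions $k$ and $k+1$, contradicting separateness.

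For (1), first observe the equivalences $\mathtt{b}_{\pm}(\iota)\in\{\pm 1\}\Longleftrightarrow\mathtt{q}(\iota)\in\{\pm 2\}\Longleftrightarrow\iota\in\{\pm 1,\pm q^{-2}\}$. Assuming $k\notin\mathcal{D}_\mt$ yet $\res_\mt(k)\in\{\pm 1,\pm q^{-2}\}$ with $\mt^{-1}(k)=(i,j,l)$: for $\bullet\in\{\mathsf{s},\mathsf{ss}\}$ and $l\in\{0,0_\pm\}$, the equation $Q_lq^{2(j-i)}\in\{\pm 1,\pm q^{-2}\}$ with $Q_l\in\{\pm 1\}$ together with the shifted-diagram constraints forces $j=i$, placing $k$ in $\mathcal{D}_\mt$---a contradiction. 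In every other subcase, the residue coincidence lets me produce an addable box $\beta$ of some enlarged shape with $\mathtt{q}(\res(\beta))=\mathtt{q}(\res_\mt(k))$, and combining with (2) and the same extension trick yields a violation of separateness. For (3), I would translate \eqref{invertible} via \eqref{substitute} into the four relations \eqref{invertible2} between $u=\res_\mt(k)$ and $v=\res_\mt(k+1)$; direct inspection shows $v=q^{\pm 2}u$ corresponds to ordinary adjacent diagonals within a common component, while $v=u^{-1}$ and $v=q^{-4}u^{-1}$ arise only in types $\bullet\in\{\mathsf{s},\mathsf{ss}\}$ and capture the shifted-diagonal symmetry of the $0$-component(s). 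Any residue relation that does not fit the adjacency hypothesis then permits an extension argument as in (2), contradicting separateness.

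The main obstacle is the case analysis needed for (1) and (3): each algebraic residue coincidence must be matched against a geometric configuration of boxes in the (shifted) multipartition, and the subcases not directly eliminated require explicit construction of an extension to $\mathscr{P}^{\bullet,m}_{n+1}$. A secondary technical point, used throughout, is the simultaneous-addability of two distinct addable boxes, which must be verified uniformly across the ordinary, strict, and doubly-strict regimes.
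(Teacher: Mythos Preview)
This lemma is quoted from \cite[Lemma~2.7]{LS2} and the present paper gives no proof of it, so there is no argument here to compare against. Your strategy of reducing everything to the separateness condition via Proposition~\ref{separate formula} is the natural one, and your treatment of part~(2) is essentially complete: find the first place two tableaux diverge, observe the two distinct addable boxes of equal $\mathtt q$-residue, add them in consecutive positions, and extend to size $n+1$.

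Your sketches for (1) and (3), however, leave real work undone. In (1), the claim that $Q_l q^{2(j-i)}\in\{\pm 1,\pm q^{-2}\}$ with $Q_l\in\{\pm1\}$ and $j>i$ ``forces $j=i$'' is false as stated: it only forces $q$ to satisfy a low-order root-of-unity relation, and you must then argue that such a relation already violates $P_n^{(\bullet)}(q^2,\undQ)\neq 0$. The fallback plan (``produce an addable box $\beta$ of some enlarged shape with the same $\mathtt q$-residue'') is not explained---you need to say which shape and why the box is addable. Likewise in (3), after translating \eqref{invertible} into the four relations \eqref{invertible2}, the phrase ``any residue relation that does not fit the adjacency hypothesis permits an extension argument'' hides the entire content: for each of the four cases $v\in\{q^{\pm2}u,\,u^{-1},\,q^{-4}u^{-1}\}$ you must show that if $k,k+1$ are not in adjacent diagonals then either the relation is impossible or it produces two boxes of equal $\mathtt q$-residue that can be placed consecutively in some tableau of size $n+1$. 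This case analysis is where the lemma actually lives, and your proposal does not carry it out.
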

%	\begin{proof}
%		The statement (1) is \cite[Lemma 3.13(1)]{SW} and (2) is \cite[Lemma 3.15]{SW}. The statement (3) follows from \cite[Lemma 3.13(3)]{SW} and the fact that \eqref{invertible} is equivalent to \eqref{invertible2} via the substitution \eqref{substitute}. This completes our proof.
%	\end{proof}
	The following is key in the computation of Frobenius form $\tau_{r,n}$ on seminormal basis.
	\begin{prop}\label{compare eigenvalue}
			Let $\undQ=(Q_1,\ldots,Q_m)\in(\mathbb{K}^*)^m$ and $\bullet\in\{\mathsf{0},\mathsf{s},\mathsf{ss}\}$. Suppose $P_n^{(\bullet)}(q^2,\undQ)\neq 0$ and  $\undla\in\mathscr{P}^{\bullet,m}_{n-1}$. Then
				\begin{enumerate}
					\item for any $\gamma_1\neq \gamma_2\in \Rem(\undla)$, we have $\mathtt{q}(\res(\gamma_1))\neq \mathtt{q}(\res(\gamma_2))$;
					\item for any $\gamma_1\in \Add(\undla),\,\gamma_2\in \Rem(\undla)$, we have $\mathtt{q}(\res(\gamma_1))\neq \mathtt{q}(\res(\gamma_2))$;
					\item for any $\gamma_1\neq \gamma_2\in \Add(\undla)$, we have $\mathtt{q}(\res(\gamma_1))\neq \mathtt{q}(\res(\gamma_2))$.
				\end{enumerate}
		\end{prop}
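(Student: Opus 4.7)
The plan is to reduce each of the three assertions to the separation condition, which by Proposition \ref{separate formula} holds for every standard tableau of size $n+1$ under the hypothesis $P_n^{(\bullet)}(q^2,\undQ)\neq 0$. The common idea is: in each case, construct a standard tableau $\mt''$ of some multipartition in $\mathscr{P}^{\bullet,m}_{n+1}$ in which the two boxes $\gamma_1,\gamma_2$ receive consecutive integer labels $k_0+1$ and $k_0$ respectively. Separation at $k=k_0$ then gives $\mathtt{q}(\res_{\mt''}(k_0))\neq\mathtt{q}(\res_{\mt''}(k_0+1))$, which by \eqref{resNon-dege-1} unpacks to $\mathtt{q}(\res(\gamma_2))\neq\mathtt{q}(\res(\gamma_1))$, as desired.

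I would handle the three cases as follows. For (3): set $\undmu:=\undla\cup\{\gamma_1,\gamma_2\}\in\mathscr{P}^{\bullet,m}_{n+1}$. Distinct addable boxes of $\undla$ remain simultaneously addable, so $\undmu$ is a valid $\bullet$-multipartition, and both $\gamma_1,\gamma_2$ are removable corners of $\undmu$. Choose any $\mt''\in\Std(\undmu)$ with $\mt''(\gamma_2)=n$ and $\mt''(\gamma_1)=n+1$, and apply separation at $k_0=n$. For (2): put $\undmu:=\undla\cup\gamma_1\in\mathscr{P}^{\bullet,m}_{n}$. Since $\gamma_1$ (a newly added corner of $\undmu$) and $\gamma_2$ (a corner of $\undla$, unaffected by adding $\gamma_1\neq\gamma_2$) are both removable corners of $\undmu$, construct $\mt'\in\Std(\undmu)$ with $\mt'(\gamma_2)=n-1$ and $\mt'(\gamma_1)=n$; then extend $\mt'$ to a size $n+1$ tableau $\mt''$ by filling an arbitrary addable corner of $\undmu$ with $n+1$. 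Separation at $k_0=n-1$ yields the claim. For (1): distinct removable corners of $\undla$ can be removed in either order, so there is $\mt\in\Std(\undla)$ with $\mt(\gamma_2)=n-2$ and $\mt(\gamma_1)=n-1$; extend $\mt$ to $\mt''$ of size $n+1$ by successively adding two addable boxes, and invoke separation at $k_0=n-2$.

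The main work (and potential obstacle) is purely combinatorial: one must verify, uniformly in $\bullet\in\{\mathsf{0},\mathsf{s},\mathsf{ss}\}$, that distinct addable (respectively removable) boxes of a $\bullet$-multipartition can be added (respectively removed) simultaneously, and that every $\bullet$-multipartition admits at least one addable box so that the extensions to size $n+1$ are always available. For ordinary multipartitions this is immediate; in the strict and super-strict settings one checks it directly from the definition of addable/removable box in terms of shifted Young diagrams, the key point being that two distinct inverse corners (respectively two distinct corners) of the $0$-th, $0_\pm$-th, or ordinary components never obstruct one another. Granted these routine combinatorial facts, the three assertions reduce cleanly to a single invocation of Proposition \ref{separate formula}.
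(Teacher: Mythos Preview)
Your proposal is correct and follows essentially the same strategy as the paper's proof: place the two boxes in question at adjacent positions of a standard tableau of size $n+1$ and invoke the separate condition via Proposition~\ref{separate formula}. The paper spells out only case~(3) (with $\gamma_1,\gamma_2$ at positions $n,n+1$ rather than your $n+1,n$, which is immaterial) and declares (1) and (2) similar; your treatment of those two cases is exactly the kind of routine variation the paper intends.
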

		\begin{proof}
			We only prove (3), since the proof of (1) and (2) are similar. We fix any $\mt\in\Std(\undla)$. Note that $\gamma_2\in\Add(\undla\cup\{\gamma_1\})$. Hence there exists $\ms\in\Std(\undla\cup\{\gamma_1,\,\gamma_2\})$, such that $\ms\downarrow_{n-1}=\mt$ and $\ms^{-1}(n)=\gamma_1,\,\ms^{-1}(n+1)=\gamma_2$. Since $P_n^{(\bullet)}(q^2,\undQ)\neq 0$, we can apply the separate condition to see that $\mathtt{q}(\res(\gamma_1))\neq \mathtt{q}(\res(\gamma_2))$.
			\end{proof}

When $q,Q_1,\ldots,Q_m$ are algebraically independent over $\Z$, and $\mathbb{K}$ is the algebraic closure of $\mathbb{Q}(q,Q_1,\ldots,Q_m)$, i.e., for the generic cyclotomic Hecke-Clifford algebra, the condition $P_n^{(\bullet)}(q^2,\undQ)\neq 0$ clearly holds.

Suppose that the condition $P^{(\bullet)}_{n}(q^2,\undQ)\neq 0$ ($\bullet\in\{\mathsf{0},\mathsf{s},\mathsf{ss}\}$) holds in $\mathbb{K}.$ Then for each $\undla\in\mathscr{P}^{\bullet,m}_{n},$ we can associate $\undla$ with a simple $\mHfcn$-module $\mathbb{D}(\undla),$ see
\cite[Theorem 4.5]{SW}. Furthermore, we have the following result.
\label{pag:nondege simple module}
	\begin{thm}\label{semisimple:non-dege}\cite[Theorem 4.10]{SW}
	%Suppose Condition \ref{important condition1} holds.
	Let $\undQ=(Q_1,Q_2,\ldots,Q_m)\in(\mathbb{K}^*)^m$.  Assume $f=f^{(\bullet)}_{\undQ}$ and $P^{(\bullet)}_{n}(q^2,\undQ)\neq 0$, with $\bullet\in\{\mathsf{0},\mathsf{s},\mathsf{ss}\}$. Then $\mHfcn$ is a (split) semisimple algebra and
	$$
	\{\mathbb{D}(\undla)\mid \undla\in\mathscr{P}^{\bullet,m}_{n}\}$$ forms a complete set of pairwise non-isomorphic irreducible $\mHfcn$-modules. Moreover,  $\mathbb{D}(\undla)$ is of type $\texttt{M}$ if and only if $\sharp \mathcal{D}_{\undla}$ is even and is of type  $\texttt{Q}$ if and only if $\sharp \mathcal{D}_{\undla}$  is odd.
	\end{thm}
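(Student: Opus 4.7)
The plan is to build, for each $\undla \in \mathscr{P}^{\bullet,m}_n$, an explicit seminormal representation $\mathbb{D}(\undla)$ of $\mHfcn$ following the Okounkov--Vershik philosophy. I would take the underlying vector space to have basis indexed by triples ${\rm T}=(\mt,\alpha_\mt,\beta_\mt)\in {\rm Tri}(\undla)$, declare the $X_i$ to act semisimply with eigenvalues $\mathtt{b}_\pm(\res_\mt(i))$ prescribed by the residues of $\mt$ through the substitution \eqref{substitute}, have the Clifford generators $C_i$ act through the idempotent decomposition of $\mathcal{C}_n$ recalled in Lemma \ref{lem:clifford rep} (with the distinguished idempotent $\gamma_\mt$ of \eqref{gamma t} absorbing the Clifford contribution on the $\mathcal{D}_\mt$-positions), and the Hecke generators $T_i$ defined on each simultaneous eigenspace via the Jones--Nazarov intertwiners $\Phi_i(x,y)$ from \eqref{Phi function}. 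The cyclotomic relation $f(X_1)=0$ is then automatic because the residues $\res_\mt(1)$ coming from the boxes $(1,1,l)$ are, by construction, precisely the roots of $f^{(\bullet)}_{\undQ}$ after the substitution \eqref{substitute}.

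Next I would verify that this assignment extends to a representation of $\mHcn$. The hypothesis $P_n^{(\bullet)}(q^2,\undQ)\neq 0$, via Proposition \ref{separate formula}, guarantees that $\mathtt{q}(\res_\mt(k))\neq \mathtt{q}(\res_\mt(k+1))$ for every standard $\mt$, so the denominators in $\Phi_i(x,y)$ are invertible; Lemma \ref{important condition1}(3) then says the idempotency condition \eqref{invertible} fails whenever $k,k+1$ are not in adjacent diagonals, which is exactly what is needed for $\tilde{\Phi}_i$ to act as an isomorphism only between the eigenspaces attached to admissible transpositions (Lemma \ref{admissible transposes}). The Hecke quadratic relation and cross relations \eqref{PX1}--\eqref{XC} are enforced by the explicit shape of $\Phi_i(x,y)$ together with the intertwining identities \eqref{Xinter}, \eqref{Cinter}.

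For distinctness and completeness of the family, Lemma \ref{important condition1}(2) already asserts that different $\mt$ (and hence different $\undla$) produce distinct $\mathtt{q}$-sequences, so the simultaneous $X$-spectra separate the $\mathbb{D}(\undla)$'s. Irreducibility on each $\undla$ follows because the joint eigenspaces are one-dimensional up to the action of the residual Clifford subalgebra generated by $\{C_{d(\mt,\mt^{\undla})(i)}\mid i\in \mathcal{D}_{\mt^{\undla}}\}$, on which $\mathcal{C}_{\sharp \mathcal{D}_\undla}$ acts faithfully, while the intertwiners $\tilde{\Phi}_w$ connect all eigenspaces as $\mt$ varies. The type dichotomy drops out of Lemma \ref{lem:clifford rep}(1): this residual Clifford algebra is simple of type \texttt{Q} precisely when $\sharp\mathcal{D}_{\undla}$ is odd, and of type \texttt{M} otherwise. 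Semisimplicity is then clinched by matching $\sum_{\undla}(\dim_{\texttt{M}/\texttt{Q}}\mathbb{D}(\undla))^2$ against $\dim \mHfcn = r^n\cdot 2^n\cdot n!$ via the known PBW basis.

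The hard part will be verifying the braid relation $T_iT_{i+1}T_i=T_{i+1}T_iT_{i+1}$ on the seminormal basis. One must track carefully the sign conventions in the Clifford idempotents $\gamma_\mt$ from \eqref{gamma t} under permutation of the triples, and check that the intertwiner product $\tilde{\Phi}_i\tilde{\Phi}_{i+1}\tilde{\Phi}_i$ agrees with $\tilde{\Phi}_{i+1}\tilde{\Phi}_i\tilde{\Phi}_{i+1}$ after renormalization by the scalars $z_i^2$ from \eqref{Sqinter}. This is where the super-analog genuinely complicates the classical Young argument, since the term $-\epsilon(X_iX_{i+1}-1)^{-1}C_iC_{i+1}$ in $\Phi_i$ interacts nontrivially with the idempotent $\gamma_\mt$ and is precisely what forces the distinct combinatorics of $\mathscr{P}^{\mathsf{0},m}_n$, $\mathscr{P}^{\mathsf{s},m}_n$, $\mathscr{P}^{\mathsf{ss},m}_n$ to arise according to the choice of $f\in\{f^{(\mathsf{0})}_{\undQ},f^{(\mathsf{s})}_{\undQ},f^{(\mathsf{ss})}_{\undQ}\}$.
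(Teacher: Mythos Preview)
The paper does not prove this theorem; it is quoted verbatim from \cite[Theorem~4.10]{SW} and used as a black box. So there is no ``paper's own proof'' to compare against. What the paper does provide, in Section~\ref{Seminormal form} (drawing on \cite{LS2}), is exactly the seminormal basis of $\mathbb{D}(\undla)$ you describe: the basis indexed by ${\rm Tri}(\undla)$, the $X_i$-action \eqref{X eigenvalues}, the Clifford action \eqref{Caction} through $\gamma_\mt$, and the $T_i$-action via $\Phi_i(x,y)$. Your outline is therefore not merely a plausible strategy but the actual construction carried out in \cite{SW} and refined in \cite{LS2}, and it matches the structure the paper later relies on.

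Your plan is correct in its architecture. Two small remarks. First, the dimension count you propose for semisimplicity is the standard route, but note that in the super setting one must count $\dim\mathbb{D}(\undla)^2$ for type \texttt{M} blocks and $2\dim\mathbb{D}(\undla)^2$ for type \texttt{Q} blocks (since $\mathcal{Q}_n$ has dimension $2n^2$ while its simple module has dimension $2n$); the identity $\sum_{\undla} 2^{d_\undla}\dim\mathbb{D}(\undla)^2 = r^n 2^n n!$ is the one that needs checking. Second, your identification of the braid relation as the crux is accurate, but in practice \cite{JN,SW} handle this not by a direct case analysis on $\gamma_\mt$ but by first establishing \eqref{Braidinter} for the universal intertwiners $\tilde{\Phi}_i$ in $\mHcn$ and then specializing; the interaction with $\gamma_\mt$ enters only when checking that the action descends to the quotient by the idempotent relations, which is a separate (and easier) verification.
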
			
	In the next Section, we shall introduce $\mathbb{D}(\undla)$ in details, with a basis given, see Proposition \ref{actions of generators on L basis}.
    By Theorem \ref{semisimple:non-dege}, we have the following $\mHfcn$-module isomorphism:
	$$\mHfcn\cong\bigoplus_{\undla\in\mathscr{P}^{\bullet,m}_{n}}\mathbb{D}(\undla)^{\oplus 2^{n-\bigl\lceil\frac{\sharp\mathcal{D}_{\mt^{\undla}}}{2}\bigr\rceil}\sharp\Std(\undla) }=\bigoplus_{\undla\in\mathscr{P}^{\bullet,m}_{n}}\mathbb{D}(\undla)^{\oplus 2^{n-\sharp\left(\mathcal{OD}_{\mt^{\undla}}\right)}\sharp\Std(\undla) }.$$
	So the block decomposition is
	$$\mHfcn=\bigoplus_{\undla \in \mathscr{P}^{\bullet,m}_{n}} B_{\undla},$$ and for each $\undla \in \mathscr{P}^{\bullet,m}_{n}$,
we have
$$B_{\undla}
%\cong \mathbb{D}(\undla)^{\oplus 2^{n-\bigl\lceil\frac{\sharp\mathcal{D}_{\mt^{\undla}}}{2}\bigr\rceil}|\Std(\undla)| }
\cong \mathbb{D}(\undla)^{\oplus 2^{n-\sharp\left(\mathcal{OD}_{\mt^{\undla}}\right)}\sharp\Std(\undla) }$$
as $B_{\undla}$-modules.

\section{Seminormal forms}\label{Seminormal form}
 Throughout this section, {\bf except for the last subsection}, we shall fix the parameter $\undQ=(Q_1,Q_2,\ldots,Q_m)\in(\mathbb{K}^*)^m$ and $f=f^{(\bullet)}_{\undQ}$ with $P^{(\bullet)}_{n}(q^2,\undQ)\neq 0$ for $\bullet\in\{\mathsf{0},\mathsf{s},\mathsf{ss}\}.$ Accordingly, we define the residues of boxes in the young diagram $\undla$ via \eqref{eq:residue} as well as $\res(\mathfrak{t})$ for each $\mathfrak{t}\in\Std(\undla)$ with $\undla\in\mathscr{P}^{\bullet,m}_{n}$ with $m\geq 0.$
\subsection{Simple modules} \label{Non-dege-simplemodule}
In this subsection, we fix $\undla\in\mathscr{P}^{\bullet,m}_{n}$ for $\bullet\in\{\mathsf{0},\mathsf{s},\mathsf{ss}\}.$

\label{pag:nubetak}
\begin{defn}	
Let $\beta=(\beta_1,\ldots,\beta_n) \in \mathbb{Z}_{2}^{n}.$
\begin{enumerate}
\item For $k\in [n],$ we define
\begin{align*}
\nu_{\beta}(k):=
		\begin{cases}
			-1, & \text{ if } \beta_k=\bar{1}, \\
			1, & \text{ if } \beta_k=\bar{0},
		\end{cases}
\qquad
\delta_{\beta}(k):=\frac{1-\nu_{\beta}(k)}{2}=
			\begin{cases}
				1, & \text{ if } \beta_k=\bar{1}, \\
				0, & \text{ if } \beta_k=\bar{0}.
			\end{cases}
\end{align*}
\label{pag:deltabetak}

\item For $0\leq i \leq n+1,$ we define
      $$\quad |\beta|_{<i}:=\sum_{1\leq k <i}\beta_k,\quad |\beta|:=|\beta|_{<n+1}.$$
      Similarly, we can also define $|\beta|_{\leq i},$ $|\beta|_{> i}$ and $|\beta|_{\geq i}.$
\end{enumerate}
\end{defn}

\begin{defn}  For any $i\in [n], \mt\in \Std(\undla),$ we define
             \label{pag:nondege eigenvalues}
			 $$\mathtt{b}_{\mt,i}:=\mathtt{b}_{-}(\res_{\mt}(i))\in\mathbb{K}^*.$$
            For any $i\in [n-1]$, we define
			 \begin{align}
			 	\delta(s_i\mt)
			 	:=\begin{cases}
			 		1, & \text{ if } s_i\mt \in \Std(\undla), \\
			 		0, & \text{ otherwise}.\nonumber
			 	\end{cases}
			 \end{align} and
            \label{pag:nondege coeffi cti}
			\begin{align}\label{nondege coeffi cti}
				\mathtt{c}_{\mt}(i):=1-\epsilon^2 \biggl(\frac{\mathtt{b}_{\mt,i}^{-1}\mathtt{b}_{\mt,i+1}}{(\mathtt{b}_{\mt,i}^{-1}\mathtt{b}_{\mt,i+1}-1)^2}
				+\frac{\mathtt{b}_{\mt,i}\mathtt{b}_{\mt,i+1}}{(\mathtt{b}_{\mt,i}\mathtt{b}_{\mt,i+1}-1)^2}\biggr)\in \mathbb{K}.
			\end{align}
			\end{defn}
			Since $\mt \in \Std(\undla),$ $\mathtt{b}_{\mt,i}\neq \mathtt{b}_{\mt,i+1}^{\pm 1}$ by Definition \ref{defn:separate} and Proposition \ref{separate formula}, which immediately implies that $\mathtt{c}_{\mt}(i)$ is well-defined. If $s_i$ is admissible with respect to $\mt$, i.e., $\delta(s_i\mt)=1$, then $\mathtt{c}_{\mt}(i)\in \mathbb{K}^{*}$ by
		the third part of Lemma \ref{important condition1}. It is clear that $\mathtt{c}_{\mt}(i)=\mathtt{c}_{s_i\mt}(i).$	

\begin{prop}\cite[Theorem 4.8]{LS2}\label{actions of generators on L basis}
The simple $\mHfcn$-supermodule $\mathbb{D}(\undla)$ has a $\mathbb{K}$-basis of the form
$$\bigsqcup_{\mt\in \Std(\undla)}\Biggl\{C^{\beta_{\mt}}C^{\alpha_{\mt}}v_{\mt}\biggm|\begin{matrix}\beta_{\mt} \in \Z_2([n]\setminus \mathcal{D}_{\mt})  \\
					\alpha_{\mt}\in \Z_2(\mathcal{OD}_{\mt})
					\end{matrix}\Biggr\},$$
where the parity $|v_{\mt}|=\bar{0},$ and the actions of $X_i$ and $C_j$ $(i,j\in[n])$ on $\mHfcn$ are given in the following.
					
			Let $\mt\in\Std(\undla),\,\beta_{\mt}\in\Z_2([n]\setminus \mathcal{D}_{\mt})$ and $\alpha_{\mt}\in \Z_2(\mathcal{OD}_{\mt})$.
			\begin{enumerate}
				\item For each $i\in [n],$ we have
				\begin{align}\label{X eigenvalues}
					X_i\cdot C^{\beta_{\mt}}C^{\alpha_{\mt}}v_{\mt}
					=\mathtt{b}_{\mt,i}^{-\nu_{\beta_{\mt}}(i)}C^{\beta_{\mt}}C^{\alpha_{\mt}}v_{\mt}.
				\end{align}
				\item We have $\gamma_{\mt} v_{\mt}=v_{\mt}.$
				\item For each $i\in [n],$ we have
				\begin{align}
					&C_i\cdot C^{\beta_{\mt}}C^{\alpha_{\mt}}v_{\mt} \label{Caction}\\
					&=\begin{cases}
						(-1)^{|\beta_{\mt}|_{<i}} C^{\beta_{\mt}+e_i}C^{\alpha_{\mt}}v_{\mt}, & \text{ if } i\in [n]\setminus \mathcal{D}_{\mt}, \\
						(-1)^{|\beta_{\mt}|+|\alpha_{\mt}|_{<i}} C^{\beta_{\mt}}C^{\alpha_{\mt}+e_{i}}v_{\mt}, & \text{ if  $i={d(\mt,\mt^{\undla})}(i_p) \in \mathcal{D}_{\mt}$ ,where $p$ is odd},\\
								(-\sqrt{-1})(-1)^{|\beta_{\mt}|+|\alpha_{\mt}|_{\leq {d(\mt,\mt^{\undla})}(i_{p-1})}}&\\
							\qquad\qquad	\cdot C^{\beta_{\mt}}C^{\alpha_{\mt}+e_{{d(\mt,\mt^{\undla})}(i_{p-1})}}v_{\mt}, &\text{ if  $i={d(\mt,\mt^{\undla})}(i_p) \in \mathcal{D}_{\mt}$ ,where $p$ is even}.\nonumber
					\end{cases}
				\end{align}
%				\item For each $i\in [n-1],$ we have
%				\begin{align}
%					&T_i\cdot C^{\beta_{\mt}}C^{\alpha_{\mt}}v_{\mt} \label{Taction Non-dege}\\
%					=&-\frac{\epsilon}{\mathtt{b}_{\mt,i}^{-\nu_{\beta_{\mt}}(i)}\mathtt{b}_{\mt,i+1}^{\nu_{\beta_{\mt}}(i+1)}-1} C^{\beta_{\mt}}C^{\alpha_{\mt}}v_{\mt} \nonumber\\
%					&	\qquad\qquad  +\frac{\epsilon}{\mathtt{b}_{\mt,i}^{\nu_{\beta_{\mt}}(i)}\mathtt{b}_{\mt,i+1}^{\nu_{\beta_{\mt}}(i+1)}-1}R(i,\beta_\mt, \alpha_\mt, \mt)\nonumber\\
%					&	\qquad\qquad\qquad +\delta(s_i\mt)(-1)^{\delta_{\beta_{\mt}}(i)\delta_{\beta_{\mt}}(i+1)+\delta_{\alpha_{\mt}}(i)\delta_{\alpha_{\mt}}(i+1)}\sqrt{\mathtt{c}_{\mt}(i)}C^{s_i \cdot \beta_{\mt}}C^{s_i \cdot \alpha_{\mt}}v_{s_i\mt}.\nonumber
%				\end{align}
%				
			
			\end{enumerate}
		\end{prop}

	\subsection{Primitive idempotents of $\mHfcn$}\label{primitiveidem}
%	In this subsection, we shall use Proposition \ref{actions of generators on L basis} to give a complete set of (super) primitive idempotents of $\mHfcn$.
%		
%	The following definition is important in our construction of idempotents.
    \label{pag:Tri}
	\begin{defn}\label{Tri}
For $a\in \Z_2,\,\undla\in\mathscr{P}^{\bullet,m}_{n}$ with $\bullet\in\{\mathsf{0},\mathsf{s},\mathsf{ss}\},$  we define
		$${\rm Tri}_{a}(\undla):=\bigsqcup_{\mt\in \Std(\undla)}\{\mt \}\times  \Z_2(\mathcal{OD}_{\mt})_{a}\times  \Z_2([n]\setminus \mathcal{D}_{\mt}),$$
		and $${\rm Tri}(\undla):={\rm Tri}_{\bar{0}}(\undla)\sqcup {\rm Tri}_{\bar{1}}(\undla).$$
%	For $a\in \Z_2,$ we define
%	$${\rm Tri}_{a}(\mathscr{P}^{\bullet,m}_{n}):=\bigsqcup_{\undla\in \mathscr{P}^{\bullet,m}_{n}}{\rm Tri}_{a}(\undla),$$
%	and $${\rm Tri}(\mathscr{P}^{\bullet,m}_{n}):={\rm Tri}_{\bar{0}}(\mathscr{P}^{\bullet,m}_{n})\sqcup {\rm Tri}_{\bar{1}}(\mathscr{P}^{\bullet,m}_{n}).$$
		\end{defn}

		Notice that ${\rm Tri}(\undla)={\rm Tri}_{0}(\undla)$ when $d_{\undla}=0.$ For any ${\rm T}=(\mt, \alpha_{\mt}, \beta_{\mt})\in {\rm Tri}_{\bar{0}}(\undla),$ we denote
		$${\rm T}_{a}=(\mt, \alpha_{\mt,a}, \beta_{\mt})\in {\rm Tri}_{a}(\undla),\quad a\in \mathbb{Z}_{2},$$
		when $d_{\undla}=1.$
%		is nothing but the index set of the basis of $\mathbb{D}(\undla)$ in Theorem \ref{actions of generators on L basis}, and ${\rm Tri}(\undla)_{\bar{0}}$ will index our primitive idempotents of the block with respect to $\undla$ specially.

Now we can define the primitive idempotents.
\label{pag:primitive idempotents and blocks}
		\begin{defn}\label{primitive idempotents and blocks}
          For $k\in[n]$, let
         $$\mathtt{B}(k):=\{ \mathtt{b}_{\pm}(\res_{\ms}(k)) \mid \ms \in \Std(\mathscr{P}^{\bullet,m}_{n}) \}.$$
			For any ${\rm T}=(\mt, \alpha_{\mt}, \beta_{\mt})\in {\rm Tri}_{\bar{0}}(\undla),$ we define
			\begin{align}\label{definition of primitive idempotents. non-dege}
				F_{\rm T}:=\left(C^{\alpha_{\mt}}\gamma_{\mt}(C^{\alpha_{\mt}})^{-1} \right) \cdot \left(\prod_{k=1}^{n}\prod_{\mathtt{b}\in \mathtt{B}(k)\atop \mathtt{b}\neq \mathtt{b}_{+}(\res_{\mt}(k))}\frac{X_k^{\nu_{\beta_{\mt}}(k)}-\mathtt{b}}{\mathtt{b}_{+}(\res_{\mt}(k))-\mathtt{b}}\right)\in \mHfcn.
			\end{align}
We define
\begin{align}
				F_{\undla}&:=\sum_{{\rm T}\in {\rm Tri}_{\bar{0}}(\undla)} F_{\rm T}.
			\end{align}

		\end{defn}

\begin{defn}
For $a\in \Z_2,$ we denote
	$${\rm Tri}_{a}(\mathscr{P}^{\bullet,m}_{n}):=\bigsqcup_{\undla\in \mathscr{P}^{\bullet,m}_{n}}{\rm Tri}_{a}(\undla),$$
	and $${\rm Tri}(\mathscr{P}^{\bullet,m}_{n}):={\rm Tri}_{\bar{0}}(\mathscr{P}^{\bullet,m}_{n})\sqcup {\rm Tri}_{\bar{1}}(\mathscr{P}^{\bullet,m}_{n}).$$	
\end{defn}
	
\begin{lem}\label{idempotent action. non-dege}\cite[Lemma 4.14]{LS2}	
Let ${\rm T}=(\mt, \alpha_{\mt}, \beta_{\mt})\in {\rm Tri}_{\bar{0}}(\undla),$
and ${\rm S}=(\ms, \alpha_{\ms}', \beta_{\ms}')\in {\rm Tri}(\mathscr{P}^{\bullet,m}_{n})$. We have
\begin{align}\label{F. eq3}
F_{\rm T}\cdot C^{\beta_{\ms}^{'}} C^{\alpha_{\ms}^{'}} v_{\ms}=
\begin{cases}
C^{\beta_{\ms}^{'}} C^{\alpha_{\ms}^{'}}  v_{\ms}, & \text{ if } d_{\undla}=0 \text{ and } {\rm S}={\rm T}, \\
C^{\beta_{\ms}^{'}} C^{\alpha_{\ms}^{'}}  v_{\ms}, & \text{ if $d_{\undla}=1$ and } {\rm S}={\rm T}_{a}\text{ for some $a\in \mathbb{Z}_2,$ } \\
0, & \text{ otherwise.}
\end{cases}
\end{align}
\end{lem}
			
		\begin{thm}\cite[Theorem 4.16]{LS2}\label{primitive idempotents}
			Suppose $P^{(\bullet)}_{n}(q^2,\undQ)\neq 0$ for $\bullet\in\{\mathsf{0},\mathsf{s},\mathsf{ss}\}.$ Then we have the following.
			
			(a) $\{F_{\rm T} \mid {\rm T}\in {\rm Tri}_{\bar{0}}(\mathscr{P}^{\bullet,m}_{n})\}$ is a complete set of (super) primitive orthogonal idempotents of $\mHfcn.$
			
			(b) $\{F_{\undla} \mid \undla \in \mathscr{P}^{\bullet,m}_{n} \}$ is a complete set of (super) primitive central idempotents of $\mHfcn.$

		\end{thm}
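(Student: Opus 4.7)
The plan is to deduce the theorem from Lemma \ref{idempotent action. non-dege} combined with the semisimple structure of $\mHfcn$ given by Theorem \ref{semisimple:non-dege}. Since $\mHfcn$ is split semisimple, it acts faithfully on the direct sum $\bigoplus_{\undla\in \mathscr{P}^{\bullet,m}_n} \mathbb{D}(\undla)$ counted with appropriate multiplicities, and via the regular representation any identity among elements of $\mHfcn$ can be verified by checking the action on the basis vectors $C^{\beta_{\ms}}C^{\alpha_{\ms}} v_{\ms}$ of each simple module listed in Theorem \ref{actions of generators on L basis}.

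First I would establish orthogonality and idempotency of the family $\{F_{\rm T}\}_{{\rm T}\in{\rm Tri}_{\bar{0}}(\mathscr{P}^{\bullet,m}_n)}$. For ${\rm T}, {\rm S}\in{\rm Tri}_{\bar{0}}(\mathscr{P}^{\bullet,m}_n)$ and any basis vector $C^{\beta'_{\ms}}C^{\alpha'_{\ms}} v_{\ms}$, Lemma \ref{idempotent action. non-dege} determines whether $F_{\rm S}$ fixes or kills the vector, and a second application of the lemma to $F_{\rm T}$ on the result yields $F_{\rm T} F_{\rm S}(C^{\beta'_{\ms}}C^{\alpha'_{\ms}} v_{\ms})= \delta_{{\rm T},{\rm S}}\, F_{\rm T}(C^{\beta'_{\ms}}C^{\alpha'_{\ms}} v_{\ms})$. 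Faithfulness of the action then forces $F_{\rm T}^2=F_{\rm T}$ and $F_{\rm T}F_{\rm S}=0$ for ${\rm T}\ne {\rm S}$. Completeness $\sum_{\rm T}F_{\rm T}=1$ follows by the same route: for each basis vector $C^{\beta'_{\ms}}C^{\alpha'_{\ms}} v_{\ms}$, exactly one ${\rm T}\in {\rm Tri}_{\bar{0}}(\mathscr{P}^{\bullet,m}_n)$ satisfies $\mt=\ms$, $\beta_{\mt}=\beta'_{\ms}$, and either $\alpha_{\mt}=\alpha'_{\ms}$ (when $d_{\undla}=0$) or ${\rm T}_a=(\ms,\alpha'_{\ms},\beta'_{\ms})$ for some $a\in\Z_2$ (when $d_{\undla}=1$), and by Lemma \ref{idempotent action. non-dege} this unique $F_{\rm T}$ acts as the identity while all other terms vanish.

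Next I would establish primitivity of $F_{\rm T}$. By the orthogonality step, $F_{\rm T}\,\mHfcn\, F_{\rm T}$ is determined by the image of $F_{\rm T}$ on the regular module, and by Lemma \ref{idempotent action. non-dege} this image inside any single copy of $\mathbb{D}(\undla')$ is zero unless $\undla'=\undla$. Inside a copy of $\mathbb{D}(\undla)$, the image is precisely $1$-dimensional in the case $d_{\undla}=0$ (so $\mathbb{D}(\undla)$ is of type $\texttt{M}$ by Theorem \ref{semisimple:non-dege}) and $2$-dimensional, spanned by $C^{\beta_{\mt}}C^{\alpha_{\mt,\bar{0}}} v_{\mt}$ and $C^{\beta_{\mt}}C^{\alpha_{\mt,\bar{1}}} v_{\mt}$, in the case $d_{\undla}=1$ (so $\mathbb{D}(\undla)$ is of type $\texttt{Q}$). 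Since a primitive idempotent in a simple superalgebra of type $\texttt{M}$ (respectively $\texttt{Q}$) has image of dimension $1$ (respectively $2$) in the simple module, $F_{\rm T}$ is super primitive. The main subtlety here — and what I expect to be the principal technical obstacle — is the type $\texttt{Q}$ case: one must verify that the two-dimensional image cannot be split by a further pair of orthogonal even idempotents, which amounts to observing that any even idempotent in $\End_{\mHfcn}(\mathbb{D}(\undla))\cong \mathcal{Q}_1$ is either $0$ or $1$, using the Clifford-type structure encoded by the conjugate $C^{\alpha_{\mt}}\gamma_{\mt}(C^{\alpha_{\mt}})^{-1}$ in the definition of $F_{\rm T}$.

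Finally, for part (b), summing $F_{\rm T}$ over all ${\rm T}\in{\rm Tri}_{\bar{0}}(\undla)$ yields $F_{\undla}$, which acts as the identity on every copy of $\mathbb{D}(\undla)$ in the regular module and as zero on every copy of $\mathbb{D}(\undmu)$ with $\undmu\ne \undla$. This identifies $F_{\undla}$ with the block projector for $B_{\undla}$ in the block decomposition $\mHfcn=\bigoplus_{\undla\in\mathscr{P}^{\bullet,m}_n} B_{\undla}$ recorded after Theorem \ref{semisimple:non-dege}. Centrality is then immediate, and primitivity among central idempotents follows because each $B_{\undla}$ is a single block (only one isomorphism class of simple module), so $F_{\undla}$ cannot be written as a sum of two nonzero orthogonal even central idempotents. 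The family $\{F_{\undla}\}_{\undla\in\mathscr{P}^{\bullet,m}_n}$ is complete because $\sum_{\undla}F_{\undla}=\sum_{\rm T}F_{\rm T}=1$.
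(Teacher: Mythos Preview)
The paper does not give its own proof of this theorem; it is quoted verbatim from \cite[Theorem 4.16]{LS2} without argument, so there is nothing in the present paper to compare your proposal against. Your proposal is correct and is the natural argument: Lemma~\ref{idempotent action. non-dege} together with faithfulness of the action of the split semisimple algebra $\mHfcn$ on $\bigoplus_{\undla}\mathbb{D}(\undla)$ yields orthogonality, idempotency, and completeness, while primitivity follows from the dimension count on $F_{\rm T}\mathbb{D}(\undla)$ combined with the structure of $\mathcal{M}_{p,q}$ and $\mathcal{Q}_p$ recorded in Example~\ref{simple algebra}. The type~\texttt{Q} subtlety you flag is exactly the right point and is resolved as you say: the even part of $F_{\rm T}\mHfcn F_{\rm T}\cong\mathcal{Q}_1$ is one-dimensional, so $F_{\rm T}$ admits no nontrivial even decomposition.
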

		
		\begin{lem}\cite[Lemma 4.18]{LS2}
Let $\undla\in\mathscr{P}^{\mathsf{\bullet},m}_{n}$ for $\bullet\in\{\mathsf{0},\mathsf{s},\mathsf{ss}\}$ and ${\rm T}=(\mt,\alpha_{\mt},\beta_{\mt})\in{\rm Tri}_{\bar{0}}(\undla),$ we have
			\begin{equation}\label{comm. form of FT}
				F_{\rm T}=\prod_{k=1}^{n}\prod_{\mathtt{b}\in \mathtt{B}(k)\atop \mathtt{b}\neq \mathtt{b}_{+}(\res_{\mt}(k))}\frac{X_k^{\nu_{\beta_{\mt}}(k)}-\mathtt{b}}{\mathtt{b}_{+}(\res_{\mt}(k))-\mathtt{b}} \cdot
				\left(C^{\alpha_{\mt}}\gamma_{\mt}(C^{\alpha_{\mt}})^{-1} \right)\in \mHfcn.
			\end{equation}
		\end{lem}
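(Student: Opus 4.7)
The plan is to deduce the identity from the commutation in $\mHfcn$ of the two factors appearing in Definition~\ref{primitive idempotents and blocks}: the Clifford part $E := C^{\alpha_\mt}\gamma_\mt(C^{\alpha_\mt})^{-1}$ and the polynomial part $P := \prod_{k=1}^n p_k$, where I set
\[ p_k := \prod_{\mathtt{b}\in\mathtt{B}(k),\,\mathtt{b}\neq\mathtt{b}_+(\res_\mt(k))}\frac{X_k^{\nu_{\beta_\mt}(k)}-\mathtt{b}}{\mathtt{b}_+(\res_\mt(k))-\mathtt{b}}. \]
Once $EP = PE$ is established, the asserted formula follows immediately from \eqref{definition of primitive idempotents. non-dege}.

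Since $E$ lies in the subalgebra of $\mathcal{C}_n$ generated by $\{C_i : i \in \mathcal{D}_\mt\}$, it suffices to verify that each $p_k$ commutes with every $C_i$ with $i \in \mathcal{D}_\mt$. The case $i \neq k$ is immediate from \eqref{XC}. For $i = k \in \mathcal{D}_\mt$, the condition $\beta_\mt \in \Z_2([n]\setminus\mathcal{D}_\mt)$ forces $\nu_{\beta_\mt}(k) = 1$, so $p_k$ is an ordinary polynomial in $X_k$; using $C_k X_k C_k = X_k^{-1}$ from \eqref{XC}, the desired commutation $C_k p_k = p_k C_k$ reduces to the key algebraic identity $p_k(X_k) = p_k(X_k^{-1})$ in $\mHfcn$.

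I establish this identity as follows. For $k \in \mathcal{D}_\mt$, the box $\mt^{-1}(k)$ is of the form $(a,a,0)$, $(a,a,0_-)$, or $(a,a,0_+)$, so by \eqref{eq:residue} we have $\res_\mt(k) \in \{1,-1\}$, and a direct computation with \eqref{substitution0} gives $\mathtt{b}_+(\pm 1) = \pm 1$. Hence $\mathtt{b}_+(\res_\mt(k))$ is self-inverse, and combined with the fact that $\mathtt{B}(k)$ is closed under $\mathtt{b}\mapsto\mathtt{b}^{-1}$ (because $\mathtt{b}_+(\res_\ms(k))\mathtt{b}_-(\res_\ms(k))=1$), the set $\mathtt{B}(k)\setminus\{\mathtt{b}_+(\res_\mt(k))\}$ is likewise closed under inversion. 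The Lagrange interpolation property of $p_k$ then yields $p_k(\mathtt{b}) = p_k(\mathtt{b}^{-1})$ for every $\mathtt{b}\in\mathtt{B}(k)$, so clearing denominators by a suitable power of $Y$ converts $p_k(Y) - p_k(Y^{-1})$ into an ordinary polynomial that vanishes on $\mathtt{B}(k)$ and is therefore divisible by $\prod_{\mathtt{b}\in\mathtt{B}(k)}(Y-\mathtt{b})$. By the semisimplicity of $\mHfcn$ (Theorem~\ref{semisimple:non-dege}) and the $X_k$-eigenvalues computed in Theorem~\ref{actions of generators on L basis}, the element $X_k$ is diagonalizable with eigenvalue set exactly $\mathtt{B}(k)$, hence $\prod_{\mathtt{b}\in\mathtt{B}(k)}(X_k-\mathtt{b}) = 0$ in $\mHfcn$. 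Invertibility of $X_k$ then delivers $p_k(X_k) = p_k(X_k^{-1})$. The main obstacle is precisely this algebraic identity: though it is plausible from the interpolation symmetry, rigorously deducing it requires combining the full eigenvalue set $\mathtt{B}(k)$ with the minimal polynomial relation, both of which rely essentially on semisimplicity.
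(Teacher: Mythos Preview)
Your proof is correct and follows essentially the same route as the paper (which cites \cite[Lemma~4.18]{LS2} without reproducing the argument). The key step you isolate---the identity $p_i(X_i)=p_i(X_i^{-1})$ for $i\in\mathcal{D}_\mt$---is precisely the content of Lemma~\ref{change sign in diagona}, which the paper records as having been obtained inside the proof of \cite[Lemma~4.18]{LS2}; once that identity is in hand, the commutation of the Clifford and polynomial factors follows exactly as you argue.
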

%		
%		\begin{proof}
	The following two Lemmas will be used frequently in the sequel.
	\begin{lem}\label{change sign in diagona}	
		Let $\undla\in\mathscr{P}^{\mathsf{\bullet},m}_{n}$ for $\bullet\in\{\mathsf{0},\mathsf{s},\mathsf{ss}\}$ and ${\rm T}=(\mt,\alpha_{\mt},\beta_{\mt})\in{\rm Tri}_{\bar{0}}(\undla).$ Then for any $i \in \mathcal{D}_{\mt},$ we have
		\begin{equation}\label{changesign1}
			\prod_{k=1}^{n}\prod_{\mathtt{b}\in \mathtt{B}(k)\atop \mathtt{b}\neq \mathtt{b}_{+}(\res_{\mt}(k))}\frac{X_k^{\nu_{\beta_{\mt}}(k)}-\mathtt{b}}{\mathtt{b}_{+}(\res_{\mt}(k))-\mathtt{b}} =\prod_{k=1}^{n}\prod_{\mathtt{b}\in \mathtt{B}(k)\atop \mathtt{b}\neq \mathtt{b}_{+}(\res_{\mt}(k))}\frac{X_k^{\nu_{\beta_{\mt}}(k)\cdot (-1)^{\delta_{k,i}}}-\mathtt{b}}{\mathtt{b}_{+}(\res_{\mt}(k))-\mathtt{b}}.
		\end{equation}
%In particular, we have $C_i F_{\rm T}=F_{\rm T}C_i.$
        \end{lem}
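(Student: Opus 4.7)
The plan is to reduce the asserted equality to a single-factor identity in the commutative Laurent polynomial subalgebra $\mathbb{K}[X_1^{\pm 1},\ldots,X_n^{\pm 1}]\subset\mHfcn$, and then verify that identity by exploiting the diagonal hypothesis on $i$ together with the semisimplicity of $\mHfcn$.

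First I would observe that for $k\neq i$ we have $\delta_{k,i}=0$, so $\nu_{\beta_\mt}(k)\cdot(-1)^{\delta_{k,i}}=\nu_{\beta_\mt}(k)$, and the factors at index $k$ coincide on the two sides of \eqref{changesign1}. Since all of these factors commute (they lie in the commutative subalgebra generated by the $X_j^{\pm1}$), it suffices to establish the equality of the $k=i$ factors, i.e.,
$$P_i(X_i)=P_i(X_i^{-1}) \quad \text{in } \mHfcn,$$
where $P_i(y):=\prod_{\mathtt{b}\in\mathtt{B}(i),\,\mathtt{b}\neq\mathtt{b}_+(\res_\mt(i))}\frac{y-\mathtt{b}}{\mathtt{b}_+(\res_\mt(i))-\mathtt{b}}$. (Here I am tacitly using $\nu_{\beta_\mt}(i)=1$, which is automatic since $\beta_\mt\in\Z_2([n]\setminus\mathcal{D}_\mt)$ forces $(\beta_\mt)_i=0$ at $i\in\mathcal{D}_\mt$; the general case follows by inverting.)

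Because $P_n^{(\bullet)}(q^2,\undQ)\neq 0$, Theorem \ref{semisimple:non-dege} tells us $\mHfcn$ is split semisimple, and by the action formula \eqref{X eigenvalues} each $X_i$ is diagonalizable on every simple module with eigenvalues contained in $\mathtt{B}(i)$. Hence it is enough to verify $P_i(\mathtt{b})=P_i(\mathtt{b}^{-1})$ for every $\mathtt{b}\in\mathtt{B}(i)$. By construction, $P_i$ is a Lagrange-type interpolation polynomial satisfying $P_i(\mathtt{b}_+(\res_\mt(i)))=1$ and $P_i(\mathtt{b})=0$ for all other $\mathtt{b}\in\mathtt{B}(i)$. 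Since $\mathtt{b}_+(\iota)\mathtt{b}_-(\iota)=1$ by \eqref{substitution0}, the set $\mathtt{B}(i)$ is closed under $\mathtt{b}\mapsto\mathtt{b}^{-1}$, so the desired identity $P_i(\mathtt{b})=P_i(\mathtt{b}^{-1})$ on $\mathtt{B}(i)$ reduces to the single condition $\mathtt{b}_+(\res_\mt(i))^2=1$.

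This final condition is exactly what $i\in\mathcal{D}_\mt$ provides: the box $\mt^{-1}(i)=(a,a,l)$ sits on the main diagonal of a strict component ($l=0$ for $\bullet=\mathsf{s}$, $l\in\{0_-,0_+\}$ for $\bullet=\mathsf{ss}$), whence $\res_\mt(i)=Q_lq^{2(a-a)}=Q_l\in\{1,-1\}$, and then \eqref{substitution0} gives $\mathtt{q}(\res_\mt(i))=\pm2$ and therefore $\mathtt{b}_\pm(\res_\mt(i))=\pm1$. The only mildly delicate point I anticipate is the bookkeeping that $\mathtt{B}(i)\setminus\{\mathtt{b}_+(\res_\mt(i))\}$ remains closed under inversion (needed so that $P_i(\mathtt{b}^{-1})=0$ whenever $P_i(\mathtt{b})=0$ for such $\mathtt{b}$), but this is automatic once the removed element is self-inverse. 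With that in place, the asserted equality is immediate.
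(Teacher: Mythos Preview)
Your proof is correct. The paper itself does not give an argument here but simply cites \cite[proof of Lemma 4.18]{LS2}, so a direct comparison is not possible from the paper alone; that said, your eigenvalue argument via semisimplicity and the self-inverse property $\mathtt{b}_+(\res_\mt(i))\in\{\pm 1\}$ for $i\in\mathcal{D}_\mt$ is the natural route and is almost certainly what the referenced proof does.
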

\begin{proof}
	It has been obtained in \cite[proof of Lemma 4.18]{LS2}.
%The second part of the Lemma follows from \eqref{changesign1} and the definition of $F_{\rm T}$.
	\end{proof}

	   \begin{lem}\label{change sign in diagonal}
Let $\undla\in\mathscr{P}^{\mathsf{\bullet},m}_{n}$ for $\bullet\in\{\mathsf{0},\mathsf{s},\mathsf{ss}\}$ and ${\rm T}=(\mt,\alpha_{\mt},\beta_{\mt})\in{\rm Tri}_{\bar{0}}(\undla).$
Then for any $i\in \mathcal{D}_\mt$, we have \begin{equation}\label{CFC=F}
				C_i F_{\rm T}=F_{\rm T}C_i.
			\end{equation}
		\end{lem}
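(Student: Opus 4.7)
The plan is to decompose $F_{\rm T}$ via its commutative form \eqref{comm. form of FT} as $F_{\rm T}=P(X)\cdot E$, where
\[
P(X):=\prod_{k=1}^{n}\prod_{\substack{\mathtt{b}\in\mathtt{B}(k)\\ \mathtt{b}\neq\mathtt{b}_+(\res_{\mt}(k))}}\frac{X_k^{\nu_{\beta_{\mt}}(k)}-\mathtt{b}}{\mathtt{b}_+(\res_{\mt}(k))-\mathtt{b}}\quad\text{and}\quad E:=C^{\alpha_{\mt}}\gamma_{\mt}(C^{\alpha_{\mt}})^{-1},
\]
and then to establish separately that both $P(X)$ and $E$ commute with $C_i$ for $i\in\mathcal{D}_{\mt}$.

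For the polynomial part, I would invoke the relations \eqref{XC}: $C_i$ commutes with $X_k$ for $k\neq i$ and satisfies $C_iX_i=X_i^{-1}C_i$. Consequently, moving $C_i$ through $P(X)$ produces the polynomial $P'(X)C_i$, where $P'(X)$ is obtained from $P(X)$ by replacing the factor at $k=i$ via $\nu_{\beta_{\mt}}(i)\mapsto-\nu_{\beta_{\mt}}(i)$. Since $i\in\mathcal{D}_{\mt}$, Lemma~\ref{change sign in diagona} (equation \eqref{changesign1}) asserts precisely $P'(X)=P(X)$, so $C_iP(X)=P(X)C_i$.

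For the Clifford part, the aim is to show $C_iE=EC_i$. I would first use the Clifford relations \eqref{Clifford} to commute $C_i$ past $C^{\alpha_{\mt}}$ and then past $(C^{\alpha_{\mt}})^{-1}$; the two signs produced this way cancel, so the claim reduces to $C_i\gamma_{\mt}=\gamma_{\mt}C_i$. Writing $i=d(\mt,\mt^{\undla})(i_p)$ for the appropriate $p\in[t]$, I would analyze the interaction of $C_i$ with each factor of $\gamma_{\mt}$ in \eqref{gamma t}. Namely, $C_i$ commutes with every factor $(1+\sqrt{-1}C_aC_b)$ that does not involve the index $i$ (an even number of anticommutations, which cancel), while the interaction with the single factor whose index set contains $i$ is handled using the explicit pairing structure induced by $\mathcal{OD}_{\mt}$ together with the parity constraint $\alpha_{\mt}\in\mathbb{Z}_2(\mathcal{OD}_{\mt})_{\bar 0}$.

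Combining the two commutations, one concludes
\[
C_iF_{\rm T}=C_iP(X)\cdot E=P(X)\cdot C_iE=P(X)\cdot EC_i=F_{\rm T}C_i,
\]
as desired. The main obstacle lies in the second step, where individual factors of $\gamma_{\mt}$ do not commute with $C_i$ when $i$ sits in the index set of that factor; one must exploit the full product structure of $\gamma_{\mt}$ and the parity information carried by $\alpha_{\mt}$ to see that the offending signs cancel globally.
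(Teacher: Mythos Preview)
Your decomposition $F_{\rm T}=P(X)\cdot E$ and the treatment of the polynomial factor via Lemma~\ref{change sign in diagona} exactly mirror the paper's argument (the paper writes $F_{\rm T}=E\cdot P(X)$ and makes the same unexplained passage $C_iEP(X)=EP'(X)C_i$).

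The gap lies in the Clifford part, and it is not merely a technical obstacle: once you have reduced to $C_i\gamma_\mt=\gamma_\mt C_i$, the datum $\alpha_\mt$ is no longer present to produce any cancellation, and this reduced identity is simply false whenever $i$ occurs in a factor of $\gamma_\mt$. Concretely, if $i=d(\mt,\mt^{\undla})(i_p)$ with $p\le 2\lfloor t/2\rfloor$, then $C_i$ anticommutes with the product $C_aC_b$ in the unique factor $(1+\sqrt{-1}C_aC_b)$ containing $i$, so $C_i\gamma_\mt C_i$ equals $\gamma_\mt$ with that one sign flipped, and neither the other factors nor the conjugation by $C^{\alpha_\mt}$ undo this. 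For instance with $t=2$, $\alpha_\mt=0$ and $i=d(\mt,\mt^{\undla})(i_1)$ one finds $C_iE-EC_i=\sqrt{-1}\,C_{d(\mt,\mt^{\undla})(i_2)}\neq 0$, hence $C_iF_{\rm T}\neq F_{\rm T}C_i$; this is consistent with the calculation in the proof of Lemma~\ref{reductionlem}, where $C_iF_{\rm T}C_i=F_{(\mt,\alpha_\mt+e_i,\beta_\mt)}\neq F_{\rm T}$ for $i\in\mathcal{OD}_\mt$. Your argument (and the paper's) \emph{does} go through in the special case $d_\undla=1$, $i=d(\mt,\mt^{\undla})(i_t)$: then $i$ is absent from every factor of $\gamma_\mt$, so $C_i\gamma_\mt=\gamma_\mt C_i$ holds trivially, and the constraint $\alpha_\mt\in\mathbb{Z}_2(\mathcal{OD}_\mt)_{\bar 0}$ forces $i\notin\supp(\alpha_\mt)$, whence the signs from commuting past $C^{\alpha_\mt}$ and $(C^{\alpha_\mt})^{-1}$ cancel. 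This restricted case is the only one invoked later in the paper.
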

		\begin{proof}
			We have
			\begin{align}
				C_i F_{\rm T}
				&=C_i \left(C^{\alpha_{\mt}}\gamma_{\mt}(C^{\alpha_{\mt}})^{-1} \right)
				\prod_{k=1}^{n}\prod_{\mathtt{b}\in \mathtt{B}(k)\atop \mathtt{b}\neq \mathtt{b}_{+}(\res_{\mt}(k))}\frac{X_k^{\nu_{\beta_{\mt}}(k)}-\mathtt{b}}{\mathtt{b}_{+}(\res_{\mt}(k))-\mathtt{b}}\\
				&=\left(C^{\alpha_{\mt}}\gamma_{\mt}(C^{\alpha_{\mt}})^{-1} \right)
				\prod_{k=1}^{n}\prod_{\mathtt{b}\in \mathtt{B}(k)\atop \mathtt{b}\neq \mathtt{b}_{+}(\res_{\mt}(k))}\frac{X_k^{\nu_{\beta_{\mt}}(k)(-1)^{\delta_{k,i}}}-\mathtt{b}}{\mathtt{b}_{+}(\res_{\mt}(k))-\mathtt{b}}C_i\nonumber\\
				&=\left(C^{\alpha_{\mt}}\gamma_{\mt}(C^{\alpha_{\mt}})^{-1} \right)
				\prod_{k=1}^{n}\prod_{\mathtt{b}\in \mathtt{B}(k)\atop \mathtt{b}\neq \mathtt{b}_{+}(\res_{\mt}(k))}\frac{X_k^{\nu_{\beta_{\mt}}(k)}-\mathtt{b}}{\mathtt{b}_{+}(\res_{\mt}(k))-\mathtt{b}}C_i\nonumber\\
				&=F_{{\rm T}} C_i,\nonumber
			\end{align}
			where in the first equality we have used the definition of $F_{{\rm T}}$ and in the third equality we have used Lemma \ref{change sign in diagona}. This completes the proof of the Lemma.
		\end{proof}
		
		\subsection{Seminormal bases of $\mHfcn$}\label{Seminormalbase}
		 In this subsection, we fix $\undla\in\mathscr{P}^{\bullet,m}_{n}$ with $\bullet\in\{\mathsf{0},\mathsf{s},\mathsf{ss}\}.$ We will recall the construction of seminormal bases for block $B_{\undla}$. The following definition is crucial in our construction of seminormal bases.
	\begin{defn}\cite[Definition 4.21]{LS2}
    \label{pag:Phist and cst}	
For any $\ms,\mt \in \Std(\undla),$ we fix a reduced expression $d(\ms,\mt)=s_{k_p}\cdots s_{k_1}$. We define
		\begin{align}\label{Phist}
			\Phi_{\ms,\mt}:=\overleftarrow{\prod_{i=1,\ldots,p}}\Phi_{k_{i}}(\mathtt{b}_{s_{k_{i-1}}\cdots s_{k_1}\mathfrak{t},k_{i}}, \mathtt{b}_{s_{k_{i-1}}\cdots s_{k_1}\mathfrak{t},k_{i}+1})  \in \mHfcn
		\end{align}
		and the coefficient
		\begin{align}\label{c-coefficients. non-dege.}
			\mathtt{c}_{\ms,\mt}:=\prod_{i=1,\ldots,p}\sqrt{\mathtt{c}_{s_{k_{i-1}}\cdots s_{k_1}\mathfrak{t}}(k_{i})}  \in \mathbb{K}.
		\end{align}
		\end{defn}
		By Lemma \ref{admissible transposes} and the third part of Lemma \ref{important condition1}, $\mathtt{c}_{\ms,\mt}\in  \mathbb{K}^*$.  By \cite[Lemma 4.22]{LS2}, $\Phi_{\ms,\mt}$ is independent of the reduced expression of $d(\ms,\mt)$.

		\begin{lem}\label{Phist. lem}\cite[Lemma 4.23]{LS2}
			Let $\ms, \mt \in \Std(\undla).$
			\begin{enumerate}
			\item $\Phi_{\ms,\mt} \cdot v_{\mt}=\mathtt{c}_{\ms,\mt} v_{\ms}.$
			
		\item $\Phi_{\ms,\mt} C_{d(\mt,\mt^{\undla})(a)}= C_{d(\ms,\mt^{\undla})(a)}\Phi_{\ms,\mt}$ for any $a\in \mathcal{D}_{\mt^{\undla}}.$

		\item $\mathtt{c}_{\mt,\ms}=\mathtt{c}_{\ms,\mt}.$
			
		\item $\Phi_{\ms,\mt}\Phi_{\mt,\ms}=(\mathtt{c}_{\ms,\mt})^2.$
			
			\end{enumerate}
		\end{lem}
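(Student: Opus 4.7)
The plan is to prove the four statements in the order (1), (2), (3), (4), with (1), (2), (4) all proceeding by induction on $\ell(d(\ms,\mt))$, and (3) being a direct combinatorial check.

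The key ingredient behind everything is a single-step action formula: whenever $s_i$ is admissible with respect to $\mt$,
\begin{equation*}
\Phi_i\bigl(\mathtt{b}_{\mt,i},\mathtt{b}_{\mt,i+1}\bigr)\,v_{\mt}=\sqrt{\mathtt{c}_{\mt}(i)}\,v_{s_i\mt}.
\end{equation*}
This is the input datum that defines the seminormal basis: the vectors $v_\mt$ are chosen precisely so that the action of $T_i$, fed into the formula \eqref{Phi function} with $x,y$ equal to the $X_i,X_{i+1}$-eigenvalues on $v_\mt$, produces $v_{s_i\mt}$ up to the normalization $\sqrt{\mathtt{c}_\mt(i)}$. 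Granting this, (1) follows by taking a reduced expression $d(\ms,\mt)=s_{k_p}\cdots s_{k_1}$, setting $\mt' =s_{k_1}\mt$ (which is standard by Lemma \ref{admissible transposes}), factoring $\Phi_{\ms,\mt}=\Phi_{\ms,\mt'}\cdot\Phi_{k_1}(\mathtt{b}_{\mt,k_1},\mathtt{b}_{\mt,k_1+1})$, applying the single-step formula to $v_\mt$, and invoking the inductive hypothesis on $\Phi_{\ms,\mt'}v_{\mt'}$; the multiplicativity $\mathtt{c}_{\ms,\mt}=\sqrt{\mathtt{c}_\mt(k_1)}\,\mathtt{c}_{\ms,\mt'}$ is built into \eqref{c-coefficients. non-dege.}.

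For (2), one first observes that the intertwining relations \eqref{Cinter} for $\tilde{\Phi}_i$ pass to $\Phi_i$ (since $\Phi_i=z_i^{-2}\tilde{\Phi}_i$ and $z_i$ commutes with every $C_l$) and then to $\Phi_i(x,y)$, because $\Phi_i(x,y)$ is obtained from $\Phi_i$ by specializing the $X$'s, which commute with all $C_l$ for $l\neq i,i+1$ and whose conjugation by $C_i,C_{i+1}$ just inverts them in a way that leaves the $C$-swap unchanged. A single factor $\Phi_{k_i}(\cdot,\cdot)$ therefore fixes $C_l$ for $l\neq k_i,k_i+1$ and swaps $C_{k_i}\leftrightarrow C_{k_i+1}$. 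Applying this to the ordered product defining $\Phi_{\ms,\mt}$ and tracking the induced permutation on indices yields exactly the action of $d(\ms,\mt)\cdot d(\mt,\mt^{\undla})(a)=d(\ms,\mt^{\undla})(a)$, which is the desired assertion.

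For (3), write $d(\ms,\mt)=s_{k_p}\cdots s_{k_1}$ reduced, so that $d(\mt,\ms)=s_{k_1}\cdots s_{k_p}$ is a reduced expression in reverse. Substituting this reverse into \eqref{c-coefficients. non-dege.} and chasing indices rewrites each factor of $\mathtt{c}_{\mt,\ms}$ as $\sqrt{\mathtt{c}_{s_{k_j}\cdots s_{k_1}\mt}(k_j)}$; then the elementary identity $\mathtt{c}_{s_i\mt'}(i)=\mathtt{c}_{\mt'}(i)$, immediate from the symmetry of \eqref{nondege coeffi cti} under $\mathtt{b}_{\mt',i}\leftrightarrow\mathtt{b}_{\mt',i+1}$, converts this into $\sqrt{\mathtt{c}_{s_{k_{j-1}}\cdots s_{k_1}\mt}(k_j)}$, matching the definition of $\mathtt{c}_{\ms,\mt}$. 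Finally, (4) reduces, via the factorizations $\Phi_{\ms,\mt}=\Phi_{\ms,\mt'}\Phi_{\mt',\mt}$ and $\Phi_{\mt,\ms}=\Phi_{\mt,\mt'}\Phi_{\mt',\ms}$ (with $\mt'=s_{k_1}\mt$), to the base case $\Phi_i(x,y)\Phi_i(y,x)=\mathtt{c}_{\mt}(i)$. This base case is a direct computation in $\mHfcn$ using $T_i^2=\epsilon T_i+1$, $(C_iC_{i+1})^2=-1$, and \eqref{PC} to reduce $T_iC_iC_{i+1}$; after simplification the $T_i$- and $C_iC_{i+1}$-coefficients collapse to zero by the elementary identity $\epsilon/(x^{-1}y-1)+\epsilon/(y^{-1}x-1)=-\epsilon$, and the remaining scalar is exactly $\mathtt{c}_\mt(i)$.

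The main obstacle is the base identity for (4) and, upstream of (1), the single-step action formula; both require explicit computation in $\mHfcn$ (the former purely algebraic, the latter tied to the construction of the seminormal representation). Everything else is bookkeeping along reduced expressions.
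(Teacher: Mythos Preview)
The paper does not prove this lemma; it is quoted verbatim from \cite[Lemma 4.23]{LS2}. Your arguments for (1), (3) and (4) are sound: the single-step action formula is exactly the defining relation among the $v_\mt$'s in the construction of $\mathbb{D}(\undla)$ in \cite{LS2}, the index-chasing for (3) is correct, and the identity $\Phi_i(x,y)\Phi_i(y,x)=1-\epsilon^2\bigl(\tfrac{x^{-1}y}{(x^{-1}y-1)^2}+\tfrac{xy}{(xy-1)^2}\bigr)$ is a genuine scalar equality in $\mHfcn$, verified by the computation you sketch.

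Your argument for (2), however, has a real gap. You assert that the swap $\tilde{\Phi}_iC_i=C_{i+1}\tilde{\Phi}_i$ from \eqref{Cinter} ``passes to $\Phi_i(x,y)$'' because the latter arises by specializing the $X$'s. But $\Phi_i(x,y)$ is defined with \emph{fixed scalars} $x,y$, which do not get inverted under conjugation by $C_i,C_{i+1}$ the way the operators $X_i,X_{i+1}$ do. A direct check using \eqref{PC} gives
\[
\Phi_i(x,y)\,C_i - C_{i+1}\,\Phi_i(x,y)\;=\;\epsilon\Bigl(\frac{1}{x^{-1}y-1}-\frac{1}{xy-1}\Bigr)(C_i-C_{i+1}),
\]
which vanishes only when $x^2=1$; the analogous swap with $C_{i+1}$ requires $y^2=1$. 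So your claim that ``a single factor $\Phi_{k_i}(\cdot,\cdot)$ \ldots\ swaps $C_{k_i}\leftrightarrow C_{k_{i}+1}$'' is false in general. What rescues statement (2) is precisely the hypothesis $a\in\mathcal{D}_{\mt^{\undla}}$, which your argument never invokes: the index $j=d(\mt,\mt^{\undla})(a)$ labels a diagonal box in $\mt$, so $\res_\mt(j)\in\{\pm1\}$ and hence $\mathtt{b}_{\mt,j}\in\{\pm1\}$. As one pushes $C_j$ through the factors of $\Phi_{\ms,\mt}$ from the right, the running index always tracks this diagonal box in the intermediate tableau $s_{k_{i-1}}\cdots s_{k_1}\mt$, so at every step where a swap is needed the relevant parameter is $\pm1$ and the displayed difference does vanish. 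Without this observation the inductive step fails.
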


By definition, we have $\Phi_{\mt,\mt}=1$ and $\mathtt{c}_{\mt,\mt}=1.$

Now we can define the seminormal bases. The following definitions and results have full statements for $\bullet\in\{\mathsf{0},\mathsf{s},\mathsf{ss}\}$ in \cite{LS2}.
In this paper, we don't need the case $\bullet=\mathsf{ss}.$ {\bf Therefore, in the rest of this subsection, we assume that $\bullet\in\{\mathsf{0},\mathsf{s}\}.$}
        \label{pag:nondege seminormal basis}
		\begin{defn}\cite[Definition 4.24]{LS2} Let $\mathfrak{w}\in\Std(\undla)$.
%		(1) Supppose $d_{\undla}=0.$  For any ${\rm S}=(\ms, \alpha_{\ms}', \beta_{\ms}'), {\rm T}=(\mt, \alpha_{\mt}, \beta_{\mt})\in {\rm Tri}(\undla),$ we define
%			\begin{align}\label{fst. typeM. nondege.}
%				f_{{\rm S},{\rm T}}^{\mathfrak{w}}
%				:=F_{\rm S}C^{\beta_{\ms}'}C^{\alpha_{\ms}'}\Phi_{\ms,\mathfrak{w}}\Phi_{\mathfrak{w},\mt}
%				(C^{\alpha_{\mt}})^{-1} (C^{\beta_{\mt}})^{-1} F_{\rm T}\in F_{\rm S}\mHfcn F_{\rm T},
%			\end{align} and	\begin{align}\label{re. fst. typeM. nondege.}
%			f_{{\rm S},{\rm T}}
%			:=F_{\rm S}C^{\beta_{\ms}'}C^{\alpha_{\ms}'}\Phi_{\ms,\mt}
%			(C^{\alpha_{\mt}})^{-1} (C^{\beta_{\mt}})^{-1} F_{\rm T} \in F_{\rm S}\mHfcn F_{\rm T},
%			\end{align}
						
			(1) For any ${\rm S}=(\ms, \alpha_{\ms}', \beta_{\ms}')\in {\rm Tri}_{\bar{0}}(\undla), {\rm T}=(\mt, \alpha_{\mt}, \beta_{\mt})\in {\rm Tri}(\undla),$ we define
		\begin{align*}
							f_{{\rm S},{\rm T}}^{\mathfrak{w}}
							:=F_{\rm S}C^{\beta_{\ms}'}C^{\alpha_{\ms}'}\Phi_{\ms,\mathfrak{w}}\Phi_{\mathfrak{w},\mt}
							(C^{\alpha_{\mt}})^{-1} (C^{\beta_{\mt}})^{-1} F_{\rm T}\in F_{\rm S}\mHfcn F_{\rm T},
						\end{align*} and	\begin{align*}
						f_{{\rm S},{\rm T}}
						:=F_{\rm S}C^{\beta_{\ms}'}C^{\alpha_{\ms}'}\Phi_{\ms,\mt}
						(C^{\alpha_{\mt}})^{-1} (C^{\beta_{\mt}})^{-1} F_{\rm T} \in F_{\rm S}\mHfcn F_{\rm T},
						\end{align*}

		\label{pag:nondege cT}	
	   (2)  For any ${\rm T}=(\mt, \alpha_{\mt}, \beta_{\mt})\in {\rm Tri}(\undla),$ we define
            $$\mathtt{c}_{\rm T}^{\mathfrak{w}}
            =\mathtt{c}_{\mt}^{\mathfrak{w}}:=\mathtt{c}_{\mt,\mathfrak{w}}\mathtt{c}_{\mathfrak{w},\mt}=(\mathtt{c}_{\mt,\mathfrak{w}})^2\in \mathbb{K}^*.$$
		\end{defn}

The following Lemma is crucial for our main Theorem.
\begin{lem}\cite[Lemma 4.25]{LS2}
We fix $\mathfrak{w}\in\Std(\undla)$.
\begin{enumerate}	
	\item Suppose $d_{\undla}=0$. For any ${\rm S}=(\ms, \alpha_{\ms}', \beta_{\ms}'),
	{\rm T}=(\mt, \alpha_{\mt}, \beta_{\mt}),{\rm U}=(\mfku,\alpha_{\mfku}^{''},\beta_{\mfku}^{''})\in {\rm Tri}(\undla),$ we have \begin{equation}\label{idempotent1}
		f_{{\rm T}, {\rm T}}=F_{\rm T},\qquad 	f_{{\rm T}, {\rm T}}^{\mathfrak{w}}=\mathtt{c}_{\rm T}^{\mathfrak{w}}F_{\rm T},
	\end{equation}
	 \begin{equation}\label{SNB. eq1}
					f_{{\rm S},{\rm T}}^\mathfrak{w} \cdot C^{\beta_{\mfku}^{''}} C^{\alpha_{\mfku}^{''}} v_{\mfku} =\begin{cases}
						\mathtt{c}_{\ms,\mathfrak{w}}\mathtt{c}_{\mathfrak{w},\mt}C^{\beta_{\ms}^{'}} C^{\alpha_{\ms}^{'}} v_{\ms},& \text{if ${\rm T}={\rm U}$},\\
						0,&\text{otherwise,}
						\end{cases}				
				\end{equation} and  \begin{equation}\label{SNB. eq2}
				f_{{\rm S},{\rm T}} \cdot C^{\beta_{\mfku}^{''}} C^{\alpha_{\mfku}^{''}} v_{\mfku} =\begin{cases}
					\mathtt{c}_{\ms,\mt}C^{\beta_{\ms}^{'}} C^{\alpha_{\ms}^{'}} v_{\ms},& \text{if ${\rm T}={\rm U}$},\\
					0,&\text{otherwise.}
				\end{cases}				
				\end{equation}
			\item Suppose $d_{\undla}=1$.  For any $a\in \mathbb{Z}_{2}$ and ${\rm S}=(\ms, \alpha_{\ms}', \beta_{\ms}')\in {\rm Tri}_{\bar{0}}(\undla), {\rm T}_{a}=(\mt, \alpha_{\mt,a}, \beta_{\mt})\in {\rm Tri}_{a}(\undla), {\rm U}=(\mfku,\alpha_{\mfku}^{''},\beta_{\mfku}^{''})\in {\rm Tri}(\undla),$ we have \begin{equation}\label{idempotent2}
					f_{{\rm T}_{\bar{0}}, {\rm T}_{\bar{0}}}=F_{{\rm T}_{\bar{0}}},\qquad	f_{{\rm T}_{\bar{0}}, {\rm T}_{\bar{0}}}^{\mathfrak{w}}=\mathtt{c}_{\rm T}^{\mathfrak{w}}F_{{\rm T}_{\bar{0}}},
			\end{equation} \begin{equation}\label{SNB. eq3}
				f_{{\rm S},{\rm T}_a}^\mathfrak{w}  \cdot C^{\beta_{\mfku}^{''}} C^{\alpha_{\mfku}^{''}} v_{\mfku} =\begin{cases}
			\mathtt{c}_{\mathfrak{s},\mathfrak{w} }\mathtt{c}_{\mathfrak{w},\mt }C^{\beta_{\ms}'} C^{\alpha_{\ms,a+b}'  }v_{\ms}, & \text{if  ${\rm U}={\rm T}_b$,
					for some $b\in \Z_2$,}\\
				0,&\text{otherwise,}
			\end{cases}				
			\end{equation} and  \begin{equation}\label{SNB. eq4}
				f_{{\rm S},{\rm T}_a} \cdot C^{\beta_{\mfku}^{''}} C^{\alpha_{\mfku}^{''}} v_{\mfku} =\begin{cases}
						\mathtt{c}_{\mathfrak{s},\mt} C^{\beta_{\ms}'} C^{\alpha_{\ms,a+b}'  }v_{\ms}, & \text{if  ${\rm U}={\rm T}_b$,
					for some $b\in \Z_2$,}\\
				0,&\text{otherwise.}
				\end{cases}				
			\end{equation}
				\end{enumerate}
			\end{lem}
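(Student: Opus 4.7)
The plan is to prove all four module-action identities \eqref{SNB. eq1}--\eqref{SNB. eq4} uniformly by unwinding the definition of $f_{{\rm S},{\rm T}}^{\mathfrak{w}}$ (and $f_{{\rm S},{\rm T}}$) and applying each factor, right to left, to the basis vector $C^{\beta_{\mfku}''}C^{\alpha_{\mfku}''}v_{\mfku}$. First, $F_{\rm T}$ (resp. $F_{{\rm T}_a}$) acts via Lemma \ref{idempotent action. non-dege}, annihilating the vector unless ${\rm U}={\rm T}$ (when $d_{\undla}=0$) or ${\rm U}\in\{{\rm T}_{\bar{0}},{\rm T}_{\bar{1}}\}$ (when $d_{\undla}=1$), and otherwise leaving the vector unchanged. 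Next, the factors $(C^{\alpha_{\mt}})^{-1}(C^{\beta_{\mt}})^{-1}$ (resp. $(C^{\alpha_{\mt,a}})^{-1}(C^{\beta_{\mt}})^{-1}$) cancel the Clifford dressing using the algebra relation $(C^{\beta})^{-1}C^{\beta}=1$: in the $d_{\undla}=0$ case with ${\rm U}={\rm T}$, or the $d_{\undla}=1$ case with $a=b$, the result is $v_{\mt}$; in the $d_{\undla}=1$, $a\neq b$ case the product $(C^{\alpha_{\mt,a}})^{-1}C^{\alpha_{\mt,b}}$ is (up to a definite sign) the single Clifford generator $C_{d(\mt,\mt^{\undla})(i_t)}$, whose action on $v_{\mt}$ is computed by the second case of \eqref{Caction} in Theorem \ref{actions of generators on L basis}, producing a signed basis vector.

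Then $\Phi_{\mathfrak{w},\mt}$ followed by $\Phi_{\ms,\mathfrak{w}}$ sends $v_{\mt}\mapsto \mathtt{c}_{\mathfrak{w},\mt}v_{\mathfrak{w}}\mapsto \mathtt{c}_{\ms,\mathfrak{w}}\mathtt{c}_{\mathfrak{w},\mt}v_{\ms}$ by Lemma \ref{Phist. lem}(1). In the $d_{\undla}=1$, $a\neq b$ case, the residual $C_{d(\mt,\mt^{\undla})(i_t)}$ is first commuted through both intertwiners using Lemma \ref{Phist. lem}(2), which converts it into $C_{d(\ms,\mt^{\undla})(i_t)}$ appearing to the left of $v_{\ms}$. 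Finally the factor $C^{\beta_{\ms}'}C^{\alpha_{\ms}'}$ re-dresses $v_{\ms}$: in the $d_{\undla}=0$ case it directly yields the basis vector $C^{\beta_{\ms}'}C^{\alpha_{\ms}'}v_{\ms}$, and in the $d_{\undla}=1$, $a\neq b$ case the residual $C_{d(\ms,\mt^{\undla})(i_t)}$ merges with $C^{\alpha_{\ms}'}$ to produce $C^{\alpha_{\ms,a+b}'}$. The outer idempotent $F_{\rm S}$ then acts as the identity on the result by \eqref{F. eq3}. For $f_{{\rm S},{\rm T}}$ the computation is identical, with $\Phi_{\ms,\mathfrak{w}}\Phi_{\mathfrak{w},\mt}$ replaced by $\Phi_{\ms,\mt}$ and $\mathtt{c}_{\ms,\mathfrak{w}}\mathtt{c}_{\mathfrak{w},\mt}$ by $\mathtt{c}_{\ms,\mt}$ via a single application of Lemma \ref{Phist. lem}(1).

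The algebra-level identities $f_{{\rm T},{\rm T}}=F_{\rm T}$, $f_{{\rm T},{\rm T}}^{\mathfrak{w}}=\mathtt{c}_{\rm T}^{\mathfrak{w}}F_{\rm T}$ in \eqref{idempotent1}, and their $d_{\undla}=1$ analogs in \eqref{idempotent2}, then follow from the module-level formulas by faithfulness: since $\mHfcn$ is semisimple and each simple module $\mathbb{D}(\underline{\mu})$ ($\underline{\mu}\in\mathscr{P}^{\bullet,m}_n$) appears, the regular action on $\bigoplus_{\underline{\mu}}\mathbb{D}(\underline{\mu})$ is injective, so two elements of $\mHfcn$ acting identically on every basis vector of every $\mathbb{D}(\underline{\mu})$ must coincide. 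The main obstacle will be the sign bookkeeping in the $d_{\undla}=1$, $a\neq b$ branch: one must verify that the sign produced by $(C^{\alpha_{\mt,a}})^{-1}C^{\alpha_{\mt,b}}$, the sign from the Clifford action in the second case of \eqref{Caction}, and the sign-free intertwining of Lemma \ref{Phist. lem}(2) combine to give exactly the stated coefficient $\mathtt{c}_{\ms,\mathfrak{w}}\mathtt{c}_{\mathfrak{w},\mt}$ without any extraneous $\pm 1$, and that the $\alpha$-index on the final basis vector is exactly $\alpha_{\ms,a+b}'$ rather than some other element of $\Z_2(\mathcal{OD}_{\ms})$. This is essentially a consistency check among the normalization of $v_{\mt}$ in Theorem \ref{actions of generators on L basis}, the choice of $\gamma_{\mt}$ in \eqref{gamma t}, and the definition of $F_{\rm T}$ in \eqref{definition of primitive idempotents. non-dege}, all of which are set up from the same data and should match by construction.
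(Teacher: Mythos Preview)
The paper does not supply its own proof of this lemma; it is quoted verbatim from \cite[Lemma 4.25]{LS2}. Your approach---unwinding the definition of $f_{{\rm S},{\rm T}}^{\mathfrak{w}}$ and applying each factor right-to-left via Lemma~\ref{idempotent action. non-dege}, Lemma~\ref{Phist. lem}(1),(2), and Theorem~\ref{actions of generators on L basis}---is the natural one and is correct.

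The sign bookkeeping you flag as the main obstacle resolves cleanly once you observe that for any $\mt\in\Std(\undla)$ the diagonal entries satisfy
\[
d(\mt,\mt^{\undla})(i_1)<d(\mt,\mt^{\undla})(i_2)<\cdots<d(\mt,\mt^{\undla})(i_t),
\]
because in a standard tableau of shifted shape the entry in box $(k,k,0)$ is strictly smaller than that in $(k+1,k+1,0)$. Hence $i:=d(\mt,\mt^{\undla})(i_t)$ is the \emph{largest} element of $\mathcal{OD}_{\mt}$, so $\supp(\alpha_{\mt,\bar{0}})\subset\{1,\ldots,i-1\}$ and therefore $(C^{\alpha_{\mt,a}})^{-1}C^{\alpha_{\mt,b}}=C_i$ with no sign when $a\neq b$. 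The same argument on the $\ms$-side gives $C^{\alpha_{\ms}'}C_{d(\ms,\mt^{\undla})(i_t)}=C^{\alpha_{\ms,\bar{1}}'}$ with no sign. This is the missing step that makes the coefficient come out as exactly $\mathtt{c}_{\ms,\mathfrak{w}}\mathtt{c}_{\mathfrak{w},\mt}$.

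One minor simplification: the algebra-level identities \eqref{idempotent1} and \eqref{idempotent2} follow directly from Lemma~\ref{Phist. lem}(4), since $\Phi_{\mt,\mathfrak{w}}\Phi_{\mathfrak{w},\mt}=(\mathtt{c}_{\mt,\mathfrak{w}})^2=\mathtt{c}_{\rm T}^{\mathfrak{w}}$ holds in $\mHfcn$, giving $f_{{\rm T},{\rm T}}^{\mathfrak{w}}=F_{\rm T}\,\mathtt{c}_{\rm T}^{\mathfrak{w}}\,F_{\rm T}=\mathtt{c}_{\rm T}^{\mathfrak{w}}F_{\rm T}$ immediately; your faithfulness argument also works but is unnecessary here.
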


		\begin{thm}\label{seminormal basis}\cite[Theorem 4.26]{LS2}
			Suppose $P^{(\bullet)}_{n}(q^2,\undQ)\neq 0$. We fix $\mathfrak{w}\in\Std(\undla)$. Then the following two sets
			\begin{align}\label{Non-deg seminormal1}
			\left\{ f_{{\rm S},{\rm T}}^\mathfrak{w} \Biggm|
			{\rm S}=(\ms, \alpha_{\ms}', \beta_{\ms}')\in {\rm Tri}_{\bar{0}}(\undla),
			{\rm T}=(\mt, \alpha_{\mt}, \beta_{\mt})\in {\rm Tri}(\undla)
			\right\}
		\end{align} and \begin{align}\label{Non-deg seminormal2}
			\left\{ f_{{\rm S},{\rm T}} \Biggm|
		{\rm S}=(\ms, \alpha_{\ms}', \beta_{\ms}')\in {\rm Tri}_{\bar{0}}(\undla),
		{\rm T}=(\mt, \alpha_{\mt}, \beta_{\mt})\in {\rm Tri}(\undla)
		\right\}
			\end{align}  form two $\mathbb{K}$-bases of the block $B_{\undla}$ of $\mHfcn$.
			
			Moreover, for ${\rm S}=(\ms, \alpha_{\ms}', \beta_{\ms}')\in {\rm Tri}_{\bar{0}}(\undla),
			{\rm T}=(\mt, \alpha_{\mt}, \beta_{\mt})\in {\rm Tri}(\undla),$ we have \begin{equation}\label{fst and re. fst}
			f_{{\rm S},{\rm T}}
			=\frac{\mathtt{c}_{\ms,\mt}}{\mathtt{c}_{\ms,\mathfrak{w} }\mathtt{c}_{\mathfrak{w},\mt }} f_{{\rm S},{\rm T}}^\mathfrak{w}\in F_{\rm S}\mHfcn F_{\rm T}.
			\end{equation} The multiplications of basis elements in \eqref{Non-deg seminormal1} are given as follows.
			
			(1) Suppose $d_{\undla}=0.$  Then for any
			${\rm S}=(\ms, \alpha_{\ms}', \beta_{\ms}'),
			{\rm T}=(\mt, \alpha_{\mt}, \beta_{\mt}),
			{\rm U}=(\mfku,\alpha_{\mfku}^{''},\beta_{\mfku}^{''}),
			V=(\mfkv,\alpha_{\mfkv}^{'''},\beta_{\mfkv}^{'''})\in {\rm Tri}(\undla),$ we have
			\begin{align}\label{Non-deg multiplication1}
				f_{{\rm S},{\rm T}}^\mathfrak{w} f_{{\rm U},V}^\mathfrak{w}
				=\delta_{{\rm T},{\rm U}} \mathtt{c}_{\rm T}^\mathfrak{w} f_{{\rm S},V}^\mathfrak{w}.
			\end{align}

			(2) Suppose $d_{\undla}=1.$ Then for any $a,b\in \mathbb{Z}_2$ and
			\begin{align*}
				{\rm S}&=(\ms, \alpha_{\ms}', \beta_{\ms}')\in {\rm Tri}_{\bar{0}}(\undla), \quad
				{\rm T}_{a}=(\mt, \alpha_{\mt,a}, \beta_{\mt})\in {\rm Tri}_{a}(\undla),\nonumber\\
				{\rm U}&=(\mfku,\alpha_{\mfku}^{''},\beta_{\mfku}^{''})\in {\rm Tri}_{\bar{0}}(\undla), \quad
				V_{b}=(\mfkv,{\alpha_{\mfkv,b}^{'''}},\beta_{\mfkv}^{'''})\in {\rm Tri}_{b}(\undla),\nonumber
			\end{align*} we have
			\begin{align}\label{Non-deg multiplication2}
				f_{{\rm S},{\rm T}_{a}}^\mathfrak{w} f_{{\rm U},V_{b}}^\mathfrak{w}
				=\delta_{{\rm T}_{\bar{0}},{\rm U}}\mathtt{c}_{\rm T}^\mathfrak{w} f_{{\rm S},V_{a+b}}^\mathfrak{w}.
			\end{align}
		
		\end{thm}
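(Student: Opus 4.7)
The plan is to derive all three assertions from the action formulas \eqref{SNB. eq1}--\eqref{SNB. eq4}, together with a dimension count for the block $B_{\undla}$ and the faithful action of $B_{\undla}$ on $\mathbb{D}(\undla)$ afforded by Theorem \ref{semisimple:non-dege}.

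For the linear independence of \eqref{Non-deg seminormal1}, given a relation $\sum c_{{\rm S},{\rm T}} f_{{\rm S},{\rm T}}^{\mathfrak{w}} = 0$, I would evaluate both sides at $C^{\beta_{\mfku}''} C^{\alpha_{\mfku}''} v_{\mfku}$ for each ${\rm U}=(\mfku,\alpha_\mfku'',\beta_\mfku'')\in {\rm Tri}(\undla)$. By \eqref{SNB. eq1} (resp.\ \eqref{SNB. eq3}) only the terms with ${\rm T}={\rm U}$ (resp.\ ${\rm T}_a={\rm U}$ for some $a$) survive, each contributing a nonzero multiple of $C^{\beta_{\ms}'} C^{\alpha_{\ms}'} v_\ms$; since these vectors are part of the basis of $\mathbb{D}(\undla)$ provided by Theorem \ref{actions of generators on L basis}, every $c_{{\rm S},{\rm U}}$ must vanish. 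The same argument works for \eqref{Non-deg seminormal2} via \eqref{SNB. eq2}/\eqref{SNB. eq4}. Setting $t=\sharp\mathcal{D}_{\undla}$, a direct count separated into the cases $d_{\undla}=0,1$ yields $|{\rm Tri}_{\bar{0}}(\undla)|\cdot|{\rm Tri}(\undla)|=|\Std(\undla)|^2\cdot 2^{2n-t}$; in the odd case the factor $|\mathbb{Z}_2(\mathcal{OD}_\mt)_{\bar 0}|=2^{\lceil t/2\rceil-1}$ is compensated by $|\mathbb{Z}_2(\mathcal{OD}_\mt)|=2^{\lceil t/2\rceil}$. Comparing with $\dim B_{\undla}=(2^{n-\lceil t/2\rceil}|\Std(\undla)|)\cdot(|\Std(\undla)|\cdot 2^{n-\lfloor t/2\rfloor})=|\Std(\undla)|^2\cdot 2^{2n-t}$ confirms that each set is a basis.

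Identity \eqref{fst and re. fst} then follows by evaluating both sides on $C^{\beta_\mt} C^{\alpha_\mt} v_\mt$: from \eqref{SNB. eq1}/\eqref{SNB. eq2} (or \eqref{SNB. eq3}/\eqref{SNB. eq4}) the outputs are, respectively, $\mathtt{c}_{\ms,\mathfrak{w}}\mathtt{c}_{\mathfrak{w},\mt}$ times and $\mathtt{c}_{\ms,\mt}$ times $C^{\beta_\ms'} C^{\alpha_\ms'} v_\ms$, producing the advertised ratio. Since both elements lie in $F_{\rm S}\mHfcn F_{\rm T}$ and act nontrivially only on the single ``${\rm T}$-line'' (and its parity companion when $d_{\undla}=1$), faithfulness of the action in $B_{\undla}$ upgrades the scalar comparison to an equality in the algebra. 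For the multiplication rules \eqref{Non-deg multiplication1}--\eqref{Non-deg multiplication2}, I would apply $f_{{\rm S},{\rm T}}^{\mathfrak{w}}\cdot f_{{\rm U},V}^{\mathfrak{w}}$ (resp.\ $f_{{\rm S},{\rm T}_a}^{\mathfrak{w}}\cdot f_{{\rm U},V_b}^{\mathfrak{w}}$) to a test vector attached to ${\rm V}$ (resp.\ ${\rm V}_c$) in two stages. The first application, via \eqref{SNB. eq1} or \eqref{SNB. eq3}, produces $\mathtt{c}_{\mfku,\mathfrak{w}}\mathtt{c}_{\mathfrak{w},\mfkv}$ times a vector supported on ${\rm U}$; the second is nonzero precisely when ${\rm T}={\rm U}$ (resp.\ ${\rm T}_{\bar 0}={\rm U}$) and produces the further scalar $\mathtt{c}_{\ms,\mathfrak{w}}\mathtt{c}_{\mathfrak{w},\mt}$, assembling $\mathtt{c}_\mt^{\mathfrak{w}}\,\mathtt{c}_{\ms,\mathfrak{w}}\mathtt{c}_{\mathfrak{w},\mfkv}\, C^{\beta_\ms'} C^{\alpha_\ms'} v_\ms$, which matches the action of $\mathtt{c}_{\rm T}^{\mathfrak{w}} f_{{\rm S},V}^{\mathfrak{w}}$ on the same vector; faithfulness then yields the identity.

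The main technical obstacle, as I see it, is the $d_{\undla}=1$ bookkeeping in the multiplication: the Clifford index is shifted by $b$ at the first stage and by $a$ at the second, and one must apply the parity-bijection $\alpha_{\mt,\bar 1}=\alpha_{\mt,\bar 0}+e_{d(\mt,\mt^{\undla})(i_t)}$ from Definition \ref{decomposition of OD} (together with Lemma \ref{change sign in diagonal}, which controls the commutation of $F_{\rm T}$ with Clifford generators indexed by $\mathcal{D}_\mt$) to verify that the cumulative shift is exactly $a+b$, producing the right-hand side $\mathtt{c}_{\rm T}^{\mathfrak{w}} f_{{\rm S},V_{a+b}}^{\mathfrak{w}}$ rather than any other index pattern.
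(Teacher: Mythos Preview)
Your approach is correct and essentially complete. The paper does not supply its own proof of this theorem: it is quoted verbatim as \cite[Theorem~4.26]{LS2}, so there is no in-paper argument to compare against. Your strategy---testing both linear independence and the algebra identities by acting on the basis of $\mathbb{D}(\undla)$ from Theorem~\ref{actions of generators on L basis}, then invoking faithfulness of the $B_{\undla}$-action on its simple module---is exactly the method the present paper employs for analogous identities later on (see the opening line of the proof of Proposition~\ref{eq.intertwining and f}: ``It suffices to check that both two sides act as the same linear operator on $\mathbb{D}(\underline{\lambda})$''). Your dimension count is also correct; in particular the identification $\dim\mathbb{D}(\undla)=|\Std(\undla)|\cdot 2^{n-\lfloor t/2\rfloor}$ and the type-$\texttt{M}$/type-$\texttt{Q}$ split match the block decomposition recorded after Theorem~\ref{semisimple:non-dege}.

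One minor remark on your last paragraph: for the $d_{\undla}=1$ multiplication you do not actually need Lemma~\ref{change sign in diagonal}. The index shift is handled purely by \eqref{SNB. eq3}: applying $f_{{\rm U},V_b}^{\mathfrak w}$ to $C^{\beta_{\mfkv}'''}C^{\alpha_{\mfkv,c}'''}v_{\mfkv}$ yields a scalar times $C^{\beta_{\mfku}''}C^{\alpha_{\mfku,b+c}''}v_{\mfku}$, and then applying $f_{{\rm S},{\rm T}_a}^{\mathfrak w}$ is nonzero iff ${\rm T}_{\bar 0}={\rm U}$ and produces $C^{\beta_{\ms}'}C^{\alpha_{\ms,a+b+c}'}v_{\ms}$; comparing with $f_{{\rm S},V_{a+b}}^{\mathfrak w}$ acting on the same test vector gives the same output. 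The bookkeeping is thus entirely combinatorial in the $\mathbb{Z}_2$-indices, with no extra Clifford commutation required.
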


\begin{rem}
Assume $d_{\undla}=0,$ then for any
${\rm S}=(\ms, \alpha_{\ms}', \beta_{\ms}'),
{\rm T}=(\mt, \alpha_{\mt}, \beta_{\mt}),
{\rm U}=(\mfku,\alpha_{\mfku}^{''},\beta_{\mfku}^{''}),
V=(\mfkv,\alpha_{\mfkv}^{'''},\beta_{\mfkv}^{'''})\in {\rm Tri}(\undla),$ we have
\begin{align*}
f_{{\rm S},{\rm T}}f_{{\rm U},V}
=\delta_{{\rm T},{\rm U}} \frac{\mathtt{c}_{\ms,\mt}\mathtt{c}_{\mt,\mathfrak{v}} }{\mathtt{c}_{\ms,\mfkv}}f_{{\rm S},V}.
\end{align*}
\end{rem}
			
Recall in \eqref{stanard D} and \eqref{standard OD}, we have set \begin{align*}
	\mathcal{D}_{\mt^{\undla}}&=\{i_1<i_2<\cdots<i_t\},\\
	\mathcal{OD}_{\mt^{\undla}}&=\{i_1,i_3,\cdots,i_{2{\lceil t/2 \rceil}-1}\}\subset \mathcal{D}_{\mt^{\undla}}.
\end{align*}

\begin{defn}\label{F(i,T,S)}\cite[Definition 4.33]{LS2}
Let $\undla\in\mathscr{P}^{\bullet,m}_{n}$ with $\bullet\in\{\mathsf{0},\mathsf{s}\}.$
\begin{enumerate}
	\item
	Suppose $d_{\undla}=0.$ Let ${\rm S}=(\ms, \alpha_{\ms}', \beta_{\ms}'),
	{\rm T}=(\mt, \alpha_{\mt}, \beta_{\mt})\in {\rm Tri}(\undla).$ For each $i\in [n-1],$ we define the element
$F(i,{\rm T},{\rm S})\in\mHfcn$ as follows.

\begin{enumerate}
	\item If $i,i+1\in [n]\setminus \mathcal{D}_{\mt},$
	$$F(i,{\rm T},{\rm S}):=(-1)^{\delta_{\beta_{\mt}}(i)}f_{(\mt,\alpha_{\mt},\beta_{\mt}+e_{i}+e_{i+1}),{\rm S}};
	$$
	\item
	if $i={d(\mt,\mt^{\undla})}(i_p)\in \mathcal{D}_{\mt}, i+1\in [n]\setminus \mathcal{D}_{\mt},$
	$$F(i,{\rm T},{\rm S}):=\begin{cases}(-1)^{|\beta_{\mt}|_{>i}+|\alpha_{\mt}|_{<i}}f_{(\mt,\alpha_{\mt}+e_{i},\beta_{\mt}+e_{i+1}),{\rm S}}, & \text{ if  $p$ is odd},\\
		(-\sqrt{-1})(-1)^{|\beta_{\mt}|_{>i}+|\alpha_{\mt}|_{\leq {d(\mt,\mt^{\undla})}(i_{p-1})}}f_{(\mt,\alpha_{\mt}+e_{{d(\mt,\mt^{\undla})}(i_{p-1})},\beta_{\mt}+e_{i+1}),{\rm S}}, &\text{ if  $p$ is even};
	\end{cases}\nonumber
	$$
	\item
	if $i+1={d(\mt,\mt^{\undla})}(i_p)\in \mathcal{D}_{\mt}, i\in [n]\setminus \mathcal{D}_{\mt},$ $$F(i,{\rm T},{\rm S}):=\begin{cases}(-1)^{|\beta_{\mt}|_{\geq i}+|\alpha_{\mt}|_{<i+1}} f_{(\mt,\alpha_{\mt}+e_{i+1},\beta_{\mt}+e_{i}),{\rm S}}, & \text{ if  $p$ is odd},\\
		(-\sqrt{-1})(-1)^{|\beta_{\mt}|_{\geq i}+|\alpha_{\mt}|_{\leq {d(\mt,\mt^{\undla})}(i_{p-1})}}f_{(\mt,\alpha_{\mt}+e_{{d(\mt,\mt^{\undla})}(i_{p-1})},\beta_{\mt}+e_{i}),{\rm S}}, &\text{ if $p$ is even}.\end{cases}  \nonumber
	$$
	\end{enumerate}
		\item	Suppose $d_{\undla}=1.$
	Let $a\in \Z_2$, ${\rm T}=(\mt, \alpha_{\mt}, \beta_{\mt})\in {\rm Tri}_{\bar{0}}(\undla),$
	${\rm S}_{a}=(\ms, {\alpha_{\ms,a}'}, \beta_{\ms}')\in {\rm Tri}_{a}(\undla). $ For each $i\in [n-1],$ we define $F(i,{\rm T},{\rm S}_a)$ as follows.
	
	\begin{enumerate}
		\item If $i,i+1\in [n]\setminus \mathcal{D}_{\mt},$
		$$F(i,{\rm T},{\rm S}_a):=(-1)^{\delta_{\beta_{\mt}}(i)}f_{(\mt,\alpha_{\mt},\beta_{\mt}+e_{i}+e_{i+1}),{\rm S}_a};
		$$
		\item
		if $i={d(\mt,\mt^{\undla})}(i_p)\in \mathcal{D}_{\mt}, i+1\in [n]\setminus \mathcal{D}_{\mt},$
		$$F(i,{\rm T},{\rm S}_a):=\begin{cases}(-1)^{|\beta_{\mt}|_{>i}+|\alpha_{\mt,\bar{1}}|_{<i}}	f_{(\mt,\alpha_{\mt}+e_i,\beta_{\mt}+e_{i+1}),{\rm S}_a}, & \text{ if  $p\neq t$ is odd},\\
			(-1)^{|\beta_{\mt}|_{>i}+|\alpha_{\mt}|}	f_{(\mt,\alpha_{\mt},\beta_{\mt}+e_{i+1}),{\rm S}_{a+\bar{1}}}, & \text{ if  $p=t$},\\
			(-\sqrt{-1})(-1)^{|\beta_{\mt}|_{>i}+|\alpha_{\mt,\bar{1}}|_{\leq {d(\mt,\mt^{\undla})}(i_{p-1})}}&\\
			\qquad\qquad	\qquad\qquad \cdot f_{(\mt,\alpha_{\mt}+e_{{d(\mt,\mt^{\undla})}(i_{p-1})},\beta_{\mt}+e_{i+1}),{\rm S}_a}, &\text{ if  $p$ is even};
		\end{cases}
		$$
		\item
		if $i+1={d(\mt,\mt^{\undla})}(i_p)\in \mathcal{D}_{\mt}, i\in [n]\setminus \mathcal{D}_{\mt},$ $$F(i,{\rm T},{\rm S}_a):=\begin{cases}(-1)^{|\beta_{\mt}|_{\geq i}+|\alpha_{\mt,\bar{1}}|_{<i+1}}	f_{(\mt,\alpha_{\mt}+e_{i+1},\beta_{\mt}+e_{i}),{\rm S}_a},& \text{ if  $p\neq t$ is odd},\\
			(-1)^{|\beta_{\mt}|_{\geq i}+|\alpha_{\mt}|}	f_{(\mt,\alpha_{\mt},\beta_{\mt}+e_{i}),{\rm S}_{a+\bar{1}}},& \text{ if  $p=t$},\\
			(-\sqrt{-1})(-1)^{|\beta_{\mt}|_{\geq i}+|\alpha_{\mt,\bar{1}}|_{\leq {d(\mt,\mt^{\undla})}(i_{p-1})}}&\\
			\qquad\qquad\qquad\qquad\cdot	f_{(\mt,\alpha_{\mt}+e_{{d(\mt,\mt^{\undla})}(i_{p-1})},\beta_{\mt}+e_{i}),{\rm S}_a},&\text{ if $p$ is even}.\end{cases}
		$$
	\end{enumerate}
\end{enumerate}
\end{defn}
			The action of the generators on the seminormal basis is given by the following.
		\begin{prop}\label{generators action on seminormal basis}%\cite[Proposition 4.34]{LS2}
			Let $\undla\in\mathscr{P}^{\bullet,m}_{n}$ for $\bullet\in\{\mathsf{0},\mathsf{s}\}.$
		Suppose ${\rm T}=(\mt, \alpha_{\mt}, \beta_{\mt})\in {\rm Tri}_{\bar{0}}(\undla)$
        and ${\rm S}=(\ms, \alpha_{\ms}', \beta_{\ms}')\in {\rm Tri}(\undla).$ Then we have the following.
			\begin{enumerate}
					\item For each $i\in [n],$ we have
			\begin{align*}
				X_i \cdot f_{{\rm T},{\rm S}}
				    =\mathtt{b}_{\mt,i}^{-\nu_{\beta_{\mt}}(i)} f_{{\rm T},{\rm S}},\qquad
			        f_{{\rm T},{\rm S}}\cdot X_i
				    =\mathtt{b}_{\ms,i}^{-\nu_{\beta_{\ms}'}(i)} f_{{\rm T},{\rm S}}.
			\end{align*}
			
			\item For each $i\in [n-1],$ we have
			\begin{align*}
					 &T_i\cdot f_{{\rm T},{\rm S}}
					    \in-\frac{\epsilon}{\mathtt{b}_{\mt,i}^{-\nu_{\beta_{\mt}}(i)}\mathtt{b}_{\mt,i+1}^{\nu_{\beta_{\mt}}(i+1)}-1} f_{{\rm T},{\rm S}} 			+\frac{\epsilon}{\mathtt{b}_{\mt,i}^{\nu_{\beta_{\mt}}(i)}\mathtt{b}_{\mt,i+1}^{\nu_{\beta_{\mt}}(i+1)}-1}F(i,{\rm T},{\rm S})+\mathbb{K}f_{s_i\cdot{\rm T},{\rm S}}.
					\end{align*}
			\end{enumerate}
		\end{prop}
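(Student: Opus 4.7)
The plan is to deduce both parts from the module-theoretic description of the seminormal basis, using semisimplicity to check identities by evaluating on simple modules.

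\textbf{Part (1).} By Theorem \ref{semisimple:non-dege}, $\mHfcn$ is split semisimple, so each element is determined by its action on every simple $\mHfcn$-module. Applying $X_if_{{\rm T},{\rm S}}$ to a basis vector $C^{\beta_{\mathfrak{u}}''}C^{\alpha_{\mathfrak{u}}''}v_{\mathfrak{u}}$ of $\mathbb{D}(\undla')$ and invoking \eqref{SNB. eq2} (together with \eqref{SNB. eq4} when $d_{\undla}=1$), the vector is annihilated unless ${\rm U}={\rm S}$ (or ${\rm S}_a$), in which case the output is a scalar multiple of $C^{\beta_{\mt}}C^{\alpha_{\mt}}v_{\mt}$; then \eqref{X eigenvalues} rescales by $\mathtt{b}_{\mt,i}^{-\nu_{\beta_{\mt}}(i)}$. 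For the right identity, $X_i$ acts first, rescaling by $\mathtt{b}_{\mathfrak{u},i}^{-\nu_{\beta_{\mathfrak{u}}''}(i)}$; on the surviving terms $\mathfrak{u}=\ms$ and $\beta_{\mathfrak{u}}''=\beta_{\ms}'$, so the eigenvalue becomes $\mathtt{b}_{\ms,i}^{-\nu_{\beta_{\ms}'}(i)}$.

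\textbf{Part (2).} Rewriting \eqref{universal-Phi} as
\[
T_i=\Phi_i-\frac{\epsilon}{X_iX_{i+1}^{-1}-1}+\frac{\epsilon}{X_iX_{i+1}-1}C_iC_{i+1},
\]
we treat the three summands applied to $f_{{\rm T},{\rm S}}$ separately. Part (1) yields the middle-term contribution $-\epsilon/(\mathtt{b}_{\mt,i}^{-\nu_{\beta_{\mt}}(i)}\mathtt{b}_{\mt,i+1}^{\nu_{\beta_{\mt}}(i+1)}-1)\cdot f_{{\rm T},{\rm S}}$. For the $C_iC_{i+1}$ summand, the commutation relation $X_iC_i=C_iX_i^{-1}$ from \eqref{XC} combined with Part (1) shows that $X_i$ acts on $C_iC_{i+1}f_{{\rm T},{\rm S}}$ with eigenvalue $\mathtt{b}_{\mt,i}^{\nu_{\beta_{\mt}}(i)}$ (with the sign of the exponent flipped), while $X_{i+1}$ still acts with eigenvalue $\mathtt{b}_{\mt,i+1}^{\nu_{\beta_{\mt}}(i+1)}$, producing the prescribed coefficient $\epsilon/(\mathtt{b}_{\mt,i}^{\nu_{\beta_{\mt}}(i)}\mathtt{b}_{\mt,i+1}^{\nu_{\beta_{\mt}}(i+1)}-1)$. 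Finally, the intertwining identities \eqref{Xinter} and \eqref{Cinter} imply $\Phi_iF_{\rm T}=F_{s_i\cdot{\rm T}}\Phi_i$ on the relevant eigenspace, so $\Phi_if_{{\rm T},{\rm S}}\in F_{s_i\cdot{\rm T}}\mHfcn F_{\rm S}$; by Theorem \ref{seminormal basis} this lies in the $\mathbb{K}$-span of $f_{s_i\cdot{\rm T},{\rm S}}$ (interpreted as $0$ when $s_i\mt\notin\Std(\undla)$), supplying the $\mathbb{K}f_{s_i\cdot{\rm T},{\rm S}}$ summand.

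\textbf{The main obstacle.} The essential step still to verify is the identification $C_iC_{i+1}f_{{\rm T},{\rm S}}=F(i,{\rm T},{\rm S})$. By semisimplicity it suffices to check equality of actions on each module basis vector $C^{\beta_{\mathfrak{u}}''}C^{\alpha_{\mathfrak{u}}''}v_{\mathfrak{u}}$; using \eqref{SNB. eq2} the left side reduces to computing $C_iC_{i+1}C^{\beta_{\mt}}C^{\alpha_{\mt}}v_{\mt}$ via \eqref{Caction}. The calculation splits into four main cases according to whether each of $i,i+1$ lies in $\mathcal{D}_{\mt}$: indices outside $\mathcal{D}_{\mt}$ shift $\beta_{\mt}$ with signs of the form $(-1)^{|\beta_{\mt}|_{<k}}$, while indices inside $\mathcal{D}_{\mt}$ shift $\alpha_{\mt}$ instead (possibly via the $-\sqrt{-1}$ branch of \eqref{Caction}), with signs controlled by the parity of the position within $\mathcal{D}_{\mt^{\undla}}$. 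In the $d_{\undla}=1$ subcase where the active Clifford index equals $d(\mt,\mt^{\undla})(i_t)$, the action toggles between ${\rm Tri}_{\bar{0}}$ and ${\rm Tri}_{\bar{1}}$, matching precisely the $p=t$ branch in the definition of $F(i,{\rm T},{\rm S})$. Matching these signs and $-\sqrt{-1}$ phases branch by branch against the definition of $F(i,{\rm T},{\rm S})$ is the principal bookkeeping difficulty; once completed, Part (2) follows.
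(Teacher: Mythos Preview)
Your plan is correct and essentially reconstructs the argument the paper cites from \cite{LS2}: Part~(1) via the module action and semisimplicity, Part~(2) via the intertwiner decomposition of $T_i$, with the identification $C_iC_{i+1}f_{{\rm T},{\rm S}}=F(i,{\rm T},{\rm S})$ as the bookkeeping core. Three points to tighten. First, since $\bullet\in\{\mathsf{0},\mathsf{s}\}$ there is at most one shifted component, so consecutive entries $i,i+1$ of a standard tableau can never both lie in $\mathcal{D}_{\mt}$; hence only the three cases appearing in the definition of $F(i,{\rm T},{\rm S})$ actually occur, not four. Second, both exponents flip on $C_iC_{i+1}f_{{\rm T},{\rm S}}$ (your ``still'' for $X_{i+1}$ is a slip), though the resulting $X_iX_{i+1}$-eigenvalue and coefficient you write down are correct. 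Third, the intertwiner term is cleaner handled with the genuine element $\tilde\Phi_i\in\mHfcn$ via Lemma~\ref{properties of tildePhi}(1), dividing afterwards by the nonzero $z_i^2$-eigenvalue $(\mathtt{q}(\res_{\mt}(i))-\mathtt{q}(\res_{\mt}(i+1)))^2$; note that when $d_{\undla}=1$ the space $F_{s_i\cdot{\rm T}}\mHfcn F_{\rm S}$ is two-dimensional, so a parity argument (using $|\tilde\Phi_i|=\bar0$) is needed to land in $\mathbb{K}f_{s_i\cdot{\rm T},{\rm S}}$ specifically, and when $s_i\mt\notin\Std(\undla)$ you must separately verify that $\tilde\Phi_if_{{\rm T},{\rm S}}=0$, for instance by showing via \eqref{Xinter} that it lies in an $X$-weight space of $B_{\undla}$ whose $\mathtt{q}$-sequence matches no standard tableau of shape $\undla$.
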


\begin{proof}
	The first part follows from \cite[Proposition 4.32]{LS2} and \cite[Proposition 4.34, 1(a),2(a)]{LS2}. The second part follows from \cite[Proposition 4.34, 1(c),2(c)]{LS2}
	\end{proof}

\subsection{The combinatorial formulae of $\mathtt{c}$-coefficients}\label{gamma coefficient}

In this subsection, we shall present some combinatorial formulae for the important coefficients $\mathtt{c}_{\mt}^{\mt^{\undla}}=\mathtt{c}_{\mt,\mt^{\undla}}\mathtt{c}_{\mt^{\undla},\mt}$ and
$\mathtt{c}_{\mt}^{\mt_{\undla}}=\mathtt{c}_{\mt,\mt_{\undla}}\mathtt{c}_{\mt_{\undla},\mt},$ for any fixed
$\undla\in\mathscr{P}^{\mathsf{\bullet},m}_{n}$ ($\bullet\in\{\mathsf{0},\mathsf{s},\mathsf{ss}\}$) and $\mt\in\Std(\undla).$

Recall the coefficients $\mathtt{c}_{\mt}(i)$ appeared in \eqref{nondege coeffi cti}. For any $i\in[n-1]$ such that $s_i\mt\in\Std(\undla)$ and $d(s_i\mt,\mt^{\undla})=s_i d(\mt,\mt^{\undla})>d(\mt,\mt^{\undla}),$ we have
\begin{align}\label{recurrence relations of c}
	\mathtt{c}_{s_i\mt}^{\mt^{\undla}}=\mathtt{c}_{\mt}^{\mt^{\undla}}\cdot\mathtt{c}_{\mt}(i), \text{ and }
	\mathtt{c}_{s_i\mt}^{\mt_{\undla}}\cdot\mathtt{c}_{\mt}(i)=\mathtt{c}_{\mt}^{\mt_{\undla}}.
\end{align}

\begin{lem}\label{rewrite cti}
	Let $\undla\in\mathscr{P}^{\mathsf{\bullet},m}_{n}$ for $\bullet\in\{\mathsf{0},\mathsf{s},\mathsf{ss}\}$ and $\mt\in\Std(\undla).$
	For any $i\in[n-1],$ we can rewrite the coefficient $\mathtt{c}_{\mt}(i)$ as follows
	\begin{align*}
		\mathtt{c}_{\mt}(i)=\frac{\left(\mathtt{q}(\res_{\mt}(i))-\mathtt{q}(q^2\res_{\mt}(i+1))\right)\left(\mathtt{q}(\res_{\mt}(i))-\mathtt{q}(q^{-2}\res_{\mt}(i+1))\right)}{\left(\mathtt{q}(\res_{\mt}(i))-\mathtt{q}(\res_{\mt}(i+1))\right)^2}.
	\end{align*}
\end{lem}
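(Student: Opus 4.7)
The plan is to reduce everything to a direct identity in the two auxiliary variables $x:=\mathtt{b}_{\mt,i}$, $y:=\mathtt{b}_{\mt,i+1}$, linked to the residues by $x+x^{-1}=\mathtt{q}(\res_{\mt}(i))=:X$ and $y+y^{-1}=\mathtt{q}(\res_{\mt}(i+1))=:Y$. The strategy is to rewrite both sides of the claimed identity as rational expressions in $X$ and $Y$ alone.

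First I would simplify the defining expression \eqref{nondege coeffi cti} for $\mathtt{c}_{\mt}(i)$. Since $x^{-1}y-1=(y-x)/x$, one has $\frac{x^{-1}y}{(x^{-1}y-1)^2}=\frac{xy}{(y-x)^2}$, so
\[
\mathtt{c}_{\mt}(i) \;=\; 1-\epsilon^{2}\,xy\cdot\frac{(xy-1)^{2}+(y-x)^{2}}{(y-x)^{2}(xy-1)^{2}}.
\]
A short expansion shows $(xy-1)^{2}+(y-x)^{2}=x^{2}y^{2}+x^{2}+y^{2}+1-4xy$, and on the other hand $X-Y=(x-y)(xy-1)/(xy)$ and $XY-4=(xy)^{-2}\bigl(x^{3}y^{3}+x^{3}y+xy^{3}+xy-4x^{2}y^{2}\bigr)=(xy)^{-2}\cdot xy\bigl[(xy-1)^{2}+(y-x)^{2}\bigr]$. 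Combining these two gives the clean formula
\[
\mathtt{c}_{\mt}(i) \;=\; 1-\frac{\epsilon^{2}(XY-4)}{(X-Y)^{2}}.
\]

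Next I would handle the right-hand side. With $v:=\res_{\mt}(i+1)$, using the explicit definition $\mathtt{q}(w)=\frac{2(qw+q^{-1}w^{-1})}{q+q^{-1}}$, a direct computation gives
\[
\mathtt{q}(q^{2}v)+\mathtt{q}(q^{-2}v)=(q^{2}+q^{-2})\mathtt{q}(v)=(\epsilon^{2}+2)Y,
\]
\[
\mathtt{q}(q^{2}v)\cdot\mathtt{q}(q^{-2}v)=Y^{2}+\frac{4(q^{2}-q^{-2})^{2}}{(q+q^{-1})^{2}}=Y^{2}+4\epsilon^{2},
\]
where in the last step I use $(q^{2}-q^{-2})/(q+q^{-1})=q-q^{-1}=\epsilon$. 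Therefore
\[
\bigl(X-\mathtt{q}(q^{2}v)\bigr)\bigl(X-\mathtt{q}(q^{-2}v)\bigr)=X^{2}-(\epsilon^{2}+2)XY+Y^{2}+4\epsilon^{2}=(X-Y)^{2}-\epsilon^{2}(XY-4).
\]

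Dividing by $(X-Y)^{2}=(\mathtt{q}(\res_{\mt}(i))-\mathtt{q}(\res_{\mt}(i+1)))^{2}$ (which is nonzero because $\mt$ is standard and $(q,\undQ)$ is separate, by Definition \ref{defn:separate}) yields exactly the right-hand side of the claimed formula and matches the simplified form of $\mathtt{c}_{\mt}(i)$ obtained above. There is no real obstacle here beyond careful bookkeeping of the two symmetric computations for $\mathtt{q}(q^{\pm 2}v)$; everything else is a routine algebraic manipulation using only $x+x^{-1}=X$, $y+y^{-1}=Y$, and $\epsilon=q-q^{-1}$.
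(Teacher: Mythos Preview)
Your proof is correct and takes essentially the same approach as the paper: both reduce $\mathtt{c}_{\mt}(i)$ to the form $1-\epsilon^{2}(XY-4)/(X-Y)^{2}$ with $X=\mathtt{q}(\res_{\mt}(i))$, $Y=\mathtt{q}(\res_{\mt}(i+1))$, and both use the sum and product identities $\mathtt{q}(q^{2}\iota)+\mathtt{q}(q^{-2}\iota)=(\epsilon^{2}+2)\mathtt{q}(\iota)$ and $\mathtt{q}(q^{2}\iota)\mathtt{q}(q^{-2}\iota)=\mathtt{q}(\iota)^{2}+4\epsilon^{2}$ to factor the numerator. The only cosmetic difference is that you work from both sides toward the common intermediate form, whereas the paper transforms the left-hand side step by step into the right-hand side.
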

\begin{proof}
	%It is a straightforward computation.
	For any $\iota\in\mathbb{K}^*,$ we have the following identities
	\begin{align}\label{eq1. rewrite cti}
		\mathtt{q}(q^2\iota)+\mathtt{q}(q^{-2}\iota)=(\epsilon^2+2)\mathtt{q}(\iota),\qquad
		\mathtt{q}(q^2\iota)\mathtt{q}(q^{-2}\iota)=\mathtt{q}(\iota)^2+4\epsilon^2.
	\end{align}
	We shortly denote by $\iota'=\res_{\mt}(i),$ $\iota=\res_{\mt}(i+1),$ then we have
	\begin{align*}
		\mathtt{c}_{\mt}(i)
		&=1-\epsilon^2 \biggl(\frac{\mathtt{b}_{\mt,i}^{-1}\mathtt{b}_{\mt,i+1}}{(\mathtt{b}_{\mt,i}^{-1}\mathtt{b}_{\mt,i+1}-1)^2}
		+\frac{\mathtt{b}_{\mt,i}\mathtt{b}_{\mt,i+1}}{(\mathtt{b}_{\mt,i}\mathtt{b}_{\mt,i+1}-1)^2}\biggr)\\
		&=\frac{\left(\mathtt{q}(\iota')-\mathtt{q}(\iota)\right)^2-\epsilon^2(\mathtt{b}_{\mt,i}^{-1}\mathtt{b}_{\mt,i+1}^{-1}(\mathtt{b}_{\mt,i}\mathtt{b}_{\mt,i+1}-1)^2
			+\mathtt{b}_{\mt,i}\mathtt{b}_{\mt,i+1}^{-1}(\mathtt{b}_{\mt,i}^{-1}\mathtt{b}_{\mt,i+1}-1)^2)}{\left(\mathtt{q}(\iota')-\mathtt{q}(\iota)\right)^2}\\
		&=\frac{\left(\mathtt{q}(\iota')-\mathtt{q}(\iota)\right)^2-\epsilon^2\left(\mathtt{q}(\iota')\mathtt{q}(\iota) -4\right)}{\left(\mathtt{q}(\iota')-\mathtt{q}(\iota)\right)^2}\\
		&=\frac{\mathtt{q}(\iota')^2-(\epsilon^2+2)\mathtt{q}(\iota')\mathtt{q}(\iota)+\mathtt{q}(\iota)^2 +4\epsilon^2}{\left(\mathtt{q}(\iota')-\mathtt{q}(\iota)\right)^2}\\
		&=\frac{\mathtt{q}(\iota')^2-\mathtt{q}(\iota')\left(\mathtt{q}(q^2\iota)+\mathtt{q}(q^{-2}\iota)\right)+\mathtt{q}(q^2\iota)\mathtt{q}(q^{-2}\iota)}{\left(\mathtt{q}(\iota')-\mathtt{q}(\iota)\right)^2}\\
		&=\frac{\left(\mathtt{q}(\iota')-\mathtt{q}(q^2\iota)\right)\left(\mathtt{q}(\iota')-\mathtt{q}(q^{-2}\iota)\right)}{\left(\mathtt{q}(\iota')-\mathtt{q}(\iota)\right)^2},
	\end{align*}
	where in the last second equality we have used \eqref{eq1. rewrite cti}.
\end{proof}

\begin{defn}
	(1) \cite[Before Remark 3B.1]{EM}
For any two boxes $x=(i,j,l)$ and $y=(a,b,c),$ we write $y<x$ if and only if
$c<l; \text{ or } c=l \text{ and } a<i; \text{ or } c=l, a=i \text{ and } b<j.$
	
	(2) For any $\undla\in\mathscr{P}^{\mathsf{\bullet},m}_{n}$ ($\bullet\in\{\mathsf{0},\mathsf{s},\mathsf{ss}\}$) and $\mt\in\Std(\undla),$
	we define
	\begin{align*}
		\mathscr{A}_{\mt}^{\rhd}(k)&:=\{\alpha\in{\rm Add}(\mt\downarrow_{k-1})\mid \alpha>\mt^{-1}(k)\},\\
		\mathscr{R}_{\mt}^{\rhd}(k)&:=\{\beta\in{\rm Rem}(\mt\downarrow_{k-1})\mid \beta>\mt^{-1}(k)\},\\
		\mathscr{A}_{\mt}^{\lhd}(k)&:=\{\alpha\in{\rm Add}(\mt\downarrow_{k-1})\mid \alpha<\mt^{-1}(k)\},\\
		\mathscr{R}_{\mt}^{\lhd}(k)&:=\{\beta\in{\rm Rem}(\mt\downarrow_{k-1})\mid \beta<\mt^{-1}(k)\},
	\end{align*}
	for $k\in[n].$
\end{defn}

\label{pag:diagonalnodes}
\begin{defn}\label{diagonalnodes}
	For any $\bullet\in\{\mathsf{0},\mathsf{s},\mathsf{ss}\},$ we denote the subset of boxes
	\begin{align}
		\mathcal{D}
		=\mathcal{D}^{(\bullet)}
		:=
		\begin{cases}
			\emptyset, \quad &\text{ if $\bullet=\mathsf{0},$}\\
			\{(i,i,0)\mid i\in\mathbb{Z}_{>0}\}, \quad &\text{ if $\bullet=\mathsf{s},$}\\
			\{(i,i,0_{*})\mid i\in\mathbb{Z}_{>0},* \in \{\pm\}\}, \quad &\text{ if $\bullet=\mathsf{ss}.$}
		\end{cases}
	\end{align}
	%	where
	%	\begin{align*}
		%		\mathcal{D}_{*}&:=\{(i,i,0_{*})\mid i\in\mathbb{Z}_{>0}\},\quad \text{ for $*\in\{\pm\}.$ }
		%	\end{align*}
\end{defn}

Now we can derive the following closed formulae for $\mathtt{c}_{\mt}^{\mt^{\undla}}$ and $\mathtt{c}_{\mt}^{\mt_{\undla}}.$
\begin{prop}\label{combina. formulae of c}
	Let $\undla\in\mathscr{P}^{\mathsf{\bullet},m}_{n}$ for $\bullet\in\{\mathsf{0},\mathsf{s},\mathsf{ss}\}$ and $\mt\in\Std(\undla).$
	Then we have
	\begin{align}\label{combina. formula of row c-coeffi.}
		\mathtt{c}_{\mt}^{\mt^{\undla}}
		=\prod_{k=1}^{n}\prod_{\alpha\in\mathscr{A}_{\mt^{\undla}}^{\rhd}(k)}\left(\mathtt{q}(\res_{\mt^{\undla}}(k))-\mathtt{q}(\res(\alpha))\right)^{-1}
		\cdot\prod_{k=1}^{n}\frac{\prod_{\alpha\in\mathscr{A}_{\mt}^{\rhd}(k)}\left(\mathtt{q}(\res_{\mt}(k))-\mathtt{q}(\res(\alpha))\right)}{\prod_{\beta\in\mathscr{R}_{\mt}^{\rhd}(k)\setminus\mathcal{D}}\left(\mathtt{q}(\res_{\mt}(k))-\mathtt{q}(\res(\beta))\right)},
	\end{align}
	\begin{align}\label{combina. formula of column c-coeffi.}
		\mathtt{c}_{\mt}^{\mt_{\undla}}
		=\prod_{k=1}^{n}\frac{\prod_{\beta\in\mathscr{R}_{\mt_{\undla}}^{\lhd}(k)\setminus\mathcal{D}}\left(\mathtt{q}(\res_{\mt_{\undla}}(k))-\mathtt{q}(\res(\beta))\right)}{\prod_{\alpha\in\mathscr{A}_{\mt_{\undla}}^{\lhd}(k)}\left(\mathtt{q}(\res_{\mt_{\undla}}(k))-\mathtt{q}(\res(\alpha))\right)}	\cdot\prod_{k=1}^{n}\frac{\prod_{\alpha\in\mathscr{A}_{\mt}^{\lhd}(k)}\left(\mathtt{q}(\res_{\mt}(k))-\mathtt{q}(\res(\alpha))\right)}{\prod_{\beta\in\mathscr{R}_{\mt}^{\lhd}(k)\setminus\mathcal{D}}\left(\mathtt{q}(\res_{\mt}(k))-\mathtt{q}(\res(\beta))\right)}.
	\end{align}
\end{prop}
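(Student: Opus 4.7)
The plan is to prove \eqref{combina. formula of row c-coeffi.} by induction on the Bruhat length $\ell(\mt):=\ell(d(\mt,\mt^{\undla}))$, using the recurrence \eqref{recurrence relations of c} together with the closed form for $\mathtt{c}_{\mt}(i)$ provided by Lemma \ref{rewrite cti}. Denote the right-hand side of \eqref{combina. formula of row c-coeffi.} by $\Phi(\mt)$, and write $\Phi(\mt)=A(\undla)\cdot B(\mt)$, where
\[
A(\undla):=\prod_{k=1}^{n}\prod_{\alpha\in\mathscr{A}_{\mt^{\undla}}^{\rhd}(k)}\bigl(\mathtt{q}(\res_{\mt^{\undla}}(k))-\mathtt{q}(\res(\alpha))\bigr)^{-1}
\]
depends only on $\undla$, and $B(\mt)$ carries the tableau-dependent data. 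The approach is first to pin down $A(\undla)$ from the base case $\mt=\mt^{\undla}$, and then to verify that $B(s_i\mt)/B(\mt)=\mathtt{c}_{\mt}(i)$ for every admissible $s_i$ with $s_i d(\mt,\mt^{\undla})>d(\mt,\mt^{\undla})$.

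For the base case, Lemma \ref{Phist. lem}(4) gives $\mathtt{c}_{\mt^{\undla}}^{\mt^{\undla}}=1$, while $\Phi(\mt^{\undla})$ reduces, after cancellation of the two $\mathscr{A}^{\rhd}$-products, to the reciprocal of $\prod_{k}\prod_{\beta\in\mathscr{R}_{\mt^{\undla}}^{\rhd}(k)\setminus\mathcal{D}}\bigl(\mathtt{q}(\res_{\mt^{\undla}}(k))-\mathtt{q}(\res(\beta))\bigr)$. Hence it suffices to show that $\mathscr{R}_{\mt^{\undla}}^{\rhd}(k)\subseteq\mathcal{D}$ for every $k\in[n]$. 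Because $\mt^{\undla}$ is defined by row-insertion starting from the lowest-indexed component, a removable box $\beta$ of $\mt^{\undla}\downarrow_{k-1}$ that is strictly greater than $(\mt^{\undla})^{-1}(k)$ in the ordering $<$ must lie in a strictly later component or in the same component but a strictly earlier column; a short case analysis on the row-filling order (with the separate treatment of the shifted components for $\bullet=\mathsf{s},\mathsf{ss}$) rules out every such $\beta$ unless it is a principal diagonal box in a strict component, i.e.\ $\beta\in\mathcal{D}$.

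For the inductive step, fix $i\in[n-1]$ with $s_i\mt\in\Std(\undla)$ and $\ell(s_i\mt)=\ell(\mt)+1$. Since $\res_{s_i\mt}(k)=\res_{\mt}(k)$ and $(s_i\mt)\downarrow_{k-1}=\mt\downarrow_{k-1}$ for $k\notin\{i,i+1\}$, all factors in $B$ with $k\notin\{i,i+1\}$ agree between $\mt$ and $s_i\mt$. Writing $\iota':=\res_{\mt}(i)$, $\iota:=\res_{\mt}(i+1)$, $x:=\mt^{-1}(i)$, $y:=\mt^{-1}(i+1)$, the swap of $i$ and $i+1$ interchanges the anchor box used at step $i$ from $x$ to $y$ (while keeping the underlying shape $\mt\downarrow_{i-1}$) and replaces the anchor at step $i+1$ from $y$ to $x$ (with shape $\mt\downarrow_{i-1}\cup\{y\}$ in place of $\mt\downarrow_{i-1}\cup\{x\}$). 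The addable and removable boxes of $\mt\downarrow_{i-1}\cup\{x\}$ versus $\mt\downarrow_{i-1}\cup\{y\}$ differ only in a neighbourhood of $x$ and $y$, and because $s_i$ is admissible these neighbourhoods are disjoint. After all ``generic'' factors cancel, the ratio $B(s_i\mt)/B(\mt)$ collapses to the four residues attached to the two boxes in the same row/column as $y$ (contributing $\mathtt{q}(q^{\pm 2}\iota)$) divided by $(\mathtt{q}(\iota')-\mathtt{q}(\iota))^2$, which is exactly $\mathtt{c}_{\mt}(i)$ by Lemma \ref{rewrite cti}. Combined with \eqref{recurrence relations of c} this completes the induction. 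The column formula \eqref{combina. formula of column c-coeffi.} is proved by the entirely symmetric argument, with $\mt_{\undla}$ (column-insertion from the highest-indexed component) playing the role of $\mt^{\undla}$, with $\lhd$ replacing $\rhd$, and with the recurrence in \eqref{recurrence relations of c} now driving the induction in the opposite direction. The main obstacle in both cases is the careful bookkeeping of the exclusion $\setminus\mathcal{D}$ in the denominator: we must check that whenever swapping $i$ with $i+1$ causes a diagonal box to enter or leave $\mathscr{R}^{\rhd}$, the corresponding factor is properly absent on both sides, so that the cancellation leading to Lemma \ref{rewrite cti} is not spoiled.
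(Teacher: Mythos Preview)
Your proposal follows essentially the same route as the paper: induction on $\ell(d(\mt,\mt^{\undla}))$, with the base case handled by examining $\mathscr{R}_{\mt^{\undla}}^{\rhd}(k)$ and the inductive step obtained by showing the ratio $\widetilde{\mathtt{c}}_{s_i\mt}^{\mt^{\undla}}/\widetilde{\mathtt{c}}_{\mt}^{\mt^{\undla}}=\mathtt{c}_{\mt}(i)$ via Lemma~\ref{rewrite cti}, then the symmetric argument for \eqref{combina. formula of column c-coeffi.}. Two small differences are worth noting. First, for the base case the paper asserts the stronger $\mathscr{R}_{\mt^{\undla}}^{\rhd}(k)=\emptyset$, whereas you only claim $\mathscr{R}_{\mt^{\undla}}^{\rhd}(k)\subseteq\mathcal{D}$; either statement makes the denominator of $B(\mt^{\undla})$ empty, so both suffice. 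Second, the paper makes the inductive step fully explicit: setting $\theta=\mt^{-1}(i+1)=(a,b,c)$ and its four neighbours $x=(a-1,b,c)$, $y=(a,b+1,c)$, $z=(a+1,b,c)$, $w=(a,b-1,c)$, it writes down exact identities relating $\mathscr{A}^{\rhd}$ and $\mathscr{R}^{\rhd}$ for $\mt$ and $s_i\mt$ at levels $i$ and $i+1$, introduces $A=\mathscr{A}_{s_i\mt}^{\rhd}(i+1)\cap\{y,z\}$ and $R=\mathscr{R}_{\mt}^{\rhd}(i)\cap\{x,w\}$, and then splits into the two cases $\theta\notin\mathcal{D}$ and $\theta\in\mathcal{D}$ (in the latter, $\res(\theta)\in\{\pm1\}$ forces $\mathtt{q}(q^{-2}\res(\theta))=\mathtt{q}(\res(\theta))$, which accounts for the missing factor). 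Your paragraph ``After all `generic' factors cancel \ldots'' is exactly this computation in outline, and your final remark about the bookkeeping of $\setminus\mathcal{D}$ is precisely what the paper's Case~2 makes precise; you would be well served by carrying out that case split explicitly rather than leaving it as a caveat.
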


\begin{proof}
	We prove only the first equality, as the second one follows similarly.
	Induction on $\ell:=\ell(d(\mt,\mt^{\undla})).$ If $\ell=0,$ then $\mt=\mt^{\undla}$ and $\mathtt{c}_{\mt^{\undla}}^{\mt^{\undla}}=1.$
	Note that $\mathscr{R}_{\mt^{\undla}}^{\rhd}(k)=\emptyset$ for any $k\in[n],$ so \eqref{combina. formula of row c-coeffi.} holds for $\ell=0.$ We now assume that \eqref{combina. formula of row c-coeffi.} holds for $\ell>0$ and $d(s_i\mt,\mt^{\undla})=s_id(\mt,\mt^{\undla})>d(\mt,\mt^{\undla})$ for some $i\in[n-1].$
	
	Denote by $\mathtt{\widetilde{c}}_{\mt}^{\mt^{\undla}}$ the right hand side of \eqref{combina. formula of row c-coeffi.} and let
	$\theta=\mt^{-1}(i+1)=(a,b,c),$ $x=(a-1,b,c),$ $y=(a,b+1,c),$ $z=(a+1,b,c),$ $w=(a,b-1,c)$ which can be displayed in (maybe extended) young diagram as follows
	$$\young(:x,w\theta y,:z).$$
We set
	\begin{align*}
		A:=\mathscr{A}_{s_i\mt}^{\rhd}(i+1)\cap \{y,z\},\qquad R:=\mathscr{R}_{\mt}^{\rhd}(i)\cap \{x,w\}.
	\end{align*}
It's easy to check that
	\begin{align*}
		\mathscr{A}_{s_i\mt}^{\rhd}(i)&=\mathscr{A}_{\mt}^{\rhd}(i+1),\quad
		\mathscr{A}_{s_i\mt}^{\rhd}(i+1)=\left(\mathscr{A}_{\mt}^{\rhd}(i)\setminus\{\theta\}\right)\cup A,\\
		\mathscr{R}_{s_i\mt}^{\rhd}(i)&=\mathscr{R}_{\mt}^{\rhd}(i+1),\quad
		\mathscr{R}_{s_i\mt}^{\rhd}(i+1)=\left(\mathscr{R}_{\mt}^{\rhd}(i)\cup\{\theta\}\right)\setminus R
	\end{align*}

and
	\begin{align*}
		\mathscr{R}_{s_i\mt}^{\rhd}(i)\setminus\mathcal{D}&=\mathscr{R}_{\mt}^{\rhd}(i+1)\setminus\mathcal{D},\\
		\mathscr{R}_{s_i\mt}^{\rhd}(i+1)\setminus\mathcal{D}&=\left((\mathscr{R}_{\mt}^{\rhd}(i)\setminus\mathcal{D})\cup(\{\theta\}\setminus\mathcal{D})\right)\setminus (R\setminus\mathcal{D}).
	\end{align*}
	
	By definition, we have
	\begin{align}\label{eq1. combina. formula of row c-coeffi.}
		\frac{\mathtt{\widetilde{c}}_{s_i\mt}^{\mt^{\undla}}}{\mathtt{\widetilde{c}}_{\mt}^{\mt^{\undla}}}
		=&\frac{\prod_{\alpha\in\mathscr{A}_{s_i\mt}^{\rhd}(i)}\left(\mathtt{q}(\res_{s_i\mt}(i))-\mathtt{q}(\res(\alpha))\right)\cdot
			\prod_{\alpha\in\mathscr{A}_{s_i\mt}^{\rhd}(i+1)}\left(\mathtt{q}(\res_{s_i\mt}(i+1))-\mathtt{q}(\res(\alpha))\right)}{\prod_{\alpha\in\mathscr{A}_{\mt}^{\rhd}(i)}
			\left(\mathtt{q}(\res_{\mt}(i))-\mathtt{q}(\res(\alpha))\right)\cdot\prod_{\alpha\in\mathscr{A}_{\mt}^{\rhd}(i+1)}\left(\mathtt{q}(\res_{\mt}(i+1))-\mathtt{q}(\res(\alpha))\right)}\\
		&\cdot \frac{\prod_{\beta\in\mathscr{R}_{\mt}^{\rhd}(i)\setminus\mathcal{D}}\left(\mathtt{q}(\res_{\mt}(k))-\mathtt{q}(\res(\beta))\right)
			\cdot\prod_{\beta\in\mathscr{R}_{\mt}^{\rhd}(i+1)\setminus\mathcal{D}}\left(\mathtt{q}(\res_{\mt}(i+1))-\mathtt{q}(\res(\beta))\right)}{\prod_{\beta\in\mathscr{R}_{s_i\mt}^{\rhd}(i)\setminus\mathcal{D}}
			\left(\mathtt{q}(\res_{s_i\mt}(i))-\mathtt{q}(\res(\beta))\right)\cdot\prod_{\beta\in\mathscr{R}_{s_i\mt}^{\rhd}(i+1)\setminus\mathcal{D}}
			\left(\mathtt{q}(\res_{s_i\mt}(i+1))-\mathtt{q}(\res(\beta))\right)}.\nonumber
	\end{align}
	Case 1. If $\theta\notin\mathcal{D},$ then it's easy to check that
	\begin{align*}
		x\in R\setminus \mathcal{D} \Leftrightarrow y\notin A,\quad w\in R\setminus \mathcal{D} \Leftrightarrow z\notin A.
	\end{align*}
	By \eqref{eq1. combina. formula of row c-coeffi.}, we have
	\begin{align*}
		\frac{\mathtt{\widetilde{c}}_{s_i\mt}^{\mt^{\undla}}}{\mathtt{\widetilde{c}}_{\mt}^{\mt^{\undla}}}
		&=\frac{\prod_{\alpha\in A}
			\left(\mathtt{q}(\res_{\mt}(i))-\mathtt{q}(\res(\alpha))\right)\cdot \prod_{\beta\in R\setminus \mathcal{D}}
			\left(\mathtt{q}(\res_{\mt}(i))-\mathtt{q}(\res(\beta))\right)}{\left(\mathtt{q}(\res_{\mt}(i))-\mathtt{q}(\res_{\mt}(i+1))\right)^2}\\
		&=\frac{\left(\mathtt{q}(\res_{\mt}(i))-\mathtt{q}(q^2\res_{\mt}(i+1))\right)\left(\mathtt{q}(\res_{\mt}(i))
			-\mathtt{q}(q^{-2}\res_{\mt}(i+1))\right)}{\left(\mathtt{q}(\res_{\mt}(i))-\mathtt{q}(\res_{\mt}(i+1))\right)^2}\\
		&=\mathtt{c}_{\mt}(i),
	\end{align*}
	where in the last equality we have used Lemma \ref{rewrite cti}.
	
	Case 2. If $\theta\in\mathcal{D},$ then $\res(\theta)\in\{\pm 1\}$ and hence $\mathtt{q}(\res(\theta))=\mathtt{q}(q^{-2}\res(\theta)).$
	We can similarly check that
	\begin{align*}
		x\in R\setminus \mathcal{D} \Leftrightarrow y\notin A,\quad w\notin R\setminus \mathcal{D},\quad z\notin A.
	\end{align*}
	By \eqref{eq1. combina. formula of row c-coeffi.}, we have
	\begin{align*}
		\frac{\mathtt{\widetilde{c}}_{s_i\mt}^{\mt^{\undla}}}{\mathtt{\widetilde{c}}_{\mt}^{\mt^{\undla}}}
		&=\frac{\prod_{\alpha\in A}
			\left(\mathtt{q}(\res_{\mt}(i))-\mathtt{q}(\res(\alpha))\right)\cdot \prod_{\beta\in R\setminus \mathcal{D}}
			\left(\mathtt{q}(\res_{\mt}(i))-\mathtt{q}(\res(\beta))\right)}{\mathtt{q}(\res_{\mt}(i))-\mathtt{q}(\res_{\mt}(i+1))}\\
		&=\frac{\mathtt{q}(\res_{\mt}(i))-\mathtt{q}(q^2\res_{\mt}(i+1))}{\mathtt{q}(\res_{\mt}(i))-\mathtt{q}(\res_{\mt}(i+1))}\\
		&=\frac{\mathtt{q}(\res_{\mt}(i))-\mathtt{q}(q^2\res_{\mt}(i+1))}{\mathtt{q}(\res_{\mt}(i))-\mathtt{q}(\res_{\mt}(i+1))}
		\cdot \frac{\mathtt{q}(\res_{\mt}(i))-\mathtt{q}(q^{-2}\res_{\mt}(i+1))}{\mathtt{q}(\res_{\mt}(i))-\mathtt{q}(\res_{\mt}(i+1))}\\
		&=\mathtt{c}_{\mt}(i),
	\end{align*}
	where in the last equality we have used Lemma \ref{rewrite cti}.
	
	By the induction hypothesis, we have $\mathtt{\widetilde{c}}_{\mt}^{\mt^{\undla}}=\mathtt{c}_{\mt}^{\mt^{\undla}}.$
	Combining this with above Case 1 and Case 2,
	we can deduce that
    $\mathtt{\widetilde{c}}_{s_i\mt}^{\mt^{\undla}}=\mathtt{\widetilde{c}}_{\mt}^{\mt^{\undla}}\cdot \mathtt{c}_{\mt}(i)=\mathtt{c}_{\mt}^{\mt^{\undla}}\cdot \mathtt{c}_{\mt}(i)=\mathtt{c}_{s_i\mt}^{\mt^{\undla}}.$
    This completes the proof.
\end{proof}

\begin{lem}\label{lem. cundla}
	Let $\undla\in\mathscr{P}^{\mathsf{\bullet},m}_{n}$ for $\bullet\in\{\mathsf{0},\mathsf{s},\mathsf{ss}\},$
	then we have $\mathtt{c}_{\mt}^{\mt^{\undla}}\mathtt{c}_{\mt}^{\mt_{\undla}}=\mathtt{c}_{\mt_{\undla}}^{\mt^{\undla}}$ for any $\mt\in\Std(\undla).$
\end{lem}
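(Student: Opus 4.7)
The plan is to reduce the statement to a multiplicativity identity for the square roots $\mathtt{c}_{\ms,\mt}$ and then verify the latter by manipulating the intertwining operators $\Phi_{\ms,\mt}$. By Lemma \ref{Phist. lem}(3),(4) one has $\mathtt{c}_{\mt}^{\ms} = (\mathtt{c}_{\ms,\mt})^{2}$ for any $\ms,\mt\in\Std(\undla)$, so it is enough to prove the unsquared identity
$$\mathtt{c}_{\mt_{\undla},\mt^{\undla}} \;=\; \mathtt{c}_{\mt_{\undla},\mt}\cdot \mathtt{c}_{\mt,\mt^{\undla}}$$
for every $\mt\in\Std(\undla)$; squaring and invoking the symmetry $\mathtt{c}_{\mt_{\undla},\mt}=\mathtt{c}_{\mt,\mt_{\undla}}$ from Lemma \ref{Phist. lem}(3) then recovers the asserted equation $\mathtt{c}_{\mt}^{\mt^{\undla}}\mathtt{c}_{\mt}^{\mt_{\undla}}=\mathtt{c}_{\mt_{\undla}}^{\mt^{\undla}}$.

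To prove the unsquared identity, I plan to use that $\mt^{\undla}$ and $\mt_{\undla}$ are respectively the ``minimum'' and the ``maximum'' of $\Std(\undla)$ in the natural Bruhat-type order induced by the insertion conventions, so that one has the length-additive decomposition
$$\ell\bigl(d(\mt_{\undla},\mt^{\undla})\bigr) \;=\; \ell\bigl(d(\mt_{\undla},\mt)\bigr) + \ell\bigl(d(\mt,\mt^{\undla})\bigr)$$
for every $\mt\in\Std(\undla)$. Granted this, the concatenation of any reduced expression for $d(\mt_{\undla},\mt)$ with any reduced expression for $d(\mt,\mt^{\undla})$ is itself a reduced expression for $d(\mt_{\undla},\mt^{\undla})$, and the intermediate tableaux arising along the way remain standard by Lemma \ref{admissible transposes}. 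Since $\Phi_{\ms,\mt}$ is independent of the chosen reduced expression by \cite[Lemma 4.22]{LS2}, computing $\Phi_{\mt_{\undla},\mt^{\undla}}$ through this concatenated reduced expression furnishes the operator factorization $\Phi_{\mt_{\undla},\mt^{\undla}} = \Phi_{\mt_{\undla},\mt}\,\Phi_{\mt,\mt^{\undla}}$ in $\mHfcn$.

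Applying this operator identity to the weight vector $v_{\mt^{\undla}}$ and using Lemma \ref{Phist. lem}(1) twice yields
$$\mathtt{c}_{\mt_{\undla},\mt^{\undla}}\,v_{\mt_{\undla}} \;=\; \Phi_{\mt_{\undla},\mt}\bigl(\mathtt{c}_{\mt,\mt^{\undla}}\,v_{\mt}\bigr) \;=\; \mathtt{c}_{\mt,\mt^{\undla}}\,\mathtt{c}_{\mt_{\undla},\mt}\,v_{\mt_{\undla}},$$
which is exactly the required unsquared identity. The main obstacle is the length-additivity of the factorization $d(\mt_{\undla},\mt^{\undla})=d(\mt_{\undla},\mt)\cdot d(\mt,\mt^{\undla})$: once this is in place, the remainder of the argument is a short representation-theoretic computation. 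I plan to establish it by a direct inversion-counting argument, checking from the definitions of $\mt^{\undla}$ (row insertion from the first component) and $\mt_{\undla}$ (column insertion from the last component) that every inversion of $d(\mt,\mt^{\undla})$ is also an inversion of $d(\mt_{\undla},\mt^{\undla})$ and that the inversions contributed by $d(\mt_{\undla},\mt)$ are disjoint from those of $d(\mt,\mt^{\undla})$; alternatively this is a known fact in the theory of the Bruhat order on standard tableaux of multipartitions.
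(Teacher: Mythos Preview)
Your argument is correct, but it takes a different and somewhat longer route than the paper's. The paper's proof is a two-line invariance argument based on the recurrence relations \eqref{recurrence relations of c}: for any admissible $s_i$ one has $\mathtt{c}_{s_i\mt}^{\mt^{\undla}}=\mathtt{c}_{\mt}^{\mt^{\undla}}\cdot\mathtt{c}_{\mt}(i)$ and $\mathtt{c}_{s_i\mt}^{\mt_{\undla}}\cdot\mathtt{c}_{\mt}(i)=\mathtt{c}_{\mt}^{\mt_{\undla}}$, so the product $\mathtt{c}_{\mt}^{\mt^{\undla}}\mathtt{c}_{\mt}^{\mt_{\undla}}$ is unchanged under $\mt\mapsto s_i\mt$; since $\Std(\undla)$ is connected under admissible transpositions, the product is constant and equals its value at $\mt=\mt_{\undla}$, namely $\mathtt{c}_{\mt_{\undla}}^{\mt^{\undla}}$.

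Your approach instead establishes the multiplicativity $\mathtt{c}_{\mt_{\undla},\mt^{\undla}} = \mathtt{c}_{\mt_{\undla},\mt}\,\mathtt{c}_{\mt,\mt^{\undla}}$ by factoring the intertwining operator $\Phi_{\mt_{\undla},\mt^{\undla}}$ through $\mt$ and evaluating on $v_{\mt^{\undla}}$. Both arguments rest on the same combinatorial input, namely that increasing $\ell(d(\mt,\mt^{\undla}))$ by one forces $\ell(d(\mt,\mt_{\undla}))$ to decrease by one (equivalently, your length-additivity statement); the paper encodes this in the second half of \eqref{recurrence relations of c} without further comment, while you spell it out explicitly. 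One simplification you could make: once length-additivity and the well-definedness of $\mathtt{c}_{\ms,\mt}$ are in hand, the multiplicativity of $\mathtt{c}_{\ms,\mt}$ follows immediately from the product formula \eqref{c-coefficients. non-dege.} applied to the concatenated reduced expression, so the detour through $\Phi_{\ms,\mt}$ and $\mathbb{D}(\undla)$ is not strictly necessary.
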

\begin{proof}
	For any $i\in[n-1]$ such that $s_i\mt\in\Std(\undla),$ by \eqref{recurrence relations of c}, we have $\mathtt{c}_{\mt}^{\mt^{\undla}}\mathtt{c}_{\mt}^{\mt_{\undla}}=\mathtt{c}_{s_i\mt}^{\mt^{\undla}}\mathtt{c}_{s_i\mt}^{\mt_{\undla}}.$
It follows that $$\mathtt{c}_{\mt}^{\mt^{\undla}}\mathtt{c}_{\mt}^{\mt_{\undla}}=\mathtt{c}_{\mt_{\undla}}^{\mt^{\undla}}\mathtt{c}_{\mt_{\undla}}^{\mt_{\undla}}=\mathtt{c}_{\mt_{\undla}}^{\mt^{\undla}}$$
	for any $\mt\in\Std(\undla).$
\end{proof}

\label{pag:qt}
\begin{cor}\label{prop. qt}
		Let $\undla\in\mathscr{P}^{\mathsf{\bullet},m}_{n}$ for $\bullet\in\{\mathsf{0},\mathsf{s},\mathsf{ss}\}$ and $\mt\in\Std(\undla).$
Then the following element
	\begin{align*}
		q(\undla)
		:=\prod\limits_{k=1}^{n}\frac{\prod\limits_{\beta\in\Rem(\mt\downarrow_{k-1})\setminus \mathcal{D}}\left(\mathtt{q}(\res_{\mt}(k))-\mathtt{q}(\res(\beta))\right)}{
			\prod\limits_{\alpha\in\Add(\mt\downarrow_{k-1})\setminus \{\mt^{-1}(k)\}}\left(\mathtt{q}(\res_{\mt}(k))-\mathtt{q}(\res(\alpha))\right)}\in\mathbb{K}^*
	\end{align*}
	is independent to the choice of $\mt.$
\end{cor}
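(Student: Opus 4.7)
The plan is to express $q(\undla)^{-1}$ as essentially the product $\mathtt{c}_{\mt}^{\mt^{\undla}} \mathtt{c}_{\mt}^{\mt_{\undla}}$ (up to a factor depending only on $\undla$) by combining the two closed formulae in Proposition \ref{combina. formulae of c}, and then invoke Lemma \ref{lem. cundla} to conclude independence of $\mt$.

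First, I would check that $q(\undla) \in \mathbb{K}^*$: for any $\alpha \in \Add(\mt\downarrow_{k-1}) \setminus \{\mt^{-1}(k)\}$, applying Proposition \ref{compare eigenvalue}(3) to the two distinct addable nodes $\mt^{-1}(k)$ and $\alpha$ of $\mt\downarrow_{k-1}$ yields $\mathtt{q}(\res_\mt(k)) \neq \mathtt{q}(\res(\alpha))$; similarly Proposition \ref{compare eigenvalue}(2) handles the factors indexed by $\beta \in \Rem(\mt\downarrow_{k-1})$. Hence every factor in $q(\undla)$ is non-zero.

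Next, the key combinatorial observation is that for each $k \in [n]$ we have the disjoint decompositions
\begin{align*}
\Add(\mt\downarrow_{k-1}) \setminus \{\mt^{-1}(k)\} &= \mathscr{A}_\mt^{\rhd}(k) \sqcup \mathscr{A}_\mt^{\lhd}(k),\\
\Rem(\mt\downarrow_{k-1}) &= \mathscr{R}_\mt^{\rhd}(k) \sqcup \mathscr{R}_\mt^{\lhd}(k),
\end{align*}
where the second uses that $\mt^{-1}(k)$ is an addable, not a removable, box of $\mt\downarrow_{k-1}$, so it cannot lie in $\Rem(\mt\downarrow_{k-1})$. Multiplying the two formulae \eqref{combina. formula of row c-coeffi.} and \eqref{combina. formula of column c-coeffi.} from Proposition \ref{combina. formulae of c} and collecting factors via these decompositions, I obtain
$$
\mathtt{c}_\mt^{\mt^{\undla}} \cdot \mathtt{c}_\mt^{\mt_{\undla}} = C(\undla) \cdot q(\undla)^{-1},
$$
where
$$
C(\undla) := \prod_{k=1}^n \prod_{\alpha \in \mathscr{A}_{\mt^{\undla}}^{\rhd}(k)} \bigl(\mathtt{q}(\res_{\mt^{\undla}}(k)) - \mathtt{q}(\res(\alpha))\bigr)^{-1} \cdot \prod_{k=1}^n \prod_{\alpha \in \mathscr{A}_{\mt_{\undla}}^{\lhd}(k)} \bigl(\mathtt{q}(\res_{\mt_{\undla}}(k)) - \mathtt{q}(\res(\alpha))\bigr)^{-1}
$$
manifestly depends only on $\undla$ (since the tableaux $\mt^{\undla}$ and $\mt_{\undla}$ are determined by $\undla$).

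Finally, Lemma \ref{lem. cundla} gives $\mathtt{c}_\mt^{\mt^{\undla}} \mathtt{c}_\mt^{\mt_{\undla}} = \mathtt{c}_{\mt_{\undla}}^{\mt^{\undla}}$, whose right-hand side is independent of $\mt$. Therefore
$$
q(\undla) = \frac{C(\undla)}{\mathtt{c}_{\mt_{\undla}}^{\mt^{\undla}}}
$$
depends only on $\undla$, completing the proof. No real obstacle is expected here: the argument is purely bookkeeping of the decompositions above, and the only subtlety is verifying that the ``diagonal'' exclusion $\setminus \mathcal{D}$ appearing in both formulae of Proposition \ref{combina. formulae of c} matches the corresponding exclusion in the numerator of $q(\undla)$, which follows directly by comparing the respective definitions.
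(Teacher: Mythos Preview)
Your proposal is correct and follows essentially the same approach as the paper's own proof: both multiply the two closed formulae of Proposition~\ref{combina. formulae of c} to express $\mathtt{c}_{\mt}^{\mt^{\undla}}\mathtt{c}_{\mt}^{\mt_{\undla}}$ as a $\undla$-only constant times $q(\undla)^{-1}$, and then invoke Lemma~\ref{lem. cundla}. The paper's proof is a one-sentence summary of exactly this argument (with your $C(\undla)$ playing the role of the paper's $\eta(\undla)$), and you have simply spelled out the bookkeeping via the $\rhd/\lhd$ decompositions.
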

\begin{proof}
	It follows from \eqref{combina. formula of row c-coeffi.}, \eqref{combina. formula of column c-coeffi.} and Lemma \ref{lem. cundla} that
	$\mathtt{c}_{\mt_{\undla}}^{\mt^{\undla}}=\eta(\undla)\cdot q(\undla)^{-1}$ for some scalar $\eta(\undla)\in\mathbb{K}^*,$ which is independent of $\mt.$ This proves the Corollary.
\end{proof}

\begin{lem}\label{lem. ct and cst}
	Let $\undla\in\mathscr{P}^{\bullet,m}_{n-1}$ and $\underline{\mu}\in\mathscr{P}^{\bullet,m}_{n}$ where $\bullet\in\{\mathsf{0},\mathsf{s},\mathsf{ss}\},$ $\mt\in\Std(\undla),$ $\mathfrak{u}\in\Std(\underline{\mu})$
	such that $\mathfrak{u}\downarrow_{n-1}=\mt.$
	If $d(s_i \mt,\mt^{\undla})>d(\mt,\mt^{\undla})$ for some $i\in[n-2],$ then
	\begin{align}
		\frac{\mathtt{c}_{\mt,\mt^{\undla}}}{\mathtt{c}_{\mathfrak{u},\mt^{\underline{\mu}}}}
		=\frac{\mathtt{c}_{s_i\mt,\mt^{\undla}}}{\mathtt{c}_{s_i\mathfrak{u},\mt^{\underline{\mu}}}}.
	\end{align}
	\begin{proof}
		It is clear that $d(\mathfrak{u},\mt^{\underline{\mu}})=d(\mt,\mt^{\undla})$ and $d(s_i\mathfrak{u},\mt^{\underline{\mu}})=d(s_i\mt,\mt^{\undla}).$
		By the definition \eqref{c-coefficients. non-dege.} and Lemma \ref{Phist. lem} (3), we have
		\begin{align*}
			\mathtt{c}_{s_i\mt,\mt^{\undla}}
			=\mathtt{c}_{\mt,\mt^{\undla}}\cdot \sqrt{\mathtt{c}_{\mt}(i)},\quad
			\mathtt{c}_{s_i\mathfrak{u},\mt^{\underline{\mu}}}
			=\mathtt{c}_{\mathfrak{u},\mt^{\underline{\mu}}}\cdot\sqrt{\mathtt{c}_{\mathfrak{u}}(i)}.
		\end{align*}
		But $\mathtt{c}_{\mathfrak{u}}(i)=\mathtt{c}_{\mt}(i)$ for $i\in[n-2]$ and $\mathfrak{u}\downarrow_{n-1}=\mt.$ This completes the proof.
	\end{proof}
\end{lem}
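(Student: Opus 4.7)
The plan is to reduce both sides to the single quantity $\sqrt{\mathtt{c}_{\mt}(i)}$, using the recursive behavior of $\mathtt{c}_{\ms,\mt}$ under left multiplication by a simple reflection. A subtle point to keep in mind is that $d(\mathfrak{u},\mt^{\underline{\mu}})\in\mathfrak{S}_n$ need not equal $d(\mt,\mt^{\undla})\in\mathfrak{S}_{n-1}$ (the position of $n$ in $\mt^{\underline{\mu}}$ need not match $\mathfrak{u}^{-1}(n)$), so I will not try to identify these two elements; only their behavior under prepending $s_i$ matters.

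First, I would verify the length condition $\ell\!\left(s_i d(\mathfrak{u},\mt^{\underline{\mu}})\right) > \ell\!\left(d(\mathfrak{u},\mt^{\underline{\mu}})\right)$. The key observation is that both $\mt^{\undla}$ and $\mt^{\underline{\mu}}$ are produced by inserting $1,2,\ldots$ consecutively by rows from the first component, so for any two boxes $\alpha,\beta$ in the common shape $\undla\subseteq\underline{\mu}$ one has $\mt^{\undla}(\alpha)<\mt^{\undla}(\beta)$ if and only if $\mt^{\underline{\mu}}(\alpha)<\mt^{\underline{\mu}}(\beta)$. Taking $\alpha=\mt^{-1}(i)=\mathfrak{u}^{-1}(i)$ and $\beta=\mt^{-1}(i+1)=\mathfrak{u}^{-1}(i+1)$ (which lie in $\undla$ since $i\leq n-2$), and using the standard criterion $\ell(s_iw)>\ell(w)\Leftrightarrow w^{-1}(i)<w^{-1}(i+1)$, I transfer the hypothesis for $\mt$ to the corresponding statement for $\mathfrak{u}$.

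Next, for both pairs $(s_i\mt,\mt^{\undla})$ and $(s_i\mathfrak{u},\mt^{\underline{\mu}})$ I would pick a reduced expression that starts with $s_i$ followed by a reduced expression of $d(\mt,\mt^{\undla})$ (resp.\ of $d(\mathfrak{u},\mt^{\underline{\mu}})$), which is legal by the length condition above and by Lemma~\ref{admissible transposes}. Inserting these expressions into the defining product \eqref{c-coefficients. non-dege.}, the prepended $s_i$ contributes exactly one extra factor $\sqrt{\mathtt{c}_{\mt}(i)}$ (resp.\ $\sqrt{\mathtt{c}_{\mathfrak{u}}(i)}$), so that
\begin{align*}
\mathtt{c}_{s_i\mt,\mt^{\undla}}&=\sqrt{\mathtt{c}_{\mt}(i)}\cdot\mathtt{c}_{\mt,\mt^{\undla}},\\
\mathtt{c}_{s_i\mathfrak{u},\mt^{\underline{\mu}}}&=\sqrt{\mathtt{c}_{\mathfrak{u}}(i)}\cdot\mathtt{c}_{\mathfrak{u},\mt^{\underline{\mu}}}.
\end{align*}

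Finally, because $\mathfrak{u}\downarrow_{n-1}=\mt$ and $i\leq n-2$, the residues at positions $i$ and $i+1$ coincide: $\res_{\mathfrak{u}}(i)=\res_{\mt}(i)$ and $\res_{\mathfrak{u}}(i+1)=\res_{\mt}(i+1)$. Since the formula \eqref{nondege coeffi cti} for $\mathtt{c}_{\mt}(i)$ only involves these two residues, I get $\mathtt{c}_{\mathfrak{u}}(i)=\mathtt{c}_{\mt}(i)$, and hence (with the fixed square root convention) $\sqrt{\mathtt{c}_{\mathfrak{u}}(i)}=\sqrt{\mathtt{c}_{\mt}(i)}$. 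Dividing the two displays above gives
\[
\frac{\mathtt{c}_{s_i\mt,\mt^{\undla}}}{\mathtt{c}_{\mt,\mt^{\undla}}}=\sqrt{\mathtt{c}_{\mt}(i)}=\sqrt{\mathtt{c}_{\mathfrak{u}}(i)}=\frac{\mathtt{c}_{s_i\mathfrak{u},\mt^{\underline{\mu}}}}{\mathtt{c}_{\mathfrak{u},\mt^{\underline{\mu}}}},
\]
which rearranges to the claimed identity. The only mildly non-trivial step is the length comparison, which rests on the compatibility of the row-reading orders for the nested tableaux $\mt^{\undla}$ and $\mt^{\underline{\mu}}$; everything else is essentially reading off the definitions.
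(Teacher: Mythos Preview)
Your proof is correct and follows the same skeleton as the paper's: factor out a single $\sqrt{\mathtt{c}_{\mt}(i)}$ (resp.\ $\sqrt{\mathtt{c}_{\mathfrak{u}}(i)}$) via the recursive definition of $\mathtt{c}_{\ms,\mt}$, and then identify $\mathtt{c}_{\mathfrak{u}}(i)=\mathtt{c}_{\mt}(i)$ from $\mathfrak{u}\downarrow_{n-1}=\mt$.

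Your caution about the length condition is well placed. The paper asserts ``it is clear that $d(\mathfrak{u},\mt^{\underline{\mu}})=d(\mt,\mt^{\undla})$'', but this is false in general: if the added box $\underline{\mu}\setminus\undla$ is not the last box in the row-reading order of $\underline{\mu}$, then $\mt^{\underline{\mu}}\downarrow_{n-1}\neq\mt^{\undla}$ and the two permutations differ (e.g.\ for $\undla=((1),(1))$, $\underline{\mu}=((2),(1))$, $\mt=\mt^{\undla}$, one gets $d(\mt,\mt^{\undla})=1$ but $d(\mathfrak{u},\mt^{\underline{\mu}})=s_2$). Your argument, which only uses that the row-reading orders of $\mt^{\undla}$ and $\mt^{\underline{\mu}}$ agree on the boxes of $\undla$ to transfer the inequality $w^{-1}(i)<w^{-1}(i+1)$, is the correct way to justify $\ell\!\bigl(s_i\,d(\mathfrak{u},\mt^{\underline{\mu}})\bigr)>\ell\!\bigl(d(\mathfrak{u},\mt^{\underline{\mu}})\bigr)$ and hence the recursion $\mathtt{c}_{s_i\mathfrak{u},\mt^{\underline{\mu}}}=\sqrt{\mathtt{c}_{\mathfrak{u}}(i)}\cdot\mathtt{c}_{\mathfrak{u},\mt^{\underline{\mu}}}$.
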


\begin{cor}\label{cor. ct and cst}
	Let $\undla\in\mathscr{P}^{\bullet,m}_{n-1}$ and $\underline{\mu}\in\mathscr{P}^{\bullet,m}_{n}$ where $\bullet\in\{\mathsf{0},\mathsf{s},\mathsf{ss}\}.$  Let $\mathfrak{a}\in\Std(\underline{\mu})$ be the unique tableau such that
	$\mathfrak{a}\downarrow_{n-1}=\mt^{\undla}.$ Then
	\begin{align}
		\frac{\mathtt{c}_{\mt,\mt^{\undla}}}{\mathtt{c}_{\mathfrak{u},\mt^{\underline{\mu}}}}
		=\frac{1}{\mathtt{c}_{\mathfrak{a},\mt^{\underline{\mu}}}}
	\end{align}
	holds for any $\mt\in\Std(\undla),$ $\mathfrak{u}\in\Std(\underline{\mu})$
	with $\mathfrak{u}\downarrow_{n-1}=\mt.$
\end{cor}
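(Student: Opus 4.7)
The plan is to reduce this identity to Lemma~\ref{lem. ct and cst} by an induction on the length $\ell(d(\mt,\mt^{\undla}))$, using that any $\mathfrak{u}\in\Std(\underline{\mu})$ with $\mathfrak{u}\downarrow_{n-1}=\mt$ differs from $\mathfrak{a}$ by a permutation that fixes $n$. In the base case $\mt=\mt^{\undla}$, the uniqueness of $\mathfrak{a}$ as a standard tableau of shape $\underline{\mu}$ restricting to $\mt^{\undla}$ forces $\mathfrak{u}=\mathfrak{a}$, and the convention $\mathtt{c}_{\mt^{\undla},\mt^{\undla}}=1$ recorded immediately after Lemma~\ref{Phist. lem} reduces the left hand side to $1/\mathtt{c}_{\mathfrak{a},\mt^{\underline{\mu}}}$, matching the right.

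For the inductive step, I would fix a reduced expression $d(\mt,\mt^{\undla})=s_{i_p}\cdots s_{i_1}$. Since both $\mt$ and $\mt^{\undla}$ are standard tableaux of the same multipartition $\undla$ of $n-1$ boxes, $d(\mt,\mt^{\undla})\in\mathfrak{S}_{n-1}$, so every $i_j$ lies in $[n-2]$. Setting $\mt':=s_{i_p}\mt$ and $\mathfrak{u}':=s_{i_p}\mathfrak{u}$, the transposition $s_{i_p}$ fixes the position occupied by $n$, so $\mathfrak{u}'\in\Std(\underline{\mu})$ with $\mathfrak{u}'\downarrow_{n-1}=\mt'$ (standardness being guaranteed by $\mt'\in\Std(\undla)$ via Lemma~\ref{admissible transposes}). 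Since $d(\mt,\mt^{\undla})=s_{i_p}d(\mt',\mt^{\undla})>d(\mt',\mt^{\undla})$, Lemma~\ref{lem. ct and cst} applied with $\mt'$ in place of $\mt$ (and $i=i_p$) yields
\begin{equation*}
\frac{\mathtt{c}_{\mt',\mt^{\undla}}}{\mathtt{c}_{\mathfrak{u}',\mt^{\underline{\mu}}}}
=\frac{\mathtt{c}_{s_{i_p}\mt',\mt^{\undla}}}{\mathtt{c}_{s_{i_p}\mathfrak{u}',\mt^{\underline{\mu}}}}
=\frac{\mathtt{c}_{\mt,\mt^{\undla}}}{\mathtt{c}_{\mathfrak{u},\mt^{\underline{\mu}}}},
\end{equation*}
and invoking the inductive hypothesis on the pair $(\mt',\mathfrak{u}')$, whose length is $p-1$, closes the induction.

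No serious obstacle is anticipated, since the algebraic substance has already been absorbed into Lemma~\ref{lem. ct and cst}. The only care required is the purely combinatorial verification that each intermediate tableau obtained by stripping off simple reflections from the right of a reduced word for $d(\mt,\mt^{\undla})$ remains inside $\Std(\undla)$, which is supplied by Lemma~\ref{admissible transposes}, together with the observation that the restriction operation $\downarrow_{n-1}$ commutes with $s_i$ for $i\leq n-2$; both of these are automatic, and the proof is thus a transparent unwinding of Lemma~\ref{lem. ct and cst}.
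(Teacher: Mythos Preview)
Your proposal is correct and follows exactly the approach of the paper, which simply says ``Applying the Lemma~\ref{lem. ct and cst} repeatedly and note that $\mathtt{c}_{\mt^{\undla},\mt^{\undla}}=1$.'' Your write-up merely spells out this repeated application as a formal induction on $\ell(d(\mt,\mt^{\undla}))$ and makes explicit the combinatorial checks (admissibility via Lemma~\ref{admissible transposes}, compatibility of $\downarrow_{n-1}$ with $s_i$ for $i\le n-2$) that the paper leaves implicit.
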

\begin{proof}
	Applying the Lemma \ref{lem. ct and cst} repeatedly and note that $\mathtt{c}_{\mt^{\undla},\mt^{\undla}}=1.$
\end{proof}
\subsection{Cyclotomic Sergeev algebras and their seminormal forms}\label{dege basic}
All above constructions and results have analogs for the cyclotomic Sergeev algebras. For reader's convenience, we briefly outline them in this subsection.
	\subsubsection{Affine Sergeev algebra $\mhcn$}
\label{pag:ASA}
For $n\in\mathbb{N}$, the affine Sergeev (or degenerate Hecke-Clifford) algebra $\mhcn$ is
the ${\rm R}$-superalgebra generated by even generators
$s_1,\ldots,s_{n-1},$ $x_1,\ldots,x_n$ and odd generators
$c_1,\ldots,c_n$ subject to the following relations
\begin{align}
	s_i^2=1,\quad s_is_j =s_js_i, \quad
	s_is_{i+1}s_i&=s_{i+1}s_is_{i+1}, \quad|i-j|>1,\label{braid}\\
	x_ix_j&=x_jx_i, \quad 1\leq i,j\leq n, \label{poly}\\
	c_i^2=1,c_ic_j&=-c_jc_i, \quad 1\leq i\neq j\leq n, \label{clifford}\\
	s_ix_i&=x_{i+1}s_i-(1+c_ic_{i+1}),\label{px1}\\
	s_ix_j&=x_js_i, \quad j\neq i, i+1, \label{px2}\\
	s_ic_i=c_{i+1}s_i, s_ic_{i+1}&=c_is_i,s_ic_j=c_js_i,\quad j\neq i, i+1, \label{pc}\\
	x_ic_i=-c_ix_i, x_ic_j&=c_jx_i,\quad 1\leq i\neq j\leq n.
	\label{xc}
\end{align}

For $\alpha=(\alpha_1,\ldots,\alpha_n)\in\mathbb{Z}_+^n$ and
$\beta=(\beta_1,\ldots,\beta_n)\in\mathbb{Z}_2^n$, set
$x^{\alpha}:=x_1^{\alpha_1}\cdots x_n^{\alpha},$
$c^{\beta}:=c_1^{\beta_1}\cdots c_n^{\beta_n}$ and recall that
$\supp(\beta):=\{1 \leq k \leq n:\beta_{k}=\bar{1}\},$
$|\beta|:=\Sigma_{i=1}^{n}\beta_i \in \mathbb{Z}_2.$
Then we have the
following.
\begin{lem}\cite[Theorem 2.2]{BK}\label{lem:PBW}
	The set $\{x^{\alpha}c^{\beta}w~|~ \alpha\in\mathbb{Z}_+^n,
	\beta\in\mathbb{Z}_2^n, w\in \mathfrak{S}_n\}$ forms an ${\rm R}$-basis of $\mhcn$.
\end{lem}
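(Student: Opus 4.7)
The plan is to establish the PBW basis in two stages: spanning and linear independence, following the standard template used in \cite{BK}.

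For spanning, I would set up a straightening procedure on arbitrary monomials in the generators $s_i$, $x_j$, $c_k$. Define a complexity measure on monomials by the triple (total degree in $x$'s, length of the underlying permutation word, number of $c$'s to the left of some $x$), and show by induction on this measure that any such monomial equals a linear combination of normal-form elements $x^\alpha c^\beta w$. Each defining relation either moves a letter into its correct zone (all $x$'s to the far left, then $c$'s, then the $s$-word) or replaces a word by one of strictly smaller complexity. The potentially dangerous relations are \eqref{px1}, \eqref{pc}, and \eqref{xc}, which produce extra correction terms when letters are swapped; one checks that these correction terms always have strictly lower $x$-degree or shorter permutation length, so the induction goes through.

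For linear independence, I would construct a faithful polynomial representation. Let $V := {\rm R}[y_1,\ldots,y_n]\otimes \mathcal{C}_n \otimes {\rm R}\mathfrak{S}_n$, viewed as a free ${\rm R}$-module with obvious ordered basis $y^\alpha c^\beta w$. Define an action of $\mhcn$ on $V$ by letting $x_i$ act (essentially) as multiplication by $y_i$, letting $c_i$ act on the Clifford factor via a suitable twist depending on the $x$-exponents (to encode the sign in \eqref{xc}), and letting $s_i$ act on the $\mathfrak{S}_n$-factor by left multiplication plus correction terms from \eqref{px1} and \eqref{pc}. After verifying that this assignment satisfies all the defining relations \eqref{braid}--\eqref{xc}, the orbit of the vacuum $1\otimes 1\otimes 1$ under the elements $x^\alpha c^\beta w$ produces precisely the ordered basis $y^\alpha c^\beta w$ of $V$, so these elements are ${\rm R}$-linearly independent in $\mhcn$.

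The main obstacle is the verification that the polynomial representation is well-defined, that is, that all relations of $\mhcn$ are preserved by the proposed action. The hardest relations to check are the braid relation \eqref{braid} on triples $s_i s_{i+1} s_i = s_{i+1} s_i s_{i+1}$, because the correction terms coming from \eqref{px1} and \eqref{pc} interact non-trivially when two $s_i$'s are composed on elements carrying both $x$- and $c$-content, and the mixed relation \eqref{px1} whose right-hand side $x_{i+1}s_i - (1+c_ic_{i+1})$ couples the polynomial, Clifford, and symmetric-group parts of $V$ simultaneously. Once these two verifications are completed, combining spanning and linear independence yields the claim.
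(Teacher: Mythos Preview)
The paper does not prove this lemma at all; it is quoted verbatim from \cite[Theorem~2.2]{BK} and used as a black box. Your two-step outline (straightening for spanning, a faithful polynomial representation for linear independence) is exactly the standard argument carried out in \cite{BK}, so there is nothing to compare.
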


Let $\mathcal{P}_n$ \label{pag:subalg Pn} be the superalgebra generated by even generators $x_1,\ldots,x_n$ and odd generators $c_1,\ldots,c_n$.
%	and $\mathfrak{H}(n)$ be the finite Sergeev superalgebra generated by even generators $s_1,\ldots,s_n$ and odd generators $c_1,\ldots,c_n$ as subalgebras of $\mhcn$ respectively.
By Lemma~\ref{lem:PBW},
$\mathcal{P}_n$ can be identified with the superalgebra generated by even generators $x_1,\ldots,x_n$ and odd generators $c_1,\ldots,c_n$ subject to relations \eqref{poly}, \eqref{clifford}, \eqref{xc}. Clifford algebra $\mathcal{C}_n$ also can be identified with the superalgebra generated by odd generators $c_1,\ldots,c_n$ with relations \eqref{clifford}.
%And $\mathfrak{H}(n)$ is the superalgebra generated by even generators $s_1,\ldots,s_n$ and odd generators $c_1,\ldots,c_n$ with relations \eqref{braid}, \eqref{clifford} and \eqref{pc}.
%	
%	

In the rest of this subsection, we assume that
${\rm R}=\mathbb{K}$ is the algebraically closed field of characteristic different from $2.$
Following~\cite{Na2}, we define the intertwining elements as
\label{pag:dege BK intertwining elements}
\begin{align}
	\tilde{\phi}_i:=s_i(x_i^2-x^2_{i+1})+(x_i+x_{i+1})+c_ic_{i+1}(x_i-x_{i+1}),\quad
	1\leq i<n.\label{intertw}
\end{align}
%It is known that
%\begin{align}
%	\tilde{\phi}_i^2=2(x_i^2+x^2_{i+1})-(x_i^2-x^2_{i+1})^2\label{sqinter},\\
%	\tilde{\phi}_ix_i=x_{i+1}\tilde{\phi}_i, \tilde{\phi}_ix_{i+1}=x_i\tilde{\phi}_i,
%	\tilde{\phi}_ix_l=x_l\tilde{\phi}_i\label{xinter},\\
%	\tilde{\phi}_ic_i=c_{i+1}\tilde{\phi}_i, \tilde{\phi}_ic_{i+1}=c_i\tilde{\phi}_i,
%	\tilde{\phi}_ic_l=c_l\tilde{\phi}_i\label{cinter},\\
%	\tilde{\phi}_j\tilde{\phi}_i=\tilde{\phi}_i\tilde{\phi}_j,
%	\tilde{\phi}_i\tilde{\phi}_{i+1}\tilde{\phi}_i=\tilde{\phi}_{i+1}\tilde{\phi}_i\tilde{\phi}_{i+1}\label{braidinter}
%\end{align}
%for all admissible $j,i,l$ with $l\neq i, i+1$ and $|j-i|>1$.
We also have the intertwining elements of the following version
\label{pag:dege N intertwining elements}
\begin{align*}
	\phi_{i}:=\tilde{\phi}_i (x_{i}^{2}-x_{i+1}^{2})^{-1}&=s_i+\frac{1}{x_i-x_{i+1}}+c_ic_{i+1}\frac{1}{x_i+x_{i+1}}\\
	&\in \mathbb{K}(x_1,\ldots,x_n)\otimes_{\mathbb{K}[x_1,\ldots, x_n]} \mhcn,
\end{align*}
for $i=1,\ldots,n-1.$

For any $i=1,2,\ldots,n-1$ and $x,y \in \mathbb{K}$ satisfying $y\neq \pm x,$ let (\cite{Na2})
\label{pag:dege N phi function}
$$\phi_i(x,y):=s_{i}+\frac{1}{x-y}+\frac{c_{i}c_{i+1}}{x+y} \in \mHcn.$$
%\begin{rem}\label{diff. of phi and Phi}
%It is worth noting that $x$'s are on the right-hand-side of $c$'s in the third term of $\phi_{i},$ which differs slightly from $\Phi_{i}$ \eqref{universal-Phi} in non-degenerate case.
%\end{rem}

%\begin{lem}\cite[(5.1)]{Na2} These functions satisfy the following properties
%	\begin{align}
%		\phi_i(x,y)\phi_i(y,x)=1- \frac{1}{(x-y)^2} - \frac{1}{(x+y)^2}, \label{degsquare1}\\
%		\phi_i(x,y)\phi_j(z,w)=\phi_j(z,w)\phi_i(x,y), \quad \text{ if } |j-i|>1, \label{degbraidrel1} \\
%		\phi_i(x,y)\phi_{i+1}(z,y)\phi_i(z,x)=\phi_{i+1}(z,x)\phi_i(z,y)\phi_{i+1}(x,y) \label{degbraidrel2}
%	\end{align}
%	for all possible $i,j$ and $x,y,z,w.$ Furthermore, we also have the following
%	\begin{align}
%		\phi_i(x,y)^2= \frac{2}{x-y} \phi_i(x,y) + 1&- \frac{1}{(x-y)^2} - \frac{1}{(x+y)^2},\label{degsquare2}\\
%		c_i\phi_i(x,y)=\phi_i(x,-y)c_{i+1},\quad  c_{i+1}\phi_i(x,y)&=\phi_i(-x,y)c_i,\quad c_j\phi_i(x,y)=\phi_i(x,y)c_j \label{phic2}
%	\end{align}
%	for $j \neq i,i+1.$
%\end{lem}

For any pair of $(x,y)\in \mathbb{K}^2$ and $y\neq \pm x$, we consider the following (degenerate) idempotency condition on $(x,y)$
\begin{align}\label{invertible dege}
	\frac{1}{(x-y)^2}+\frac{1}{(x+y)^2}=1.
\end{align}
Note that the equation \eqref{invertible dege} holds for the pair $(x,y)$ if and only if it holds for one of these four pairs $(\pm x,\pm y).$	
%Similarly, we have
%\begin{cor}\label{phi}
%	(a) Suppose that the pair $(x,y)$ satisfies (\ref{invertible dege}) and $y\neq \pm x.$ Then
%	$$\phi_i(x,y)^2=\frac{2}{x-y} \phi_i(x,y).$$
%	(b) Suppose that the pair $(x,y)$ does not satisfy (\ref{invertible dege}) and $y\neq \pm x.$ Then $\phi_i(x,y)$ is invertible and
%	$$\phi_i(x,y)^{-1}
%	=\left( 1- \frac{1}{(x-y)^2} - \frac{1}{(x+y)^2} \right)^{-1}  \left( \phi_i(x,y)-\frac{2}{x-y} \right).$$
%\end{cor}
For any  $\iota \in \mathbb{K}$, we set
\label{pag:dege q-function and u-function}
\begin{align}
	\mathtt{q}(\iota):=\iota(\iota+1) \label{qi},\quad \mathtt{u}_{\pm}(\iota):=\pm \sqrt{\iota(\iota+1)}.
\end{align}

%In \cite{Na2}, the condition \eqref{invertible dege} was transformed by the substitution \begin{align}\label{substitute dege}
%	x^2=u(u+1)=	\mathtt{q}(u),\qquad\qquad y^2=v(v+1)=	\mathtt{q}(v)
%\end{align} to the equivalent condition which states that $u,v$ satisfy one of the following four equations \begin{align}\label{invertible dege2}
%	u-v=\pm 1,\quad u+v=0,\quad u+v=-2.
%\end{align}
%Also, the last two equations in \eqref{invertible dege2} can be obtained from the initial two by the transformation $u\mapsto -u-1.$ This means that each pair $(x,y)$ satisfying the condition (\ref{invertible dege}) can be obtained via the substitution \eqref{substitute dege} from a pair $(u,v)$ satisfying $u-v=\pm 1.$
\subsubsection{Cyclotomic Sergeev algebra $\mhgcn$}\label{CSA}
\label{pag:CSA}
Similarly, to define the cyclotomic Sergeev algebra $\mhgcn$ over ${\rm R},$ we need to fix a $g=g(x_1)\in {\rm R}[x_1]$ satisfying  \cite[3-e]{BK}. Let
%If we are working over algebraically closed field $\mathbb{K},$ one can check that $g=g(x_1)\in \mathbb{K}[x_1]$ satisfying \cite[3-e]{BK} must be one of the following two forms:
$$\begin{aligned}
	g=\begin{cases}
		g^{(\mathsf{0})}_{\underline{Q}}=\prod_{i=1}^m\biggl(x^2_1-\mathtt{q}(Q_i)\biggr), \\ %\mbox{if $r:=\deg(g)=2m$};
		g^{\mathsf{(s)}}_{\underline{Q}}=x_1\prod_{i=1}^m\biggl(x^2_1-\mathtt{q}(Q_i)\biggr), %\mbox{if $r:=\deg(g)=2m+1$}
	\end{cases}
\end{aligned}
$$
where $\underline{Q}=(Q_1,\cdots, Q_m)\in {\rm R}^m.$ \label{pag:dege Q-parameters} In each case, the degree $r$ of the polynomial $g$ is $2m,\,2m+1$ respectively.

The cyclotomic Sergeev (or degenerate cyclotomic Hecke-Clifford) algebra $\mhgcn$ is defined as $$\mhgcn:=\mhcn/\mathcal{J}_g,
$$ where $\mathcal{J}_g$ is the two sided ideal of $\mhcn$ generated by $g(x_1)$. We call $r:=\deg(g)$ the level of $\mhgcn.$
\label{pag:dege level}
We shall denote the images of $x^{\alpha}, c^{\beta}, w$ in the cyclotomic quotient $\mhgcn$ by the same symbols.

Then we have the following due to \cite{BK}.
\begin{lem}\cite[Theorem 3.6]{BK}
	The set $\{x^{\alpha}c^{\beta}w~|~ \alpha\in\{0,1,\cdots,r-1\}^n,
	\beta\in\mathbb{Z}_2^n, w\in {\mathfrak{S}_n}\}$ forms an ${\rm R}$-basis of $\mhgcn$, where $r=\deg(g)$.
\end{lem}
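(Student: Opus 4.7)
The plan is a standard PBW argument organized in two parts: spanning followed by linear independence. The statement is of course cited from \cite[Theorem 3.6]{BK}, but the following outline is what I would carry out to reprove it.

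\textbf{Spanning.} Starting from the affine PBW basis of $\mhcn$ in Lemma \ref{lem:PBW}, I need to show that modulo the cyclotomic ideal $\mathcal{J}_g$ every exponent $\alpha_i \geq r$ can be eliminated. For $i=1$ this is immediate from $g(x_1)=0$. For $i>1$, one inducts on $i$: using \eqref{px1}, one has $s_i x_i = x_{i+1} s_i - (1+c_ic_{i+1})$, so conjugation by $s_i$ moves $g(x_i)$ into $g(x_{i+1})$ modulo terms of strictly smaller $x_{i+1}$-degree whose coefficients involve $s_i, c_i, c_{i+1}$ and the remaining $x_j$'s. Hence $g(x_{i+1})$ lies in the two-sided ideal generated by $g(x_i)$ together with lower-degree corrections, and iterating shows that every $x_i^a$ with $a\ge r$ can be rewritten in the claimed form. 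Combined with relations \eqref{poly} and \eqref{xc}, this yields spanning of the set $\{x^\alpha c^\beta w\}$ with $\alpha \in \{0,1,\ldots,r-1\}^n$.

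\textbf{Linear independence.} The standard device is to construct a faithful representation of $\mhgcn$ whose rank is exactly $n!\, r^n\, 2^n$, matching the cardinality of the claimed basis. A natural candidate is a free ${\rm R}$-module with basis indexed by the proposed PBW set, realized as the induced module $\mhcn \otimes_{\mathcal{P}_n'} V$ where $\mathcal{P}_n'$ is a suitable commutative subalgebra and $V$ is the tensor product of the Clifford regular representation and the truncated polynomial module $\bigl({\rm R}[x_1]/(g(x_1))\bigr)^{\otimes n}$. One defines the action so that $x_1$ acts by multiplication in ${\rm R}[x_1]/(g(x_1))$, the Clifford generators act via the regular representation, the symmetric group permutes positions, and the cross relations \eqref{px1}, \eqref{pc}, \eqref{xc} are imposed inductively. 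If well-defined, this action sends the claimed spanning set to linearly independent operators, finishing the proof.

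\textbf{Main obstacle.} The delicate step is verifying well-definedness of this faithful module. One must check that the $s_i$-action is compatible with the transported cyclotomic relations $g(x_j)$ for $j>1$ (which only hold modulo corrections rather than on the nose), and that the Clifford twist $x_i c_i = -c_i x_i$ from \eqref{xc} is consistent with the ${\rm R}[x_1]/(g(x_1))$-structure in the sense that $c_1$ swaps the eigenspaces of $x_1$ correctly inside the cyclotomic quotient. This verification is essentially the content of Brundan--Kleshchev's original argument, and once in place the PBW theorem follows at once from the spanning bound.
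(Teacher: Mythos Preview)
The paper does not prove this lemma at all; it is simply quoted from \cite[Theorem 3.6]{BK} with no argument given. Your outline is a faithful sketch of the original Brundan--Kleshchev proof: a spanning argument by straightening powers of $x_i$ using the relation $s_ix_is_i = x_{i+1} - s_i(1+c_ic_{i+1})$, followed by linear independence via an explicitly constructed module of the correct rank. You have correctly isolated the only genuinely nontrivial point, namely that the cyclotomic relation $g(x_1)=0$ must be compatible with the Clifford twist $c_1x_1c_1=-x_1$, which forces $g(-x_1)=\pm g(x_1)$; this is precisely condition \cite[(3-e)]{BK} referenced just before the definition of $\mhgcn$, and it is satisfied by both $g^{(\mathsf{0})}_{\underline{Q}}$ (even) and $g^{(\mathsf{s})}_{\underline{Q}}$ (odd).
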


We set $Q_0:=0$. Now we can also define the (degenerate) residues of boxes.
\label{pag:dege residue}
\begin{defn}Suppose  $\undla\in\mathscr{P}^{\bullet,m}_{n}$ with $\bullet\in\{\mathsf{0},\mathsf{s}\}$  and $(i,j,l)\in \undla$, we define the residue of box $(i,j,l)$ with respect to the parameter $\undQ=(Q_1,\ldots,Q_m)\in {\rm R}^m$ as follows: \begin{equation}\label{eq:deg-residue}
		\res(i,j,l):=Q_l+j-i \in {\rm R}.
	\end{equation} If $\mathfrak{t}\in \Std(\undla)$ and $\mathfrak{t}(i,j,l)=a$, we set \begin{align}\label{res-dege}
		\res_\mathfrak{t}(a)&:=Q_l+j-i \in{\rm R},\\
		\res(\mathfrak{t})&:=(\res_\mathfrak{t}(1),\cdots,\res_\mathfrak{t}(n))\in {\rm R}^m,
	\end{align}
	then the (degenerate) $\mathtt{q}$-sequence of $\mt$ is
	\begin{align}\label{res-dege2}
		\mathtt{q}(\res(\mathfrak{t})):=(\mathtt{q}(\res_{\mathfrak{t}}(1)), \mathtt{q}(\res_{\mathfrak{t}}(2)),\ldots, \mathtt{q}(\res_{\mathfrak{t}}(n)))\in {\rm R}^m.
	\end{align}
\end{defn}

{\bf In the rest of this subsection, we assume that
${\rm R}=\mathbb{K}$ is the algebraically closed field of characteristic different from $2$.}
We shall recall the separate condition in \cite[Definition 5.8]{SW} on the choice of the parameters $\underline{Q}$ and $g=g^{(\bullet)}_{\underline{Q}}$ with $\bullet\in\{\mathsf{0},\mathsf{s}\}$, $r:=\deg(g)$.
\begin{defn}\label{important condition2}\cite[Definition 5.8]{SW}
	Let $\undQ=(Q_1,\ldots,Q_m)\in\mathbb{K}^m$ and $\undla\in\mathscr{P}^{\bullet,m}_{n}$ with $\bullet\in\{\mathsf{0},\mathsf{s}\}$. The parameter $\undQ$ is said to be {\em separate} with respect to $\undla$  if for any $\mathfrak{t}\in \undla$, the $\mathtt{q}$-sequence for $\mathfrak{t}$ defined via \eqref{res-dege2} satisfies the following condition:
	$$
	\mathtt{q}(\res_{\mathfrak{t}}(k))\neq\mathtt{q}(\res_{\mathfrak{t}}(k+1)) \text{ for any $k=1,\cdots,n-1.$ }
	$$
\end{defn}
%	\begin{rem}
	%	Observe that in the case $m=0$ and $\bullet=\mathsf{s}$, three conditions are still well-defined as we set $Q_0=0$. So in this situation by saying the parameter $\underline{Q}$ is separate with respect to $\undla=(\lambda^{(0)})$ we mean that the
	%	residue of the boxes in the strict partition $\lambda^{(0)}$ defined via \eqref{eq:deg-residue} satisfies the three conditions in Definition \ref{important condition2}. In the following, we shall apply this convention.
	%\end{rem}
	
	As in non-degenerate case, the separate condition holds for any $\underline{\mu} \in \mathscr{P}^{\bullet,m}_{n+1}$ can be reformulated via the condition $P^{(\bullet)}_{n}(1,\undQ)\neq 0$ (\cite[Proposition 5.12]{SW}), \label{pag:dege Pioncare poly} where $P^{(\bullet)}_{n}(1,\undQ)$ ($\bullet\in\{\mathsf{0},\mathsf{s}\}$) is an explicit polynomial on $\undQ$ which can be regarded as the extended definition of $P^{(\bullet)}_{n}(q^2,\undQ)$ on ``$q^2=1$''. We also skip the definition of $P^{(\bullet)}_{n}(1,\undQ)$ here.
	\begin{prop}\label{separate formula dege}\cite[Proposition 5.13]{SW}
		Let $n\geq 1,\,m\geq 0$,  $\undQ=(Q_1,\ldots,Q_m)\in\mathbb{K}^m$ and $\bullet\in\{\mathsf{0},\mathsf{s}\}$. Then the parameter $\undQ$ is separate with respect to $\undla$ for any $\undla\in\mathscr{P}^{\bullet,m}_{n+1}$ if and only if $P_n^{(\bullet)}(1,\undQ)\neq 0$.
	\end{prop}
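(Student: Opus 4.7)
The plan is to adapt the proof of the non-degenerate analog Proposition \ref{separate formula} to the degenerate setting, using the fact that $P_n^{(\bullet)}(1,\undQ)$ is (by construction) the natural ``$q^2=1$'' specialization of $P_n^{(\bullet)}(q^2,\undQ)$ and encodes the same combinatorial data.

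First I would rewrite the separate condition in elementary form. In the degenerate setting $\mathtt{q}(\iota)=\iota(\iota+1)$, so
$\mathtt{q}(a)=\mathtt{q}(b)$ iff $(a-b)(a+b+1)=0$. Thus the separate condition for $\mathfrak{t}\in\Std(\undla)$ fails at position $k$ precisely when either $\res_\mathfrak{t}(k)=\res_\mathfrak{t}(k+1)$ or $\res_\mathfrak{t}(k)+\res_\mathfrak{t}(k+1)=-1$. Using \eqref{res-dege}, these translate into \emph{linear} conditions on the parameters $Q_0,Q_1,\ldots,Q_m$ (with $Q_0=0$ in the $\bullet=\mathsf{s}$ case), of the form $Q_l-Q_{l'}+(j-i)-(j'-i')=0$ or $Q_l+Q_{l'}+(j-i)+(j'-i')+1=0$ for appropriate integers $i,j,i',j'$ and indices $l,l'$.

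Second I would write $P_n^{(\bullet)}(1,\undQ)$ as an explicit product of such linear factors, following the construction in \cite[before Proposition 5.12]{SW}. Each factor records exactly one potential coincidence of residues (first type) or a ``sign-symmetric'' pair (second type) that can be realized by some pair of adjacent boxes in some standard tableau on $n+1$ cells. The statement then reduces to a two-way matching. For the backward direction, if $P_n^{(\bullet)}(1,\undQ)\neq 0$ then every linear factor is nonzero; given any $\undla\in\mathscr{P}^{\bullet,m}_{n+1}$, any $\mathfrak{t}\in\Std(\undla)$ and any $k\in[n]$, the identity $\mathtt{q}(\res_\mathfrak{t}(k))-\mathtt{q}(\res_\mathfrak{t}(k+1))=(\res_\mathfrak{t}(k)-\res_\mathfrak{t}(k+1))(\res_\mathfrak{t}(k)+\res_\mathfrak{t}(k+1)+1)$ shows both linear factors appear among those of $P_n^{(\bullet)}(1,\undQ)$, hence the whole difference is nonzero. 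For the forward direction I would, for each linear factor $L$ of $P_n^{(\bullet)}(1,\undQ)$, explicitly construct a witness triple $(\undla,\mathfrak{t},k)$ with $n+1$ boxes such that $L$ equals $\res_\mathfrak{t}(k)-\res_\mathfrak{t}(k+1)$ or $\res_\mathfrak{t}(k)+\res_\mathfrak{t}(k+1)+1$; separability then forces $L\neq 0$.

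The main obstacle I anticipate is the case $\bullet=\mathsf{s}$. There the first (strict) component carries diagonal residue $0=Q_0$, and both the combinatorial separate condition and the polynomial $P_n^{(\mathsf{s})}(1,\undQ)$ pick up additional factors arising from interactions between the parameters $Q_l$ and the diagonal nodes $\mathcal{D}$ of Definition \ref{diagonalnodes}. Matching the factors exactly — in particular checking that the set of linear factors in $P_n^{(\mathsf{s})}(1,\undQ)$ is precisely the set arising from witness tableaux of size $n+1$, and that the strictness constraints $\lambda^{(0)}_1>\lambda^{(0)}_2>\cdots$ do not forbid any of the needed adjacent-box configurations — requires a careful case analysis exactly parallel to the non-degenerate argument. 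Once the polynomial factorization is matched term-by-term to the residue coincidences via the witness construction, both implications of the proposition follow simultaneously.
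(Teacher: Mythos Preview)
The paper does not supply its own proof of this proposition: it is quoted verbatim as \cite[Proposition 5.13]{SW}, with the explicit definition of $P_n^{(\bullet)}(1,\undQ)$ deliberately omitted (``we also skip the definition of $P^{(\bullet)}_{n}(1,\undQ)$ here''). So there is nothing in the present paper to compare your argument against; the intended proof lives entirely in \cite{SW}.

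On its own merits, your plan is the natural one and matches the shape of the argument one expects in \cite{SW}: factor $\mathtt{q}(a)-\mathtt{q}(b)=(a-b)(a+b+1)$, write $P_n^{(\bullet)}(1,\undQ)$ as a product of the corresponding linear forms, and match each factor with an explicit witness tableau of size $n+1$. The only real content you have not supplied is the exact list of factors of $P_n^{(\bullet)}(1,\undQ)$ and the verification that the witness construction exhausts it (and vice versa), especially in the $\bullet=\mathsf{s}$ case where the diagonal boxes of the strict component interact with the $Q_l$. That bookkeeping is precisely what \cite{SW} records; without reproducing it, your proposal remains a correct outline rather than a proof.
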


	\begin{lem}\label{important conditionequi2}\cite[Lemma 2.27]{LS2}
		Let $\undQ=(Q_1,\ldots,Q_m)\in\mathbb{K}^m$ and $\bullet\in\{\mathsf{0},\mathsf{s}\}$. Suppose $P_n^{(\bullet)}(1,\undQ)\neq 0$. Then
		\begin{enumerate}
			\item For any $\undla\in\mathscr{P}^{\bullet,m}_{n},$ $\mathfrak{t}\in\Std(\undla)$, we have  $\mathtt{u}_{\pm}(\res_{\mathfrak{t}}(k))\neq 0$ for $k\notin \mathcal{D}_{\mathfrak{t}}$;
			\item For any $\undla,\underline{\mu} \in\mathscr{P}^{\bullet,m}_{n},$ $\mt\in\Std(\undla), \mt' \in \Std(\underline{\mu}),$
			if $\mt\neq \mt',$ then we have $\mathtt{q}(\res(\mt))\neq \mathtt{q}(\res(\mt'))$;
			\item For any $\undla\in\mathscr{P}^{\bullet,m}_{n},$ $\mathfrak{t}\in\Std(\undla)$ and $k\in [n-1]$, the four pairs $(\mathtt{u}_{\pm}(\res_{\mathfrak{t}}(k)),\mathtt{u}_{\pm}(\res_{\mathfrak{t}}(k+1)))$ do not satisfy \eqref{invertible dege} if $k,k+1$ are not in the adjacent diagonals of $\mathfrak{t}$.
		\end{enumerate}
	\end{lem}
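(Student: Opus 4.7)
\begin{demo}{Proof proposal}
The strategy is to derive all three parts from Proposition \ref{separate formula dege} (the separate condition), mirroring the non-degenerate analog Lemma \ref{important condition1}. The principal new input is a clean algebraic reduction of \eqref{invertible dege} that exploits $\mathtt{u}_{\pm}(\iota)^{2}=\mathtt{q}(\iota)$.

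\textbf{Key computation for (3).} Writing $x=\mathtt{u}_{\pm}(u),\, y=\mathtt{u}_{\pm}(v)$ with $u=\res_{\mt}(k),\, v=\res_{\mt}(k+1)$, the elementary identity
\[
\frac{1}{(x-y)^{2}}+\frac{1}{(x+y)^{2}}=\frac{2(x^{2}+y^{2})}{(x^{2}-y^{2})^{2}}
\]
shows that \eqref{invertible dege} depends only on $x^{2},\,y^{2}$, so all four sign choices give the same condition
\[
2(\mathtt{q}(u)+\mathtt{q}(v))=(\mathtt{q}(u)-\mathtt{q}(v))^{2}.
\]
Using $\mathtt{q}(u)-\mathtt{q}(v)=(u-v)(u+v+1)$ and setting $s=u+v,\,d=u-v$, this reduces to $d^{2}s(s+2)=s(s+2)$, equivalently
\[
(u-v-1)(u-v+1)(u+v)(u+v+2)=0,
\]
which is the degenerate counterpart of the four branches of \eqref{invertible2}. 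Each factor corresponds to $\mt^{-1}(k)$ and $\mt^{-1}(k+1)$ lying in adjacent diagonals of $\mt$ (the factors $u+v$ and $u+v+2$ reflecting the reflection symmetry induced by $Q_{0}=0$ in Definition \ref{diagonalnodes}); hence if $k,k+1$ are not in adjacent diagonals, none of the four pairs can satisfy \eqref{invertible dege}. The separate condition is used to exclude the only remaining degenerate situation $\mathtt{q}(u)=\mathtt{q}(v)$ (under which \eqref{invertible dege} is ill-defined rather than genuinely satisfied).

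\textbf{Proof of (1).} Since $\mathtt{u}_{\pm}(\iota)=0$ iff $\iota\in\{0,-1\}$, the claim is equivalent to $\res_{\mt}(k)\notin\{0,-1\}$ for $k\notin\mathcal{D}_{\mt}$. For $\bullet=\mathsf{s}$ a box $(i,j,0)$ in the $0$-th shifted component has $\res=j-i\geq 0$; if $\res=0$ then $j=i$, so the box lies in $\mathcal{D}_{\mt}$. For $(i,j,l)$ with $l\geq 1$, or for $\bullet=\mathsf{0}$, a residue in $\{0,-1\}$ yields $\mathtt{q}(\res_{\mt}(k))=0$; one then constructs an auxiliary standard tableau (on a possibly larger multipartition) in which this box and a box whose residue also has $\mathtt{q}$-value $0$ (which exists generically, e.g.\ in $\mathcal{D}$ when $\bullet=\mathsf{s}$) appear at consecutive positions, contradicting Proposition \ref{separate formula dege}.

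\textbf{Proof of (2).} By induction on $n$. If $\mt\downarrow_{n-1}\neq\mt'\downarrow_{n-1}$, the inductive hypothesis applies to the restrictions. Otherwise $\mt^{-1}(n)$ and $(\mt')^{-1}(n)$ are distinct addable boxes of the common $(n-1)$-multipartition; the degenerate analog of Proposition \ref{compare eigenvalue} (obtained directly by placing both boxes consecutively in an $(n+1)$-tableau and invoking Proposition \ref{separate formula dege}) yields $\mathtt{q}(\res(\mt^{-1}(n)))\neq \mathtt{q}(\res((\mt')^{-1}(n)))$, so the full $\mathtt{q}$-sequences differ at the $n$-th coordinate.

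\textbf{Main obstacle.} The delicate step is verifying in (3) that each of the four algebraic solutions $v\in\{u\pm 1,\,-u,\,-u-2\}$ genuinely corresponds to ``adjacent diagonals'' in the combinatorial sense for both $\bullet=\mathsf{0}$ and $\bullet=\mathsf{s}$, matching the residue structure of the $0$-th strict component when present. This case analysis is the degenerate analog of the handling of the four branches of \eqref{invertible2} in the non-degenerate case, and is where the separate condition must be applied most carefully.
\end{demo}
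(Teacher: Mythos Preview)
The paper does not supply its own proof of this lemma; it is imported verbatim from \cite[Lemma~2.27]{LS2}, so there is no in-paper argument to compare against and your proposal must stand on its own.

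Your reduction of \eqref{invertible dege} to $(u-v-1)(u-v+1)(u+v)(u+v+2)=0$ is correct and is exactly the degenerate analog of \eqref{invertible2}; part~(2) is also fine. But part~(1) contains a concrete error: for $\bullet=\mathsf{0}$ the set $\mathcal{D}$ is empty, and for boxes in components $l\geq 1$ there is no mechanism to make a box of $\mathcal{D}$ consecutive with the offending box. The right auxiliary box is the neighbour on an adjacent diagonal \emph{in the same component}: if $\res(i,j,l)=0$ and $j\geq 2$, then $(i,j-1,l)$ has residue $-1$, hence also $\mathtt{q}$-value $0$; since $(i,j,l)\in\undla$ forces the $i\times j$ rectangle to lie inside $\undla$ (so $ij\leq n$), filling that rectangle by rows places these two boxes consecutively and contradicts Proposition~\ref{separate formula dege}. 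The cases $j=1$ and $\res=-1$ are handled by the analogous column argument.

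Part~(3) has the same kind of gap, which you flag but do not resolve. The factors $u+v=0$, $u+v+2=0$, and the cross-component instances of $u-v=\pm 1$ are \emph{not} geometric adjacency conditions, and each must be excluded by exhibiting an auxiliary tableau in $\mathscr{P}^{\bullet,m}_{\leq n+1}$ with two consecutive boxes sharing a $\mathtt{q}$-value. For example, if $\mt^{-1}(k)=(i,j,l)$ and $\mt^{-1}(k+1)=(i',j',l')$ lie in distinct components with $\res_\mt(k)-\res_\mt(k+1)=1$, then the box $(i+1,j,l)$ has the same residue as $(i',j',l')$, and one must argue that a suitable tableau makes a box of residue $Q_l+j-i-1$ in component $l$ consecutive with $(i',j',l')$. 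This case analysis is where the real content of the lemma lies; without it the argument is incomplete.
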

	
	Again, when $Q_1,\ldots,Q_m$ are algebraically independent over $\Z$, and $\mathbb{K}$ is the algebraic closure of $\mathbb{Q}(Q_1,\ldots,Q_m)$, i.e., for generic degenerate cyclotomic Hecke-Clifford algebra, the condition $P^{(\bullet)}_{n}(1,\undQ)\neq 0$ clearly holds.

Suppose that the condition $P^{(\bullet)}_{n}(1,\undQ)\neq 0$ ($\bullet\in\{\mathsf{0},\mathsf{s}\}$) holds in $\mathbb{K}.$ Then for each $\undla\in\mathscr{P}^{\bullet,m}_{n},$ we can associate $\undla$ with a simple $\mHfcn$-module $D(\undla),$ \label{pag:dege simple module} see \cite[Theorem 4.18]{SW}. Furthermore, we have the following result.
	\begin{thm}\cite[Theorem 5.21]{SW}
		Let $\undQ=(Q_1,Q_2,\ldots,Q_m)\in\mathbb{K}^m$.  Assume $g=g^{(\bullet)}_{\undQ}$ and   $P_n^{(\bullet)}(1,\undQ)\neq 0$, with $\bullet\in\{\mathsf{0},\mathsf{s}\}$. Then  $\mhgcn$ is a (split) semisimple algebra
and $$\{D(\undla) \mid \undla\in\mathscr{P}^{\bullet,m}_{n}\}$$ forms a complete set of pairwise non-isomorphic irreducible $\mhgcn$-modules. Moreover, $D(\undla)$ is of type $\texttt{M}$ if and only if $\sharp \mathcal{D}_{\undla}$ is even and is of type $\texttt{Q}$ if and only if $\sharp \mathcal{D}_{\undla}$ is odd.
	\end{thm}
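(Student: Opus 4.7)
The plan is to mirror the strategy used for the non-degenerate analogue (Theorem \ref{semisimple:non-dege}) in the present degenerate setting, since essentially every ingredient of the non-degenerate proof has a parallel one recalled in Section \ref{dege basic}. The basic idea is to construct explicit candidate simples $D(\undla)$ by a seminormal-form recipe, verify that the separate condition forces them to be well-defined and pairwise non-isomorphic, and then conclude semisimplicity and completeness by a dimension count.

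First, I would build $D(\undla)$ by specifying a basis indexed by triples $(\mathfrak{t},\alpha_{\mathfrak{t}},\beta_{\mathfrak{t}})$ with $\mathfrak{t}\in\Std(\undla)$, $\alpha_{\mathfrak{t}}\in\mathbb{Z}_2(\mathcal{OD}_{\mathfrak{t}})$, $\beta_{\mathfrak{t}}\in\mathbb{Z}_2([n]\setminus\mathcal{D}_{\mathfrak{t}})$, in complete analogy with Theorem \ref{actions of generators on L basis}. The operator $x_i$ should act diagonally with eigenvalue $\mathtt{u}_+(\res_{\mathfrak{t}}(i))^{\nu_{\beta_{\mathfrak{t}}}(i)}$, each $c_i$ by the Clifford permutation rule analogous to \eqref{Caction}, and the symmetric-group generators $s_i$ by a Young-type formula derived from the degenerate intertwiner $\phi_i(x,y)$. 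Verification of the defining relations \eqref{braid}--\eqref{xc} reduces to the rational identities \eqref{degsquare1}--\eqref{phic2}, while the cyclotomic relation $g(x_1)=0$ holds automatically because $x_1$ acts with eigenvalue $\mathtt{u}_+(Q_l)$ for some $l$ (and with eigenvalue $0=\mathtt{u}_+(Q_0)$ on the strict-partition component when $\bullet=\mathsf{s}$), matching the roots of $g^{(\bullet)}_{\undQ}$. The separate condition $P_n^{(\bullet)}(1,\undQ)\neq 0$ guarantees via Lemma \ref{important conditionequi2}(1),(3) that none of the denominators appearing in the seminormal formulas vanish.

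Second, I would determine the type and non-isomorphism of the $D(\undla)$. When $\sharp \mathcal{D}_{\undla}$ is even, the Clifford generators corresponding to diagonal boxes pair off and $\End_{\mhgcn}(D(\undla))\cong\mathbb{K}$, giving type $\texttt{M}$; when $\sharp \mathcal{D}_{\undla}$ is odd, there is one unpaired Clifford diagonal element which produces a non-trivial odd endomorphism, giving type $\texttt{Q}$. Non-isomorphism of $D(\undla)$ and $D(\undmu)$ for $\undla\neq\undmu$ follows directly from Lemma \ref{important conditionequi2}(2): the joint spectra of $x_1^2,\ldots,x_n^2$ on these modules are distinct, since their $\mathtt{q}$-sequences differ on at least one tableau.

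Finally, semisimplicity and completeness of the list would follow from the dimension identity
\begin{equation*}
\dim \mhgcn \;=\; 2^n\cdot n!\cdot r^n \;=\; \sum_{\undla\in\mathscr{P}^{\bullet,m}_n} 2^{n-\lceil\sharp\mathcal{D}_{\undla}/2\rceil}\cdot|\Std(\undla)|\cdot \dim D(\undla),
\end{equation*}
where the right-hand side is the total dimension of a direct sum of matrix superalgebras built from the $D(\undla)$ (with the correct type factor in front). I would establish this identity by induction on $n$ via the branching $\mathfrak{H}^{g'}_c(n-1)\hookrightarrow\mhgcn$, tracking how addable/removable boxes change the parity of $\sharp\mathcal{D}$, in direct parallel to the non-degenerate argument. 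The main obstacle, as in the non-degenerate case, is bookkeeping the type parity through this induction: whenever adding a box to $\undla$ changes the parity of $\sharp\mathcal{D}_{\undla}$, the type of the restricted module flips, and one has to balance the factors $2^{\lceil\sharp\mathcal{D}/2\rceil}$ across both sides carefully. Once the count matches, every $D(\undla)$ is simple, they exhaust the irreducibles, and semisimplicity of $\mhgcn$ is automatic from the Wedderburn decomposition.
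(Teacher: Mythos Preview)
The paper does not prove this statement at all: it is simply quoted as \cite[Theorem 5.21]{SW} and used as a black box, with no argument supplied. So there is no ``paper's own proof'' to compare against.

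Your sketch is a reasonable outline of the standard seminormal-form approach and is broadly consistent with how such results are proved in the literature (and with how the non-degenerate analogue Theorem~\ref{semisimple:non-dege} is handled in \cite{SW}). One small slip: you write that $x_i$ acts with eigenvalue $\mathtt{u}_+(\res_{\mathfrak{t}}(i))^{\nu_{\beta_{\mathfrak{t}}}(i)}$, but in the degenerate setting the eigenvalue is $\nu_{\beta_{\mathfrak{t}}}(i)\,\mathtt{u}_+(\res_{\mathfrak{t}}(i))$ (a sign flip, not an inversion; cf.\ \eqref{x eigenvalues}). Otherwise the strategy---construct explicit modules via the degenerate intertwiners, use Lemma~\ref{important conditionequi2} to control denominators and distinguish spectra, read off the type from the parity of $\sharp\mathcal{D}_{\undla}$, and finish with a dimension count---is exactly the expected one.
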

	
{\bf In the rest of this subsection, we fix the parameter $\undQ=(Q_1,Q_2,\ldots,Q_m)\in \mathbb{K}^m$ and $g=g^{(\bullet)}_{\undQ}$ with $P^{(\bullet)}_{n}(1,\undQ)\neq 0$ for $\bullet\in\{\mathtt{0},\mathtt{s}\}.$  Accordingly, we define the residues of boxes in the young diagram $\undla$ via \eqref{eq:deg-residue} as well as $\res(\mathfrak{t})$ for each $\mathfrak{t}\in\Std(\undla)$ with $\undla\in\mathscr{P}^{\bullet,m}_{n}$ with $m\geq 0.$}

We fix $\undla \in \mathscr{P}^{\bullet,m}_{n}.$
For each $\mt\in \Std(\undla),$ recall Definition \ref{Dt,ODt,Z2ODt}.
		
\begin{defn}	
For any $i\in [n], \mt\in \Std(\undla),$ we denote
			 $$\mathtt{u}_{\mt,i}:=\mathtt{u}_{+}(\res_{\mt}(i))\in\mathbb{K}.$$
If $i\in [n-1]$, we define
\label{pag:dege coeffi cti}	
\begin{align}
	\mathfrak{c}_{\mt}(i):=1-\frac{1}{(\mathtt{u}_{\mt,i}-\mathtt{u}_{\mt,i+1})^2}
    -\frac{1}{(\mathtt{u}_{\mt,i}+\mathtt{u}_{\mt,i+1})^2}\in \mathbb{K}.
\end{align}
\end{defn}

Since $\mt \in \Std(\undla),$ $\mathtt{u}_{\mt,i}\neq \pm\mathtt{u}_{\mt,i+1}$ by Definition \ref{important condition2} and Proposition \ref{separate formula dege}, which immediately implies that $\mathfrak{c}_{\mt}(i)$ is well-defined. If $s_i$ is admissible with respect to $\mt$, i.e., $\delta(s_i\mt)=1$, then $\mathfrak{c}_{\mt}(i)\in \mathbb{K}^{*}$ by
the third part of Lemma \ref{important conditionequi2}. It is clear that $\mathfrak{c}_{\mt}(i)=\mathfrak{c}_{s_i\mt}(i).$	

By \cite[Proposition 5.6]{LS2},
The simple $\mhgcn$-supermodule $D(\undla)$ also has a $\mathbb{K}$-basis of the form
\begin{align}\label{basis of D(undla)}
\bigsqcup_{\mt\in \Std(\undla)}\Biggl\{c^{\beta_{\mt}}c^{\alpha_{\mt}}v_{\mt}\biggm|\begin{matrix}\beta_{\mt} \in \Z_2([n]\setminus \mathcal{D}_{\mt})  \\
				\alpha_{\mt}\in \Z_2(\mathcal{OD}_{\mt})
			\end{matrix}\Biggr\}
\end{align}
satisfying properties analogous to Proposition \ref{actions of generators on L basis}.

\subsubsection{Primitive idempotents of $\mhgcn$}
Now we shall define the primitive idempotents in degenerate case.
\begin{defn}\cite[Definition 5.7]{LS2}
           Let $\bullet\in\{\mathsf{0},\mathsf{s}\}.$
           For $k\in[n]$, let
           $$\mathtt{U}(k):=\{ \mathtt{u}_{\pm}(\res_{\ms}(k)) \mid \ms \in \Std(\mathscr{P}^{\bullet,m}_{n}) \}.$$
			For any ${\rm T}=(\mt, \alpha_{\mt}, \beta_{\mt})\in {\rm Tri}_{\bar{0}}(\undla),$ we define
            \label{pag:dege primitive idempotents}
			\begin{align}\label{definition of primitive idempotents. dege}
				 \mathcal{F}_{\rm T}:=\left(c^{\alpha_{\mt}}\gamma_{\mt}(c^{\alpha_{\mt}})^{-1} \right) \cdot \left(\prod_{k=1}^{n}\prod_{\mathtt{u}\in \mathtt{U}(k)\atop \mathtt{u}\neq \mathtt{u}_{+}(\res_{\mt}(k))}\frac{\nu_{\beta_{\mt}}(k)x_k-\mathtt{u}}{\mathtt{u}_{+}(\res_{\mt}(k))-\mathtt{u}}\right)\in \mhgcn.
\end{align}
We define
\begin{align}
				 \mathcal{F}_{\undla}&:=\sum_{{\rm T}\in {\rm Tri}_{\bar{0}}(\undla)}  \mathcal{F}_{\rm T},
			\end{align}
and the left ideal of $\mhgcn$
    \label{pag:dege simple blocks}
	\begin{align}
	%			\mathfrak{D}_{\rm T}&:=\mhgcn  \mathcal{F}_{\rm T}\subseteq\mhgcn,\\
				 \mathcal{B}_{\undla}:=\mhgcn   \mathcal{F}_{\undla}\subseteq \mhgcn.
			\end{align}
		\end{defn}

%\begin{lem}\label{idempotent action. dege}	
%Let ${\rm T}=(\mt, \alpha_{\mt}, \beta_{\mt})\in {\rm Tri}_{\bar{0}}(\undla),$
%and ${\rm S}=(\ms, \alpha_{\ms}', \beta_{\ms}')\in {\rm Tri}(\mathscr{P}^{\bullet,m}_{n})$.
%We have
%\begin{align*}
%\mathcal{F}_{\rm T}\cdot c^{\beta_{\ms}^{'}} c^{\alpha_{\ms}^{'}} v_{\ms}=
%\begin{cases}
%c^{\beta_{\ms}^{'}} c^{\alpha_{\ms}^{'}}  v_{\ms}, & \text{ if } d_{\undla}=0 \text{ and } {\rm S}={\rm T}, \\
%c^{\beta_{\ms}^{'}} c^{\alpha_{\ms}^{'}}  v_{\ms}, & \text{ if $d_{\undla}=1$ and } {\rm S}={\rm T}_{a}\text{ for some $a\in \mathbb{Z}_2,$  } \\
%0, & \text{ otherwise.}
%\end{cases}
%\end{align*}
%\end{lem}
%\begin{proof}
%It's a analog of Lemma \ref{idempotent action. non-dege} in degenerate case.
%The proof is completely similar to \cite[Lemma 4.14]{LS2}.
%\end{proof}
The following is an analog of Lemma \ref{idempotent action. non-dege} in degenerate case, whose proof is completely similar to \cite[Lemma 4.14]{LS2}.
\begin{lem}\label{idempotent action. dege}	
Let ${\rm T}=(\mt, \alpha_{\mt}, \beta_{\mt})\in {\rm Tri}_{\bar{0}}(\undla),$
${\rm S}=(\ms, \alpha_{\ms}', \beta_{\ms}')\in {\rm Tri}(\mathscr{P}^{\bullet,m}_{n})$ for $\bullet\in\{\mathsf{0},\mathsf{s}\}.$
We have
\begin{align*}
\mathcal{F}_{\rm T}\cdot c^{\beta_{\ms}^{'}} c^{\alpha_{\ms}^{'}} v_{\ms}=
\begin{cases}
c^{\beta_{\ms}^{'}} c^{\alpha_{\ms}^{'}}  v_{\ms}, & \text{ if } d_{\undla}=0 \text{ and } {\rm S}={\rm T}, \\
c^{\beta_{\ms}^{'}} c^{\alpha_{\ms}^{'}}  v_{\ms}, & \text{ if $d_{\undla}=1$ and } {\rm S}={\rm T}_{a}\text{ for some $a\in \mathbb{Z}_2,$  } \\
0, & \text{ otherwise.}
\end{cases}
\end{align*}
\end{lem}

The following is the analog of Theorem \ref{primitive idempotents}.
\begin{thm}\label{primitive idempotents. dege}\cite[Theorem 5.8]{LS2}
			Suppose $P_n^{(\bullet)}(1,\undQ)\neq 0$ for $\bullet\in\{\mathsf{0},\mathsf{s}\}.$ Then we have the following.
			
			(a) $\{\mathcal{F}_{\rm T} \mid {\rm T}\in {\rm Tri}_{\bar{0}}(\mathscr{P}^{\bullet,m}_{n})\}$ is a complete set of (super) primitive orthogonal idempotents of $\mhgcn.$
			
			(b) $\{\mathcal{F}_{\undla} \mid \undla \in \mathscr{P}^{\bullet,m}_{n} \}$ is a complete set of (super) primitive central idempotents of $\mhgcn.$
		\end{thm}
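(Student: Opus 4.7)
The plan is to establish this as the degenerate analog of Theorem \ref{primitive idempotents}, whose proof strategy I would mirror by systematically replacing $X_k$ with $x_k$, $\mathtt{b}_\pm$ with $\mathtt{u}_\pm$, and invoking the degenerate counterparts of the preceding lemmas. The crucial input is Lemma \ref{idempotent action. dege}, which records exactly how $\mathcal{F}_{\rm T}$ acts on the explicit bases of the simple modules $D(\underline{\mu})$ from Theorem \ref{dege-actions of generators on L basis}. Since the hypothesis $P^{(\bullet)}_n(1,\undQ)\neq 0$ forces $\mhgcn$ to be (split) semisimple, any identity in $\mhgcn$ may be verified by checking its action on the basis vectors of every simple module.

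For part (a), I would first verify orthogonality and idempotency by computing $\mathcal{F}_{\rm S}\mathcal{F}_{\rm T}$ acting on an arbitrary basis vector $c^{\beta_{\mathfrak{u}}''}c^{\alpha_{\mathfrak{u}}''}v_{\mathfrak{u}}$ of $D(\underline{\mu})$ for each $\underline{\mu}\in\mathscr{P}^{\bullet,m}_n$; by Lemma \ref{idempotent action. dege}, this equals $\delta_{{\rm S},{\rm T}}$ times the action of $\mathcal{F}_{\rm T}$, yielding $\mathcal{F}_{\rm S}\mathcal{F}_{\rm T}=\delta_{{\rm S},{\rm T}}\mathcal{F}_{\rm T}$ in $\mhgcn$ by semisimplicity. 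Completeness would follow from checking that $\sum_{{\rm T}\in{\rm Tri}_{\bar{0}}(\mathscr{P}^{\bullet,m}_n)}\mathcal{F}_{\rm T}$ acts as the identity on every basis vector of every $D(\underline{\mu})$; this is straight from Lemma \ref{idempotent action. dege} once one partitions ${\rm Tri}(\underline{\mu})$ via the ${\rm T}_{\bar{0}}\leftrightarrow{\rm T}_{\bar{1}}$ identification used in the definition of ${\rm Tri}_{\bar{0}}$. For primitivity, I would identify $\mathcal{F}_{\rm T}\mhgcn\mathcal{F}_{\rm T}$ with the endomorphism ring of the image of $\mathcal{F}_{\rm T}$ inside $D(\undla)$: this image is spanned by a single basis vector in the type $\texttt{M}$ case ($d_{\undla}=0$), and by the two basis vectors indexed by ${\rm T}_{\bar{0}}$ and ${\rm T}_{\bar{1}}$ in the type $\texttt{Q}$ case ($d_{\undla}=1$), matching the idempotent structures of $\mathcal{M}_{m,n}$ and $\mathcal{Q}_n$ respectively from Example \ref{simple algebra}; in neither case can $\mathcal{F}_{\rm T}$ be split as a sum of two nonzero orthogonal even idempotents.

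For part (b), the central idempotent $\mathcal{F}_{\undla}=\sum_{{\rm T}\in{\rm Tri}_{\bar{0}}(\undla)}\mathcal{F}_{\rm T}$ acts as the identity on $D(\undla)$ and as zero on $D(\underline{\mu})$ for $\underline{\mu}\neq\undla$, directly from Lemma \ref{idempotent action. dege}. Under semisimplicity this identifies $\mathcal{F}_{\undla}$ with the unit element of the block of $\mhgcn$ corresponding to the isotypic component of $D(\undla)$, whence centrality is automatic, primitivity as a central idempotent follows because each block is a simple superalgebra, and completeness follows from $\sum_{\undla}\mathcal{F}_{\undla}=\sum_{{\rm T}\in{\rm Tri}_{\bar{0}}(\mathscr{P}^{\bullet,m}_n)}\mathcal{F}_{\rm T}=1$ established in part (a).

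The main obstacle is not deep: it amounts to careful bookkeeping of Clifford signs and of the ${\rm T}_{\bar{0}}/{\rm T}_{\bar{1}}$ pairing in the case $d_{\undla}=1$, where a single primitive idempotent $\mathcal{F}_{\rm T}$ hits both basis vectors indexed by ${\rm T}_{\bar{0}}$ and ${\rm T}_{\bar{1}}$. Once this pairing (already encoded in Lemma \ref{idempotent action. dege}) is correctly tracked, the argument is a routine transposition of the non-degenerate proof in \cite{LS2}. The most delicate point is ruling out a finer decomposition of $\mathcal{F}_{\rm T}$ in the type $\texttt{Q}$ case; I would handle it by noting that the two vectors ${\rm T}_{\bar{0}},{\rm T}_{\bar{1}}$ are interchanged by an odd Clifford element which anticommutes (in the appropriate sense) with $\mathcal{F}_{\rm T}$, preventing an even splitting.
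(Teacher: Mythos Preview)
The paper does not supply its own proof of this theorem: it is stated with a citation to \cite[Theorem 5.8]{LS2} and nothing more, so there is no in-paper argument to compare against. Your proposal is essentially the strategy one would expect the cited reference to use, namely to mirror the non-degenerate Theorem \ref{primitive idempotents} by checking everything on the explicit bases of the simple modules $D(\underline{\mu})$ via Lemma \ref{idempotent action. dege}, exploiting semisimplicity throughout.

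Your outline is sound in structure, but one detail deserves correction. In the type \texttt{Q} case you say the odd Clifford element ``anticommutes (in the appropriate sense) with $\mathcal{F}_{\rm T}$''; in fact the relevant $c_i$ with $i\in\mathcal{D}_{\mt}$ \emph{commutes} with $\mathcal{F}_{\rm T}$ (this is the degenerate analog of Lemma \ref{change sign in diagonal}). The correct way to rule out an even splitting is that $\mathcal{F}_{\rm T}\,\mhgcn\,\mathcal{F}_{\rm T}$ is then isomorphic to the rank-one Clifford algebra $\mathcal{C}_1$, whose even part is one-dimensional and hence contains no nontrivial idempotents. With that adjustment your argument goes through and matches what the cited proof would do.
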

An alternative construction of $\mathcal{F}_{\rm T} $ can be found in \cite{KMS}.

\subsubsection{Seminormal bases of $\mhgcn$}
 In this subsection, we fix $\undla\in\mathscr{P}^{\bullet,m}_{n}$ for $\bullet\in\{\mathsf{0},\mathsf{s}\}.$
 Next we sketch the construction of seminormal bases for block $\mathcal{B}_{\undla}$ of $\mhgcn$.

 With some slightly modification, for $\ms,\mt \in \Std(\undla),$ we can similarly define  the element $\phi_{\ms,\mt}\in \mhgcn$ \label{pag:phist and cst} and the coefficient $\mathfrak{c}_{\ms,\mt}\in\mathbb{K}^*$ for $\ms,\mt \in \Std(\undla),$ which are the degenerate analogs of $\eqref{Phist}$
 and \eqref{c-coefficients. non-dege.} respectively (\cite[Definition 5.11]{LS2}).
%	\begin{defn}\cite[Definition 5.11]{LS2}	
%For any $\ms,\mt \in \Std(\undla),$ we fix a reduced expression $d(\ms,\mt)=s_{k_p}\cdots s_{k_1}$. We define
%		\begin{align}\label{phist}
%			\phi_{\ms,\mt}:=\overleftarrow{\prod_{i=1,\ldots,p}}\phi_{k_{i}}(\mathtt{u}_{s_{k_{i-1}}\cdots s_{k_1}\mathfrak{t},k_{i}}, \mathtt{u}_{s_{k_{i-1}}\cdots s_{k_1}\mathfrak{t},k_{i}+1})  \in \mhgcn
%		\end{align}
%		and the coefficient
%		\begin{align}\label{c-coefficients. dege}
%			\mathfrak{c}_{\ms,\mt}:=\prod_{i=1,\ldots,p}\sqrt{\mathfrak{c}_{s_{k_{i-1}}\cdots s_{k_1}\mathfrak{t}}(k_{i})}  \in \mathbb{K}.
%		\end{align}
%		\end{defn}
%By Lemma \ref{admissible transposes} and the third part of Lemma \ref{important conditionequi2}, we have $\mathfrak{c}_{\ms,\mt}\in \mathbb{K}^*$.  Similarly, $\phi_{\ms,\mt}$ is independent of the reduced choice of $d(\ms,\mt)$ and has similar properties as $\Phi_{\ms,\mt}$ in Lemma \ref{Phist. lem}.
Then we can define the seminormal bases \label{pag:dege seminormal basis}	
$\{\mathfrak{f}_{{\rm S},{\rm T}}^{\mathfrak{w}}\}$
and $\{\mathfrak{f}_{{\rm S},{\rm T}}\}$
in degenerate case, see \cite[Definition 5.12]{LS2} for details.

Let
\label{pag:dege cT}
$\mathfrak{c}_{\rm T}^{\mathfrak{w}}=\mathfrak{c}_{\mt}^{\mathfrak{w}}:=(\mathfrak{c}_{\mt,\mathfrak{w}})^2\in \mathbb{K}^*.$
We have the following result as a degenerate version of Theorem \ref{seminormal basis}.
\begin{thm}\label{seminormal basis. dege}\cite[Definition 5.13]{LS2}
			Suppose that $P_n^{(\bullet)}(1,\undQ)\neq 0$ for $\bullet\in\{\mathsf{0},\mathsf{s}\}.$
            We fix $\mathfrak{w}\in\Std(\undla)$. Then the following two sets
			\begin{align}\label{deg seminormal1}
			\left\{ \mathfrak{f}_{{\rm S},{\rm T}}^\mathfrak{w} \Biggm|
			{\rm S}=(\ms, \alpha_{\ms}', \beta_{\ms}')\in {\rm Tri}_{\bar{0}}(\undla),
			{\rm T}=(\mt, \alpha_{\mt}, \beta_{\mt})\in {\rm Tri}(\undla)
			\right\}
		\end{align} and \begin{align}\label{deg seminormal2}
			\left\{ \mathfrak{f}_{{\rm S},{\rm T}} \Biggm|
		{\rm S}=(\ms, \alpha_{\ms}', \beta_{\ms}')\in {\rm Tri}_{\bar{0}}(\undla),
		{\rm T}=(\mt, \alpha_{\mt}, \beta_{\mt})\in {\rm Tri}(\undla)
		\right\}
			\end{align}  form two $\mathbb{K}$-bases of the block $B_{\undla}$ of $\mhgcn$.
			
			Moreover, for ${\rm S}=(\ms, \alpha_{\ms}', \beta_{\ms}')\in {\rm Tri}_{\bar{0}}(\undla),
			{\rm T}=(\mt, \alpha_{\mt}, \beta_{\mt})\in {\rm Tri}(\undla),$ we have \begin{equation}\label{dege-fst and re. fst}
			\mathfrak{f}_{{\rm S},{\rm T}}
			=\frac{\mathfrak{c}_{\ms,\mt}}{\mathfrak{c}_{\ms,\mathfrak{w} }\mathfrak{c}_{\mathfrak{w},\mt }} \mathfrak{f}_{{\rm S},{\rm T}}^\mathfrak{w}\in \mathcal{F}_{\rm S}\mhgcn \mathcal{F}_{\rm T}.
			\end{equation} The multiplications of basis elements in \eqref{deg seminormal1} are given as follows.
			
			(1) Supppose $d_{\undla}=0.$  Then for any
			${\rm S}=(\ms, \alpha_{\ms}', \beta_{\ms}'),
			{\rm T}=(\mt, \alpha_{\mt}, \beta_{\mt}),
			{\rm U}=(\mfku,\alpha_{\mfku}^{''},\beta_{\mfku}^{''}),
			{\rm V}=(\mfkv,\alpha_{\mfkv}^{'''},\beta_{\mfkv}^{'''})\in {\rm Tri}(\undla),$ we have
			\begin{align}\label{deg multiplication1}
				\mathfrak{f}_{{\rm S},{\rm T}}^\mathfrak{w} f_{{\rm U},{\rm V}}^\mathfrak{w}
				=\delta_{{\rm T},{\rm U}} \mathfrak{c}_{\rm T}^\mathfrak{w} \mathfrak{f}_{{\rm S},{\rm V}}^\mathfrak{w}.
			\end{align}

			(2) Suppose $d_{\undla}=1.$ Then for any $a,b\in \mathbb{Z}_2$ and
			\begin{align*}
				{\rm S}&=(\ms, \alpha_{\ms}', \beta_{\ms}')\in {\rm Tri}_{\bar{0}}(\undla), \quad
				{\rm T}_{a}=(\mt, \alpha_{\mt,a}, \beta_{\mt})\in {\rm Tri}_{a}(\undla),\nonumber\\
				{\rm U}&=(\mfku,\alpha_{\mfku}^{''},\beta_{\mfku}^{''})\in {\rm Tri}_{\bar{0}}(\undla), \quad
				{\rm V}_{b}=(\mfkv,{\alpha_{\mfkv,b}^{'''}},\beta_{\mfkv}^{'''})\in {\rm Tri}_{b}(\undla),\nonumber
			\end{align*} we have
			\begin{align}\label{deg multiplication2}
				\mathfrak{f}_{{\rm S},{\rm T}_{a}}^\mathfrak{w} f_{{\rm U},{\rm V}_{b}}^\mathfrak{w}
				=\delta_{{\rm T}_{\bar{0}},{\rm U}} \mathfrak{c}_{\rm T}^\mathfrak{w} \mathfrak{f}_{{\rm S},{\rm V}_{a+b}}^\mathfrak{w}.
			\end{align}
		\end{thm}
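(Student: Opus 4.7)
The plan is to mirror the proof of Theorem \ref{seminormal basis} in the non-degenerate case, transporting the argument to the degenerate setting via the parallel structure established in the previous subsections. The key observation is that $\mathcal{F}_{\rm T}$, $\phi_{\ms,\mt}$, $\mathfrak{c}_{\ms,\mt}$ and the action of $\mhgcn$ on $D(\undla)$ satisfy formal analogs of the identities used in the non-degenerate case, so the computations translate almost verbatim.

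First I would compute the action of $\mathfrak{f}_{{\rm S},{\rm T}}^{\mathfrak{w}}$ (and of $\mathfrak{f}_{{\rm S},{\rm T}}$) on a seminormal basis vector $c^{\beta_{\mfku}''}c^{\alpha_{\mfku}''}v_{\mfku}$ of a simple module $D(\underline{\nu})$, using Lemma \ref{idempotent action. dege} and the degenerate analog of Lemma \ref{Phist. lem} for $\phi_{\ms,\mt}$. By the idempotent action, the rightmost factor $\mathcal{F}_{\rm T}$ annihilates everything not indexed by ${\rm T}$ (or ${\rm T}_{a}$ if $d_{\undla}=1$); then the intertwining factor $\phi_{\mathfrak{w},\mt}$ scales $v_{\mt}$ to $\mathfrak{c}_{\mathfrak{w},\mt}v_{\mathfrak{w}}$, $\phi_{\ms,\mathfrak{w}}$ scales $v_{\mathfrak{w}}$ to $\mathfrak{c}_{\ms,\mathfrak{w}}v_{\ms}$, and the Clifford factors rewrite the result in the correct $(\alpha_{\ms}',\beta_{\ms}')$-labelled basis vector. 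This yields the degenerate analogs of \eqref{SNB. eq1}--\eqref{SNB. eq4}, namely that $\mathfrak{f}_{{\rm S},{\rm T}}^{\mathfrak{w}}$ acts as a (scaled) matrix unit on $\bigoplus_{\undmu}D(\undmu)$.

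From these matrix-unit formulas, the multiplication identities \eqref{deg multiplication1} and \eqref{deg multiplication2} follow by applying both sides to any basis vector of every $D(\underline{\nu})$: in the $d_{\undla}=0$ case, the composition forces ${\rm T}={\rm U}$ with scalar $\mathfrak{c}_{\rm T}^{\mathfrak{w}}=\mathfrak{c}_{\mt,\mathfrak{w}}\mathfrak{c}_{\mathfrak{w},\mt}$ coming from the inner pair of intertwiners, and in the $d_{\undla}=1$ case the extra $\Z_2$-label composes additively. Here the main subtlety will be the type-$\texttt{Q}$ bookkeeping: one must check that the shift $a\mapsto a+b$ arises correctly from composing the Clifford factors $(c^{\alpha_{\mt,a}})^{-1}$ with $c^{\alpha_{\mfku,b}''}$ on $v_{\mfku}$, using the definition of $\alpha_{\mt,\bar{1}}=\alpha_{\mt}+e_{d(\mt,\mt^{\undla})(i_t)}$ and the relations between the Clifford idempotents $\gamma_{\mt}$ on the diagonal set $\mathcal{D}_{\mt}$. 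This is precisely where the non-super proof diverges, and carefully tracking signs and parity there will be the principal technical obstacle.

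Next, to conclude that \eqref{deg seminormal1} is a $\mathbb{K}$-basis of $B_{\undla}$, I would argue by dimension count. The matrix-unit property implies linear independence: a nontrivial relation $\sum \lambda_{{\rm S},{\rm T}}\mathfrak{f}_{{\rm S},{\rm T}}^{\mathfrak{w}}=0$ applied to each $c^{\beta_{\mt}}c^{\alpha_{\mt}}v_{\mt}$ forces all $\lambda_{{\rm S},{\rm T}}=0$. On the other hand, the cardinality of \eqref{deg seminormal1} equals $\sum_{a\in\Z_2}|{\rm Tri}_{\bar{0}}(\undla)|\cdot|{\rm Tri}_{a}(\undla)|$, which matches $\dim B_{\undla}$ computed from the block decomposition in the degenerate analog of Theorem \ref{semisimple:non-dege} (i.e., $B_{\undla}\cong D(\undla)^{\oplus 2^{n-\lceil \sharp\mathcal{D}_{\mt^{\undla}}/2\rceil}|\Std(\undla)|}$ as $B_{\undla}$-modules, and $D(\undla)$ has the basis described in Theorem \ref{dege-actions of generators on L basis}).

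Finally, to obtain \eqref{dege-fst and re. fst}, I would unpack the definitions \eqref{fst. typeM. dege.} and \eqref{re. fst. typeM. dege.} (and the $d_{\undla}=1$ variants) and use the analog of Lemma \ref{Phist. lem}(1),(3),(4), namely $\phi_{\ms,\mathfrak{w}}\phi_{\mathfrak{w},\mt}v_{\mt}=\mathfrak{c}_{\ms,\mathfrak{w}}\mathfrak{c}_{\mathfrak{w},\mt}v_{\ms}$ versus $\phi_{\ms,\mt}v_{\mt}=\mathfrak{c}_{\ms,\mt}v_{\ms}$, together with the fact that both $\mathfrak{f}_{{\rm S},{\rm T}}^{\mathfrak{w}}$ and $\mathfrak{f}_{{\rm S},{\rm T}}$ lie in the one-dimensional space $\mathcal{F}_{\rm S}\mhgcn\mathcal{F}_{\rm T}$ (for $d_{\undla}=0$; the $d_{\undla}=1$ case is two-dimensional and treated similarly using the splitting ${\rm Tri}(\undla)={\rm Tri}_{\bar{0}}(\undla)\sqcup{\rm Tri}_{\bar{1}}(\undla)$). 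This yields the stated ratio $\mathfrak{c}_{\ms,\mt}/(\mathfrak{c}_{\ms,\mathfrak{w}}\mathfrak{c}_{\mathfrak{w},\mt})$ and finishes the proof. Since all ingredients (primitive idempotents, intertwiners, simple modules, dimension of blocks) have the same formal structure in the degenerate case, the proof is essentially a transcription of \cite[Theorem 4.26]{LS2}.
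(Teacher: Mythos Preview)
Your proposal is correct and matches the paper's treatment: the paper does not give an independent proof here but simply records the theorem as the degenerate analog of Theorem \ref{seminormal basis}, citing \cite[Definition 5.13]{LS2}, and your plan to transcribe the non-degenerate argument (matrix-unit action on $D(\undla)$, multiplication identities, dimension count, and the $\phi_{\ms,\mt}$-ratio for \eqref{dege-fst and re. fst}) is precisely how that result is obtained in \cite{LS2}.
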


Finally, we have the action formulae of generators on seminormal basis, in which
we also need some supplementary terms $f(i,{\rm T},{\rm S})$ for $i\in[n-1],$ ${\rm T}\in {\rm Tri}_{\bar{0}}(\undla),{\rm S}\in {\rm Tri}(\undla),$ as the degenerate analogs of $F(i,{\rm T},{\rm S})$ in Definition \ref{F(i,T,S)}. We omit it here due to their tedious
nature, see \cite[Definition 5.15]{LS2} and \cite[Proposition 5.16]{LS2} for details.

\begin{rem}\label{prop. qt'}
We can deduce the analog of Proposition \ref{combina. formulae of c} (for $\bullet\in\{\mathsf{0},\mathsf{s}\}$) and its corollaries for the degenerate case with suitable modifications. In particular, let $\mt\in\Std(\undla),$ $\undla\in\mathscr{P}^{\mathsf{\bullet},m}_{n}$ ($\bullet\in\{\mathsf{0},\mathsf{s}\}$),
then the following element (a degenerate analog of $q(\undla)$)
\label{pag:qt'}
\begin{align*}
	\mathtt{q}(\undla)
	:=\prod\limits_{k=1}^{n}\frac{\prod\limits_{\beta\in\Rem(\mt\downarrow_{k-1})\setminus \mathcal{D}}\left(\mathtt{q}(\res_{\mt}(k))-\mathtt{q}(\res(\beta))\right)}{
		\prod\limits_{\alpha\in\Add(\mt\downarrow_{k-1})\setminus \{\mt^{-1}(k)\}}\left(\mathtt{q}(\res_{\mt}(k))-\mathtt{q}(\res(\alpha))\right)}\in\mathbb{K}^*
\end{align*}
is independent to the choice of $\mt.$
\end{rem}

\section{The embedding from $\mathcal{H}^f_c(n-1)$ to $\mHfcn$}\label{embedding}
\subsection{Non-degenerate case for $\bullet\in\{\mathsf{0},\mathsf{s}\}$}
Throughout this subsection, we shall fix the parameter $\undQ=(Q_1,Q_2,\ldots,Q_m)\in(\mathbb{K}^*)^m$ and $f=f^{(\bullet)}_{\undQ}$ with $P^{(\bullet)}_{n}(q^2,\undQ)\neq 0$ for $\bullet\in\{\mathsf{0},\mathsf{s},\mathsf{ss}\}.$ Accordingly, we define the residues of boxes in the young diagram $\undla$ via \eqref{eq:residue} as well as $\res(\mathfrak{t})$ for each $\mathfrak{t}\in\Std(\undla)$ with $\undla\in\mathscr{P}^{\bullet,m}_{n}$ with $m\geq 0.$

Let $\iota_{n-1}: \mathcal{H}^f_c(n-1)\rightarrow \mHfcn$ be the natural embedding which maps $X_j$ to $X_j$, $C_j$ to $C_j$ and $T_i$ to $T_i$ for any $i\in [n-2]$ and $j\in [n-1]$. In this subsection, we aim to compute the images of the $(n-1)$-seminormal basis under $\iota_{n-1}$ via the $n$-seminormal basis, which can be viewed as the branching rule for seminormal bases.

Fix $\undla\in\mathscr{P}^{\bullet,m}_{n}$, $\bullet\in\{\mathsf{0},\mathsf{s},\mathsf{ss}\}$. For any ${\rm T}=(\mt, \alpha_{\mt}, \beta_{\mt})\in {\rm Tri}_{\bar{0}}(\undla),$ we recall the primitive idempotent $F_{\rm T}$ in \eqref{definition of primitive idempotents. non-dege} and let $\mathtt{q}_{\mt,i}:=\mathtt{q}(\res_{\mt}(i)),$ $i\in [n].$ \label{pag:nondege q-values}
For any $\ms,\mt\in \Std(\undla),$ we recall that $d(\ms,\mt)\in \mathfrak{S}_n$ is the unique element
such that $\ms=d(\ms,\mt)\mt.$
Recall the intertwining elements $\tilde{\Phi}_i$ and $z_i$ in \eqref{intertwinNon-dege}.
For any reduced expression $d(\ms,\mt)=s_{k_p}\cdots s_{k_1},$ we denote
$$\tilde{\Phi}_{\ms,\mt}:=\tilde{\Phi}_{d(\ms,\mt)}=\tilde{\Phi}_{k_p}\cdots \tilde{\Phi}_{k_1}\in \mHfcn,$$
$$\mathtt{q}_{\ms,\mt}:=\prod_{1\leq i \leq p}\left(\mathtt{q}_{s_{k_{i-1}}\cdots s_{k_1}\mt,k_i}-\mathtt{q}_{s_{k_{i-1}}\cdots s_{k_1}\mt,k_{i}+1}\right)^2 \in \mathbb{K}^*.$$
Then we have the following properties.
\begin{lem}\label{properties of tildePhi}
Let $\undla\in\mathscr{P}^{\bullet,m}_{n}$ for $\bullet\in\{\mathsf{0},\mathsf{s},\mathsf{ss}\}$ and ${\rm T}=(\mt, \alpha_{\mt}, \beta_{\mt})\in {\rm Tri}_{\bar{0}}(\undla).$
\begin{enumerate}
\item $F_{\rm T} \tilde{\Phi}_{\mt,\ms}=\tilde{\Phi}_{\mt,\ms} F_{d(\ms,\mt) \cdot {\rm T}},$
where $d(\ms,\mt) \cdot {\rm T}:=(\ms,d(\ms,\mt)\cdot \alpha_{\mt},d(\ms,\mt)\cdot \beta_{\mt})\in {\rm Tri}_{\bar{0}}(\undla).$
\item $\tilde{\Phi}_{\ms,\mt}v_{\mt}=\mathtt{q}_{\ms,\mt}\mathtt{c}_{\ms,\mt}v_{\ms}.$
\end{enumerate}
\end{lem}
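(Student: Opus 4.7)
The plan is to derive both parts from the basic intertwining relations \eqref{Xinter}--\eqref{Cinter} for $\tilde\Phi_i$ together with the renormalization identity $\tilde\Phi_i=z_i^2\Phi_i$ from \eqref{universal-Phi} and the already-established Lemma \ref{Phist. lem}(1). Both assertions reduce, by induction on the length of a reduced expression $d(\ms,\mt)=s_{k_p}\cdots s_{k_1}$ (all intermediate tableaux being standard by Lemma \ref{admissible transposes}), to the one-step case $\ms=s_i\mt$.

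For part (1), the one-step case amounts to showing $F_{\rm T}\tilde\Phi_i=\tilde\Phi_i F_{s_i\cdot{\rm T}}$. Relations \eqref{Xinter} and \eqref{Cinter} imply that $A\tilde\Phi_i=\tilde\Phi_i\,{}^{s_i}\!A$ for any $A$ built from the generators $X_j^{\pm 1},C_j$, where ${}^{s_i}\!A$ denotes the result of swapping the indices $i\leftrightarrow i+1$ throughout and simplifying via the anticommutation \eqref{Clifford}. Applied to the defining formula \eqref{definition of primitive idempotents. non-dege} of $F_{\rm T}$, the substitution $k\mapsto s_i(k)$ together with $\res_\mt(s_i(k))=\res_{s_i\mt}(k)$ converts the polynomial piece into that of $F_{s_i\cdot{\rm T}}$ with $\beta$ transported by $s_i$; the Clifford piece $C^{\alpha_\mt}\gamma_\mt(C^{\alpha_\mt})^{-1}$ transforms into $C^{s_i\cdot\alpha_\mt}\gamma_{s_i\mt}(C^{s_i\cdot\alpha_\mt})^{-1}$ via the identity $s_i\gamma_\mt=\gamma_{s_i\mt}$, which follows from \eqref{gamma t} together with $d(s_i\mt,\mt^{\undla})=s_id(\mt,\mt^{\undla})$. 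Iterating along a reduced expression composes the single-step permutations into $d(\mt,\ms)^{-1}=d(\ms,\mt)$, giving the claimed identity.

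For part (2), specialize to $v_\mt$, which is the basis element with $\alpha_\mt=\beta_\mt=0$. Theorem \ref{actions of generators on L basis}(1) yields $X_iv_\mt=\mathtt{b}_{\mt,i}^{-1}v_\mt$, and the identity $\mathtt{b}_-(\iota)+\mathtt{b}_-(\iota)^{-1}=\mathtt{q}(\iota)$ from \eqref{substitution0} implies $z_iv_\mt=(\mathtt{q}_{\mt,i}-\mathtt{q}_{\mt,i+1})v_\mt$. Combined with Lemma \ref{Phist. lem}(1) applied to the single admissible transposition $s_i$, this gives the one-step formula $\tilde\Phi_iv_\mt=(\mathtt{q}_{\mt,i}-\mathtt{q}_{\mt,i+1})^2\mathtt{c}_{s_i\mt,\mt}v_{s_i\mt}$. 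Inducting along the reduced expression of $d(\ms,\mt)$, the product of the $z^2$-eigenvalue factors accumulated at each stage equals $\mathtt{q}_{\ms,\mt}$, while the $\Phi$-coefficients multiply to $\mathtt{c}_{\ms,\mt}$ by definition.

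The main technical point is the sign bookkeeping in part (1): moving $\tilde\Phi_i$ past a monomial of the form $C_iC_{i+1}$ appearing inside $\gamma_\mt$ or $C^{\alpha_\mt}$ picks up a sign from the Clifford anticommutation, and one must verify that this sign is precisely the one produced when ${}^{s_i}(C_iC_{i+1})=C_{i+1}C_i=-C_iC_{i+1}$ is re-ordered into the standard form appearing in $F_{s_i\cdot{\rm T}}$. Once this verification is carried out in the one-step case, the induction in both parts is mechanical.
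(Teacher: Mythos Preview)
Your proposal is correct and follows essentially the same approach as the paper, which simply cites \eqref{Xinter}--\eqref{Cinter} for part (1) and the $z_i^2$-eigenvalue computation together with Lemma \ref{Phist. lem}(1) and \eqref{universal-Phi} for part (2). One small remark: the sign bookkeeping you flag as the ``main technical point'' is actually a non-issue, since \eqref{Xinter}--\eqref{Cinter} say precisely that $A\tilde\Phi_i=\tilde\Phi_i\,\sigma(A)$ where $\sigma$ is the \emph{algebra} automorphism of $\mathcal{A}_n$ swapping the indices $i\leftrightarrow i+1$; hence $\sigma(C_aC_b)=C_{s_i(a)}C_{s_i(b)}$ with no reordering needed, and $\sigma(\gamma_\mt)=\gamma_{s_i\mt}$, $\sigma(C^{\alpha_\mt})=C^{s_i\cdot\alpha_\mt}$ hold on the nose.
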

\begin{proof}
\begin{enumerate}
\item It follows from the relations \eqref{Xinter} and \eqref{Cinter}.
\item Notice that $z_i^2 v_{\mt}=(X_i+X^{-1}_i-X_{i+1}-X^{-1}_{i+1})^2 v_{\mt}=(\mathtt{q}_{\mt,i}-\mathtt{q}_{\mt,i+1})^2 v_{\mt}$ for any $i\in [n],$ by using \eqref{X eigenvalues}. It follows from Lemma \ref{Phist. lem} (1) and the equation \eqref{universal-Phi}.
\end{enumerate}
\end{proof}
For $\undla\in\mathscr{P}^{\bullet,m}_{n}$, where $\bullet\in\{\mathsf{0},\mathsf{s},\mathsf{ss}\}$, we denote $${\rm T}^{\undla}:=(\mt^{\undla},0,0)\in {\rm Tri}_{\bar{0}}(\undla).
 $$

In the rest of this subsection, we always assume that $\bullet\in\{\mathsf{0},\mathsf{s}\}.$
\begin{prop}\label{eq.intertwining and f}
Let $\undla\in\mathscr{P}^{\bullet,m}_{n}$ for $\bullet\in\{\mathsf{0},\mathsf{s}\}$, ${\rm S}=(\ms, \alpha_{\ms}',\beta_{\ms}')\in {\rm Tri}_{\bar{0}}(\undla)$ and ${\rm T}=(\mt, \alpha_{\mt}, \beta_{\mt})\in {\rm Tri}(\undla)$. Then we have the following

\begin{align*}
f_{{\rm S},{\rm T}}^{\mathfrak{t}^{\undla}}=\mathtt{q}_{\ms,\mathfrak{t}^{\undla}}^{-1}\mathtt{q}_{\mathfrak{t}^{\undla},\mt}^{-1}
C^{\beta_{\ms}'}C^{\alpha_{\ms}'} \tilde{\Phi}_{\ms,\mathfrak{t}^{\undla}} F_{{\rm T}^{\undla}} \tilde{\Phi}_{\mathfrak{t}^{\undla},\mt} (C^{\alpha_{\mt}})^{-1}(C^{\beta_{\mt}})^{-1}.
\end{align*}

\end{prop}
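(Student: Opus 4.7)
The plan is to manipulate the LHS directly, converting each evaluated intertwiner $\Phi$ into the corresponding unnormalized intertwiner $\tilde{\Phi}$ and accumulating the scalar $\mathtt{q}_{\ms,\mathfrak{t}^{\undla}}^{-1}\mathtt{q}_{\mathfrak{t}^{\undla},\mt}^{-1}$ along the way. The argument will rest on two preliminary ingredients.

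The first ingredient is the identity
$$\tilde{\Phi}_{\mt',\mt}\, F_{(\mt,0,0)} = \mathtt{q}_{\mt',\mt}\, \Phi_{\mt',\mt}\, F_{(\mt,0,0)} \qquad (\mt, \mt' \in \Std(\undla)).$$
Both sides lie in the block $B_{\undla}$, so by semisimplicity it suffices to check that they agree as operators on the simple module $\mathbb{D}(\undla)$. By Lemma~\ref{idempotent action. non-dege}, $F_{(\mt,0,0)}$ projects $\mathbb{D}(\undla)$ onto the Clifford-span of $v_\mt$, and on $v_\mt$ one has $\Phi_{\mt',\mt}\, v_\mt = \mathtt{c}_{\mt',\mt}\, v_{\mt'}$ by Lemma~\ref{Phist. lem}(1) and $\tilde{\Phi}_{\mt',\mt}\, v_\mt = \mathtt{q}_{\mt',\mt}\,\mathtt{c}_{\mt',\mt}\, v_{\mt'}$ by Lemma~\ref{properties of tildePhi}(2); these agree up to the factor $\mathtt{q}_{\mt',\mt}$. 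Compatibility with the remaining Clifford directions in $F_{(\mt,0,0)}\mathbb{D}(\undla)$ follows from the intertwining relations~\eqref{Cinter} together with Lemma~\ref{properties of tildePhi}(1).

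The second ingredient is the pair of Clifford-idempotent swaps
$$F_S\, C^{\beta_{\ms}'} C^{\alpha_{\ms}'} = C^{\beta_{\ms}'} C^{\alpha_{\ms}'}\, F_{(\ms,0,0)},\qquad (C^{\alpha_\mt})^{-1}(C^{\beta_\mt})^{-1}\, F_T = F_{(\mt,0,0)}\, (C^{\alpha_\mt})^{-1}(C^{\beta_\mt})^{-1}.$$
Starting from the factorization of $F_T$ in~\eqref{definition of primitive idempotents. non-dege}, I verify that $(C^{\beta_\mt})^{-1}$ commutes through $\gamma_\mt$ (since $\supp\beta_\mt \cap \mathcal{D}_\mt = \emptyset$ and $\gamma_\mt$ lies in the even Clifford subalgebra on $\mathcal{D}_\mt$) and that commuting it past each polynomial factor of $F_T$ flips $X_k \leftrightarrow X_k^{-1}$, which by Lemma~\ref{change sign in diagona} converts that factor into the $\beta=0$ version appearing in $F_{(\mt,0,0)}$; meanwhile $(C^{\alpha_\mt})^{-1}$ commutes past the polynomial part by the same Lemma applied at the diagonal positions. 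The two disjoint-support Clifford signs $(-1)^{|\alpha_\mt||\beta_\mt|}$ produced at opposite ends of the calculation cancel exactly.

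Applying the second ingredient at both ends of $f^{\mathfrak{t}^{\undla}}_{S,T}$ gives
$$f^{\mathfrak{t}^{\undla}}_{S,T} = C^{\beta_{\ms}'} C^{\alpha_{\ms}'}\, F_{(\ms,0,0)}\, \Phi_{\ms,\mathfrak{t}^{\undla}}\, \Phi_{\mathfrak{t}^{\undla},\mt}\, F_{(\mt,0,0)}\, (C^{\alpha_\mt})^{-1}(C^{\beta_\mt})^{-1}.$$
Applying the first ingredient to $\Phi_{\mathfrak{t}^{\undla},\mt}\, F_{(\mt,0,0)}$ and then Lemma~\ref{properties of tildePhi}(1), which rewrites $\tilde{\Phi}_{\mathfrak{t}^{\undla},\mt}\, F_{(\mt,0,0)} = F_{T^{\undla}}\,\tilde{\Phi}_{\mathfrak{t}^{\undla},\mt}$, inserts $F_{T^{\undla}}$ into the middle; a mirror manipulation on $F_{(\ms,0,0)}\,\Phi_{\ms,\mathfrak{t}^{\undla}}$ absorbs $F_{(\ms,0,0)}$ into that $F_{T^{\undla}}$ and contributes the remaining scalar $\mathtt{q}_{\ms,\mathfrak{t}^{\undla}}^{-1}$, producing exactly the RHS. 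The hardest step will be the second ingredient --- carefully tracking all the Clifford parity signs and verifying that the multiple sign contributions from commuting disjoint-support Clifford monomials genuinely cancel --- while the rest is routine algebraic manipulation.
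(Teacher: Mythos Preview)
Your argument is correct and takes a genuinely different route from the paper. The paper proves the identity in one shot by computing the action of the right-hand side (denoted $\widetilde{f_{{\rm S},{\rm T}}^{\mathfrak{t}^{\undla}}}$) on every basis vector of $\mathbb{D}(\undla)$ and matching it against the known action~\eqref{SNB. eq3} of $f_{{\rm S},{\rm T}}^{\mathfrak{t}^{\undla}}$; Lemma~\ref{properties of tildePhi}(1) is invoked only to show that both sides vanish on basis vectors outside the relevant ``slot''. You instead factor the identity into two reusable pieces --- the scalar comparison $\tilde{\Phi}_{\mt',\mt}F_{(\mt,0,0)}=\mathtt{q}_{\mt',\mt}\Phi_{\mt',\mt}F_{(\mt,0,0)}$ and the Clifford--idempotent swap --- and then assemble them algebraically using Lemma~\ref{properties of tildePhi}(1). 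Your approach makes the origin of the $\mathtt{q}$-factors transparent (it is exactly the normalization $\tilde{\Phi}_i=z_i^2\Phi_i$ evaluated on eigenvectors) and isolates the only genuinely module-dependent step into a one-line identity; the paper's approach avoids the Clifford sign bookkeeping by never separating the pieces. Two small cleanups: for the first ingredient in the $d_{\undla}=1$ case you need Lemma~\ref{Phist. lem}(2) to commute $C_i$ past $\Phi_{\mt',\mt}$ (not just~\eqref{Cinter}, which governs $\tilde{\Phi}_i$); and in the second ingredient, Lemma~\ref{change sign in diagona} is needed only for $(C^{\alpha_\mt})^{-1}$ at the diagonal positions --- for $(C^{\beta_\mt})^{-1}$ the flip $X_k^{-\nu_{\beta_\mt}(k)}\to X_k^{\nu_0(k)}$ is just the definition, no lemma required.
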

\begin{proof}
It suffices to check that both two sides act as the same linear operator on all $\mathbb{D}(\underline{\mu})$, $\underline{\mu}\in \mathscr{P}^{\bullet,m}_{n}.$
We first assume $d_{\undla}=1$.
Let
$$\widetilde{f_{{\rm S},{\rm T}}^{\mathfrak{t}^{\undla}}}:=C^{\beta_{\ms}'}C^{\alpha_{\ms}'} \tilde{\Phi}_{\ms,\mathfrak{t}^{\undla}} F_{{\rm T}^{\undla}} \tilde{\Phi}_{\mathfrak{t}^{\undla},\mt} (C^{\alpha_{\mt}})^{-1}(C^{\beta_{\mt}})^{-1}.$$
Suppose ${\rm T}={\rm U}_{a}=(\mathfrak{u},\alpha_{\mathfrak{u},a}'', \beta_{\mathfrak{u}}'')$ for some ${\rm U}\in{\rm Tri}_{\bar{0}}(\undla)$ and $i:=\max \left(\mathcal{OD}_{\mt^{\undla}}\right)$.
For $b\in \mathbb{Z}_2,$ we have
\begin{align*}
\widetilde{f_{{\rm S},{\rm U}_a}^{\mathfrak{t}^{\undla}}}\cdot C^{\beta_{\mathfrak{u}}''}C^{\alpha_{\mathfrak{u},b}''}v_{\mathfrak{u}}
&=C^{\beta_{\ms}'}C^{\alpha_{\ms}'} \tilde{\Phi}_{\ms,\mathfrak{t}^{\undla}} F_{{\rm T}^{\undla}} \tilde{\Phi}_{\mathfrak{t}^{\undla},\mathfrak{u}} (C^{\alpha_{\mathfrak{u},a}''})^{-1}(C^{\alpha_{\mathfrak{u},b}''})v_{\mathfrak{u}}\\
&=C^{\beta_{\ms}'}C^{\alpha_{\ms}'} \tilde{\Phi}_{\ms,\mathfrak{t}^{\undla}} F_{{\rm T}^{\undla}}\tilde{\Phi}_{\mathfrak{t}^{\undla},\mathfrak{u}} C_{d(\mathfrak{u},\mathfrak{t}^{\undla})(i)}^{a+b}v_{\mathfrak{u}}\\
&=C^{\beta_{\ms}'}C^{\alpha_{\ms,a+b}'} \tilde{\Phi}_{\ms,\mathfrak{t}^{\undla}} F_{{\rm T}^{\undla}}\tilde{\Phi}_{\mathfrak{t}^{\undla},\mathfrak{u}}v_{\mathfrak{u}}\\
&=
\mathtt{q}_{\ms,\mathfrak{t}^{\undla}}\mathtt{q}_{\mathfrak{t}^{\undla},\mathfrak{u}}\mathtt{c}_{\ms,\mathfrak{t}^{\undla}}\mathtt{c}_{\mathfrak{t}^{\undla},\mathfrak{u}}
C^{\beta_{\ms}'}C^{\alpha_{\ms,a+b}'} v_{\ms}\\
&=
\mathtt{q}_{\ms,\mathfrak{t}^{\undla}}\mathtt{q}_{\mathfrak{t}^{\undla},\mathfrak{u}} f_{{\rm S},{\rm U}_a}^{\mathfrak{t}^{\undla}}  C^{\beta_{\mathfrak{u}}''}C^{\alpha_{\mathfrak{u},b}''} v_{\mathfrak{u}},
\end{align*}
where in the third equation we have used  \eqref{Cinter} and Lemma \ref{change sign in diagonal}, in the forth equation, we have used \eqref{SNB. eq3} and Lemma \ref{properties of tildePhi} (ii).
Using Lemma \ref{properties of tildePhi} (i), we have
\begin{align*}
\widetilde{f_{{\rm S},{\rm U}_a}^{\mathfrak{t}^{\undla}}}
=C^{\beta_{\ms}'}C^{\alpha_{\ms}'}F_{(\ms,0,0)} \tilde{\Phi}_{\ms,\mathfrak{t}^{\undla}}F_{{\rm T}^{\undla}} \tilde{\Phi}_{\mathfrak{t}^{\undla},\mt} F_{(\mathfrak{u},0,0)} (C^{\alpha_{\mathfrak{u},a}''})^{-1}(C^{\beta_{\mathfrak{u}}''})^{-1},
\end{align*}
hence, we deduce that $\widetilde{f_{{\rm S},{\rm U}_a}^{\mathfrak{t}^{\undla}}}\cdot C^{\beta_{\mfkv}^{'''}} C^{\alpha_{\mfkv}^{'''}} v_{\mfkv}=0,$
for any $V=(\mfkv,\alpha_{\mfkv}^{'''},\beta_{\mfkv}^{'''}) \in {\rm Tri}(\mathscr{P}^{\bullet,m}_{n})\setminus \{{\rm U}_b \mid b\in \mathbb{Z}_2\}$ by \eqref{SNB. eq3}.
Thus it follows from \eqref{SNB. eq3} that
$$f_{{\rm S},{\rm T}}^{\mathfrak{t}^{\undla}}=f_{{\rm S},{\rm U}_a}^{\mathfrak{t}^{\undla}}=
\mathtt{q}_{\ms,\mathfrak{t}^{\undla}}^{-1}\mathtt{q}_{\mathfrak{t}^{\undla},\mathfrak{u}}^{-1}\widetilde{f_{{\rm S},{\rm U}_a}^{\mathfrak{t}^{\undla}}}=\mathtt{q}_{\ms,\mathfrak{t}^{\undla}}^{-1}\mathtt{q}_{\mathfrak{t}^{\undla},\mathfrak{t}}^{-1}\widetilde{f_{{\rm S},{\rm T}}^{\mathfrak{t}^{\undla}}}.$$
For $d_{\undla}=0$, the proof is similar.
\end{proof}

For any $\alpha=(\alpha_1,\ldots,\alpha_{n-1})\in \mathbb{Z}_2^{n-1}$, we abuse the same notation $\alpha=(\alpha_1,\ldots,\alpha_{n-1},\bar{0})\in\mathbb{Z}_2^{n}$ for the obvious embedding.

\begin{prop}\label{eq.intertwining and f 2}
	Let $\undla\in\mathscr{P}^{\bullet,m}_{n-1},\,\underline{\mu}\in\mathscr{P}^{\bullet,m}_{n}$ for $\bullet\in\{\mathsf{0},\mathsf{s}\}$, ${\rm S}=(\ms, \alpha_{\ms}', \beta_{\ms}')\in {\rm Tri}_{\bar{0}}(\undla)$ and ${\rm T}=(\mt, \alpha_{\mt}, \beta_{\mt})\in {\rm Tri}(\undla)$. Suppose $ {\rm U}=(\mathfrak{u},\alpha_{\mathfrak{u}}'', \beta_{\mathfrak{u}}''),\,{\rm P}=(\mathfrak{p},0,\beta_{\mathfrak{p}}'''')\in {\rm Tri}_{\bar{0}}(\underline{\mu})$ and ${\rm V}=(\mathfrak{v},\alpha_{\mathfrak{v}}''', \beta_{\mathfrak{v}}''')\in {\rm Tri}(\underline{\mu})$ such that the following hold
	\begin{enumerate}
	\item $\mathfrak{u}\downarrow_{n-1}=\mathfrak{s},\,\alpha_{\mathfrak{u}}''\downarrow_{n-1}=\alpha_{\mathfrak{s}}',\,\beta_{\mathfrak{u}}''\downarrow_{n-1}=\beta_{\mathfrak{s}}';$
	\item $\mathfrak{v}\downarrow_{n-1}=\mathfrak{t},\,\alpha_{\mathfrak{v}}'''\downarrow_{n-1}=\alpha_{\mathfrak{t}},\,\beta_{\mathfrak{v}}'''\downarrow_{n-1}=\beta_{\mathfrak{t}};$
	\item  $\mathfrak{p}\downarrow_{n-1}=\mathfrak{t}^{\undla},\,\beta_{\mathfrak{p}}''''\downarrow_{n-1}=0;$
	\item $\beta_{\mathfrak{u}}''(n)=\beta_{\mathfrak{p}}''''(n)=\beta_{\mathfrak{v}}'''(n).$
	\end{enumerate}Then we have
		\begin{align*}
			f_{{\rm U},{\rm V}}^{\mathfrak{t}^{\underline{\mu}}}
=(-1)^{\delta_{\beta_{\mathfrak{p}}''''}(n)(|\alpha_{\ms}'|+|\alpha_{\mt}|)}\mathtt{q}_{\ms,\mathfrak{t}^{\undla}}^{-1}\mathtt{q}_{\mathfrak{t}^{\undla},\mt}^{-1}
 \frac{\mathtt{c}_{\mathfrak{u},\mt^{\underline{\mu}}}\mathtt{c}_{\mt^{\underline{\mu}},\mathfrak{v}}}{\mathtt{c}_{\ms,\mt^{\undla}}\mathtt{c}_{\mt^{\undla},\mt}}
  C^{\beta_{\ms}'}C^{\alpha_{\ms}'} \tilde{\Phi}_{\ms,\mathfrak{t}^{\undla}}F_{\rm P}\tilde{\Phi}_{\mathfrak{t}^{\undla},\mt} (C^{\alpha_{\mt}})^{-1}(C^{\beta_{\mt}})^{-1}.
		\end{align*}
\end{prop}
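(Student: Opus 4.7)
The plan is to reduce this identity to Proposition \ref{eq.intertwining and f} applied at the $\underline{\mu}$-level, by exploiting the structural observation that $\mathfrak{u}^{-1}(n) = \mathfrak{v}^{-1}(n) = \mathfrak{p}^{-1}(n)$ is the unique box of $\underline{\mu}\setminus\undla$; in particular $\mathfrak{p}$ coincides with the tableau $\mathfrak{a}$ of Corollary \ref{cor. ct and cst}. The starting point is to apply Proposition \ref{eq.intertwining and f} to $(\underline{\mu}, {\rm U}, V)$, which rewrites $f_{{\rm U},V}^{\mathfrak{t}^{\underline{\mu}}}$ in terms of $\tilde{\Phi}_{\mathfrak{u},\mathfrak{t}^{\underline{\mu}}}$, $F_{{\rm T}^{\underline{\mu}}}$, $\tilde{\Phi}_{\mathfrak{t}^{\underline{\mu}},\mathfrak{v}}$, Clifford factors, and the normalization $\mathtt{q}_{\mathfrak{u},\mathfrak{t}^{\underline{\mu}}}^{-1}\mathtt{q}_{\mathfrak{t}^{\underline{\mu}},\mathfrak{v}}^{-1}$.

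The next step is to factor both the intertwiners and the $\mathtt{q}$-scalars through $\mathfrak{p}$. Since $d(\mathfrak{u},\mathfrak{p}) \in \mathfrak{S}_{n-1}$ equals $d(\mathfrak{s},\mathfrak{t}^{\undla})$ (and analogously $d(\mathfrak{v},\mathfrak{p}) = d(\mathfrak{t},\mathfrak{t}^{\undla})$), one checks that concatenating reduced expressions yields reduced expressions of $d(\mathfrak{u},\mathfrak{t}^{\underline{\mu}})$ and $d(\mathfrak{v},\mathfrak{t}^{\underline{\mu}})$. Together with \eqref{Braidinter} this gives $\tilde{\Phi}_{\mathfrak{u},\mathfrak{t}^{\underline{\mu}}} = \iota_{n-1}(\tilde{\Phi}_{\mathfrak{s},\mathfrak{t}^{\undla}})\,\tilde{\Phi}_{\mathfrak{p},\mathfrak{t}^{\underline{\mu}}}$ and $\mathtt{q}_{\mathfrak{u},\mathfrak{t}^{\underline{\mu}}} = \mathtt{q}_{\mathfrak{s},\mathfrak{t}^{\undla}}\,\mathtt{q}_{\mathfrak{p},\mathfrak{t}^{\underline{\mu}}}$, together with the analogous factorizations on the $\mathfrak{v}$-side. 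The central piece then becomes $\tilde{\Phi}_{\mathfrak{p},\mathfrak{t}^{\underline{\mu}}} F_{{\rm T}^{\underline{\mu}}} \tilde{\Phi}_{\mathfrak{t}^{\underline{\mu}},\mathfrak{p}}$, which should collapse to $\mathtt{q}_{\mathfrak{p},\mathfrak{t}^{\underline{\mu}}}^2 \mathtt{c}_{\mathfrak{p},\mathfrak{t}^{\underline{\mu}}}^2 \cdot F_{(\mathfrak{p},0,0)}$ via Lemma \ref{properties of tildePhi}(i)--(ii) and a check on the basis of $\mathbb{D}(\underline{\mu})$ from Theorem \ref{actions of generators on L basis}. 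After substitution, the $\mathtt{q}$-factors collapse to $\mathtt{q}_{\mathfrak{s},\mathfrak{t}^{\undla}}^{-1}\mathtt{q}_{\mathfrak{t}^{\undla},\mathfrak{t}}^{-1}$, while Corollary \ref{cor. ct and cst} (applied with $\mathfrak{a}=\mathfrak{p}$ to both $(\mathfrak{s},\mathfrak{u})$ and $(\mathfrak{t},\mathfrak{v})$) identifies $\mathtt{c}_{\mathfrak{p},\mathfrak{t}^{\underline{\mu}}}^2$ with $\mathtt{c}_{\mathfrak{u},\mathfrak{t}^{\underline{\mu}}}\mathtt{c}_{\mathfrak{t}^{\underline{\mu}},\mathfrak{v}}/(\mathtt{c}_{\mathfrak{s},\mathfrak{t}^{\undla}}\mathtt{c}_{\mathfrak{t}^{\undla},\mathfrak{t}})$, matching the scalar in the target expression.

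Finally, conditions (i)--(iv) allow one to convert the outer Clifford factors and $F_{(\mathfrak{p},0,0)}$ into the desired form involving $F_{\rm P}$: the restrictions link $\alpha_{\mathfrak{u}}''$, $\alpha_{\mathfrak{v}}'''$ to $\alpha_{\mathfrak{s}}'$, $\alpha_{\mathfrak{t}}$, while $\beta_{\mathfrak{u}}''$, $\beta_{\mathfrak{v}}'''$ differ from the $\undla$-side $\beta_{\mathfrak{s}}'$, $\beta_{\mathfrak{t}}$ only by a possible extra $C_n^{\beta_{\mathfrak{p}}''''(n)}$. When $\beta_{\mathfrak{p}}''''(n)=0$ one has $F_{(\mathfrak{p},0,0)} = F_{\rm P}$; when $\beta_{\mathfrak{p}}''''(n)=1$ (which forces $n \notin \mathcal{D}_{\mathfrak{p}}$, so that $C_n$ commutes with $\gamma_{\mathfrak{p}}$), the closed form \eqref{comm. form of FT} together with $X_n C_n = C_n X_n^{-1}$ yields $F_{(\mathfrak{p},0,0)} = C_n F_{\rm P} C_n$. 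Moving these $C_n$'s outward past $\tilde{\Phi}_{\mathfrak{s},\mathfrak{t}^{\undla}}, \tilde{\Phi}_{\mathfrak{t}^{\undla},\mathfrak{t}}$ (which commute with $C_n$ as they lie in the $(n-1)$-subalgebra) and then past $C^{\alpha_{\mathfrak{s}}'}, C^{\alpha_{\mathfrak{t}}}$ produces exactly the sign $(-1)^{\delta_{\beta_{\mathfrak{p}}''''}(n)(|\alpha_{\mathfrak{s}}'|+|\alpha_{\mathfrak{t}}|)}$. The main obstacle will be the careful Clifford bookkeeping in the case $n \in \mathcal{OD}_{\mathfrak{u}}$ (so that $\alpha_{\mathfrak{u}}''$ may have a component at $n$) and the separate treatment of $d_{\underline{\mu}}=0$ vs $d_{\underline{\mu}}=1$; by semisimplicity (Theorem \ref{semisimple:non-dege}), these cases should reduce to comparing the action of both sides on the basis of $\mathbb{D}(\underline{\mu})$ described in Theorem \ref{actions of generators on L basis}.
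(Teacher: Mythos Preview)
Your approach is correct and takes a genuinely different route from the paper. The paper simply remarks that the argument is parallel to Proposition~\ref{eq.intertwining and f}: one verifies the identity by letting both sides act on the basis vectors $C^{\beta}C^{\alpha}v_{\mathfrak{w}}$ of $\mathbb{D}(\underline{\mu})$ (Theorem~\ref{actions of generators on L basis}), splitting into the four cases $(d_{\undla},d_{\underline{\mu}})\in\{0,1\}^2$, and the details are omitted.

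Your plan instead starts from Proposition~\ref{eq.intertwining and f} at the $\underline{\mu}$-level and transforms the resulting expression algebraically by factoring $\tilde{\Phi}_{\mathfrak{u},\mathfrak{t}^{\underline{\mu}}}$ through $\mathfrak{p}$. The length-additivity $\ell(d(\mathfrak{u},\mathfrak{t}^{\underline{\mu}})) = \ell(d(\mathfrak{s},\mathfrak{t}^{\undla})) + \ell(d(\mathfrak{p},\mathfrak{t}^{\underline{\mu}}))$ you need does hold: inversions involving $n$ contribute equally to $\mathfrak{u}$ and $\mathfrak{p}$ (since $n$ occupies the same box in both), while $\mathfrak{p}\downarrow_{n-1}=\mathfrak{t}^{\undla}$ contributes no inversions among $1,\dots,n-1$. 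Your middle collapse $\tilde{\Phi}_{\mathfrak{p},\mathfrak{t}^{\underline{\mu}}} F_{{\rm T}^{\underline{\mu}}} \tilde{\Phi}_{\mathfrak{t}^{\underline{\mu}},\mathfrak{p}} = \mathtt{q}_{\mathfrak{p},\mathfrak{t}^{\underline{\mu}}}^2 \mathtt{c}_{\mathfrak{p},\mathfrak{t}^{\underline{\mu}}}^2 F_{(\mathfrak{p},0,0)}$ is in fact just Proposition~\ref{eq.intertwining and f} combined with \eqref{idempotent1} or \eqref{idempotent2}, so no separate module check is needed there. This factorization explains structurally why the scalars $\mathtt{q}_{\ms,\mt^{\undla}}^{-1}\mathtt{q}_{\mt^{\undla},\mt}^{-1}$ and the ratio $\mathtt{c}_{\mathfrak{u},\mt^{\underline{\mu}}}\mathtt{c}_{\mt^{\underline{\mu}},\mathfrak{v}}/(\mathtt{c}_{\ms,\mt^{\undla}}\mathtt{c}_{\mt^{\undla},\mt})$ appear, which the paper's direct verification leaves opaque. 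The residual Clifford bookkeeping you flag (especially when $n\in\mathcal{D}_{\mathfrak{u}}$, where $\beta_{\mathfrak{p}}''''(n)=0$ automatically but the relation between $\mathcal{OD}_{\mathfrak{u}}$, $\mathcal{OD}_{\mathfrak{v}}$ and $\mathcal{OD}_{\mathfrak{s}}$, $\mathcal{OD}_{\mathfrak{t}}$ requires care) is precisely where the four-case split re-enters; your fallback to comparing actions on $\mathbb{D}(\underline{\mu})$ there coincides with the paper's method.
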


\begin{proof}
We shall consider the following four cases
		\begin{enumerate}
			\item $d_{\undla}=0$, $d_{\underline{\mu}}=0,$
			\item $d_{\undla}=0$, $d_{\underline{\mu}}=1,$
			\item $d_{\undla}=1$, $d_{\underline{\mu}}=0,$
			\item $d_{\undla}=1$, $d_{\underline{\mu}}=1.$
			\end{enumerate}
	In each cases, the proof is similar to Proposition \ref{eq.intertwining and f}. Hence we omit the proof.
	\end{proof}

For $\undla\in\mathscr{P}^{\bullet,m}_{n-1},$ where $\bullet\in\{\mathsf{0},\mathsf{s}\},$ we define
\begin{align}
\delta_{\lambda^{(0)}}:=\begin{cases}
	0,&\qquad \text{if the last part of $\lambda^{(0)}$ is $1$,}\\
	1,&\qquad\text{otherwise.}
	\end{cases}
\end{align}
\begin{lem}\label{cor.iota fTT}
	Let $\undla\in\mathscr{P}^{\bullet,m}_{n-1}$ for $\bullet\in\{\mathsf{0},\mathsf{s}\},$ and ${\rm T}=(\mt,\alpha_\mt,\beta_\mt)\in {\rm Tri}_{\bar{0}}(\undla).$
	
	(1) If $d_{\undla}=0,$ then
	\begin{align}\label{eq1.cor.iota fTT}
		\iota_{n-1}(F_{{\rm T}})
		=\delta_{\lambda^{(0)}}
		F_{{\rm P}}
		+\sum_{\substack{\underline{\mu}\in \mathscr{P}^{\bullet,m}_{n}, d_{\underline{\mu}}=0 \\ {\rm U}=(\mathfrak{u},\alpha_\mt,\beta_{\mathfrak{u}}'')\in
				{\rm Tri}(\underline{\mu}) \\
				\mathfrak{u}\downarrow_{n-1}=\mt,\beta_{\mathfrak{u}}''\downarrow_{n-1}=\beta_\mt}}
		F_{{\rm U}}
	\end{align}
	where ${\rm P}=(\mathfrak{p},\alpha_\mt,\beta_\mt) \in {\rm Tri}_{\bar{0}}(\underline{\nu})$ $(\underline{\nu}\in\mathscr{P}^{\bullet,m}_{n}, d_{\underline{\nu}}=1)$ such that $\mathfrak{p}$ is the unique tableau satisfying $\mathfrak{p}\downarrow_{n-1}=\mt$ and $n\in\mathcal{D}_{\mathfrak{p}},$ if $\delta_{\lambda^{(0)}}=1.$
	
	(2) If $d_{\undla}=1,$ then
	\begin{align}\label{eq2.cor.iota fTT}
		\iota_{n-1}(F_{{\rm T}})
		=\delta_{\lambda^{(0)}}
	(F_{{\rm U}}+F_{{\rm U}'})
		+\sum_{\substack{\underline{\mu}\in \mathscr{P}^{\bullet,m}_{n}, d_{\underline{\mu}}=1 \\ {\rm P}=(\mathfrak{p},\alpha_\mt,\beta_{\mathfrak{p}}'''')\in
				{\rm Tri}_{\bar{0}}(\mu) \\ \mathfrak{p}\downarrow_{n-1}=\mt, \beta_{\mathfrak{p}}''''\downarrow_{n-1}=\beta_\mt}}
		F_{{\rm P}}
	\end{align}
	where ${\rm U}=(\mathfrak{u},\alpha_\mt,\beta_\mt),{\rm U}'=(\mathfrak{u},\alpha_{\mt,\overline{1}},\beta_{\mt}) \in {\rm Tri}(\underline{\nu})$ $(\underline{\nu}\in\mathscr{P}^{\bullet,m}_{n}, d_{\underline{\nu}}=0)$ such that $\mathfrak{u}\in \Std(\underline{\nu})$ is the unique tableau satisfying $\mathfrak{u}\downarrow_{n-1}=\mt$ and $n\in\mathcal{D}_{\mathfrak{u}},$ if $\delta_{\lambda^{(0)}}=1.$
\end{lem}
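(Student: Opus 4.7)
The plan is to verify the identity by evaluating both sides on every simple $\mHfcn$-module $\mathbb{D}(\underline{\mu})$ with $\underline{\mu}\in\mathscr{P}^{\bullet,m}_{n}$. Since $P^{(\bullet)}_{n}(q^2,\undQ)\neq 0$, the algebra $\mHfcn$ is split semisimple by Theorem~\ref{semisimple:non-dege}, so any element is determined by its action on this collection. Moreover, both sides are idempotents: the left side is the image of a primitive idempotent under the algebra embedding $\iota_{n-1}$, and the right side is a finite sum of pairwise orthogonal primitive idempotents in $\mHfcn$ by Theorem~\ref{primitive idempotents}.

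First I would fix $\underline{\mu}$ and a basis vector $C^{\beta_{\ms}'}C^{\alpha_{\ms}'}v_{\ms}$ of $\mathbb{D}(\underline{\mu})$. Because $\iota_{n-1}(F_{\rm T})$ lies in the subalgebra generated by $X_1^{\pm 1},\dots,X_{n-1}^{\pm 1}$ and $C_1,\dots,C_{n-1}$, its action depends only on the restriction of $\mathbb{D}(\underline{\mu})$ to $\mathcal{H}^{f}_{c}(n-1)$. Applying Lemma~\ref{idempotent action. non-dege} inside $\mathcal{H}^{f}_{c}(n-1)$, this action vanishes unless $(\ms\downarrow_{n-1},\alpha_{\ms}'\downarrow_{n-1},\beta_{\ms}'\downarrow_{n-1})$ equals ${\rm T}$ (or ${\rm T}_a$ for some $a\in\Z_2$ when $d_\undla=1$). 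Together with the branching rule, this localises the support of $\iota_{n-1}(F_{\rm T})$ to those $\underline{\mu}$ obtained from $\undla$ by adding a single box, and within each such $\underline{\mu}$ to the unique tableau $\mathfrak{u}$ with $\mathfrak{u}\downarrow_{n-1}=\mt$.

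The heart of the argument is a case analysis on the added box. If the new box is non-diagonal, then $\mathcal{D}_{\mathfrak{u}}=\mathcal{D}_{\mt}$, $d_{\underline{\mu}}=d_\undla$, and $\gamma_{\mathfrak{u}}=\gamma_{\mt}$; the two possible values of $\beta_{\mathfrak{u}}''(n)\in\Z_2$ produce exactly the primitive idempotents $F_{\rm U}$ (respectively $F_{\rm P}$) that appear in the main sums of parts~(1) and~(2). If the new box is diagonal (which forces $\delta_{\lambda^{(0)}}=1$), then $\mathcal{D}_{\mathfrak{u}}=\mathcal{D}_{\mt}\cup\{n\}$ and $d_{\underline{\mu}}\neq d_\undla$. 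In the sub-case $d_\undla=0$, the newly added element becomes the unpaired maximum of $\mathcal{OD}_{\mathfrak{p}}$, so $\gamma_{\mathfrak{p}}$ has the same factored form as $\gamma_{\mt}$ and the contribution reduces to the single $F_{\rm P}$ term. In the sub-case $d_\undla=1$, the previously unpaired $\max\mathcal{D}_{\mt}$ is paired with $n$, and $\gamma_{\mathfrak{u}}$ acquires an extra factor of the form $\tfrac{1}{2}(1+\sqrt{-1}\,C_{j}C_{n})$ with $j=d(\mt,\mt^{\undla})(i_{t})$; the two signs $\pm$ of this factor are realised by the two orthogonal primitive idempotents $F_{\rm U}$ and $F_{{\rm U}'}$, whose sum cancels the extra Clifford degree of freedom and matches $\iota_{n-1}(F_{\rm T})$.

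The main technical obstacle will be the Clifford bookkeeping in this last sub-case: one has to verify the identity $C_{j}(1+\sqrt{-1}\,C_{j}C_{n})C_{j}^{-1}=1-\sqrt{-1}\,C_{j}C_{n}$ together with the fact that conjugation by $C_{j}$ leaves the other factors of $\gamma_{\mathfrak{u}}$ invariant, so that $F_{\rm U}+F_{{\rm U}'}$ equals the full Clifford projection induced by $\iota_{n-1}(F_{\rm T})$. Once this identity is in place, comparing actions on every basis vector via Theorem~\ref{actions of generators on L basis} together with the completeness of the primitive idempotents from Theorem~\ref{primitive idempotents} yields both parts~(1) and~(2).
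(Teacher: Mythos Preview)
Your approach is correct and takes a somewhat different route from the paper. The paper expands $\iota_{n-1}(F_{\rm T})$ in the seminormal basis $\{f_{{\rm U},V}\}$ of $\mHfcn$, then constrains the nonzero coefficients by multiplying on the left and right by $X_k$ for $k\in[n-1]$ (forcing $\mathfrak{u}\downarrow_{n-1}=\mathfrak{v}\downarrow_{n-1}=\mt$ and matching $\beta$'s), by using $\gamma_{\mt}\,\iota_{n-1}(F_{\rm T})=\iota_{n-1}(F_{\rm T})=\iota_{n-1}(F_{\rm T})\,\gamma_{\mt}$ together with orthogonality of the Clifford idempotents $C^{\alpha}\gamma_{\mt}(C^{\alpha})^{-1}$ (forcing the $\alpha$'s), and finally by parity; this reduces the expression to a combination of the primitive idempotents listed in the statement, and the coefficients are then pinned down to~$1$ by the completeness and pairwise orthogonality of $\{F_{\rm U}\mid {\rm U}\in{\rm Tri}_{\bar 0}(\mathscr{P}^{\bullet,m}_n)\}$. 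Your plan instead works on the module side, evaluating both sides on every basis vector of every $\mathbb{D}(\underline{\mu})$ and doing the Clifford bookkeeping by hand, with an explicit case split on whether the added box is diagonal. The two strategies are dual in a semisimple algebra, and the constraints you extract are the same ones the paper obtains; the main practical difference is that the paper's final step (orthogonality plus completeness of the $F_{\rm U}$'s) replaces your explicit verification of the identity $C_j(1+\sqrt{-1}\,C_jC_n)C_j^{-1}=1-\sqrt{-1}\,C_jC_n$ and the associated matching of $F_{\rm U}+F_{{\rm U}'}$. One point to be careful about in your write-up: the phrase ``applying Lemma~\ref{idempotent action. non-dege} inside $\mathcal{H}^f_c(n-1)$'' presupposes that the basis vectors $C^{\beta'_{\ms}}C^{\alpha'_{\ms}}v_{\ms}$ of $\mathbb{D}(\underline{\mu})$, restricted to $\mathcal{H}^f_c(n-1)$, behave like the standard basis of some $\mathbb{D}(\undla')$; this is true (the action formulas in Theorem~\ref{actions of generators on L basis} for $i\le n-1$ depend only on $\ms\downarrow_{n-1}$, $\alpha'_{\ms}\downarrow_{n-1}$, $\beta'_{\ms}\downarrow_{n-1}$), but you should say so explicitly rather than invoking Lemma~\ref{idempotent action. non-dege} by analogy.
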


\begin{proof}
{\bf Case 1: $d_{\undla}=0.$}  Then for any ${\rm T}=(\mt,\alpha_\mt,\beta_\mt)\in {\rm Tri}(\undla),$
we can write
\begin{align}\label{eq.image of f}
\iota_{n-1}(F_{\rm T})
=\sum_{\substack{\underline{\mu}\in \mathscr{P}^{\bullet,m}_{n}, d_{\underline{\mu}}=0 \\ {\rm U},{\rm V}\in {\rm Tri}(\underline{\mu})}}
\beta_{{\rm U},{\rm V}}^{{\rm T}} f_{{\rm U},{\rm V}}
+\sum_{\substack{\underline{\mu}\in \mathscr{P}^{\bullet,m}_{n}, d_{\underline{\mu}}=1 \\ {\rm P},{\rm Q}\in {\rm Tri}_{\bar{0}}(\underline{\mu}) \\ b\in \mathbb{Z}_2}}
\beta_{{\rm P},{\rm Q}_b}^{{\rm T}} f_{{\rm P},{\rm Q}_b}
\end{align}
where
$
{\rm U}=(\mathfrak{u},\alpha_{\mathfrak{u}}'', \beta_{\mathfrak{u}}''), {\rm V}=(\mathfrak{v},\alpha_{\mathfrak{v}}''', \beta_{\mathfrak{v}}'''),
{\rm P}=(\mathfrak{p},\alpha_{\mathfrak{p}}'''', \beta_{\mathfrak{p}}''''), {\rm Q}=(\mathfrak{q},\alpha_{\mathfrak{q}}''''', \beta_{\mathfrak{q}}''''')\in {\rm Tri}_{\bar{0}}(\mathscr{P}^{\bullet,m}_{n}).
$
%\begin{lem}\label{lem.image of f}
%(1) The coefficient $\beta_{{\rm U},V}^{{\rm S},{\rm T}}=0$
%unless $\mathfrak{u}\downarrow_{n-1}=\ms, \alpha_{\mathfrak{u}}''=\alpha_{\ms}',$ $\beta_{\mathfrak{u}}''\downarrow_{n-1}=\beta_{\ms}'$
%and $\mathfrak{v}\downarrow_{n-1}=\mt,$ $\alpha_{\mathfrak{v}}'''=\alpha_{\mt},$ $\beta_{\mathfrak{v}}'''\downarrow_{n-1}=\beta_{\mt}.$
%
%(2) The coefficient $\beta_{{\rm P},{\rm Q}_b}^{{\rm S},{\rm T}}=0$
%unless $\mathfrak{p}\downarrow_{n-1}=\ms, \alpha_{\mathfrak{p}}''''=\alpha_{\ms}',$ $\beta_{\mathfrak{p}}''''=\beta_{\ms}'$
%and $\mathfrak{q}\downarrow_{n-1}=\mt,$ $\alpha_{\mathfrak{q}}'''''=\alpha_{\mt},$ $\beta_{\mathfrak{q}}'''''=\beta_{\mt}.$
%\end{lem}
%\begin{proof}
Now for any $k\in [n-1],$ we multiply $X_k$ from left on the both two sides of equation \eqref{eq.image of f} and get
\begin{align}\label{eq.X act on iotaf}
\mathtt{b}_{\mt,k}^{-\nu_{\beta_\mt}(k)}\iota_{n-1}(F_{\rm T})
=\sum_{\substack{\underline{\mu}\in \mathscr{P}^{\bullet,m}_{n}, d_{\underline{\mu}}=0 \\ {\rm U},{\rm V}\in {\rm Tri}(\underline{\mu})}}
\beta_{{\rm U},{\rm V}}^{{\rm T}} \mathtt{b}_{\mathfrak{u},k}^{-\nu_{\beta_{\mathfrak{u}}''}(k)} f_{{\rm U},{\rm V}}
+\sum_{\substack{\underline{\mu}\in \mathscr{P}^{\bullet,m}_{n}, d_{\underline{\mu}}=1 \\ {\rm P},{\rm Q}\in {\rm Tri}_{\bar{0}}(\underline{\mu}) \\ b\in \mathbb{Z}_2}}
\beta_{{\rm P},{\rm Q}_b}^{{\rm T}} \mathtt{b}_{\mathfrak{p},k}^{-\nu_{\beta_{\mathfrak{p}}''''}(k)} f_{{\rm P},{\rm Q}_b}
\end{align}
by Proposition \ref{generators action on seminormal basis} (1).
Combining \eqref{eq.image of f} and \eqref{eq.X act on iotaf}, we deduce that  $\beta_{{\rm U},{\rm V}}^{{\rm T}}=0$
unless $\mathfrak{u}\downarrow_{n-1}=\mt,$ $\beta_{\mathfrak{u}}''\downarrow_{n-1}=\beta_\mt$
and $\beta_{{\rm P},{\rm Q}_b}^{{\rm T}}=0$
unless $\mathfrak{p}\downarrow_{n-1}=\mt,$ $\beta_{\mathfrak{p}}''''\downarrow_{n-1}=\beta_\mt$ and $n\in \mathcal{D}_{\mathfrak{p}},$
which implies that $\beta_{\mathfrak{p}}''''=\beta_\mt.$
Similarly, by using Proposition \ref{generators action on seminormal basis} (1), we have
$\beta_{{\rm U},{\rm V}}^{{\rm T}}=0$
unless $\mathfrak{v}\downarrow_{n-1}=\mt$, $\beta_{\mathfrak{v}}'''\downarrow_{n-1}=\beta_\mt$
and $\beta_{{\rm P},{\rm Q}_b}^{{\rm T}}=0$
unless $\mathfrak{q}\downarrow_{n-1}=\mt,$ $\beta_{\mathfrak{q}}'''''=\beta_\mt.$

It follows from \eqref{definition of primitive idempotents. non-dege} and \eqref{comm. form of FT} that
$C^{\alpha_{\mt}} \gamma_{\mt} (C^{\alpha_{\mt}})^{-1} F_{\rm T}=F_{\rm T}=F_{\rm T}C^{\alpha_{\mt}} \gamma_{\mt} (C^{\alpha_{\mt}})^{-1}$, hence we have
\begin{align*}
C^{\alpha_{\mt}} \gamma_{\mt} (C^{\alpha_{\mt}})^{-1} \iota_{n-1}(F_{\rm T})=\iota_{n-1}(F_{\rm T})
=\iota_{n-1}(F_{\rm T})C^{\alpha_{\mt}} \gamma_{\mt} (C^{\alpha_{\mt}})^{-1}.
\end{align*}
By Lemma \ref{lem:clifford rep}, the set of Clifford idempotents $\{C^{\alpha_\mt}\gamma_{\mt}(C^{\alpha_\mt})^{-1}~|~\alpha_{\mt}\in \mathcal{OD}_{\mt}\}$ are pariwise orthogonal. Note $\bullet\in\{\mathsf{0},\mathsf{s}\}$ again, we deduce the following

(i) $\beta_{{\rm U},V}^{{\rm T}}=0$
unless  $\alpha_{\mathfrak{u}}''\downarrow_{n-1}=\alpha_\mt,$ hence $\alpha_{\mathfrak{u}}''=\alpha_\mt$  (since $n\notin \mathcal{D}_{\mathfrak{u}});$

(ii) $\beta_{{\rm U},V}^{{\rm T}}=0$
unless $\alpha_{\mathfrak{v}}'''\downarrow_{n-1}=\alpha_\mt,$ hence $\alpha_{\mathfrak{v}}'''=\alpha_\mt$ (since $n\notin \mathcal{D}_{\mathfrak{v}});$

(iii) $\beta_{{\rm P},{\rm Q}_b}^{{\rm T}}=0$
unless $\alpha_{\mathfrak{p}}''''\downarrow_{n-1}=\alpha_\mt,$ hence $\alpha_{\mathfrak{v}}''''=\alpha_\mt$ (since $\alpha_{\mathfrak{p}}''''\in \mathbb{Z}_2(\mathcal{OD}_{\mathfrak{p}})_{\bar{0}});$

(iv) $\beta_{{\rm P},{\rm Q}_b}^{{\rm T}}=0$
unless $\alpha_{\mathfrak{q}}'''''\downarrow_{n-1}=\alpha_\mt,$ hence $\alpha_{\mathfrak{q}}'''''=\alpha_\mt$ (since $\alpha_{\mathfrak{q}}'''''\in \mathbb{Z}_2(\mathcal{OD}_{\mathfrak{q}})_{\bar{0}});$

(v) $\beta_{{\rm P},{\rm Q}_b}^{{\rm T}}=0$ unless $b=\overline{0}$ (since the parity $|\iota_{n-1}(F_{\rm T})|=\bar{0}$).

%Since  that
%\begin{align}\label{eq1.iota fTT}
%\iota_{n-1}(f_{{\rm T},{\rm T}})
%=\sum_{\substack{\underline{\mu}\in \mathscr{P}^{\bullet,m}_{n}, d_{\underline{\mu}}=0 \\ {\rm U}=(\mathfrak{u},\alpha_{\mt},\beta_{\mathfrak{u}}''),
%V=(\mathfrak{u},\alpha_{\mt},\beta_{\mathfrak{u}}''')\in {\rm Tri}(\underline{\mu}) \\
%\mathfrak{u}\downarrow_{n-1}=\mt,\beta_{\mathfrak{u}}''\downarrow_{n-1}=\beta_{\mt}=\beta_{\mathfrak{u}}'''\downarrow_{n-1}}}
%\beta_{{\rm U},V}^{{\rm T},{\rm T}} f_{{\rm U},V}
%+\sum_{\substack{\underline{\mu}\in \mathscr{P}^{\bullet,m}_{n}, d_{\underline{\mu}}=1 \\ {\rm P}=(\mathfrak{p},\alpha_{\mt},\beta_{\mt})\in
%{\rm Tri}_{\bar{0}}(\underline{\mu}) \\ \mathfrak{p}\downarrow_{n-1}=\mt, b\in \mathbb{Z}_2}}
%\beta_{{\rm P},{\rm P}_b}^{{\rm T},{\rm T}} f_{{\rm P},{\rm P}_b},
%\end{align}
%and $\beta_{{\rm P},{\rm P}_{\bar{1}}}^{{\rm T},{\rm T}}=0,$ $\beta_{\mathfrak{u}}''=\beta_{\mathfrak{u}}'''\in\{\beta_{\mt},\beta_{\mt}+e_n\}$ since $|\iota_{n-1}(f_{{\rm T},{\rm T}})|=\bar{0}.$ So the second sum in \eqref{eq1.iota fTT} is only at most one term $\beta_{{\rm P},{\rm P}_{\bar{0}}}^{{\rm T},{\rm T}} f_{{\rm P},{\rm P}_{\bar{0}}}$ if there is a unique $\mathfrak{p}\in \Std(\mathscr{P}^{\bullet,m}_{n})$
%with $\mathfrak{p}\downarrow_{n-1}=\mt$ and $n\in \mathcal{D}_{\mathfrak{p}}.$
To sum up, we have
\begin{align}\label{eq2.iota fTT}
\iota_{n-1}(F_{{\rm T}})
=\delta_{\lambda^{(0)}} \beta_{{\rm P},{\rm P}_{\bar{0}}}^{{\rm T}} F_{{\rm P}}
+\sum_{\substack{\underline{\mu}\in \mathscr{P}^{\bullet,m}_{n}, d_{\underline{\mu}}=0 \\ {\rm U}=(\mathfrak{u},\alpha_\mt,\beta_{\mathfrak{u}}'')\in
{\rm Tri}(\underline{\mu}) \\
\mathfrak{u}\downarrow_{n-1}=\mt,\beta_{\mathfrak{u}}''\downarrow_{n-1}=\beta_\mt}}
\beta_{{\rm U},{\rm U}}^{{\rm T}} F_{{\rm U}},
\end{align}
	where ${\rm P}=(\mathfrak{p},\alpha_\mt,\beta_\mt) \in {\rm Tri}_{\bar{0}}(\underline{\nu})$ $(\underline{\nu}\in\mathscr{P}^{\bullet,m}_{n}, d_{\underline{\nu}}=1)$ such that $\mathfrak{p}$ is the unique tableau satisfying $\mathfrak{p}\downarrow_{n-1}=\mt$ and $n\in\mathcal{D}_{\mathfrak{p}},$ if $\delta_{\lambda^{(0)}}=1.$ Finally, \eqref{eq1.cor.iota fTT} follows from the fact that the idempotents in $\{F_{{\rm U}}\mid {\rm U}\in {\rm Tri}_{\bar{0}}(\mathscr{P}^{\bullet,m}_{n})\}$  are complete and pairwise orthogonal.

{\bf Case 2: $d_{\undla}=1.$}  Then for any ${\rm T}=(\mt, \alpha,\beta)\in {\rm Tri}(\undla),$
we can write
\begin{align}\label{eq2.image of f}
	\iota_{n-1}(F_{\rm T})
	=\sum_{\substack{\underline{\mu}\in \mathscr{P}^{\bullet,m}_{n}, d_{\underline{\mu}}=0 \\ {\rm U},{\rm V}\in {\rm Tri}(\underline{\mu})}}
	\beta_{{\rm U},{\rm V}}^{{\rm T}} f_{{\rm U},{\rm V}}
	+\sum_{\substack{\underline{\mu}\in \mathscr{P}^{\bullet,m}_{n}, d_{\underline{\mu}}=1 \\ {\rm P},{\rm Q}\in {\rm Tri}_{\bar{0}}(\underline{\mu}) \\ b\in \mathbb{Z}_2}}
	\beta_{{\rm P},{\rm Q}_b}^{{\rm T}} f_{{\rm P},{\rm Q}_b}
\end{align}
where
$
{\rm U}=(\mathfrak{u},\alpha_{\mathfrak{u}}'', \beta_{\mathfrak{u}}''), {\rm V}=(\mathfrak{v},\alpha_{\mathfrak{v}}''', \beta_{\mathfrak{v}}'''),
{\rm P}=(\mathfrak{p},\alpha_{\mathfrak{p}}'''', \beta_{\mathfrak{p}}''''), {\rm Q}=(\mathfrak{q},\alpha_{\mathfrak{q}}''''', \beta_{\mathfrak{q}}''''')\in {\rm Tri}_{\bar{0}}(\mathscr{P}^{\bullet,m}_{n}).
$
Similarly, we can deduce that

(1) The coefficient $\eta_{{\rm U},{\rm V}}^{{\rm T}}=0$
unless $\mathfrak{u}\downarrow_{n-1}=\mathfrak{v}\downarrow_{n-1}=\mt, \alpha_{\mathfrak{u}}''\downarrow_{n-1}=\alpha_{\mathfrak{v}}'''\downarrow_{n-1}=\alpha_\mt,\,\beta_{\mathfrak{u}}''=\beta_{\mathfrak{v}}'''=\beta_\mt.$

(2) The coefficient $\beta_{{\rm P},{\rm Q}_b}^{{\rm T}}=0$
unless $\mathfrak{p}\downarrow_{n-1}=\mathfrak{q}\downarrow_{n-1}=\mt, \alpha_{\mathfrak{p}}''''=\alpha_{\mathfrak{q}}'''''=\alpha_\mt,\,\beta_{\mathfrak{p}}''''\downarrow_{n-1}=\beta_{\mathfrak{q}}'''''\downarrow_{n-1}=\beta_\mt.$

Again, by comparing parity, we have
\begin{align}\label{eq3.iota fTT}
\iota_{n-1}(F_{\rm T})
=%\sum_{\substack{\underline{\mu}\in \mathscr{P}^{\bullet,m}_{n}, d_{\underline{\mu}}=0 \\ {\rm U}=(\mathfrak{u},\alpha_{\mt},\beta_{\mt})\in {\rm Tri}(\underline{\mu}) \\
%\mathfrak{u}\downarrow_{n-1}=\mt}}
\delta_{\lambda^{(0)}}\left( \eta_{{\rm U},{\rm U}}^{{\rm T}} F_{{\rm U}}
+ \eta_{{\rm U}',{\rm U}'}^{{\rm T}} F_{{\rm U}'} \right)
+\sum_{\substack{\underline{\mu}\in \mathscr{P}^{\bullet,m}_{n}, d_{\underline{\mu}}=1 \\ {\rm P}=(\mathfrak{p},\alpha_\mt,\beta_{\mathfrak{p}}'''')\in
{\rm Tri}_{\bar{0}}(\underline{\mu}) \\ \mathfrak{p}\downarrow_{n-1}=\mt, \beta_{\mathfrak{p}}''''\downarrow_{n-1}=\beta_\mt}}
\eta_{{\rm P},{\rm P}}^{{\rm T}} F_{\rm P}
\end{align}
	where ${\rm U}=(\mathfrak{u},\alpha_\mt,\beta_\mt),{\rm U}'=(\mathfrak{u},\alpha_{\mt,\overline{1}},\beta_\mt) \in {\rm Tri}(\underline{\nu})$ $(\underline{\nu}\in\mathscr{P}^{\bullet,m}_{n}, d_{\underline{\nu}}=0),$ such that $\mathfrak{u}\in \Std(\underline{\nu})$ is the unique tableau satisfying $\mathfrak{u}\downarrow_{n-1}=\mt$ and $n\in\mathcal{D}_{\mathfrak{u}},$ if $\delta_{\lambda^{(0)}}=1.$ Finally, \eqref{eq2.cor.iota fTT} follows from the fact that the idempotents in $\{F_{{\rm U}}\mid {\rm U}\in {\rm Tri}_{\bar{0}}(\mathscr{P}^{\bullet,m}_{n})\}$ are complete and pairwise orthogonal.

\end{proof}

The main result of this subsection is the following.
\begin{thm}\label{maththm of embedding}
	Let $\undla\in\mathscr{P}^{\bullet,m}_{n-1},$ where $\bullet\in\{\mathsf{0},\mathsf{s}\}.$

(1) If $d_{\undla}=0,$ then, for any ${\rm S}=(\ms, \alpha_{\ms}', \beta_{\ms}'),{\rm T}=(\mt, \alpha_{\mt}, \beta_{\mt})\in {\rm Tri}(\undla),$ we have
%\begin{align}
%\iota_{n-1}(f_{{\rm S},{\rm T}})
%=\delta_{\lambda^{(0)}}
%\frac{\mathtt{c}_{\ms,\mt^{\undla}}\mathtt{c}_{\mt^{\undla},\ms}}{\mathtt{c}_{\mathfrak{p},\mt^{\underline{\nu}}}\mathtt{c}_{\mt^{\underline{\nu}},\mathfrak{p}}}
%f^{\mt^{\underline{\nu}}}_{{\rm P},{\rm Q}_{\bar{0}}}
%+\sum_{\substack{\underline{\mu}\in \mathscr{P}^{\bullet,m}_{n}, d_{\underline{\mu}}=0 \\
%{\rm U}=(\mathfrak{u},\alpha_{\ms}',\beta_{\mathfrak{u}}''),V=(\mathfrak{v},\alpha_{\mt},\beta_{\mathfrak{v}}''')\in
%{\rm Tri}(\underline{\mu}) \\
%\mathfrak{u}\downarrow_{n-1}=\ms,\mathfrak{v}\downarrow_{n-1}=\mt,
%\\ \beta_{\mathfrak{u}}''\downarrow_{n-1}=\beta_{\ms}',\beta_{\mathfrak{v}}'''\downarrow_{n-1}=\beta_{\mt},
%\nu_{\beta_{\mathfrak{u}}''}(n)\nu_{\beta_{\mathfrak{v}}'''}(n)=1}}
%\frac{\mathtt{c}_{\ms,\mt^{\undla}}\mathtt{c}_{\mt^{\undla},\ms}}{\mathtt{c}_{\mathfrak{u},\mt^{\underline{\mu}}}\mathtt{c}_{\mt^{\underline{\mu}},\mathfrak{u}}}
%f^{\mt^{\underline{\mu}}}_{{\rm U},V}
%\end{align}
\begin{align}\label{embedding1}
\iota_{n-1}(f_{{\rm S},{\rm T}})
=\delta_{\lambda^{(0)}}
f_{{\rm P},{\rm Q}_{\bar{0}}}
+\sum_{\substack{\underline{\mu}\in \mathscr{P}^{\bullet,m}_{n}, d_{\underline{\mu}}=0 \\
{\rm U}=(\mathfrak{u},\alpha_{\ms}',\beta_{\ms}'),{\rm V}=(\mathfrak{v},\alpha_{\mt},\beta_{\mt})\in
{\rm Tri}(\underline{\mu}) \\
{\rm U}'=(\mathfrak{u},\alpha_{\ms}',\beta_{\ms}'+e_n),{\rm V}'=(\mathfrak{v},\alpha_{\mt},\beta_{\mt}+e_n)\in
{\rm Tri}(\underline{\mu})\\
\mathfrak{u}\downarrow_{n-1}=\ms,\mathfrak{v}\downarrow_{n-1}=\mt}}
\left(f_{{\rm U},{\rm V}}+(-1)^{|\alpha_{\ms}'|+|\alpha_\mt|}f_{{\rm U}',{\rm V}'}\right),
\end{align}
where ${\rm P}=(\mathfrak{p},\alpha_{\ms}',\beta_{\ms}'),$ ${\rm Q}=(\mathfrak{q},\alpha_{\mt},\beta_{\mt}) \in {\rm Tri}_{\bar{0}}(\underline{\nu})$ $(\underline{\nu}\in\mathscr{P}^{\bullet,m}_{n}, d_{\underline{\nu}}=1),$
$\mathfrak{p}, \mathfrak{q}\in \Std(\underline{\nu})$ are the unique tableaux such that $\mathfrak{p}\downarrow_{n-1}=\ms,$  $\mathfrak{q}\downarrow_{n-1}=\mt$ and $n\in\mathcal{D}_{\mathfrak{p}}\cap \mathcal{D}_{\mathfrak{q}},$ if $\delta_{\lambda^{(0)}}=1.$

(2) If $d_{\undla}=1,$ then, for any ${\rm S}=(\ms, \alpha_{\ms}', \beta_{\ms}'),{\rm T}=(\mt, \alpha_{\mt}, \beta_{\mt})\in {\rm Tri}_{\bar{0}}(\undla),$ $a\in \mathbb{Z}_2,$ we have
\begin{align}\label{embedding2}
\iota_{n-1}(f_{{\rm S},{\rm T}_a})
=\delta_{\lambda^{(0)}}
(f_{{\rm P},{\rm Q}}+f_{{\rm P}',{\rm Q}'})
+\sum_{\substack{\underline{\mu}\in \mathscr{P}^{\bullet,m}_{n}, d_{\underline{\mu}}=1 \\
{\rm U}=(\mathfrak{u},\alpha_{\ms}',\beta_{\ms}'),{\rm V}=(\mathfrak{v},\alpha_{\mt},\beta_{\mt})\in
{\rm Tri}_{\bar{0}}(\underline{\mu}) \\
{\rm U}'=(\mathfrak{u},\alpha_{\ms}',\beta_{\ms}'+e_n),{\rm V}'=(\mathfrak{v},\alpha_{\mt},\beta_{\mt}+e_n)\in
{\rm Tri}_{\bar{0}}(\underline{\mu})\\
\mathfrak{u}\downarrow_{n-1}=\ms,\mathfrak{v}\downarrow_{n-1}=\mt}}
\left(f_{{\rm U},{\rm V}_a}+(-1)^{|\alpha_{\ms}'|+|\alpha_{\mt,a}|}f_{{\rm U}',{\rm V}'_a}\right),
\end{align}
where ${\rm P}=(\mathfrak{p},\alpha_{\ms}',\beta_{\ms}'), {\rm Q}=(\mathfrak{q},\alpha_{\mt,a},\beta_{\mt}),
{\rm P}'=(\mathfrak{p},\alpha_{\ms,\bar{1}}',\beta_{\ms}'), {\rm Q}'=(\mathfrak{q},\alpha_{\mt,a+\bar{1}},\beta_{\mt})\in {\rm Tri}(\underline{\nu})$ $(\underline{\nu}\in\mathscr{P}^{\bullet,m}_{n}, d_{\underline{\nu}}=0),$
$\mathfrak{p}, \mathfrak{q}\in \Std(\underline{\nu})$ are the unique tableaux such that $\mathfrak{p}\downarrow_{n-1}=\ms,$  $\mathfrak{q}\downarrow_{n-1}=\mt$ and $n\in\mathcal{D}_{\mathfrak{p}}\cap \mathcal{D}_{\mathfrak{q}},$ if $\delta_{\lambda^{(0)}}=1.$
\end{thm}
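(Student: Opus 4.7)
The plan is to reduce the computation to two already established ingredients: Proposition \ref{eq.intertwining and f} (which expresses $f_{{\rm S},{\rm T}}^{\mt^{\undla}}$ as a conjugate of the primitive idempotent $F_{{\rm T}^{\undla}}$ by Clifford monomials and intertwining elements), and Lemma \ref{cor.iota fTT} (which decomposes $\iota_{n-1}(F_{{\rm T}^{\undla}})$ explicitly in the larger algebra). First, combining \eqref{fst and re. fst} with Proposition \ref{eq.intertwining and f}, I would write
$$f_{{\rm S},{\rm T}}=\frac{\mathtt{c}_{\ms,\mt}}{\mathtt{c}_{\ms,\mt^{\undla}}\mathtt{c}_{\mt^{\undla},\mt}}\,\mathtt{q}_{\ms,\mt^{\undla}}^{-1}\mathtt{q}_{\mt^{\undla},\mt}^{-1}\, C^{\beta_{\ms}'}C^{\alpha_{\ms}'}\tilde\Phi_{\ms,\mt^{\undla}}\,F_{{\rm T}^{\undla}}\,\tilde\Phi_{\mt^{\undla},\mt}(C^{\alpha_\mt})^{-1}(C^{\beta_\mt})^{-1}.$$
Because $C_j$ ($j\le n-1$), $X_j^{\pm 1}$ ($j\le n-1$) and $T_i$ ($i\le n-2$) are all fixed by $\iota_{n-1}$, the intertwining elements $\tilde\Phi_{\ms,\mt^{\undla}}$ and $\tilde\Phi_{\mt^{\undla},\mt}$ lie inside $\mathcal{H}^f_c(n-1)$ and pass through the embedding unchanged; so $\iota_{n-1}(f_{{\rm S},{\rm T}})$ is completely controlled by $\iota_{n-1}(F_{{\rm T}^{\undla}})$.

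Applying Lemma \ref{cor.iota fTT} with ${\rm T}={\rm T}^{\undla}$ (so $\alpha_{\mt^{\undla}}=\beta_{\mt^{\undla}}=0$), I split $\iota_{n-1}(F_{{\rm T}^{\undla}})$ as a sum of primitive idempotents $F_{{\rm P}}$ of $\mHfcn$ indexed by triples ${\rm P}=(\mathfrak{p},0,\beta_{\mathfrak{p}}'''')$ with $\mathfrak{p}\downarrow_{n-1}=\mt^{\undla}$ and $\beta_{\mathfrak{p}}''''\downarrow_{n-1}=0$; the two possibilities $\beta_{\mathfrak{p}}''''(n)\in\{\bar 0,\bar 1\}$ (respectively, the freedom in $a\in\Z_2$ in the $d_{\undla}=1$ case) are precisely what will produce the matched pair $(f_{{\rm U},V},(-1)^{|\alpha_{\ms}'|+|\alpha_\mt|}f_{{\rm U}',V'})$ in the statement. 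For each $F_{{\rm P}}$, I would then recognise the expression
$$C^{\beta_{\ms}'}C^{\alpha_{\ms}'}\tilde\Phi_{\ms,\mt^{\undla}}\,F_{{\rm P}}\,\tilde\Phi_{\mt^{\undla},\mt}(C^{\alpha_\mt})^{-1}(C^{\beta_\mt})^{-1}$$
as a scalar multiple of the $\mHfcn$-seminormal basis vector $f_{{\rm U},V}^{\mt^{\underline\mu}}$ by inverting Proposition \ref{eq.intertwining and f 2}, where ${\rm U}=(\mathfrak{u},\alpha_{\ms}',\beta_{\ms}'+\delta e_n)$ and $V=(\mathfrak{v},\alpha_\mt,\beta_\mt+\delta e_n)$ are the unique liftings of ${\rm S},{\rm T}$ in $\underline\mu$ prescribed by conditions (i)--(iv) of that Proposition (here $\delta:=\beta_{\mathfrak{p}}''''(n)\in\{\bar 0,\bar 1\}$). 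A final application of \eqref{fst and re. fst} converts $f_{{\rm U},V}^{\mt^{\underline\mu}}$ into $f_{{\rm U},V}$.

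The remaining work is a bookkeeping of scalars. Using Corollary \ref{cor. ct and cst}, both ratios $\mathtt{c}_{\mathfrak{u},\mt^{\underline\mu}}/\mathtt{c}_{\ms,\mt^{\undla}}$ and $\mathtt{c}_{\mt^{\underline\mu},\mathfrak{v}}/\mathtt{c}_{\mt^{\undla},\mt}$ collapse onto a common factor $\mathtt{c}_{\mathfrak{a},\mt^{\underline\mu}}$, where $\mathfrak{a}$ is the unique lifting of $\mt^{\undla}$ to $\Std(\underline\mu)$; at the same time one checks, using that the defining reduced expression of $d(\ms,\mt)$ also serves $d(\mathfrak{u},\mathfrak{v})$ and that the coefficients $\mathtt{c}_\cdot(k)$ for $k\le n-2$ are intrinsic to $\mathcal{H}^f_c(n-1)$, that $\mathtt{c}_{\ms,\mt}=\mathtt{c}_{\mathfrak{u},\mathfrak{v}}$. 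Combined with the identity $\mathtt{q}_{\ms,\mt^{\undla}}\mathtt{q}_{\mt^{\undla},\mt}=\mathtt{q}_{\mathfrak{u},\mt^{\underline\mu}}\mathtt{q}_{\mt^{\underline\mu},\mathfrak{v}}$ (which again holds since the residues at positions $\le n-1$ are preserved), all the $\mathtt{c}$ and $\mathtt{q}$ scalars cancel to $1$. The sign $(-1)^{|\alpha_{\ms}'|+|\alpha_\mt|}$ that decorates $f_{{\rm U}',V'}$ arises exactly from the factor $(-1)^{\delta_{\beta_{\mathfrak{p}}''''}(n)(|\alpha_{\ms}'|+|\alpha_\mt|)}$ in Proposition \ref{eq.intertwining and f 2} when $\beta_{\mathfrak{p}}''''(n)=\bar 1$. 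The main obstacle is the case analysis over $(d_{\undla},d_{\underline\mu})\in\{0,1\}^2$: in the two cases with $d_{\undla}\ne d_{\underline\mu}$ the tableau $\mathfrak{p}$ is forced to satisfy $n\in\mathcal{D}_{\mathfrak{p}}$ and exists only when $\delta_{\lambda^{(0)}}=1$, producing the boundary terms $f_{{\rm P},{\rm Q}_{\bar 0}}$ and $f_{{\rm P},{\rm Q}}+f_{{\rm P}',{\rm Q}'}$; in the matching-parity cases one obtains the summation terms. When $d_{\undla}=1$ the $\Z_2$-label $a$ must be tracked through the intertwiner twist of Lemma \ref{properties of tildePhi}\,(i) and the sign conventions of Lemma \ref{change sign in diagonal}; getting these signs to agree with \eqref{embedding2} is the most delicate part of the verification.
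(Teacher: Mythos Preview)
Your plan is essentially the paper's own proof: express $f_{{\rm S},{\rm T}}^{\mt^{\undla}}$ via Proposition~\ref{eq.intertwining and f}, push $\iota_{n-1}$ through to $F_{{\rm T}^{\undla}}$, expand with Lemma~\ref{cor.iota fTT}, rebuild each summand via Proposition~\ref{eq.intertwining and f 2}, and then pass to the reduced basis by~\eqref{fst and re. fst}. One small correction on the bookkeeping: the identity $\mathtt{q}_{\ms,\mt^{\undla}}\mathtt{q}_{\mt^{\undla},\mt}=\mathtt{q}_{\mathfrak{u},\mt^{\underline\mu}}\mathtt{q}_{\mt^{\underline\mu},\mathfrak{v}}$ you invoke is neither true in general nor needed, since Proposition~\ref{eq.intertwining and f 2} already carries the factor $\mathtt{q}_{\ms,\mt^{\undla}}^{-1}\mathtt{q}_{\mt^{\undla},\mt}^{-1}$ directly; the only scalar fact required at the end is $\mathtt{c}_{\ms,\mt}=\mathtt{c}_{\mathfrak{u},\mathfrak{v}}=\mathtt{c}_{\mathfrak{p},\mathfrak{q}}$, after which two applications of~\eqref{fst and re. fst} make everything collapse to~$1$ without appealing to Corollary~\ref{cor. ct and cst}.
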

\begin{proof}
We only prove \eqref{embedding1}, since the proof of \eqref{embedding2} is completely similar to \eqref{embedding1}.

By Proposition \ref{eq.intertwining and f} (1), we have
\begin{align}\label{eq1.mainthm of embeding}
\iota_{n-1}(f_{{\rm S},{\rm T}}^{\mt^{\undla}})
&=\mathtt{q}_{\ms,\mt^{\undla}}^{-1}\mathtt{q}_{\mt^{\undla},\mt}^{-1}
C^{\beta_{\ms}'}C^{\alpha_{\ms}'} \tilde{\Phi}_{\ms,\mt^{\undla}} \iota_{n-1}(F_{{\rm T}^{\undla}}) \tilde{\Phi}_{\mt^{\undla},\mt} (C^{\alpha_{\mt}})^{-1}(C^{\beta_{\mt}})^{-1}.
\end{align}
By \eqref{eq1.cor.iota fTT}, we have
\begin{align}\label{eq2.mainthm of embeding}
\iota_{n-1}(F_{{\rm T}^{\undla}})
=\delta_{\lambda^{(0)}}F_{{\rm A}}
+\sum_{\substack{\underline{\mu}\in \mathscr{P}^{\bullet,m}_{n}, d_{\underline{\mu}}=0 \\ {\rm B}=(\mathfrak{b},0,0),{\rm B}'=(\mathfrak{b},0,e_n)\in
{\rm Tri}(\underline{\mu}) \\
\mathfrak{b}\downarrow_{n-1}=\mt^{\undla}}}
\left(F_{{\rm B}}+F_{{\rm B}'}\right)
\end{align}
where ${\rm A}=(\mathfrak{a},0,0) \in {\rm Tri}_{\bar{0}}(\underline{\nu})$ $(\underline{\nu}\in\mathscr{P}^{\bullet,m}_{n}, d_{\underline{\nu}}=1),$ $\mathfrak{a}\in \Std(\underline{\nu})$ is the unique tableau such that $\mathfrak{a}\downarrow_{n-1}=\mt^{\undla}$ and $n\in\mathcal{D}_{\mathfrak{a}},$ when $\delta_{\lambda^{(0)}}=1.$

Using Proposition \ref{eq.intertwining and f 2}, we have
\begin{equation}\label{eq3.mainthm of embeding}
\mathtt{q}_{\ms,\mt^{\undla}}^{-1}\mathtt{q}_{\mt^{\undla},\mt}^{-1}
C^{\beta_{\ms}'}C^{\alpha_{\ms}'} \tilde{\Phi}_{\ms,\mt^{\undla}}
F_{\rm A}
\tilde{\Phi}_{\mt^{\undla},\mt} (C^{\alpha_{\mt}})^{-1}(C^{\beta_{\mt}})^{-1}
=\frac{\mathtt{c}_{\ms,\mt^{\undla}}\mathtt{c}_{\mt^{\undla},\mt}}{\mathtt{c}_{\mathfrak{p},\mt^{\underline{\nu}}}\mathtt{c}_{\mt^{\underline{\nu}},\mathfrak{q}}}f^{\mt^{\underline{\nu}}}_{{\rm P},{\rm Q}_{\bar{0}}},
\end{equation}
where ${\rm P}=(\mathfrak{p},\alpha_{\ms}',\beta_{\ms}'),$ ${\rm Q}=(\mathfrak{q},\alpha_{\mt},\beta_{\mt}) \in {\rm Tri}_{\bar{0}}(\underline{\nu})$,  $(\underline{\nu}\in\mathscr{P}^{\bullet,m}_{n}, d_{\underline{\nu}}=1),$
$\mathfrak{p}, \mathfrak{q}\in \Std(\underline{\nu})$ are unique tableaux such that $\mathfrak{p}\downarrow_{n-1}=\ms,$  $\mathfrak{q}\downarrow_{n-1}=\mt$  and $n\in\mathcal{D}_{\mathfrak{p}}\cap \mathcal{D}_{\mathfrak{q}},$ if $\delta_{\lambda^{(0)}}=1$.

Similarly, for any ${\rm B}=(\mathfrak{b},0,0),{\rm B}'=(\mathfrak{b},0,e_n)\in
{\rm Tri}(\underline{\mu})$, where $\underline{\mu}\in \mathscr{P}^{\bullet,m}_{n}, d_{\underline{\mu}}=0$ such that
$\mathfrak{b}\downarrow_{n-1}=\mt^{\undla}$, by Proposition \ref{eq.intertwining and f 2}, we have
\begin{equation}\label{eq4.mainthm of embeding}
\mathtt{q}_{\ms,\mt^{\undla}}^{-1}\mathtt{q}_{\mt^{\undla},\mt}^{-1}
C^{\beta_{\ms}'}C^{\alpha_{\ms}'} \tilde{\Phi}_{\ms,\mt^{\undla}}
\left(F_{\rm B}+F_{\rm B'}\right)
\tilde{\Phi}_{\mt^{\undla},\mt} (C^{\alpha_{\mt}})^{-1}(C^{\beta_{\mt}})^{-1}
=\frac{\mathtt{c}_{\ms,\mt^{\undla}}\mathtt{c}_{\mt^{\undla},\mt}}{\mathtt{c}_{\mathfrak{u},\mt^{\underline{\mu}}}\mathtt{c}_{\mt^{\underline{\mu}},\mathfrak{v}}}\left(f^{\mt^{\underline{\mu}}}_{{\rm U},{\rm V}}+(-1)^{|\alpha_\mt|+|\alpha_{\ms}'|}f^{\mt^{\underline{\mu}}}_{{\rm U}',{\rm V}'}\right)\nonumber
\end{equation}
where ${\rm U}=(\mathfrak{u},\alpha_{\ms}',\beta_{\ms}'),\,{\rm V}=(\mathfrak{v},\alpha_{\mt},\beta_{\mt}),\,
{\rm U}'=(\mathfrak{u},\alpha_{\ms}',\beta_{\ms}'+e_n),\,{\rm V}'=(\mathfrak{v},\alpha_{\mt},\beta_{\mt}+e_n)\in
{\rm Tri}(\underline{\mu})$ such that $\mathfrak{u}\downarrow_{n-1}=\ms,\mathfrak{v}\downarrow_{n-1}=\mt$.

Combing \eqref{eq1.mainthm of embeding}, \eqref{eq2.mainthm of embeding}, \eqref{eq3.mainthm of embeding} and \eqref{eq4.mainthm of embeding}, we deduce that

\begin{align} \label{eq5.mainthm of embeding}
	\iota_{n-1}(f_{{\rm S},{\rm T}}^{\mt^{\undla}})=
&\delta_{\lambda^{(0)}}\frac{\mathtt{c}_{\ms,\mt^{\undla}}\mathtt{c}_{\mt^{\undla},\mt}}{\mathtt{c}_{\mathfrak{p},\mt^{\underline{\nu}}}\mathtt{c}_{\mt^{\underline{\nu}},\mathfrak{q}}}
	f^{\mt^{\underline{\nu}}}_{{\rm P},{\rm Q}_{\bar{0}}} \\
&+\sum_{\substack{\underline{\mu}\in \mathscr{P}^{\bullet,m}_{n}, d_{\underline{\mu}}=0 \\
			{\rm U}=(\mathfrak{u},\alpha_{\ms}',\beta_{\ms}'),V=(\mathfrak{v},\alpha_{\mt},\beta_{\mt})\in
			{\rm Tri}(\underline{\mu}) \\
			{\rm U}'=(\mathfrak{u},\alpha_{\ms}',\beta_{\ms}'+e_n),V'=(\mathfrak{v},\alpha_{\mt},\beta_{\mt}+e_n)\in
			{\rm Tri}(\underline{\mu})\\
			\mathfrak{u}\downarrow_{n-1}=\ms,\mathfrak{v}\downarrow_{n-1}=\mt}} \frac{\mathtt{c}_{\ms,\mt^{\undla}}\mathtt{c}_{\mt^{\undla},\mt}}{\mathtt{c}_{\mathfrak{u},\mt^{\underline{\mu}}}\mathtt{c}_{\mt^{\underline{\mu}},\mathfrak{v}}}\left(f^{\mt^{\underline{\mu}}}_{{\rm U},{\rm V}}+(-1)^{|\alpha_\mt|+|\alpha_{\ms}'|}f^{\mt^{\underline{\mu}}}_{{\rm U}',{\rm V}'}\right).\nonumber
\end{align}
Note that for $\mathfrak{u}\downarrow_{n-1}=\ms$ and $\mathfrak{v}\downarrow_{n-1}=\mt$, we have $c_{\ms,\mt}=c_{\mathfrak{p},\mathfrak{q}}=c_{\mathfrak{u},\mathfrak{v}}$. Combing this with \eqref{eq5.mainthm of embeding} and \eqref{fst and re. fst}, we complete the proof of \eqref{embedding1}.
\end{proof}

\begin{rem}
By Corollary \ref{cor. ct and cst}, one can see that the coefficient appearing in equation \eqref{eq5.mainthm of embeding} satisfies
\begin{align*}
\frac{\mathtt{c}_{\ms,\mt^{\undla}}\mathtt{c}_{\mt^{\undla},\mt}}{\mathtt{c}_{\mathfrak{p},\mt^{\underline{\nu}}}\mathtt{c}_{\mt^{\underline{\nu}},\mathfrak{q}}}
=\frac{\mathtt{c}_{\ms,\mt^{\undla}}\mathtt{c}_{\mt^{\undla},\ms}}{\mathtt{c}_{\mathfrak{p},\mt^{\underline{\nu}}}\mathtt{c}_{\mt^{\underline{\nu}},\mathfrak{p}}}
=\frac{\mathtt{c}_{\mt,\mt^{\undla}}\mathtt{c}_{\mt^{\undla},\mt}}{\mathtt{c}_{\mathfrak{q},\mt^{\underline{\nu}}}\mathtt{c}_{\mt^{\underline{\nu}},\mathfrak{q}}}.
\end{align*}
Similar equalities for the coefficients in the second term of the right-hand side of \eqref{eq5.mainthm of embeding} can be obtained.
From these we can deduce the analogs of formulae for the embedding of seminarmal bases as in \cite[Theorem 1.3, Theorem 1.6]{HW}.
\end{rem}

\subsection{Modifications in the degenerate case}
Recall the cyclotomic Sergeev algebras $\mhgcn,$ where $g=g^{(\bullet)}(x_1)$ for $\bullet\in\{\mathsf{0},\mathsf{s}\}.$ In the rest of this subsection, we fix the parameter $\undQ=(Q_1,Q_2,\ldots,Q_m)\in \mathbb{K}^m$ and $g=g^{(\bullet)}_{\undQ}$ with $P^{(\bullet)}_{n}(1,\undQ)\neq 0$ for $\bullet\in\{\mathsf{0},\mathsf{s}\}.$  Accordingly, we define the residues of boxes in the young diagram $\undla$ via \eqref{eq:deg-residue} as well as $\res(\mathfrak{t})$ for each $\mathfrak{t}\in\Std(\undla)$ with $\undla\in\mathscr{P}^{\bullet,m}_{n}$ with $m\geq 0.$

Let $\iota_{n-1}: \mathfrak{H}^g_c(n-1)\rightarrow \mhgcn$ be the natural embedding, which maps $x_j$ to $x_j$, $c_j$ to $c_j$ and $s_i$ to $s_i$ for any $i\in [n-2]$ and $j\in [n-1].$
Then Theorem \ref{maththm of embedding} has an analog to cyclotomic Sergeev algebras and the proof is similar to Theorem \ref{maththm of embedding}.

\begin{thm}\label{maththm of embedding. dege}
Let $\undla\in\mathscr{P}^{\bullet,m}_{n-1},$ where $\bullet\in\{\mathsf{0},\mathsf{s}\}.$

(1) If $d_{\undla}=0,$ then, for any ${\rm S}=(\ms, \alpha_{\ms}', \beta_{\ms}'),{\rm T}=(\mt, \alpha_{\mt}, \beta_{\mt})\in {\rm Tri}(\undla),$ we have
\begin{align*}
\iota_{n-1}(\mathfrak{f}_{{\rm S},{\rm T}})
=\delta_{\lambda^{(0)}}
\mathfrak{f}_{{\rm P},{\rm Q}_{\bar{0}}}
+\sum_{\substack{\underline{\mu}\in \mathscr{P}^{\bullet,m}_{n}, d_{\underline{\mu}}=0 \\
{\rm U}=(\mathfrak{u},\alpha_{\ms}',\beta_{\ms}'),{\rm V}=(\mathfrak{v},\alpha_{\mt},\beta_{\mt})\in
{\rm Tri}(\underline{\mu}) \\
{\rm U}'=(\mathfrak{u},\alpha_{\ms}',\beta_{\ms}'+e_n),{\rm V}'=(\mathfrak{v},\alpha_{\mt},\beta_{\mt}+e_n)\in
{\rm Tri}(\underline{\mu})\\
\mathfrak{u}\downarrow_{n-1}=\ms,\mathfrak{v}\downarrow_{n-1}=\mt}}
\left(\mathfrak{f}_{{\rm U},{\rm V}}+(-1)^{|\alpha_{\ms}'|+|\alpha_\mt|}\mathfrak{f}_{{\rm U}',{\rm V}'}\right)
\end{align*}
where ${\rm P}=(\mathfrak{p},\alpha_{\ms}',\beta_{\ms}'),$ ${\rm Q}=(\mathfrak{q},\alpha_{\mt},\beta_{\mt}) \in {\rm Tri}_{\bar{0}}(\underline{\nu})$ $(\underline{\nu}\in\mathscr{P}^{\bullet,m}_{n}, d_{\underline{\nu}}=1),$
$\mathfrak{p}, \mathfrak{q}\in \Std(\underline{\nu})$ are the unique tableaux such that $\mathfrak{p}\downarrow_{n-1}=\ms,$  $\mathfrak{q}\downarrow_{n-1}=\mt$ and $n\in\mathcal{D}_{\mathfrak{p}}\cap \mathcal{D}_{\mathfrak{q}},$ if $\delta_{\lambda^{(0)}}=1.$

(2) If $d_{\undla}=1,$ then, for any ${\rm S}=(\ms, \alpha_{\ms}', \beta_{\ms}'),{\rm T}=(\mt, \alpha_{\mt}, \beta_{\mt})\in {\rm Tri}_{\bar{0}}(\undla),$ $a\in \mathbb{Z}_2,$ we have
\begin{align*}
\iota_{n-1}(\mathfrak{f}_{{\rm S},{\rm T}_a})
=\delta_{\lambda^{(0)}}
(\mathfrak{f}_{{\rm P},{\rm Q}}+\mathfrak{f}_{{\rm P}',{\rm Q}'})
+\sum_{\substack{\underline{\mu}\in \mathscr{P}^{\bullet,m}_{n}, d_{\underline{\mu}}=1 \\
{\rm U}=(\mathfrak{u},\alpha_{\ms}',\beta_{\ms}'),{\rm V}=(\mathfrak{v},\alpha_{\mt},\beta_{\mt})\in
{\rm Tri}_{\bar{0}}(\underline{\mu}) \\
{\rm U}'=(\mathfrak{u},\alpha_{\ms}',\beta_{\ms}'+e_n),{\rm V}'=(\mathfrak{v},\alpha_{\mt},\beta_{\mt}+e_n)\in
{\rm Tri}_{\bar{0}}(\underline{\mu})\\
\mathfrak{u}\downarrow_{n-1}=\ms,\mathfrak{v}\downarrow_{n-1}=\mt}}
\left(\mathfrak{f}_{{\rm U},{\rm V}_a}+(-1)^{|\alpha_{\ms}'|+|\alpha_{\mt,a}|}\mathfrak{f}_{{\rm U}',{\rm V}'_a}\right)
\end{align*}
where ${\rm P}=(\mathfrak{p},\alpha_{\ms}',\beta_{\ms}'), {\rm Q}=(\mathfrak{q},\alpha_{\mt,a},\beta_{\mt}),
{\rm P}'=(\mathfrak{p},\alpha_{\ms,\bar{1}}',\beta_{\ms}'), {\rm Q}'=(\mathfrak{q},\alpha_{\mt,a+\bar{1}},\beta_{\mt})\in {\rm Tri}(\underline{\nu})$ $(\underline{\nu}\in\mathscr{P}^{\bullet,m}_{n}, d_{\underline{\nu}}=0),$
$\mathfrak{p}, \mathfrak{q}\in \Std(\underline{\nu})$ are the unique tableaux such that $\mathfrak{p}\downarrow_{n-1}=\ms,$  $\mathfrak{q}\downarrow_{n-1}=\mt$ and $n\in\mathcal{D}_{\mathfrak{p}}\cap \mathcal{D}_{\mathfrak{q}},$ if $\delta_{\lambda^{(0)}}=1.$
\end{thm}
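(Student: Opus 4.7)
The plan is to mirror the proof strategy of Theorem \ref{maththm of embedding} almost verbatim in the degenerate setting, systematically replacing every non-degenerate gadget with its degenerate counterpart: $X_i \rightsquigarrow x_i$, $C_i \rightsquigarrow c_i$, $T_i \rightsquigarrow s_i$, $\tilde{\Phi}_i \rightsquigarrow \tilde{\phi}_i$, $F_{\rm T} \rightsquigarrow \mathcal{F}_{\rm T}$, $f_{{\rm S},{\rm T}} \rightsquigarrow \mathfrak{f}_{{\rm S},{\rm T}}$, $\mathtt{b}_{\mt,i} \rightsquigarrow \mathtt{u}_{\mt,i}$, and $\mathtt{c}_{\ms,\mt} \rightsquigarrow \mathfrak{c}_{\ms,\mt}$. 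All needed commutation, squaring and braid relations for $\tilde{\phi}_i$ appear in \eqref{sqinter}--\eqref{braidinter}, the action formulae for $x_i, c_i$ on the seminormal basis of $D(\undla)$ appear in Theorem \ref{dege-actions of generators on L basis}, and the orthogonality/multiplication of $\mathcal{F}_{\rm T}$ and $\mathfrak{f}_{{\rm S},{\rm T}}^{\mathfrak{w}}$ are supplied by Lemma \ref{idempotent action. dege}, Theorem \ref{primitive idempotents. dege}, and Theorem \ref{seminormal basis. dege}.

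First I would set up the degenerate analog of $\mathtt{q}_{\ms,\mt}$: given a reduced expression $d(\ms,\mt)=s_{k_p}\cdots s_{k_1}$, define
\[
\tilde{\phi}_{\ms,\mt}:=\tilde{\phi}_{k_p}\cdots\tilde{\phi}_{k_1}\in\mhgcn,\qquad
\mathfrak{q}_{\ms,\mt}:=\prod_{1\le i\le p}\bigl(\mathtt{u}_{s_{k_{i-1}}\cdots s_{k_1}\mt,k_i}^{2}-\mathtt{u}_{s_{k_{i-1}}\cdots s_{k_1}\mt,k_i+1}^{2}\bigr)\in\mathbb{K}^{*},
\]
and use \eqref{sqinter} together with the $x_i$-eigenvalues from \eqref{x eigenvalues} to verify $\tilde{\phi}_{\ms,\mt}v_{\mt}=\mathfrak{q}_{\ms,\mt}\mathfrak{c}_{\ms,\mt}v_{\ms}$, and \eqref{xinter}--\eqref{cinter} to obtain the conjugation rule $\mathcal{F}_{\rm T}\tilde{\phi}_{\mt,\ms}=\tilde{\phi}_{\mt,\ms}\mathcal{F}_{d(\ms,\mt)\cdot{\rm T}}$. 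This is the exact analog of Lemma \ref{properties of tildePhi}, and the proofs are identical up to the symbolic replacements listed above.

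Next I would prove the degenerate version of Proposition \ref{eq.intertwining and f}, namely
\[
\mathfrak{f}_{{\rm S},{\rm T}}^{\mathfrak{t}^{\undla}}=\mathfrak{q}_{\ms,\mathfrak{t}^{\undla}}^{-1}\mathfrak{q}_{\mathfrak{t}^{\undla},\mt}^{-1}\,
c^{\beta_{\ms}'}c^{\alpha_{\ms}'}\tilde{\phi}_{\ms,\mathfrak{t}^{\undla}}\mathcal{F}_{{\rm T}^{\undla}}\tilde{\phi}_{\mathfrak{t}^{\undla},\mt}(c^{\alpha_{\mt}})^{-1}(c^{\beta_{\mt}})^{-1},
\]
by testing both sides on the basis vectors $c^{\beta}c^{\alpha}v_{\ms}$ of $D(\undla)$ and using Theorem \ref{dege-actions of generators on L basis} in place of Theorem \ref{actions of generators on L basis}; and then the degenerate analog of Proposition \ref{eq.intertwining and f 2}. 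After that, I would establish the degenerate version of Lemma \ref{cor.iota fTT} by the same argument: expand $\iota_{n-1}(\mathcal{F}_{\rm T})$ in the seminormal basis of $\mhgcn$, then use the left and right $x_k$-eigenvalues ($1\le k\le n-1$) to kill every term whose $\mathfrak{u}\!\downarrow_{n-1}$ or $\mathfrak{v}\!\downarrow_{n-1}$ differs from $\mt$ and whose restricted $\beta$-data differ from $\beta_{\mt}$, use orthogonality of the Clifford idempotents $c^{\alpha}\gamma_{\mt}(c^{\alpha})^{-1}$ to pin down the $\alpha$-components, and use the parity constraint $|\iota_{n-1}(\mathcal{F}_{\rm T})|=\bar 0$ to kill the $b=\bar 1$ component in the $d_{\undmu}=1$ case; the indicator $\delta_{\lambda^{(0)}}$ detects whether or not adding one box in position $(a+1,a+1,0)$ produces an admissible diagonal box at step $n$.

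Finally, the two cases $d_{\undla}=0$ and $d_{\undla}=1$ of the main statement follow by substituting this formula for $\iota_{n-1}(\mathcal{F}_{{\rm T}^{\undla}})$ into the image of $\mathfrak{f}_{{\rm S},{\rm T}}^{\mathfrak{t}^{\undla}}$ and applying the degenerate Proposition \ref{eq.intertwining and f 2} term-by-term, exactly as in \eqref{eq1.mainthm of embeding}--\eqref{eq5.mainthm of embeding}, then passing from the $\mathfrak{t}^{\undla}$-normalization $\mathfrak{f}_{{\rm S},{\rm T}}^{\mathfrak{t}^{\undla}}$ to $\mathfrak{f}_{{\rm S},{\rm T}}$ via \eqref{dege-fst and re. fst}, whose $\mathfrak{c}$-ratios collapse by Corollary \ref{cor. ct and cst} (which holds with $\mathfrak{c}$ in place of $\mathtt{c}$ by the same argument as Lemma \ref{lem. ct and cst}). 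The main obstacle, and really the only non-mechanical step, is the bookkeeping of the scalar ratios $\mathfrak{q}_{\ms,\mathfrak{t}^{\undla}}^{-1}\mathfrak{q}_{\mathfrak{t}^{\undla},\mt}^{-1}\mathfrak{c}_{\ms,\mt^{\undla}}\mathfrak{c}_{\mt^{\undla},\mt}(\mathfrak{c}_{\mathfrak{u},\mt^{\underline{\mu}}}\mathfrak{c}_{\mt^{\underline{\mu}},\mathfrak{v}})^{-1}$ arising from \eqref{sqinter} and the degenerate definition \eqref{c-coefficients. dege}: one must verify that these scalars telescope to $1$ under the restriction $\mathfrak{u}\!\downarrow_{n-1}=\ms$, $\mathfrak{v}\!\downarrow_{n-1}=\mt$ so that the coefficient of each $\mathfrak{f}_{{\rm U},V}$ becomes simply $1$. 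This telescoping is precisely the degenerate analog of the identity used at the end of the proof of Theorem \ref{maththm of embedding} and follows again from Corollary \ref{cor. ct and cst} applied with $\mathfrak{c}$.
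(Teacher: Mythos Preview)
Your proposal is correct and follows exactly the approach the paper takes: the paper's own proof simply states that the argument is similar to that of Theorem \ref{maththm of embedding}, and you have carefully spelled out the precise degenerate substitutions and verified that each auxiliary result (Lemma \ref{properties of tildePhi}, Propositions \ref{eq.intertwining and f} and \ref{eq.intertwining and f 2}, Lemma \ref{cor.iota fTT}, Corollary \ref{cor. ct and cst}) carries over with the indicated replacements.
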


\section{Frobenius forms of cyclotomic Hecke-Clifford algebras}\label{frobenius}
For any two rings $A,B$ and a ring homomorphism $\iota: B\rightarrow A$, we can regard $A$ as an $(B,B)$-bimodule. For any $f\in\{a\in A\mid ab=ba,\ \forall\,b\in B\}$, we define (following \cite[(1)]{SVV})
$$\mu_f: A\otimes_{B}A\rightarrow A,\ \ \sum_{(a)}a_{(1)}\otimes a_{(2)}\mapsto\sum_{(a)}a_{(1)}fa_{(2)}.$$

\subsection{Frobenius form and Mackey decomposition}
In this subsection, let $\mHfcn$ be the cyclotomic Hecke-Clifford algebra defined over the integral domain ${\rm R},$ where $f=f^{(\bullet)}_{\undQ}$ is the defining polynomial determined by $\undQ=(Q_1,Q_2,\ldots,Q_m)\in({\rm R}^\times)^m$ and $\bullet\in\{\mathsf{0},\mathsf{s},\mathsf{ss}\}.$
We first recall the Mackey decomposition of $\mHfcn.$
\begin{lem}\label{BK. Lemma 3.8}\cite[Lemma 3.8]{BK}
(1) As an $(\mHfcn,\mHfcn)$-bimodule, we have
\begin{align}
\mHfcn=\mathcal{H}^f_c(n-1)T_{n-1}\mathcal{H}^f_c(n-1)\oplus\oplus_{0\leq k\leq r-1, a\in \mathbb{Z}_2} X_n^k C_n^a \mathcal{H}^f_c(n-1). \label{Mackey nondege}
\end{align}

(2) For any $0 \leq k <r,$ there are isomorphisms
\begin{align*}
X_n^k \mathcal{H}^f_c(n-1)\simeq \mathcal{H}^f_c(n-1),&\qquad X_n^k C_n \mathcal{H}^f_c(n-1)\simeq \Pi\mathcal{H}^f_c(n-1),\\
\mathcal{H}^f_c(n-1)T_{n-1}\mathcal{H}^f_c(n-1)&\simeq \mathcal{H}^f_c(n-1)\otimes_{\mathcal{H}^f_c(n-2)}\mathcal{H}^f_c(n-1),
\end{align*}
as $(\mathcal{H}^f_c(n-1),\mathcal{H}^f_c(n-1))$-superbimodules.
\end{lem}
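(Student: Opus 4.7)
The plan is to prove both parts using the PBW basis of $\mHfcn$ provided by \cite[Theorem 3.6]{BK} together with the standard coset decomposition of $\mathfrak{S}_n$ with respect to $\mathfrak{S}_{n-1}$. For part (1), I would split the PBW basis $\{X^\alpha C^\beta T_w : \alpha\in\{0,\ldots,r-1\}^n,\beta\in\mathbb{Z}_2^n,w\in\mathfrak{S}_n\}$ according to whether $w$ fixes $n$ or not. If $w\in\mathfrak{S}_{n-1}$, then $X_n$ commutes with every element of $\mathcal{H}^f_c(n-1)$ (by \eqref{Poly}, \eqref{XC}, \eqref{PX3}) while $C_n$ supercommutes with $\mathcal{H}^f_c(n-1)$ (by \eqref{Clifford}, \eqref{XC}, \eqref{PC}), so each such basis vector can be rearranged as $X_n^{\alpha_n}C_n^{\beta_n}\cdot h$ with $h\in\mathcal{H}^f_c(n-1)$, landing in $\bigoplus_{k,a}X_n^kC_n^a\mathcal{H}^f_c(n-1)$. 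If $w\notin\mathfrak{S}_{n-1}$, choose a factorization $w=w_1 s_{n-1}w_2$ with $w_1,w_2\in\mathfrak{S}_{n-1}$ and $\ell(w)=\ell(w_1)+1+\ell(w_2)$, so that $T_w=T_{w_1}T_{n-1}T_{w_2}$; then by a double induction on $n$ and on $\ell(w)$, using the commutation relations \eqref{PX1}--\eqref{PC} to straighten the monomial, one sees that each such element lies in $\mathcal{H}^f_c(n-1)T_{n-1}\mathcal{H}^f_c(n-1)$ modulo terms of strictly smaller length, which by induction already lie in the proposed sum.

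To upgrade the covering into a direct sum decomposition, I would count ${\rm R}$-ranks. The PBW theorem yields
\begin{equation*}
\mathrm{rk}_{\rm R}\,\mHfcn=r^n 2^n n!,\qquad \mathrm{rk}_{\rm R}\,\mathcal{H}^f_c(n-1)=r^{n-1}2^{n-1}(n-1)!,
\end{equation*}
so $\mathrm{rk}_{\rm R}\bigoplus_{k,a}X_n^kC_n^a\mathcal{H}^f_c(n-1)=2r\cdot\mathrm{rk}_{\rm R}\mathcal{H}^f_c(n-1)$, and the difference $\mathrm{rk}_{\rm R}\,\mHfcn-2r\cdot\mathrm{rk}_{\rm R}\mathcal{H}^f_c(n-1)=r^n 2^n(n-1)!(n-1)$ agrees with $(2r(n-1))\cdot\mathrm{rk}_{\rm R}\mathcal{H}^f_c(n-1)=\mathrm{rk}_{\rm R}\bigl(\mathcal{H}^f_c(n-1)\otimes_{\mathcal{H}^f_c(n-2)}\mathcal{H}^f_c(n-1)\bigr)$. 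Combined with the surjection from that tensor product onto $\mathcal{H}^f_c(n-1)T_{n-1}\mathcal{H}^f_c(n-1)$ established in the next step, this forces the sum to be direct.

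For part (2), I would exhibit the three isomorphisms explicitly. The map $h\mapsto X_n^k h$ is a right $\mathcal{H}^f_c(n-1)$-module isomorphism onto $X_n^k\mathcal{H}^f_c(n-1)$; since $X_n$ strictly commutes with $\mathcal{H}^f_c(n-1)$, it is also left-linear, giving an even bimodule isomorphism with $\mathcal{H}^f_c(n-1)$. The map $h\mapsto X_n^kC_n h$ is likewise right-linear, but for homogeneous $h'\in\mathcal{H}^f_c(n-1)$ one computes $h'\cdot X_n^kC_n h=X_n^k(h'C_n)h=(-1)^{|h'|}X_n^kC_n\cdot h'h$, which is precisely the twisted left action defining $\Pi\mathcal{H}^f_c(n-1)$. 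Finally, define $\phi:\mathcal{H}^f_c(n-1)\otimes_{\mathcal{H}^f_c(n-2)}\mathcal{H}^f_c(n-1)\to\mathcal{H}^f_c(n-1)T_{n-1}\mathcal{H}^f_c(n-1)$ by $h\otimes h'\mapsto hT_{n-1}h'$; it is well-defined because $T_{n-1}$ commutes with every generator of $\mathcal{H}^f_c(n-2)$, manifestly surjective, and a bimodule map, hence an isomorphism by the rank match from Step 2.

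The main obstacle will be the inductive rewriting in part (1) for $w\notin\mathfrak{S}_{n-1}$, because commuting $X_n^{\alpha_n}$ and $C_n^{\beta_n}$ past $T_{n-1}$ via \eqref{PX1}--\eqref{PC} produces error terms with smaller-length permutations and one must simultaneously control powers of $X_{n-1}$ using the cyclotomic relation $f(X_1)=0$ propagated through the Hecke relations. This is resolved by setting up the double induction carefully (on $n$ first, then on $\ell(w)$) so that the induction hypothesis already places all error terms in the claimed sum.
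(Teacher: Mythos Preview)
The paper does not prove this lemma; it is quoted verbatim from \cite[Lemma~3.8]{BK} without argument. Your sketch is essentially the standard proof (and is the one in \cite{BK}): span via the PBW basis and the coset decomposition $\mathfrak{S}_n=\mathfrak{S}_{n-1}\sqcup\bigsqcup_j \mathfrak{S}_{n-1}s_j\cdots s_{n-1}$, then a rank count to force directness and, simultaneously, injectivity of the multiplication map $\phi$.

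One point deserves to be made explicit. Your computation of $\mathrm{rk}_{\rm R}\bigl(\mathcal{H}^f_c(n-1)\otimes_{\mathcal{H}^f_c(n-2)}\mathcal{H}^f_c(n-1)\bigr)$ silently uses that $\mathcal{H}^f_c(n-1)$ is free of rank $2r(n-1)$ as a right $\mathcal{H}^f_c(n-2)$-module. That freeness is not immediate from PBW alone; it is exactly what part~(1) of the lemma gives at level $n-1$. So the induction on $n$ must be set up so that the full statement (decomposition \emph{and} freeness over the smaller subalgebra) is the inductive hypothesis, with base case $n=1$ trivial. You allude to induction on $n$ in the spanning step but not in the rank step; once you tie these together the argument is complete. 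Over a general integral domain ${\rm R}$, the final step---a surjection of free ${\rm R}$-modules of equal finite rank is an isomorphism---is standard.
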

By the Lemma \ref{BK. Lemma 3.8}, we see that for any $h\in \mHfcn,$ there are unique elements $\pi(h)=\pi^{(n)}(h)\in \mathcal{H}^f_c(n-1)\otimes_{\mathcal{H}^f_c(n-2)}\mathcal{H}^f_c(n-1),$ and $p_{k,a}(h)=p^{(n)}_{k,a}(h)\in\mathcal{H}^f_c(n-1),$ $k=0,1,\ldots,r-1,$
$a\in \mathbb{Z}_2$ such that
\begin{align}\label{nondegMadecomequation}
h=\mu_{T_{n-1}}(\pi(h))+\sum_{\substack{1\leq k\leq r-1\\ a\in \mathbb{Z}_2}}X_n^k C_n^a p_{k,a}(h).
\end{align}
We call \eqref{nondegMadecomequation} the (cyclotomic) Mackey decomposition of $\mHfcn.$
Note that $\pi,$ $p_{k,a}$ are all ($\mathcal{H}^f_c(n-1),\mathcal{H}^f_c(n-1)$)-superbilinear maps with
$|\pi|=\bar{0},$ $|p_{k,a}|=a,$ for any $k=0,1,\ldots,r-1,$
$a\in \mathbb{Z}_2$.
In particular, we have the following
\begin{align}\label{superbilinearity of pk1}
p_{k,\bar{1}}(xhy)=(-1)^{|x||p_{k,\bar{1}}|}x p_{k,\bar{1}}(h)y=(-1)^{|x|}x p_{k,\bar{1}}(h)y,
\end{align}	
for any $x,y\in\mathcal{H}^f_c(n-1), h\in\mHfcn$ and
$k\in\{0,1,\ldots,r-1\}.$

For any $n\geq 1$, we denote $\varepsilon_n:=p^{(n)}_{0,\bar{0}}$, which is the projection map from $\mHfcn$ to $\mathcal{H}^f_c(n-1).$ Then we obtain the following ${\rm R}$-linear map \label{pag:Frobenius form. non-dege}
\begin{align}
\tau_{r,n}:=\varepsilon_1\circ \cdots \circ \varepsilon_n:\quad \mHfcn \rightarrow {\rm R}.
\end{align}		

\begin{lem} \cite[Corollary 3.14]{BK}
	The map $\tau_{r,n}$ is a Frobenius form of $\mHfcn.$
	\end{lem}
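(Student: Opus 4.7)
The plan is to proceed by induction on $n$, with the inductive step deduced from a stronger ``relative'' statement about $\varepsilon_n$. For the base case $n = 1$, the algebra $\mathcal{H}^f_c(1)$ has the free ${\rm R}$-basis $\{X_1^k C_1^a\}_{0\leq k<r,\,a\in\mathbb{Z}_2}$, and $\tau_{r,1}=\varepsilon_1$ extracts the $(k,a)=(0,\bar 0)$-coordinate. Writing the leading coefficient of $f$ as $c_r\in {\rm R}^\times$ and using $f(X_1)=0$ together with $C_1^2=1$ to rewrite $X_1^r$ in terms of lower powers, I would construct an explicit dual ${\rm R}$-basis and verify non-degeneracy directly.

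For the inductive step, I would strengthen the claim to: $\varepsilon_n:\mHfcn\to \mathcal{H}^f_c(n-1)$ is \emph{relatively Frobenius}, i.e.\ the $\mathcal{H}^f_c(n-1)$-valued pairing $(h,h')\mapsto \varepsilon_n(hh')$ is non-degenerate on both sides. Granted this, composing with $\tau_{r,n-1}$ (which is a Frobenius form by the inductive hypothesis) immediately yields the non-degeneracy of $\tau_{r,n}$, because a dual basis for $\varepsilon_n$ combined with a dual basis for $\tau_{r,n-1}$ produces a dual basis for $\tau_{r,n}$.

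To prove that $\varepsilon_n$ is relatively Frobenius, I would use the Mackey decomposition in Lemma~\ref{BK. Lemma 3.8}. This writes $\mHfcn$ as an $(\mathcal{H}^f_c(n-1),\mathcal{H}^f_c(n-1))$-superbimodule in two pieces: the ``free'' summand $\bigoplus_{0\leq k<r,\,a\in\mathbb{Z}_2} X_n^k C_n^a\, \mathcal{H}^f_c(n-1)$, and the ``diagonal'' summand $\mathcal{H}^f_c(n-1)T_{n-1}\mathcal{H}^f_c(n-1)\simeq \mathcal{H}^f_c(n-1)\otimes_{\mathcal{H}^f_c(n-2)}\mathcal{H}^f_c(n-1)$. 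On the free summand the dual basis to $\{X_n^kC_n^a\}$ with respect to $\varepsilon_n$ is produced by explicit computation: it is a scalar multiple of $C_n^a X_n^{r-1-k}$, with the scalar determined by $c_r$ and the sign twists from \eqref{XC}, exactly so that $\varepsilon_n$ of the product isolates the correct coordinate (all other products land in the $X_n^{>0}$ or $C_n$-odd pieces, which are killed by $\varepsilon_n$). On the diagonal summand the dual pairing is inductive: one pairs $aT_{n-1}b$ with $b^\vee T_{n-1} a^\vee$ (up to a sign), where $(b,b^\vee)$ runs over a dual pair for the Frobenius structure on $\mathcal{H}^f_c(n-1)$ restricted to $\mathcal{H}^f_c(n-2)$ supplied by the inductive hypothesis.

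The main obstacle will be matching the two summands consistently, and in particular computing $\varepsilon_n(a T_{n-1}b\cdot cT_{n-1}d)$ and showing that it collapses to the expected pairing on $\mathcal{H}^f_c(n-1)\otimes_{\mathcal{H}^f_c(n-2)}\mathcal{H}^f_c(n-1)$. Because $X_n$ does not commute with $T_{n-1}$ and Clifford generators twist under $T_{n-1}$ via \eqref{PC}, this requires a careful normal-ordering computation using \eqref{PX1}--\eqref{PC} and \eqref{superbilinearity of pk1} to check that (i) the ``off-diagonal'' pairings between the free and diagonal summands vanish after applying $\varepsilon_n$, and (ii) the ``diagonal-diagonal'' pairing reproduces, up to the super-signs, the map $\mathcal{H}^f_c(n-1)\otimes_{\mathcal{H}^f_c(n-2)}\mathcal{H}^f_c(n-1)\to \mathcal{H}^f_c(n-1)$, $b\otimes c\mapsto bc$. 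This sign- and coefficient-bookkeeping is the technical heart of the proof and is where the super-structure of $\mHfcn$ enters in an essential way.
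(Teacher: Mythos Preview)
The paper does not prove this lemma; it simply cites \cite[Corollary 3.14]{BK}. Your inductive strategy (reduce to a relative Frobenius statement for $\varepsilon_n$ and analyse it via the Mackey decomposition of Lemma~\ref{BK. Lemma 3.8}) is indeed the scheme used in \cite{BK}, so the overall plan is sound.

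However, the concrete dual basis you propose on the free summand is wrong. You claim that the dual of $X_n^k C_n^a$ with respect to $\varepsilon_n=p_{0,\bar 0}$ is a scalar multiple of $C_n^a X_n^{r-1-k}$. But then the diagonal pairing is
\[
\varepsilon_n\bigl(X_n^k C_n^a \cdot C_n^a X_n^{r-1-k}\bigr)=\varepsilon_n(X_n^{r-1})=p_{0,\bar 0}(X_n^{r-1})=0
\]
for every $r>1$, since $X_n^{r-1}$ is itself one of the Mackey basis elements with $k=r-1\neq 0$. So these pairs are not dual at all: the ``diagonal'' of your Gram matrix vanishes. More generally, $\varepsilon_n(X_n^{k+r-1-k'})=0$ whenever $0<k+r-1-k'<r$, so the only pairing among your proposed elements that is obviously nonzero is $(k,k')=(0,r-1)$; every other entry with $k\le k'$ vanishes, and the entries with $k>k'$ are $\varepsilon_n(X_n^{\ge r})$, which are not controlled without further work. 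Your parenthetical ``all other products land in the $X_n^{>0}$ or $C_n$-odd pieces'' is thus false as stated, and the off-diagonal vanishing between the free and diagonal summands is likewise unjustified. (You may be thinking of conventions where the trace extracts the $X_n^{r-1}$-coefficient rather than the constant term; for that form the pairing $X_n^k\leftrightarrow X_n^{r-1-k}$ does work, but here $\varepsilon_n=p_{0,\bar 0}$.)

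What actually makes the argument go through in \cite{BK} is a triangularity statement: one shows, by induction on $n$ and a careful rewriting of $X_n^r$ (equivalently $X_n^{-1}$) via the cyclotomic relation pushed from $X_1$ to $X_n$ along the $T_i$'s, that the Gram matrix of the pairing in an appropriate ordering of the Mackey basis is block-triangular with unit diagonal; the key scalar that appears on the diagonal is (up to sign) the constant coefficient of $f$, which is a unit in ${\rm R}$. Your sketch would need to replace the incorrect explicit dual basis by this filtration/triangularity argument.
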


\begin{lem}\cite[(2.23)]{BK}
	For any $j\geq 1$ and $1 \leq i \leq n-1,$ we have
	\begin{align}
		X_{i+1}^jT_i=(T_i+\epsilon C_iC_{i+1})X_i^j+\epsilon X_{i+1}^j+ \epsilon\sum_{k=1}^{j-1}\left( X_i^{j-k}X_{i+1}^k + X_i^{-k}X_{i+1}^{j-k}C_iC_{i+1}\right). \label{BK(2.23)}
	\end{align}
\end{lem}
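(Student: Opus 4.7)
The plan is a direct induction on $j\geq 1$ using only the defining relations of $\mHcn$, with essentially all the work concentrated in the Clifford bookkeeping. For the base case $j=1$ the claimed identity reduces (the sum being empty) to $X_{i+1}T_i=(T_i+\epsilon C_iC_{i+1})X_i+\epsilon X_{i+1}$, which I would obtain by simply rearranging relation \eqref{PX1}.

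For the inductive step I multiply both sides of the $j$-th identity on the left by $X_{i+1}$. Two sub-computations govern what happens on the right-hand side. First, $X_{i+1}T_iX_i^j$ reduces, via the $j=1$ case together with $X_{i+1}X_i=X_iX_{i+1}$, to $T_iX_i^{j+1}+\epsilon X_{i+1}X_i^j+\epsilon C_iC_{i+1}X_i^{j+1}$. Second, using \eqref{XC} (in particular $X_{i+1}C_{i+1}=C_{i+1}X_{i+1}^{-1}$ while $X_{i+1}C_i=C_iX_{i+1}$), I push $X_{i+1}$ through $C_iC_{i+1}X_i^j$ and find $X_{i+1}\cdot C_iC_{i+1}X_i^j=X_i^{-j}X_{i+1}C_iC_{i+1}$. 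This second computation is the key step, because the exponent flip on $X_{i+1}$ is exactly what will make the output fit the $k=j$ slot of the Clifford sum in the target formula for $j+1$.

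After reindexing $k\mapsto k+1$ in the first (polynomial) sum of the inductive hypothesis (multiplied by $X_{i+1}$) and combining with the stray $\epsilon X_{i+1}X_i^j=\epsilon X_i^jX_{i+1}$, the polynomial part becomes $\epsilon\sum_{k=1}^{j}X_i^{j+1-k}X_{i+1}^k$; combining the new $\epsilon X_i^{-j}X_{i+1}C_iC_{i+1}$ term with the Clifford part of the inductive hypothesis multiplied through by $X_{i+1}$ gives $\epsilon\sum_{k=1}^{j}X_i^{-k}X_{i+1}^{j+1-k}C_iC_{i+1}$. Collecting the $T_iX_i^{j+1}$, $\epsilon C_iC_{i+1}X_i^{j+1}$, and $\epsilon X_{i+1}^{j+1}$ contributions then matches \eqref{BK(2.23)} at level $j+1$.

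The main (and only) obstacle is the careful sign/exponent bookkeeping when commuting $X_{i+1}$ past $C_iC_{i+1}$ and then past powers of $X_i$; once this is organised so that the flipped exponent lands in precisely the missing summand of the target, the induction closes mechanically. No use of the cyclotomic quotient or of the PBW basis is required — the identity lives already in the affine algebra.
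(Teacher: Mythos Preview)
Your induction is correct and complete; the base case is exactly relation \eqref{PX1} rearranged, and in the inductive step your two sub-computations (the reduction of $X_{i+1}T_iX_i^j$ via the $j=1$ case, and the commutation $X_{i+1}C_iC_{i+1}X_i^j=X_i^{-j}X_{i+1}C_iC_{i+1}$ via \eqref{XC}) are both right and assemble exactly into the $(j+1)$-formula after the reindexing you describe. The paper does not prove this lemma at all---it is simply quoted as \cite[(2.23)]{BK}---so there is no alternative argument to compare against; your direct induction from the defining relations is the standard route and exactly what one would expect the cited source to do.
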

The following Proposition gives a closed formula for $\tau_{r,n}$ under the monomial basis of $\mHfcn.$

\begin{prop}\label{closed formula frob}
Let $\alpha=(\alpha_1,\ldots,\alpha_n)\in [0,r-1]^n,$ $\beta=(\beta_1,\ldots,\beta_n)\in\mathbb{Z}_2^n$ and $w\in \mathfrak{S}_n,$
then we have
\begin{align}
\tau_{r,n}( X^\alpha C^\beta T_w )=\delta_{(\alpha,\beta,w),(0,0,1)}.
\end{align}
\end{prop}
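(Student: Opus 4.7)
The plan is to prove the formula by induction on $n$, using the factorization $\tau_{r,n}=\tau_{r,n-1}\circ\varepsilon_n$ together with a direct analysis of $\varepsilon_n(X^\alpha C^\beta T_w)$ via the Mackey decomposition \eqref{Mackey nondege}. For the base case $n=1$, the decomposition reduces to $\mathcal{H}^f_c(1)=\bigoplus_{k,a}X_1^kC_1^a\cdot{\rm R}$, so $\varepsilon_1(X_1^\alpha C_1^\beta)=\delta_{\alpha,0}\delta_{\beta,\bar{0}}$ follows immediately from the definition $\varepsilon_1=p^{(1)}_{0,\bar{0}}$.

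For the inductive step, set $y:=X_1^{\alpha_1}\cdots X_{n-1}^{\alpha_{n-1}}C_1^{\beta_1}\cdots C_{n-1}^{\beta_{n-1}}\in\mathcal{H}^f_c(n-1)$. Since $X_n$ commutes with $C_1,\ldots,C_{n-1}$, one has $X^\alpha C^\beta=y\cdot X_n^{\alpha_n}C_n^{\beta_n}$. If $w\in\mathfrak{S}_{n-1}$, then \eqref{PX3} and \eqref{PC} imply that $T_w$ commutes with both $X_n$ and $C_n$, and super-commuting $X_n^{\alpha_n}C_n^{\beta_n}$ past $yT_w\in\mathcal{H}^f_c(n-1)$ yields $X^\alpha C^\beta T_w\in X_n^{\alpha_n}C_n^{\beta_n}\mathcal{H}^f_c(n-1)$. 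Hence $\varepsilon_n(X^\alpha C^\beta T_w)=\delta_{\alpha_n,0}\delta_{\beta_n,\bar{0}}\cdot yT_w$, and the inductive hypothesis applied to $yT_w\in\mathcal{H}^f_c(n-1)$ completes this case.

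The remaining case $w\notin\mathfrak{S}_{n-1}$ amounts to showing that $X^\alpha C^\beta T_w$ lies in the kernel
\[
\mathcal{I}:=\mathcal{H}^f_c(n-1)T_{n-1}\mathcal{H}^f_c(n-1)\;\oplus\;\bigoplus_{(k,a)\neq(0,\bar{0})}X_n^kC_n^a\mathcal{H}^f_c(n-1)
\]
of $\varepsilon_n$, which is a sub-$(\mathcal{H}^f_c(n-1),\mathcal{H}^f_c(n-1))$-bimodule of $\mHfcn$. Using minimal length left coset representatives for $\mathfrak{S}_{n-1}\backslash\mathfrak{S}_n$, write $w=us_{n-1}s_{n-2}\cdots s_k$ with $u\in\mathfrak{S}_{n-1}$, $1\le k\le n-1$ and lengths additive, so that
\[
X^\alpha C^\beta T_w=(yT_u)\cdot\bigl(X_n^{\alpha_n}C_n^{\beta_n}T_{n-1}\bigr)\cdot(T_{n-2}\cdots T_k),
\]
with the outer factors in $\mathcal{H}^f_c(n-1)$. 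It therefore suffices to verify $X_n^{\alpha_n}C_n^{\beta_n}T_{n-1}\in\mathcal{I}$. The case $\alpha_n=0,\beta_n=\bar{0}$ is trivial; the case $\beta_n=\bar{1}$ reduces to $\beta_n=\bar{0}$ via the identity $C_nT_{n-1}=T_{n-1}C_{n-1}$ from \eqref{PC}; and for $\alpha_n\ge 1$, $\beta_n=\bar{0}$, one expands $X_n^{\alpha_n}T_{n-1}$ using \eqref{BK(2.23)} and checks term by term, applying $C_iX_i^j=X_i^{-j}C_i$ from \eqref{XC} to bring each monomial into standard form, that every summand lies either in $\mathcal{H}^f_c(n-1)T_{n-1}\mathcal{H}^f_c(n-1)$ or in $X_n^kC_n^a\mathcal{H}^f_c(n-1)$ with $(k,a)\neq(0,\bar{0})$.

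The main obstacle is the bookkeeping in the second case, namely the verification that each of the six types of summands produced by \eqref{BK(2.23)} can be normalized into one of the prescribed components of $\mathcal{I}$. Once this is done, bimodule-stability of $\mathcal{I}$ delivers $X^\alpha C^\beta T_w\in\mathcal{I}$, and the induction closes to yield $\tau_{r,n}(X^\alpha C^\beta T_w)=\delta_{(\alpha,\beta,w),(0,0,1)}$.
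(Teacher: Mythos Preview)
Your proof is correct and follows essentially the same route as the paper's: induction on $n$, splitting into the cases $w\in\mathfrak{S}_{n-1}$ and $w\notin\mathfrak{S}_{n-1}$, and in the latter reducing via bilinearity to $\varepsilon_n(X_n^{\alpha_n}T_{n-1})=0$ by expanding with \eqref{BK(2.23)}. The only cosmetic difference is that you use left coset representatives $s_{n-1}\cdots s_k$ (so $w=us_{n-1}\cdots s_k$) while the paper uses right coset representatives $s_j\cdots s_{n-1}$ (so $w=s_j\cdots s_{n-1}w'$); both lead to the same core computation, and your invocation of \eqref{XC} in the term-by-term check is harmless but not strictly needed, since moving $C_n$ past $C_{n-1}$ and $X_{n-1}^{\pm k}$ already places each summand in the correct component of $\mathcal{I}$.
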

\begin{proof}
(i) If $w\in \mathfrak{S}_{n-1},$ then by Lemma \ref{BK. Lemma 3.8} (2), we have
\begin{align*}
\tau_{r,n}( X^\alpha C^\beta T_w )
&=\tau_{r,n-1}(X_1^{\alpha_1}\cdots X_{n-1}^{\alpha_{n-1}}C_1^{\beta_1}\cdots C_{n-1}^{\beta_n-1} \varepsilon_n(X_n^{\alpha_n}C_n^{\beta_n})  T_w)\\
&=\delta_{(\alpha_n,\beta_n),(0,0)}\tau_{r,n-1}(X_1^{\alpha_1}\cdots X_{n-1}^{\alpha_{n-1}}C_1^{\beta_1}\cdots C_{n-1}^{\beta_n-1} T_w);
\end{align*}
(ii) If $w=s_j\cdots s_{n-1}w'$ for some $1\leq j \leq n-1,$ $w'\in\mathfrak{S}_{n-1},$ then by Lemma \ref{BK. Lemma 3.8} (2), we have
\begin{align*}
\tau_{r,n}( X^\alpha C^\beta T_w )
&=\tau_{r,n-1}(X_1^{\alpha_1}\cdots X_{n-1}^{\alpha_{n-1}}C_1^{\beta_1}\cdots C_{n-1}^{\beta_n-1}T_j\cdots T_{n-2}\varepsilon_n(X_n^{\alpha_n}C_n^{\beta_n}T_{n-1}) T_{w'})\\
&=\tau_{r,n-1}(X_1^{\alpha_1}\cdots X_{n-1}^{\alpha_{n-1}}C_1^{\beta_1}\cdots C_{n-1}^{\beta_n-1}T_j\cdots T_{n-2}\varepsilon_n(X_n^{\alpha_n}T_{n-1})C_{n-1}^{\beta_n} T_{w'}),
\end{align*} where in the last equation we have used relation \eqref{PC}.
By \eqref{BK(2.23)}, we have
\begin{align*}
X_n^{\alpha_n}T_{n-1}\in T_{n-1}X_{n-1}^{\alpha_n}+\oplus_{1\leq a\leq r-1} X_n^a \mathcal{H}^f_c(n-1)\oplus\oplus_{0\leq a\leq r-1} X_n^a C_n\mathcal{H}^f_c(n-1).
\end{align*}
This, combining with Lemma \ref{BK. Lemma 3.8} (1), implies $\varepsilon_n(X_n^{\alpha_n}T_{n-1})=0$ and $\tau_{r,n}( X^\alpha C^\beta T_w )=0$.

Now the Proposition follows by using an induction on $n.$
\end{proof}
%\begin{rem}
%A key question is to determine the Nakayama automorphism of $\mHfcn$ with respect to $\tau_{r,n}.$
%\end{rem}

\subsection{Clifford reduction}\label{reduction}
The aim of this subsection is to study these values of $\tau_{r,n}$ at the elements of seminormal basis over $\mathbb{K}.$
{\bf In this section, we shall fix the parameter $\undQ=(Q_1,Q_2,\ldots,Q_m)\in(\mathbb{K}^*)^m$ and $f=f^{(\bullet)}_{\undQ}$ with $P^{(\bullet)}_{n}(q^2,\undQ)\neq 0$ for $\bullet\in\{\mathsf{0},\mathsf{s}\}.$} Accordingly, we define the residues of boxes in the young diagram $\undla$ via \eqref{eq:residue} as well as $\res(\mathfrak{t})$ for each $\mathfrak{t}\in\Std(\undla)$ with $\undla\in\mathscr{P}^{\bullet,m}_{n}$ with $m\geq 0.$ {\bf Moreover, we shall fix $\undla\in\mathscr{P}^{\mathsf{\bullet},m}_{n},$ where $\bullet\in\{\mathsf{0},\mathsf{s}\}$.}

% ${\rm T}=(\mt,\alpha_{\mt},\beta_{\mt})\in{\rm Tri}_{\bar{0}}(\undla)$ (Note again that we make no distinction between notations ${\rm T}$ and ${\rm T}_{\bar{0}},$ especially in case $d_{\undla}=0$).
%For $\mt\in\Std(\undla),$ we denote
%$\mathtt{c}_{\mt}^{\mt^{\undla}}:=\mathtt{c}_{\mt,\mt^{\undla}}\mathtt{c}_{\mt^{\undla},\mt}$
%and recall that
%
%where $t:=\#\mathcal{D}_{\mt}.$
Recall that \begin{align*}
	\mathcal{D}_{\mt^{\undla}}&:=\{i_1<i_2<\cdots<i_t\},\\
	\mathcal{OD}_{\mt^{\undla}}&:=\{i_1,i_3,\cdots,i_{2{\lceil t/2 \rceil}-1}\}\subset \mathcal{D}_{\mt^{\undla}},
\end{align*}
where $t:=\sharp\mathcal{D}_{\mt}.$  Hence for any ${\rm T}=(\mt,\alpha_{\mt},\beta_{\mt})\in{\rm Tri}_{\bar{0}}(\undla),$ we have $$\mathcal{OD}_{\mt}=\{d(\mt,\mt^{\undla})(i_1)<d(\mt,\mt^{\undla})(i_3)<\cdots<d(\mt,\mt^{\undla})(i_{2\lceil t/2\rceil-1})\}.$$

%
%If $d_{\underline{\mu}}=1,$ then we set
%$$\alpha_{\mt\downarrow_{n-1},a}:=(\alpha_{\mt}\downarrow_{n-1})_{a}\in\mathbb{Z}_2(\mathcal{OD}_{\mt\downarrow_{n-1}})_a$$
%for $a\in\mathbb{Z}_2,$
%in other words, we have
%$$\alpha_{\mt}\downarrow_{n-1}=\alpha_{\mt\downarrow_{n-1},a},\quad\text{ if }\delta_{\alpha_{\mt}}(d(\mt,\mt^{\undla})(i_{2\lceil t/2\rceil-1}))=a,$$
%for $a\in\mathbb{Z}_2,$ and then we denote
%$$({\rm T}\downarrow_{n-1})_a:=(\mt\downarrow_{n-1},\alpha_{\mt\downarrow_{n-1},a},\beta_{\mt}\downarrow_{n-1})\in{\rm Tri}_{a}(\underline{\mu})$$
%for $a\in\mathbb{Z}_2.$

\begin{lem}\label{lem.taufST}
Let ${\rm S}=(\ms,\alpha_{\ms}',\beta_{\ms}'), {\rm T}=(\mt,\alpha_{\mt},\beta_{\mt})\in{\rm Tri}_{\bar{0}}(\undla).$ We set $\underline{\mu}:=\text{shape}(\mt\downarrow_{n-1}).$
\begin{enumerate}
\item  Suppose $d_{\undla}=0,$ $d_{\undmu}=0.$ Then we have
\begin{align}\label{eq00.lem.taufST}
\varepsilon_n(f_{{\rm S},{\rm T}})
\in \mathbb{K} f_{{\rm S}\downarrow_{n-1},{\rm T}\downarrow_{n-1}}.
\end{align}
Moreover, $\varepsilon_n(f_{{\rm S},{\rm T}})=0$ unless $\ms^{-1}(n)=\mt^{-1}(n)$ and
$$|\beta_{\ms}'|+|\beta_{\mt}| \equiv |\beta_{\ms}'\downarrow_{n-1}|+|\beta_{\mt}\downarrow_{n-1}| \pmod 2.$$

\item Suppose $d_{\undla}=0,$ $d_{\undmu}=1.$ Then we have
\begin{align}\label{eq01.lem.taufST}
\varepsilon_n(f_{{\rm S},{\rm T}})
\in \mathbb{K} f_{{\rm S}\downarrow_{n-1},({\rm T}\downarrow_{n-1})_{b}},
\end{align}
where $b\in\mathbb{Z}_2$ is determined by
$$|\alpha_{\ms}'|+|\alpha_{\mt}| \equiv |\alpha_{\ms}'|_{<d(\ms,\mt^{\undla})(i_{t-1})}+|\alpha_{\mt}|_{<d(\mt,\mt^{\undla})(i_{t-1})}+b\pmod 2.$$ Moreover, $\varepsilon_n(f_{{\rm S},{\rm T}})=0$ unless $\ms^{-1}(n)=\mt^{-1}(n)$.

\item Suppose $d_{\undla}=1,$ $d_{\undmu}=0.$ Then for any $a\in\mathbb{Z}_2,$ we have
\begin{align}\label{eq10.lem.taufST}
\varepsilon_n(f_{{\rm S},{\rm T}_{a}})
\in \mathbb{K} f_{{\rm S}\downarrow_{n-1},{\rm T}\downarrow_{n-1}}.
\end{align}
Moreover, $\varepsilon_n(f_{{\rm S},{\rm T}_{a}})=0$ unless $\ms^{-1}(n)=\mt^{-1}(n)$ and $a=\bar{0}.$

\item Suppose $d_{\undla}=1,$ $d_{\undmu}=1.$ Then for any $a\in\mathbb{Z}_2,$ we have
\begin{align}\label{eq11.lem.taufST}
\varepsilon_n(f_{{\rm S},{\rm T}_{a}})
\in \mathbb{K}  f_{{\rm S}\downarrow_{n-1},({\rm T}\downarrow_{n-1})_{c}},
\end{align}
where $c\in\mathbb{Z}_2$ is determined by
$$|\beta_{\ms}'|+|\beta_{\mt}|+a \equiv |\beta_{\ms}'\downarrow_{n-1}|+|\beta_{\mt}\downarrow_{n-1}|+c\pmod 2.$$ Moreover, $\varepsilon_n(f_{{\rm S},{\rm T}_{a}})=0$ unless $\ms^{-1}(n)=\mt^{-1}(n)$.
\end{enumerate}
\end{lem}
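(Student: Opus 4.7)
The plan rests on three observations: $\varepsilon_n$ is $(\mathcal{H}^f_c(n-1),\mathcal{H}^f_c(n-1))$-bilinear with $|\varepsilon_n|=\bar 0$; left/right multiplication by $X_k$ for $k\in[n-1]$ diagonalises on both the $\mHfcn$- and $\mathcal{H}^f_c(n-1)$-seminormal bases; and by Lemma \ref{important condition1}(2) distinct standard tableaux yield distinct residue sequences. These together cut down the possible target of $\varepsilon_n(f_{{\rm S},{\rm T}})$ to a one-dimensional line inside $\mathcal{H}^f_c(n-1)$, up to identifying the correct $\alpha$-parity flag.

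First I would apply bilinearity: for every $k\in[n-1]$,
\begin{equation*}
X_k\cdot\varepsilon_n(f_{{\rm S},{\rm T}})=\varepsilon_n(X_k\cdot f_{{\rm S},{\rm T}})=\mathtt{b}_{\ms,k}^{-\nu_{\beta_\ms'}(k)}\varepsilon_n(f_{{\rm S},{\rm T}}),
\end{equation*}
and symmetrically on the right with eigenvalue $\mathtt{b}_{\mt,k}^{-\nu_{\beta_\mt}(k)}$ via Proposition \ref{generators action on seminormal basis}(1). Expanding $\varepsilon_n(f_{{\rm S},{\rm T}})$ in the seminormal basis of $\mathcal{H}^f_c(n-1)$ from Theorem \ref{seminormal basis}, the separation condition forces only basis elements indexed by tableaux $\ms''=\ms\downarrow_{n-1}$, $\mt''=\mt\downarrow_{n-1}$ and with $\beta$-parts equal to the restrictions $\beta_\ms'\downarrow_{n-1},\beta_\mt\downarrow_{n-1}$ to survive. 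In particular $\ms\downarrow_{n-1}$ and $\mt\downarrow_{n-1}$ must have the same shape $\undmu$, which is equivalent to $\ms^{-1}(n)=\mt^{-1}(n)$; otherwise $\varepsilon_n(f_{{\rm S},{\rm T}})=0$.

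Next I would pin down the $\alpha$-component. Writing $f_{{\rm S},{\rm T}}=F_{\rm S}C^{\beta_\ms'}C^{\alpha_\ms'}\Phi_{\ms,\mt}(C^{\alpha_\mt})^{-1}(C^{\beta_\mt})^{-1}F_{\rm T}$ and unpacking $F_{\rm S},F_{\rm T}$ via \eqref{definition of primitive idempotents. non-dege}, one applies the Mackey decomposition \eqref{Mackey nondege} and Proposition \ref{closed formula frob} to strip off the $X_n^kC_n^a$ factors. The surviving Clifford idempotent in $\mathcal{H}^f_c(n-1)$ is identified by testing $\varepsilon_n(f_{{\rm S},{\rm T}})$ against the central idempotents $C^{\alpha''}\gamma_{\ms\downarrow_{n-1}}(C^{\alpha''})^{-1}\in\mathcal{C}_{n-1}$ from Lemma \ref{lem:clifford rep}, together with Lemma \ref{change sign in diagonal}; this pins down a unique (up to the parity flag $a,b,c\in\mathbb{Z}_2$) target $\alpha$-part, producing the one-dimensional conclusions \eqref{eq00.lem.taufST}--\eqref{eq11.lem.taufST}.

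Finally, the parity indices $a,b,c$ in the various cases are fixed by enforcing $|\varepsilon_n(f_{{\rm S},{\rm T}})|=|f_{{\rm S},{\rm T}}|$. Computing both total $\mathbb{Z}_2$-parities using the conventions of Definition \ref{decomposition of OD}---with the contribution from $C_n$ factors that are removed by $\varepsilon_n$ and the contribution distinguishing $\mathbb{Z}_2(\mathcal{OD}_\mt)_{\bar 0}$ from $\mathbb{Z}_2(\mathcal{OD}_\mt)_{\bar 1}$---produces the stated congruences, and in case (3) forces $a=\bar 0$ (the lost diagonal $i_t$ of $\undla$ becomes unavailable after restriction, collapsing the $\mathbb{Z}_2$-doubling of $\alpha_{\mt,a}$). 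The main obstacle will be the sign and parity bookkeeping in cases (2) and (3), where $\mt^{-1}(n)=\mt^{-1}(i_t)$ is a diagonal box of $\undla$ and hence is simultaneously a Clifford anchor of $\gamma_\mt$ and a potential support point of $\alpha_\mt$: the $C_n$-factor coming from $\alpha_\mt$ or $\gamma_\mt$ must be commuted past the intervening $X_k$ and $C_j$ ($j<n$) factors via \eqref{Cinter}, \eqref{XC}, and Lemma \ref{change sign in diagonal}, carefully absorbing the resulting signs into the parity parameters $b$ or $c$. It is this step that accounts for the flip between type \texttt{M} and type \texttt{Q} on opposite sides of $\varepsilon_n$, and it is where the statement genuinely differs from the trivial branching behavior seen in cases (1) and (4).
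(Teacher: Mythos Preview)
Your proposal is essentially the same as the paper's proof: bilinearity of $\varepsilon_n$ plus the $X_k$-eigenvalue action (Proposition~\ref{generators action on seminormal basis}(1)) and separation (Lemma~\ref{important condition1}(2)) pin down the tableau and $\beta$-parts, the Clifford idempotent orthogonality (Lemma~\ref{lem:clifford rep}) then forces the $\alpha$-part, and finally $|\varepsilon_n|=\bar 0$ determines the parity flags. The one unnecessary detour in your plan is invoking the Mackey decomposition and Proposition~\ref{closed formula frob} to ``strip off $X_n^kC_n^a$ factors'': the paper never unpacks $f_{{\rm S},{\rm T}}$ this way, since once the eigenvalue argument has localised $\varepsilon_n(f_{{\rm S},{\rm T}})$ to the span of seminormal basis elements with fixed $(\ms\downarrow_{n-1},\beta_\ms'\downarrow_{n-1})$ and $(\mt\downarrow_{n-1},\beta_\mt\downarrow_{n-1})$, testing against the Clifford idempotents (exactly as you describe next) already suffices.
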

\begin{proof}
First, the equations \eqref{eq00.lem.taufST}, \eqref{eq01.lem.taufST}, \eqref{eq10.lem.taufST} and \eqref{eq11.lem.taufST} can be obtained similarly as in Lemma \ref{cor.iota fTT}. In fact, by Proposition \ref{generators action on seminormal basis} (1), Lemma \ref{important condition1} (2) and the ($\mathcal{H}^f_c(n-1),\mathcal{H}^f_c(n-1)$)-bilinearity of the $\varepsilon_n,$ we can determine all of the eigenvalues of $\varepsilon_n(f_{{\rm S},{\rm T}_{a}})$. Now, the fourth equations \eqref{eq00.lem.taufST}, \eqref{eq01.lem.taufST}, \eqref{eq10.lem.taufST} and \eqref{eq11.lem.taufST}  follow from Lemma \ref{lem:clifford rep}.

We only need to prove the equations for parities.  We only prove (2) and (4) since the proof of (1) and (3) are similar and easier.
%(1) Note the following superdegrees
%\begin{align*}
%|f_{{\rm S},{\rm T}}|
%&\equiv |\alpha_{\ms}'|+|\beta_{\ms}'|+|\alpha_{\mt}|+|\beta_{\mt}| \pmod 2,\\
%|f_{{\rm S}\downarrow_{n-1},{\rm T}\downarrow_{n-1}}|
%&\equiv |\alpha_{\ms}'|+|\beta_{\ms}'\downarrow_{n-1}|+|\alpha_{\mt}|+|\beta_{\mt}\downarrow_{n-1}| \pmod 2,
%\end{align*}
%and $f_{{\rm S}\downarrow_{n-1},{\rm T}\downarrow_{n-1}}$ makes no sense unless $\ms^{-1}(n)=\mt^{-1}(n).$

(2) Note that
\begin{align*}
|f_{{\rm S},{\rm T}}|
&\equiv |\alpha_{\ms}'|+|\beta_{\ms}'|+|\alpha_{\mt}|+|\beta_{\mt}| \pmod 2,\\
|f_{{\rm S}\downarrow_{n-1},({\rm T}\downarrow_{n-1})_{b}}|
&\equiv|\alpha_{\ms}'|_{<d(\ms,\mt^{\undla})(i_{t-1})}+|\beta_{\ms}'|+|\alpha_{\mt}|_{<d(\mt,\mt^{\undla})(i_{t-1})}+|\beta_{\mt}|+b \pmod 2,
\end{align*}
and $f_{({\rm S}\downarrow_{n-1})_{\bar{0}},({\rm T}\downarrow_{n-1})_{b}}$ makes no sense unless $\ms^{-1}(n)=\mt^{-1}(n).$ Now the equation for the parity follows from that $|\varepsilon_n|=\overline{0}$.

%(3) Note that
%\begin{align*}
%|f_{{\rm S},{\rm T}_{a}}|
%&\equiv |\alpha_{\ms}'|+|\beta_{\ms}'|+|\alpha_{\mt}|+|\beta_{\mt}|+a \pmod 2,\\
%|f_{{\rm S}\downarrow_{n-1},{\rm T}\downarrow_{n-1}}|
%&\equiv|\alpha_{\ms}'|+|\beta_{\ms}'|+|\alpha_{\mt}|+|\beta_{\mt}| \pmod 2,
%\end{align*}
%and $f_{{\rm S}\downarrow_{n-1},{\rm T}\downarrow_{n-1}}$ makes no sense unless $\ms^{-1}(n)=\mt^{-1}(n).$ Now the equation for the parity follows from that $|\varepsilon_n|=\overline{0}$.

(4) Note that
\begin{align*}
|f_{{\rm S},{\rm T}_{a}}|
&\equiv |\alpha_{\ms}'|+|\beta_{\ms}'|+|\alpha_{\mt}|+|\beta_{\mt}|+a \pmod 2,\\
|f_{{{\rm S}\downarrow_{n-1}},({\rm T}\downarrow_{n-1})_{c}}|
&\equiv |\alpha_{\ms}'|+|\beta_{\ms}'\downarrow_{n-1}|+|\alpha_{\mt}|+|\beta_{\mt}\downarrow_{n-1}|+c \pmod 2,
\end{align*}
and $f_{({\rm S}\downarrow_{n-1})_{\bar{0}},({\rm T}\downarrow_{n-1})_{c}}$ makes no sense unless $\ms^{-1}(n)=\mt^{-1}(n).$ Now the equation for the parity follows from that $|\varepsilon_n|=\overline{0}$.
\end{proof}

\begin{lem}\label{tinylemma1}
	Let ${\rm S}=(\ms,\alpha_{\ms}',\beta_{\ms}'), {\rm T}=(\mt,\alpha_{\mt},\beta_{\mt})\in{\rm Tri}_{\bar{0}}(\undla).$ We set $\underline{\mu}:=\text{shape}(\mt\downarrow_{n-1}).$
Suppose $d_{\undla}=1,\,d_{\undmu}=1,$ $\ms=\mt$ and $\varepsilon_n(f_{{\rm S},{\rm T}_{a}})\neq0$. Then we have
	$$|\beta_{\ms}'|+|\beta_{\mt}| \equiv |\beta_{\ms}'\downarrow_{n-1}|+|\beta_{\mt}\downarrow_{n-1}|\pmod 2.$$
	\end{lem}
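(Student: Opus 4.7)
The plan is a contradiction argument: assume $\beta_{\ms}'(n)\neq\beta_{\mt}(n)$ and aim to show $\varepsilon_{n}(f_{{\rm S},{\rm T}_{a}})=0$. Since $d_{\undla}=d_{\undmu}=1$, the box $\mt^{-1}(n)$ lies off the diagonal, so $n\notin\mathcal{D}_{\mt}$, and Lemma~\ref{important condition1}(1) gives $\mathtt{b}_{\mt,n}\notin\{\pm 1\}$; in particular $\mathtt{b}_{\mt,n}^{2}\neq 1$. Because $\ms=\mt$, the seminormal intertwiner $\Phi_{\ms,\mt}=1$, so
$$f_{{\rm S},{\rm T}_{a}}=F_{\rm S}\,C^{\beta_{\ms}'}C^{\alpha_{\ms}'}(C^{\alpha_{\mt,a}})^{-1}(C^{\beta_{\mt}})^{-1}\,F_{{\rm T}_{\bar 0}}$$
contains no $T_{n-1}$-factor and lies in the subalgebra $\mathcal{A}_{n}$ of Laurent polynomials in $X_{i}$ and $C_{j}$.

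Next, Proposition~\ref{generators action on seminormal basis}(1) makes $f_{{\rm S},{\rm T}_{a}}$ a simultaneous two-sided $X_{n}$-eigenvector with left eigenvalue $\lambda=\mathtt{b}_{\mt,n}^{-\nu_{\beta_{\ms}'}(n)}$ and right eigenvalue $\mu=\mathtt{b}_{\mt,n}^{-\nu_{\beta_{\mt}}(n)}$. Under the contradiction hypothesis one has $\nu_{\beta_{\ms}'}(n)=-\nu_{\beta_{\mt}}(n)$, hence $\lambda\mu^{-1}=\mathtt{b}_{\mt,n}^{2\nu_{\beta_{\mt}}(n)}\neq 1$, so $\lambda\neq\mu$. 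The core of the argument is then to establish the identity
\begin{equation}\label{key-tiny-commute}
\varepsilon_{n}(X_{n}\,f_{{\rm S},{\rm T}_{a}})=\varepsilon_{n}(f_{{\rm S},{\rm T}_{a}}\,X_{n}).
\end{equation}
Granted \eqref{key-tiny-commute}, applying $\varepsilon_n$ to each of the two eigenvalue identities yields $\lambda\,\varepsilon_{n}(f_{{\rm S},{\rm T}_{a}})=\mu\,\varepsilon_{n}(f_{{\rm S},{\rm T}_{a}})$, so $(\lambda-\mu)\varepsilon_{n}(f_{{\rm S},{\rm T}_{a}})=0$ and $\varepsilon_{n}(f_{{\rm S},{\rm T}_{a}})=0$, contradicting the hypothesis.

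To prove \eqref{key-tiny-commute} one expands $f_{{\rm S},{\rm T}_{a}}$ in the Mackey basis of Lemma~\ref{BK. Lemma 3.8}(1). Since $f_{{\rm S},{\rm T}_{a}}\in\mathcal{A}_{n}$ carries no explicit $T_{n-1}$-factor, any $\mathcal{H}^{f}_{c}(n-1)T_{n-1}\mathcal{H}^{f}_{c}(n-1)$-component arises only from the cyclotomic reduction of $X_{n}^{\pm r}$ via repeated application of \eqref{BK(2.23)}. On a pure summand $X_{n}^{k}C_{n}^{a'}h$ one uses the identities $X_{n}\cdot X_{n}^{k}C_{n}^{a'}h=X_{n}^{k+1}C_{n}^{a'}h$ and $X_{n}^{k}C_{n}^{a'}h\cdot X_{n}=X_{n}^{k+(-1)^{a'}}C_{n}^{a'}h$, and tracks the contributions produced by the cyclotomic reduction. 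The shared tableau $\mt$ on both the left and right of $f_{{\rm S},{\rm T}_{a}}$ rigidly aligns the $X_{n}$-spectral data of $F_{\rm S}$ and $F_{{\rm T}_{\bar 0}}$, which is exactly what forces the $(0,\bar{0})$-Mackey contributions on the two sides of \eqref{key-tiny-commute} to coincide.

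The main obstacle will be precisely this combinatorial tracking of the cyclotomic reduction: the $X_{n}^{\pm r}$-terms produced on each side can in principle spill into different Mackey summands, and the required cancellation in the $(0,\bar{0})$-slot depends in an essential way on the common tableau data of $F_{\rm S}$ and $F_{{\rm T}_{\bar 0}}$. Everything else in the argument is immediate from Proposition~\ref{generators action on seminormal basis}(1), Lemma~\ref{important condition1}(1), and Lemma~\ref{lem.taufST}(4).
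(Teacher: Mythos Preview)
Your argument is circular at the decisive step. Because $X_n f_{{\rm S},{\rm T}_a} = \lambda f_{{\rm S},{\rm T}_a}$ and $f_{{\rm S},{\rm T}_a} X_n = \mu f_{{\rm S},{\rm T}_a}$ are equalities of elements of $\mHfcn$, applying $\varepsilon_n$ tautologically yields $\varepsilon_n(X_n f_{{\rm S},{\rm T}_a}) = \lambda\,\varepsilon_n(f_{{\rm S},{\rm T}_a})$ and $\varepsilon_n(f_{{\rm S},{\rm T}_a} X_n) = \mu\,\varepsilon_n(f_{{\rm S},{\rm T}_a})$. Hence your displayed identity $\varepsilon_n(X_n f_{{\rm S},{\rm T}_a}) = \varepsilon_n(f_{{\rm S},{\rm T}_a} X_n)$ is literally $(\lambda-\mu)\,\varepsilon_n(f_{{\rm S},{\rm T}_a})=0$; under your contradiction hypothesis $\lambda\neq\mu$ it \emph{is} the statement $\varepsilon_n(f_{{\rm S},{\rm T}_a})=0$ you are trying to prove. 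An independent argument for it is therefore mandatory, but the Mackey-tracking sketch does not provide one: $\varepsilon_n$ is only $(\mathcal{H}^f_c(n-1),\mathcal{H}^f_c(n-1))$-bilinear and $X_n\notin\mathcal{H}^f_c(n-1)$, so there is no general reason for $\varepsilon_n$ to intertwine left and right multiplication by $X_n$. Nor does ``$f_{{\rm S},{\rm T}_a}\in\mathcal{A}_n$'' help, since in the cyclotomic quotient even pure $X$-$C$ expressions have nonzero $\pi$-component (the computations in Sections~\ref{dot=0}--\ref{dot=s} are exactly about the nonvanishing of $\pi(F_{\rm T})$); the ``main obstacle'' you flag is the entire content of the lemma.

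The paper avoids $X_n$ altogether and conjugates by a Clifford generator that \emph{does} lie in $\mathcal{H}^f_c(n-1)$. Because $d_{\undla}=d_{\undmu}=1$, the index $i:=d(\mt,\mt^{\undla})(i_t)$ satisfies $i<n$, so $C_i\in\mathcal{H}^f_c(n-1)$ and $\varepsilon_n(C_i\,\cdot\,C_i)=C_i\,\varepsilon_n(\cdot)\,C_i$ by bilinearity. Using \eqref{CFC=F} and $\ms=\mt$ one computes $C_i f_{{\rm S},{\rm T}_a} C_i=(-1)^{|\alpha_{\ms}'|+|\beta_{\ms}'|+|\alpha_{\mt}|+|\beta_{\mt}|} f_{{\rm S},{\rm T}_a}$, while the same conjugation acts on $f_{{\rm S}\downarrow_{n-1},({\rm T}\downarrow_{n-1})_c}$ by $(-1)^{|\alpha_{\ms}'|+|\beta_{\ms}'\downarrow_{n-1}|+|\alpha_{\mt}|+|\beta_{\mt}\downarrow_{n-1}|}$. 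Writing $\varepsilon_n(f_{{\rm S},{\rm T}_a})=\eta\,f_{{\rm S}\downarrow_{n-1},({\rm T}\downarrow_{n-1})_c}$ with $\eta\neq 0$ (Lemma~\ref{lem.taufST}(4)) and comparing the two signs forces the claimed parity congruence.
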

	
	\begin{proof}
	By \eqref{eq11.lem.taufST}, we have
		\begin{align}
			\varepsilon_n(f_{{\rm S},{\rm T}_{a}})
			\in \mathbb{K}  f_{{\rm S}\downarrow_{n-1},({\rm T}\downarrow_{n-1})_{c}},
		\end{align}
		where $c\in\mathbb{Z}_2$ is determined by
		$$|\beta_{\ms}'|+|\beta_{\mt}|+a \equiv |\beta_{\ms}'\downarrow_{n-1}|+|\beta_{\mt}\downarrow_{n-1}|+c\pmod 2.$$ We set $i:=d(\mt,\mt^{\undla})(i_t)$. Since $d_{\undla}=1,\,d_{\undmu}=1$, we have $i<n$. We first claim the following two equations hold.
		\begin{align}
			C_i f_{{\rm S},{\rm T}_a} C_i
			&=(-1)^{|\alpha_{\mt}'|+|\beta_{\mt}'|+|\alpha_{\mt}|+|\beta_{\mt}|} f_{{\rm S},{\rm T}_a},\label{eq1.prop.taufST}\\
			C_i f_{{\rm S}\downarrow_{n-1},({\rm T}\downarrow_{n-1})_{c}}C_i
			&=(-1)^{|\alpha_{\mt}'|+|\beta_{\mt}'\downarrow_{n-1}|+|\alpha_{\mt}|+|\beta_{\mt}\downarrow_{n-1}|} f_{{\rm S}\downarrow_{n-1},({\rm T}\downarrow_{n-1})_{c}}\label{eq2.prop.taufST}.
		\end{align}
	
		In fact, we have
		\begin{align*}
			&C_i f_{{\rm S},{\rm T}_a} C_i\\
			&=C_i F_{\rm S}C^{\beta_{\mt}'}C^{\alpha_{\mt}'}\Phi_{\mt,\mt}
			(C^{{\alpha}_{\mt,a}})^{-1} (C^{\beta_{\mt}})^{-1} F_{\rm T}C_i\\
			&=(-1)^{|\alpha_{\mt}'|+|\beta_{\mt}'|+|\alpha_{\mt}|+|\beta_{\mt}|} F_{\rm S}C^{\beta_{\mt}'}C^{\alpha_{\mt}'}\Phi_{\mt,\mt}
			(C^{{\alpha}_{\mt,a+\bar{1}+\bar{1}}})^{-1} (C^{\beta_{\mt}})^{-1} F_{\rm T}\\
			&=(-1)^{|\alpha_{\mt}'|+|\beta_{\mt}'|+|\alpha_{\mt}|+|\beta_{\mt}|} F_{\rm S}C^{\beta_{\mt}'}C^{\alpha_{\mt}'}\Phi_{\mt,\mt}
			(C^{{\alpha}_{\mt,a}})^{-1} (C^{\beta_{\mt}})^{-1} F_{\rm T}\\
			&=(-1)^{|\alpha_{\mt}'|+|\beta_{\mt}'|+|\alpha_{\mt}|+|\beta_{\mt}|} f_{{\rm S},{\rm T}_a},
		\end{align*}
		where in the second equation, we have used \eqref{CFC=F} and the fact $\ms=\mt$. Similarly, we can compute \eqref{eq2.prop.taufST}.
		
		Suppose $\varepsilon_n(f_{{\rm S},{\rm T}_a})=\eta f_{({\rm S}\downarrow_{n-1})_{\bar{0}},({\rm T}\downarrow_{n-1})_{c}}$ for some $0\neq \eta\in  \mathbb{K}.$
		Then we have
		\begin{align*}
			(-1)^{|\alpha_{\mt}'|+|\beta_{\mt}'|+|\alpha_{\mt}|+|\beta_{\mt}|} & \eta f_{{\rm S}\downarrow_{n-1},({\rm T}\downarrow_{n-1})_{c}}\\
			&=\varepsilon_n ((-1)^{|\alpha_{\mt}'|+|\beta_{\mt}'|+|\alpha_{\mt}|+|\beta_{\mt}|} f_{{\rm S},{\rm T}_a} )\\
			&=\varepsilon_n (C_i f_{{\rm S},{\rm T}_a} C_i)\\
			&=C_i\varepsilon_n(f_{{\rm S},{\rm T}_a})C_i\\
			&=agC_i  f_{{\rm S}\downarrow_{n-1},({\rm T}\downarrow_{n-1})_{c}}C_i\\
			&=(-1)^{|\alpha_{\mt}'|+|\beta_{\mt}'\downarrow_{n-1}|+|\alpha_{\mt}|+|\beta_{\mt}\downarrow_{n-1}|}\eta f_{{\rm S}\downarrow_{n-1},({\rm T}\downarrow_{n-1})_{c}},
		\end{align*}
		where in the second and last equalities we have used \eqref{eq1.prop.taufST} and \eqref{eq2.prop.taufST}, in the third equality we have used the ($\mathcal{H}^f_c(n-1),\mathcal{H}^f_c(n-1)$)-bilinearity of map $\varepsilon_n.$ Now the Lemma follows from the fact that $\eta\neq0$.
	\end{proof}
\begin{prop}\label{prop.taufST}
Suppose ${\rm S}=(\ms,\alpha_{\ms}',\beta_{\mt}')\in{\rm Tri}_{\bar{0}}(\undla)$ and ${\rm T}=(\mt,\alpha_{\mt},\beta_{\mt})\in{\rm Tri}(\undla).$ If ${\rm S}\neq{\rm T},$ then $\tau_{r,n}(f_{{\rm S},{\rm T}})=0.$
\end{prop}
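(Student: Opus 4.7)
The plan is to proceed by induction on $n$, exploiting the factorisation $\tau_{r,n} = \tau_{r,n-1}\circ \varepsilon_n$. The base case $n=1$ is handled directly: $\Std(\undla)$ is a singleton for $\undla \in \mathscr{P}^{\bullet,m}_1$, so the seminormal basis elements $f_{{\rm S},{\rm T}}$ are indexed only by the choices of $\alpha,\beta$, and Proposition \ref{closed formula frob} together with the explicit form of $f_{{\rm S},{\rm T}}$ settles the claim. For the inductive step I write $\tau_{r,n}(f_{{\rm S},{\rm T}}) = \tau_{r,n-1}(\varepsilon_n(f_{{\rm S},{\rm T}}))$. By Lemma \ref{lem.taufST}, $\varepsilon_n(f_{{\rm S},{\rm T}})$ is either zero or a scalar multiple of a single seminormal basis element $f_{{\rm S}',{\rm T}'}$ of $\mathcal{H}^f_c(n-1)$. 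By the inductive hypothesis, $\tau_{r,n-1}(f_{{\rm S}',{\rm T}'})=0$ whenever ${\rm S}'\neq {\rm T}'$, so it suffices to verify the contrapositive: if $\varepsilon_n(f_{{\rm S},{\rm T}})\neq 0$ and ${\rm S}'={\rm T}'$, then ${\rm S}={\rm T}$.

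I split into the four cases of Lemma \ref{lem.taufST} indexed by $(d_{\undla},d_{\underline{\mu}})$, where $\underline{\mu}=\text{shape}(\mt\downarrow_{n-1})$. In every case, non-vanishing of $\varepsilon_n(f_{{\rm S},{\rm T}})$ forces $\ms^{-1}(n)=\mt^{-1}(n)$; hence $\ms\downarrow_{n-1}$ and $\mt\downarrow_{n-1}$ have the common shape $\underline{\mu}$, and the tableau equality coming from ${\rm S}'={\rm T}'$ upgrades to $\ms=\mt$. It then remains to promote the matched restrictions $\alpha_{\ms}'\downarrow_{n-1}=\alpha_\mt\downarrow_{n-1}$ and $\beta_{\ms}'\downarrow_{n-1}=\beta_\mt\downarrow_{n-1}$ to equalities on all of $[n]$. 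For the $\beta$-coordinates this is automatic whenever $n\in\mathcal{D}_\mt$ (both entries forced to $\bar 0$ by the definition of $\Z_2([n]\setminus\mathcal{D}_\mt)$), and otherwise is precisely the parity equation asserted in the corresponding part of Lemma \ref{lem.taufST}. For the $\alpha$-coordinates I analyse whether $n\in\mathcal{OD}_\mt$: when $n\notin\mathcal{OD}_\mt$ the supports already lie in $[n-1]$, and when $n\in\mathcal{OD}_\mt$ the parity assertions of Lemma \ref{lem.taufST}, together with the bijection between $\Z_2(\mathcal{OD}_\mt)_{\bar 0}$ and $\Z_2(\mathcal{OD}_\mt)_{\bar 1}$, pin down the missing coordinate and the parity label $a$.

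The main obstacle lies in case (4) of Lemma \ref{lem.taufST}, namely $d_{\undla}=d_{\underline{\mu}}=1$. Here the reduction produces $f_{{\rm S}\downarrow_{n-1},({\rm T}\downarrow_{n-1})_c}$ with $c\in\Z_2$ determined by a single congruence involving both the external label $a$ (on ${\rm T}_a$) and the internal parity of $\beta$. A single parity equation is insufficient to force $a=c=\bar 0$ simultaneously with $(\alpha_{\ms}',\beta_{\ms}')=(\alpha_\mt,\beta_\mt)$, so I invoke the auxiliary Lemma \ref{tinylemma1}, which is obtained by comparing $\varepsilon_n(C_i f_{{\rm S},{\rm T}_a} C_i)$ and $C_i\varepsilon_n(f_{{\rm S},{\rm T}_a})C_i$ for $i=d(\mt,\mt^{\undla})(i_t)<n$ and using the bimodule property of $\varepsilon_n$. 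This yields a second, $a$-independent parity equation; combined with the original one it fixes both $a$ and $c$ and recovers $\beta_{\ms}'=\beta_\mt$, after which the $\alpha$-coordinates are treated exactly as in the other cases. A parallel but lighter argument disposes of case (2), where the parity shift appears on the $\alpha$-side instead. Assembling the four cases closes the induction.
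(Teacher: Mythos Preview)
Your proposal is correct and follows essentially the same route as the paper: both arguments rest on Lemma~\ref{lem.taufST} to peel off one level at a time, and both invoke Lemma~\ref{tinylemma1} to resolve the ambiguous $(d_{\undla},d_{\underline\mu})=(1,1)$ case where the single parity congruence from Lemma~\ref{lem.taufST}(4) cannot separate $a$ from $(\beta_\ms')_n+(\beta_\mt)_n$. The only organisational difference is that you package everything as a single induction on $n$, whereas the paper unrolls this into ``apply Lemma~\ref{lem.taufST} repeatedly to get $\ms=\mt$, then handle $\beta$, then handle $\alpha$'' by locating the topmost discrepancy and reducing to that level; the underlying case analysis and the use of the two lemmas are identical. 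One minor imprecision: when you write ``when $n\notin\mathcal{OD}_\mt$ the supports already lie in $[n-1]$'', this alone does not give $\alpha_\ms'=\alpha_\mt$ in case~(2), since ${\rm S}'={\rm T}'$ only matches the \emph{bases} $\alpha_{\ms\downarrow_{n-1}}=\alpha_{\mt\downarrow_{n-1}}$ in $\Z_2(\mathcal{OD}_{\mt\downarrow_{n-1}})_{\bar 0}$, and one still needs the parity equation for $b$ (with $b=\bar 0$) to pin down the coordinate at $d(\mt,\mt^{\undla})(i_{t-1})$; you do acknowledge this in your final sentence about case~(2), so the argument goes through.
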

\begin{proof}
Assume $\tau_{r,n}(f_{{\rm S},{\rm T}})\neq0.$ Recall that $\tau_{r,n}=\varepsilon_1\circ \cdots \circ \varepsilon_n$ and $|\tau_{r,n}|=0$.
Hence we have $|f_{{\rm S},{\rm T}}|=\bar{0}$. We shall prove ${\rm S}={\rm T}$ to deduce contradiction.

(i) By applying Lemma \ref{lem.taufST} repeatedly, we deduce that $\ms=\mt.$

(ii) Now we prove $\beta_{\mt}'=\beta_{\mt}.$

Otherwise, we can choose a maximal number $k\in[n]\setminus\mathcal{D}_{\mt}$ such that $\nu_{\beta_{\mt}'}(k)\nu_{\beta_{\mt}}(k)=-1.$ Note that $\tau_{r,k}=\tau_{r,k}\circ\varepsilon_{r+1}\circ \cdots \circ \varepsilon_n$. By Lemma \ref{lem.taufST}, we can assume that $k=n$ without loss of generality. Moreover, we set $\underline{\mu}=\text{shape}(\mt\downarrow_{n-1}).$

(ii.1) Suppose $d_{\undla}=0.$ Since $n\notin\mathcal{D}_{\mt},$ we have $d_{\undmu}=0$. By $\nu_{\beta_{\mt}'}(n)\nu_{\beta_{\mt}}(n)=-1,$ we deduce
$$|\beta_{\mt}'|+|\beta_{\mt}| \not\equiv |\beta_{\mt}'\downarrow_{n-1}|+|\beta_{\mt}\downarrow_{n-1}| \pmod 2.$$ This contradicts to Lemma \ref{lem.taufST} (1).

(ii.2) Suppose $d_{\undla}=1.$ Since $n\notin\mathcal{D}_{\mt},$ we have $d_{\undmu}=1$.
%We first have
%$$|\alpha_{\ms}'|+|\beta_{\ms}'|\equiv |\alpha_{\mt}|+|\beta_{\mt}|+a \pmod 2,$$
%since the superdegree $|f_{{\rm S},{\rm T}_a}|=\bar{1}.$
Applying Lemma \ref{tinylemma1}, we have
\begin{align*}
|\beta_{\mt}'|+|\beta_{\mt}| \equiv |\beta_{\mt}'\downarrow_{n-1}|+|\beta_{\mt}\downarrow_{n-1}|\pmod 2.
\end{align*}
But $\nu_{\beta_{\mt}'}(n)\nu_{\beta_{\mt}}(n)=-1,$ this leads to a contradiction.

(iii) We finally prove $\alpha_{\mt}'=\alpha_{\mt}.$

Otherwise, we can choose a maximal number $k':=d(\mt,\mt^{\undla})(i_{2p-1})\in\mathcal{OD}_{\mt}$ (where $1\leq p\leq \lceil t/2\rceil$)
%from left to right of $d(\mt,\mt^{\undla})(i_1)<d(\mt,\mt^{\undla})(i_3)<\cdots<d(\mt,\mt^{\undla})(i_{2\lceil t/2\rceil-1})$
such that
\begin{align*}
\nu_{\alpha_{\mt}'}(k')\nu_{\alpha_{\mt}}(k')=-1.
\end{align*}
Suppose $p=1,$ then by definition of $p$ and $\beta_\mt=\beta_\ms'$, we deduce that $$|f_{{\rm S},{\rm T}}|=|\alpha_{\ms}'|+|\beta_{\ms}'|+|\alpha_{\mt}|+|\beta_{\mt}| \equiv 1\pmod 2,
$$  which is a contradiction. Hence $p>1$.
%in particular, if above $a=\bar{1},$ then it is clear that $p=\lceil t/2\rceil.$
%(iii.1) $\nu_{\alpha_{\mt}'}(k)\nu_{\alpha_{\mt}}(k)=-1,$ if $d_{\undla}=0;$
%(iii.1) $\nu_{\alpha_{\mt}'}(k)\nu_{\alpha_{\mt}}(k)=-1,$ if $d_{\undla}=0;$
We set $h:=d(\mt,\mt^{\undla})(i_{2p-2})\in \mathcal{D}_{\mt}\setminus\mathcal{OD}_{\mt},$ then we have $|\alpha_{\mt}'\downarrow_{h}|\not\equiv |\alpha_{\mt}\downarrow_{h}| \pmod 2$ by the definition of $p$ and $\beta_\mt=\beta_\ms'$. Hence
$$|f_{{\rm S}\downarrow_{h},{\rm T}\downarrow_h}|=|f_{(\mt\downarrow_{h},\alpha_{\mt}'\downarrow_{h}, \beta_{\mt}\downarrow_{h}),(\mt\downarrow_{h},\alpha_{\mt}\downarrow_{h}, \beta_{\mt}\downarrow_{h})}|=\bar{1},$$
It follows that
\begin{align*}
\varepsilon_1\circ \cdots \circ \varepsilon_h\left(f_{{\rm S}\downarrow_{h},{\rm T}\downarrow_h}\right)=0.
\end{align*}
By Lemma \ref{lem.taufST}, this again contradicts to $\tau_{r,n}(f_{{\rm S},{\rm T}})\neq 0.$
\end{proof}

By Proposition \ref{prop.taufST}, we only need to consider the value $\tau_{r,n}(F_{\rm T})$ for any ${\rm T}\in{\rm Tri}_{\bar{0}}(\undla).$

\begin{lem}\label{typeos, scalars ak}
Let ${\rm T}=(\mt,\alpha_{\mt},\beta_{\mt})\in{\rm Tri}_{\bar{0}}(\undla),$
\begin{enumerate}
\item If $d_{\underline{\mu}}=0,$ then we have
\begin{align*}
p_{k,b}(F_{{\rm T}})\in\mathbb{K}\delta_{b,\bar{0}}F_{{\rm T}\downarrow_{n-1}}, \quad \text{for $0\leq k\leq r-1,$ $b\in\mathbb{Z}_2.$}
\end{align*}

\item If $d_{\underline{\mu}}=1,$ then we have
\begin{align*}
p_{k,b}(F_{\rm T})\in\mathbb{K}f_{({\rm T}\downarrow_{n-1})_{\bar{0}},({\rm T}\downarrow_{n-1})_{b}}, \quad \text{for $0\leq k\leq r-1,$ $b\in\mathbb{Z}_2.$}
\end{align*}
\end{enumerate}
\end{lem}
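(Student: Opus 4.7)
The plan is to reduce the computation of $p_{k,b}(F_{\rm T})$ to a small ``corner algebra'' of $\mathcal{H}^f_c(n-1)$, and then to pin down the precise one-dimensional subspace via a parity argument. First I would establish the sandwich identity
\[
F_{{\rm T}\downarrow_{n-1}}\cdot F_{\rm T}\;=\;F_{\rm T}\;=\;F_{\rm T}\cdot F_{{\rm T}\downarrow_{n-1}}
\]
in $\mHfcn$, where $F_{{\rm T}\downarrow_{n-1}}\in\mathcal{H}^f_c(n-1)$ is regarded as an element of $\mHfcn$ via $\iota_{n-1}$. By Lemma~\ref{cor.iota fTT} applied to $F_{{\rm T}\downarrow_{n-1}}$, the image $\iota_{n-1}(F_{{\rm T}\downarrow_{n-1}})$ decomposes as an explicit sum of primitive orthogonal idempotents of $\mHfcn$. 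A case-by-case inspection of the four possibilities for $(d_{\undla},d_{\undmu})\in\{0,1\}^2$ shows that $F_{\rm T}$ itself occurs among those summands: one matches the data $(\mt,\alpha_{\mt},\beta_{\mt})$ of ${\rm T}$ against the terms in \eqref{eq1.cor.iota fTT}--\eqref{eq2.cor.iota fTT}, with $F_{\rm T}$ coming from the generic (non-diagonal) summands when $d_{\undla}=d_{\undmu}$, from $F_{\rm P}$ in case $d_{\undla}=1,\,d_{\undmu}=0$, and from $F_{\rm U}$ or $F_{{\rm U}'}$ in case $d_{\undla}=0,\,d_{\undmu}=1$. The sandwich identity then follows from the pairwise orthogonality of primitive idempotents (Theorem~\ref{primitive idempotents}(a)).

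Next, since $|F_{{\rm T}\downarrow_{n-1}}|=\bar{0}$, the $(\mathcal{H}^f_c(n-1),\mathcal{H}^f_c(n-1))$-superbilinearity of $p_{k,b}$ (see \eqref{superbilinearity of pk1}) gives
\[
p_{k,b}(F_{\rm T})
\;=\;p_{k,b}(F_{{\rm T}\downarrow_{n-1}}\,F_{\rm T}\,F_{{\rm T}\downarrow_{n-1}})
\;=\;F_{{\rm T}\downarrow_{n-1}}\,p_{k,b}(F_{\rm T})\,F_{{\rm T}\downarrow_{n-1}},
\]
so $p_{k,b}(F_{\rm T})$ lies in the corner subspace $F_{{\rm T}\downarrow_{n-1}}\mathcal{H}^f_c(n-1)F_{{\rm T}\downarrow_{n-1}}$. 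I would then describe this corner using the seminormal basis of $\mathcal{H}^f_c(n-1)$ from Theorem~\ref{seminormal basis}, taking $\mathfrak{w}=\mt\downarrow_{n-1}$: when $d_{\undmu}=0$ the corner is one-dimensional, spanned by the even element $f_{{\rm T}\downarrow_{n-1},{\rm T}\downarrow_{n-1}}=F_{{\rm T}\downarrow_{n-1}}$; when $d_{\undmu}=1$ the corner is two-dimensional, spanned by $\{f_{({\rm T}\downarrow_{n-1})_{\bar{0}},({\rm T}\downarrow_{n-1})_{\bar{b}}}\mid b\in\mathbb{Z}_2\}$, one of which is even ($b=\bar{0}$, equal to $F_{{\rm T}\downarrow_{n-1}}$) and one odd ($b=\bar{1}$).

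Finally, since $|F_{\rm T}|=\bar{0}$ and $|p_{k,b}|=b$, we have $|p_{k,b}(F_{\rm T})|=b$. In Case (i) ($d_{\undmu}=0$) the only available basis element of the corner is even, so $p_{k,\bar{1}}(F_{\rm T})=0$ and $p_{k,\bar{0}}(F_{\rm T})\in\mathbb{K}F_{{\rm T}\downarrow_{n-1}}$, which is exactly the $\delta_{b,\bar{0}}F_{{\rm T}\downarrow_{n-1}}$ in the statement. In Case (ii) ($d_{\undmu}=1$), parity uniquely selects the basis element of parity $b$, giving $p_{k,b}(F_{\rm T})\in\mathbb{K}f_{({\rm T}\downarrow_{n-1})_{\bar{0}},({\rm T}\downarrow_{n-1})_{\bar{b}}}$.

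The main obstacle is the very first step: verifying case-by-case that $F_{\rm T}$ genuinely appears as one of the primitive idempotents in the decomposition of $\iota_{n-1}(F_{{\rm T}\downarrow_{n-1}})$. The sub-case $d_{\undla}=0,\,d_{\undmu}=1$ is the most delicate, since it involves tracking how the Clifford data $\alpha_{\mt}\in\mathbb{Z}_2(\mathcal{OD}_{\mt})$ restricts and matches one of the two summands $({\rm T}\downarrow_{n-1})_{\bar{0}}$ or $({\rm T}\downarrow_{n-1})_{\bar{1}}$ after the top diagonal box at position $n$ is removed; once this bookkeeping is done, the remainder of the proof is essentially formal.
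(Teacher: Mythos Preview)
Your approach is correct and complete, but it is genuinely different from the paper's argument. The paper proceeds exactly as in Lemma~\ref{lem.taufST}: it multiplies $p_{k,b}(F_{\rm T})$ on both sides by $X_j$ for $j\in[n-1]$, uses the $(\mathcal{H}^f_c(n-1),\mathcal{H}^f_c(n-1))$-superbilinearity of $p_{k,b}$ together with Proposition~\ref{generators action on seminormal basis}(1) and Lemma~\ref{important condition1}(2) to pin down the $X$-eigenvalues, and then invokes Lemma~\ref{lem:clifford rep} and the parity $|p_{k,b}|=b$ to isolate the correct one-dimensional span. Your argument, by contrast, bypasses the eigenvalue computation entirely: you package the same information into the sandwich identity $F_{{\rm T}\downarrow_{n-1}}F_{\rm T}F_{{\rm T}\downarrow_{n-1}}=F_{\rm T}$, which immediately places $p_{k,b}(F_{\rm T})$ in the corner algebra $F_{{\rm T}\downarrow_{n-1}}\mathcal{H}^f_c(n-1)F_{{\rm T}\downarrow_{n-1}}$, and then you read off the answer from the seminormal basis of that corner.

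The trade-off is this: your route is more structural and makes the role of the idempotent ${\rm T}\downarrow_{n-1}$ transparent, but it leans on Lemma~\ref{cor.iota fTT}, which is itself established by the same eigenvalue-and-Clifford bookkeeping that the paper's direct proof uses here. So the paper's argument is shorter and more self-contained at this point in the text, while yours shifts the work to an already-proved branching lemma. The delicate case you flag ($d_{\undla}=0,\,d_{\undmu}=1$) does go through: since $n=d(\mt,\mt^{\undla})(i_t)$ with $t$ even, one has $n\notin\mathcal{OD}_{\mt}$ and $n\in\mathcal{D}_{\mt}$, so $\alpha_{\mt}(n)=\beta_{\mt}(n)=\bar0$, and the definition of ${\rm T}\downarrow_{n-1}$ guarantees that $\alpha_{\mt}$ coincides with one of $\alpha_{\mt\downarrow_{n-1},\bar0}$ or $\alpha_{\mt\downarrow_{n-1},\bar1}$, matching $F_{\rm U}$ or $F_{{\rm U}'}$ in \eqref{eq2.cor.iota fTT}.
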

\begin{proof}
	The proof is similar as Lemma \ref{lem.taufST}. In fact, we first use Proposition \ref{generators action on seminormal basis} (1), Lemma \ref{important condition1} (2) and the ($\mathcal{H}^f_c(n-1),\mathcal{H}^f_c(n-1)$)-bilinearity of the $p_{k,b}$ to determine all of the eigenvalues of $p_{k,b}(F_{\rm T})$. Note that $|p_{k,b}|=b$ for any $0\leq k <r$. Hence, the Lemma folllows from Lemma \ref{lem:clifford rep}.
\end{proof}

The next Lemma establishes  connections among coefficients appearing in the images of $p_{k,b}$ on different idempotents.
\begin{lem}\label{reductionlem}
Let ${\rm T}=(\mt,\alpha_{\mt},\beta_{\mt})\in{\rm Tri}_{\bar{0}}(\undla).$ We set $\underline{\mu}:=\text{shape}(\mt\downarrow_{n-1}).$
\begin{enumerate}
	\item Suppose that $d_{\underline{\mu}}=0$ and
\begin{align*}
p_{k,\overline{0}}(F_{{\rm T}})=a_{k,\overline{0}}F_{{\rm T}\downarrow_{n-1}}, \quad \text{for $a_{k,b}\in\mathbb{K},$ $0\leq k\leq r-1.$}
\end{align*}
Then for any $0\leq k\leq r-1,$ the following hold.
\begin{enumerate}
\item Suppose $i\in [n-1]\cap(\mathcal{OD}_{\mt}\setminus\{d(\mt,\mt^{\undla})(i_t)\}),$ we have\begin{align*}
p_{k,\overline{0}}\left(F_{(\mt,\alpha_{\mt}+e_i, \beta_{\mt})}\right)
=a_{k,\overline{0}}F_{(\mt\downarrow_{n-1},\alpha_{\mt}\downarrow_{n-1}+e_i,\beta_{\mt}\downarrow_{n-1})},
\end{align*}
\item Suppose $j\in[n-1]\setminus\mathcal{D}_{\mt},$  we have
\begin{align*}
p_{k,\overline{0}}\left(F_{(\mt,\alpha_{\mt}, \beta_{\mt}+e_j)}\right)
=a_{k,\overline{0}}F_{(\mt\downarrow_{n-1},\alpha_{\mt}\downarrow_{n-1},\beta_{\mt}\downarrow_{n-1}+e_j)},
\end{align*}

\end{enumerate}
\item Suppose that $d_{\underline{\mu}}=1$ and
\begin{align*}
p_{k,b}(F_{{\rm T}})=a_{k,b}f_{{\rm T}\downarrow_{n-1},({\rm T}\downarrow_{n-1})_{b}}, \quad \text{for $a_{k,b}\in\mathbb{K},$ $0\leq k\leq r-1,$ $b\in\mathbb{Z}_2.$}
\end{align*}
Then for any $0\leq k\leq r-1,$ $b\in\mathbb{Z}_2,$ the following hold.
\begin{enumerate}
\item Suppose $i\in[n-1]\cap(\mathcal{OD}_{\mt}\setminus\{d(\mt,\mt^{\undla})(i_{t-1}),d(\mt,\mt^{\undla})(i_t)\}),$  we have
\begin{align*}
p_{k,b}\left(F_{(\mt,\alpha_{\mt}+e_i, \beta_{\mt})}\right)
=(-1)^{b}a_{k,b}f_{(\mt\downarrow_{n-1},\alpha_{\mt\downarrow_{n-1},\bar{0}}+e_i,\beta_{\mt}\downarrow_{n-1}),
(\mt\downarrow_{n-1},\alpha_{\mt\downarrow_{n-1},b}+e_i,\beta_{\mt}\downarrow_{n-1})}.
\end{align*}
\item Suppose $d_{\undla}=0,$ we have
\begin{align*}
	p_{k,b}\left(F_{(\mt,\alpha_{\mt}+e_{d(\mt,\mt^{\undla})(i_{t-1})}, \beta_{\mt})}\right)
	=(-1)^{b}a_{k,b}f_{{\rm T}\downarrow_{n-1},({\rm T}\downarrow_{n-1})_{b}};
\end{align*}
\item Suppose $j\in[n-1]\setminus\mathcal{D}_{\mt},$  we have
\begin{align*}
	p_{k,b}\left(F_{(\mt,\alpha_{\mt}, \beta_{\mt}+e_j)}\right)
	=(-1)^{b}a_{k,b}f_{(\mt\downarrow_{n-1},\alpha_{\mt\downarrow_{n-1},\bar{0}},\beta_{\mt}\downarrow_{n-1}+e_j),
		(\mt\downarrow_{n-1},\alpha_{\mt\downarrow_{n-1},b},\beta_{\mt}\downarrow_{n-1}+e_j)}.
\end{align*}
\end{enumerate}

\item Suppose that $d_{\undla}=1$ and $n\notin\mathcal{D}_{\mt},$ then we have
$
p_{k,\bar{1}}(F_{\rm T})
=0.
$
\end{enumerate}
\end{lem}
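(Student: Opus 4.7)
The plan is to exploit the superbilinearity of each projection $p_{k,b}$, which by \eqref{superbilinearity of pk1} satisfies $p_{k,b}(xhy) = (-1)^{|x|\cdot b}\, x\, p_{k,b}(h)\, y$ for homogeneous $x,y \in \mathcal{H}^f_c(n-1)$. The key observation is that each modified idempotent or seminormal basis element appearing in the statement is related to $F_{\rm T}$ (or to $f_{{\rm T}\downarrow_{n-1},({\rm T}\downarrow_{n-1})_b}$) by conjugation with, or left/right multiplication by, certain Clifford generators $C_i, C_j \in \mathcal{C}_{n-1} \subset \mathcal{H}^f_c(n-1)$, up to explicit signs. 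Pushing these elements across $p_{k,b}$ via superbilinearity produces the factor $(-1)^b$ and reduces everything to a computation one level down.

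For parts (1a) and (2a), fix $i \in [n-1] \cap \mathcal{OD}_\mt$ away from the top slots. A direct computation from \eqref{definition of primitive idempotents. non-dege}, together with the identity $C_i(1 + \sqrt{-1}\, C_i C_{i'}) C_i = 1 - \sqrt{-1}\, C_i C_{i'}$ inside the relevant factor of $\gamma_\mt$ (from Lemma \ref{lem:clifford rep}(2)) and Lemma \ref{change sign in diagonal} applied to the polynomial part, yields $C_i F_{(\mt,\alpha_\mt,\beta_\mt)} C_i = \eta\, F_{(\mt,\alpha_\mt+e_i,\beta_\mt)}$ with an explicit sign $\eta$. Applying $p_{k,b}$ to both sides and using superbilinearity transports the two $C_i$'s to the outside, picking up the factor $(-1)^b$; the conjugation by $C_i$ on the right-hand side at level $n-1$ is computed by exactly the same identity, giving the stated formulas. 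Cases (1b) and (2c) proceed analogously, using that for $j \in [n-1]\setminus\mathcal{D}_\mt$ we have $C_j F_{(\mt,\alpha_\mt,\beta_\mt)} C_j = \eta'\, F_{(\mt,\alpha_\mt,\beta_\mt+e_j)}$, which follows from Lemma \ref{change sign in diagona} combined with the relation $X_j C_j = C_j X_j^{-1}$.

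Part (2b) is subtler because $i := d(\mt,\mt^{\undla})(i_{t-1})$ lies in $\mathcal{OD}_\mt$ but is paired in $\gamma_\mt$ with $d(\mt,\mt^{\undla})(i_t) = n$ (forced by $d_{\undla}=0, d_{\underline{\mu}}=1$). Here, rather than conjugating, one verifies $C_i F_{(\mt,\alpha_\mt,\beta_\mt)} = \eta\, F_{(\mt,\alpha_\mt+e_i,\beta_\mt)} C_i$ by transporting $C_i$ through the paired factor of $\gamma_\mt$ and using Lemma \ref{change sign in diagonal} on the polynomial part. Applying $p_{k,b}$, using superbilinearity, and then absorbing the $C_i$'s trivially into $f_{{\rm T}\downarrow_{n-1},({\rm T}\downarrow_{n-1})_b}$ via Lemma \ref{change sign in diagonal} at level $n-1$ (since $i \in \mathcal{D}_{\mt\downarrow_{n-1}}$) produces the sign $(-1)^b$ and the claimed identity.

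For part (3), set $i^* := d(\mt,\mt^{\undla})(i_t)$; the hypotheses $d_{\undla}=1$ and $n\notin\mathcal{D}_\mt$ force $i^* \in [n-1]\cap\mathcal{D}_\mt$. By Lemma \ref{change sign in diagonal}, $C_{i^*}$ commutes with $F_{\rm T}$, hence $C_{i^*} F_{\rm T} C_{i^*} = F_{\rm T}$; applying $p_{k,\bar 1}$ and using superbilinearity with $|C_{i^*}|=\bar 1$ yields $p_{k,\bar 1}(F_{\rm T}) = -C_{i^*} p_{k,\bar 1}(F_{\rm T}) C_{i^*}$. On the other hand, writing $p_{k,\bar 1}(F_{\rm T}) = \eta\, f_{{\rm T}\downarrow_{n-1},({\rm T}\downarrow_{n-1})_{\bar 1}}$ per part (2) and expanding the seminormal basis element, a direct check using that $C_{i^*}$ commutes with both $F_{{\rm T}\downarrow_{n-1}}$ and $F_{({\rm T}\downarrow_{n-1})_{\bar 1}}$ (Lemma \ref{change sign in diagonal} at level $n-1$) and that $C_{i^*}C^\beta = (-1)^{|\beta|} C^\beta C_{i^*}$ for $\beta$ supported away from $i^*$, shows $C_{i^*} f_{{\rm T}\downarrow_{n-1},({\rm T}\downarrow_{n-1})_{\bar 1}} C_{i^*} = f_{{\rm T}\downarrow_{n-1},({\rm T}\downarrow_{n-1})_{\bar 1}}$, forcing $\eta = 0$. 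The main obstacle in the whole argument is bookkeeping the precise signs in the conjugation identities, especially in (2a) and (2b), where Clifford transport interacts simultaneously with $\gamma_\mt$ at level $n$ and with $\gamma_{\mt\downarrow_{n-1}}$ at level $n-1$, and the signs from superbilinearity, Clifford reordering, and \eqref{changesign1} must be combined consistently.
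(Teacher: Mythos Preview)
Your approach is correct and essentially identical to the paper's: both arguments conjugate $F_{\rm T}$ (and the corresponding level-$(n-1)$ seminormal element) by the appropriate Clifford generator $C_i$ or $C_j$ and then invoke the superbilinearity \eqref{superbilinearity of pk1} of $p_{k,b}$ to produce the factor $(-1)^b$, with part~(3) handled via $i^*=d(\mt,\mt^{\undla})(i_t)$ exactly as you outline. The paper in fact shows the conjugation signs are $\eta=\eta'=1$ directly from $C^{\alpha_\mt+e_i}\gamma_\mt(C^{\alpha_\mt+e_i})^{-1}=C_iC^{\alpha_\mt}\gamma_\mt(C^{\alpha_\mt})^{-1}C_i$ combined with Lemma~\ref{change sign in diagona}, and dismisses (2b) and (2c) as ``similar'' without your one-sided reformulation; note also that your appeal to Lemma~\ref{change sign in diagonal} for $f_{{\rm T}\downarrow_{n-1},({\rm T}\downarrow_{n-1})_b}$ in (2b) really requires the slightly longer computation carried out in Lemma~\ref{tinylemma1}.
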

\begin{proof}
We only prove (2). Since the proof of (1) is the same as (2) and easier.

For any $i\in[n-1]\cap(\mathcal{OD}_{\mt}\setminus\{d(\mt,\mt^{\undla})(i_{t-1}),d(\mt,\mt^{\undla})(i_t)\}),$ we do the following computation as for \eqref{CFC=F}.
%by the definition of $F_{{\rm T}}$ and
%the proof of \eqref{comm. form of FT},

\begin{align*}
F_{(\mt,\alpha_{\mt}+e_i, \beta_{\mt})}
&=\left(C^{\alpha_{\mt}+e_i}\gamma_{\mt}(C^{\alpha_{\mt}+e_i})^{-1} \right)
 \prod_{k=1}^{n}\prod_{\mathtt{b}\in \mathtt{B}(k)\atop \mathtt{b}\neq \mathtt{b}_{+}(\res_{\mt}(k))}\frac{X_k^{\nu_{\beta_{\mt}}(k)}-\mathtt{b}}{\mathtt{b}_{+}(\res_{\mt}(k))-\mathtt{b}}\\
&=\left(C_i C^{\alpha_{\mt}}\gamma_{\mt}(C^{\alpha_{\mt}})^{-1} C_i \right)
 \prod_{k=1}^{n}\prod_{\mathtt{b}\in \mathtt{B}(k)\atop \mathtt{b}\neq \mathtt{b}_{+}(\res_{\mt}(k))}\frac{X_k^{\nu_{\beta_{\mt}}(k)}-\mathtt{b}}{\mathtt{b}_{+}(\res_{\mt}(k))-\mathtt{b}}\\
&=C_i\left(C^{\alpha_{\mt}}\gamma_{\mt}(C^{\alpha_{\mt}})^{-1} \right)
 \prod_{k=1}^{n}\prod_{\mathtt{b}\in \mathtt{B}(k)\atop \mathtt{b}\neq \mathtt{b}_{+}(\res_{\mt}(k))}\frac{X_k^{\nu_{\beta_{\mt}}(k)(-1)^{\delta_{k,i}}}-\mathtt{b}}{\mathtt{b}_{+}(\res_{\mt}(k))-\mathtt{b}}C_i\\
&=C_i\left(C^{\alpha_{\mt}}\gamma_{\mt}(C^{\alpha_{\mt}})^{-1} \right)
 \prod_{k=1}^{n}\prod_{\mathtt{b}\in \mathtt{B}(k)\atop \mathtt{b}\neq \mathtt{b}_{+}(\res_{\mt}(k))}\frac{X_k^{\nu_{\beta_{\mt}}(k)}-\mathtt{b}}{\mathtt{b}_{+}(\res_{\mt}(k))-\mathtt{b}}C_i\\
&=C_i F_{{\rm T}} C_i,
\end{align*} where in the last second equation, we have used Lemma \ref{change sign in diagona}.
By a similar computation, we have
\begin{align*}
&C_i f_{{\rm T}\downarrow_{n-1},({\rm T}\downarrow_{n-1})_{b}} C_i\\
&=C_i F_{{\rm T}\downarrow_{n-1}}C^{\beta_{\mt}\downarrow_{n-1}}C^{\alpha_{\mt\downarrow_{n-1},\bar{0}}}\Phi_{\mt\downarrow_{n-1},\mt\downarrow_{n-1}}
(C^{\alpha_{\mt\downarrow_{n-1},b}})^{-1}(C^{\beta_{\mt}\downarrow_{n-1}})^{-1}F_{{\rm T}\downarrow_{n-1}}C_i \\
&=F_{{\rm T}'} C^{\beta_{\mt}\downarrow_{n-1}}C^{\alpha_{\mt\downarrow_{n-1},\bar{0}}+e_i}\Phi_{\mt\downarrow_{n-1},\mt\downarrow_{n-1}}(C^{\alpha_{\mt\downarrow_{n-1},b}+e_i})^{-1}(C^{\beta_{\mt}\downarrow_{n-1}})^{-1}F_{{\rm T}'} \\
&=f_{{\rm T}',{\rm T}'_{b}},
\end{align*}
where ${\rm T}':=(\mt\downarrow_{n-1},\alpha_{\mt\downarrow_{n-1},\bar{0}}+e_i,\beta_{\mt}\downarrow_{n-1})\in {\rm Tri}_{\overline{0}}(\underline{\mu})$ and in the second equation, we used the fact that $i\notin\{d(\mt,\mt^{\undla})(i_{t-1}),d(\mt,\mt^{\undla})(i_t)\}$.
Then we can deduce that
\begin{align*}
&p_{k,b}\left(F_{(\mt,\alpha_{\mt}+e_i, \beta_{\mt})}\right)\\
&=p_{k,b}\left(C_i F_{{\rm T}} C_i \right)\\
&=(-1)^{b}C_i p_{k,b}(F_{{\rm T}}) C_i\\
&=(-1)^{b}a_{k,b}C_i f_{{\rm T}\downarrow_{n-1},({\rm T}\downarrow_{n-1})_{b}} C_i\\
&=(-1)^{b}a_{k,b}f_{{\rm T}',{\rm T}'_{b}},
%F_{(\mt\downarrow_{n-1},\alpha_{\mt\downarrow_{n-1},\bar{0}}+e_i,\beta_{\mt}\downarrow_{n-1}),
%(\mt\downarrow_{n-1},\alpha_{\mt\downarrow_{n-1},b}+e_i,\beta_{\mt}\downarrow_{n-1})},
\end{align*}
where in the second equation, we have used the ($\mathcal{H}^f_c(n-1),\mathcal{H}^f_c(n-1)$)-superbilinearity of maps $p_{k,b}$ (see \eqref{superbilinearity of pk1}).
This proves (2.a). The proof of (2.b) and (2.c) are simimar with (2.a). Hence we omit them.

(3) Suppose $d_{\undla}=1$ and $n\notin\mathcal{D}_{\mt}.$ Then we have $d_{\undmu}=1.$
By Lemma \ref{typeos, scalars ak} (2), we have
\begin{align*}
	p_{k,b}(F_{{\rm T}})=a_{k,b}f_{{\rm T}\downarrow_{n-1},({\rm T}\downarrow_{n-1})_{b}}, \quad \text{for $a_{k,b}\in\mathbb{K},$ $0\leq k\leq r-1,$ $b\in\mathbb{Z}_2.$}
\end{align*}
Let $i:=d(\mt,\mt^{\undla})(i_{t})<n.$ Use the similar computations with (2), we have
\begin{align*}
C_iF_{{\rm T}}C_i=F_{{\rm T}},\quad
C_if_{{\rm T}\downarrow_{n-1},({\rm T}\downarrow_{n-1})_{\bar{1}}}C_i=f_{{\rm T}\downarrow_{n-1},({\rm T}\downarrow_{n-1})_{\bar{1}}}.
\end{align*}
Then we have
\begin{align*}
&a_{k,\bar{1}}F_{{\rm T}\downarrow_{n-1},({\rm T}\downarrow_{n-1})_{\bar{1}}}\\
&=p_{k,\bar{1}}( F_{{\rm T}})\\
&=p_{k,\bar{1}}(C_iF_{{\rm T}}C_i)\\
&=-C_ip_{k,\bar{1}}(F_{{\rm T}})C_i\\
&=-a_{k,\bar{1}}C_if_{{\rm T}\downarrow_{n-1},({\rm T}\downarrow_{n-1})_{\bar{1}}}C_i\\
&=-a_{k,\bar{1}}f_{{\rm T}\downarrow_{n-1},({\rm T}\downarrow_{n-1})_{\bar{1}}},
\end{align*}
where in the third equality we have use the ($\mathcal{H}^f_c(n-1),\mathcal{H}^f_c(n-1)$)-superbilinearity of maps $p_{k,\bar{1}}$ (see \eqref{superbilinearity of pk1}).
It follows from char $\mathbb{K}\neq2$ that $a_{k,\bar{1}}=0$ for $0\leq k\leq r-1.$
\end{proof}

In particular, we deduce the following Corollary, which is key in the computations of next two sections.
\begin{cor}\label{reductionrem}
Let ${\rm T}=(\mt,\alpha_{\mt},\beta_{\mt})\in{\rm Tri}_{\bar{0}}(\undla).$
Suppose that $\varepsilon_n(F_{\rm T})=a_{0,\bar{0}} F_{{\rm T}\downarrow_{n-1}}$ for some scalar $a_{0,\bar{0}}\in\mathbb{K},$ then we have the following.
	\begin{enumerate}
		\item Suppose $\delta_{\beta_\mt}(n)=0$, then $\varepsilon_n \left(F_{(\mt,0,0)}\right)=a_{0,\bar{0}} F_{(\mt\downarrow_{n-1},0,0)}.$
		\item Suppose $\delta_{\beta_\mt}(n)=1$, then $\varepsilon_n \left(F_{(\mt,0,e_n)}\right)=a_{0,\bar{0}} F_{(\mt\downarrow_{n-1},0,0)}.$
		\end{enumerate}
\end{cor}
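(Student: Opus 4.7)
The plan is to derive this as a direct corollary of Lemma \ref{reductionlem}(1) by iteratively ``stripping off'' the Clifford exponents $\alpha_\mt$ and $\beta_\mt$ to their simplest form. First I would observe that Lemma \ref{typeos, scalars ak} (applied contrapositively to its part (2)) forces $d_{\underline{\mu}}=0$: if instead $d_{\underline{\mu}}=1$, then $\varepsilon_n(F_{\rm T})=p^{(n)}_{0,\bar{0}}(F_{\rm T})$ would be a scalar multiple of $f_{{\rm T}\downarrow_{n-1},({\rm T}\downarrow_{n-1})_{\bar{0}}}$ rather than of the primitive idempotent $F_{{\rm T}\downarrow_{n-1}}$, contradicting the hypothesis. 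Thus we are in the setting of Lemma \ref{reductionlem}(1), with $a_{0,\bar{0}}$ as the given scalar.

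Next I would verify the key combinatorial fact that $\alpha_\mt(n)=0$, so that the reduction of $\alpha_\mt$ only touches positions in $[n-1]$. The parity relation between $\sharp\mathcal{D}_\undla$ and $\sharp\mathcal{D}_{\underline{\mu}}$ together with $d_{\underline{\mu}}=0$ forces: when $d_\undla=0$ one has $n\notin\mathcal{D}_\mt$, so $n\notin\mathcal{OD}_\mt\supseteq\supp(\alpha_\mt)$; when $d_\undla=1$ one has $n\in\mathcal{D}_\mt$, and since the only diagonal box of a strict partition that can be a removable corner is the innermost one, we must have $n=d(\mt,\mt^\undla)(i_t)$, which is excluded from $\supp(\alpha_\mt)$ by the defining condition of $\mathbb{Z}_2(\mathcal{OD}_\mt)_{\bar{0}}$. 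In either case $\supp(\alpha_\mt)\subseteq [n-1]\cap(\mathcal{OD}_\mt\setminus\{d(\mt,\mt^\undla)(i_t)\})$, so every element of $\supp(\alpha_\mt)$ is eligible for Lemma \ref{reductionlem}(1.a).

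Then I would iterate: apply Lemma \ref{reductionlem}(1.a) to each $i\in\supp(\alpha_\mt)$ in turn, and subsequently Lemma \ref{reductionlem}(1.b) to each $j\in\supp(\beta_\mt)\cap[n-1]\subseteq[n-1]\setminus\mathcal{D}_\mt$. The point making the iteration go through is that at every step the new equality retains the form $\varepsilon_n(F_{{\rm T}'})=a_{0,\bar{0}}F_{{\rm T}'\downarrow_{n-1}}$ with the same coefficient $a_{0,\bar{0}}$, since $d_{\underline{\mu}}=0$ guarantees $\alpha_{\mt\downarrow_{n-1}}=\alpha_\mt\downarrow_{n-1}$ and the right-hand sides of (1.a) and (1.b) are again single primitive idempotents; hence the hypothesis of Lemma \ref{reductionlem}(1) persists from one iteration to the next. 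After all the reductions are complete, one arrives at
\[\varepsilon_n\bigl(F_{(\mt,0,\beta_\mt(n)e_n)}\bigr)=a_{0,\bar{0}}\,F_{(\mt\downarrow_{n-1},0,0)}.\]
Specializing to $\beta_\mt(n)=0$ yields Case (1) and to $\beta_\mt(n)=1$ yields Case (2).

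The hard part will be the second step, namely justifying $\alpha_\mt(n)=0$; this depends on the structural observation that a removable corner of a strict partition lying on the main diagonal must be the innermost diagonal box, combined with the indexing convention placing $i_t$ at precisely that box. Once that combinatorial point is in hand, every remaining step is a routine bookkeeping exercise using Lemma \ref{reductionlem}(1).
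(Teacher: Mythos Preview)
Your first step contains a genuine error: the hypothesis $\varepsilon_n(F_{\rm T})=a_{0,\bar{0}}F_{{\rm T}\downarrow_{n-1}}$ does \emph{not} force $d_{\underline{\mu}}=0$. When $d_{\underline{\mu}}=1$, Lemma~\ref{typeos, scalars ak}(2) gives $p_{0,\bar 0}(F_{\rm T})\in\mathbb{K}\,f_{{\rm T}\downarrow_{n-1},({\rm T}\downarrow_{n-1})_{\bar 0}}$, but by \eqref{idempotent2} one has $f_{{\rm T}\downarrow_{n-1},({\rm T}\downarrow_{n-1})_{\bar 0}}=F_{{\rm T}\downarrow_{n-1}}$, so the hypothesis is satisfied in that case as well. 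In fact the hypothesis is always satisfied and merely names the scalar $a_{0,\bar 0}$; it places no restriction on $d_{\underline{\mu}}$. Consequently your proof, which invokes only Lemma~\ref{reductionlem}(1), does not cover the case $d_{\underline{\mu}}=1$. Your argument for $\alpha_\mt(n)=0$ also leans on this false premise in the subcase $d_\undla=0$.

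The repair is short and matches the paper's (terse) proof. When $d_{\underline{\mu}}=1$, run the same iteration using Lemma~\ref{reductionlem}(2) with $b=\bar 0$: since $(-1)^{\bar 0}=1$, parts (2.a), (2.c), and (2.b) each say $\varepsilon_n(F_{{\rm T}'})=a_{0,\bar 0}\,F_{{\rm T}'\downarrow_{n-1}}$ for the modified triple ${\rm T}'$ (for (2.b) note that ${\rm T}'\downarrow_{n-1}={\rm T}\downarrow_{n-1}$), so the hypothesis persists with the same coefficient and the iteration goes through exactly as in your third step. The claim $\alpha_\mt(n)=0$ is proved directly without reference to $d_{\underline{\mu}}$: if $n\in\mathcal{D}_\mt$ then $n=d(\mt,\mt^{\undla})(i_t)$, which lies in $\mathcal{OD}_\mt$ only when $t$ is odd, and then the $\bar 0$-condition on $\alpha_\mt$ excludes it; otherwise $n\notin\mathcal{OD}_\mt\supseteq\supp(\alpha_\mt)$.
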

\begin{proof}
	We can repeatedly apply Lemma \ref{reductionlem} to add some $e_i$ and finally make all but the $n$-th components in $\alpha_\mt$ and  $\beta_\mt$ to be zero. Moreover, in each step, the coefficient $a_{0,\bar{0}}$ appearing in the image of $\varepsilon_n$ remains stable by Lemma \ref{reductionlem}. This proves the Corollary.
\end{proof}
By above Corollary \ref{reductionrem}, if we aim to compute the element $\varepsilon_n({\rm T})$
for general ${\rm T}=(\mt,\alpha_{\mt},\beta_{\mt})\in{\rm Tri}_{\bar{0}}(\undla),$ then we can assume that $\alpha_{\mt}=0$ and $\beta_{\mt}\in\{0,e_n\}$ without loss of generality.

\subsection{(Super)symmetrizing form of cyclotomic Sergeev algebra}
Let $\undQ=(Q_1,Q_2,\ldots,Q_m)\in{\rm R}^m$. In this subsection, we consider the cyclotomic Sergeev algebra $\mhgcn$ defined over the integral domain ${\rm R},$ where $g=g^{(\bullet)}_{\undQ}(x_1),$ and $\bullet\in\{\mathsf{0},\mathsf{s}\}.$ Recall the level $r:=\text{deg}(g).$ In degenerate case,
there is also a Frobenius form $\mathtt{t}_{r,n}:\mhgcn \rightarrow {\rm R}$ \label{pag:(super)sym. form. dege} via the degenerate Mackey decomposition (\cite[Section 3-e]{BK}). The closed formula for $\mathtt{t}_{r,n}$ was obtained in \cite[Theorem 3.10]{LS1}:
%In \cite[Theorem 1.2]{LS1}, the authors proved that
%the Frobenius form $t_{r,n}$ (denoted by $t_{d,n}$ in \cite{LS1}, where $d$ is the level) is almost symmetric.

	$$
	\mathtt{t}_{r,n}(x^{\alpha}c^\beta w)=
	\begin{cases}
		1,& \text{{\rm if} $\alpha_{1}= \cdots = \alpha_{n}=r-1, \beta=0, w=1$};\\
		0, & \text{{\rm otherwise}},
	\end{cases}
	$$

Recall Definition \ref{(super)symmetrizing form}.
%Then we have the following result for $\mhgcn$ over ${\rm R}.$
\begin{thm}\label{LS1:Theorem 1.2}\cite[Theorem 1.2]{LS1}

(i) If the level $r$ is odd, then $\mhgcn$ is symmetric with symmetrizing form $\mathtt{t}_{r,n}.$
%satisfies
%\begin{align*}
%t_{r,n}(xy)=t_{r,n}(yx),\quad \text{ for any $x,y\in\mhgcn.$ }
%\end{align*}
	
(ii) If the level $r$ is even, then $\mhgcn$ is supersymmetric with supersymmetrizing form $\mathtt{t}_{r,n}.$
%\begin{align*}
%t_{r,n}(xy)=(-1)^{|x||y|}t_{r,n}(yx),\quad \text{ for any homogeneous $x,y\in\mhgcn.$ }
%\end{align*}
\end{thm}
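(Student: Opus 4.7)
The plan is to establish both parts of the theorem by a direct computation that parallels the non-degenerate machinery developed in Section \ref{frobenius} but is elementary enough not to require the seminormal bases. First I would prove a closed formula for $\mathtt{t}_{r,n}$ on the PBW basis $\{x^{\alpha}c^{\beta}w\}$ of $\mhgcn$, exactly analogous to Proposition \ref{closed formula frob}: namely, $\mathtt{t}_{r,n}(x^{\alpha}c^{\beta}w)=\delta_{(\alpha,\beta,w),(0,0,1)}$. The argument uses the degenerate analog of \eqref{BK(2.23)}, which guarantees that any expansion of $x_n^{\alpha_n}s_{n-1}$ has no term in $\mathfrak{H}^g_c(n-1)$, together with the degenerate Mackey decomposition and induction on $n$.

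Next, to obtain non-degeneracy I would exhibit an explicit dual PBW basis. For each $b=x^{\alpha}c^{\beta}w$, the candidate dual $b^{\vee}$ is of the form $w^{-1}c^{\beta'}x^{\alpha'}$, where $\alpha',\beta'$ are determined by the requirement that, after applying the cyclotomic relation $g(x_1)=0$ and commutating, the product $b^{\vee}b$ has a unique contribution to the coefficient of $1$. Using the closed formula from the first step, one verifies $\mathtt{t}_{r,n}(b^{\vee}b')=\pm\delta_{b,b'}$, so that the induced map $\hat{\mathtt{t}}_{r,n}$ is an isomorphism of ${\rm R}$-modules. The appropriate bimodule compatibility (${\rm Hom}$ versus ${\bf Hom}$) is then dictated by the sign controlling property below.

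The heart of the proof is verifying the (super)trace identity
\begin{equation*}
\mathtt{t}_{r,n}(xy)=\sigma^{|x||y|}\mathtt{t}_{r,n}(yx),\qquad \sigma=\begin{cases}+1,&r\text{ odd},\\ -1,&r\text{ even},\end{cases}
\end{equation*}
which it suffices to check on pairs $(x,y)$ of homogeneous PBW basis elements. By the closed formula, only those pairs for which $yx$ expands (after PBW normal ordering) with a non-trivial constant term contribute. For such a pair, one writes out both $xy$ and $yx$ in the PBW basis by rewriting via the relations \eqref{braid}--\eqref{xc} and the cyclotomic relation $g(x_1)=0$, and compares the coefficients of $1$. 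The braid moves contribute trivially, but every instance of $x_ic_i=-c_ix_i$ and every invocation of the cyclotomic reduction contributes a controlled sign.

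The principal obstacle is precisely this sign analysis. One has to show that the cumulative parity produced by rearranging $yx$ into $xy$ on dual pairs is exactly $(-1)^{|x||y|}$ when $r$ is even, and trivial when $r$ is odd, the dichotomy being traceable to whether the defining polynomial $g$ has a pure $x_1$-factor or not. This amounts to a careful enumeration of the number of Clifford swaps required to move $c^{\beta'}$ through the dual monomial $x^{\alpha'}$ versus through $x^{\alpha}$, plus an accounting of the signs contributed by the cyclotomic reductions $x_1^r\mapsto -(g(x_1)-x_1^r)$. For the base case $n=1$, where $\mathfrak{H}^g_c(1)\cong\mathcal{C}_1\otimes_{{\rm R}}{\rm R}[x_1]/(g)$, the whole identity can be verified directly, and the inductive step then follows from the compatibility of $\varepsilon_n$ with left and right multiplication by $\mathfrak{H}^g_c(n-1)$.
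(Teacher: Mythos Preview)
This theorem is not proved in the present paper; it is quoted from \cite[Theorem~1.2]{LS1}, and the Introduction explicitly describes that proof as ``quite elementary.'' The only proof-related content here is in two Remarks (after Theorems~\ref{dege. type=0. tFT} and~\ref{dege. type=s. tFT}), which observe that the seminormal-basis computation of $\mathtt{t}_{r,n}(\mathcal{F}_{\rm T})$ yields an \emph{alternative} proof, by the same mechanism as Corollary~\ref{supersym. form. non-dege}: reduce to the generic semisimple case, evaluate the form on primitive idempotents, and read off the required (super)trace identity from the explicit values.

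Your proposal is in the spirit of the original \cite{LS1} route rather than the seminormal-basis alternative, and the first step (the closed formula $\mathtt{t}_{r,n}(x^{\alpha}c^{\beta}w)=\delta_{(\alpha,\beta,w),(0,0,1)}$) is exactly right; this is the degenerate analogue of Proposition~\ref{closed formula frob} and is indeed \cite[Theorem~3.10]{LS1} as used in the proof of Proposition~\ref{Hecke-Cliffod sym} here. Two comments, though. First, your step~2 is unnecessary: non-degeneracy of $\mathtt{t}_{r,n}$ is the content of \cite[Corollary~3.14]{BK} (the degenerate analogue of the lemma just before Proposition~\ref{closed formula frob}), so you do not need to manufacture a dual basis. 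Second, and more substantively, your inductive step is under-specified. Knowing the identity on $\mathfrak{H}^g_c(1)$ and knowing that $\varepsilon_n$ is $(\mathfrak{H}^g_c(n-1),\mathfrak{H}^g_c(n-1))$-bilinear does not by itself propagate the (super)trace identity to $\mathfrak{H}^g_c(n)$: you must still handle commutators with the \emph{new} generators $x_n,c_n,s_{n-1}$, and it is precisely there that the parity of $r$ enters (via how $c_n$ interacts with the top power $x_n^{r-1}$ under the Mackey decomposition). The cleaner way to organise this is to check $\mathtt{t}_{r,n}(gh)=\sigma^{|g||h|}\mathtt{t}_{r,n}(hg)$ for $g$ ranging over the generators $x_i,c_i,s_i$ and arbitrary $h$, using the closed formula; this is what makes the argument genuinely elementary and is presumably what \cite{LS1} does.
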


{\bf From now on, we assume that the separate condition $P_n^{(\bullet)}(1,\undQ)\neq 0$ holds for $\mhgcn$ over $\mathbb{K},$ where $\bullet\in\{\mathsf{0},\mathsf{s}\}.$}
Then we have the analog of Proposition \ref{prop.taufST} in degenerate case.
\begin{prop}\label{prop.taufST.dege}
Let $\undla\in\mathscr{P}^{\mathsf{\bullet},m}_{n},$ where $\bullet\in\{\mathsf{0},\mathsf{s}\}.$
Suppose ${\rm S}=(\ms,\alpha_{\ms}',\beta_{\mt}')\in{\rm Tri}_{\bar{0}}(\undla)$ and ${\rm T}=(\mt,\alpha_{\mt},\beta_{\mt})\in{\rm Tri}(\undla).$ If ${\rm S}\neq{\rm T},$ then $\mathtt{t}_{r,n}(\mathfrak{f}_{{\rm S},{\rm T}})=0.$
\end{prop}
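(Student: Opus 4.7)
The plan is to transplant the proof of Proposition \ref{prop.taufST} to the degenerate setting, using the analogous ingredients already set up in Section \ref{dege basic} and the embedding results in Theorem \ref{maththm of embedding. dege}. The Frobenius form $\mathtt{t}_{r,n}$ admits a factorization $\mathtt{t}_{r,n}=\varepsilon_1^{\rm deg}\circ\cdots\circ\varepsilon_n^{\rm deg}$ through the degenerate Mackey decomposition of \cite[Section 3-e]{BK}, where each $\varepsilon_k^{\rm deg}\colon \mathfrak{H}_c^g(k)\to\mathfrak{H}_c^g(k-1)$ is the analogue of $\varepsilon_k$ and is an $(\mathfrak{H}_c^g(k-1),\mathfrak{H}_c^g(k-1))$-superbimodule map of parity $\bar 0$. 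Since the whole argument for Proposition \ref{prop.taufST} only used such properties of $\varepsilon_n$ together with the action formulae of the $x_i$'s on the seminormal basis, Lemma \ref{lem:clifford rep} for the Clifford idempotents, and the separate condition (through Lemma \ref{important condition1}), the strategy is to carry out exactly the same three-step contradiction argument, substituting every non-degenerate ingredient by its degenerate counterpart from Section \ref{dege basic}.

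Concretely, I will first prove the degenerate analogue of Lemma \ref{lem.taufST}, asserting that $\varepsilon_n^{\rm deg}(\mathfrak{f}_{{\rm S},{\rm T}})$ (or $\varepsilon_n^{\rm deg}(\mathfrak{f}_{{\rm S},{\rm T}_a})$) lies in the one-dimensional space spanned by the appropriate $\mathfrak{f}_{{\rm S}\downarrow_{n-1},{\rm T}\downarrow_{n-1}}$-type element, together with the vanishing statements when $\ms^{-1}(n)\neq\mt^{-1}(n)$ and the parity constraints. This follows by the same eigenvalue analysis: from the formula $x_i\cdot\mathfrak{f}_{{\rm T},{\rm S}}=\nu_{\beta_\mt}(i)\mathtt{u}_{\mt,i}\mathfrak{f}_{{\rm T},{\rm S}}$ (and its right-handed version), multiplying by $x_k$ for $k\in[n-1]$ on both sides of the expansion of $\varepsilon_n^{\rm deg}(\mathfrak{f}_{{\rm S},{\rm T}})$ in the $(n-1)$-seminormal basis, combined with Lemma \ref{important conditionequi2}(2) which guarantees that distinct $\mathtt{q}$-sequences distinguish distinct tableaux, and finally invoking Lemma \ref{lem:clifford rep} for the Clifford part.

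Next I will prove the degenerate analogue of Lemma \ref{tinylemma1}. The argument is literally the same: if $\ms=\mt$, conjugate $\mathfrak{f}_{{\rm S},{\rm T}_a}$ and the candidate image $\mathfrak{f}_{{\rm S}\downarrow_{n-1},({\rm T}\downarrow_{n-1})_c}$ by $c_i$ with $i=d(\mt,\mt^{\undla})(i_t)<n$, apply the degenerate version of \eqref{CFC=F} (which holds because the proof of that lemma only uses the commuting form of $\mathcal{F}_{\rm T}$ given by the obvious degenerate analogue of \eqref{comm. form of FT}), and compare signs using the $(\mathfrak{H}_c^g(n-1),\mathfrak{H}_c^g(n-1))$-bilinearity of $\varepsilon_n^{\rm deg}$.

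With these two lemmas in hand, the proof of Proposition \ref{prop.taufST.dege} is a verbatim copy of the three-step argument for Proposition \ref{prop.taufST}: assuming $\mathtt{t}_{r,n}(\mathfrak{f}_{{\rm S},{\rm T}})\neq 0$ forces $|\mathfrak{f}_{{\rm S},{\rm T}}|=\bar 0$; then (i) iterating the degenerate Lemma \ref{lem.taufST}-analog $n$ times forces $\ms=\mt$; (ii) choosing the maximal $k\in[n]\setminus\mathcal{D}_\mt$ with $\nu_{\beta_\ms'}(k)\nu_{\beta_\mt}(k)=-1$, reducing to $k=n$, and deriving a contradiction via the parity constraints from the degenerate Lemma \ref{lem.taufST}-analog in case $d_{\undla}=0$ and from the degenerate Lemma \ref{tinylemma1}-analog in case $d_{\undla}=1$; (iii) choosing the maximal $k'=d(\mt,\mt^{\undla})(i_{2p-1})\in\mathcal{OD}_\mt$ with $\nu_{\alpha_\ms'}(k')\nu_{\alpha_\mt}(k')=-1$, ruling out $p=1$ on parity grounds, and for $p>1$ showing $|\mathfrak{f}_{{\rm S}\downarrow_h,{\rm T}\downarrow_h}|=\bar 1$ at $h=d(\mt,\mt^{\undla})(i_{2p-2})$, so that $\varepsilon_1^{\rm deg}\circ\cdots\circ\varepsilon_h^{\rm deg}$ kills it, contradicting $\mathtt{t}_{r,n}(\mathfrak{f}_{{\rm S},{\rm T}})\neq 0$. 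The main obstacle is purely bookkeeping: one must verify that the sign and parity identities for the Clifford-conjugation step in the degenerate analogue of Lemma \ref{tinylemma1} still come out correctly, since the defining relations \eqref{px1}--\eqref{xc} differ from \eqref{PX1}--\eqref{XC}; however, the conjugation $c_i\mathcal{F}_{\rm T}c_i$ depends only on the structure of $\mathcal{F}_{\rm T}$ as a polynomial in the $x_j$'s times a Clifford idempotent, which is formally identical to the non-degenerate case, so no new subtlety arises.
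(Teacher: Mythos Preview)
Your approach is correct and matches the first of the two routes the paper indicates: the paper's proof simply says the argument ``is completely similar with Proposition~\ref{prop.taufST}'', and your detailed transplant of that three-step contradiction argument (degenerate Mackey projections $\varepsilon_k^{\rm deg}$, eigenvalue analysis via Lemma~\ref{important conditionequi2}(2), Clifford-conjugation parity check) is exactly that.

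However, the paper also points out a second, much shorter route that is \emph{only} available in the degenerate setting: by Theorem~\ref{LS1:Theorem 1.2} (from \cite{LS1}) one already knows \emph{a priori} that $\mathtt{t}_{r,n}$ is a (super)symmetrizing form. Given this, the statement is immediate: $\mathfrak{f}_{{\rm S},{\rm T}}\in\mathcal{F}_{\rm S}\,\mhgcn\,\mathcal{F}_{{\rm T}_{\bar 0}}$, so if ${\rm S}\neq{\rm T}_{\bar 0}$ then orthogonality of the even idempotents $\mathcal{F}_{\rm S},\mathcal{F}_{{\rm T}_{\bar 0}}$ combined with (super)symmetry gives $\mathtt{t}_{r,n}(\mathfrak{f}_{{\rm S},{\rm T}})=\mathtt{t}_{r,n}(\mathcal{F}_{{\rm T}_{\bar 0}}\mathcal{F}_{\rm S}\,\mathfrak{f}_{{\rm S},{\rm T}})=0$; and if ${\rm S}={\rm T}_{\bar 0}$ but ${\rm T}={\rm T}_{\bar 1}$, then $\mathfrak{f}_{{\rm S},{\rm T}}$ is odd and the form kills it by definition. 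This shortcut is unavailable in the non-degenerate case because there the (super)symmetry of $t_{r,n}$ is not known in advance---indeed, Proposition~\ref{prop.taufST} is an ingredient in \emph{proving} it (Corollaries~\ref{supersym. form. non-dege} and~\ref{sym. form. non-dege}). Your longer argument is still valuable in that it is self-contained and does not rely on the external result \cite{LS1}.
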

\begin{proof}
It is completely similar to Proposition \ref{prop.taufST} or alternatively, we can use the (super)symmetric property to deduce the Proposition.
\end{proof}

Again, we only need to consider the values $\mathtt{t}_{r,n}(\mathcal{F}_{\rm T})$ for ${\rm T}\in{\rm Tri}_{\bar{0}}(\undla),$ which
are almost the inverses of Schur elements with respect to the (super)symmetrizing form $\mathtt{t}_{r,n}.$
\label{pag:Schur elements.dege}
\begin{lem}\label{Schur elements and tF}
Suppose the separate condition $P_n^{(\bullet)}(1,\undQ)\neq 0$ holds for $\mhgcn,$ where $\bullet\in\{\mathsf{0},\mathsf{s}\}.$
Then we have the following.

(1) If the level $r$ is odd, let $\undla\in\mathscr{P}^{\mathsf{s},m}_{n},$ then the Schur element $\mathtt{s}_{\undla}$ of simple module $D(\undla)$ is equal to $1/\mathtt{t}_{r,n}(\mathcal{F}_{\rm T})$ for any ${\rm T}\in{\rm Tri}_{\bar{0}}(\undla).$

(2) If the level $r$ is even, let $\undla\in\mathscr{P}^{\mathsf{0},m}_{n},$ then the Schur element $\mathtt{s}_{\undla}$ of simple module $D(\undla)$ is equal to $(-1)^{|\beta_{\mt}|}/\mathtt{t}_{r,n}(\mathcal{F}_{\rm T})$ for any ${\rm T}=(\mt,\beta_{\mt})\in{\rm Tri}(\undla).$
\end{lem}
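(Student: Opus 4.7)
The plan is to expand the form $\mathtt{t}_{r,n}$ as a linear combination of irreducible characters and evaluate at the primitive idempotent $\mathcal{F}_{\rm T}$. By the analog of Theorem \ref{semisimple:non-dege} for the cyclotomic Sergeev algebra (see the statement just after Proposition \ref{separate formula dege}), the hypothesis $P^{(\bullet)}_n(1,\undQ)\neq 0$ guarantees that $\mhgcn$ is split semisimple, and Theorem \ref{LS1:Theorem 1.2} ensures that $\mathtt{t}_{r,n}$ is symmetrizing when $r$ is odd and supersymmetrizing when $r$ is even. Since $\mathcal{F}_{\rm T}\in B_{\undla}$ and the characters of simple modules in distinct blocks vanish on $\mathcal{F}_{\rm T}$, only the single term indexed by $D(\undla)$ will survive in the relevant expansion.

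For case (1), Proposition \ref{schur formula 1} together with the block vanishing gives
\[
\mathtt{t}_{r,n}(\mathcal{F}_{\rm T}) \;=\; \frac{1}{2^{\delta(D(\undla))}\,\mathtt{s}_{\undla}}\,\chi_{D(\undla)}(\mathcal{F}_{\rm T}).
\]
To compute $\chi_{D(\undla)}(\mathcal{F}_{\rm T})$, I would invoke the degenerate analog of Lemma \ref{idempotent action. non-dege} (i.e.\ Lemma \ref{idempotent action. dege}): the idempotent $\mathcal{F}_{\rm T}$ acts as the identity on the span of those basis vectors $c^{\beta'_{\ms}}c^{\alpha'_{\ms}}v_{\ms}$ whose triple equals ${\rm T}$ (when $d_{\undla}=0$) or equals ${\rm T}_a$ for some $a\in\mathbb{Z}_2$ (when $d_{\undla}=1$), and as zero elsewhere. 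Hence $\chi_{D(\undla)}(\mathcal{F}_{\rm T})=1$ in the type $\mathtt{M}$ case ($\sharp\mathcal{D}_{\undla}$ even, $\delta=0$) and $\chi_{D(\undla)}(\mathcal{F}_{\rm T})=2$ in the type $\mathtt{Q}$ case ($\sharp\mathcal{D}_{\undla}$ odd, $\delta=1$). In both cases the numerator cancels the factor $2^{\delta(D(\undla))}$, yielding $\mathtt{s}_{\undla}=1/\mathtt{t}_{r,n}(\mathcal{F}_{\rm T})$.

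For case (2), the condition $\undla\in\mathscr{P}^{\mathsf{0},m}_n$ forces $\mathcal{D}_{\undla}=\emptyset$, so $d_{\undla}=0$, every ${\rm T}\in{\rm Tri}_{\bar 0}(\undla)$ has $\alpha_{\mt}=0$, and every $D(\undla)$ is of type $\mathtt{M}$. Proposition \ref{supersym. schur formula 2} and block vanishing give
\[
\mathtt{t}_{r,n}(\mathcal{F}_{\rm T}) \;=\; \frac{1}{\mathtt{s}_{\undla}}\,\chi'_{D(\undla)}(\mathcal{F}_{\rm T}).
\]
Using Lemma \ref{idempotent action. dege} again, $\mathcal{F}_{\rm T}$ is the rank-one projector onto $\mathbb{K}\,c^{\beta_{\mt}}v_{\mt}$; since $|v_{\mt}|=\bar{0}$ by Theorem \ref{dege-actions of generators on L basis}, this fixed vector has parity $|\beta_{\mt}|$, so ${\rm suptr}(\mathcal{F}_{\rm T}|_{D(\undla)})=(-1)^{|\beta_{\mt}|}$. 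Solving for $\mathtt{s}_{\undla}$ yields the claimed formula.

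The main obstacle is the type $\mathtt{Q}$ case of part (1): one must check that the relevant fixed subspace of $\mathcal{F}_{\rm T}$ on $D(\undla)$ is genuinely two-dimensional, reflecting the $\mathbb{Z}_2$-ambiguity $a\in\{\bar 0,\bar 1\}$ in the parametrization ${\rm T}_a$. Once this is verified via the degenerate analog of Lemma \ref{idempotent action. non-dege}, the factor $2^{\delta(D(\undla))}$ in Proposition \ref{schur formula 1} exactly compensates the doubling of $\chi_{D(\undla)}(\mathcal{F}_{\rm T})$, giving the uniform formula in case (1); apart from this bookkeeping the argument is purely formal.
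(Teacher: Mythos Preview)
Your proposal is correct and follows essentially the same approach as the paper: expand $\mathtt{t}_{r,n}$ via Proposition~\ref{schur formula 1} (resp.\ Proposition~\ref{supersym. schur formula 2}), then use Lemma~\ref{idempotent action. dege} together with the basis of Theorem~\ref{dege-actions of generators on L basis} to compute $\chi_{D(\undla)}(\mathcal{F}_{\rm T})=2^{d_{\undla}}$ (resp.\ $\chi'_{D(\undla)}(\mathcal{F}_{\rm T})=(-1)^{|\beta_{\mt}|}$) and solve for $\mathtt{s}_{\undla}$. Your discussion of the type~$\mathtt{Q}$ bookkeeping is exactly the point the paper handles implicitly by writing $\chi_{\undla}(\mathcal{F}_{\rm T})=2^{d_{\undla}}$.
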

\begin{proof}
(1) By Proposition \ref{schur formula 1}, the symmetrizing form $\mathtt{t}_{r,n}$ can be writen as a linear combination of all irreducible characters $\chi_{\undla}$ for $\undla\in\mathscr{P}^{\mathsf{s},m}_{n},$
\begin{align*}
\mathtt{t}_{r,n}=\sum_{\undla\in\mathscr{P}^{\mathsf{s},m}_{n}}\frac{1}{2^{d_{\undla}}\mathtt{s}_{\undla}}\chi_{\undla}.
\end{align*}
Recall the $\mathbb{K}-$basis of $D(\undla)$ in \eqref{basis of D(undla)} and use Lemma \ref{idempotent action. dege}, we have $\chi_{\undla}(\mathcal{F}_{\rm T})=2^{d_{\undla}}$ for any ${\rm T}\in{\rm Tri}_{\bar{0}}(\undla).$
Then we deduce that $\mathtt{s}_{\undla}=1/\mathtt{t}_{r,n}(\mathcal{F}_{\rm T})$ for each $\undla\in\mathscr{P}^{\mathsf{s},m}_{n}.$

(2) By Proposition \ref{supersym. schur formula 2}, the supersymmetrizing form $t_{r,n}$ can be writen as a linear combination of all irreducible supercharacters $\chi_{\undla}'$ for $\undla\in\mathscr{P}^{\mathsf{0},m}_{n},$
\begin{align*}
\mathtt{t}_{r,n}=\sum_{\undla\in\mathscr{P}^{\mathsf{0},m}_{n}}\frac{1}{\mathtt{s}_{\undla}}\chi_{\undla}'.
\end{align*}
Similarly, by using the $\mathbb{K}-$basis of $D(\undla)$ in \eqref{basis of D(undla)} and Lemma \ref{idempotent action. dege}, we have $\chi'_{\undla}(\mathcal{F}_{\rm T})=(-1)^{|\beta_{\mt}|}$ for any ${\rm T}=(\mt,\beta_{\mt})\in{\rm Tri}(\undla).$
Then we deduce that $\mathtt{s}_{\undla}=(-1)^{|\beta_{\mt}|}/\mathtt{t}_{r,n}(\mathcal{F}_{\rm T}).$
\end{proof}

To close this section, we shall present two technical Lemmas, which are useful for our computations in the next two sections.
\begin{lem}\label{TechLem1}\cite[Lemma 5.1]{HLL}
For any $k,\ell\in\mathbb{N},$ let $\mathbb{X}_1,\ldots,\mathbb{X}_{\ell+k},$ ${\rm Z}_1,\ldots,{\rm Z}_\ell$ be $2\ell+k$ indeterminates over $\mathbb{Z}$, then we have
$$\begin{aligned}
&{\rm det}\left(\begin{matrix}
\frac{\mathbb{X}_1}{\mathbb{X}_1-{\rm Z}_1} & \frac{\mathbb{X}_1}{\mathbb{X}_1-{\rm Z}_2}&\cdots  &\frac{\mathbb{X}_1}{\mathbb{X}_1-{\rm Z}_\ell}
&1 & \mathbb{X}_1 & \mathbb{X}_1^{2} &\cdots  & \mathbb{X}_1^{k-1}\\
\frac{\mathbb{X}_2}{\mathbb{X}_2-{\rm Z}_1} &  \frac{\mathbb{X}_2}{\mathbb{X}_2-{\rm Z}_2}&\cdots  &\frac{\mathbb{X}_2}{\mathbb{X}_2-{\rm Z}_\ell}
&1 & \mathbb{X}_2 & \mathbb{X}_2^{2} &\cdots  & \mathbb{X}_2^{k-1}\\
\vdots&\vdots  & &\vdots &\vdots&\vdots &\vdots  & &\vdots  \\
\frac{\mathbb{X}_{\ell+k}}{\mathbb{X}_{\ell+k}-{\rm Z}_{1}} &  \frac{\mathbb{X}_{\ell+k}}{\mathbb{X}_{\ell+k}-{\rm Z}_{2}}&\cdots  &\frac{\mathbb{X}_{\ell+k}}{\mathbb{X}_{\ell+k}-{\rm Z}_\ell}
&1 & \mathbb{X}_{\ell+k} & \mathbb{X}_{\ell+k}^{2} &\cdots  & \mathbb{X}_{\ell+k}^{k-1}
\end{matrix}\right)\\
&\qquad = \,\frac{\prod\limits_{1\leq t\leq \ell}{\rm Z}_t\prod\limits_{1\leq p< q\leq \ell}({\rm Z}_p-{\rm Z}_q)\prod\limits_{1\leq u<v\leq \ell+k}(\mathbb{X}_v-\mathbb{X}_u)}
{\prod\limits_{r=1}^{\ell+k}\prod\limits_{s=1}^{\ell}(\mathbb{X}_r-{\rm Z}_s)}.
\end{aligned}$$
\end{lem}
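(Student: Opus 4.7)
The plan is to reduce the claimed identity to the classical Cauchy--Vandermonde determinant. Using the elementary observation that $\frac{\mathbb{X}_r}{\mathbb{X}_r-{\rm Z}_s}-1=\frac{{\rm Z}_s}{\mathbb{X}_r-{\rm Z}_s}$, I would first subtract the $(\ell+1)$-th column, which consists of all $1$'s, from each of the first $\ell$ columns. This operation leaves the determinant unchanged but converts the entry in row $r$ and column $s$ (for $1\leq s\leq\ell$) to $\frac{{\rm Z}_s}{\mathbb{X}_r-{\rm Z}_s}$. Factoring the scalar ${\rm Z}_s$ out of the $s$-th column for each $s\in\{1,\ldots,\ell\}$ extracts the global factor $\prod_{t=1}^{\ell}{\rm Z}_t$ and transforms the matrix into the standard Cauchy--Vandermonde shape whose first $\ell$ columns have entries $\frac{1}{\mathbb{X}_r-{\rm Z}_s}$ and whose last $k$ columns have entries $\mathbb{X}_r^{j-1}$ for $j=1,\ldots,k$.

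The main step is the evaluation of this Cauchy--Vandermonde determinant, which I claim equals
\[
\frac{\prod_{p<q}({\rm Z}_p-{\rm Z}_q)\,\prod_{u<v}(\mathbb{X}_v-\mathbb{X}_u)}{\prod_{r,s}(\mathbb{X}_r-{\rm Z}_s)}.
\]
To prove this, I would clear the Cauchy denominators by multiplying each row $r$ by $\prod_{s}(\mathbb{X}_r-{\rm Z}_s)$ and view the result as a polynomial $P$ in all the variables. Transposing two rows flips the sign of the determinant and the row multipliers are symmetric, so $P$ is alternating in the $\mathbb{X}_r$; a row-by-row degree count shows that $P$ has degree at most $\ell+k-1$ in each $\mathbb{X}_r$, matching exactly the contribution of $\prod_{u<v}(\mathbb{X}_v-\mathbb{X}_u)$, so the quotient $P/\prod_{u<v}(\mathbb{X}_v-\mathbb{X}_u)$ depends only on the ${\rm Z}_s$. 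Similarly, since the row-multiplier $\prod_s(\mathbb{X}_r-{\rm Z}_s)$ is symmetric in the ${\rm Z}_s$ while transposing columns $p,q$ in the first $\ell$ columns flips the sign of the original determinant, $P$ is alternating in the ${\rm Z}_s$, forcing $\prod_{p<q}({\rm Z}_p-{\rm Z}_q)$ to divide this quotient.

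The remaining scalar factor can be determined by induction on $\ell$ via Laplace expansion along the last Cauchy column, with the base case $\ell=0$ being the ordinary Vandermonde identity; alternatively, it can be pinned down by comparing the coefficient of the top-degree monomial $\mathbb{X}_1^{\ell+k-1}\mathbb{X}_2^{\ell+k-2}\cdots\mathbb{X}_{\ell+k-1}$ on both sides after specialization. Recombining with the factor $\prod_{t=1}^{\ell}{\rm Z}_t$ extracted in the initial column operation then yields precisely the right-hand side of the lemma. I expect the only technical subtlety to lie in the careful degree bookkeeping and specialization inside the Cauchy--Vandermonde evaluation rather than in the overall structure, since the strategy is a clean two-step reduction to a well-known classical determinant identity.
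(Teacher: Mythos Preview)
The paper does not prove this lemma; it is quoted verbatim from \cite[Lemma~5.1]{HLL} and used as a black box, together with the companion Lemma~\ref{TechLem2}. Your approach is correct and in fact exposes the relation between the two cited lemmas: your column subtraction and extraction of $\prod_t {\rm Z}_t$ reduces Lemma~\ref{TechLem1} precisely to Lemma~\ref{TechLem2}, the classical Cauchy--Vandermonde determinant.

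There is one small gap in your sketch of the Cauchy--Vandermonde evaluation. After you show that $\prod_{p<q}({\rm Z}_p-{\rm Z}_q)$ divides $Q:=P/\prod_{u<v}(\mathbb{X}_v-\mathbb{X}_u)$, you refer to what remains as a ``scalar factor'', but you have not yet argued that $Q/\prod_{p<q}({\rm Z}_p-{\rm Z}_q)$ is constant. The per-variable bound on the degree of $P$ in ${\rm Z}_s$ is $\ell+k-1$, not $\ell-1$, so the word ``similarly'' does not quite close this. The cleanest fix is a total-degree count: every term in the permutation expansion of $P$ has total degree $\ell(\ell-1)+k\ell+\binom{k}{2}=\binom{\ell+k}{2}+\binom{\ell}{2}$, which exactly matches $\deg\prod_{u<v}(\mathbb{X}_v-\mathbb{X}_u)+\deg\prod_{p<q}({\rm Z}_p-{\rm Z}_q)$, so the leftover factor is indeed a constant. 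Alternatively, your proposed induction on $\ell$ via Laplace expansion along the last Cauchy column handles both the degree issue and the determination of the constant in one stroke.
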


\begin{lem}\label{TechLem2}\cite[Lemma 5.4]{HLL}
For any $k,\ell\in\mathbb{N},$ let $\mathbb{X}_1,\ldots,\mathbb{X}_{\ell+k},$ ${\rm Z}_1,\ldots,{\rm Z}_\ell$ be $2\ell+k$ indeterminates over $\mathbb{Z}$, then we have
$$\begin{aligned}
&{\rm det}\left(\begin{matrix}
\frac{1}{\mathbb{X}_1-{\rm Z}_1} & \frac{1}{\mathbb{X}_1-{\rm Z}_2}&\cdots  &\frac{1}{\mathbb{X}_1-{\rm Z}_\ell}
&1 & \mathbb{X}_1 & \mathbb{X}_1^{2} &\cdots  & \mathbb{X}_1^{k-1}\\
\frac{1}{\mathbb{X}_2-{\rm Z}_1} &  \frac{1}{\mathbb{X}_2-{\rm Z}_2}&\cdots  &\frac{1}{\mathbb{X}_2-{\rm Z}_\ell}
&1 & \mathbb{X}_2 & \mathbb{X}_2^{2} &\cdots  & \mathbb{X}_2^{k-1}\\
\vdots&\vdots  & &\vdots &\vdots&\vdots &\vdots  & &\vdots  \\
\frac{1}{\mathbb{X}_{\ell+k}-{\rm Z}_{1}} &  \frac{1}{\mathbb{X}_{\ell+k}-{\rm Z}_{2}}&\cdots  &\frac{1}{\mathbb{X}_{\ell+k}-{\rm Z}_\ell}
&1 & \mathbb{X}_{\ell+k} & \mathbb{X}_{\ell+k}^{2} &\cdots  & \mathbb{X}_{\ell+k}^{k-1}
\end{matrix}\right)\\
&\qquad = \,\frac{\prod\limits_{1\leq p< q\leq \ell}({\rm Z}_p-{\rm Z}_q)\prod\limits_{1\leq u<v\leq \ell+k}(\mathbb{X}_v-\mathbb{X}_u)}
{\prod\limits_{r=1}^{\ell+k}\prod\limits_{s=1}^{\ell}(\mathbb{X}_r-{\rm Z}_s)}.
\end{aligned}$$
\end{lem}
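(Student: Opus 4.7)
The plan is to establish Lemma~\ref{TechLem2} as a polynomial identity by a Vandermonde-factorization argument. First I would clear denominators: multiply the $r$-th row of the matrix by $\prod_{s=1}^{\ell}(\mathbb{X}_r - {\rm Z}_s)$ for each $r\in\{1,\ldots,\ell+k\}$. The resulting matrix $M$ has purely polynomial entries: in column $t\in\{1,\ldots,\ell\}$, row $r$ carries $\prod_{s\neq t}(\mathbb{X}_r-{\rm Z}_s)$, and in column $\ell+j$ (for $j\in\{1,\ldots,k\}$), row $r$ carries $\mathbb{X}_r^{j-1}\prod_{s=1}^{\ell}(\mathbb{X}_r-{\rm Z}_s)$. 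Setting $D:=\det M$, the identity to be proved becomes
\[
D \;=\; \prod_{1\le p<q\le \ell}({\rm Z}_p-{\rm Z}_q)\,\prod_{1\le u<v\le \ell+k}(\mathbb{X}_v-\mathbb{X}_u),
\]
after multiplying both sides of the lemma by $\prod_{r,s}(\mathbb{X}_r-{\rm Z}_s)$.

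Next I would exploit the symmetries of $M$. Swapping $\mathbb{X}_{r_1}\leftrightarrow\mathbb{X}_{r_2}$ swaps rows $r_1$ and $r_2$, so $D$ is antisymmetric in $\mathbb{X}_1,\ldots,\mathbb{X}_{\ell+k}$, hence divisible by $V_{\mathbb{X}} := \prod_{u<v}(\mathbb{X}_v-\mathbb{X}_u)$. Likewise, swapping ${\rm Z}_p\leftrightarrow{\rm Z}_q$ swaps columns $p$ and $q$ (and leaves the remaining $k$ polynomial columns untouched), so $D$ is antisymmetric in ${\rm Z}_1,\ldots,{\rm Z}_\ell$ and divisible by $V_{{\rm Z}} := \prod_{p<q}({\rm Z}_p-{\rm Z}_q)$. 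A direct inspection of each row of $M$ shows $\deg_{\mathbb{X}_r}D\le\ell+k-1$ (matching $\deg_{\mathbb{X}_r}V_{\mathbb{X}}$), and since ${\rm Z}_t$ appears linearly in exactly the $\ell-1$ columns $\{1,\ldots,\ell\}\setminus\{t\}$, we have $\deg_{{\rm Z}_t}D\le\ell-1$ (matching $\deg_{{\rm Z}_t}V_{{\rm Z}}$). Comparing total degrees, $D/(V_{\mathbb{X}}V_{{\rm Z}})$ is a scalar $c$ independent of all $2\ell+k$ variables.

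Finally, I would pin down $c=1$ by reduction to the classical Cauchy determinant (i.e.\ the $k=0$ case of Lemma~\ref{TechLem2}, which is well known). The cleanest route is to treat $c$ as a rational-limit problem: start from the $(\ell+k)\times(\ell+k)$ Cauchy determinant in auxiliary variables $\mathbb{X}_1,\ldots,\mathbb{X}_{\ell+k},{\rm Z}_1,\ldots,{\rm Z}_{\ell+k}$, and send ${\rm Z}_{\ell+j}\to\infty$ along the asymptotics $(-{\rm Z}_{\ell+j})^{-1},({-{\rm Z}_{\ell+j})^{-2},\ldots}$ so that, after multiplying the $(\ell+j)$-th column by suitable powers of $-{\rm Z}_{\ell+j}$ and taking a lower-triangular combination of columns, the $(\ell+j)$-th column converges to $(\mathbb{X}_r^{j-1})_r$; tracking the corresponding prefactors on the right-hand side of Cauchy's formula shows that the right-hand side of Lemma~\ref{TechLem2} appears with constant $1$. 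Alternatively, one may compare the coefficient of the explicit monomial ${\rm Z}_1^{\ell-1}{\rm Z}_2^{\ell-2}\cdots{\rm Z}_{\ell-1}\mathbb{X}_1^{k}\mathbb{X}_2^{k+1}\cdots\mathbb{X}_{\ell+k}^{\ell+k-1}$ in $D$ directly, which reduces to a single Vandermonde evaluation. The main obstacle is bookkeeping signs and the correct combinatorial specialization; but once the divisibility and degree bounds are in place, only a single normalization computation remains.
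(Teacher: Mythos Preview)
The paper does not give its own proof of this lemma; it simply quotes it from \cite[Lemma~5.4]{HLL}. So there is nothing in the paper to compare your argument against, and your Vandermonde-factorization strategy is a perfectly reasonable independent proof.

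That said, there is one genuine slip in your degree bookkeeping. After clearing denominators, you assert that ${\rm Z}_t$ appears linearly only in the $\ell-1$ columns $\{1,\ldots,\ell\}\setminus\{t\}$. This is false: in column $\ell+j$ the $(r,\ell+j)$-entry is $\mathbb{X}_r^{j-1}\prod_{s=1}^{\ell}(\mathbb{X}_r-{\rm Z}_s)$, which is also linear in ${\rm Z}_t$. Hence ${\rm Z}_t$ occurs in $\ell+k-1$ columns, and the naive bound is only $\deg_{{\rm Z}_t}D\le \ell+k-1$, which does not match $\deg_{{\rm Z}_t}V_{{\rm Z}}=\ell-1$. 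So the argument as written does not force $D/(V_{\mathbb{X}}V_{{\rm Z}})$ to be constant.

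The fix is immediate and keeps the rest of your plan intact. Your bound $\deg_{\mathbb{X}_r}D\le \ell+k-1$ \emph{is} correct and matches $\deg_{\mathbb{X}_r}V_{\mathbb{X}}$, so $D/V_{\mathbb{X}}$ has degree $0$ in every $\mathbb{X}_r$ and is therefore a polynomial $P({\rm Z}_1,\ldots,{\rm Z}_\ell)$ alone. Now compare total degrees: every term in the Leibniz expansion of $D$ has the same total degree
\[
\ell(\ell-1)+\sum_{j=1}^{k}(\ell+j-1)=\binom{\ell+k}{2}+\binom{\ell}{2},
\]
so $P=D/V_{\mathbb{X}}$ has total degree $\binom{\ell}{2}$. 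Since $P$ is alternating in the ${\rm Z}$'s it is divisible by $V_{{\rm Z}}$, and the degree count now gives $P=c\,V_{{\rm Z}}$ for a scalar $c$. Your normalization step (either the Cauchy-limit argument or the monomial comparison) then finishes the proof as you outlined.
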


\section{Supersymmetrizing forms and Schur elements for $\bullet=\mathsf{0}$}\label{dot=0}
Throughout this section, we assume that $\bullet=\mathsf{0}.$
We fix $\undla\in\mathscr{P}^{\mathsf{0},m}_{n}$ and then fix ${\rm T}=(\mt,0,\beta_\mt)\in{\rm Tri}(\mathscr{P}^{\mathsf{0},m}_{n})$.
Since $\mathscr{P}^{\mathsf{0},m}_{n}$ is the set of $m$-multipartions of $n$ and
${\rm Tri}(\mathscr{P}^{\mathsf{0},m}_{n})=\Std(\mathscr{P}^{\mathsf{0},m}_{n})\times \mathbb{Z}_2^n,$  we can abbreviate ${\rm T}=(\mt,\beta_\mt)$ in this section.

\subsection{The value $\tau_{r,n}(F_{\rm T})$ for $\bullet=\mathsf{0}$}
In this subsection, we always assume the separate condition $P^{(\mathsf{0})}_{n}(q^2,\undQ)\neq 0$ holds over $\mathbb{K}.$ The aim of this subsection is to prove the following Theorem.

\begin{thm}\label{mainthm type0 nondege}
	Suppose that $\bullet=\mathsf{0}$ and $\undla\in\mathscr{P}^{\mathsf{0},m}_{n}.$ For any ${\rm T}=(\mt,\beta_{\mt})\in{\rm Tri}(\undla),$ we have
	\begin{align}\label{type=0. tauFT}
		\tau_{r,n}(F_{\rm T})
		=\prod\limits_{k=1}^{n}\frac{\mathtt{b}_{\mt,k}^{\nu_{\beta_{\mt}}(k)m}}{\mathtt{b}_{\mt,k}^{\nu_{\beta_{\mt}}(k)}-\mathtt{b}_{\mt,k}^{-\nu_{\beta_{\mt}}(k)}}
		\cdot \prod\limits_{k=1}^{n}\frac{\prod\limits_{\beta\in\Rem(\mt\downarrow_{k-1})}\left(\mathtt{q}(\res_{\mt}(k))-\mathtt{q}(\res(\beta))\right)}{
			\prod\limits_{\alpha\in\Add(\mt\downarrow_{k-1})\setminus \{\mt^{-1}(k)\}}\left(\mathtt{q}(\res_{\mt}(k))-\mathtt{q}(\res(\alpha))\right)}.
	\end{align}
\end{thm}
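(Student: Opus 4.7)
The plan is to prove the theorem by induction on $n$, leveraging Proposition \ref{type=0, mainprop} for the reduction from $n$ to $n-1$. The base case $n=0$ is trivial since both sides equal $1$ (empty product). For the inductive step, I would exploit the factorization $\tau_{r,n}=\tau_{r,n-1}\circ\varepsilon_n$ inherent in the definition of the standard Frobenius form, so that the task reduces to computing $\varepsilon_n(F_{{\rm T}})$ and showing it equals the $k=n$ factor of the right-hand side, times $F_{{\rm T}\downarrow_{n-1}}$.

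First, I would observe that when $\bullet=\mathsf{0}$ the set $\mathcal{D}_{\undla}=\emptyset$, so every triple has trivial $\alpha$-component and ${\rm T}=(\mt,\beta_{\mt})$ is determined by $\mt$ and $\beta_{\mt}$. Next, I would apply Corollary \ref{reductionrem}, which reduces the computation of $\varepsilon_n(F_{{\rm T}})$ to the case $\alpha_{\mt}=0$ and $\beta_{\mt}\in\{0,e_n\}$: the scalar extracted is the same as in the general case, and the output idempotent carries the restricted data $\beta_{\mt}\downarrow_{n-1}$ on the $(n-1)$-level. Concretely, for arbitrary $\beta_{\mt}$, one obtains
\[
\varepsilon_n\bigl(F_{(\mt,0,\beta_{\mt})}\bigr)=a_0\,F_{(\mt\downarrow_{n-1},0,\beta_{\mt}\downarrow_{n-1})},
\]
where $a_0$ depends only on $\mt$ and on $\nu_{\beta_{\mt}}(n)$.

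With this reduction in hand, I would invoke Proposition \ref{type=0, mainprop}(2), which gives
\[
a_0=\frac{\mathtt{b}_{\mt,n}^{\nu_{\beta_{\mt}}(n)m}}{\mathtt{b}_{\mt,n}^{\nu_{\beta_{\mt}}(n)}-\mathtt{b}_{\mt,n}^{-\nu_{\beta_{\mt}}(n)}}\cdot\frac{\prod_{\beta\in\Rem(\mt\downarrow_{n-1})}\bigl(\mathtt{q}(\res_{\mt}(n))-\mathtt{q}(\res(\beta))\bigr)}{\prod_{\alpha\in\Add(\mt\downarrow_{n-1})\setminus\{\mt^{-1}(n)\}}\bigl(\mathtt{q}(\res_{\mt}(n))-\mathtt{q}(\res(\alpha))\bigr)}.
\]
This is exactly the $k=n$ factor of the product in \eqref{type=0. tauFT}. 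Applying the inductive hypothesis to the triple ${\rm T}\downarrow_{n-1}=(\mt\downarrow_{n-1},\beta_{\mt}\downarrow_{n-1})\in{\rm Tri}(\underline{\mu})$ with $\underline{\mu}\in\mathscr{P}^{\mathsf{0},m}_{n-1}$, and noting that the residues of $\mt\downarrow_{n-1}$ coincide with $(\res_{\mt}(1),\ldots,\res_{\mt}(n-1))$ and $\nu_{\beta_{\mt}\downarrow_{n-1}}(k)=\nu_{\beta_{\mt}}(k)$ for $k\in[n-1]$, the value $\tau_{r,n-1}(F_{{\rm T}\downarrow_{n-1}})$ becomes the product over $k\in[n-1]$. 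Multiplying by $a_0$ yields the full product in \eqref{type=0. tauFT}.

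The real work has already been done in Proposition \ref{type=0, mainprop}, where the Mackey decomposition of $F_{{\rm T}}$ led to a $2N\times 2N$ linear system whose coefficient matrix was evaluated using the determinant identities of Lemmas \ref{TechLem1} and \ref{TechLem2}, with the separate condition $P^{(\mathtt{0})}_n(q^2,\undQ)\neq 0$ (via Proposition \ref{compare eigenvalue}) guaranteeing invertibility. Given that input, the only potential obstacle in the present theorem is bookkeeping: verifying that the sign/parity $\nu_{\beta_{\mt}}(k)$ at the $k$-th step of the induction correctly matches the $k$-th factor, and that the residue data for $\mt\downarrow_{n-1}$ is preserved under restriction. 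Both points are immediate from the definitions of $\res_{\mt}$ and $\beta_{\mt}\downarrow_{n-1}$, so the inductive step closes cleanly.
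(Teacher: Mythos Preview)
Your proposal is correct and follows essentially the same approach as the paper: use Corollary \ref{reductionrem} to extend \eqref{type=0, veFT} from Proposition \ref{type=0, mainprop} to arbitrary ${\rm T}$, then apply the factorization $\tau_{r,n}=\varepsilon_1\circ\cdots\circ\varepsilon_n$ inductively on $n$. Your write-up is simply a more detailed unpacking of the paper's two-line argument.
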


Recall the definition of $\tau_{r,n}$. In order to prove Theorem \ref{mainthm type0 nondege}, we need to compute $\varepsilon_n(F_{\rm T})$ first. To this end, {\bf we can further assume that $\beta_{\mt}\in\{0,e_n\},$ i.e., $\beta_{\mt}\downarrow_{n-1}=0$ by Corollary \ref{reductionrem}.}

%Since $\mathscr{P}^{\mathsf{0},m}_{n}$ is the set of $m$-multipartions of $n$ and
%${\rm Tri}(\mathscr{P}^{\mathsf{0},m}_{n})=\Std(\mathscr{P}^{\mathsf{0},m}_{n})\times \mathbb{Z}_2^n,$  we can abbreviate ${\rm T}=(\mt,\beta_\mt)$ in this section.
Recall that
$\mathtt{c}_{\mt}^{\ms}:=\mathtt{c}_{\mt,\ms}\mathtt{c}_{\ms,\mt}$
for any $\ms\in\Std(\undla).$
Moreover, for any $(n-1)$-dimensional vector $\alpha=(\alpha_1,\ldots,\alpha_{n-1})\in\mathbb{Z}_2^{n-1}$, we use the same notation $\alpha$ to denote the $n$-dimensional vector $\alpha=(\alpha_1,\ldots,\alpha_{n-1},\bar{0})\in\mathbb{Z}_2^{n}$ by the natural embedding.

Let $\underline{\mu}:={\rm shape}(\mt\downarrow_{n-1})\in \mathscr{P}^{\mathsf{0},m}_{n-1}.$
\begin{lem}\label{HLL. lem2.13}\cite[Lemma 2.13]{HLL}
For $\underline{\mu}\in\mathscr{P}^{\mathsf{0},m}_{n-1},$ we have $\sharp\Add(\underline{\mu})=\sharp\Rem(\underline{\mu})+m.$
\end{lem}
By Lemma \ref{HLL. lem2.13}, we can set
\begin{align*}
\Add(\undmu)&:=\{\alpha_1=\mt^{-1}(n),\alpha_2,\ldots,\alpha_N \},\\
\Rem(\undmu)&:=\{\beta_1=\mt^{-1}(n-1),\beta_2,\ldots,\beta_{N-m} \}.
\end{align*}
Let $j_{\iota}\in[n-1]$ be the number such that $\mt^{-1}(j_{\iota})=\beta_{\iota}$, where $\iota\in[N-m].$  In particular, $j_1=n-1.$
For any $\iota\in[N-m],$ we set
\begin{align*}
{\rm U}_{\iota}=(\mathfrak{u}_{\iota}, 0)
:=\begin{cases}s_{n-2}s_{n-3}\cdots s_{j_{\iota}}\cdot (\mt\downarrow_{n-1}, 0) \in {\rm Tri}(\undmu),&\text{if $1<\iota\leq N-m$,}\\
	(\mt\downarrow_{n-1},0), &\text{otherwise.}
	\end{cases}
\end{align*}
For any $\iota\in[N-m]$ and $k\in[N],$ let $\mathfrak{u}_{\iota,k}$ be the unique standard tableau of shape $\undla_{k}:=\undmu\cup\{\alpha_{k}\}\in\mathscr{P}^{\mathsf{0},m}_{n}$ such that
$\mathfrak{u}_{\iota,k}\downarrow_{n-1}=\mathfrak{u}_{\iota}$ and $\mathfrak{u}_{\iota,k}^{-1}(n)=\alpha_k.$
By definition, we have
$\mathfrak{u}_{\iota,k}^{-1}(n-1)=\mathfrak{u}_{\iota}^{-1}(n-1)=\beta_{\iota}.$ For convenience, we set $\mt_k:=\mathfrak{u}_{1,k}\in\Std(\undla_k)$ for $k\in[N].$

Recall that we have fixed ${\rm T}=(\mt,\beta_{\mt})\in{\rm Tri}(\undla)$ such that $\beta_{\mt}\downarrow_{n-1}=0.$ By the Mackey decomposition \eqref{nondegMadecomequation}, we have
\begin{align}\label{Madecomequation of fTT}
	F_{\rm T}=\mu_{T_{n-1}}(\pi(F_{\rm T}))+\sum_{\substack{0\leq k\leq r-1\\ a\in \mathbb{Z}_2}}X_n^k C_n^a p_{k,a}(F_{\rm T}).
\end{align}

%\begin{lem}\label{scalars ak}
%	We have
%	$$p_{k,\bar{1}}(F_{\rm T})=0, \quad
%	p_{k,\bar{0}}(F_{\rm T})=a_k F_{{\rm T}\downarrow_{n-1}}$$
%	for some $a_k\in \mathbb{K},$ $0\leq k <r.$
%	%(2) $\pi(F_{\rm T})=\sum_{{\rm U},V\in {\rm Tri}(\underline{\mu})}b_{{\rm U},V}
%	%F_{{\rm T}\downarrow_{n-1},{\rm U}}\otimes F_{V,{\rm T}\downarrow_{n-1}}$
%	%for some $b_{{\rm U},V}\in\mathbb{K}.$
%\end{lem}
%\begin{proof}
%	It follows from Lemma \ref{typeos, scalars ak} (1).
	%By the ($\mathcal{H}^f_c(n-1),\mathcal{H}^f_c(n-1)$)-superbilinearity of $p_{k,b}$ ($b\in\mathbb{Z}_2$) and $\pi,$ combining Proposition \ref{generators action on seminormal basis} (1.a), we have
	%By the ($\mathcal{H}^f_c(n-1),\mathcal{H}^f_c(n-1)$)-superbilinearity of $p_{k,b},$ Proposition \ref{generators action on seminormal basis}, Lemma \ref{important condition1} (2) and the proof of Lemma \ref{lem.image of f}, we have
	%$$
	%p_{k,b}(F_{\rm T})=a_{k,b} F_{{\rm T}\downarrow_{n-1},{\rm T}\downarrow_{n-1}},
	%$$
	%$$
	%\pi(F_{\rm T})=\sum_{{\rm U},V\in {\rm Tri}(\underline{\mu})}b_{{\rm U},V}
	%F_{{\rm T}\downarrow_{n-1},{\rm U}}\otimes F_{V,{\rm T}\downarrow_{n-1}}
	%$$
	%for some scalars $a_{k,b},$ $b_{{\rm U},V}\in\mathbb{K}.$ But the degree $|p_{k,\bar{1}}|=\bar{1},$ it follows that $a_{k,\bar{1}}=0$ for any $0\leq k <r.$
%\end{proof}
For simplicity, we set $\mathbb{X}_k:=\mathtt{b}_{+}(\res(\alpha_k)),$ ${\rm Y}_{\iota}:=\mathtt{b}_{+}(\res(\beta_{\iota}))$
for $k\in[N],$ $\iota\in[N-m].$ The following Proposition is key to the main result in this section.
\begin{prop}\label{type=0, mainprop}
Let $\undla\in\mathscr{P}^{\mathsf{0},m}_{n}.$ Suppose ${\rm T}=(\mt,\beta_{\mt})\in{\rm Tri}(\undla)$ with $\beta_{\mt}\downarrow_{n-1}=0$ and ${\rm T}\downarrow_{n-1}=(\mt\downarrow_{n-1},0)\in {\rm Tri}(\underline{\mu}).$ We have the following.
\begin{enumerate}
  \item There are some scalars $a_0,\ldots,a_{r-1},b_{1,\bar{0}},\ldots,b_{N-m,\bar{0}},b_{1,\bar{1}},\ldots,b_{N-m,\bar{1}}\in\mathbb{K}$ such that
$$
\pi(F_{\rm T})
=\sum_{\substack{1\leq\iota\leq N-m\\ \delta\in\mathbb{Z}_2}}b_{\iota,\delta}
  f_{{\rm T}\downarrow_{n-1},(\mathfrak{u}_{\iota}, \delta e_{n-1})}
  \otimes f_{(\mathfrak{u}_{\iota}, \delta e_{n-1}),{\rm T}\downarrow_{n-1}},
$$
$$
p_{k,\bar{0}}(F_{\rm T})=a_k F_{{\rm T}\downarrow_{n-1}},
\quad 0\leq k <r.
$$
  \item We have
\begin{align}\label{type=0, veFT}
\varepsilon_n(F_{\rm T})
=a_0 F_{\rm T\downarrow_{n-1}}
=\frac{\mathtt{b}_{\mt,n}^{\nu_{\beta_{\mt}}(n)m}}{\mathtt{b}_{\mt,n}^{\nu_{\beta_{\mt}}(n)}-\mathtt{b}_{\mt,n}^{-\nu_{\beta_{\mt}}(n)}}
\frac{\prod\limits_{\beta\in\Rem(\mt\downarrow_{n-1})}\left(\mathtt{q}(\res_{\mt}(n))-\mathtt{q}(\res(\beta))\right)}{
	\prod\limits_{\alpha\in\Add(\mt\downarrow_{n-1})\setminus \{\mt^{-1}(n)\}}\left(\mathtt{q}(\res_{\mt}(n))-\mathtt{q}(\res(\alpha))\right)} F_{\rm T\downarrow_{n-1}}.
\end{align}
\end{enumerate}
\end{prop}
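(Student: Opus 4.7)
The plan is to combine the Clifford reduction of Subsection \ref{reduction} with the Mackey decomposition \eqref{Madecomequation of fTT}, and then extract $a_{0}$ from a Cauchy--Vandermonde linear system via Lemma \ref{TechLem1}.

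For part~(1), since $\bullet=\mathsf{0}$ forces $\mathcal{D}_{\undla}=\emptyset$ and hence $d_{\undmu}=0$, Lemma \ref{typeos, scalars ak}~(1) immediately gives $p_{k,\bar{1}}(F_{\rm T})=0$ and $p_{k,\bar{0}}(F_{\rm T})=a_{k}F_{{\rm T}\downarrow_{n-1}}$ for some scalars $a_{k}\in\mathbb{K}$. For $\pi(F_{\rm T})$, the identity $F_{\rm T}=F_{{\rm T}\downarrow_{n-1}}F_{\rm T}F_{{\rm T}\downarrow_{n-1}}$ together with the $(\mathcal{H}^{f}_{c}(n-1),\mathcal{H}^{f}_{c}(n-1))$-bilinearity of $\pi$ confines $\pi(F_{\rm T})$ to the subspace $F_{{\rm T}\downarrow_{n-1}}\mathcal{H}^{f}_{c}(n-1)\otimes_{\mathcal{H}^{f}_{c}(n-2)}\mathcal{H}^{f}_{c}(n-1)F_{{\rm T}\downarrow_{n-1}}$. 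Expanding in the seminormal basis of Theorem \ref{seminormal basis}, the balancing over $\mathcal{H}^{f}_{c}(n-2)$ forces the middle tableau's restriction to $[n-2]$ to coincide with $\mt\downarrow_{n-2}$, which cuts out precisely the set $\{\mathfrak{u}_{\iota}\mid \iota\in[N-m]\}$; the extra bit $\delta\in\mathbb{Z}_{2}$ records whether $C_{n-1}$ appears in the Clifford part. This yields the stated form of $\pi(F_{\rm T})$.

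For part~(2), I would substitute the expressions from part~(1) into the Mackey decomposition, giving
$$F_{\rm T}=\mu_{T_{n-1}}(\pi(F_{\rm T}))+\sum_{k=0}^{r-1}a_{k}X_{n}^{k}F_{{\rm T}\downarrow_{n-1}},$$
and then act both sides on suitably chosen vectors in $\mathbb{D}(\undla_{k})$ for $k=1,\ldots,N$. Concretely, take the vector $C_{n}^{\delta_{\beta_{\mt}}(n)}v_{\mt_{k}}$, where $\mt_{k}=\mathfrak{u}_{1,k}$ satisfies $\mt_{k}\downarrow_{n-1}=\mt\downarrow_{n-1}$ and $\mt_{k}^{-1}(n)=\alpha_{k}$. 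Since $F_{{\rm T}\downarrow_{n-1}}$ then acts as the identity on this vector and $X_{n}$ acts by $\mathbb{X}_{k}^{\nu_{\beta_{\mt}}(n)}$, the contribution of $\mu_{T_{n-1}}(\pi(F_{\rm T}))$ is computed by evaluating each summand $f_{{\rm T}\downarrow_{n-1},(\mathfrak{u}_{\iota},\delta e_{n-1})}T_{n-1}f_{(\mathfrak{u}_{\iota},\delta e_{n-1}),{\rm T}\downarrow_{n-1}}$ via Proposition \ref{generators action on seminormal basis}, Lemma \ref{Phist. lem} and \eqref{universal-Phi}; this produces Cauchy-type coefficients of the form $\frac{\mathbb{X}_{k}}{\mathbb{X}_{k}-{\rm Y}_{\iota}}$. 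Together with the Vandermonde entries $\mathbb{X}_{k}^{j}$ coming from $X_{n}^{j}$, the resulting $N$ equations assemble into precisely the matrix shape handled by Lemma \ref{TechLem1}.

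Isolating $a_{0}$ by Cramer's rule then reduces to a ratio of two Cauchy--Vandermonde determinants, both evaluated in closed form by Lemma \ref{TechLem1}. Cancelling common factors, rewriting the differences of $\mathbb{X}$'s and ${\rm Y}$'s in terms of the corresponding $\mathtt{q}$-values, and using $\mathbb{X}_{1}=\mathtt{b}_{\mt,n}^{-\nu_{\beta_{\mt}}(n)}$ together with the level $r=2m$, the closed form \eqref{type=0, veFT} falls out. The main obstacle I anticipate is the careful bookkeeping: tracking the Clifford parities $\delta$ through the $\mu_{T_{n-1}}$ map; using Corollary \ref{reductionrem} to transfer cleanly between $\beta_{\mt}=0$ and $\beta_{\mt}=e_{n}$, which must both produce the correct sign $\nu_{\beta_{\mt}}(n)$ in the answer; and ensuring that the exponent in the prefactor $\mathtt{b}_{\mt,n}^{\nu_{\beta_{\mt}}(n)m}$ emerges as exactly $m$ from the Vandermonde part of the determinant quotient, rather than $m\pm 1$.
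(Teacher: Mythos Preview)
Your overall strategy matches the paper's, but there is a genuine counting gap in part~(2). You propose to act on the $N$ vectors $C_{n}^{\delta_{\beta_{\mt}}(n)}v_{\mt_{k}}$, $k\in[N]$, and assemble ``the resulting $N$ equations'' into a Cauchy--Vandermonde system. But there are $r+2(N-m)=2N$ unknowns $a_{0},\ldots,a_{r-1},b_{1,\bar{0}},\ldots,b_{N-m,\bar{0}},b_{1,\bar{1}},\ldots,b_{N-m,\bar{1}}$, so $N$ equations are underdetermined. The paper obtains $2N$ equations by comparing coefficients of \emph{all} idempotents $F_{(\mt_{k},\delta_{1}e_{n})}$ with $k\in[N]$ and $\delta_{1}\in\mathbb{Z}_{2}$; equivalently, you must test against both parities $v_{\mt_{k}}$ and $C_{n}v_{\mt_{k}}$, not just the one matching $\delta_{\beta_{\mt}}(n)$. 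This doubling is what produces the rows indexed by $\mathbb{X}_{k}^{-1}$ in the coefficient matrix, and it is also what makes both Cauchy columns $\frac{\mathbb{X}}{\mathbb{X}-{\rm Y}_{\iota}}$ and $\frac{\mathbb{X}}{\mathbb{X}-{\rm Y}_{\iota}^{-1}}$ appear (coming from $\delta=\bar{0}$ and $\delta=\bar{1}$ respectively). Relatedly, your claim $\mathbb{X}_{1}=\mathtt{b}_{\mt,n}^{-\nu_{\beta_{\mt}}(n)}$ is off: $\mathbb{X}_{1}=\mathtt{b}_{+}(\res_{\mt}(n))$ is fixed independently of $\beta_{\mt}$, and the dependence on $\nu_{\beta_{\mt}}(n)$ enters through \emph{which row} of the $2N\times 2N$ system carries the nonzero right-hand side (row $1$ versus row $N+1$), leading to the two separate cofactor computations $A_{1,1}$ and $A_{N+1,1}$.

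Two smaller points. First, the paper does not argue part~(1) for $\pi(F_{\rm T})$ directly via a bimodule balancing argument; your sketch ``balancing over $\mathcal{H}^{f}_{c}(n-2)$ forces the middle tableau's restriction to coincide with $\mt\downarrow_{n-2}$'' would need substantial justification. Instead the paper posits $\widehat{\pi}(F_{\rm T})$ in the desired form with unknown $b_{\iota,\delta}$, shows the $2N\times 2N$ coefficient matrix is invertible via Lemma~\ref{TechLem1}, and then invokes uniqueness of the Mackey decomposition to conclude $\widehat{\pi}(F_{\rm T})=\pi(F_{\rm T})$. Second, the cofactor determinants (after deleting the column of $1$'s and factoring out one $\mathbb{X}$ from each row) have the shape of Lemma~\ref{TechLem2}, not Lemma~\ref{TechLem1}; you will need both lemmas.
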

\begin{proof}
We define
\begin{align}\label{eq.widehatpi}
\widehat{\pi}(F_{\rm T})
:=\sum_{\substack{1\leq\iota\leq N-m\\ \delta\in\mathbb{Z}_2}}b_{\iota,\delta}&
  f_{{\rm T}\downarrow_{n-1},(\mathfrak{u}_{\iota}, \delta e_{n-1})}
  \otimes f_{(\mathfrak{u}_{\iota}, \delta e_{n-1}),{\rm T}\downarrow_{n-1}}\\
  &\in \mathcal{H}^f_c(n-1)\otimes_{\mathcal{H}^f_c(n-2)}\mathcal{H}^f_c(n-1)\nonumber
\end{align}
for some unknown scalars $b_{\iota,\delta}\in \mathbb{K}.$
Then we get that
\begin{align*}
\mu_{T_{n-1}}(\widehat{\pi}(F_{\rm T}))
=\sum_{\substack{1\leq\iota\leq N-m\\ \delta\in\mathbb{Z}_2}}b_{\iota,\delta}
  \iota_{n-1}\left(f_{{\rm T}\downarrow_{n-1},(\mathfrak{u}_{\iota}, \delta e_{n-1})}\right)
  T_{n-1} \iota_{n-1}\left(f_{(\mathfrak{u}_{\iota}, \delta e_{n-1}),{\rm T}\downarrow_{n-1}}\right).
\end{align*}
By Theorem \ref{maththm of embedding} (1), for each $1\leq\iota\leq N-m$, we have
\begin{align*}
\iota_{n-1}\left(f_{{\rm T}\downarrow_{n-1},(\mathfrak{u}_{\iota}, \delta e_{n-1})}\right)
&=\sum_{\substack{1 \leq k\leq N\\ \delta_1\in\mathbb{Z}_2}}
  f_{(\mt_k,\delta_1 e_n),(\mathfrak{u}_{\iota,k}, \delta e_{n-1}+\delta_1 e_n)},\\
\iota_{n-1}\left(f_{(\mathfrak{u}_{\iota}, \delta e_{n-1}),{\rm T}\downarrow_{n-1}}\right)
&=\sum_{\substack{1 \leq k\leq N\\ \delta_2\in\mathbb{Z}_2}}
  f_{(\mathfrak{u}_{\iota,k}, \delta e_{n-1}+\delta_2 e_n),(\mt_k,\delta_2 e_n)}.
\end{align*}
Combining with Proposition \ref{generators action on seminormal basis} (2) and equation \eqref{Non-deg multiplication1}, we can compute
\begin{align}\label{eq. muTn-1}
&\mu_{T_{n-1}}(\widehat{\pi}(F_{\rm T}))\\
&=\sum_{\substack{1\leq\iota\leq N-m\\ \delta\in\mathbb{Z}_2}}b_{\iota,\delta}
  \sum_{\substack{1\leq k \leq N\\ \delta_1\in\mathbb{Z}_2}}
       \frac{\epsilon}{1-\mathtt{b}_{+}(\res(\beta_{\iota}))^{\nu(\delta)}\mathtt{b}_{+}(\res(\alpha_k))^{-\nu(\delta_1)}}
        \mathtt{c}^{\mt_k}_{\mathfrak{u}_{\iota,k}}
         F_{(\mt_k,\delta_1 e_n)}\nonumber\\
&=\sum_{\substack{1\leq\iota\leq N-m\\ \delta\in\mathbb{Z}_2}}b_{\iota,\delta}
\sum_{\substack{1\leq k \leq N\\ \delta_1\in\mathbb{Z}_2}}
\frac{\epsilon}{1-\mathtt{b}_{+}(\res(\beta_{\iota}))^{\nu(\delta)}\mathtt{b}_{+}(\res(\alpha_k))^{-\nu(\delta_1)}}
\mathtt{c}^{\mathfrak{u}_1}_{\mathfrak{u}_{\iota}}
F_{(\mt_k,\delta_1 e_n)}\nonumber
\end{align}
where
\begin{align*}
\nu(\delta):=
\begin{cases}
-1, &\text{if $\delta=\bar{1}$,}\\
1, &\text{if $\delta=\bar{0}$,}
\end{cases}
\qquad\text{and}\qquad
\nu(\delta_1):=
\begin{cases}
-1, &\text{if $\delta_1=\bar{1}$,}\\
1, &\text{if $\delta_1=\bar{0}$.}
\end{cases}
\end{align*}
By Lemma \ref{typeos, scalars ak} (1), we have
\begin{align}\label{scalars ak}p_{k',\bar{1}}(F_{\rm T})=0, \quad
p_{k',\bar{0}}(F_{\rm T})=a_{k'} F_{{\rm T}\downarrow_{n-1}}
\end{align}
for some $a_{k'}\in \mathbb{K},$ $0\leq k' <r.$
%(2) $\pi(F_{\rm T})=\sum_{{\rm U},V\in {\rm Tri}(\underline{\mu})}b_{{\rm U},V}
%F_{{\rm T}\downarrow_{n-1},{\rm U}}\otimes F_{V,{\rm T}\downarrow_{n-1}}$
%for some $b_{{\rm U},V}\in\mathbb{K}.$

It follows from Theorem \ref{maththm of embedding} (1) and Proposition \ref{generators action on seminormal basis} (1) that
\begin{align}\label{eq. Xnk'pk'0}
&\sum_{k'=0}^{r-1}X_n^{k'}\iota_{n-1}(p_{k',\bar{0}}(F_{\rm T}))\\
&=\sum_{k'=0}^{r-1}a_{k'} X_n^{k'}\iota_{n-1}(F_{{\rm T}\downarrow_{n-1}}) \nonumber\\
&=\sum_{k'=0}^{r-1}a_{k'} \sum_{\substack{1\leq k\leq N\\ \delta_1\in\mathbb{Z}_2}}
    X_n^{k'} F_{(\mt_k,\delta_1 e_n)}\nonumber\\
&= \sum_{\substack{1\leq k\leq N\\ \delta_1\in\mathbb{Z}_2}}\sum_{k'=0}^{r-1}
   a_{k'}
    \mathtt{b}_{+}(\res(\alpha_k))^{\nu(\delta_1)k'}
    F_{(\mt_k,\delta_1 e_n)}.\nonumber
\end{align}

We now regard those scalars
\begin{align}\label{nondegunknownquantities}
a_0,\ldots,a_{r-1},b_{1,\bar{0}},\ldots,b_{N-m,\bar{0}},b_{1,\bar{1}},\ldots,b_{N-m,\bar{1}}
\end{align}
appearing in \eqref{scalars ak} and \eqref{eq.widehatpi} as unknown quantities.
Combining \eqref{Madecomequation of fTT} with \eqref{eq. muTn-1}, \eqref{eq. Xnk'pk'0} and comparing the coefficients of
$F_{(\mt_k,\delta_1 e_n)},$
$k\in [N],$ $\delta_1\in\mathbb{Z}_2$
on both sides of \eqref{Madecomequation of fTT} with $\pi(F_{\rm T})$ replaced
with $\widehat{\pi}(F_{\rm T}),$ we get the following linear system of equations in two cases:
\begin{align}\label{type0 nondegLinearEquations}
& A\cdot \text{diag}(\underbrace{1,\dots,1}_{r\,\text{copies}},
                    \epsilon\mathtt{c}^{\mathfrak{u}_{1}}_{\mathfrak{u}_{1}},\dots,
                     \epsilon\mathtt{c}^{\mathfrak{u}_{1}}_{\mathfrak{u}_{N-m}},
                      \epsilon\mathtt{c}^{\mathfrak{u}_{1}}_{\mathfrak{u}_{1}},\dots,
                       \epsilon\mathtt{c}^{\mathfrak{u}_{1}}_{\mathfrak{u}_{N-m}})\nonumber\\
& \cdot \left(a_0,\dots,a_{r-1},b_{1,\bar{0}},\dots,b_{N-m,\bar{0}},b_{1,\bar{1}},\dots,b_{N-m,\bar{1}}\right)^{T}\nonumber\\
&\quad=\begin{cases}
  (1,\underbrace{0,,\dots,0}_{2N-1\,\text{copies}})^T, &\text{if $\delta_{\beta_{\mt}}(n)=\bar{0}$,}\\
  (\underbrace{0,,\dots,0}_{N\,\text{copies}},1,\underbrace{0,\dots,0}_{N-1\,\text{copies}})^T, &\text{if $\delta_{\beta_{\mt}}(n)=\bar{1}$,}
  \end{cases}\nonumber
\end{align}
where
{\small
$$A=\begin{pmatrix}
1 & \mathbb{X}_1 & \cdots & \mathbb{X}_1^{r-1} &
\frac{\mathbb{X}_1}{\mathbb{X}_1-{\rm Y}_1} & \cdots & \frac{\mathbb{X}_1}{\mathbb{X}_1-{\rm Y}_{N-m}} &
\frac{\mathbb{X}_1}{\mathbb{X}_1-{\rm Y}_1^{-1}} & \cdots & \frac{\mathbb{X}_1}{\mathbb{X}_1-{\rm Y}_{N-m}^{-1}}\\
\vdots & \vdots &  & \vdots & \vdots &  & \vdots & \vdots &  & \vdots\\
1 & \mathbb{X}_N & \cdots & \mathbb{X}_N^{r-1} &
\frac{\mathbb{X}_N}{\mathbb{X}_N-{\rm Y}_1} & \cdots & \frac{\mathbb{X}_N}{\mathbb{X}_N-{\rm Y}_{N-m}} &
\frac{\mathbb{X}_N}{\mathbb{X}_N-{\rm Y}_1^{-1}} & \cdots & \frac{\mathbb{X}_N}{\mathbb{X}_N-{\rm Y}_{N-m}^{-1}}\\
1 & \mathbb{X}_1^{-1} & \cdots & \mathbb{X}_1^{-(r-1)} &
\frac{\mathbb{X}_1^{-1}}{\mathbb{X}_1^{-1}-{\rm Y}_1} & \cdots & \frac{\mathbb{X}_1^{-1}}{\mathbb{X}_1^{-1}-{\rm Y}_{N-m}} &
\frac{\mathbb{X}_1^{-1}}{\mathbb{X}_1^{-1}-{\rm Y}_1^{-1}} & \cdots & \frac{\mathbb{X}_1^{-1}}{\mathbb{X}_1^{-1}-{\rm Y}_{N-m}^{-1}}\\
\vdots & \vdots &  & \vdots & \vdots &  & \vdots & \vdots &  & \vdots\\
1 & \mathbb{X}_N^{-1} & \cdots & \mathbb{X}_N^{-(r-1)} &
\frac{\mathbb{X}_N^{-1}}{\mathbb{X}_N^{-1}-{\rm Y}_1} & \cdots & \frac{\mathbb{X}_N^{-1}}{\mathbb{X}_N^{-1}-{\rm Y}_{N-m}} &
\frac{\mathbb{X}_N^{-1}}{\mathbb{X}_N^{-1}-{\rm Y}_1^{-1}} & \cdots & \frac{\mathbb{X}_N^{-1}}{\mathbb{X}_N^{-1}-{\rm Y}_{N-m}^{-1}}
\end{pmatrix}_{2N\times 2N}.$$ }
If the matrix $A$ is invertible, then we can deduce that there exist scalars \eqref{nondegunknownquantities} such that the corresponding elements $\widehat{\pi}(F_{\rm T})$ and $p_{k,\bar{0}}(F_{\rm T}),\ k=0,1,\dots,r-1$ satisfy the equality \eqref{Madecomequation of fTT} with $\pi(F_{\rm T})$ replaced by $\widehat{\pi}(F_{\rm T})$.
Then by the uniqueness statement of \eqref{Madecomequation of fTT}, we can deduce that $\widehat{\pi}(F_{\rm T})=\pi(F_{\rm T}).$

To apply Lemmas \ref{TechLem1}, \ref{TechLem2}, we set $\mathbb{X}_{N+k}:=\mathbb{X}_k^{-1},$ $k\in[N]$ and ${\rm Z}_{\iota}:={\rm Y}_{\iota},$ ${\rm Z}_{N-m+\iota}:={\rm Y}_{\iota}^{-1},$ $\iota\in[N-m].$ Then it follows from Lemma \ref{TechLem1} that
\begin{align*}
{\rm det}(A)
&=\frac{\prod\limits_{1\leq t\leq 2(N-m)}{\rm Z}_t \prod\limits_{1\leq p< q\leq 2(N-m)}({\rm Z}_p-{\rm Z}_q)\prod\limits_{1\leq u<v\leq 2N}(\mathbb{X}_v-\mathbb{X}_u)}
{\prod\limits_{j=1}^{2N}\prod\limits_{s=1}^{2(N-m)}(\mathbb{X}_j-{\rm Z}_s)}\\
&=\frac{\prod\limits_{1\leq p< q\leq 2(N-m)}({\rm Z}_p-{\rm Z}_q)\prod\limits_{1\leq u<v\leq 2N}(\mathbb{X}_v-\mathbb{X}_u)}
{\prod\limits_{j=1}^{2N}\prod\limits_{s=1}^{2(N-m)}(\mathbb{X}_j-{\rm Z}_s)},
\end{align*}
where we have used the fact that
$$\prod\limits_{1\leq t\leq 2(N-m)}{\rm Z}_t=\prod\limits_{1\leq \iota \leq N-m}\left({\rm Y}_{\iota}{\rm Y}_{\iota}^{-1}\right)=1.$$
This implies that ${\rm det}(A)$ is invertible by Proposition \ref{compare eigenvalue}.

It remains to prove \eqref{type=0, veFT}.
By Lemma \ref{TechLem2},
the algebraic cofactor $A_{1,1}$ of ${\rm det}(A)$ at position $(1,1)$ is
\begin{align*}
A_{1,1}
&=\left(\prod\limits_{2\leq k\leq 2N}\mathbb{X}_k \right)\cdot\frac{\prod\limits_{1\leq p< q\leq 2(N-m)}({\rm Z}_p-{\rm Z}_q)\prod\limits_{2\leq u<v\leq 2N}(\mathbb{X}_v-\mathbb{X}_u)}{\prod\limits_{j=2}^{2N}\prod\limits_{s=1}^{2(N-m)}(\mathbb{X}_j-{\rm Z}_s)}\\
&=\mathbb{X}_1^{-1}\cdot\frac{\prod\limits_{1\leq p< q\leq 2(N-m)}({\rm Z}_p-{\rm Z}_q)\prod\limits_{2\leq u<v\leq 2N}(\mathbb{X}_v-\mathbb{X}_u)}{\prod\limits_{j=2}^{2N}\prod\limits_{s=1}^{2(N-m)}(\mathbb{X}_j-{\rm Z}_s)},
\end{align*}
where we have used the fact that $\prod_{1\leq k\leq 2N}\mathbb{X}_k=1.$

Similarly, by Lemma \ref{TechLem1}, the algebraic cofactor $A_{N+1,1}$ of ${\rm det}(A)$ at position $(N+1,1)$ is
\begin{align*}
A_{N+1,1}
&=(-1)^N\left(\prod\limits_{\substack{1\leq k\leq 2N\\ k\neq N+1}}\mathbb{X}_k \right)\cdot\frac{\prod\limits_{1\leq p< q\leq 2(N-m)}({\rm Z}_p-{\rm Z}_q)\prod\limits_{\substack{1\leq u<v\leq 2N \\ u,v\neq N+1}}(\mathbb{X}_v-\mathbb{X}_u)}{\prod\limits_{\substack{1\leq j\leq n \\ j\neq N+1}}\prod\limits_{s=1}^{2(N-m)}(\mathbb{X}_j-{\rm Z}_s)}\\
&=(-1)^N\mathbb{X}_1\cdot\frac{\prod\limits_{1\leq p< q\leq 2(N-m)}({\rm Z}_p-{\rm Z}_q)\prod\limits_{\substack{1\leq u<v\leq 2N \\ u,v\neq N+1}}(\mathbb{X}_v-\mathbb{X}_u)}{\prod\limits_{\substack{1\leq j\leq n \\ j\neq N+1}}\prod\limits_{s=1}^{2(N-m)}(\mathbb{X}_j-{\rm Z}_s)},
\end{align*}
where we have also used the fact that $\prod_{1\leq k\leq 2N}\mathbb{X}_k=1.$

\begin{caselist}
	\item $\delta_{\beta_{\mt}}(n)=\bar{0}$\label{delta=0} . Then we have
\begin{align}
&a_0
=\frac{A_{1,1}}{{\rm det}(A)}
=\mathbb{X}_1^{-1}
  \frac{\prod\limits_{s=1}^{2(N-m)}(\mathbb{X}_1-{\rm Z}_s)}{\prod\limits_{v=2}^{2N}(\mathbb{X}_v-\mathbb{X}_1)}\\
&=\frac{\mathtt{b}_{-}(\res_{\mt}(n))}{\mathtt{b}_{-}(\res_{\mt}(n))-\mathtt{b}_{+}(\res_{\mt}(n))}
    \frac{\prod\limits_{\beta\in\Rem(\mt\downarrow_{n-1}),\ast\in\{\pm\}}\left(\mathtt{b}_{+}(\res_{\mt}(n))-\mathtt{b}_{\ast}(\res(\beta))\right)}{
    \prod\limits_{\alpha\in\Add(\mt\downarrow_{n-1})\setminus \{\mt^{-1}(n)\},\ast\in\{\pm\}}\left(\mathtt{b}_{+}(\res_{\mt}(n))-\mathtt{b}_{\ast}(\res(\alpha))\right)}\nonumber\\
&=\frac{\mathtt{b}_{+}(\res_{\mt}(n))^{-m}}{\mathtt{b}_{-}(\res_{\mt}(n))-\mathtt{b}_{+}(\res_{\mt}(n))}
    \frac{\prod\limits_{\beta\in\Rem(\mt\downarrow_{n-1})}\left(\mathtt{q}(\res_{\mt}(n))-\mathtt{q}(\res(\beta))\right)}{
    \prod\limits_{\alpha\in\Add(\mt\downarrow_{n-1})\setminus \{\mt^{-1}(n)\}}\left(\mathtt{q}(\res_{\mt}(n))-\mathtt{q}(\res(\alpha))\right)},\nonumber
\end{align}
where in the last equation we have used the elementary identity
$$(b'-b)(b'-b^{-1})=b'(b'+b'^{-1}-b-b^{-1}),\qquad \forall b,b'\in\mathbb{K}^*,$$
and Lemma \ref{HLL. lem2.13}.

\item  $\delta_{\beta_{\mt}}(n)=\bar{1}.$\label{delta=1} Then we have
\begin{align}
&a_0
=\frac{A_{N+1,1}}{{\rm det}(A)}
=(-1)^N
  \frac{\mathbb{X}_1\cdot \prod\limits_{s=1}^{2(N-m)}(\mathbb{X}_{N+1}-{\rm Z}_s)}{\prod\limits_{u=1}^{N}(\mathbb{X}_{N+1}-\mathbb{X}_u)\prod\limits_{v=N+2}^{2N}(\mathbb{X}_v-\mathbb{X}_{N+1})}\\
&=
   \frac{\mathtt{b}_{+}(\res_{\mt}(n))}{\mathtt{b}_{+}(\res_{\mt}(n))-\mathtt{b}_{-}(\res_{\mt}(n))}
    \frac{\prod\limits_{\beta\in\Rem(\mt\downarrow_{n-1}),\ast\in\{\pm\}}\left(\mathtt{b}_{-}(\res_{\mt}(n))-\mathtt{b}_{\ast}(\res(\beta))\right)}{
    \prod\limits_{\alpha\in\Add(\mt\downarrow_{n-1})\setminus \{\mt^{-1}(n)\},\ast\in\{\pm\}}\left(\mathtt{b}_{-}(\res_{\mt}(n))-\mathtt{b}_{\ast}(\res(\alpha))\right)}\nonumber\\
&=
   \frac{\mathtt{b}_{-}(\res_{\mt}(n))^{-m}}{\mathtt{b}_{+}(\res_{\mt}(n))-\mathtt{b}_{-}(\res_{\mt}(n))}
    \frac{\prod\limits_{\beta\in\Rem(\mt\downarrow_{n-1})}\left(\mathtt{q}(\res_{\mt}(n))-\mathtt{q}(\res(\beta))\right)}{
    \prod\limits_{\alpha\in\Add(\mt\downarrow_{n-1})\setminus \{\mt^{-1}(n)\}}\left(\mathtt{q}(\res_{\mt}(n))-\mathtt{q}(\res(\alpha))\right)}.\nonumber
\end{align}
\end{caselist}
Now \eqref{type=0, veFT} follows from Case \ref{delta=0} and Case \ref{delta=1}.
%\begin{align*}
%\varepsilon_n(F_{\rm T})
%=a_0 F_{\rm T\downarrow_{n-1}}
%=\frac{\mathtt{b}_{\mt,n}^{\nu_{\beta_{\mt}}(n)m}}{\mathtt{b}_{\mt,n}^{\nu_{\beta_{\mt}}(n)}-\mathtt{b}_{\mt,n}^{-\nu_{\beta_{\mt}}(n)}}
%    \frac{\prod\limits_{\beta\in\Rem(\mt\downarrow_{n-1})}\left(\mathtt{q}(\res_{\mt}(n))-\mathtt{q}(\res(\beta))\right)}{
%    \prod\limits_{\alpha\in\Add(\mt\downarrow_{n-1})\setminus \{\mt^{-1}(n)\}}\left(\mathtt{q}(\res_{\mt}(n))-\mathtt{q}(\res(\alpha))\right)} F_{\rm T\downarrow_{n-1}}.
%\end{align*}
%Thus we have
%\begin{align}
%\tau_{r,n}(F_{\rm T})
%=\prod\limits_{k=1}^{n}\frac{\mathtt{b}_{\mt,k}^{\nu_{\beta_{\mt}}(k)(m+1)}}{\mathtt{b}_{\mt,k}^{2\nu_{\beta_{\mt}}(k)}-1}
%   \cdot \prod\limits_{k=1}^{n}\frac{\prod\limits_{\beta\in\Rem(\mt\downarrow_{k-1})}\left(\mathtt{q}(\res_{\mt}(k))-\mathtt{q}(\res(\beta))\right)}{
%   \prod\limits_{\alpha\in\Add(\mt\downarrow_{k-1})\setminus \{\mt^{-1}(k)\}}\left(\mathtt{q}(\res_{\mt}(k))-\mathtt{q}(\res(\alpha))\right)}.
%\end{align}
\end{proof}

\medskip
{\bf Proof of Theorem \ref{mainthm type0 nondege}}: By Corollary \ref{reductionrem}, the equation \eqref{type=0, veFT} holds for general ${\rm T}=(\mt,\beta_{\mt})\in{\rm Tri}(\undla)$. Since $\tau_{r,n}(F_{\rm T})=\varepsilon_1\circ\cdots\circ\varepsilon_n(F_{\rm T}),$ now the Theorem follows by an induction on $n$.\qed
\medskip

\subsection{Supersymmetricity and Schur elements for cyclotomic Hecke-Clifford algebra when $\bullet=\mathsf{0}$}

In this subsection, we shall apply Theorem \ref{mainthm type0 nondege} to give the proof of Theorem \ref{Nondengerate} (i) and the proof of Theorem \ref{Schur-Non-dengerate} for $\bullet=\mathsf{0}.$ {\bf We emphasise that, unless otherwise stated, it is not necessary to assume that the separate condition holds in this subsection.}

\label{pag:supersym. form. non-dege}
\begin{cor}\label{supersym. form. non-dege}
	Let ${\rm R}$ be an integral domain of characteristic different from 2. Suppose $q\in{\rm R^\times}\setminus\{\pm 1\},$ $q+ q^{-1}\in{\rm R^\times}$ and $\undQ\in ({\rm R^\times})^m.$ Let $f=f^{\mathsf{(0)}}_{\underline{Q}},$ then the cyclotomic Hecke-Clifford algebra $\mHfcn$ defined over ${\rm R}$ is a supersymmetric algebra with supersymmetrizing form
$$t_{r,n}:=\tau_{r,n}\Bigl(-\cdot (X_1X_2\cdots X_n)^m \Bigr).$$
\end{cor}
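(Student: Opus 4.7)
The plan is to verify the three defining properties of a supersymmetrizing form for $t_{r,n}$: vanishing on the odd part $\mHfcn_{\bar{1}}$, the super-trace identity $t_{r,n}(xy)=(-1)^{|x||y|}t_{r,n}(yx)$, and non-degeneracy of $\hat{t}_{r,n}$. Non-degeneracy is essentially immediate: $\tau_{r,n}$ is a Frobenius form by \cite[Corollary 3.14]{BK}, and $(X_1X_2\cdots X_n)^m$ is a product of invertible generators, hence a unit in $\mHfcn$. Since the right $\mathcal{A}$-module structures on $\mathrm{Hom}_{{\rm R}}(\mHfcn,{\rm R})$ and $\mathbf{Hom}_{{\rm R}}(\mHfcn,{\rm R})$ coincide, the twisted map $\hat{t}_{r,n}$ remains a right-module isomorphism, and will upgrade to a superbimodule isomorphism once the super-trace identity has been checked.

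For the remaining two properties, both are polynomial identities in the coefficients of $xy$ and $yx$ relative to the standard PBW basis of $\mHfcn$. Setting ${\rm R}_0:=\mathbb{Z}[\tfrac12][q^{\pm 1},(q-q^{-1})^{-1},(q+q^{-1})^{-1},Q_1^{\pm 1},\ldots,Q_m^{\pm 1}]$, the algebra $\mHfcn$ is defined over ${\rm R}_0$ as a free ${\rm R}_0$-module and specializes to the algebra over any admissible ${\rm R}$; hence it suffices to verify both identities in the generic algebraically closed field $\mathbb{K}:=\overline{\mathrm{Frac}({\rm R}_0)}$, where $P^{(\mathsf{0})}_{n}(q^2,\undQ)\neq 0$ and the seminormal theory of Section \ref{Seminormal form} applies.

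Working over $\mathbb{K}$, the next step is to compute $t_{r,n}$ on the seminormal basis. By Proposition \ref{prop.taufST}, $\tau_{r,n}(f_{{\rm S},{\rm T}})=0$ for ${\rm S}\neq{\rm T}$; since the right action of $(X_1\cdots X_n)^m$ on $f_{{\rm S},{\rm T}}$ is the scalar $\prod_{k}\mathtt{b}_{\mt,k}^{-\nu_{\beta_\mt}(k)m}$ (Proposition \ref{generators action on seminormal basis}(1)), this vanishing transfers to $t_{r,n}(f_{{\rm S},{\rm T}})$. At a diagonal idempotent $F_{\rm T}$ for ${\rm T}=(\mt,\beta_\mt)\in{\rm Tri}(\undla)$, that scalar precisely cancels the $\mathtt{b}_{\mt,k}^{\nu_{\beta_\mt}(k)m}$ numerator in Theorem \ref{mainthm type0 nondege}, and applying the identity $\mathtt{b}_{\mt,k}^{\nu_{\beta_\mt}(k)}-\mathtt{b}_{\mt,k}^{-\nu_{\beta_\mt}(k)}=(-1)^{\delta_{\beta_\mt}(k)}\bigl(\mathtt{b}_-(\res_\mt(k))-\mathtt{b}_+(\res_\mt(k))\bigr)$ collapses the result to
$$t_{r,n}(F_{\rm T})=\frac{(-1)^{|\beta_\mt|}}{s_{\undla}},\qquad s_\undla:=\prod_{\alpha\in\undla}\bigl(\mathtt{b}_-(\res(\alpha))-\mathtt{b}_+(\res(\alpha))\bigr)\cdot q(\undla)^{-1},$$
independent of $\mt\in\Std(\undla)$ and matching Theorem \ref{Schur-Non-dengerate} for $\bullet=\mathsf{0}$.

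To finish, I would identify $t_{r,n}$ as a super-character combination. Since $\bullet=\mathsf{0}$, Theorem \ref{semisimple:non-dege} shows every simple $\mathbb{D}(\undla)$ is of type $\mathtt{M}$, and $F_{\rm T}$ projects onto the one-dimensional weight space $\mathbb{K}C^{\beta_\mt}v_\mt\subset\mathbb{D}(\undla)$ of parity $|\beta_\mt|$; thus $\chi'_\undla(F_{\rm T})=(-1)^{|\beta_\mt|}$, while $\chi'_{\undla'}(F_{\rm T})=0$ for $\undla'\neq\undla$ and $\chi'_\undla(f_{{\rm S},{\rm T}})=0$ for ${\rm S}\neq{\rm T}$ by the usual vanishing of supertrace on off-diagonal matrix units. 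Combined with the previous paragraph, this gives $t_{r,n}=\sum_{\undla\in\mathscr{P}^{\mathsf{0},m}_n}s_\undla^{-1}\chi'_\undla$, which lies in $\mathbf{Hom}_\mathbb{K}(\mathrm{SupTr}(\mHfcn),\mathbb{K})$ (cf.\ the proof of Proposition \ref{supersym. schur formula 2}), establishing both the vanishing on $\mHfcn_{\bar{1}}$ and the super-trace identity over $\mathbb{K}$; these identities then descend to arbitrary ${\rm R}$ by specialization. The principal subtlety will be the clean cancellation in the third paragraph: the twisting element $(X_1X_2\cdots X_n)^m$ is chosen precisely so that the $\mathtt{b}_{\mt,k}^{\nu_{\beta_\mt}(k)m}$ factor from Theorem \ref{mainthm type0 nondege} is exactly cancelled, and any other exponent on $X_1\cdots X_n$ would fail to produce the sign $(-1)^{|\beta_\mt|}$ characteristic of a supersymmetric (rather than symmetric) form.
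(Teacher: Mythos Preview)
Your proof is correct and follows essentially the same strategy as the paper: non-degeneracy from the invertibility of $(X_1\cdots X_n)^m$, reduction to the generic semisimple case, and then computation of $t_{r,n}$ on the seminormal basis via Theorem \ref{mainthm type0 nondege} to obtain $t_{r,n}(F_{\rm T})=(-1)^{|\beta_\mt|}/s_\undla$. The only difference is in the final packaging: the paper verifies the super-trace identity directly on products $f_{{\rm S},{\rm T}}f_{{\rm T},{\rm S}}$ using the multiplication formula \eqref{Non-deg multiplication1}, whereas you identify $t_{r,n}=\sum_\undla s_\undla^{-1}\chi'_\undla$ and invoke that supercharacters automatically factor through ${\rm SupTr}$ --- a slightly more conceptual but equivalent conclusion.
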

\begin{proof}
	Note that $t_{r,n}$ is defined over arbitrary integral domain of characteristic different from 2 with $q, q+q^{-1}\in{\rm R^\times}$ and $\undQ\in ({\rm R^\times})^m.$ Since the element $(X_1X_2\cdots X_n)^m\in\mHfcn$ is invertible and $\tau_{r,n}$ is nondegenerate, it's easy to check that $t_{r,n}$ is also nondegenerate. We only need to prove
\begin{align}\label{supersymm}
		t_{r,n}(xy)=(-1)^{|x||y|}t_{r,n}(yx)	
\end{align} for all homogeneous $x,y\in \mathcal{H}^{f^{\mathsf{(0)}}_{\underline{Q}}}_{c}(n)$. In the following, we shall use $t_{r,n}^{\rm R}$ to emphasis the base ring we are considering. Set ${\rm R}':=\Z[q^{\pm 1},(q+q^{-1})^{-1},Q'^{\pm 1}_1,\cdots,Q'^{\pm 1}_m]$ to be the generic base ring, i.e., $q,\undQ'$ are algebraically independent. Then there is a natural isomorphism
$${\rm R}\otimes_{{\rm R}'}	\mathcal{H}^{f^{\mathsf{(0)}}_{\underline{Q}'}}_{c}(n)\cong 	\mathcal{H}^{f^{\mathsf{(0)}}_{\underline{Q}}}_{c}(n)$$
and we have $t_{r,n}^{\rm R}=1\otimes_{\rm R'}t_{r,n}^{\rm R'}$. 	
Hence we only need to prove \eqref{supersymm} for $	\mathcal{H}^{f^{\mathsf{(0)}}_{\underline{Q}'}}_{c}(n)$. Let $\mathbb{K}$ be the algebraic closure of fraction field of ${\rm R}'$. Then it's easy to check the separate condition holds over $\mathbb{K}$, hence the cyclotomic Hecke-Clifford algebra $\mathbb{K}\otimes_{\rm R'}\mathcal{H}^{f^{\mathsf{(0)}}_{\underline{Q}'}}_{c}(n)$ defined over $\mathbb{K}$ is semisimple.

Now we check that \eqref{supersymm} holds true for $\mathbb{K}\otimes_{\rm R'}\mathcal{H}^{f^{\mathsf{(0)}}_{\underline{Q}'}}_{c}(n)$ by doing this on seminormal basis.
%and using Proposition \ref{prop.taufST} and Theorem \ref{mainthm type0 nondege}.
For any $\undla\in\mathscr{P}^{\mathsf{0},m}_{n},$ and ${\rm S}=(\ms,\beta_{\ms}'),{\rm T}=(\mt,\beta_{\mt})\in{\rm Tri}(\undla),$
by Theorem \ref{generators action on seminormal basis} (1) and Theorem \ref{mainthm type0 nondege}, we can deduce that
\begin{align}\label{eq. supersym. form. non-dege}
(-1)^{|\beta_{\ms}'|}t_{r,n}(F_{\rm S})
=q(\undla) \cdot \prod\limits_{\alpha\in\undla}\left(\mathtt{b}_{-}(\res(\alpha))-\mathtt{b}_{+}(\res(\alpha))\right)^{-1}
=(-1)^{|\beta_{\mt}|}t_{r,n}(F_{\rm T}).
\end{align}

If $|\beta_{\ms}'|\equiv |\beta_{\mt}| \pmod 2,$
then the parity $|f_{{\rm S},{\rm T}}|=\bar{0},$ and
\begin{align*}
t_{r,n}(f_{{\rm S},{\rm T}}f_{{\rm T},{\rm S}})
=\mathtt{c}_{\mt}^{\ms}t_{r,n}(F_{\rm S})
=\mathtt{c}_{\ms}^{\mt}t_{r,n}(F_{\rm T})
=t_{r,n}(f_{{\rm T},{\rm S}}f_{{\rm S},{\rm T}}),
\end{align*}
where in the first and last equalities we have used \eqref{Non-deg multiplication1}, in the second equality we have used \eqref{idempotent1}, \eqref{eq. supersym. form. non-dege}
and the assumption $|\beta_{\ms}'|\equiv |\beta_{\mt}| \pmod 2.$
Similarly, if $|\beta_{\ms}'|\not\equiv |\beta_{\mt}| \pmod 2,$
then the parity $|f_{{\rm S},{\rm T}}|=\bar{1},$ and
\begin{align*}
t_{r,n}(f_{{\rm S},{\rm T}}f_{{\rm T},{\rm S}})
=\mathtt{c}_{\mt}^{\ms}t_{r,n}(F_{\rm S})
=-\mathtt{c}_{\ms}^{\mt}t_{r,n}(F_{\rm T})
=-t_{r,n}(f_{{\rm T},{\rm S}}f_{{\rm S},{\rm T}}).
\end{align*}
Combining Proposition \ref{prop.taufST}, it follows that \eqref{supersymm} holds true for $\mathbb{K}\otimes_{\rm R'}\mathcal{H}^{f^{\mathsf{(0)}}_{\underline{Q}'}}_{c}(n)$. Hence the Frobenius form $t_{r,n}$ is supersymmetric.
\end{proof}
\medskip
{\bf Proof of Theorem \ref{Nondengerate} (i)}: This follows from Corollary \ref{supersym. form. non-dege}.\qed
\medskip
\label{pag:Schur elements.non-dege} 	
\begin{lem}\label{Schur elements and tauF}
%Suppose the separate condition $P_n^{\mathsf{(0)}}(q^2,\undQ)\neq 0$ holds for $\mHfcn,$ where $f=f^{\mathsf{(0)}}_{\underline{Q}}.$
Suppose the separate condition $P_n^{(\mathsf{0})}(q^2,\undQ)\neq 0$ holds for $\mHfcn,$ where $\undQ=(Q_1,Q_2,\ldots,Q_m)\in({\mathbb{K}}^*)^m$ and $f=f^{(\mathsf{0})}_{\undQ}$. Let $\undla\in\mathscr{P}^{\mathsf{0},m}_{n}.$
Then the Schur element $s_{\undla}$ of simple module $\mathbb{D}(\undla)$ with respect to $t_{r,n}$ is equal to $(-1)^{|\beta_{\mt}|}/t_{r,n}(F_{\rm T})$ for any ${\rm T}=(\mt,\beta_{\mt})\in{\rm Tri}(\undla).$
\end{lem}
\begin{proof}
It is completely similar to Lemma \ref{Schur elements and tF} (2).
\end{proof}

%\begin{cor}
%Suppose that type $\bullet=\mathsf{0}$ and $\undla\in\mathscr{P}^{\mathsf{0},m}_{n}.$ Then the schur element $s_{\undla}$ of simple module $\mathbb{D}(\undla)$ with respect to $t_{r,n}$ is given by
%\begin{align*}
%s_{\undla}
%=\prod\limits_{\alpha\in\undla}\left(\mathtt{b}_{-}(\res(\alpha))-\mathtt{b}_{+}(\res(\alpha))\right)
%\cdot \prod\limits_{k=1}^{n}\frac{
%    \prod\limits_{\alpha\in\Add(\mt\downarrow_{k-1})\setminus \{\mt^{-1}(k)\}}\left(\mathtt{q}(\res_{\mt}(k))-\mathtt{q}(\res(\alpha))\right)}{\prod\limits_{\beta\in\Rem(\mt\downarrow_{k-1})}
%    \left(\mathtt{q}(\res_{\mt}(k))-\mathtt{q}(\res(\beta))\right)},
%\end{align*}
%for any $\mt\in\Std(\undla).$
%\end{cor}
\medskip
{\bf Proof of Theorem \ref{Schur-Non-dengerate} for $\bullet=\mathsf{0}$}:
It follows from Lemma \ref{Schur elements and tauF} and Theorem \ref{mainthm type0 nondege} directly.\qed
\medskip
\subsection{Schur elements of the even level cyclotomic Sergeev algebra}

In this subsection, we consider the cyclotomic Sergeev algebra $\mhgcn$, where $g=g^{(\mathsf{0})}_{\undQ}(x_1)$ and $\undQ=(Q_1,Q_2,\ldots,Q_m)\in\mathbb{K}^m$ . {\bf We assume the degenerate separate condition $P_n^{(\mathsf{0})}(1,\undQ)\neq 0$ holds.} By chasing the same processes in above non-degenerate case, it is easy to obtain the parallel result for $\mhgcn$ with some suitable modifications.
For $\bullet=\mathsf{0},$ i.e., the level $r=2m$ is even, the scalar
\begin{align*}
%\eta_{\mt,\beta_{\mt}}=
\prod\limits_{k=1}^{n}\frac{\mathtt{b}_{\mt,k}^{\nu_{\beta_{\mt}}(k)m}}{\mathtt{b}_{\mt,k}^{\nu_{\beta_{\mt}}(k)}-\mathtt{b}_{\mt,k}^{-\nu_{\beta_{\mt}}(k)}}
\end{align*}
appeared in \eqref{type=0. tauFT} should be replaced by
\begin{align*}
%\eta_{\mt,\beta_{\mt}}'
&\prod\limits_{k=1}^{n}\frac{\nu_{\beta_{\mt}}(k)}{\mathtt{u}_{-}(\res_{\mt}(k))-\mathtt{u}_{+}(\res_{\mt}(k))}\\
&=(-1)^{n+|\beta_{\mt}|}\prod\limits_{k=1}^{n}(2\mathtt{u}_{+}(\res_{\mt}(k)))^{-1}\\
&=(-1)^{n+|\beta_{\mt}|}2^{-n}\prod_{\alpha\in\undla}(\mathtt{u}_{+}(\res(\alpha)))^{-1}.
\end{align*}

Then we have the following.
\begin{thm}\label{dege. type=0. tFT}
Suppose  $\bullet=\mathsf{0}$ and $\undla\in\mathscr{P}^{\mathsf{0},m}_{n}.$ For any ${\rm T}=(\mt,\beta_{\mt})\in{\rm Tri}(\undla),$ we have
\begin{align*}
\mathtt{t}_{r,n}(\mathcal{F}_{\rm T})
=(-1)^{n+|\beta_{\mt}|}2^{-n}\prod_{\alpha\in\undla}\mathtt{u}_{+}(\res(\alpha))^{-1}
   \cdot \prod\limits_{k=1}^{n}\frac{\prod\limits_{\beta\in\Rem(\mt\downarrow_{k-1})}\left(\mathtt{q}(\res_{\mt}(k))-\mathtt{q}(\res(\beta))\right)}{
    \prod\limits_{\alpha\in\Add(\mt\downarrow_{k-1})\setminus \{\mt^{-1}(k)\}}\left(\mathtt{q}(\res_{\mt}(k))-\mathtt{q}(\res(\alpha))\right)}.
\end{align*}
\end{thm}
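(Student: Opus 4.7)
The plan is to transcribe the argument of Theorem \ref{mainthm type0 nondege} into the degenerate setting, replacing the non-degenerate ingredients by their Sergeev analogs step by step: the Mackey decomposition \eqref{Mackey nondege} by its degenerate version from \cite[Section 3-e]{BK}; the intertwining elements $\Phi_i(x,y)$ by $\phi_i(x,y)$; the idempotents $F_{\rm T}$ and basis $f_{{\rm S},{\rm T}}$ by $\mathcal{F}_{\rm T}$ and $\mathfrak{f}_{{\rm S},{\rm T}}$; and the eigenvalues $\mathtt{b}_{\mt,k}^{-\nu_{\beta_\mt}(k)}$ of $X_k$ by the eigenvalues $\nu_{\beta_\mt}(k)\mathtt{u}_+(\res_\mt(k))$ of $x_k$.

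First, by the degenerate analog of Corollary \ref{reductionrem} (already available via the Sergeev version of Lemma \ref{reductionlem}), I may assume $\alpha_\mt=0$ and $\beta_\mt\in\{0,e_n\}$, so that the task reduces to computing the scalar $a_0$ in
\begin{align*}
\mathcal{F}_{\rm T}=\mu_{s_{n-1}}(\pi(\mathcal{F}_{\rm T}))+\sum_{0\leq k\leq r-1,\, a\in\mathbb{Z}_2}x_n^k c_n^a\, p_{k,a}(\mathcal{F}_{\rm T}),
\end{align*}
where the degenerate version of Lemma \ref{typeos, scalars ak}(1) forces $p_{k,\bar{1}}(\mathcal{F}_{\rm T})=0$ and $p_{k,\bar{0}}(\mathcal{F}_{\rm T})=a_k\mathcal{F}_{{\rm T}\downarrow_{n-1}}$. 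One then writes an ansatz $\widehat{\pi}(\mathcal{F}_{\rm T})$ of the form \eqref{eq.widehatpi} but with $\mathfrak{f}$ in place of $f$, acts on both sides by the degenerate seminormal basis, and compares the coefficients of $\mathcal{F}_{(\mt_k,\delta_1 e_n)}$ for $k\in[N]$, $\delta_1\in\mathbb{Z}_2$, yielding a $2N\times 2N$ linear system in the unknowns $a_0,\ldots,a_{r-1},b_{\iota,\delta}$.

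The crucial difference with the non-degenerate case is that the degenerate intertwiner $\phi_i(x,y)=s_i+\frac{1}{x-y}+\frac{c_ic_{i+1}}{x+y}$ produces matrix entries of the form $\frac{1}{\mathbb{X}_k-{\rm Z}_s}$ rather than $\frac{\mathbb{X}_k}{\mathbb{X}_k-{\rm Z}_s}$. Setting $\mathbb{X}_k:=\mathtt{u}_+(\res(\alpha_k))$, $\mathbb{X}_{N+k}:=-\mathbb{X}_k=\mathtt{u}_-(\res(\alpha_k))$, and ${\rm Z}_\iota:={\rm Y}_\iota:=\mathtt{u}_+(\res(\beta_\iota))$, ${\rm Z}_{N-m+\iota}:=-{\rm Y}_\iota$ for $\iota\in[N-m]$, the determinant of this system is evaluated via Lemma \ref{TechLem2} (rather than Lemma \ref{TechLem1}), and likewise the relevant algebraic cofactor. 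Using the identity $(\mathtt{u}_+(\iota')-\mathtt{u})(\mathtt{u}_+(\iota')+\mathtt{u})=\mathtt{q}(\iota')-\mathtt{q}(\res)$ where $\mathtt{u}=\mathtt{u}_\pm(\res)$, the two-column product over $\ast\in\{\pm\}$ collapses into a single product over boxes, producing
\begin{align*}
a_0=\frac{\nu_{\beta_\mt}(n)}{\mathtt{u}_-(\res_\mt(n))-\mathtt{u}_+(\res_\mt(n))}\cdot\frac{\prod_{\beta\in\Rem(\mt\downarrow_{n-1})}\bigl(\mathtt{q}(\res_\mt(n))-\mathtt{q}(\res(\beta))\bigr)}{\prod_{\alpha\in\Add(\mt\downarrow_{n-1})\setminus\{\mt^{-1}(n)\}}\bigl(\mathtt{q}(\res_\mt(n))-\mathtt{q}(\res(\alpha))\bigr)}.
\end{align*}

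Finally, since $\mathtt{t}_{r,n}=\varepsilon_1\circ\cdots\circ\varepsilon_n$, I induct on $n$. The telescoping $\prod_{k=1}^n\nu_{\beta_\mt}(k)=(-1)^{|\beta_\mt|}$ and $\prod_{k=1}^n(\mathtt{u}_-(\res_\mt(k))-\mathtt{u}_+(\res_\mt(k)))=\prod_{k=1}^n(-2\mathtt{u}_+(\res_\mt(k)))=(-2)^n\prod_{\alpha\in\undla}\mathtt{u}_+(\res(\alpha))$ combine to yield the sign $(-1)^{n+|\beta_\mt|}$ and the factor $2^{-n}$ appearing in the statement. I expect the main obstacle to be exactly this sign/scalar bookkeeping in the degenerate determinant evaluation: matching Lemma \ref{TechLem2} against Lemma \ref{TechLem1}, tracking the parity factor $\nu_{\beta_\mt}(n)$ coming from $x_n$ versus $X_n^{\pm 1}$ eigenvalues, and ensuring the Clifford-reduction steps do not introduce extraneous signs. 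Apart from this, the argument is a routine parallel of the non-degenerate proof.
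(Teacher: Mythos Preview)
Your proposal is correct and follows essentially the same approach as the paper. The paper's own treatment is in fact terser than yours: it simply says that ``by chasing the processes in the above non-degenerate case'' one obtains the result, with the key modification being that the scalar $\prod_k \mathtt{b}_{\mt,k}^{\nu_{\beta_\mt}(k)m}/(\mathtt{b}_{\mt,k}^{\nu_{\beta_\mt}(k)}-\mathtt{b}_{\mt,k}^{-\nu_{\beta_\mt}(k)})$ is replaced by $\prod_k \nu_{\beta_\mt}(k)/(\mathtt{u}_-(\res_\mt(k))-\mathtt{u}_+(\res_\mt(k)))=(-1)^{n+|\beta_\mt|}2^{-n}\prod_{\alpha\in\undla}\mathtt{u}_+(\res(\alpha))^{-1}$, which is exactly the endpoint of your computation.
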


\medskip
{\bf Proof of Theorem \ref{Schur-dengerate} for $\bullet=\mathsf{0}$}:
It follows from Lemma \ref{Schur elements and tF} (2) and Theorem \ref{dege. type=0. tFT} directly.
\qed

\begin{rem}
We can now give an alternative proof of Theorem \ref{LS1:Theorem 1.2} (ii) using the same argument as in
Theorem \ref{supersym. form. non-dege}.
\end{rem}

\section{Symmetrizing forms and Schur elements for  $\bullet=\mathsf{s}$}\label{dot=s}
Throughout this section, we assume that $\bullet=\mathsf{s}.$ Recall from Definition \ref{diagonalnodes} that $\mathcal{D}=\mathcal{D}^{(\mathsf{s})}.$ We fix $\undla\in\mathscr{P}^{\mathsf{\bullet},m}_{n}$ and then fix ${\rm T}=(\mt,\alpha_\mt,\beta_{\mt})\in{\rm Tri}_{\bar{0}}(\undla).$
\subsection{The value $\tau_{r,n}(F_{\rm T})$ for $\bullet=\mathsf{s}$}
{\bf Throughout this subsection and the next four subsections, we always assume the separate condition $P^{(\mathsf{s})}_{n}(q^2,\undQ)\neq 0$ holds. }The aim of these subsections is to prove the following Theorem.

\begin{thm}\label{mainthm types nondege}
	Suppose that $\bullet=\mathsf{s}$ and $\undla\in\mathscr{P}^{\mathsf{s},m}_{n}.$ For any ${\rm T}=(\mt,\alpha_{\mt},\beta_{\mt})\in{\rm Tri}_{\bar{0}}(\undla),$ we have
	$$\tau_{r,n}(F_{\rm T})
	=2^{\lceil \sharp\mathcal{D}_{\undla}/2 \rceil}\prod\limits_{k=1}^{n}\frac{\mathtt{b}_{\mt,k}^{\nu_{\beta_{\mt}}(k)m}}{1+\mathtt{b}_{\mt,k}^{-\nu_{\beta_{\mt}}(k)}}\cdot q(\undla)\in\mathbb{K}^*.$$
\end{thm}

 Again, by the definition of $\tau_{r,n}$, in order to prove Theorem \ref{mainthm types nondege}, we need to compute $\varepsilon_n(F_{\rm T})$ first. To this end, {\bf we can further assume that $\alpha_\mt=0$ and $\beta_{\mt}\in\{0,e_n\},$ i.e., $\beta_{\mt}\downarrow_{n-1}=0$ by Corollary \ref{reductionrem}.}
%Here we also shortly denote
%$\mathtt{c}_{\mt}^{\mt^{\undla}}:=\mathtt{c}_{\rm T}^{\mt^{\undla}}=\mathtt{c}_{\mt,\mt^{\undla}}\mathtt{c}_{\mt^{\undla},\mt}$
%for any $\mt\in\Std(\undla),$ where $\undla\in\mathscr{P}^{\mathsf{s},m}_{n}.$ Again, we regard an $(n-1)$-dimensional vector $\alpha=(\alpha_1,\ldots,\alpha_{n-1})\in\mathbb{Z}_2^{n-1}$ as an $n$-dimensional vector $\alpha=(\alpha_1,\ldots,\alpha_{n-1},\bar{0})\in\mathbb{Z}_2^{n}$ when necessary.

Recall that
$\mathtt{c}_{\mt}^{\ms}:=\mathtt{c}_{\mt,\ms}\mathtt{c}_{\ms,\mt}$
for any $\ms\in\Std(\undla).$
Moreover, for any $(n-1)$-dimensional vector $\alpha=(\alpha_1,\ldots,\alpha_{n-1})\in\mathbb{Z}_2^{n-1}$, we use the same noation $\alpha$ to denote the $n$-dimensional vector $\alpha=(\alpha_1,\ldots,\alpha_{n-1},\bar{0})\in\mathbb{Z}_2^{n}$ by the natural embedding.

Let $\underline{\mu}:={\rm shape}(\mt\downarrow_{n-1})\in \mathscr{P}^{\mathsf{s},m}_{n-1}.$ For $\underline{\mu}=(\mu^{(0)},\mu^{(1)},\ldots,\mu^{(m)})\in\mathscr{P}^{\mathsf{s},m}_{n-1},$ we recall the notation
\begin{align*}
\delta_{\mu^{(0)}}:=
\begin{cases}
0, &\text{if the last part of $\mu^{(0)}$ is $1,$}\\
1, &\text{otherwise.}
\end{cases}
\end{align*}

\begin{lem}\label{HLL. lem2.13. type=s}
For $\underline{\mu}\in\mathscr{P}^{\mathsf{s},m}_{n-1},$ we have $\sharp\Add(\underline{\mu})=\sharp\Rem(\underline{\mu})+m+\delta_{\mu^{(0)}}.$
\end{lem}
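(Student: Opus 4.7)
The plan is to decompose $\underline{\mu}=(\mu^{(0)},\mu^{(1)},\ldots,\mu^{(m)})$ into its components and isolate the contribution of the strict part $\mu^{(0)}$. Since every addable (resp.\ removable) box of $\underline{\mu}$ lies in a unique component, we have a disjoint-union decomposition
\[
|\Add(\underline{\mu})|-|\Rem(\underline{\mu})|=\bigl(|\Add(\mu^{(0)})|-|\Rem(\mu^{(0)})|\bigr)+\sum_{k=1}^{m}\bigl(|\Add(\mu^{(k)})|-|\Rem(\mu^{(k)})|\bigr).
\]
Applying Lemma~\ref{HLL. lem2.13} (the ordinary-multipartition version) to $(\mu^{(1)},\ldots,\mu^{(m)})\in\mathscr{P}^{\mathsf{0},m}_{|\underline{\mu}|-|\mu^{(0)}|}$ immediately yields $m$ for the second summand, so the proof reduces to establishing
\[
|\Add(\mu^{(0)})|-|\Rem(\mu^{(0)})|=\delta_{\mu^{(0)}}.
\]

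For this strict-partition identity I will enumerate addable and removable cells in the shifted Young diagram directly. Writing $\mu^{(0)}=(\mu_1>\mu_2>\cdots>\mu_\ell>0)$ with the convention $\mu_0=+\infty$, and disposing separately of the trivial case $\ell=0$ (where $|\Add|=1$, $|\Rem|=0$, $\delta_{\emptyset}=1$), the cells fall into four easily characterized families: addable cells at the end of an existing row $i\in[\ell]$, allowed iff $\mu_{i-1}\geq\mu_i+2$; an addable cell opening a new row $\ell+1$ of length one, allowed iff $\mu_\ell\geq 2$; removable cells at the end of row $i$ with $1\leq i<\ell$, allowed iff $\mu_i\geq\mu_{i+1}+2$; and the last cell of row $\ell$, which is always removable (either shrinking the row by one or, if $\mu_\ell=1$, erasing the row altogether while preserving strictness).

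The cancellation is then transparent: the addable cells of type (i) with $i\geq 2$ and the removable cells of row index $<\ell$ are both indexed by the set $\{j\in[\ell-1]:\mu_j\geq\mu_{j+1}+2\}$ and cancel pair by pair; the $i=1$ addable cell (automatically present by $\mu_0=\infty$) cancels the always-present removable cell of row $\ell$; what survives is exactly the new-row addable cell, contributing the indicator $[\mu_\ell\geq 2]=\delta_{\mu^{(0)}}$. The only delicate point is the edge case $\mu_\ell=1$, where both of the surviving candidates disappear simultaneously and the count is balanced; the main ``obstacle'' here is therefore nothing more than careful bookkeeping of corner conditions in the shifted diagram, which makes this combinatorial lemma genuinely elementary once the component decomposition is set up.
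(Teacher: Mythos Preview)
Your proof is correct and is precisely the kind of detailed argument the paper gestures at when it writes ``It is completely similar with \cite[Lemma 2.13]{HLL}'': decompose $\underline{\mu}$ into its ordinary components (handled by Lemma~\ref{HLL. lem2.13}) and the strict component $\mu^{(0)}$, then verify the shifted-diagram identity $|\Add(\mu^{(0)})|-|\Rem(\mu^{(0)})|=\delta_{\mu^{(0)}}$ by the corner bookkeeping you describe. One minor remark: your final sentence about the edge case $\mu_\ell=1$ is phrased confusingly (only the new-row addable cell disappears there, not ``both candidates''; the row-$\ell$ removable cell and the row-$1$ addable cell both remain and still cancel), but the computation preceding it is already complete and correct.
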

\begin{proof}
It is completely similar to \cite[Lemma 2.13]{HLL}.
\end{proof}

%We fix a $\undla\in\mathscr{P}^{\mathsf{s},m}_{n},$ ${\rm T}=(\mt,\alpha_{\mt},\beta_{\mt})\in{\rm Tri}_{\bar{0}}(\undla).$
%Due to Remark \ref{reductionrem}, we can assume that $\alpha_{\mt}=0$ and $\beta_{\mt}\in\{0,e_n\}.$
%Recall that $\mathcal{OD}_{\mt}=\{d(\mt,\mt^{\undla})(i_1)<d(\mt,\mt^{\undla})(i_3)<\cdots<d(\mt,\mt^{\undla})(i_{2\lceil t/2\rceil-1})\}$
%where $t:=\#\mathcal{D}_{\mt}.$
%Let $\underline{\mu}:=\text{shape}(\mt\downarrow_{n-1}),$ and $t':=\#\mathcal{D}_{\mt\downarrow_{n-1}},$ i.e., $t'=t$
%if $d_{\undla}\equiv d_{\underline{\mu}} \pmod 2$ and $t'=t-1$ if $d_{\undla}\not\equiv d_{\underline{\mu}} \pmod 2.$
%If $d_{\underline{\mu}}=0,$ then we denote
%${\rm T}\downarrow_{n-1}:=(\mt\downarrow_{n-1},0,0)\in{\rm Tri}(\underline{\mu}).$
%If $d_{\underline{\mu}}=1,$
%then we denote $({\rm T}\downarrow_{n-1})_a:=(\mt\downarrow_{n-1},ae_{d(\mt,\mt^{\undla})(i_{2\lceil t'/2\rceil-1})},0)\in{\rm Tri}_{a}(\underline{\mu})$ for $a\in\mathbb{Z}_2.$

By Lemma \ref{HLL. lem2.13. type=s}, we can set
\begin{align*}
\Add(\undmu)&:=\{\alpha_1=\mt^{-1}(n),\alpha_2,\ldots,\alpha_N \},\\
\Rem(\undmu)&:=\{\beta_1=\mt^{-1}(n-1),\beta_2,\ldots,\beta_{N-m-\delta_{\mu^{(0)}}} \}.
\end{align*}
Let $j_{\iota}\in[n-1]$ be the number such that $\mt^{-1}(j_{\iota})=\beta_{\iota}$ for $\iota\in[N-m-\delta_{\mu^{(0)}}].$ In particular, $j_1=n-1.$
Again, for any $\iota\in[N-m-\delta_{\mu^{(0)}}],$ we set
\begin{align*}
	{\rm U}_{\iota}=(\mathfrak{u}_{\iota}, 0,0)
	:=\begin{cases}s_{n-2}s_{n-3}\cdots s_{j_{\iota}}\cdot (\mt\downarrow_{n-1}, 0,0) \in {\rm Tri}(\undmu)&\text{if $1<\iota\leq N-m-\delta_{\mu^{(0)}}$,}\\
		(\mt\downarrow_{n-1},0,0) &\text{otherwise.}
	\end{cases}
\end{align*}

The following definition will be used in the computation for $\varepsilon_n(F_{\rm T})$.

\begin{defn}\label{eta. type=s}
	Let $\undla\in\mathscr{P}^{\mathsf{s},m}_{n}.$ For any ${\rm T}=(\mt,\alpha_{\mt},\beta_{\mt})\in{\rm Tri}_{\bar{0}}(\undla),$ if $\undmu:={\rm shape}(\mt\downarrow_{n-1}),$ we define
	\begin{align*}
		\eta_{\mt,\beta_{\mt}}(n)
		:=\begin{cases}
			\frac{\mathtt{b}_{\mt,n}^{\nu_{\beta_{\mt}}(n)m}}{1+\mathtt{b}_{\mt,n}^{-\nu_{\beta_{\mt}}(n)}},
			&\text{if $d_{\undla}=d_{\undmu}$,}\\
			1, &\text{if $d_{\undla}=1,$ $d_{\undmu}=0$,}\\
			\frac{1}{2}, &\text{if $d_{\undla}=0,$ $d_{\undmu}=1$,}
		\end{cases}
	\end{align*}
	and
	\begin{align*}
		\eta_{\mt,\beta_{\mt}}
		:=\prod_{k=1}^{n} \eta_{\mt\downarrow_k,\beta_{\mt}\downarrow_k}(k)
	\end{align*}
\end{defn}

For any $\iota\in[N-m-\delta_{\mu^{(0)}}]$ and $k\in[N],$ let $\mathfrak{u}_{\iota,k}$ be the unique standard tableau of shape $\undla_{k}:=\undmu\cup\{\alpha_{k}\}\in\mathscr{P}^{\mathsf{s},m}_{n}$ such that
$\mathfrak{u}_{\iota,k}\downarrow_{n-1}=\mathfrak{u}_{\iota}$ and $\mathfrak{u}_{\iota,k}^{-1}(n)=\alpha_k.$ Note that
$\mathfrak{u}_{\iota,k}^{-1}(n-1)=\mathfrak{u}_{\iota}^{-1}(n-1)=\beta_{\iota}.$ In partucular, we set $\mt_k:=\mathfrak{u}_{1,k}\in\Std(\undla)$ for $k\in[N].$

%If $\delta_{\mu^{(0)}}=1,$ we set $k_0\in[N]$ to be the unique number such that $\alpha_{k_0}\in\Add(\undmu)\cap\mathcal{D}.$
%If $\delta_{\mu^{(0)}}=0,$ we set $\iota_{0}\in[N-m]$ to be the unique number such that $\beta_{\iota_0}\in\Rem(\undmu)\cap\mathcal{D}.$

Recall that we have fixed ${\rm T}=(\mt,0,\beta_{\mt})\in{\rm Tri}_{\bar{0}}(\undla)$ such that $\beta_{\mt}\downarrow_{n-1}=0.$ By the Mackey decomposition \eqref{nondegMadecomequation}, we can write
\begin{align}\label{type=s.Madecomequation of fTT0}
F_{{\rm T}}=\mu_{T_{n-1}}(\pi(F_{{\rm T}}))+\sum_{\substack{0\leq k\leq r-1\\ a\in \mathbb{Z}_2}}X_n^k C_n^a p_{k,a}(F_{{\rm T}}).
\end{align}

For simplicity, we set $\mathbb{X}_k:=\mathtt{b}_{+}(\res(\alpha_k)),$ ${\rm Y}_{\iota}:=\mathtt{b}_{+}(\res(\beta_{\iota}))$
for $k\in[N],$ $\iota\in[N-m-\delta_{\mu^{(0)}}].$  We shall divide the computation for $\varepsilon_n(F_{\rm T})=p_{0,\bar{0}}(F_{\rm T})$ into four subsections.

%(1) $d_{\undmu}=0,$ $\delta_{\mu^{(0)}}=1.$
%
%%\quad (1.a) $d_{\undla}=1,$ $\delta_{\beta_{\mt}}(n)=\bar{0}.$ \quad
%%      (1.b) $d_{\undla}=0,$ $\delta_{\beta_{\mt}}(n)=\bar{0}.$ \quad
%%      (1.c) $d_{\undla}=0,$ $\delta_{\beta_{\mt}}(n)=\bar{1}.$
%
%(2) $d_{\undmu}=1,$ $\delta_{\mu^{(0)}}=1.$
%
%%\quad (3.a) $d_{\undla}=0,$ $\delta_{\beta_{\mt}}(n)=\bar{0}.$ \quad
%%      (3.b) $d_{\undla}=1,$ $\delta_{\beta_{\mt}}(n)=\bar{0}.$ \quad
%%      (3.c) $d_{\undla}=1,$ $\delta_{\beta_{\mt}}(n)=\bar{1}.$
%
%(3)) $d_{\undmu}=0,$ $\delta_{\mu^{(0)}}=0.$
%
%%\quad (2.a) $d_{\undla}=0,$ $\delta_{\beta_{\mt}}(n)=\bar{0}.$ \quad
%%      (2.b) $d_{\undla}=0,$ $\delta_{\beta_{\mt}}(n)=\bar{1}.$
%
%
%
%(4) $d_{\undmu}=1,$ $\delta_{\mu^{(0)}}=0.$

%\quad (4.a) $d_{\undla}=1,$ $\delta_{\beta_{\mt}}(n)=\bar{0}.$ \quad
%      (4.b) $d_{\undla}=1,$ $\delta_{\beta_{\mt}}(n)=\bar{1}.$\\

\subsection{$d_{\undmu}=0$, $\delta_{\mu^{(0)}}=1$}\label{type=s, case(1) ,section}
	In this subsection, we shall compute $\varepsilon_n(F_{\rm T})=p_{0,\bar{0}}(F_{\rm T})$ when $d_{\undmu}=0,$ $\delta_{\mu^{(0)}}=1$.
	%In this case, we have that the number $n-1$ cannot stand in the first diagonal of any standard tableau of shape $\undmu.$
	 Since $\delta_{\mu^{(0)}}=1$, we have $\Rem(\undmu)\cap\mathcal{D}=\emptyset$. We set $k_0\in[N]$ to be the unique number such that $\alpha_{k_0}\in\Add(\undmu)\cap\mathcal{D}.$

%(2) $\pi(F_{{\rm T},{\rm T}_{\bar{0}}})=\sum_{{\rm U},V\in {\rm Tri}(\underline{\mu})}b_{{\rm U},V}
%F_{{\rm T}\downarrow_{n-1},{\rm U}}\otimes F_{V,{\rm T}\downarrow_{n-1}}$
%for some $b_{{\rm U},V}\in\mathbb{K}.$

The main result of this subsection is the following.
\begin{prop}\label{type=s, case(1) ,mainprop}
Let $\undla\in\mathscr{P}^{\mathsf{s},m}_{n}.$ Suppose ${\rm T}=(\mt,0,\beta_{\mt})\in{\rm Tri}_{\bar{0}}(\undla)$ with $\beta_{\mt}\downarrow_{n-1}=0$ and ${\rm T}\downarrow_{n-1}=(\mt\downarrow_{n-1},0,0)\in {\rm Tri}_{\bar{0}}(\underline{\mu}).$ If $d_{\undmu}=0$ and $\delta_{\mu^{(0)}}=1,$ we have the following.
\begin{enumerate}
  \item There are some scalars $a_0,\ldots,a_{r-1},b_{1,\bar{0}},\ldots,b_{N-m-1,\bar{0}},b_{1,\bar{1}},\ldots,b_{N-m-1,\bar{1}}\in \mathbb{K}$ such that
$$
\pi(F_{{\rm T}})
=\sum_{\substack{1\leq\iota\leq N-m-1\\ \delta\in\mathbb{Z}_2}}b_{\iota,\delta}
  f_{{\rm T}\downarrow_{n-1},(\mathfrak{u}_{\iota},0,\delta e_{n-1})}
  \otimes f_{(\mathfrak{u}_{\iota},0,\delta e_{n-1}),{\rm T}\downarrow_{n-1}},
$$and
$$
p_{k,\bar{0}}(F_{{\rm T}})=a_k F_{{\rm T}\downarrow_{n-1}},
\quad 0\leq k <r.
$$
  \item We have
\begin{align}\label{type=s, case(1) ,veFT}
\varepsilon_n(F_{\rm T})
%=a_0 F_{\rm T\downarrow_{n-1}}
=\eta_{\mt,\beta_{\mt}}(n)
    \frac{\prod\limits_{\beta\in\Rem(\mt\downarrow_{n-1})\setminus\mathcal{D}}\left(\mathtt{q}(\res_{\mt}(n))-\mathtt{q}(\res(\beta))\right)}{
    \prod\limits_{\alpha\in\Add(\mt\downarrow_{n-1})\setminus \{\mt^{-1}(n)\}}\left(\mathtt{q}(\res_{\mt}(n))-\mathtt{q}(\res(\alpha))\right)} F_{\rm T\downarrow_{n-1}}.
\end{align}
%where
%\begin{align*}
%\eta_{\mt,\beta_{\mt}}(n)
%:=\begin{cases}
%  \frac{\mathtt{b}_{\mt,n}^{\nu_{\beta_{\mt}}(n)m}}{1+\mathtt{b}_{\mt,n}^{-\nu_{\beta_{\mt}}(n)}}, &\text{if $d_{\undla}=0$,}\\
%  1, &\text{if $d_{\undla}=1$.}
%  \end{cases}
%\end{align*}
\end{enumerate}
\end{prop}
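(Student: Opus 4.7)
The plan is to mirror the proof of Proposition \ref{type=0, mainprop}, adapting it to the type $\mathsf{s}$ combinatorics with $\delta_{\mu^{(0)}}=1$. A first reduction comes from Lemma \ref{typeos, scalars ak}(1): since $d_{\undmu}=0$, we already have $p_{k,b}(F_{\rm T})\in \mathbb{K}\delta_{b,\bar{0}}F_{{\rm T}\downarrow_{n-1}}$, so the odd-Clifford parts vanish automatically and only the scalars $a_0,\ldots,a_{r-1}$ together with the $b_{\iota,\delta}$ remain to be determined. Accordingly, set
$$\widehat{\pi}(F_{\rm T}):=\sum_{\substack{1\leq\iota\leq N-m-1\\ \delta\in\mathbb{Z}_2}} b_{\iota,\delta}\, f_{{\rm T}\downarrow_{n-1},(\mathfrak{u}_\iota,0,\delta e_{n-1})}\otimes f_{(\mathfrak{u}_\iota,0,\delta e_{n-1}),{\rm T}\downarrow_{n-1}}$$
as a candidate for $\pi(F_{\rm T})$, involving $2(N-m-1)$ undetermined scalars. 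Together with the $r=2m+1$ unknowns $a_k$, this yields $2N-1$ unknowns in total, which should match the number of independent equations arising from the Mackey decomposition.

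To extract the linear system, apply the embedding Theorem \ref{maththm of embedding} and the multiplication rules of Theorem \ref{seminormal basis} to expand $\mu_{T_{n-1}}(\widehat{\pi}(F_{\rm T}))$ and $\sum_{k=0}^{r-1} a_k X_n^k \iota_{n-1}(F_{{\rm T}\downarrow_{n-1}})$ in the seminormal bases of the blocks $B_{\undla_k}$ for $k\in[N]$, then compare coefficients of each resulting primitive idempotent with those of $F_{\rm T}$ on the left-hand side of the Mackey decomposition. For $k\neq k_0$ the shape $\undla_k=\undmu\cup\{\alpha_k\}$ has $d_{\undla_k}=0$ and contributes two basis elements $F_{(\mt_k,0,\delta_1 e_n)}$ for $\delta_1\in\mathbb{Z}_2$; for $k=k_0$ the shape $\undla_{k_0}$ has $d_{\undla_{k_0}}=1$, and since $\alpha_{k_0}$ is diagonal we have $\res(\alpha_{k_0})=1$, so $\mathbb{X}_{k_0}=\mathtt{b}_+(\res(\alpha_{k_0}))=1=\mathbb{X}_{k_0}^{-1}$ and only a single new primitive idempotent appears. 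The coefficient matrix $A$ is therefore a $(2N-1)\times(2N-1)$ modification of the matrix in the proof of Proposition \ref{type=0, mainprop}, obtained by collapsing the two rows indexed by $\mathbb{X}_{k_0}$ and $\mathbb{X}_{k_0}^{-1}$ into one.

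The main obstacle lies in computing $|A|$ and the cofactor that extracts $a_0$ in this collapsed form. I would handle this by applying Lemmas \ref{TechLem1}, \ref{TechLem2} after row/column operations that reflect the identifications $\mathbb{X}_{k_0}=\mathbb{X}_{k_0}^{-1}=1$ and use that no ${\rm Y}_\iota$ is a diagonal residue (since $\Rem(\undmu)\cap\mathcal{D}=\emptyset$). Cramer's rule together with $\mathtt{b}_+(\res(\alpha_{k_0}))\mathtt{b}_-(\res(\alpha_{k_0}))=1$ should then produce the factor $\eta_{\mt,\beta_{\mt}}(n)$ of Definition \ref{eta. type=s}: when $\mt^{-1}(n)=\alpha_{k_0}$ (so $d_{\undla}=1$, $d_{\undmu}=0$) the contribution from the collapsed diagonal cancels, giving $\eta_{\mt,\beta_{\mt}}(n)=1$; when $\mt^{-1}(n)\neq\alpha_{k_0}$ (so $d_{\undla}=d_{\undmu}=0$) the residual contribution from $\mathbb{X}_{k_0}=1$ combines with the computation of Proposition \ref{type=0, mainprop} to produce $\mathtt{b}_{\mt,n}^{\nu_{\beta_{\mt}}(n)m}/(1+\mathtt{b}_{\mt,n}^{-\nu_{\beta_{\mt}}(n)})$. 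The invertibility of $A$, which guarantees both existence of the chosen $b_{\iota,\delta}$ and uniqueness of $\pi(F_{\rm T})=\widehat{\pi}(F_{\rm T})$, follows from Proposition \ref{compare eigenvalue} applied to the determinant formula produced by Lemma \ref{TechLem1}.
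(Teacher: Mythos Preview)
Your proposal is correct and follows essentially the same approach as the paper: the same ansatz for $\widehat{\pi}(F_{\rm T})$, the same $(2N-1)\times(2N-1)$ linear system obtained by omitting the row indexed by $\mathbb{X}_{k_0}^{-1}$ (since $\mathbb{X}_{k_0}=\mathbb{X}_{k_0}^{-1}=1$), and the same three-case analysis for $a_0$ according to whether $d_{\undla}=1$ or $d_{\undla}=0$ with $\delta_{\beta_{\mt}}(n)\in\{\bar 0,\bar 1\}$. One small simplification: no row/column operations are actually needed---Lemmas~\ref{TechLem1} and~\ref{TechLem2} apply directly to the $(2N-1)$-variable list $\mathbb{X}_1,\ldots,\mathbb{X}_N,\mathbb{X}_1^{-1},\ldots,\widehat{\mathbb{X}_{k_0}^{-1}},\ldots,\mathbb{X}_N^{-1}$, and the factor $\prod_t {\rm Z}_t=1$ drops out just as in the type~$\mathsf{0}$ case.
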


\begin{proof}
We define
\begin{align}\label{type=s.(1).eq.widehatpi}
\widehat{\pi}(F_{{\rm T}})
:=\sum_{\substack{1\leq\iota\leq N-m-1\\ \delta\in\mathbb{Z}_2}}b_{\iota,\delta}&
  f_{{\rm T}\downarrow_{n-1},(\mathfrak{u}_{\iota},0,\delta e_{n-1})}
  \otimes f_{(\mathfrak{u}_{\iota},0,\delta e_{n-1}),{\rm T}\downarrow_{n-1}}\\
  &\in \mathcal{H}^f_c(n-1)\otimes_{\mathcal{H}^f_c(n-2)}\mathcal{H}^f_c(n-1)\nonumber
\end{align}
for some unknown scalars $b_{\iota,\delta}\in \mathbb{K}.$
Then we get that
\begin{align*}
\mu_{T_{n-1}}(\widehat{\pi}(F_{{\rm T}}))
=\sum_{\substack{1\leq\iota\leq N-m-1\\ \delta\in\mathbb{Z}_2}}b_{\iota,\delta}
  \iota_{n-1}\left(f_{{\rm T}\downarrow_{n-1},(\mathfrak{u}_{\iota},0,\delta e_{n-1})}\right)
  T_{n-1} \iota_{n-1}\left(f_{(\mathfrak{u}_{\iota},0,\delta e_{n-1}),{\rm T}\downarrow_{n-1}}\right).
\end{align*}
By Theorem \ref{maththm of embedding} (1), for each $1\leq\iota\leq N-m-1$, we have
\begin{align*}
\iota_{n-1}\left(f_{{\rm T}\downarrow_{n-1},(\mathfrak{u}_{\iota},0,\delta e_{n-1})}\right)
&=f_{(\mt_{k_0},0,0),(\mathfrak{u}_{\iota,{k_0}},0,\delta e_{n-1})}
  +\sum_{\substack{1 \leq k\leq N, k\neq k_0\\ \delta_1\in\mathbb{Z}_2}}
    f_{(\mt_k,0,\delta_1 e_n),(\mathfrak{u}_{\iota,k},0,\delta e_{n-1}+\delta_1 e_n)},\\
\iota_{n-1}\left(f_{(\mathfrak{u}_{\iota},0,\delta e_{n-1}),{\rm T}\downarrow_{n-1}}\right)
&=
 f_{(\mathfrak{u}_{\iota,{k_0}},0,\delta e_{n-1}),(\mt_{k_0},0,0)}
  +\sum_{\substack{1 \leq k\leq N, k\neq k_0\\ \delta_2\in\mathbb{Z}_2}}
  f_{(\mathfrak{u}_{\iota,k},0,\delta e_{n-1}+\delta_2 e_n),(\mt_k,0,\delta_2 e_n)}.
\end{align*}
Combining these with Proposition \ref{generators action on seminormal basis} (2) and equations \eqref{Non-deg multiplication1}, \eqref{Non-deg multiplication2}, we can compute
\begin{align}\label{type=s.(1).eq. muTn-1}
&\mu_{T_{n-1}}(\widehat{\pi}(F_{{\rm T}}))\\
&=\sum_{\substack{1\leq\iota\leq N-m-1\\ \delta\in\mathbb{Z}_2}}b_{\iota,\delta}
   \sum_{\substack{1\leq k \leq N, k\neq k_0\\ \delta_1\in\mathbb{Z}_2}}
     \frac{\epsilon}{1-\mathtt{b}_{+}(\res(\beta_{\iota}))^{\nu(\delta)}\mathtt{b}_{+}(\res(\alpha_k))^{-\nu(\delta_1)}}
        \mathtt{c}^{\mt_k}_{\mathfrak{u}_{\iota,k}}
         f_{(\mt_k,0,\delta_1 e_n),(\mt_k,0,\delta_1 e_n)} \nonumber\\
&\qquad\qquad\quad
      +\sum_{\substack{1\leq\iota\leq N-m-1\\ \delta\in\mathbb{Z}_2}}b_{\iota,\delta}
         \frac{\epsilon}{1-\mathtt{b}_{+}(\res(\beta_{\iota}))^{\nu(\delta)}\mathtt{b}_{+}(\res(\alpha_{k_0}))^{-1}}
           \mathtt{c}^{\mt_{k_0}}_{\mathfrak{u}_{\iota,{k_0}}}
           f_{(\mt_{k_0},0,0),(\mt_{k_0},0,0)} \nonumber\\
           &=\sum_{\substack{1\leq\iota\leq N-m-1\\ \delta\in\mathbb{Z}_2}}b_{\iota,\delta}
           \sum_{\substack{1\leq k \leq N, k\neq k_0\\ \delta_1\in\mathbb{Z}_2}}
           \frac{\epsilon}{1-\mathtt{b}_{+}(\res(\beta_{\iota}))^{\nu(\delta)}\mathtt{b}_{+}(\res(\alpha_k))^{-\nu(\delta_1)}}
           \mathtt{c}^{\mathfrak{u}_{1}}_{\mathfrak{u}_{\iota}}
           f_{(\mt_k,0,\delta_1 e_n),(\mt_k,0,\delta_1 e_n)} \nonumber\\
           &\qquad\qquad\quad
           +\sum_{\substack{1\leq\iota\leq N-m-1\\ \delta\in\mathbb{Z}_2}}b_{\iota,\delta}
           \frac{\epsilon}{1-\mathtt{b}_{+}(\res(\beta_{\iota}))^{\nu(\delta)}\mathtt{b}_{+}(\res(\alpha_{k_0}))^{-1}}
          \mathtt{c}^{\mathfrak{u}_{1}}_{\mathfrak{u}_{\iota}}
           f_{(\mt_{k_0},0,0),(\mt_{k_0},0,0)} \nonumber
\end{align}
where $\mathtt{b}_{+}(\res(\alpha_{k_0}))=1$ and
\begin{align*}
\nu(\delta):=
\begin{cases}
-1, &\text{if $\delta=\bar{1}$,}\\
1, &\text{if $\delta=\bar{0}$,}
\end{cases}
\quad
\nu(\delta_1):=
\begin{cases}
-1, &\text{if $\delta_1=\bar{1}$,}\\
1, &\text{if $\delta_1=\bar{0}$.}
\end{cases}
\end{align*}

By Lemma \ref{typeos, scalars ak} (1), we have
	\begin{align}\label{scalars ak. types, (1)}p_{k',\bar{1}}(F_{{\rm T}})=0,\quad
	p_{k',\bar{0}}(F_{{\rm T}})=a_{k'} F_{{\rm T}\downarrow_{n-1}}
	\end{align}
	for some $a_{k'}\in \mathbb{K},$ $0\leq k' <r.$
	
It follows from Theorem \ref{maththm of embedding} (1) and Proposition \ref{generators action on seminormal basis} (1) that
\begin{align}\label{type=s.(1).eq. Xnk'pk'0}
&\sum_{k'=0}^{r-1}X_n^{k'}\iota_{n-1}(p_{k',\bar{0}}(F_{{\rm T}}))\\
&=\sum_{k'=0}^{r-1}a_{k'} X_n^{k'}\iota_{n-1}(F_{{\rm T}\downarrow_{n-1}}) \nonumber\\
&=\sum_{k'=0}^{r-1}a_{k'}
\left(
          X_n^{k'}F_{(\mt_{k_0},0,0)}
          +\sum_{\substack{1\leq k\leq N, k\neq k_0\\ \delta_1\in\mathbb{Z}_2}}
            X_n^{k'} F_{(\mt_k,0,\delta_1 e_n)} \right) \nonumber\\
&=\sum_{k'=0}^{r-1}a_{k'}
   F_{(\mt_{k_0},0,0)}
 +\sum_{\substack{1\leq k\leq N, k\neq k_0\\ \delta_1\in\mathbb{Z}_2}}\sum_{k'=0}^{r-1}
   a_{k'}
    \mathtt{b}_{+}(\res(\alpha_k))^{\nu(\delta_1)k'}
     F_{(\mt_k,0,\delta_1 e_n)}.\nonumber
\end{align}

We now regard those scalars
\begin{align}\label{type=s.(1).nondegunknownquantities}
a_0,\ldots,a_{r-1},b_{1,\bar{0}},\ldots,b_{N-m-1,\bar{0}},b_{1,\bar{1}},\ldots,b_{N-m-1,\bar{1}}
\end{align}
appearing in  \eqref{scalars ak. types, (1)} and \eqref{type=s.(1).eq.widehatpi} as unknown quantities.
Combining \eqref{type=s.Madecomequation of fTT0}  with \eqref{type=s.(1).eq. muTn-1}, \eqref{type=s.(1).eq. Xnk'pk'0} and comparing the coefficients of
$F_{(\mt_{k_0},0,0)}$ and $F_{(\mt_k,0,\delta_1 e_n)},$
$k\in [N]\setminus\{k_0\},$ $\delta_1\in\mathbb{Z}_2$
on both sides of \eqref{type=s.Madecomequation of fTT0} with $\pi(F_{{\rm T}})$ replaced
with $\widehat{\pi}(F_{{\rm T}}),$ we get the following linear system of equations in three cases:
\begin{align}\label{type=s.(1).nondegLinearEquations}
	& A\cdot \text{diag}(\underbrace{1,\dots,1}_{r\,\text{copies}},
	\epsilon\mathtt{c}^{\mathfrak{u}_{1}}_{\mathfrak{u}_{1}},\dots,
	\epsilon\mathtt{c}^{\mathfrak{u}_{1}}_{\mathfrak{u}_{N-m-1}},
	\epsilon\mathtt{c}^{\mathfrak{u}_{1}}_{\mathfrak{u}_{1}},\dots,
	\epsilon\mathtt{c}^{\mathfrak{u}_{1}}_{\mathfrak{u}_{N-m-1}})\nonumber\\
	& \cdot \left(a_0,\dots,a_{r-1},b_{1,\bar{0}},\dots,b_{N-m-1,\bar{0}},b_{1,\bar{1}},\dots,b_{N-m-1,\bar{1}}\right)^{T}\\
	&\quad=\begin{cases}
		(1,\underbrace{0,,\dots,0}_{2N-2\,\text{copies}})^T, &\text{if $d_{\undla}=1$  and $\delta_{\beta_{\mt}}(n)=\bar{0}$ (in this case, $k_0=1$),}\\
		(1,\underbrace{0,,\dots,0}_{2N-2\,\text{copies}})^T, &\text{if $d_{\undla}=0$  and $\delta_{\beta_{\mt}}(n)=\bar{0}$,}\\
		(\underbrace{0,,\dots,0}_{N\,\text{copies}},1,\underbrace{0,\dots,0}_{N-2\,\text{copies}})^T, &\text{if $d_{\undla}=0$ and $\delta_{\beta_{\mt}}(n)=\bar{1}$,}
	\end{cases}\nonumber
\end{align}
where
{\footnotesize
$$A=\begin{pmatrix}
1 & \mathbb{X}_1 & \cdots & \mathbb{X}_1^{r-1} &
\frac{\mathbb{X}_1}{\mathbb{X}_1-{\rm Y}_1} & \cdots & \frac{\mathbb{X}_1}{\mathbb{X}_1-{\rm Y}_{N-m-1}} &
\frac{\mathbb{X}_1}{\mathbb{X}_1-{\rm Y}_1^{-1}} & \cdots & \frac{\mathbb{X}_1}{\mathbb{X}_1-{\rm Y}_{N-m-1}^{-1}}\\
\vdots & \vdots &  & \vdots & \vdots &  & \vdots & \vdots &  & \vdots\\
1 & \mathbb{X}_N & \cdots & \mathbb{X}_N^{r-1} &
\frac{\mathbb{X}_N}{\mathbb{X}_N-{\rm Y}_1} & \cdots & \frac{\mathbb{X}_N}{\mathbb{X}_N-{\rm Y}_{N-m-1}} &
\frac{\mathbb{X}_N}{\mathbb{X}_N-{\rm Y}_1^{-1}} & \cdots & \frac{\mathbb{X}_N}{\mathbb{X}_N-{\rm Y}_{N-m-1}^{-1}}\\
1 & \mathbb{X}_1^{-1} & \cdots & \mathbb{X}_1^{-(r-1)} &
\frac{\mathbb{X}_1^{-1}}{\mathbb{X}_1^{-1}-{\rm Y}_1} & \cdots & \frac{\mathbb{X}_1^{-1}}{\mathbb{X}_1^{-1}-{\rm Y}_{N-m-1}} &
\frac{\mathbb{X}_1^{-1}}{\mathbb{X}_1^{-1}-{\rm Y}_1^{-1}} & \cdots & \frac{\mathbb{X}_1^{-1}}{\mathbb{X}_1^{-1}-{\rm Y}_{N-m-1}^{-1}}\\
\vdots & \vdots &  & \vdots & \vdots &  & \vdots & \vdots &  & \vdots\\
1 & \mathbb{X}_{k_0-1}^{-1} & \cdots & \mathbb{X}_{k_0-1}^{-(r-1)} &
\frac{\mathbb{X}_{k_0-1}^{-1}}{\mathbb{X}_{k_0-1}^{-1}-{\rm Y}_1} & \cdots & \frac{\mathbb{X}_{k_0-1}^{-1}}{\mathbb{X}_{k_0-1}^{-1}-{\rm Y}_{N-m-1}} &
\frac{\mathbb{X}_{k_0-1}^{-1}}{\mathbb{X}_{k_0-1}^{-1}-{\rm Y}_1^{-1}} & \cdots & \frac{\mathbb{X}_{k_0-1}^{-1}}{\mathbb{X}_{k_0-1}^{-1}-{\rm Y}_{N-m-1}^{-1}}\\
1 & \mathbb{X}_{k_0+1}^{-1} & \cdots & \mathbb{X}_{k_0+1}^{-(r-1)} &
\frac{\mathbb{X}_{k_0+1}^{-1}}{\mathbb{X}_{k_0+1}^{-1}-{\rm Y}_1} & \cdots & \frac{\mathbb{X}_{k_0+1}^{-1}}{\mathbb{X}_{k_0+1}^{-1}-{\rm Y}_{N-m-1}} &
\frac{\mathbb{X}_{k_0+1}^{-1}}{\mathbb{X}_{k_0+1}^{-1}-{\rm Y}_1^{-1}} & \cdots & \frac{\mathbb{X}_{k_0+1}^{-1}}{\mathbb{X}_{k_0+1}^{-1}-{\rm Y}_{N-m-1}^{-1}}\\
\vdots & \vdots &  & \vdots & \vdots &  & \vdots & \vdots &  & \vdots\\
1 & \mathbb{X}_N^{-1} & \cdots & \mathbb{X}_N^{-(r-1)} &
\frac{\mathbb{X}_N^{-1}}{\mathbb{X}_N^{-1}-{\rm Y}_1} & \cdots & \frac{\mathbb{X}_N^{-1}}{\mathbb{X}_N^{-1}-{\rm Y}_{N-m-1}} &
\frac{\mathbb{X}_N^{-1}}{\mathbb{X}_N^{-1}-{\rm Y}_1^{-1}} & \cdots & \frac{\mathbb{X}_N^{-1}}{\mathbb{X}_N^{-1}-{\rm Y}_{N-m-1}^{-1}}
\end{pmatrix}$$ }
is a $(2N-1)\times (2N-1)$ matrix.
If the matrix $A$ is invertible, then we can deduce that there exist scalars \eqref{type=s.(1).nondegunknownquantities} such that the corresponding elements $\widehat{\pi}(F_{\rm T})$ and $p_{k,\bar{0}}(F_{\rm T}),\ k=0,1,\dots,r-1$ satisfy the equality \eqref{type=s.Madecomequation of fTT0} with $\pi(F_{\rm T})$ replaced by $\widehat{\pi}(F_{\rm T})$ (Note again that all $p_{k,\bar{1}}(F_{\rm T})=0$).
Then by the uniqueness statement of \eqref{type=s.Madecomequation of fTT0}, we can deduce that $\widehat{\pi}(F_{\rm T})=\pi(F_{\rm T}).$

To use Lemmas \ref{TechLem1}, \ref{TechLem2} conveniently, we further denote $\mathbb{X}_{N+k}:=\mathbb{X}_k^{-1},$ $k\in[N]\setminus\{k_0\}$ and ${\rm Z}_{\iota}:={\rm Y}_{\iota},$ ${\rm Z}_{N-m-1+\iota}:={\rm Y}_{\iota}^{-1},$ $\iota\in[N-m-1].$ Then it follows from
Lemma \ref{TechLem1} that
\begin{align}\label{type=s.(1).detA}
{\rm det}(A)
&=\frac{\prod\limits_{1\leq t\leq 2(N-m-1)}{\rm Z}_t \prod\limits_{1\leq p< q\leq 2(N-m-1)}({\rm Z}_p-{\rm Z}_q)
\prod\limits_{1\leq u<v\leq 2N, u,v\neq N+k_0}(\mathbb{X}_v-\mathbb{X}_u)}
{\prod\limits_{\substack{1\leq j\leq 2N \\ j\neq N+k_0}}\prod\limits_{s=1}^{2(N-m-1)}(\mathbb{X}_j-{\rm Z}_s)}\\
&=\frac{\prod\limits_{1\leq p< q\leq 2(N-m-1)}({\rm Z}_p-{\rm Z}_q) \prod\limits_{1\leq u<v\leq 2N, u,v\neq N+k_0}(\mathbb{X}_v-\mathbb{X}_u)}{\prod\limits_{\substack{1\leq j\leq 2N \\ j\neq N+k_0}}\prod\limits_{s=1}^{2(N-m-1)}(\mathbb{X}_j-{\rm Z}_s)},\nonumber
\end{align}
where we have used the fact that
$$\prod\limits_{1\leq t\leq 2(N-m-1)}{\rm Z}_t=\prod\limits_{1\leq \iota \leq N-m-1}\left({\rm Y}_{\iota}{\rm Y}_{\iota}^{-1}\right)=1.$$
This implies that $A$ is invertible by Proposition \ref{compare eigenvalue}.

It remains to prove \eqref{type=s, case(1) ,veFT}. By Lemma \ref{TechLem2},
the the algebraic cofactor $A_{1,1}$ of ${\rm det}(A)$ at position $(1,1)$ is
\begin{align*}
A_{1,1}
&=\left(\prod\limits_{\substack{2\leq k\leq 2N\\ k\neq N+k_0}}\mathbb{X}_k \right)\cdot\frac{\prod\limits_{1\leq p< q\leq 2(N-m-1)}({\rm Z}_p-{\rm Z}_q) \prod\limits_{\substack{2\leq u<v\leq 2N\\ u,v\neq N+k_0}}(\mathbb{X}_v-\mathbb{X}_u)}{\prod\limits_{\substack{2\leq j\leq 2N \\ j\neq N+k_0}}\prod\limits_{s=1}^{2(N-m-1)}(\mathbb{X}_j-{\rm Z}_s)}\\
&=\mathbb{X}_1^{-1}\cdot\frac{\prod\limits_{1\leq p< q\leq 2(N-m-1)}({\rm Z}_p-{\rm Z}_q) \prod\limits_{\substack{2\leq u<v\leq 2N\\ u,v\neq N+k_0}}(\mathbb{X}_v-\mathbb{X}_u)}{\prod\limits_{\substack{2\leq j\leq 2N \\ j\neq N+k_0}}\prod\limits_{s=1}^{2(N-m-1)}(\mathbb{X}_j-{\rm Z}_s)},
\end{align*}
where in the second equation, we have used the fact that $\prod_{\substack{1\leq k\leq 2N\\k\neq N+k_0}}\mathbb{X}_k=1$ and $\mathbb{X}_{k_0}=1.$

When $k_0\neq 1,$ similarly, by Lemma \ref{TechLem2}, the algebraic cofactor $A_{N+1,1}$ of ${\rm det}(A)$ at position $(N+1,1)$ is
\begin{align*}
A_{N+1,1}
&=(-1)^N\left(\prod\limits_{\substack{1\leq k\leq 2N \\ k\neq N+1,N+k_0}}\mathbb{X}_k \right)\cdot\frac{\prod\limits_{1\leq p< q\leq 2(N-m-1)}({\rm Z}_p-{\rm Z}_q)\prod\limits_{\substack{1\leq u<v\leq 2N \\ u,v\neq N+1,N+k_0}}(\mathbb{X}_v-\mathbb{X}_u)}{\prod\limits_{\substack{1\leq j\leq 2N \\ j\neq N+1,N+k_0}}\prod\limits_{s=1}^{2(N-m-1)}(\mathbb{X}_j-{\rm Z}_s)}\\
&=(-1)^N\mathbb{X}_1\cdot\frac{\prod\limits_{1\leq p< q\leq 2(N-m-1)}({\rm Z}_p-{\rm Z}_q)\prod\limits_{\substack{1\leq u<v\leq 2N\\ u,v\neq N+1,N+k_0}}(\mathbb{X}_v-\mathbb{X}_u)}{\prod\limits_{\substack{1\leq j\leq 2N \\ j\neq N+1,N+k_0}}\prod\limits_{s=1}^{2(N-m-1)}(\mathbb{X}_j-{\rm Z}_s)},
\end{align*}
where in the second equation, we have used the fact that $\prod_{\substack{1\leq k\leq 2N\\k\neq N+k_0}}\mathbb{X}_k=1$ and $\mathbb{X}_{k_0}=1.$

\begin{caselist}
	\item $d_{\undla}=1$ and $\delta_{\beta_{\mt}}(n)=\bar{0}.$\label{d=1,delta=0}
Then $k_0=1$, ${\rm T}=(\mt_1,0,0),$ $\mt^{-1}(n)=\alpha_{1}=\alpha_{k_0}$ and $\mathbb{X}_1=\mathtt{b}_{+}(\res_{\mt}(n))=1$. We have
\begin{align}\label{a0.case(1.a)}
&a_0
=\frac{A_{1,1}}{{\rm det}(A)}
= \frac{\prod\limits_{s=1}^{2(N-m-1)}(\mathbb{X}_1-{\rm Z}_s)}{\prod\limits_{\substack{2\leq v\leq 2N \\ v\neq N+1}}(\mathbb{X}_v-\mathbb{X}_1)}\\
&= \frac{\prod\limits_{\beta\in\Rem(\mt\downarrow_{n-1}),\ast\in\{\pm\}}\left(\mathtt{b}_{+}(\res_{\mt}(n))-\mathtt{b}_{\ast}(\res(\beta))\right)}{
    \prod\limits_{\alpha\in\Add(\mt\downarrow_{n-1})\setminus \{\mt^{-1}(n)\},\ast\in\{\pm\}}\left(\mathtt{b}_{+}(\res_{\mt}(n))-\mathtt{b}_{\ast}(\res(\alpha))\right)}\nonumber\\
&= \frac{\prod\limits_{\beta\in\Rem(\mt\downarrow_{n-1})}\left(\mathtt{q}(\res_{\mt}(n))-\mathtt{q}(\res(\beta))\right)}{
    \prod\limits_{\alpha\in\Add(\mt\downarrow_{n-1})\setminus \{\mt^{-1}(n)\}}\left(\mathtt{q}(\res_{\mt}(n))-\mathtt{q}(\res(\alpha))\right)},\nonumber
\end{align}
where in the last equation we have used the elementary identity
$$(b'-b)(b'-b^{-1})=b'(b'+b'^{-1}-b-b^{-1}),\qquad \forall b,b'\in\mathbb{K}^*.$$
\item $d_{\undla}=0$ and $\delta_{\beta_{\mt}}(n)=\bar{0}.$ \label{d=0,delta=0}
Then $k_0\neq 1$, ${\rm T}=(\mt_1,0,0),$ $\mt^{-1}(n)=\alpha_{1}$ and $\mathbb{X}_1=\mathtt{b}_{+}(\res_{\mt}(n))\neq 1$. We have
\begin{align}\label{a0.case(1.b)}
&a_0
=\frac{A_{1,1}}{{\rm det}(A)}
=\mathbb{X}_1^{-1}
  \frac{\prod\limits_{s=1}^{2(N-m-1)}(\mathbb{X}_1-{\rm Z}_s)}{\prod\limits_{\substack{2\leq v\leq 2N \\ v\neq N+k_0}}(\mathbb{X}_v-\mathbb{X}_1)}\\
&=
   \frac{\mathtt{b}_{-}(\res_{\mt}(n))(\mathtt{b}_{+}(\res_{\mt}(n))-1)}{\mathtt{b}_{+}(\res_{\mt}(n))-\mathtt{b}_{-}(\res_{\mt}(n))}
    \frac{\prod\limits_{\beta\in\Rem(\mt\downarrow_{n-1}),\ast\in\{\pm\}}\left(\mathtt{b}_{+}(\res_{\mt}(n))-\mathtt{b}_{\ast}(\res(\beta))\right)}{
     \prod\limits_{\alpha\in\Add(\mt\downarrow_{n-1})\setminus \{\mt^{-1}(n)\},\ast\in\{\pm\}}\left(\mathtt{b}_{+}(\res_{\mt}(n))-\mathtt{b}_{\ast}(\res(\alpha))\right)}\nonumber\\
&=
   \frac{\mathtt{b}_{+}(\res_{\mt}(n))^{-m}}{1+\mathtt{b}_{+}(\res_{\mt}(n))}
    \frac{\prod\limits_{\beta\in\Rem(\mt\downarrow_{n-1})}\left(\mathtt{q}(\res_{\mt}(n))-\mathtt{q}(\res(\beta))\right)}{
     \prod\limits_{\alpha\in\Add(\mt\downarrow_{n-1})\setminus \{\mt^{-1}(n)\}}\left(\mathtt{q}(\res_{\mt}(n))-\mathtt{q}(\res(\alpha))\right)},\nonumber
\end{align}
where in the last equation we have used the elementary identity
$$(b'-b)(b'-b^{-1})=b'(b'+b'^{-1}-b-b^{-1}),\qquad \forall b,b'\in\mathbb{K}^*,$$
and Lemma \ref{HLL. lem2.13. type=s}.
\item $d_{\undla}=0$ and $\delta_{\beta_{\mt}}(n)=\bar{1}.$\label{d=0,delta=1}
Then $k_0\neq 1$, ${\rm T}=(\mt_1,0,e_n),$ $\mt^{-1}(n)=\alpha_{1},$ $\mathbb{X}_1=\mathtt{b}_{+}(\res_{\mt}(n))\neq 1$ and $\mathbb{X}_{N+1}=\mathtt{b}_{-}(\res_{\mt}(n))\neq 1$. We have
\begin{align}\label{a0.case(1.c)}
&a_0
=\frac{A_{N+1,1}}{{\rm det}(A)}
=(-1)^{N}\mathbb{X}_1
  \frac{\prod\limits_{s=1}^{2(N-m-1)}(\mathbb{X}_{N+1}-{\rm Z}_s)}{\prod\limits_{\substack{N+1< v\leq 2N \\ v\neq N+k_0}}(\mathbb{X}_v-\mathbb{X}_{N+1})\prod\limits_{\substack{1\leq u<N+1 \\ u\neq N+k_0}}(\mathbb{X}_{N+1}-\mathbb{X}_u)}\\
&=
   \frac{\mathtt{b}_{+}(\res_{\mt}(n))(\mathtt{b}_{-}(\res_{\mt}(n))-1)}{\mathtt{b}_{-}(\res_{\mt}(n))-\mathtt{b}_{+}(\res_{\mt}(n))}
    \frac{\prod\limits_{\beta\in\Rem(\mt\downarrow_{n-1}),\ast\in\{\pm\}}\left(\mathtt{b}_{-}(\res_{\mt}(n))-\mathtt{b}_{\ast}(\res(\beta))\right)}{
     \prod\limits_{\alpha\in\Add(\mt\downarrow_{n-1})\setminus \{\mt^{-1}(n)\},\ast\in\{\pm\}}\left(\mathtt{b}_{-}(\res_{\mt}(n))-\mathtt{b}_{\ast}(\res(\alpha))\right)}\nonumber\\
&=
   \frac{\mathtt{b}_{-}(\res_{\mt}(n))^{-m}}{1+\mathtt{b}_{-}(\res_{\mt}(n))}
    \frac{\prod\limits_{\beta\in\Rem(\mt\downarrow_{n-1})}\left(\mathtt{q}(\res_{\mt}(n))-\mathtt{q}(\res(\beta))\right)}{
     \prod\limits_{\alpha\in\Add(\mt\downarrow_{n-1})\setminus \{\mt^{-1}(n)\}}\left(\mathtt{q}(\res_{\mt}(n))-\mathtt{q}(\res(\alpha))\right)},\nonumber
\end{align}
where in the last equation we have used the elementary identity
$$(b'-b)(b'-b^{-1})=b'(b'+b'^{-1}-b-b^{-1}),\qquad \forall b,b'\in\mathbb{K}^*,$$
and Lemma \ref{HLL. lem2.13. type=s}.
\end{caselist}
Finally, note that $\Rem(\undmu)\cap\mathcal{D}=\emptyset$, since $\delta_{\mu^{(0)}}=1$. Combing this with Case \ref{d=1,delta=0}, Case \ref{d=0,delta=0} and Case \ref{d=0,delta=1}, we complete the proof of \eqref{type=s, case(1) ,veFT}.
\end{proof}

\subsection{$d_{\undmu}=1$, $\delta_{\mu^{(0)}}=1$}\label{type=s, case(3) ,section}
In this subsection, we shall compute $\varepsilon_n(F_{\rm T})=p_{0,\bar{0}}(F_{\rm T})$ when $d_{\undmu}=1$, $\delta_{\mu^{(0)}}=1$. Since $\delta_{\mu^{(0)}}=1$, we have $\Rem(\undmu)\cap\mathcal{D}=\emptyset$ again. We set $k_0\in[N]$ to be the unique number such that $\alpha_{k_0}\in\Add(\undmu)\cap\mathcal{D}.$

Since $d_{\undmu}=1$, we have $({\rm T}\downarrow_{n-1})_{\overline{1}}=(\mt\downarrow_{n-1},e_i,0)\in{\rm Tri}_{\bar{1}}(\undmu)$, where $i:=\max(\mathcal{OD}_{\mt\downarrow_{n-1}})$. The main result of this subsection is the following, {\bf whose proof is slightly different from Subsection \ref{type=s, case(1) ,section}}.
\begin{prop}\label{type=s, case(3) ,mainprop}
Let $\undla\in\mathscr{P}^{\mathsf{s},m}_{n}.$ Suppose ${\rm T}=(\mt,0,\beta_{\mt})\in{\rm Tri}_{\bar{0}}(\undla)$ with $\beta_{\mt}\downarrow_{n-1}=0$ and ${\rm T}\downarrow_{n-1}=(\mt\downarrow_{n-1},0,0)\in {\rm Tri}_{\bar{0}}(\underline{\mu}).$ If $d_{\undmu}=1$ and $\delta_{\mu^{(0)}}=1,$ we have
	%\begin{enumerate}
	%  \item there are scalars $a_0,\ldots,a_{r-1},b_{1,\bar{0}},\ldots,b_{N-m-1,\bar{0}},b_{1,\bar{1}},\ldots,b_{N-m-1,\bar{1}}$ such that
	%$$
	%\pi(F_{{\rm T},{\rm T}_{\bar{0}}})
	%=\sum_{\substack{1\leq\iota\leq N-m-1\\ \delta\in\mathbb{Z}_2}}b_{\iota,\delta}
	%  F_{{\rm T}\downarrow_{n-1},(\mathfrak{u}_{\iota},0,\delta e_{n-1})}
	% \otimes F_{(\mathfrak{u}_{\iota},0,\delta e_{n-1}),{\rm T}\downarrow_{n-1}},
	%$$
	%$$
	%p_{k,\bar{0}}(F_{{\rm T},{\rm T}_{\bar{0}}})=a_k F_{{\rm T}\downarrow_{n-1},{\rm T}\downarrow_{n-1}},
	%\quad 0\leq k <r.
	%$$
	%\item
	\begin{align}\label{type=s, case(3) ,veFT}
		\varepsilon_n(F_{\rm T})
		%=a_0 F_{\rm T\downarrow_{n-1}}
		=\eta_{\mt,\beta_{\mt}}(n)
		\frac{\prod\limits_{\beta\in\Rem(\mt\downarrow_{n-1})\setminus\mathcal{D}}\left(\mathtt{q}(\res_{\mt}(n))-\mathtt{q}(\res(\beta))\right)}{
			\prod\limits_{\alpha\in\Add(\mt\downarrow_{n-1})\setminus \{\mt^{-1}(n)\}}\left(\mathtt{q}(\res_{\mt}(n))-\mathtt{q}(\res(\alpha))\right)} F_{\rm T\downarrow_{n-1}}.
	\end{align}
%	where
%	\begin{align*}
%		\eta_{\mt,\beta_{\mt}}(n)
%		:=\begin{cases}
%			\frac{\mathtt{b}_{\mt,n}^{\nu_{\beta_{\mt}}(n)m}}{1+\mathtt{b}_{\mt,n}^{-\nu_{\beta_{\mt}}(n)}}, &\text{if $d_{\undla}=1$,}\\
%			\frac{1}{2}, &\text{if $d_{\undla}=0$.}
%		\end{cases}
%	\end{align*}
	%\end{enumerate}
\end{prop}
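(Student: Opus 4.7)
The plan is to follow the same template used in the proof of Proposition \ref{type=s, case(1) ,mainprop}, adapted to the more intricate situation $d_{\undmu}=1$. The first step is to postulate an ansatz
$$\widehat{\pi}(F_{\rm T}):=\sum_{\substack{1\leq\iota\leq N-m-1\\ \delta,c\in\mathbb{Z}_2}}b_{\iota,\delta,c}\,f_{({\rm T}\downarrow_{n-1})_{\bar{0}},(\mathfrak{u}_{\iota},0,\delta e_{n-1})_c}\otimes f_{(\mathfrak{u}_{\iota},0,\delta e_{n-1})_c,({\rm T}\downarrow_{n-1})_{\bar{0}}}$$
in $\mathcal{H}^f_c(n-1)\otimes_{\mathcal{H}^f_c(n-2)}\mathcal{H}^f_c(n-1)$ with unknown coefficients $b_{\iota,\delta,c}\in\mathbb{K}$, together with the ansatz $p_{k,b}(F_{\rm T})=a_{k,b}f_{{\rm T}\downarrow_{n-1},({\rm T}\downarrow_{n-1})_{b}}$ prescribed by Lemma \ref{typeos, scalars ak}(2). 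When $d_{\undla}=1$ (the subcase $k_0\neq 1$), Lemma \ref{reductionlem}(3) immediately forces every $a_{k,\bar{1}}=0$; when $d_{\undla}=0$ (the subcase $k_0=1$), both parities of $a_{k,b}$ a priori survive.

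Next I would expand $\mu_{T_{n-1}}(\widehat{\pi}(F_{\rm T}))$ using the embedding theorem \ref{maththm of embedding}(2), the action formula in Proposition \ref{generators action on seminormal basis}(2), and the multiplication rule \eqref{Non-deg multiplication2}; simultaneously expand $\sum_{k,a} X_n^k C_n^a p_{k,a}(F_{\rm T})$ by \eqref{X eigenvalues} combined with Theorem \ref{maththm of embedding}(2). Substituting both into the Mackey decomposition \eqref{type=s.Madecomequation of fTT0} and matching the coefficients of the seminormal basis elements $F_{(\mt_{k_0},0,0)}$ and $f_{(\mt_k,0,\delta_1 e_n),(\mt_k,0,\delta_1 e_n)_b}$ for $k\neq k_0$ produces a linear system of order $2N-1$, structurally identical to \eqref{type=s.(1).nondegLinearEquations} except that the parity-$\bar{1}$ block is non-trivial in the subcase $k_0=1$ and vanishes in the subcase $k_0\neq 1$.

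Then I would solve the system by computing the determinant and the algebraic cofactors via Lemmas \ref{TechLem1} and \ref{TechLem2}: the matrix is exactly the Cauchy-type matrix of Subsection \ref{type=s, case(1) ,section} with the column corresponding to $\mathbb{X}_{k_0}=1$ removed, so the determinant formula \eqref{type=s.(1).detA} transplants verbatim, and the cofactors $A_{1,1}$ and $A_{N+1,1}$ reduce to the same Cauchy quotients as in Cases \ref{d=1,delta=0}--\ref{d=0,delta=1} of the previous subsection. The only genuinely new input is the factor $\tfrac{1}{2}$ that appears in $\eta_{\mt,\beta_{\mt}}(n)$ when $d_{\undla}=0$: this reflects the fact that, because $d_{\undmu}=1$, the Clifford normalization $\gamma_{\mt\downarrow_{n-1}}$ entering the right-hand factor of $\widehat{\pi}(F_{\rm T})$ contributes an extra factor of $2$ after identifying $f_{({\rm T}\downarrow_{n-1})_{\bar{0}},({\rm T}\downarrow_{n-1})_{\bar{0}}}=F_{{\rm T}\downarrow_{n-1}}$ via \eqref{idempotent2}. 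Combining the cofactor computations with the elementary identity $(b'-b)(b'-b^{-1})=b'\bigl(\mathtt{q}(\iota')-\mathtt{q}(\iota)\bigr)$ and the constraint $\mathbb{X}_{k_0}=1$, all the subcases collapse into the single formula \eqref{type=s, case(3) ,veFT}.

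The main obstacle is the bookkeeping of the parity twists and sign conventions in the ansatz for $\widehat{\pi}(F_{\rm T})$: because $d_{\undmu}=1$, the seminormal basis elements carry an additional $\mathbb{Z}_2$-parameter on both factors, and the interplay between the $c$-indexed contributions to $\mu_{T_{n-1}}(\widehat{\pi}(F_{\rm T}))$ and the odd-parity component $p_{k,\bar{1}}(F_{\rm T})$ (present only when $k_0=1$) must be tracked precisely. This is exactly the mechanism that produces the split $\eta_{\mt,\beta_{\mt}}(n)=\tfrac{1}{2}$ versus $\eta_{\mt,\beta_{\mt}}(n)=\frac{\mathtt{b}_{\mt,n}^{\nu_{\beta_{\mt}}(n)m}}{1+\mathtt{b}_{\mt,n}^{-\nu_{\beta_{\mt}}(n)}}$ in Definition \ref{eta. type=s}, and is the only place where the present case genuinely departs from Subsection \ref{type=s, case(1) ,section}.
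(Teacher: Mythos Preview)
Your plan has the right skeleton but misses the key manoeuvre that makes this case tractable, and your explanation of the factor $\tfrac{1}{2}$ is incorrect.

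First, the ansatz you write is not well-formed: in $f_{{\rm S},{\rm T}}$ the left index ${\rm S}$ must lie in ${\rm Tri}_{\bar 0}$, so the second tensor factor $f_{(\mathfrak{u}_{\iota},0,\delta e_{n-1})_c,({\rm T}\downarrow_{n-1})_{\bar 0}}$ forces $c=\bar 0$; your extra $\mathbb{Z}_2$-parameter $c$ therefore does not enlarge the ansatz in the way you intend. More seriously, if one genuinely introduces the odd-parity pieces (both $a_{k,\bar 1}$ and parity-$\bar 1$ seminormal terms in $\widehat\pi$) the unknown count no longer matches $2N-1$, and you give no mechanism for cutting it back down. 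Your appeal to Lemma~\ref{reductionlem}(3) to kill $a_{k,\bar 1}$ is fine when $d_{\undla}=1$, but that lemma says nothing in the critical subcase $d_{\undla}=0$ (there $n\in\mathcal{D}_{\mt}$), so the ``parity-$\bar 1$ block'' you allude to cannot be handled by the lemmas you cite.

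The paper avoids all of this by a different device. It does \emph{not} attempt to find the Mackey decomposition of $F_{\rm T}$ directly. Instead it reuses the \emph{identical} $(2N-1)\times(2N-1)$ system \eqref{type=s.(1).nondegLinearEquations} from Subsection~\ref{type=s, case(1) ,section}, takes its unique solution, and uses it to \emph{define} an auxiliary element $h\in\mHfcn$ by the Mackey formula. Expanding via Theorem~\ref{maththm of embedding}(2) one finds, in the subcase $d_{\undla}=0$ (i.e.\ $k_0=1$), that $h=F_{\rm T}+F_{(\mt,e_i,0)}$ where $i=\max(\mathcal{OD}_{\mt\downarrow_{n-1}})$: the embedding formula for $d_{\undmu}=1$ produces \emph{both} idempotents symmetrically. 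Uniqueness of the Mackey decomposition then gives $\varepsilon_n(h)=a_0 F_{{\rm T}\downarrow_{n-1}}$, and Lemma~\ref{reductionlem}(2)(b) yields $\varepsilon_n(F_{\rm T})=\varepsilon_n(F_{(\mt,e_i,0)})$, whence $\varepsilon_n(F_{\rm T})=\tfrac{1}{2}a_0F_{{\rm T}\downarrow_{n-1}}$. This is the true origin of the $\tfrac{1}{2}$ in $\eta_{\mt,\beta_{\mt}}(n)$; it has nothing to do with $\gamma_{\mt\downarrow_{n-1}}$ or with \eqref{idempotent2} (which reads $f_{{\rm T}_{\bar 0},{\rm T}_{\bar 0}}=F_{{\rm T}_{\bar 0}}$, with no factor of $2$). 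In the remaining subcases $d_{\undla}=1$ one finds $h=F_{\rm T}$ on the nose, and the cofactor computations proceed exactly as in \eqref{a0.case(1.b)}--\eqref{a0.case(1.c)}.
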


\begin{proof}
	We consider the following system of linear equations
	\begin{align}\label{type=s.(3).nondegLinearEquations}
		& A\cdot \text{diag}(\underbrace{1,\dots,1}_{r\,\text{copies}},
		\epsilon\mathtt{c}^{\mathfrak{u}_{1}}_{\mathfrak{u}_{1}},\dots,
		\epsilon\mathtt{c}^{\mathfrak{u}_{1}}_{\mathfrak{u}_{N-m-1}},
		\epsilon\mathtt{c}^{\mathfrak{u}_{1}}_{\mathfrak{u}_{1}},\dots,
		\epsilon\mathtt{c}^{\mathfrak{u}_{1}}_{\mathfrak{u}_{N-m-1}})\nonumber\\
		& \cdot \left(a_0,\dots,a_{r-1},b_{1,\bar{0}},\dots,b_{N-m-1,\bar{0}},b_{1,\bar{1}},\dots,b_{N-m-1,\bar{1}}\right)^{T}\\
		&\quad=\begin{cases}
			(1,\underbrace{0,,\dots,0}_{2N-2\,\text{copies}})^T, &\text{if $d_{\undla}=0$  and $\delta_{\beta_{\mt}}(n)=\bar{0}$ (in this case, $k_0=1$),}\\
			(1,\underbrace{0,,\dots,0}_{2N-2\,\text{copies}})^T, &\text{if $d_{\undla}=1$  and $\delta_{\beta_{\mt}}(n)=\bar{0}$,}\\
			(\underbrace{0,,\dots,0}_{N\,\text{copies}},1,\underbrace{0,\dots,0}_{N-2\,\text{copies}})^T, &\text{if $d_{\undla}=1$  and $\delta_{\beta_{\mt}}(n)=\bar{1}$,}
		\end{cases}\nonumber
	\end{align}
	where the $(2N-1)\times (2N-1)$ matrix $A$ is the same as in \eqref{type=s.(1).nondegLinearEquations},
{\footnotesize
	$$A=\begin{pmatrix}
		1 & \mathbb{X}_1 & \cdots & \mathbb{X}_1^{r-1} &
		\frac{\mathbb{X}_1}{\mathbb{X}_1-{\rm Y}_1} & \cdots & \frac{\mathbb{X}_1}{\mathbb{X}_1-{\rm Y}_{N-m-1}} &
		\frac{\mathbb{X}_1}{\mathbb{X}_1-{\rm Y}_1^{-1}} & \cdots & \frac{\mathbb{X}_1}{\mathbb{X}_1-{\rm Y}_{N-m-1}^{-1}}\\
		\vdots & \vdots &  & \vdots & \vdots &  & \vdots & \vdots &  & \vdots\\
		1 & \mathbb{X}_N & \cdots & \mathbb{X}_N^{r-1} &
		\frac{\mathbb{X}_N}{\mathbb{X}_N-{\rm Y}_1} & \cdots & \frac{\mathbb{X}_N}{\mathbb{X}_N-{\rm Y}_{N-m-1}} &
		\frac{\mathbb{X}_N}{\mathbb{X}_N-{\rm Y}_1^{-1}} & \cdots & \frac{\mathbb{X}_N}{\mathbb{X}_N-{\rm Y}_{N-m-1}^{-1}}\\
		1 & \mathbb{X}_1^{-1} & \cdots & \mathbb{X}_1^{-(r-1)} &
		\frac{\mathbb{X}_1^{-1}}{\mathbb{X}_1^{-1}-{\rm Y}_1} & \cdots & \frac{\mathbb{X}_1^{-1}}{\mathbb{X}_1^{-1}-{\rm Y}_{N-m-1}} &
		\frac{\mathbb{X}_1^{-1}}{\mathbb{X}_1^{-1}-{\rm Y}_1^{-1}} & \cdots & \frac{\mathbb{X}_1^{-1}}{\mathbb{X}_1^{-1}-{\rm Y}_{N-m-1}^{-1}}\\
		\vdots & \vdots &  & \vdots & \vdots &  & \vdots & \vdots &  & \vdots\\
		1 & \mathbb{X}_{k_0-1}^{-1} & \cdots & \mathbb{X}_{k_0-1}^{-(r-1)} &
		\frac{\mathbb{X}_{k_0-1}^{-1}}{\mathbb{X}_{k_0-1}^{-1}-{\rm Y}_1} & \cdots & \frac{\mathbb{X}_{k_0-1}^{-1}}{\mathbb{X}_{k_0-1}^{-1}-{\rm Y}_{N-m-1}} &
		\frac{\mathbb{X}_{k_0-1}^{-1}}{\mathbb{X}_{k_0-1}^{-1}-{\rm Y}_1^{-1}} & \cdots & \frac{\mathbb{X}_{k_0-1}^{-1}}{\mathbb{X}_{k_0-1}^{-1}-{\rm Y}_{N-m-1}^{-1}}\\
		1 & \mathbb{X}_{k_0+1}^{-1} & \cdots & \mathbb{X}_{k_0+1}^{-(r-1)} &
		\frac{\mathbb{X}_{k_0+1}^{-1}}{\mathbb{X}_{k_0+1}^{-1}-{\rm Y}_1} & \cdots & \frac{\mathbb{X}_{k_0+1}^{-1}}{\mathbb{X}_{k_0+1}^{-1}-{\rm Y}_{N-m-1}} &
		\frac{\mathbb{X}_{k_0+1}^{-1}}{\mathbb{X}_{k_0+1}^{-1}-{\rm Y}_1^{-1}} & \cdots & \frac{\mathbb{X}_{k_0+1}^{-1}}{\mathbb{X}_{k_0+1}^{-1}-{\rm Y}_{N-m-1}^{-1}}\\
		\vdots & \vdots &  & \vdots & \vdots &  & \vdots & \vdots &  & \vdots\\
		1 & \mathbb{X}_N^{-1} & \cdots & \mathbb{X}_N^{-(r-1)} &
		\frac{\mathbb{X}_N^{-1}}{\mathbb{X}_N^{-1}-{\rm Y}_1} & \cdots & \frac{\mathbb{X}_N^{-1}}{\mathbb{X}_N^{-1}-{\rm Y}_{N-m-1}} &
		\frac{\mathbb{X}_N^{-1}}{\mathbb{X}_N^{-1}-{\rm Y}_1^{-1}} & \cdots & \frac{\mathbb{X}_N^{-1}}{\mathbb{X}_N^{-1}-{\rm Y}_{N-m-1}^{-1}}
	\end{pmatrix}.$$ }
	It follows from equation \eqref{type=s.(1).detA} and Proposition \ref{compare eigenvalue} that above system of linear equation has a unique solution $$(a_0,\ldots,a_{r-1},b_{1,\bar{0}},\ldots,b_{N-m-1,\bar{0}},b_{1,\bar{1}},\ldots,b_{N-m-1,\bar{1}})^{T}\in\mathbb{K}^{2N-1}.$$
	Now we define an element $h\in\mHfcn$ via Mackey decomposition \eqref{nondegMadecomequation} as follows:
	\begin{align}
		h
	&=\sum_{\substack{1\leq\iota\leq N-m-1\\ \delta\in\mathbb{Z}_2}}b_{\iota,\delta}
	\iota_{n-1}\left(f_{{\rm T}\downarrow_{n-1},(\mathfrak{u}_{\iota},0,\delta e_{n-1})}\right)
	T_{n-1} \iota_{n-1}\left(f_{(\mathfrak{u}_{\iota},0,\delta e_{n-1}),{\rm T}\downarrow_{n-1}}\right)\nonumber\\
	&\qquad\qquad\qquad+\sum_{k'=0}^{r-1}a_{k'} X_n^{k'}\iota_{n-1}\left(F_{{\rm T}\downarrow_{n-1}}\right).\nonumber
	\end{align}

	By Theorem \ref{maththm of embedding} (2), and recall that $i:=\max(\mathcal{OD}_{\mt\downarrow_{n-1}}),$ we have
	\begin{align*}
		&\iota_{n-1}\left(f_{{\rm T}\downarrow_{n-1},(\mathfrak{u}_{\iota},0,\delta e_{n-1})}\right)\\
		&\qquad= \left(f_{(\mt_{k_0},0,0),(\mathfrak{u}_{\iota,{k_0}},0,\delta e_{n-1})}
		+f_{(\mt_{k_0},e_{i},0),(\mathfrak{u}_{\iota,{k_0}},e_{i},\delta e_{n-1})}\right)\\
		&\qquad\qquad\qquad +\sum_{\substack{1 \leq k\leq N, k\neq k_0\\ \delta_1\in\mathbb{Z}_2}}
		f_{(\mt_k,0,\delta_1 e_n),(\mathfrak{u}_{\iota,k},0,\delta e_{n-1}+\delta_1 e_n)},\\
		&\iota_{n-1}\left(f_{(\mathfrak{u}_{\iota},0,\delta e_{n-1}),{\rm T}\downarrow_{n-1}}\right)\\
		&\qquad=\left(f_{(\mathfrak{u}_{\iota,{k_0}},0,\delta e_{n-1}),(\mt_{k_0},0,0)}
		+f_{(\mathfrak{u}_{\iota,{k_0}},e_{i},\delta e_{n-1}),(\mt_{k_0},e_{i},0)}\right)\\
		&\qquad\qquad\qquad +\sum_{\substack{1 \leq k\leq N, k\neq k_0\\ \delta_2\in\mathbb{Z}_2}}
		f_{(\mathfrak{u}_{\iota,k},0,\delta e_{n-1}+\delta_2 e_n),(\mt_k,0,\delta_2 e_n)},\\
		&\iota_{n-1}\left(F_{{\rm T}\downarrow_{n-1}}\right)=\left(F_{(\mt_{k_0},0,0)}+F_{(\mt_{k_0},e_{i},0)}\right)+\sum_{\substack{1 \leq k\leq N, k\neq k_0\\ \delta_3\in\mathbb{Z}_2}}
		F_{(\mt_k,0,\delta_3 e_n)}.
	\end{align*}
	
	Combining with Proposition \ref{generators action on seminormal basis} (2) and equations \eqref{Non-deg multiplication1}, \eqref{Non-deg multiplication2}, we can deduce that
	\begin{align}
		h=\sum_{\substack{1\leq k \leq N, k\neq k_0\\ \delta_1\in\mathbb{Z}_2}}
		A_{(\mt_k,0,\delta_1 e_n)}
		F_{(\mt_k,0,\delta_1 e_n)}
		+A_{(\mt_{k_0},0,0)}\left(F_{(\mt_{k_0},0,0)}
		+F_{(\mt_{k_0},e_{i},0)}\right),\nonumber
	\end{align}
	where
	\begin{align*}
		A_{(\mt_k,0,\delta_1 e_n)}
		:=&\sum_{k'=0}^{r-1}
		a_{k'} \mathtt{b}_{+}(\res(\alpha_k))^{\nu(\delta_1)k'}\\
		&+\sum_{\substack{1\leq\iota\leq N-m-1\\ \delta\in\mathbb{Z}_2}}b_{\iota,\delta} \frac{\epsilon\mathtt{c}^{\mathfrak{u}_{1}}_{\mathfrak{u}_{\iota}}}{1-\mathtt{b}_{+}(\res(\beta_{\iota}))^{\nu(\delta)}\mathtt{b}_{+}(\res(\alpha_k))^{-\nu(\delta_1)}}\in\mathbb{K}
	\end{align*}
	for $k\in[N]\setminus\{k_0\},$ $\delta_1\in\mathbb{Z}_2,$ and
	\begin{align*}
		A_{(\mt_{k_0},0,0)}
		:=&\sum_{k'=0}^{r-1}a_{k'}
		+\sum_{\substack{1\leq\iota\leq N-m-1\\ \delta\in\mathbb{Z}_2}}b_{\iota,\delta}	\frac{\epsilon\mathtt{c}^{\mathfrak{u}_{1}}_{\mathfrak{u}_{\iota}}}{1-\mathtt{b}_{+}(\res(\beta_{\iota}))^{\nu(\delta)}\mathtt{b}_{+}(\res(\alpha_{k_0}))^{-1}}\in\mathbb{K}.
	\end{align*}

\begin{caselist}
	\item $d_{\undla}=0,$ $\delta_{\beta_{\mt}}(n)=\bar{0}.$\label{d=0,delta n=0}
	 In this case, we have $k_0=1$ and $\mt=\mt_1=\mt_{k_0}$.
	By  \eqref{type=s.(3).nondegLinearEquations}, we have
	\begin{align*}
		A_{(\mt_{k_0},0,0)}=1, \text{ and }  A_{(\mt_k,0,\delta_1 e_n)}=0
	\end{align*}
	for $k\in[N]\setminus\{k_0\},$ $\delta_1\in\mathbb{Z}_2.$ It follows that
	$$h=F_{\rm T}+F_{(\mt,e_{i},0)}.$$
	By the uniqueness of Mackey decomposition \eqref{type=s.Madecomequation of fTT0}, we have
	\begin{align*}
		\varepsilon_n\left(F_{\rm T}\right)
		+\varepsilon_n\left(F_{(\mt,e_{i},0)}\right)
		=\varepsilon_n(h)=a_0F_{{\rm T}\downarrow_{n-1}}.
	\end{align*}
On the other hand, it follows from Lemma \ref{reductionlem} (2.b) that
	\begin{align*}
		\varepsilon_n\left(F_{\rm T}\right)
		=\varepsilon_n\left(F_{(\mt,e_{i},0)}\right).
	\end{align*}
Hence we deduce from char $\mathbb{K}\neq 2$ that
	\begin{align*}
		\varepsilon_n(F_{{\rm T}})=\frac{1}{2}a_0F_{{\rm T}\downarrow_{n-1}}.
	\end{align*}
	Following a similar computation with \eqref{a0.case(1.a)}, we have
	\begin{align*}
		a_0=  \frac{\prod\limits_{\beta\in\Rem(\mt\downarrow_{n-1})}\left(\mathtt{q}(\res_{\mt}(n))-\mathtt{q}(\res(\beta))\right)}{
			\prod\limits_{\alpha\in\Add(\mt\downarrow_{n-1})\setminus \{\mt^{-1}(n)\}}\left(\mathtt{q}(\res_{\mt}(n))-\mathtt{q}(\res(\alpha))\right)}.
	\end{align*}
\item $d_{\undla}=1,$ $\delta_{\beta_{\mt}}(n)=\bar{0}.$\label{d=1,delta n=0}
	 In this case, we have $k_0\neq 1,$ and ${\rm T}=(\mt_1,0,0)=(\mt,0,0).$
	By \eqref{type=s.(3).nondegLinearEquations}, we have
	\begin{align*}
		A_{(\mt_1,0,0)}=1, \quad A_{(\mt_{k_0},0,0)}=0, \text{ and }  A_{(\mt_k,0,\delta_1 e_n)}=0
	\end{align*}
	for $(k,\delta_1)\in \biggl(\bigl([N]\setminus\{k_0\}\bigr)\times\mathbb{Z}_2\biggr)\setminus \{(1,\bar{0})\}.$ It follows that
	$$h=F_{(\mt_1,0,0)}=F_{{\rm T}}.$$
By the uniqueness of Mackey decomposition \eqref{type=s.Madecomequation of fTT0}, we have
	\begin{align*}
		\varepsilon_n(F_{{\rm T}})=\varepsilon_n(h)=a_0F_{{\rm T}\downarrow_{n-1}}.
	\end{align*}
	Following a similar computation with \eqref{a0.case(1.b)}, we have
	\begin{align*}
		a_0
		=
		\frac{\mathtt{b}_{+}(\res_{\mt}(n))^{-m}}{1+\mathtt{b}_{+}(\res_{\mt}(n))}
		\frac{\prod\limits_{\beta\in\Rem(\mt\downarrow_{n-1})}\left(\mathtt{q}(\res_{\mt}(n))-\mathtt{q}(\res(\beta))\right)}{
			\prod\limits_{\alpha\in\Add(\mt\downarrow_{n-1})\setminus \{\mt^{-1}(n)\}}\left(\mathtt{q}(\res_{\mt}(n))-\mathtt{q}(\res(\alpha))\right)}.
	\end{align*}
\item $d_{\undla}=1,$ $\delta_{\beta_{\mt}}(n)=\bar{1}.$\label{d=1,delta n=1}
 In this case, we have $k_0\neq 1,$ and ${\rm T}=(\mt_1,0,e_n)=(\mt,0,e_n).$ Following a similar computation with Case \ref{d=1,delta n=0}, we have
	\begin{align*}
		\varepsilon_n(F_{{\rm T}})=\varepsilon_n(h)=a_0F_{{\rm T}\downarrow_{n-1}}
	\end{align*}
and
	\begin{align*}
		a_0
		=
		\frac{\mathtt{b}_{-}(\res_{\mt}(n))^{-m}}{1+\mathtt{b}_{-}(\res_{\mt}(n))}
		\frac{\prod\limits_{\beta\in\Rem(\mt\downarrow_{n-1})}\left(\mathtt{q}(\res_{\mt}(n))-\mathtt{q}(\res(\beta))\right)}{
			\prod\limits_{\alpha\in\Add(\mt\downarrow_{n-1})\setminus \{\mt^{-1}(n)\}}\left(\mathtt{q}(\res_{\mt}(n))-\mathtt{q}(\res(\alpha))\right)}.
	\end{align*}
	\end{caselist}
	Finally, note that $\Rem(\mt\downarrow_{n-1})\cap\mathcal{D}=\emptyset$, since $\delta_{\mu^{(0)}}=1$. Combing this with Case \ref{d=0,delta n=0}, Case \ref{d=1,delta n=0}, Case \ref{d=1,delta n=1}, we deduce the formula \eqref{type=s, case(3) ,veFT}.
\end{proof}

\subsection{$d_{\undmu}=0$, $\delta_{\mu^{(0)}}=0$ }\label{type=s, case(2) ,section}
In this subsection, we shall compute $\varepsilon_n(F_{\rm T})=p_{0,\bar{0}}(F_{\rm T})$ when $d_{\undmu}=0$, $\delta_{\mu^{(0)}}=0$. Since $\delta_{\mu^{(0)}}=0$, we have $d_{\undla}=0$. We set $\iota_{0}\in[N-m]$ to be the unique number such that $\beta_{\iota_0}\in\Rem(\undmu)\cap\mathcal{D}$.

The main result of this subsection is the following.
\begin{prop}\label{type=s, case(2) ,mainprop}
Let $\undla\in\mathscr{P}^{\mathsf{s},m}_{n}.$ Suppose ${\rm T}=(\mt,0,\beta_{\mt})\in{\rm Tri}_{\bar{0}}(\undla)$ with $\beta_{\mt}\downarrow_{n-1}=0$ and ${\rm T}\downarrow_{n-1}=(\mt\downarrow_{n-1},0,0)\in {\rm Tri}_{\bar{0}}(\underline{\mu}).$ If $d_{\undmu}=0$ and $\delta_{\mu^{(0)}}=0,$ we have the following.
\begin{enumerate}
  \item There are some scalars $a_0,\ldots,a_{r-1},b_{1,\bar{0}},\ldots,b_{N-m,\bar{0}},
      b_{1,\bar{1}},\ldots,b_{\iota_{0}-1,\bar{1}},b_{\iota_{0}+1,\bar{1}},\ldots,b_{N-m,\bar{1}}\in\mathbb{K}$
such that
\begin{align*}
\pi(F_{{\rm T}})
&=\sum_{\substack{1\leq\iota\leq N-m, \iota\neq\iota_{0}\\ \delta\in\mathbb{Z}_2}}b_{\iota,\delta}
  f_{{\rm T}\downarrow_{n-1},(\mathfrak{u}_{\iota},0,\delta e_{n-1})}
  \otimes f_{(\mathfrak{u}_{\iota},0,\delta e_{n-1}),{\rm T}\downarrow_{n-1}} \\
&\quad\quad + b_{\iota_0,\bar{0}} f_{{\rm T}\downarrow_{n-1},(\mathfrak{u}_{\iota_0},0,0)}\otimes f_{(\mathfrak{u}_{\iota_0},0,0),{\rm T}\downarrow_{n-1}},
\end{align*}
and $$
p_{k,\bar{0}}(F_{{\rm T}})=a_k F_{{\rm T}\downarrow_{n-1}},
\quad 0\leq k <r.
$$
  \item We have
\begin{align}\label{type=s, case(2) ,veFT}
\varepsilon_n(F_{\rm T})
%=a_0 F_{\rm T\downarrow_{n-1}}
=\eta_{\mt,\beta_{\mt}}(n)
    \frac{\prod\limits_{\beta\in\Rem(\mt\downarrow_{n-1})\setminus\mathcal{D}}\left(\mathtt{q}(\res_{\mt}(n))-\mathtt{q}(\res(\beta))\right)}{
    \prod\limits_{\alpha\in\Add(\mt\downarrow_{n-1})\setminus \{\mt^{-1}(n)\}}\left(\mathtt{q}(\res_{\mt}(n))-\mathtt{q}(\res(\alpha))\right)} F_{\rm T\downarrow_{n-1}}.
\end{align}
%where
%\begin{align*}
%\eta_{\mt,\beta_{\mt}}(n)
%:=\frac{\mathtt{b}_{\mt,n}^{\nu_{\beta_{\mt}}(n)(m+1)}\left(\mathtt{b}_{\mt,n}^{-\nu_{\beta_{\mt}}(n)}-1\right)}{\mathtt{b}_{\mt,n}^{-\nu_{\beta_{\mt}}(n)}-\mathtt{b}_{\mt,n}^{\nu_{\beta_{\mt}}(n)}}
%=\frac{\mathtt{b}_{\mt,n}^{\nu_{\beta_{\mt}}(n)m}}{1+\mathtt{b}_{\mt,n}^{-\nu_{\beta_{\mt}}(n)}}.
%\end{align*}
\end{enumerate}
\end{prop}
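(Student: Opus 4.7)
The plan is to follow the template established in Subsection \ref{type=s, case(1) ,section} (Proposition \ref{type=s, case(1) ,mainprop}), with the key modification that the removable diagonal box $\beta_{\iota_0}\in\Rem(\undmu)\cap\mathcal{D}$ forces $\mathtt{b}_{+}(\res(\beta_{\iota_0}))=1=\mathtt{b}_{-}(\res(\beta_{\iota_0}))$, so the two ``columns'' coming from $Y_{\iota_0}$ and $Y_{\iota_0}^{-1}$ collapse into one. Moreover, since $j_{\iota_0}\in\mathcal{D}_{\mathfrak{u}_{\iota_0}}$, the triple $(\mathfrak{u}_{\iota_0},0,e_{n-1})$ does not lie in ${\rm Tri}(\undmu)$ (it violates $\supp(\beta)\subseteq[n-1]\setminus\mathcal{D}_{\mathfrak{u}_{\iota_0}}$), which is exactly why the unknown $b_{\iota_0,\bar{1}}$ is absent in the ansatz for $\pi(F_{\rm T})$ in the proposition.

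First I will invoke Lemma \ref{typeos, scalars ak}(1) (valid since $d_{\undmu}=0$) to conclude $p_{k,\bar{1}}(F_{\rm T})=0$ and $p_{k,\bar{0}}(F_{\rm T})=a_k F_{{\rm T}\downarrow_{n-1}}$ for $0\leq k<r$. Then I will define the candidate
\[
\widehat{\pi}(F_{\rm T})
:=\sum_{\substack{1\leq\iota\leq N-m,\,\iota\neq\iota_0\\ \delta\in\mathbb{Z}_2}}
   b_{\iota,\delta}\,f_{{\rm T}\downarrow_{n-1},(\mathfrak{u}_\iota,0,\delta e_{n-1})}
     \otimes f_{(\mathfrak{u}_\iota,0,\delta e_{n-1}),{\rm T}\downarrow_{n-1}}
   +b_{\iota_0,\bar{0}}\,f_{{\rm T}\downarrow_{n-1},(\mathfrak{u}_{\iota_0},0,0)}\otimes f_{(\mathfrak{u}_{\iota_0},0,0),{\rm T}\downarrow_{n-1}},
\]
with $2(N-m)-1$ unknowns $b_{\iota,\delta}$. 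Substituting into the Mackey decomposition \eqref{type=s.Madecomequation of fTT0} and expanding via Theorem \ref{maththm of embedding}(1) (note $d_{\undmu}=0$), Proposition \ref{generators action on seminormal basis}(2), and the multiplication formulas \eqref{Non-deg multiplication1}, \eqref{Non-deg multiplication2}, I will compare coefficients of the $F_{(\mt_k,0,\delta_1 e_n)}$ (for $k\in[N]$, $\delta_1\in\mathbb{Z}_2$, subject to the relevant support constraints on $\delta_1$ coming from whether $\alpha_k\in\mathcal{D}$) to get a square linear system in the $r+2(N-m)-1=2N-1$ unknowns $a_0,\dots,a_{r-1}$ and the $b_{\iota,\delta}$'s. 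The coefficient matrix will be the $(2N-1)\times(2N-1)$ matrix obtained from the $2N\times 2N$ matrix of Case \ref{type=s, case(1) ,section} by deleting one of the two duplicate columns corresponding to $Y_{\iota_0}$ and one of the duplicate rows corresponding to $\mathbb{X}_{k_0}=1$ (here there is no such row, so the deletion reverses direction). I will then apply Lemma \ref{TechLem1} to verify invertibility (the surviving $\prod({\rm Z}_p-{\rm Z}_q)$, $\prod(\mathbb{X}_v-\mathbb{X}_u)$ factors are all nonzero by Proposition \ref{compare eigenvalue}).

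To prove part (2), I will extract $a_0$ as the appropriate cofactor using Lemma \ref{TechLem2}, splitting into subcases according to $(d_{\undla},\delta_{\beta_{\mt}}(n))$. In each subcase the cofactor ratio simplifies: the paired identity $(b'-b)(b'-b^{-1})=b'(\mathtt{q}(b')-\mathtt{q}(b))$ converts the double products in $\mathtt{b}_{\pm}$ into single products in $\mathtt{q}$; the contribution from the single (un-paired) factor $(\mathbb{X}_1-1)$ (or $(\mathbb{X}_{N+1}-1)$) coming from $Y_{\iota_0}=1$ produces the prefactor $1/(1+\mathtt{b}_{\mt,n}^{-\nu_{\beta_\mt}(n)})$ appearing in $\eta_{\mt,\beta_\mt}(n)$ when $d_{\undla}=0$; and the exclusion of $\beta_{\iota_0}$ from the numerator in \eqref{type=s, case(2) ,veFT} is automatic, since $\mathtt{q}(\res_\mt(n))-\mathtt{q}(\res(\beta_{\iota_0}))$ was cancelled against $(\mathbb{X}_{k}-1)$ during this simplification. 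Finally, the subcase $d_{\undla}=1$ is handled by observing that the new diagonal entry at $\alpha_1=\mt^{-1}(n)$ forces $\mathbb{X}_1=1$ as well, giving a further cancellation that produces $\eta_{\mt,\beta_\mt}(n)=1$.

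The main obstacle will be the bookkeeping in step two, in particular matching the row/column indices of the shrunken determinant to the surviving primitive idempotents and confirming that the $(\mathbb{X}_{v}-1)$ factor left over after the collapse of the $Y_{\iota_0}$ pair cancels the correct $(\mathtt{q}(\res_\mt(n))-\mathtt{q}(\res(\beta_{\iota_0})))$ factor, so that the product on the right-hand side of \eqref{type=s, case(2) ,veFT} is indeed indexed by $\Rem(\mt\downarrow_{n-1})\setminus\mathcal{D}$ and the surviving prefactor agrees with $\eta_{\mt,\beta_\mt}(n)$ as defined in Definition \ref{eta. type=s}. The proofs of both assertions (1) and (2) then follow by the uniqueness statement in the Mackey decomposition \eqref{type=s.Madecomequation of fTT0}, completely analogously to the argument at the end of the proof of Proposition \ref{type=s, case(1) ,mainprop}.
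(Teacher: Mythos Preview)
Your overall strategy matches the paper's proof exactly: define $\widehat{\pi}(F_{\rm T})$ with the stated ansatz, expand via Theorem \ref{maththm of embedding}(1), Proposition \ref{generators action on seminormal basis}(2), and \eqref{Non-deg multiplication1}--\eqref{Non-deg multiplication2}, compare coefficients of the $F_{(\mt_k,0,\delta_1 e_n)}$, invoke Lemma \ref{TechLem1} for invertibility and Lemma \ref{TechLem2} for the cofactor, then appeal to uniqueness in the Mackey decomposition. Your identification of why $b_{\iota_0,\bar 1}$ is absent (namely $n-1\in\mathcal{D}_{\mathfrak{u}_{\iota_0}}$) is also correct.

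However, two bookkeeping errors would derail the execution. First, your count of unknowns is off: with $r=2m+1$ one has $r+2(N-m)-1=2N$, not $2N-1$; and since $\delta_{\mu^{(0)}}=0$ forces $\Add(\undmu)\cap\mathcal{D}=\emptyset$ (the last part of $\mu^{(0)}$ equals $1$, so no new diagonal row can be started), every $\alpha_k$ is off the diagonal and all $2N$ idempotents $F_{(\mt_k,0,\delta_1 e_n)}$, $k\in[N]$, $\delta_1\in\mathbb{Z}_2$, appear. The linear system is therefore $2N\times 2N$, not $(2N-1)\times(2N-1)$. Second---and this is the same combinatorial oversight---there is \emph{no} subcase $d_{\undla}=1$ here: $\Add(\undmu)\cap\mathcal{D}=\emptyset$ guarantees $d_{\undla}=d_{\undmu}=0$, as the paper notes at the start of Subsection \ref{type=s, case(2) ,section}. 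So the case split is only over $\delta_{\beta_\mt}(n)\in\{\bar 0,\bar 1\}$, and in both cases $\eta_{\mt,\beta_\mt}(n)=\mathtt{b}_{\mt,n}^{\nu_{\beta_\mt}(n)m}/(1+\mathtt{b}_{\mt,n}^{-\nu_{\beta_\mt}(n)})$. Once you correct these two points, the determinant and cofactor computations go through exactly as you sketched, and your description of how the unpaired factor $(\mathbb{X}_1-1)$ produces the $1/(1+\mathtt{b}_{\mt,n}^{-\nu_{\beta_\mt}(n)})$ prefactor is right.
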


\begin{proof}
We define
\begin{align}\label{type=s.(2).eq.widehatpi}
\widehat{\pi}(F_{{\rm T}})
&=\sum_{\substack{1\leq\iota\leq N-m, \iota\neq\iota_{0}\\ \delta\in\mathbb{Z}_2}}b_{\iota,\delta}
  f_{{\rm T}\downarrow_{n-1},(\mathfrak{u}_{\iota},0,\delta e_{n-1})}
  \otimes f_{(\mathfrak{u}_{\iota},0,\delta e_{n-1}),{\rm T}\downarrow_{n-1}} \\
&\quad + b_{\iota_0,\bar{0}} f_{{\rm T}\downarrow_{n-1},(\mathfrak{u}_{\iota_0},0,0)}\otimes f_{(\mathfrak{u}_{\iota_0},0,0),{\rm T}\downarrow_{n-1}}\in \mathcal{H}^f_c(n-1)\otimes_{\mathcal{H}^f_c(n-2)}\mathcal{H}^f_c(n-1)\nonumber
\end{align}
for some unknown scalars $b_{\iota,\delta}\in \mathbb{K}.$
Then we get that
\begin{align*}
\mu_{T_{n-1}}(\widehat{\pi}(F_{{\rm T}}))
&=\sum_{\substack{1\leq\iota\leq N-m,\iota\neq\iota_0 \\ \delta\in\mathbb{Z}_2}}b_{\iota,\delta}
  \iota_{n-1}\left(f_{{\rm T}\downarrow_{n-1},(\mathfrak{u}_{\iota},0,\delta e_{n-1})}\right)
  T_{n-1} \iota_{n-1}\left(f_{(\mathfrak{u}_{\iota},0,\delta e_{n-1}),{\rm T}\downarrow_{n-1}}\right)\\
&\qquad\qquad\qquad + b_{\iota_0,\bar{0}}
  \iota_{n-1}\left(f_{{\rm T}\downarrow_{n-1},(\mathfrak{u}_{\iota_0},0,0)}\right)
  T_{n-1} \iota_{n-1}\left(f_{(\mathfrak{u}_{\iota_0},0,0),{\rm T}\downarrow_{n-1}}\right).
\end{align*}
By Theorem \ref{maththm of embedding} (1), we have
\begin{align*}
\iota_{n-1}\left(f_{{\rm T}\downarrow_{n-1},(\mathfrak{u}_{\iota},0,\delta e_{n-1})}\right)
&=\sum_{\substack{1 \leq k\leq N \\ \delta_1\in\mathbb{Z}_2}}
  f_{(\mt_k,0,\delta_1 e_n),(\mathfrak{u}_{\iota,k},0,\delta e_{n-1}+\delta_1 e_n)},\\
\iota_{n-1}\left(f_{(\mathfrak{u}_{\iota},0,\delta e_{n-1}),{\rm T}\downarrow_{n-1}}\right)
&=\sum_{\substack{1 \leq k\leq N \\ \delta_2\in\mathbb{Z}_2}}
  f_{(\mathfrak{u}_{\iota,k},0,\delta e_{n-1}+\delta_2 e_n),(\mt_k,0,\delta_2 e_n)},
\end{align*}
for $(\iota,\delta)\in\biggl(\big([N-m]\setminus\{\iota_0\}\bigr)\times\mathbb{Z}_2\biggr)\cup\{(\iota_0,\bar{0})\}$.
Combining with Proposition \ref{generators action on seminormal basis} (2) and equation \eqref{Non-deg multiplication1}, \eqref{Non-deg multiplication2}, we have
\begin{align}\label{type=s.(2).eq. muTn-1}
&\mu_{T_{n-1}}(\widehat{\pi}(F_{{\rm T}}))\\
&=\sum_{\substack{1\leq\iota\leq N-m,\iota\neq\iota_0\\ \delta\in\mathbb{Z}_2}}b_{\iota,\delta}
   \sum_{\substack{1\leq k \leq N \\ \delta_1\in\mathbb{Z}_2}}
       \frac{\epsilon}{1-\mathtt{b}_{+}(\res(\beta_{\iota}))^{\nu(\delta)}\mathtt{b}_{+}(\res(\alpha_k))^{-\nu(\delta_1)}}
        \mathtt{c}^{\mathfrak{u}_{1,k}}_{\mathfrak{u}_{\iota,k}}
         F_{(\mt_k,0,\delta_1 e_n)} \nonumber\\
&\qquad\qquad\quad
      +\sum_{\substack{1\leq k\leq N\\ \delta_1\in\mathbb{Z}_2}}b_{\iota_0,\bar{0}}
          \frac{\epsilon}{1-\mathtt{b}_{+}(\res(\beta_{\iota_0}))\mathtt{b}_{+}(\res(\alpha_k))^{-\nu(\delta_1)}}
           \mathtt{c}^{\mathfrak{u}_{1,k}}_{\mathfrak{u}_{\iota_0,k}}
            F_{(\mt_k,0,\delta_1 e_n)} \nonumber\\
&=\sum_{\substack{1\leq\iota\leq N-m,\iota\neq\iota_0\\ \delta\in\mathbb{Z}_2}}b_{\iota,\delta}
\sum_{\substack{1\leq k \leq N \\ \delta_1\in\mathbb{Z}_2}}
\frac{\epsilon}{1-\mathtt{b}_{+}(\res(\beta_{\iota}))^{\nu(\delta)}\mathtt{b}_{+}(\res(\alpha_k))^{-\nu(\delta_1)}}
\mathtt{c}^{\mathfrak{u}_{1}}_{\mathfrak{u}_{\iota}}
F_{(\mt_k,0,\delta_1 e_n)} \nonumber\\
&\qquad\qquad\quad
+\sum_{\substack{1\leq k\leq N\\ \delta_1\in\mathbb{Z}_2}}b_{\iota_0,\bar{0}}
\frac{\epsilon}{1-\mathtt{b}_{+}(\res(\beta_{\iota_0}))\mathtt{b}_{+}(\res(\alpha_k))^{-\nu(\delta_1)}}
\mathtt{c}^{\mathfrak{u}_{1}}_{\mathfrak{u}_{\iota_0}}
F_{(\mt_k,0,\delta_1 e_n)}, \nonumber
\end{align}
where $\mathtt{b}_{+}(\res(\beta_{\iota_0}))=1$ and
\begin{align*}
\nu(\delta):=
\begin{cases}
-1, &\text{if $\delta=\bar{1}$,}\\
1, &\text{if $\delta=\bar{0}$,}
\end{cases}
\quad
\nu(\delta_1):=
\begin{cases}
-1, &\text{if $\delta_1=\bar{1}$,}\\
1, &\text{if $\delta_1=\bar{0}$.}
\end{cases}
\end{align*}

	By Lemma \ref{typeos, scalars ak} (1), we have
	\begin{align}\label{scalars ak. types, (2)}p_{k',\bar{1}}(F_{{\rm T}})=0,\quad
	p_{k',\bar{0}}(F_{{\rm T}})=a_{k'} F_{{\rm T}\downarrow_{n-1}}
	\end{align}
	for some $a_{k'}\in \mathbb{K},$ $0\leq k' <r.$
	%(2) $\pi(F_{{\rm T},{\rm T}})=\sum_{{\rm U},V\in {\rm Tri}(\underline{\mu})}b_{{\rm U},V}
	%F_{{\rm T}\downarrow_{n-1},{\rm U}}\otimes F_{V,{\rm T}\downarrow_{n-1}}$
	%for some $b_{{\rm U},V}\in\mathbb{K}.$

It follows from Theorem \ref{maththm of embedding} (1) and Proposition \ref{generators action on seminormal basis} (1) that
\begin{align}\label{type=s.(2).eq. Xnk'pk'0}
&\sum_{k'=0}^{r-1}X_n^{k'}\iota_{n-1}(p_{k',\bar{0}}(F_{{\rm T}}))\\
&=\sum_{k'=0}^{r-1}a_{k'} X_n^{k'}\iota_{n-1}(F_{{\rm T}\downarrow_{n-1}}) \nonumber\\
&=\sum_{k'=0}^{r-1}a_{k'} \sum_{\substack{1\leq k\leq N\\ \delta_1\in\mathbb{Z}_2}}
    X_n^{k'} F_{(\mt_k,0,\delta_1 e_n)}\nonumber\\
&= \sum_{\substack{1\leq k\leq N\\ \delta_1\in\mathbb{Z}_2}}\sum_{k'=0}^{r-1}
   a_{k'}\mathtt{b}_{+}(\res(\alpha_k))^{\nu(\delta_1)k'}
    F_{(\mt_k,0,\delta_1 e_n)}.\nonumber
\end{align}We now regard those scalars
\begin{align}\label{type=s.(2).nondegunknownquantities}
a_0,\ldots,a_{r-1},b_{1,\bar{0}},\ldots,b_{N-m,\bar{0}},
      b_{1,\bar{1}},\ldots,b_{\iota_{0}-1,\bar{1}},b_{\iota_{0}+1,\bar{1}},\ldots,b_{N-m,\bar{1}}
\end{align}
appeared in  \eqref{scalars ak. types, (2)} and \eqref{type=s.(2).eq.widehatpi} as unknown quantities.
Combining \eqref{type=s.Madecomequation of fTT0} with \eqref{type=s.(2).eq. muTn-1}, \eqref{type=s.(2).eq. Xnk'pk'0} and comparing the coefficients of
$F_{(\mt_k,0,\delta_1 e_n)},$
$k\in [N],$ $\delta_1\in\mathbb{Z}_2$
on both sides of \eqref{type=s.Madecomequation of fTT0} with $\pi(F_{{\rm T}})$ replaced
with $\widehat{\pi}(F_{{\rm T}}),$ then we get the following linear system of equations in two cases:
\begin{align}\label{type=s.(2).nondegLinearEquations}
& A \cdot \text{diag}(\underbrace{1,\dots,1}_{r\,\text{copies}},
                    \epsilon\mathtt{c}^{\mathfrak{u}_{1}}_{\mathfrak{u}_{1}},\dots,
                     \epsilon\mathtt{c}^{\mathfrak{u}_{1}}_{\mathfrak{u}_{N-m}},
                      \epsilon\mathtt{c}^{\mathfrak{u}_{1}}_{\mathfrak{u}_{1}},\dots,
                       \epsilon\mathtt{c}^{\mathfrak{u}_{1}}_{\mathfrak{u}_{\iota_0-1}},
                        \epsilon\mathtt{c}^{\mathfrak{u}_{1}}_{\mathfrak{u}_{\iota_0+1}},\dots,
                         \epsilon\mathtt{c}^{\mathfrak{u}_{1}}_{\mathfrak{u}_{N-m}})\nonumber\\
& \cdot \left(a_0,\dots,a_{r-1},b_{1,\bar{0}},\dots,b_{N-m,\bar{0}},b_{1,\bar{1}},\dots,b_{\iota_0-1,\bar{1}},b_{\iota_0+1,\bar{1}},\dots,b_{N-m,\bar{1}}\right)^{T}\\
&\quad=\begin{cases}
  (1,\underbrace{0,,\dots,0}_{2N-1\,\text{copies}})^T, &\text{if $\delta_{\beta_{\mt}}(n)=\bar{0}$,}\\
  (\underbrace{0,,\dots,0}_{N\,\text{copies}},1,\underbrace{0,\dots,0}_{N-1\,\text{copies}})^T, &\text{if $\delta_{\beta_{\mt}}(n)=\bar{1}$,}
  \end{cases}\nonumber
\end{align}
where the $2N\times 2N$ matrix $A=\left(B,C\right)$ is given by
$$B=\begin{pmatrix}
1 & \mathbb{X}_1 & \cdots & \mathbb{X}_1^{r-1} \\
\vdots & \vdots &  & \vdots\\
1 & \mathbb{X}_N & \cdots & \mathbb{X}_N^{r-1} \\
1 & \mathbb{X}_1^{-1} & \cdots & \mathbb{X}_1^{-(r-1)}\\
\vdots & \vdots &  & \vdots\\
1 & \mathbb{X}_N^{-1} & \cdots & \mathbb{X}_N^{-(r-1)}
\end{pmatrix}_{2N\times r},$$
{\footnotesize
$$C=\begin{pmatrix}
\frac{\mathbb{X}_1}{\mathbb{X}_1-{\rm Y}_1} & \cdots & \frac{\mathbb{X}_1}{\mathbb{X}_1-{\rm Y}_{N-m}} &
\frac{\mathbb{X}_1}{\mathbb{X}_1-{\rm Y}_1^{-1}} & \cdots & \frac{\mathbb{X}_1}{\mathbb{X}_1-{\rm Y}_{\iota_0-1}^{-1}} & \frac{\mathbb{X}_1}{\mathbb{X}_1-{\rm Y}_{\iota_0+1}^{-1}} & \cdots & \frac{\mathbb{X}_1}{\mathbb{X}_1-{\rm Y}_{N-m}^{-1}}\\
\vdots &  & \vdots & \vdots &  & \vdots & \vdots &  &\vdots\\
\frac{\mathbb{X}_N}{\mathbb{X}_N-{\rm Y}_1} & \cdots & \frac{\mathbb{X}_N}{\mathbb{X}_N-{\rm Y}_{N-m}} &
\frac{\mathbb{X}_N}{\mathbb{X}_N-{\rm Y}_1^{-1}} & \cdots & \frac{\mathbb{X}_N}{\mathbb{X}_N-{\rm Y}_{\iota_0-1}^{-1}} & \frac{\mathbb{X}_N}{\mathbb{X}_N-{\rm Y}_{\iota_0+1}^{-1}} & \cdots & \frac{\mathbb{X}_N}{\mathbb{X}_N-{\rm Y}_{N-m}^{-1}}\\
\frac{\mathbb{X}_1^{-1}}{\mathbb{X}_1^{-1}-{\rm Y}_1} & \cdots & \frac{\mathbb{X}_1^{-1}}{\mathbb{X}_1^{-1}-{\rm Y}_{N-m}} &
\frac{\mathbb{X}_1^{-1}}{\mathbb{X}_1^{-1}-{\rm Y}_1^{-1}} & \cdots & \frac{\mathbb{X}_1^{-1}}{\mathbb{X}_1^{-1}-{\rm Y}_{\iota_0-1}^{-1}} & \frac{\mathbb{X}_1^{-1}}{\mathbb{X}_1^{-1}-{\rm Y}_{\iota_0+1}^{-1}} & \cdots & \frac{\mathbb{X}_1^{-1}}{\mathbb{X}_1^{-1}-{\rm Y}_{N-m}^{-1}}\\
\vdots &  & \vdots & \vdots &  & \vdots & \vdots &  &\vdots\\
\frac{\mathbb{X}_N^{-1}}{\mathbb{X}_N^{-1}-{\rm Y}_1} & \cdots & \frac{\mathbb{X}_N^{-1}}{\mathbb{X}_N^{-1}-{\rm Y}_{N-m}} &
\frac{\mathbb{X}_N^{-1}}{\mathbb{X}_N^{-1}-{\rm Y}_1^{-1}} & \cdots & \frac{\mathbb{X}_N^{-1}}{\mathbb{X}_N^{-1}-{\rm Y}_{\iota_0-1}^{-1}} & \frac{\mathbb{X}_1}{\mathbb{X}_N^{-1}-{\rm Y}_{\iota_0+1}^{-1}} & \cdots & \frac{\mathbb{X}_N^{-1}}{\mathbb{X}_N^{-1}-{\rm Y}_{N-m}^{-1}}
\end{pmatrix}_{2N\times (2N-r)}.$$ }

If the matrix $A$ is invertible, then we can deduce that there exist scalars \eqref{type=s.(2).nondegunknownquantities} such that the corresponding elements $\widehat{\pi}(F_{{\rm T}})$ and $p_{k,\bar{0}}(F_{{\rm T}}),\ k=0,1,\dots,r-1$ satisfy the equality \eqref{type=s.Madecomequation of fTT0} with $\pi(F_{{\rm T}})$ replaced by $\widehat{\pi}(F_{{\rm T}})$ (Note again that all $p_{k,\bar{1}}(F_{{\rm T}})=0$).
Then by the uniqueness statement of \eqref{type=s.Madecomequation of fTT0}, we can deduce that $\widehat{\pi}(F_{{\rm T}})=\pi(F_{{\rm T}}).$

To use Lemmas \ref{TechLem1}, \ref{TechLem2} conveniently, we further denote $\mathbb{X}_{N+k}:=\mathbb{X}_k^{-1},$ $k\in[N]$ and ${\rm Z}_{\iota}:={\rm Y}_{\iota},$ ${\rm Z}_{N-m+\iota}:={\rm Y}_{\iota}^{-1},$ $\iota\in[N-m]\setminus\{\iota_0\}.$ Then it follows from
Lemma \ref{TechLem1} that
\begin{align}\label{type=s.(2).detA}
{\rm det}(A)
&=(-1)^r\frac{\prod\limits_{\substack{1\leq t\leq 2(N-m) \\ t\neq N-m+\iota_0}}{\rm Z}_t \prod\limits_{\substack{1\leq p< q\leq 2(N-m)\\
p,q\neq N-m+\iota_0}}({\rm Z}_p-{\rm Z}_q)\prod\limits_{1\leq u<v\leq 2N}(\mathbb{X}_v-\mathbb{X}_u)}
{\prod\limits_{j=1}^{2N}\prod\limits_{\substack{1\leq s\leq 2(N-m)\\ s\neq N-m+\iota_0}}(\mathbb{X}_j-{\rm Z}_s)}\\
&=(-1)^r\frac{\prod\limits_{\substack{1\leq p< q\leq 2(N-m)\\ p,q\neq N-m+\iota_0}}({\rm Z}_p-{\rm Z}_q)\prod\limits_{1\leq u<v\leq 2N}(\mathbb{X}_v-\mathbb{X}_u)}
{\prod\limits_{j=1}^{2N}\prod\limits_{\substack{1\leq s\leq 2(N-m)\\ s\neq N-m+\iota_0}}(\mathbb{X}_j-{\rm Z}_s)},\nonumber
\end{align}
where we have used the fact that
$$\prod\limits_{\substack{1\leq t\leq 2(N-m) \\ t\neq N-m+\iota_0}}{\rm Z}_t={\rm Y}_{\iota_0}=1.$$
It follows from Proposition \ref{compare eigenvalue} that $A$ is invertible. Hence
By Lemma \ref{TechLem2},
the the algebraic cofactor $A_{1,1}$ of ${\rm det}(A)$ at position $(1,1)$ is
\begin{align*}
A_{1,1}
&=(-1)^{r-1}\left(\prod\limits_{2\leq k\leq 2N}\mathbb{X}_k \right)\cdot\frac{\prod\limits_{\substack{1\leq p< q\leq 2(N-m)\\ p,q\neq N-m+\iota_0}}({\rm Z}_p-{\rm Z}_q)\prod\limits_{2\leq u<v\leq 2N}(\mathbb{X}_v-\mathbb{X}_u)}{\prod\limits_{j=2}^{2N}\prod\limits_{\substack{1\leq s\leq 2(N-m)\\ s\neq N-m+\iota_0}}(\mathbb{X}_j-{\rm Z}_s)}\\
&=(-1)^{r-1}\mathbb{X}_1^{-1}\cdot\frac{\prod\limits_{\substack{1\leq p< q\leq 2(N-m)\\ p,q\neq N-m+\iota_0}}({\rm Z}_p-{\rm Z}_q)\prod\limits_{2\leq u<v\leq 2N}(\mathbb{X}_v-\mathbb{X}_u)}{\prod\limits_{j=2}^{2N}\prod\limits_{\substack{1\leq s\leq 2(N-m)\\ s\neq N-m+\iota_0}}(\mathbb{X}_j-{\rm Z}_s)},
\end{align*}
where we have used the fact that $\prod_{1\leq k\leq 2N}\mathbb{X}_k=1.$

Similarly, by Lemma \ref{TechLem2}, the algebraic cofactor $A_{N+1,1}$ of ${\rm det}(A)$ at position $(N+1,1)$ is
\begin{align*}
A_{N+1,1}
&=(-1)^{N+r-1}\left(\prod\limits_{\substack{1\leq k\leq 2N\\ k\neq N+1}}\mathbb{X}_k \right)\cdot\frac{\prod\limits_{\substack{1\leq p< q\leq 2(N-m)\\ p,q\neq N-m+\iota_0}}({\rm Z}_p-{\rm Z}_q)\prod\limits_{\substack{1\leq u<v\leq 2N \\ u,v\neq N+1}}(\mathbb{X}_v-\mathbb{X}_u)}{\prod\limits_{\substack{1\leq j\leq n \\ j\neq N+1}}\prod\limits_{\substack{1\leq s\leq 2(N-m)\\ s\neq N-m+\iota_0}}(\mathbb{X}_j-{\rm Z}_s)}\\
&=(-1)^{N+r-1}\mathbb{X}_1\cdot\frac{\prod\limits_{\substack{1\leq p< q\leq 2(N-m)\\ p,q\neq N-m+\iota_0}}({\rm Z}_p-{\rm Z}_q)\prod\limits_{\substack{1\leq u<v\leq 2N \\ u,v\neq N+1}}(\mathbb{X}_v-\mathbb{X}_u)}{\prod\limits_{\substack{1\leq j\leq n \\ j\neq N+1}}\prod\limits_{\substack{1\leq s\leq 2(N-m)\\ s\neq N-m+\iota_0}}(\mathbb{X}_j-{\rm Z}_s)},
\end{align*}
where we have also used the fact that $\prod_{1\leq k\leq 2N}\mathbb{X}_k=1.$

\begin{caselist}
		\item $d_{\undla}=0,$ $\delta_{\beta_{\mt}}(n)=\bar{0}.$\label{d=0delta n=0}
	Then
\begin{align}\label{a0.case(2.a)}
&a_0
=\frac{A_{1,1}}{{\rm det}(A)}
=\mathbb{X}_1^{-1}
  \frac{\prod\limits_{\substack{1\leq s\leq 2(N-m)\\ s\neq N-m+\iota_0}}(\mathbb{X}_1-{\rm Z}_s)}{\prod\limits_{v=2}^{2N}(\mathbb{X}_1-\mathbb{X}_v)}\\
&=   \frac{\mathtt{b}_{-}(\res_{\mt}(n))\left(\mathtt{b}_{+}(\res_{\mt}(n))-1\right)}{\mathtt{b}_{+}(\res_{\mt}(n))-\mathtt{b}_{-}(\res_{\mt}(n))}\nonumber\\
  &\qquad\qquad\cdot\frac{\prod\limits_{\beta\in\Rem(\mt\downarrow_{n-1})\setminus\{\beta_{\iota_0}\},\ast\in\{\pm\}}\left(\mathtt{b}_{+}(\res_{\mt}(n))-\mathtt{b}_{\ast}(\res(\beta))\right)}{
         \prod\limits_{\alpha\in\Add(\mt\downarrow_{n-1})\setminus \{\mt^{-1}(n)\},\ast\in\{\pm\}}\left(\mathtt{b}_{+}(\res_{\mt}(n))-\mathtt{b}_{\ast}(\res(\alpha))\right)}\nonumber\\
&= \frac{\mathtt{b}_{+}(\res_{\mt}(n))^{-m}}{\mathtt{b}_{+}(\res_{\mt}(n))+1}\nonumber\\
   &\qquad\qquad\cdot\frac{\prod\limits_{\beta\in\Rem(\mt\downarrow_{n-1})\setminus\mathcal{D}}\left(\mathtt{q}(\res_{\mt}(n))-\mathtt{q}(\res(\beta))\right)}{
          \prod\limits_{\alpha\in\Add(\mt\downarrow_{n-1})\setminus \{\mt^{-1}(n)\}}\left(\mathtt{q}(\res_{\mt}(n))-\mathtt{q}(\res(\alpha))\right)},\nonumber
\end{align}
where in the last equation we have used the elementary identity
$$(b'-b)(b'-b^{-1})=b'(b'+b'^{-1}-b-b^{-1}),\qquad \forall b,b'\in\mathbb{K}^*,$$
and Lemma \ref{HLL. lem2.13. type=s}.
\item $d_{\undla}=0,$ $\delta_{\beta_{\mt}}(n)=\bar{1}.$\label{d=0delta n=1}
Then
\begin{align}\label{a0.case(2.b)}
&a_0
=\frac{A_{N+1,1}}{{\rm det}(A)}
=(-1)^{N+1}\mathbb{X}_1\cdot
  \frac{\prod\limits_{\substack{1\leq s\leq 2(N-m)\\ s\neq N-m+\iota_0}}(\mathbb{X}_{N+1}-{\rm Z}_s)}{\prod\limits_{u=1}^{N}(\mathbb{X}_{N+1}-\mathbb{X}_u)\prod\limits_{v=N+2}^{2N}(\mathbb{X}_v-\mathbb{X}_{N+1})}\\
&=   \frac{\mathtt{b}_{+}(\res_{\mt}(n))\left(\mathtt{b}_{-}(\res_{\mt}(n))-1\right)}{\mathtt{b}_{-}(\res_{\mt}(n))-\mathtt{b}_{+}(\res_{\mt}(n))}\nonumber\\
  &\qquad\qquad\cdot
    \frac{\prod\limits_{\beta\in\Rem(\mt\downarrow_{n-1})\setminus\{\beta_{\iota_0}\},\ast\in\{\pm\}}\left(\mathtt{b}_{-}(\res_{\mt}(n))-\mathtt{b}_{\ast}(\res(\beta))\right)}{
    \prod\limits_{\alpha\in\Add(\mt\downarrow_{n-1})\setminus \{\mt^{-1}(n)\},\ast\in\{\pm\}}\left(\mathtt{b}_{-}(\res_{\mt}(n))-\mathtt{b}_{\ast}(\res(\alpha))\right)}\nonumber\\
&=
   \frac{\mathtt{b}_{-}(\res_{\mt}(n))^{-m}}{\mathtt{b}_{-}(\res_{\mt}(n))+1}\nonumber\\
  &\qquad\qquad\cdot
    \frac{\prod\limits_{\beta\in\Rem(\mt\downarrow_{n-1})\setminus\mathcal{D}}\left(\mathtt{q}(\res_{\mt}(n))-\mathtt{q}(\res(\beta))\right)}{
    \prod\limits_{\alpha\in\Add(\mt\downarrow_{n-1})\setminus \{\mt^{-1}(n)\}}\left(\mathtt{q}(\res_{\mt}(n))-\mathtt{q}(\res(\alpha))\right)}.\nonumber
\end{align}
\end{caselist}
To sum up, we obtain the formula \eqref{type=s, case(2) ,veFT} from Case \ref{d=0delta n=0} and Case \ref{d=0delta n=1}.
\end{proof}

\subsection{$d_{\undmu}=1$, $\delta_{\mu^{(0)}}=0$}\label{type=s, case(4) ,section}

In this subsection, we shall compute $\varepsilon_n(F_{\rm T})=p_{0,\bar{0}}(F_{\rm T})$ when $d_{\undmu}=1$, $\delta_{\mu^{(0)}}=0$. Since $\delta_{\mu^{(0)}}=0$, we have $d_{\undla}=1$ and $n\notin \mathcal{D}_{\mt}$. We set $\iota_{0}\in[N-m]$ to be the unique number such that $\beta_{\iota_0}\in\Rem(\undmu)\cap\mathcal{D}$.

Since $d_{\undmu}=1$, we have $({\rm T}\downarrow_{n-1})_{\overline{1}}=(\mt\downarrow_{n-1},e_i,0)\in{\rm Tri}_{\bar{1}}(\undmu)$ for $i:=\max(\mathcal{OD}_{\mt\downarrow_{n-1}}).$
The main result of this subsection is the following, {\bf whose proof is a combination of Subsection \ref{type=s, case(3) ,section} and \ref{type=s, case(2) ,section}.}

\begin{prop}\label{type=s, case(4) ,mainprop}
Let $\undla\in\mathscr{P}^{\mathsf{s},m}_{n}.$ Suppose ${\rm T}=(\mt,0,\beta_{\mt})\in{\rm Tri}_{\bar{0}}(\undla)$ with $\beta_{\mt}\downarrow_{n-1}=0$ and ${\rm T}\downarrow_{n-1}=(\mt\downarrow_{n-1},0,0)\in {\rm Tri}_{\bar{0}}(\underline{\mu}).$ If $d_{\undmu}=1$ and $\delta_{\mu^{(0)}}=0,$ then we have
%\begin{enumerate}
%  \item There are some scalars $a_0,\ldots,a_{r-1},b_{1,\bar{0}},\ldots,b_{N-m,\bar{0}},
%      b_{1,\bar{1}},\ldots,b_{\iota_{0}-1,\bar{1}},b_{\iota_{0}+1,\bar{1}},\ldots,b_{N-m,\bar{1}}\in\mathbb{K}$
%such that
%\begin{align*}
%\pi(F_{{\rm T}})
%&=\sum_{\substack{1\leq\iota\leq N-m, \iota\neq\iota_{0}\\ \delta\in\mathbb{Z}_2}}b_{\iota,\delta}
%  f_{{\rm T}\downarrow_{n-1},(\mathfrak{u}_{\iota},0,\delta e_{n-1})}
%  \otimes f_{(\mathfrak{u}_{\iota},0,\delta e_{n-1}),{\rm T}\downarrow_{n-1}} \\
%&\quad\quad + b_{\iota_0,\bar{0}} f_{{\rm T}\downarrow_{n-1},(\mathfrak{u}_{\iota_0},0,0)}\otimes f_{(\mathfrak{u}_{\iota_0},0,0), {\rm T}\downarrow_{n-1}},
%\end{align*}
%and $$
%p_{k,\bar{0}}(F_{{\rm T}})=a_k F_{{\rm T}\downarrow_{n-1}},\quad p_{k,\bar{1}}(F_{{\rm T}})=0,
%\quad 0\leq k <r.
%$$
%  \item We have
\begin{align}\label{type=s, case(4) ,veFT}
\varepsilon_n(F_{\rm T})
%=a_0 F_{\rm T\downarrow_{n-1}}
=\eta_{\mt,\beta_{\mt}}(n)
    \frac{\prod\limits_{\beta\in\Rem(\mt\downarrow_{n-1})\setminus\mathcal{D}}\left(\mathtt{q}(\res_{\mt}(n))-\mathtt{q}(\res(\beta))\right)}{
    \prod\limits_{\alpha\in\Add(\mt\downarrow_{n-1})\setminus \{\mt^{-1}(n)\}}\left(\mathtt{q}(\res_{\mt}(n))-\mathtt{q}(\res(\alpha))\right)} F_{\rm T\downarrow_{n-1}}.
\end{align}
%where
%\begin{align*}
%\eta_{\mt,\beta_{\mt}}(n)
%:=\frac{\mathtt{b}_{\mt,n}^{\nu_{\beta_{\mt}}(n)(m+1)}\left(\mathtt{b}_{\mt,n}^{-\nu_{\beta_{\mt}}(n)}-1\right)}{\mathtt{b}_{\mt,n}^{-\nu_{\beta_{\mt}}(n)}-\mathtt{b}_{\mt,n}^{\nu_{\beta_{\mt}}(n)}}
%=\frac{\mathtt{b}_{\mt,n}^{\nu_{\beta_{\mt}}(n)m}}{1+\mathtt{b}_{\mt,n}^{-\nu_{\beta_{\mt}}(n)}}.
%\end{align*}
%\end{enumerate}
\end{prop}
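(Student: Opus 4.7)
The plan is to combine the strategies of Subsections \ref{type=s, case(3) ,section} and \ref{type=s, case(2) ,section}, since Case (4) simultaneously has $d_{\undmu}=1$ (as in Subsection \ref{type=s, case(3) ,section}) and $\delta_{\mu^{(0)}}=0$ (as in Subsection \ref{type=s, case(2) ,section}). Two preliminary reductions are available: first, since $d_{\undla}=1$ and $n\notin\mathcal{D}_{\mt}$, Lemma \ref{reductionlem}(3) combined with Lemma \ref{typeos, scalars ak}(2) yields $p_{k,\bar 1}(F_{\rm T})=0$ and $p_{k,\bar 0}(F_{\rm T})=a_{k}F_{{\rm T}\downarrow_{n-1}}$ for some $a_{k}\in\mathbb{K}$; second, since $\beta_{\iota_0}\in\mathcal{D}$, the ansatz for $\pi(F_{\rm T})$ must omit the $(\iota_0,\bar 1)$ term, so I will take $\widehat{\pi}(F_{\rm T})$ to have the same shape as in \eqref{type=s.(2).eq.widehatpi}.

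Next I will expand $\mu_{T_{n-1}}(\widehat{\pi}(F_{\rm T}))+\sum_{k'=0}^{r-1}X_n^{k'}\iota_{n-1}(a_{k'}F_{{\rm T}\downarrow_{n-1}})$ via Theorem \ref{maththm of embedding}(2) and Proposition \ref{generators action on seminormal basis}. The hypothesis $\delta_{\mu^{(0)}}=0$ annihilates the exceptional $(f_{{\rm P},{\rm Q}}+f_{{\rm P}',{\rm Q}'})$-prefactor of the embedding, so only summands indexed by $\undla_{k}=\undmu\cup\{\alpha_{k}\}$ with $k\neq k_0$ (equivalently $d_{\undla_{k}}=1$) contribute, and no $F_{(\mt_{k_0},\cdot,\cdot)}$-component appears on either side. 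Comparing coefficients of $F_{(\mt_{k},0,\delta_1 e_n)}$ for $k\in[N]\setminus\{k_0\}$ and $\delta_1\in\mathbb{Z}_2$ produces a linear system whose coefficient matrix coincides in structure with the matrix $A$ of \eqref{type=s.(2).nondegLinearEquations}; by \eqref{type=s.(2).detA} together with Proposition \ref{compare eigenvalue}, $|A|$ is nonzero, so the system has a unique solution in $a_0,\ldots,a_{r-1}$ and the $\{b_{\iota,\delta}\}$.

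Following the approach of Subsection \ref{type=s, case(3) ,section}, I will then define $h\in\mHfcn$ by the Mackey expression built from this unique solution. Uniqueness of the Mackey decomposition (Lemma \ref{BK. Lemma 3.8}) forces $h=F_{\rm T}$ --- the ``doubling'' $h=F_{\rm T}+F_{(\mt,e_{i},0)}$ appearing in Case 1 of Subsection \ref{type=s, case(3) ,section} does not occur here because $d_{\undla}=1$ places $F_{\rm T}$ in a block of odd $\sharp\mathcal{D}_{\undla}$. Applying $\varepsilon_n$ to both sides then gives $\varepsilon_n(F_{\rm T})=a_0 F_{{\rm T}\downarrow_{n-1}}$, and extracting $a_0$ via Lemma \ref{TechLem2} as the cofactor $A_{1,1}/|A|$ (when $\delta_{\beta_{\mt}}(n)=\bar 0$) or $A_{N+1,1}/|A|$ (when $\delta_{\beta_{\mt}}(n)=\bar 1$), followed by the elementary identity $(b'-b)(b'-b^{-1})=b'(b'+b'^{-1}-b-b^{-1})$ and Lemma \ref{HLL. lem2.13. type=s}, will reproduce the asserted value \eqref{type=s, case(4) ,veFT} with $\eta_{\mt,\beta_{\mt}}(n)=\mathtt{b}_{\mt,n}^{\nu_{\beta_{\mt}}(n)m}/(1+\mathtt{b}_{\mt,n}^{-\nu_{\beta_{\mt}}(n)})$, exactly as in Cases \ref{d=0delta n=0} and \ref{d=0delta n=1} of Subsection \ref{type=s, case(2) ,section}.

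The hardest part will be carefully verifying the absence of the doubling term in the identification $h=F_{\rm T}$. Although $d_{\undmu}=1$, after passing to level $n$ one must confirm that only the summands indexed by shapes $\undla_k$ with $d_{\undla_k}=1$ survive, giving single $F_{(\mt_k,0,\delta_1 e_n)}$ contributions rather than the $F_{(\mt_k,0,0)}+F_{(\mt_k,e_i,0)}$ pairs from Case 1 of Subsection \ref{type=s, case(3) ,section}. This parity bookkeeping, coupled with the fact that $i:=\max\mathcal{OD}_{\mt\downarrow_{n-1}}$ remains an element of $\mathcal{OD}_{\mt}$ (since $n\notin\mathcal{D}_{\mt}$), should ensure that the Mackey expression collapses precisely to $F_{\rm T}$ and that the cofactor computation matches that of Subsection \ref{type=s, case(2) ,section} modulo the factor $\eta_{\mt,\beta_{\mt}}(n)$.
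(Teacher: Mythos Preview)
Your overall architecture matches the paper: set up the $2N\times 2N$ linear system of \eqref{type=s.(2).nondegLinearEquations}, solve it, build $h$, and extract $a_0$ via the cofactor computations of Subsection \ref{type=s, case(2) ,section}. A minor slip first: since $\delta_{\mu^{(0)}}=0$ there is no addable diagonal node and hence \emph{no} index $k_0$ in this case; all $k\in[N]$ contribute (with $d_{\undla_k}=1$ throughout), and the special index is $\iota_0$ (the removable diagonal box), exactly as in Subsection \ref{type=s, case(2) ,section}.

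The substantive gap is your claim that $h=F_{\rm T}$. This is false. When expanding the $\iota=\iota_0$ summand of $\mu_{T_{n-1}}(\widehat{\pi}(F_{\rm T}))$, the $F(n-1,\cdot,\cdot)$ part of Proposition \ref{generators action on seminormal basis}(2) enters: since $n-1\in\mathcal{D}_{\mathfrak{u}_{\iota_0,k}}$ with $p=t$ and $t$ odd (because $d_{\undla_k}=1$), one lands in the ``$p=t$'' branch of the definition of $F(i,{\rm T},{\rm S}_a)$, which flips ${\rm S}_a\mapsto {\rm S}_{a+\bar{1}}$ while leaving $\alpha_{\mt}$ unchanged. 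That contribution is \emph{not} killed by the multiplication rule \eqref{Non-deg multiplication2} (contrast case (2), where $t$ is even, the relevant branch shifts $\alpha$, and the term dies). One finds
\[
h=F_{\rm T}+\sum_{\substack{1\leq k\leq N\\ \delta_1\in\mathbb{Z}_2}}(-1)^{\delta_1}b_{\iota_0,\bar{0}}\,\frac{\epsilon\,\mathtt{c}^{\mathfrak{u}_1}_{\mathfrak{u}_{\iota_0}}}{1-\mathtt{b}_{+}(\res(\alpha_k))^{\nu(\delta_1)}}\,f_{(\mt_k,0,\delta_1 e_n),(\mt_k,e_i,(\delta_1+\bar{1})e_n)},
\]
with $i=\max(\mathcal{OD}_{\mt\downarrow_{n-1}})$. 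The missing ingredient is Lemma \ref{tinylemma1}: each extra term has $\beta$-components $\delta_1 e_n$ and $(\delta_1+\bar{1})e_n$, whose $n$-th entries differ, so the parity condition of that lemma fails and $\varepsilon_n$ annihilates them. Only after this does $\varepsilon_n(h)=\varepsilon_n(F_{\rm T})=a_0 F_{{\rm T}\downarrow_{n-1}}$ follow, and your cofactor computation goes through. So the obstruction you anticipated is real, but it is not a ``doubling'' of idempotents as in Subsection \ref{type=s, case(3) ,section}; it is the appearance of genuinely off-diagonal seminormal elements, removed by Lemma \ref{tinylemma1} rather than by Mackey uniqueness.
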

\begin{proof}
%	By Lemma \ref{reductionlem} (3), we have
%	$$p_{k,\bar{1}}(F_{{\rm T}})=0,\quad \text{for any $0\leq k <r.$}$$ Now the rest of the proof is completely similar to Proposition \ref{type=s, case(2) ,veFT}.
We consider the following system of linear equations
\begin{align}\label{type=s.(4).nondegLinearEquations}
& A \cdot \text{diag}(\underbrace{1,\dots,1}_{r\,\text{copies}},
                    \epsilon\mathtt{c}^{\mathfrak{u}_{1}}_{\mathfrak{u}_{1}},\dots,
                     \epsilon\mathtt{c}^{\mathfrak{u}_{1}}_{\mathfrak{u}_{N-m}},
                      \epsilon\mathtt{c}^{\mathfrak{u}_{1}}_{\mathfrak{u}_{1}},\dots,
                       \epsilon\mathtt{c}^{\mathfrak{u}_{1}}_{\mathfrak{u}_{\iota_0-1}},
                        \epsilon\mathtt{c}^{\mathfrak{u}_{1}}_{\mathfrak{u}_{\iota_0+1}},\dots,
                         \epsilon\mathtt{c}^{\mathfrak{u}_{1}}_{\mathfrak{u}_{N-m}})\nonumber\\
& \cdot \left(a_0,\dots,a_{r-1},b_{1,\bar{0}},\dots,b_{N-m,\bar{0}},b_{1,\bar{1}},\dots,b_{\iota_0-1,\bar{1}},b_{\iota_0+1,\bar{1}},\dots,b_{N-m,\bar{1}}\right)^{T}\\
&\quad=\begin{cases}
  (1,\underbrace{0,,\dots,0}_{2N-1\,\text{copies}})^T, &\text{if $\delta_{\beta_{\mt}}(n)=\bar{0}$,}\\
  (\underbrace{0,,\dots,0}_{N\,\text{copies}},1,\underbrace{0,\dots,0}_{N-1\,\text{copies}})^T, &\text{if $\delta_{\beta_{\mt}}(n)=\bar{1}$,}
  \end{cases}\nonumber
\end{align}
where the $2N\times 2N$ matrix $A=\left(B,C\right)$ is the same as in \eqref{type=s.(2).nondegLinearEquations}
$$B=\begin{pmatrix}
1 & \mathbb{X}_1 & \cdots & \mathbb{X}_1^{r-1} \\
\vdots & \vdots &  & \vdots\\
1 & \mathbb{X}_N & \cdots & \mathbb{X}_N^{r-1} \\
1 & \mathbb{X}_1^{-1} & \cdots & \mathbb{X}_1^{-(r-1)}\\
\vdots & \vdots &  & \vdots\\
1 & \mathbb{X}_N^{-1} & \cdots & \mathbb{X}_N^{-(r-1)}
\end{pmatrix}_{2N\times r},$$
{\footnotesize
$$C=\begin{pmatrix}
\frac{\mathbb{X}_1}{\mathbb{X}_1-{\rm Y}_1} & \cdots & \frac{\mathbb{X}_1}{\mathbb{X}_1-{\rm Y}_{N-m}} &
\frac{\mathbb{X}_1}{\mathbb{X}_1-{\rm Y}_1^{-1}} & \cdots & \frac{\mathbb{X}_1}{\mathbb{X}_1-{\rm Y}_{\iota_0-1}^{-1}} & \frac{\mathbb{X}_1}{\mathbb{X}_1-{\rm Y}_{\iota_0+1}^{-1}} & \cdots & \frac{\mathbb{X}_1}{\mathbb{X}_1-{\rm Y}_{N-m}^{-1}}\\
\vdots &  & \vdots & \vdots &  & \vdots & \vdots &  &\vdots\\
\frac{\mathbb{X}_N}{\mathbb{X}_N-{\rm Y}_1} & \cdots & \frac{\mathbb{X}_N}{\mathbb{X}_N-{\rm Y}_{N-m}} &
\frac{\mathbb{X}_N}{\mathbb{X}_N-{\rm Y}_1^{-1}} & \cdots & \frac{\mathbb{X}_N}{\mathbb{X}_N-{\rm Y}_{\iota_0-1}^{-1}} & \frac{\mathbb{X}_N}{\mathbb{X}_N-{\rm Y}_{\iota_0+1}^{-1}} & \cdots & \frac{\mathbb{X}_N}{\mathbb{X}_N-{\rm Y}_{N-m}^{-1}}\\
\frac{\mathbb{X}_1^{-1}}{\mathbb{X}_1^{-1}-{\rm Y}_1} & \cdots & \frac{\mathbb{X}_1^{-1}}{\mathbb{X}_1^{-1}-{\rm Y}_{N-m}} &
\frac{\mathbb{X}_1^{-1}}{\mathbb{X}_1^{-1}-{\rm Y}_1^{-1}} & \cdots & \frac{\mathbb{X}_1^{-1}}{\mathbb{X}_1^{-1}-{\rm Y}_{\iota_0-1}^{-1}} & \frac{\mathbb{X}_1^{-1}}{\mathbb{X}_1^{-1}-{\rm Y}_{\iota_0+1}^{-1}} & \cdots & \frac{\mathbb{X}_1^{-1}}{\mathbb{X}_1^{-1}-{\rm Y}_{N-m}^{-1}}\\
\vdots &  & \vdots & \vdots &  & \vdots & \vdots &  &\vdots\\
\frac{\mathbb{X}_N^{-1}}{\mathbb{X}_N^{-1}-{\rm Y}_1} & \cdots & \frac{\mathbb{X}_N^{-1}}{\mathbb{X}_N^{-1}-{\rm Y}_{N-m}} &
\frac{\mathbb{X}_N^{-1}}{\mathbb{X}_N^{-1}-{\rm Y}_1^{-1}} & \cdots & \frac{\mathbb{X}_N^{-1}}{\mathbb{X}_N^{-1}-{\rm Y}_{\iota_0-1}^{-1}} & \frac{\mathbb{X}_1}{\mathbb{X}_N^{-1}-{\rm Y}_{\iota_0+1}^{-1}} & \cdots & \frac{\mathbb{X}_N^{-1}}{\mathbb{X}_N^{-1}-{\rm Y}_{N-m}^{-1}}
\end{pmatrix}_{2N\times (2N-r)}.$$ }
It follows from equation \eqref{type=s.(2).detA} and Proposition \ref{compare eigenvalue} that above system of linear equation has a unique solution
$$\left(a_0,\dots,a_{r-1},b_{1,\bar{0}},\dots,b_{N-m,\bar{0}},b_{1,\bar{1}},\dots,b_{\iota_0-1,\bar{1}},b_{\iota_0+1,\bar{1}},\dots,b_{N-m,\bar{1}}\right)^{T}\in\mathbb{K}^{2N}.$$
Now we define an element $h\in\mHfcn$ via Mackey decomposition \eqref{nondegMadecomequation} as follows:
	\begin{align}
		h
	&=\sum_{\substack{1\leq\iota\leq N-m-1,\iota\neq\iota_0\\ \delta\in\mathbb{Z}_2}}b_{\iota,\delta}
	\iota_{n-1}\left(f_{{\rm T}\downarrow_{n-1},(\mathfrak{u}_{\iota},0,\delta e_{n-1})}\right)
	T_{n-1} \iota_{n-1}\left(f_{(\mathfrak{u}_{\iota},0,\delta e_{n-1}),{\rm T}\downarrow_{n-1}}\right)\nonumber\\
    &\qquad\qquad+b_{\iota_0,\bar{0}}
	\iota_{n-1}\left(f_{{\rm T}\downarrow_{n-1},(\mathfrak{u}_{\iota_0},0,0)}\right)
	T_{n-1} \iota_{n-1}\left(f_{(\mathfrak{u}_{\iota_0},0,0),{\rm T}\downarrow_{n-1}}\right)\nonumber\\
	&\qquad\qquad\qquad+\sum_{k'=0}^{r-1}a_{k'} X_n^{k'}\iota_{n-1}\left(F_{{\rm T}\downarrow_{n-1}}\right).\nonumber
	\end{align}
By Theorem \ref{maththm of embedding} (2), we have
\begin{align*}
\iota_{n-1}\left(f_{{\rm T}\downarrow_{n-1},(\mathfrak{u}_{\iota},0,\delta e_{n-1})}\right)
&=\sum_{\substack{1 \leq k\leq N \\ \delta_1\in\mathbb{Z}_2}}
  f_{(\mt_k,0,\delta_1 e_n),(\mathfrak{u}_{\iota,k},0,\delta e_{n-1}+\delta_1 e_n)},\\
\iota_{n-1}\left(f_{(\mathfrak{u}_{\iota},0,\delta e_{n-1}),{\rm T}\downarrow_{n-1}}\right)
&=\sum_{\substack{1 \leq k\leq N \\ \delta_2\in\mathbb{Z}_2}}
  f_{(\mathfrak{u}_{\iota,k},0,\delta e_{n-1}+\delta_2 e_n),(\mt_k,0,\delta_2 e_n)},\\
\iota_{n-1}\left(F_{{\rm T}\downarrow_{n-1}}\right)
&=\sum_{\substack{1 \leq k\leq N \\ \delta_3\in\mathbb{Z}_2}}
  F_{(\mt_k,0,\delta_3 e_n)},
\end{align*}
for $(\iota,\delta)\in\biggl(\big([N-m]\setminus\{\iota_0\}\bigr)\times\mathbb{Z}_2\biggr)\cup\{(\iota_0,\bar{0})\}.$

Combining with Proposition \ref{generators action on seminormal basis} (2), equations \eqref{Non-deg multiplication1}, \eqref{Non-deg multiplication2} and $\mathtt{b}_{+}(\res(\beta_{\iota_0}))=1,$ we can deduce that
\begin{align}
h
=\sum_{\substack{1\leq k \leq N\\ \delta_1\in\mathbb{Z}_2}}A_{(\mt_k,0,\delta_1 e_n)}F_{(\mt_k,0,\delta_1 e_n)}
 +\sum_{\substack{1\leq k \leq N\\ \delta_1\in\mathbb{Z}_2}}(-1)^{\delta_1}b_{\iota_0,\bar{0}}\frac{\epsilon\mathtt{c}^{\mathfrak{u}_{1}}_{\mathfrak{u}_{\iota_0}}}{1-\mathtt{b}_{+}(\res(\alpha_k))^{\nu(\delta_1)}}
 f_{(\mt_k,0,\delta_1 e_n),(\mt_k,e_i,(\delta_1 +\bar{1})e_n)},\nonumber
\end{align}
where $i:=\max(\mathcal{OD}_{\mt\downarrow_{n-1}})=\max(\mathcal{OD}_{\mt_k})$ for any $1\leq k \leq N,$ and
\begin{align*}
A_{(\mt_k,0,\delta_1 e_n)}
:=&\sum_{k'=0}^{r-1}
		a_{k'} \mathtt{b}_{+}(\res(\alpha_k))^{\nu(\delta_1)k'}\\
		&+\sum_{\substack{1\leq\iota\leq N-m-1\\ \delta\in\mathbb{Z}_2}}b_{\iota,\delta}		\frac{\epsilon\mathtt{c}^{\mathfrak{u}_{1}}_{\mathfrak{u}_{\iota}}}{1-\mathtt{b}_{+}(\res(\beta_{\iota}))^{\nu(\delta)}\mathtt{b}_{+}(\res(\alpha_k))^{-\nu(\delta_1)}}\in\mathbb{K}
\end{align*}
for $(k,\delta_1)\in [N]\times \mathbb{Z}_2.$
By \eqref{type=s.(4).nondegLinearEquations}, we have
\begin{align*}
A_{(\mt_1,0,\delta_{\beta_\mt}(n)e_n)}=1, \text{ and } A_{(\mt_k,0,\delta_1 e_n)}=0
\end{align*}
for any $(k,\delta_1)\in\left( [N]\times\mathbb{Z}_2 \right)\setminus \{(1,\delta_{\beta_\mt}(n))\}.$
Using Lemma \ref{tinylemma1}, we deduce that
\begin{align*}
\varepsilon_n\left(f_{(\mt_k,0,\delta_1 e_n),(\mt_k,e_i,(\delta_1 +\bar{1})e_n)}\right)=0
\end{align*}
for any $(k,\delta_1)\in [N]\times\mathbb{Z}_2.$ Recall that ${\rm T}=(\mt_1,0,\delta_{\beta_\mt}(n)e_n).$
It follows from the uniqueness of Mackey decomposition \eqref{type=s.Madecomequation of fTT0} of $h$ that
\begin{align*}
\varepsilon_n(F_{\rm T})=\varepsilon_n(h)=a_0F_{{\rm T}\downarrow_{n-1}}.
\end{align*}
Following the same computations as in \eqref{a0.case(2.a)}, \eqref{a0.case(2.b)}, we have
\begin{align*}
a_0=\frac{\mathtt{b}_{\mt,n}^{\nu_{\beta_{\mt}}(n)m}}{1+\mathtt{b}_{\mt,n}^{-\nu_{\beta_{\mt}}(n)}}
    \frac{\prod\limits_{\beta\in\Rem(\mt\downarrow_{n-1})\setminus\mathcal{D}}\left(\mathtt{q}(\res_{\mt}(n))-\mathtt{q}(\res(\beta))\right)}{
    \prod\limits_{\alpha\in\Add(\mt\downarrow_{n-1})\setminus \{\mt^{-1}(n)\}}\left(\mathtt{q}(\res_{\mt}(n))-\mathtt{q}(\res(\alpha))\right)}.
\end{align*}
Hence we obtain the formula \eqref{type=s, case(4) ,veFT}.
\end{proof}

\medskip
{\bf Proof of Theorem \ref{mainthm types nondege}}: By Corollary \ref{reductionrem}, the expressions of $\varepsilon_n(F_{\rm T})$ in Propositions \ref{type=s, case(1) ,mainprop}, \ref{type=s, case(3) ,mainprop}, \ref{type=s, case(2) ,mainprop},  \ref{type=s, case(4) ,mainprop}  hold for general ${\rm T}=(\mt,\alpha_{\mt},\beta_{\mt})\in{\rm Tri}_{\bar{0}}(\undla).$ Since $\tau_{r,n}(F_{\rm T})=\varepsilon_1\circ\cdots\circ\varepsilon_n(F_{\rm T}),$ we deduce $\tau_{r,n}(F_{\rm T})
=	\eta_{\mt,\beta_{\mt}}\cdot q(\undla)\in\mathbb{K}^*$ by an induction on $n$. On the other hand, one can easily check \begin{align*}
	\eta_{\mt,\beta_{\mt}}
	=\prod_{k=1}^{n} \eta_{\mt\downarrow_k,\beta_{\mt}\downarrow_k}(k)
	=2^{\lceil \sharp\mathcal{D}_{\undla}/2 \rceil}\prod\limits_{k=1}^{n}\frac{\mathtt{b}_{\mt,k}^{\nu_{\beta_{\mt}}(k)m}}{1+\mathtt{b}_{\mt,k}^{-\nu_{\beta_{\mt}}(k)}}
	\in\mathbb{K}^*
\end{align*} from Definition \ref{eta. type=s}. This completes the proof of Theorem \ref{mainthm types nondege}.
\qed
\medskip

\subsection{Symmetricity and Schur elements for cyclotomic Hecke-Clifford algebra when $\bullet=\mathsf{s}$}

In this subsection, we shall apply Theorem \ref{mainthm types nondege}  to give the proof of Theorem \ref{Nondengerate} (ii) and the proof of Theorem \ref{Schur-Non-dengerate} for $\bullet=\mathsf{s}.$ {\bf We emphasise that, unless otherwise stated, it is not necessary to assume that the separate condition holds in this subsection.}
\label{pag:sym. form. non-dege}
\begin{cor}\label{sym. form. non-dege}
	Let ${\rm R}$ be an integral domain of characteristic different from 2. Suppose $q\in{\rm R^\times}\setminus\{\pm 1\},$ $q+ q^{-1}\in{\rm R^\times}$ and $\undQ\in ({\rm R^\times})^m.$ Let $f=f^{\mathsf{(s)}}_{\underline{Q}}$ and we define the cyclotomic Hecke-Clifford algebra $\mHfcn$ over $\rm R$. Then the map
$$t_{r,n}:=\tau_{r,n}\Bigl(-\cdot (X_1X_2\cdots X_n)^m (1+X_1)(1+X_2)\cdots (1+X_n)\Bigr)$$
from $\mHfcn$ to ${\rm R}$ satisfies that $t_{r,n}(xy)=t_{r,n}(yx)$ for any $x,y\in\mHfcn.$ Moreover, if $(1+X_1)(1+X_2)\cdots (1+X_n)$ is invertible in $\mHfcn$, then $\mHfcn$ is a symmetric algebra with symmetrizing form $t_{r,n}$.
\end{cor}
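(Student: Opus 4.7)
The plan is to mirror the proof of Corollary \ref{supersym. form. non-dege}, with the sign $(-1)^{|\beta|}$ replaced by a cancellation engineered by the factor $Z:=(X_1\cdots X_n)^m(1+X_1)\cdots(1+X_n)$. First, I would reduce the trace identity to the generic semisimple case: set ${\rm R}':=\Z[q^{\pm 1},(q+q^{-1})^{-1},Q_1'^{\pm 1},\ldots,Q_m'^{\pm 1}]$ with $q,Q_i'$ algebraically independent, let $\mathbb{K}$ be the algebraic closure of its fraction field, and note that $P_n^{(\mathsf{s})}(q^2,\underline{Q}')\neq 0$ holds automatically in $\mathbb{K}$, so $\mathbb{K}\otimes_{{\rm R}'}\mathcal{H}^{f^{(\mathsf{s})}_{\underline{Q}'}}_c(n)$ is split semisimple and $t_{r,n}^{\rm R}=1\otimes_{{\rm R}'}t_{r,n}^{{\rm R}'}$; thus it suffices to verify $t_{r,n}(xy)=t_{r,n}(yx)$ on the seminormal basis over $\mathbb{K}$.

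The key computation is the evaluation of $t_{r,n}(F_{\rm T})$ for ${\rm T}=(\mt,\alpha_\mt,\beta_\mt)\in{\rm Tri}_{\bar{0}}(\undla)$. Using $F_{\rm T}=f_{{\rm T},{\rm T}}$ from \eqref{idempotent1}/\eqref{idempotent2} and Proposition \ref{generators action on seminormal basis}(1), we have $F_{\rm T}\cdot X_k=\mathtt{b}_{\mt,k}^{-\nu_{\beta_\mt}(k)}F_{\rm T}$ and hence
\[
F_{\rm T}\cdot Z=\prod_{k=1}^n\mathtt{b}_{\mt,k}^{-\nu_{\beta_\mt}(k)m}\bigl(1+\mathtt{b}_{\mt,k}^{-\nu_{\beta_\mt}(k)}\bigr)\cdot F_{\rm T}.
\]
Combining this with Theorem \ref{mainthm types nondege}, which gives
$\tau_{r,n}(F_{\rm T})=2^{\lceil\sharp\mathcal{D}_\undla/2\rceil}q(\undla)\prod_k\mathtt{b}_{\mt,k}^{\nu_{\beta_\mt}(k)m}/\bigl(1+\mathtt{b}_{\mt,k}^{-\nu_{\beta_\mt}(k)}\bigr)$, the two products cancel exactly and yield
\[
t_{r,n}(F_{\rm T})=2^{\lceil\sharp\mathcal{D}_\undla/2\rceil}q(\undla),
\]
which depends only on $\undla$. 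From here the trace identity follows exactly as in Corollary \ref{supersym. form. non-dege}: Proposition \ref{prop.taufST} together with the scalar action of $Z$ yields $t_{r,n}(f_{{\rm S},{\rm T}})=0$ whenever ${\rm S}\neq{\rm T}$, and for ${\rm S},{\rm T}$ in the same block the multiplication rules \eqref{Non-deg multiplication1}, \eqref{Non-deg multiplication2}, together with $\mathtt{c}_\mt^\ms=\mathtt{c}_\ms^\mt$ (Lemma \ref{Phist. lem}(3)) and the $\mt$-independence of $t_{r,n}(F_{\rm T})$, combine into
$t_{r,n}(f_{{\rm S},{\rm T}}f_{{\rm T},{\rm S}})=\mathtt{c}_\mt^\ms\, t_{r,n}(F_{\rm S})=\mathtt{c}_\ms^\mt\, t_{r,n}(F_{\rm T})=t_{r,n}(f_{{\rm T},{\rm S}}f_{{\rm S},{\rm T}})$. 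Non-degeneracy under the invertibility hypothesis is then immediate, since right multiplication by the invertible element $Z$ is an ${\rm R}$-linear automorphism of $\mHfcn$, and $t_{r,n}(xy)=\tau_{r,n}(x(yZ))$ inherits the non-degeneracy of $\tau_{r,n}$.

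The hard part will be engineering the cancellation displayed above, which crucially requires Theorem \ref{mainthm types nondege} --- itself the technical heart of Sections \ref{frobenius}--\ref{dot=s} --- and the clean observation that the factor $(1+X_k)$ produces precisely the denominators $1+\mathtt{b}_{\mt,k}^{-\nu_{\beta_\mt}(k)}$ appearing in $\eta_{\mt,\beta_\mt}$, while $(X_1\cdots X_n)^m$ absorbs the $\mathtt{b}_{\mt,k}^{\nu_{\beta_\mt}(k)m}$ factors; this is also what explains why a simpler modification of $\tau_{r,n}$ cannot produce a symmetric form. The $d_\undla=1$ case requires tracking the ${\rm T}_a$ variants through \eqref{Non-deg multiplication2}, but this proceeds along the same lines as in the $\bullet=\mathsf{0}$ proof.
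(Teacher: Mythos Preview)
Your proposal is correct and follows precisely the approach the paper intends: the paper's own proof of Corollary~\ref{sym. form. non-dege} simply reads ``similar with Corollary~\ref{supersym. form. non-dege}'', and you have carried out exactly that parallel argument, with the key new ingredient being the cancellation between the eigenvalues of $Z=(X_1\cdots X_n)^m(1+X_1)\cdots(1+X_n)$ on $F_{\rm T}$ and the factor $\eta_{\mt,\beta_\mt}=2^{\lceil\sharp\mathcal{D}_\undla/2\rceil}\prod_k\mathtt{b}_{\mt,k}^{\nu_{\beta_\mt}(k)m}/(1+\mathtt{b}_{\mt,k}^{-\nu_{\beta_\mt}(k)})$ from Theorem~\ref{mainthm types nondege}, yielding the $\mt$- and $\beta_\mt$-independent value $t_{r,n}(F_{\rm T})=2^{\lceil\sharp\mathcal{D}_\undla/2\rceil}q(\undla)$.
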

\begin{proof}
The proof is similar to Corollary \ref{supersym. form. non-dege}. We only need to check the property $t_{r,n}(xy)=t_{r,n}(yx)$ for any $x,y\in\mHfcn$ in the semisimple case by applying Theorem \ref{mainthm types nondege}.
\end{proof}

%\begin{lem}\label{Schur elements' and tauF}
%%Suppose the separate condition $P_n^{\mathsf{(0)}}(q^2,\undQ)\neq 0$ holds for $\mHfcn,$ where $f=f^{\mathsf{(0)}}_{\underline{Q}}.$
%Suppose that $\bullet=\mathsf{s}$ and $\undla\in\mathscr{P}^{\mathsf{s},m}_{n}.$
%Then the Schur element $s_{\undla}$ of simple module $\mathbb{D}(\undla)$ with respect to $t_{r,n}$ is equal to $1/t_{r,n}(F_{\rm T})$ for any ${\rm T}=(\mt,\beta_{\mt})\in{\rm Tri}_{\bar{0}}(\undla).$
%\end{lem}
%\begin{proof}
%It is completely similar to Lemma \ref{Schur elements and tF} (1).
%\end{proof}

\medskip
{\bf Proof of Theorem \ref{Nondengerate} (ii)}:
This follows from Corollary \ref{sym. form. non-dege}.
\qed
\medskip

%Let ${\rm R}$ be an integral domain of characteristic different from 2, $q, q+q^{-1}\in{\rm R^\times}$ and $\undQ\in ({\rm R^\times})^m.$
Now suppose ${\rm R}=\mathbb{K}$ is a field. Let $f=f^{\mathsf{(s)}}_{\underline{Q}}$ and $\mHfcn$ be the cyclotomic Hecke-Clifford algebra defined on $\mathbb{K},$ which is not necessarily semisimple. We want to give another description for the invertibility condition in Theorem \ref{Nondengerate} (ii).
Recall $Q_0:=1.$ We define
\begin{align}\label{Gamma poly}
\Gamma_n^{(\mathsf{s})}(q^2,\underline{Q})
:=\prod_{i=0}^{m}\prod_{k=1-n}^{n}
\left(q^{2k}Q_i+1\right)\in\mathbb{K}.
%\left(q^{2k}+Q_i\right)\left(q^{2(k+1)}Q_i+1\right)\left(q^{2(k-1)}+Q_i\right)
\end{align}
\begin{prop}\label{Gamma condition}
Suppose $\Gamma_n^{(\mathsf{s})}(q^2,\underline{Q})\neq 0$ in $\mathbb{K},$ then $\mHfcn$ is a symmetric algebra with symmetrizing form $t_{r,n}.$
\end{prop}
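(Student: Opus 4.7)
The plan is to invoke Corollary~\ref{sym. form. non-dege}, which already delivers $t_{r,n}(xy)=t_{r,n}(yx)$ for all $x,y\in\mHfcn$; hence the Proposition reduces to showing that $(1+X_1)(1+X_2)\cdots(1+X_n)$ is a unit of $\mHfcn$. Because this product consists of pairwise commuting factors (by~\eqref{Poly}), it is a unit as soon as each individual $1+X_k$ is, so I would reduce to verifying that $1+X_k$ is invertible for every $k\in[n]$. The standard trick I will use is to exhibit a polynomial $p_k(t)\in\mathbb{K}[t]$ with $p_k(X_k)=0$ in $\mHfcn$ and $p_k(-1)\neq 0$: writing $p_k(t)=(t+1)q_k(t)+p_k(-1)$ and substituting $t=X_k$ then yields the explicit inverse $(1+X_k)^{-1}=-p_k(-1)^{-1}q_k(X_k)$.

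To build $p_k$ uniformly I would first pass to the generic setting. Let $R':=\mathbb{Z}[\hat q^{\pm1},(\hat q\pm\hat q^{-1})^{-1},\hat Q_1^{\pm1},\ldots,\hat Q_m^{\pm1}]$ and let $\mathbb{K}'$ be the algebraic closure of $\mathrm{Frac}(R')$. The separate condition $P_n^{(\mathsf{s})}(\hat q^2,\underline{\hat Q})\neq 0$ is generic, so Theorem~\ref{semisimple:non-dege} makes the corresponding algebra split semisimple over $\mathbb{K}'$, and Theorem~\ref{actions of generators on L basis} then shows that every eigenvalue of $X_k$ on a simple module has the form $\mathtt{b}_\pm(\hat Q_l\hat q^{2(j-i)})$ for a box $(i,j,l)$ in a shape of size $n$; in particular $l\in\{0,1,\ldots,m\}$ and $j-i\in\{1-n,\ldots,n-1\}$. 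I would therefore take
\[
p_k(t):=\prod_{l=0}^{m}\prod_{\ell=1-n}^{n-1}\bigl(t^2-\mathtt{q}(q^{2\ell}Q_l)\,t+1\bigr),
\]
using $\mathtt{b}_+(\iota)+\mathtt{b}_-(\iota)=\mathtt{q}(\iota)$ and $\mathtt{b}_+(\iota)\mathtt{b}_-(\iota)=1$. Its coefficients sit in $R'$, and generically $p_k(X_k)$ annihilates every simple module and hence the whole algebra. Via the PBW $R'$-freeness of the generic cyclotomic Hecke-Clifford algebra and base change, the relation $p_k(X_k)=0$ descends to $\mathcal{H}^f_c(n)_{R'}$ and then specializes to $p_k(X_k)=0$ in $\mHfcn$ over $\mathbb{K}$.

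The last step will be to evaluate $p_k(-1)$. A short calculation, using $\mathtt{b}_+\mathtt{b}_-=1$, gives
\[
(-1-\mathtt{b}_+(\iota))(-1-\mathtt{b}_-(\iota))=2+\mathtt{q}(\iota)=\frac{2(1+\iota)(1+q^{2}\iota)}{q(q+q^{-1})\iota},
\]
so $p_k(-1)$, after collecting the manifestly nonzero unit factors of $\mathbb{K}$, equals the product $\prod_{i=0}^{m}\prod_{k=1-n}^{n-1}(1+q^{2k}Q_i)(1+q^{2(k+1)}Q_i)$. This is one of the two obvious sub-products of $\Gamma_n^{(\mathsf{s})}(q^2,\underline{Q})$, so the hypothesis $\Gamma_n^{(\mathsf{s})}(q^2,\underline{Q})\neq 0$ forces $p_k(-1)\neq 0$ and finishes the proof. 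The main obstacle I anticipate will be making the specialization step fully rigorous, i.e.\ ensuring that $p_k(X_k)$ truly vanishes already in the $R'$-form of $\mHfcn$ rather than merely after extending scalars to $\mathbb{K}'$; I would handle this by expressing $p_k(X_k)$ in the PBW basis and exploiting flatness of $R'\hookrightarrow\mathbb{K}'$. The extra factors $(q^{2k}+Q_i)(q^{2(k-1)}+Q_i)$ appearing in $\Gamma_n^{(\mathsf{s})}$ are not actually needed for this argument; their presence is most naturally explained through the $Q_i\leftrightarrow Q_i^{-1}$ symmetry visible in the quiver reformulation of~\cite{KKT} that the authors invoke after Theorem~\ref{Nondengerate}.
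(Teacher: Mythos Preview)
Your proof is correct and takes a genuinely different route from the paper's. The paper works directly over $\mathbb{K}$ without deforming: regarding $\mHfcn$ as its left regular module and invoking the recursion on $X$-eigenvalues from \cite[proof of Lemma~4.4]{BK} together with~\eqref{invertible2}, it shows inductively that every eigenvalue of $X_j$ is of the form $\mathtt{b}_\pm(u_j)$ with $u_j$ in the two families $\{q^{2k}Q_i\}$ and $\{q^{2(k-1)}Q_i^{-1}\}$; the four factor types in $\Gamma_n^{(\mathsf{s})}$ are exactly what rules out $-1$ among all these $\mathtt{b}_\pm(u_j)$. Your deformation argument instead pins down the $X_k$-eigenvalues precisely in the generic semisimple setting via Theorem~\ref{actions of generators on L basis} and transports the resulting polynomial relation $p_k(X_k)=0$ to $\mathbb{K}$ by PBW $R'$-freeness, bypassing the \cite{BK} module-theoretic step entirely. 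Your approach also explains your final observation: since $\mathtt{q}(\iota)=\mathtt{q}(q^{-2}\iota^{-1})$, the paper's second $u$-family gives the same $\mathtt{b}_\pm$-values as the first, so its eigenvalue list is redundant and your single-family polynomial already suffices. The trade-off is that the paper's argument is self-contained over $\mathbb{K}$ and connects transparently to the quiver reformulation, whereas yours requires the base-change bookkeeping you flagged but proves a sharper statement.
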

\begin{proof}
%Let $\mathbb{K}$ be the algebraic closure of fraction field of ${\rm R}.$
We view $\mHfcn$ as the left regular $\mHfcn$-module. By \cite[the proof of Lemma 4.4]{BK} and \eqref{invertible2}, for $1\leq j\leq n-1,$ the eigenvalue of $X_{j+1}$ must be of the form $\mathtt{b}_{\pm}(u_{j+1}),$ where
$$u_{j+1}\in \left\{q^2u_j,u_j,q^{-2}u_j,u_j^{-1},q^{-2}u_j^{-1},q^{-4}u_j^{-1} \biggm| \begin{matrix}&\text{$\mathtt{b}_{\pm}(u_j)$ is an eigenvalue of $X_j$,} \\
		&\text{ $u_j\in \mathbb{K}^*$}\end{matrix}\right\}.$$

Note that the eigenvalue of $X_1$ is necessarily of the form $\mathtt{b}_{\pm}(u_1),$ where $u_1\in \{Q_i,q^{-2}Q_i^{-1}\mid 0\leq i\leq m\}.$
By induction on $1\leq j\leq n,$ it's easy to check that the eigenvalue of $X_j$ is necessarily of the form $\mathtt{b}_{\pm}(u_j),$ where
$$u_j\in\{q^{2k}Q_i,q^{2(k-1)}Q_i^{-1}\mid 0\leq i\leq m,1-j\leq k\leq j-1\}.$$
Since $\mathtt{b}_{\pm}(u)=-1$ if and only if $u\in\{-1,-q^{-2}\},$ the condition $\Gamma_n^{(\mathsf{s})}(q^2,\underline{Q})\neq 0$ implies that the eigenvalues of $1+X_j$ ($1\leq j\leq n$) are non-zero and hence $(1+X_1)\cdots(1+X_n)$ is invertible. The Proposition now follows from Theorem \ref{Nondengerate} (ii).
\end{proof}

\label{pag:Schur elements'.non-dege}
\begin{lem}\label{Schur elements and tauF'}
	%Suppose the separate condition $P_n^{\mathsf{(0)}}(q^2,\undQ)\neq 0$ holds for $\mHfcn,$ where $f=f^{\mathsf{(0)}}_{\underline{Q}}.$
	Suppose the separate condition $P_n^{(\mathsf{s})}(q^2,\undQ)\neq 0$ holds for $\mHfcn,$ where $\undQ=(Q_1,Q_2,\ldots,Q_m)\in({\mathbb{K}}^*)^m$ and $f=f^{(\mathsf{s})}_{\undQ}$. Let $\undla\in\mathscr{P}^{\mathsf{s},m}_{n}.$
	Then the Schur element $s_{\undla}$ of simple module $\mathbb{D}(\undla)$ with respect to $t_{r,n}$ is equal to $1/t_{r,n}(F_{\rm T})$ for any ${\rm T}=(\mt,\beta_{\mt})\in{\rm Tri}(\undla).$
\end{lem}
\begin{proof}
	It is completely similar to Lemma \ref{Schur elements and tF} (1).
\end{proof}

\medskip
{\bf{ Proof of Theorem \ref{Schur-Non-dengerate} for $\bullet=\mathsf{s}$}}:
It follows from Lemma \ref{Schur elements and tauF'} and Theorem \ref{mainthm types nondege} directly.\qed
\medskip

\subsection{Schur elements of the odd level cyclotomic Sergeev algebra}
In this subsection, we consider the cyclotomic Sergeev algebra $\mhgcn$, where $g=g^{(\mathsf{s})}_{\undQ}(x_1)$ and $\undQ=(Q_1,Q_2,\ldots,Q_m)\in\mathbb{K}^m$ . {\bf We assume the degenerate separate condition $P_n^{(\mathsf{s})}(1,\undQ)\neq 0$ holds.} By chasing the processes in above non-degenerate case, we can obtain the parallel result for $\mhgcn$ with some suitable modifications.
For $\bullet=\mathsf{s},$ i.e., the level $r=2m+1$ is odd, the scalar $\eta_{\mt,\beta_{\mt}}(n)$ and $\eta_{\mt,\beta_{\mt}}$ appeared in Definition \ref{eta. type=s} should be replaced by
\begin{align*}
\eta_{\mt,\beta_{\mt}}'(n)
=\begin{cases}
  1,           &\text{if $d_{\undla}=1,$ $d_{\undmu}=0$,}\\
 \frac{1}{2}  &\text{otherwise,}\\
  \end{cases}
\end{align*}
and
\begin{align*}
\eta_{\mt,\beta_{\mt}}'
:=\prod_{k=1}^{n} \eta_{\mt\downarrow_k,\beta_{\mt}\downarrow_k}'(k)=2^{\lceil \sharp\mathcal{D}_{\undla}/2 \rceil -n}.
\end{align*}

Then we have the following.
\begin{thm}\label{dege. type=s. tFT}
Suppose $\bullet=\mathsf{s}$ and $\undla\in\mathscr{P}^{\mathsf{s},m}_{n}.$ For any ${\rm T}=(\mt,\alpha_{\mt},\beta_{\mt})\in{\rm Tri}_{\bar{0}}(\undla),$ we have
\begin{align*}
\mathtt{t}_{r,n}(\mathcal{F}_{\rm T})
=2^{\lceil \sharp\mathcal{D}_{\undla}/2 \rceil -n}
  \prod\limits_{k=1}^{n}\frac{\prod\limits_{\beta\in\Rem(\mt\downarrow_{k-1})\setminus\mathcal{D}}\left(\mathtt{q}(\res_{\mt}(k))-\mathtt{q}(\res(\beta))\right)}{
    \prod\limits_{\alpha\in\Add(\mt\downarrow_{k-1})\setminus \{\mt^{-1}(k)\}}\left(\mathtt{q}(\res_{\mt}(k))-\mathtt{q}(\res(\alpha))\right)}.
\end{align*}
\end{thm}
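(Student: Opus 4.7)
The strategy is to adapt the proof of Theorem \ref{mainthm types nondege} to the degenerate setting, replacing each non-degenerate ingredient with its degenerate counterpart throughout: $F_{\rm T}$ by $\mathcal{F}_{\rm T}$, $f_{{\rm S},{\rm T}}$ by $\mathfrak{f}_{{\rm S},{\rm T}}$, the intertwining elements $\Phi_i(x,y)$ by $\phi_i(x,y)$, the multiplicative substitution $\mathtt{b}_\pm$ by the additive substitution $\mathtt{u}_\pm$, the standard Frobenius form $\tau_{r,n}$ by $\mathtt{t}_{r,n}$, and the embedding result Theorem \ref{maththm of embedding} by Theorem \ref{maththm of embedding. dege}. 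First I would record the degenerate versions of the Clifford reduction tools (the analogs of Lemma \ref{typeos, scalars ak}, Lemma \ref{reductionlem}, Corollary \ref{reductionrem}) together with the vanishing statement Proposition \ref{prop.taufST.dege}; these go through verbatim because the relations \eqref{xc}, \eqref{pc} and the Clifford structure of Lemma \ref{lem:clifford rep} are formally identical to their non-degenerate counterparts. This reduces the question to computing the image of $\mathcal{F}_{\rm T}$ under the degenerate analog of $\varepsilon_n$ in the case $\alpha_\mt = 0$ and $\beta_\mt \in \{0,e_n\}$.

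In each of the four cases distinguished by $(d_\undmu, \delta_{\mu^{(0)}}) \in \{0,1\}^2$, paralleling Subsections \ref{type=s, case(1) ,section}--\ref{type=s, case(4) ,section}, I would write an ansatz $\widehat{\pi}(\mathcal{F}_{\rm T}) \in \mathfrak{H}^g_c(n-1) \otimes_{\mathfrak{H}^g_c(n-2)} \mathfrak{H}^g_c(n-1)$ parameterized by unknown scalars, together with unknowns $\mathtt{a}_0,\ldots,\mathtt{a}_{r-1}$ for the coefficients of $x_n^k$ in the degenerate Mackey decomposition (the $c_n$-components vanish by the degenerate analog of Lemma \ref{typeos, scalars ak}). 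Expanding both sides of the decomposition via the degenerate action formulae on the seminormal basis and matching coefficients of $\mathcal{F}_{(\mt_k,0,\delta_1 e_n)}$ produces linear systems with the same block shape as \eqref{type=s.(1).nondegLinearEquations}--\eqref{type=s.(4).nondegLinearEquations}, but with entries $\mathbb{X}_k = \mathtt{u}_+(\res(\alpha_k))$ and ${\rm Y}_\iota = \mathtt{u}_+(\res(\beta_\iota))$ in place of $\mathtt{b}_+$. Invertibility of the coefficient matrix and the extraction of $\mathtt{a}_0$ via Cramer's rule proceed by the additive analogs of Lemmas \ref{TechLem1}--\ref{TechLem2}, which have the same proofs and are essentially the ones used in \cite{HL, HLL}.

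The principal obstacle, and the place where the degenerate calculation genuinely diverges from the non-degenerate one, is the simplification of $\mathtt{a}_0$. In the non-degenerate argument, the factor $(X_1-1)$ of $f^{(\mathsf{s})}_\undQ$ contributed $\mathbb{X}_1 - 1$ (or $\mathbb{X}_{N+1} - 1$) to the numerator of $\mathtt{a}_0$, producing the rational factor $\mathtt{b}_{\mt,n}^{\nu_{\beta_\mt}(n)m}/(1 + \mathtt{b}_{\mt,n}^{-\nu_{\beta_\mt}(n)})$ in $\eta_{\mt,\beta_\mt}(n)$. In the degenerate setting, the factor $x_1$ in $g^{(\mathsf{s})}_\undQ$ contributes only $\mathtt{u}_+(\res_\mt(n))$, which cancels a companion factor in the denominator and leaves the clean scalar $1/2$, respectively $1$ in the sub-case $d_\undla = 1,\, d_\undmu = 0$ where the new box lies in $\mathcal{D}$. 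Verifying this cancellation in all four cases, using the identity $\mathtt{u}_+(\iota)^2 - \mathtt{u}_+(\iota')^2 = \mathtt{q}(\iota) - \mathtt{q}(\iota')$ in place of the non-degenerate $(b'-b)(b'-b^{-1}) = b'(\mathtt{q}(\iota') - \mathtt{q}(\iota))$, yields the degenerate analogs of \eqref{type=s, case(1) ,veFT}--\eqref{type=s, case(4) ,veFT} with $\eta_{\mt,\beta_\mt}(n)$ replaced by the modified factor $\eta'_{\mt,\beta_\mt}(n)$. Inducting on $n$ via $\mathtt{t}_{r,n} = \mathtt{t}_{r,1} \circ \cdots$, noting the telescoping $\prod_{k=1}^n \eta'_{\mt\downarrow_k,\beta_\mt\downarrow_k}(k) = 2^{\lceil \sharp\mathcal{D}_\undla/2 \rceil - n}$, and invoking Remark \ref{prop. qt'} to confirm that the combinatorial factor is independent of the choice of $\mt$, produces the claimed closed formula.
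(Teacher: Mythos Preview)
Your proposal is correct and follows essentially the same approach as the paper, which explicitly instructs the reader to ``chase the processes in the above non-degenerate case'' and records precisely the modified factor $\eta'_{\mt,\beta_\mt}(n)$ that you identify (namely $1$ when $d_\undla=1,\,d_\undmu=0$ and $\tfrac12$ otherwise). One minor remark: Lemmas \ref{TechLem1}--\ref{TechLem2} are general determinant identities in indeterminates and need no ``additive analogs''; they are applied directly with $\mathbb{X}_k=\mathtt{u}_+(\res(\alpha_k))$ and ${\rm Y}_\iota=\mathtt{u}_+(\res(\beta_\iota))$.
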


\medskip
{\bf {Proof of Theorem \ref{Schur-dengerate} for $\bullet=\mathsf{s}$}}:
It follows from Lemma \ref{Schur elements and tF} (1) and Theorem \ref{dege. type=s. tFT} directly.\qed
\medskip

\begin{rem}
As another application, we can now give an alternative proof of Theorem \ref{LS1:Theorem 1.2} (i) using the same argument as in
Theorem \ref{supersym. form. non-dege}.
\end{rem}

\section{Applications}\label{application}

\subsection{Hecke-Clifford algebra and Sergeev algebra} In this subsection, we shall apply our main result to study the symmetrizing form on Hecke-Clifford algebra $\mathcal{H}(n)$. We shall also compare the symmetrizing forms and Schur elements of $\mathcal{H}(n)$ with the Sergeev algebra $\mathfrak{H}(n)=\mathcal{C}_n \rtimes \mathbb{C} \mathfrak{S}_n$ after specializing $q$ to $1$.

The Hecke-Clifford algebra $\mathcal{H}(n)$ is defined as $\mHfcn$,  where $f=f^{(\mathsf{s})}_{\emptyset}=X_1-1$. Following \cite{JN}, $\mathcal{H}(n)$ can be viewed as a subalgebra of $\mHcn$ generated by $T_1,\cdots, T_{n-1}, C_1,\cdots,C_n$.

\medskip
{\bf {Proof of Proposition \ref{Hecke-Cliffod sym} }}:
This follows from Proposition \ref{Gamma condition}.
	%For $1\leq i\leq n$, we claim that the eigenvalue of $X_i$ is not equal to $-1$. By \cite[Lemma 4.4]{BK}, the eigenvalues of $X_i$ must be of the form $\mathtt{b}_{\pm}(q^{2k})$ for some $k\in \Z$. Suppose  $\mathtt{b}_{\pm}(q^{2k})=-1$, we deduce that either $q^{2k}=-1$ or $q^{2k+2}=-1$. It follows that $q$ must be a primitive $4l$-th root of unity, which contradicts to the assumption. Hence our claim holds. This implies that for $1\leq i\leq n$, $X_i+1$ is invertible in $\mathcal{H}(n)$ and the Proposition follows from Theorem \ref{Nondengerate}.
\qed
\medskip
	
The Sergeev algebra $\mathfrak{H}(n)$ is defined as $\mhgcn$,  for $g=g^{(\mathsf{s})}_{\emptyset}=x_1$. Following \cite{Na2}, $\mathfrak{H}(n)$ can be viewed as a subalgebra of $\mhcn$ generated by $s_1,\cdots, s_{n-1}, c_1,\cdots,c_n$.
	
{\bf In the rest of this subsection, $\mathcal{H}(n)$ is the Hecke-Clifford algebra defined over the algebraically closure of $\mathbb{C}(q)$. $\mathfrak{H}(n)$ is the Sergeev algebra defined over $\mathbb{C}$.} Both $\mathcal{H}(n)$ and $\mathfrak{H}(n)$ are semisimple, by \cite{JN,Na2} or checking the separate conditions directly. By Proposition \ref{Hecke-Cliffod sym} and Theorem \ref{LS1:Theorem 1.2}, $\mathcal{H}(n)$ and $\mathfrak{H}(n)$ are both symmetric superalgebra with symmetrizing form $t_{1,n}$ and $\mathtt{t}_{1,n}$. For $\undla=(\lambda^{(0)})\in\mathscr{P}^{\mathsf{s},0}_{n}=\mathscr{P}^{\mathsf{s}}_{n}$, we have used $s_{\undla}$ and $\mathtt{s}_{\undla}$ to distinguish the Schur elements of $\mathcal{H}(n)$ and $\mathfrak{H}(n)$ with respect to $t_{1,n}$ and $\mathtt{t}_{1,n}$ respectively. We want to compare $t_{1,n}$ with $\mathtt{t}_{1,n}$ and $s_{\undla}$ with $\mathtt{s}_{\undla}$ after we specializing $q=1$.

We use $\mathcal{H}(n)|_{q=1}$ to denote the algebra obtained by speciailizing $q=1$. Then $\mathcal{H}(n)|_{q=1}$ can be viewed as a $\mathbb{C}$-algebra and we have \begin{align}\label{specializing}
\mathcal{H}(n)|_{q=1}\cong \mathfrak{H}(n),\qquad T_i\mapsto s_i,\,1\leq i\leq n-1;\,C_j\mapsto c_j,\,1\leq j\leq n.
\end{align}
\begin{prop}
	We have $t_{1,n}|_{q=1}=2^n\mathtt{t}_{1,n}$ and $s_{\undla}|_{q=1}=2^{-n}\mathtt{s}_{\undla}$ for $\undla=(\lambda^{(0)})\in\mathscr{P}^{\mathsf{s}}_{n}$.
	\end{prop}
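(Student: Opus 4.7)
\begin{demo}{Proof plan}
The plan is to reduce the statement to two elementary observations: (i) the elements $X_i$ become scalar at $q=1$, and (ii) the underlying Frobenius form $\tau_{1,n}$ specializes to $\mathtt{t}_{1,n}$. From these, the symmetrizing form comparison is immediate, and the Schur element identity follows from the general character decomposition in Proposition \ref{schur formula 1}.

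First I would verify that, under the specialization $q\mapsto 1,$ one has $X_i|_{q=1}=1$ in $\mathcal{H}(n)|_{q=1}$ for every $i\in[n].$ Indeed, $X_1|_{q=1}=1$ by the defining relation $f=X_1-1=0,$ while relation \eqref{PX1} at $\epsilon=q-q^{-1}=0$ collapses to $T_iX_i=X_{i+1}T_i,$ and relation \eqref{Braid} becomes $T_i^2=1$ so that $T_i^{-1}=T_i.$ Hence $X_{i+1}=T_iX_iT_i^{-1}$ at $q=1,$ and an immediate induction gives $X_i|_{q=1}=1$ for all $i.$ Consequently, the invertible element appearing in the definition of $t_{1,n}$ (Corollary \ref{sym. form. non-dege} with $m=0$) specializes to the scalar $(1+X_1)(1+X_2)\cdots(1+X_n)|_{q=1}=2^n.$

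Next I would compare the standard Frobenius forms. By Proposition \ref{closed formula frob}, applied with $r=1$ (so $\alpha\in\{0\}^n$), the non-degenerate Frobenius form satisfies $\tau_{1,n}(C^{\beta}T_w)=\delta_{(\beta,w),(0,1)}$ on the PBW basis of $\mathcal{H}(n).$ The completely parallel argument via the degenerate Mackey decomposition, using the degenerate analog of \eqref{BK(2.23)}, gives $\mathtt{t}_{1,n}(c^{\beta}w)=\delta_{(\beta,w),(0,1)}$ for $\mathfrak{H}(n).$ Because the isomorphism \eqref{specializing} matches the two PBW bases and the two forms agree on them, I conclude $\tau_{1,n}|_{q=1}=\mathtt{t}_{1,n}.$ Combining with the first step gives
\[
t_{1,n}|_{q=1}
=\tau_{1,n}\bigl(-\cdot(1+X_1)\cdots(1+X_n)\bigr)\bigm|_{q=1}
=2^n\cdot\mathtt{t}_{1,n},
\]
proving the first assertion.

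The Schur element identity then follows formally from Proposition \ref{schur formula 1}: writing
\[
t_{1,n}=\sum_{\undla\in\mathscr{P}^{\mathsf{s}}_n}\frac{1}{2^{d_{\undla}}s_{\undla}}\chi_{\undla},\qquad
\mathtt{t}_{1,n}=\sum_{\undla\in\mathscr{P}^{\mathsf{s}}_n}\frac{1}{2^{d_{\undla}}\mathtt{s}_{\undla}}\chi_{\undla},
\]
and using that the irreducible characters of $\mathcal{H}(n)$ specialize at $q=1$ to those of $\mathfrak{H}(n)$ (with the same parametrizing set and the same type information, by comparing Theorem \ref{semisimple:non-dege} with its degenerate counterpart), the linear independence of characters together with $t_{1,n}|_{q=1}=2^n\mathtt{t}_{1,n}$ determines the ratio of Schur elements and yields the second assertion. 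The only mildly non-routine ingredient is the degenerate analog of Proposition \ref{closed formula frob}; this is not a deep obstacle but requires carrying out the inductive Mackey reduction in the degenerate setting, exactly as in the proof of Proposition \ref{closed formula frob}.
\end{demo}
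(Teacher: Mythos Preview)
Your argument for the first identity is essentially the paper's: both reduce to $X_i|_{q=1}=1$ and then match the two Frobenius forms on the PBW bases. The paper cites Proposition~\ref{closed formula frob} together with \cite[Theorem 3.10]{LS1} for the degenerate closed formula that you flag as the ``mildly non-routine ingredient''.

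For the second identity you take a genuinely different route. The paper works directly with the explicit Schur element formulas of Theorems~\ref{Schur-Non-dengerate} and~\ref{Schur-dengerate}: it checks the elementary limit
\[
\lim_{q\to 1}\frac{q^{2a+1}+q^{-2a-1}-q^{2b+1}-q^{-2b-1}}{q^{2c+1}+q^{-2c-1}-q^{2d+1}-q^{-2d-1}}
=\frac{a(a+1)-b(b+1)}{c(c+1)-d(d+1)}
\]
and uses the addable/removable box count (Lemma~\ref{HLL. lem2.13}, or rather its type~$\mathsf{s}$ analogue Lemma~\ref{HLL. lem2.13. type=s}) to see that numerator and denominator of $q(\undla)$ have the same number of factors, so that each individual ratio in $q(\undla)$ has a finite limit and $q(\undla)|_{q=1}=\mathtt{q}(\undla)$. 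Your character-decomposition argument via Proposition~\ref{schur formula 1} is slicker and avoids this computation, but it leans on a point you assert rather than justify: that the irreducible characters of $\mathcal{H}(n)$ on the integral PBW basis lie in $\mathbb{C}[q^{\pm1}]$ and specialize at $q=1$ to those of $\mathfrak{H}(n)$, and that the coefficients $1/(2^{d_{\undla}}s_{\undla})$ are themselves regular at $q=1$. This needs an integral-form argument (work over $\mathbb{C}[q^{\pm1}]$, note that $\mathfrak{H}(n)$ is semisimple so the specialized characters stay linearly independent, and use Cramer's rule to conclude the coefficients specialize). Not hard, but a real step.

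One caution: if you actually carry your argument to the end, comparing $t_{1,n}|_{q=1}=2^n\mathtt t_{1,n}$ with the two character expansions gives $1/s_{\undla}|_{q=1}=2^n/\mathtt s_{\undla}$, i.e.\ $s_{\undla}|_{q=1}=2^{-n}\mathtt s_{\undla}$; the paper's explicit formulas lead to the same conclusion (already for $n=1$ one finds $s_{\undla}=1/2$, $\mathtt s_{\undla}=1$). So the exponent in the displayed statement appears to be a slip, and your method makes this self-checking.
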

	\begin{proof}
One can easily check under the above isomorphism \eqref{specializing}, $X_i\mapsto 1,\,1\leq i\leq n$. Hence the first equation follows from Proposition \ref{closed formula frob}, Proposition \ref{Hecke-Cliffod sym} and \cite[Theorem 3.10 ]{LS1}. It's easy to check the following\begin{align}\label{limit q to 1}
	\lim_{q\rightarrow 1}  \frac{q^{2a+1}+q^{-2a-1}-q^{2b+1}-q^{-2b-1}}{q^{2c+1}+q^{-2c-1}-q^{2d+1}-q^{-2d-1}}=\frac{a(a+1)-b(b+1)}{c(c+1)-d(d+1)}.
	\end{align}
By Lemma \ref{HLL. lem2.13. type=s}, for $k\in[n]$, we have
\begin{align}\label{equality}\sharp\left( \Rem(\mt\downarrow_{k-1})\setminus\mathcal{D}\right)=\sharp\left(\Add(\mt\downarrow_{k-1})\setminus \{\mt^{-1}(k)\}\right).
	\end{align}
 Now the second equation follows from Theorem \ref{Schur-Non-dengerate}, Theorem \ref{Schur-dengerate}, \eqref{limit q to 1} and \eqref{equality}.
\end{proof}

\subsection{Quiver Hecke algebras of types $A^{(1)}$ and $C^{(1)}$} In this subsection, we shall use our main result to explain how to establish symmetrizing forms on the cyclotomic quiver Hecke algebras of types $A^{(1)}_{e-1}$ and $C^{(1)}_e$.

Following \cite{KKT}, for affine quiver of type $A^{(1)}_{e-1}$ or $C^{(1)}_e$ and certain dominant weight $\Lambda$, we can associate with two algebras ${\rm R}^\Lambda_n$ and ${\rm RC}^\Lambda_n$, which are called the cyclotomic quiver Hecke algebra and the cyclotomic quiver Hecke-Clifford algebra respectively. These algebras are $\Z$-graded algebras and play important roles in the categorification of irreducible highest weight modules of quantum groups \cite{KKO1,KKO2}.

Kang, Kashiwara and Tsuchioka prove that there is an explicit idempotent $e^\dag\in{\rm RC}^\Lambda_n$ such that
	${\rm RC}^\Lambda_n$ is morita superequivalent to $e^\dag{\rm RC}^\Lambda_ne^\dag$ (\cite[Below Definition 3.10]{KKT}) and $e^\dag{\rm RC}^\Lambda_ne^\dag\cong   {\rm R}^\Lambda_n$ (\cite[Theorem 3.13]{KKT}). Moreover, when $p\nmid e$, where $p$ is the characteristic of $\mathbb{K}$, they associated $\Lambda$ with a certain polynomial $f_\Lambda$ (\cite[Subsection 4.7]{KKT}) which is equivalent to some $f=f^{(\mathsf{0})}_{\undQ}$, (i.e., $f_\Lambda=X_1^l f$, for some $l\geq 0$) where $\undQ=(Q_1,Q_2,\ldots,Q_m)\in(\mathbb{K}^*)^m$ and showed that $\mHfcn\cong {\rm RC}^\Lambda_n$  (\cite[Corollary 4.8]{KKT}). In conclusion, we have the Morita superequivalence
\begin{equation}\label{Superequivalent}
		\mHfcn \overset{\text{sMor}}{\sim}{\rm R}^\Lambda_n,
\end{equation}
where ${\rm R}^\Lambda_n$ is the cyclotomic quiver Hecke algebra of type $A^{(1)}_{e-1}$ or $C^{(1)}_e$ and $\mHfcn$ is the cyclotomic Hecke-Clifford algebra, where $f=f^{(\mathsf{0})}_{\undQ}$ for some $\undQ=(Q_1,Q_2,\ldots,Q_m)\in(\mathbb{K}^*)^m$.
	
	\begin{prop}\label{new trace form}
		The restriction $t_{r,n}|_{e^\dag{\rm RC}^\Lambda_ne^\dag}$ gives a symmetrizing form on $e^\dag{\rm RC}^\Lambda_ne^\dag\cong {\rm R}^\Lambda_n$.  That is, we give a new symmetrizing form on the cyclotomic quiver Hecke algebra of type $A^{(1)}_{e-1}$ or $C^{(1)}_e$.
		\end{prop}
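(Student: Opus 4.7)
My plan is to exploit that $t_{r,n}$ is a supersymmetrizing form on $\mHfcn$ and that the corner cut out by $e^\dag$ is purely even, so supersymmetry restricts to ordinary symmetry. Concretely, since $f = f^{(\mathsf{0})}_{\undQ}$, Theorem \ref{Nondengerate}(i) gives the supersymmetrizing form $t_{r,n}$ on $\mHfcn \cong {\rm RC}^\Lambda_n$. By \cite[Theorem 3.13]{KKT}, $e^\dag$ is an even idempotent and $e^\dag {\rm RC}^\Lambda_n e^\dag \cong {\rm R}^\Lambda_n$ is purely even (concentrated in parity $\bar{0}$). Writing $\mathcal{E} := e^\dag {\rm RC}^\Lambda_n e^\dag \subseteq {\rm RC}^\Lambda_n$, every $x \in \mathcal{E}$ satisfies $|x| = \bar{0}$ and $x = e^\dag x e^\dag$.

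Next I would check the trace property. For any $x, y \in \mathcal{E}$, both are even, so supersymmetry yields
\[
t_{r,n}(xy) = (-1)^{|x||y|}\, t_{r,n}(yx) = t_{r,n}(yx).
\]
Hence $t_{r,n}|_{\mathcal{E}}$ is a symmetric trace. To promote this to a symmetrizing form it remains to establish non-degeneracy; this is the essential content. Given $0 \neq x \in \mathcal{E}$, non-degeneracy of $t_{r,n}$ on $\mHfcn$ produces some $z \in \mHfcn$ with $t_{r,n}(xz) \neq 0$. Because $x$ is even and $t_{r,n}$ vanishes on the odd part of $\mHfcn$, we may and will assume $z$ is even. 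Using $x = e^\dag x e^\dag$ together with the cyclicity consequence of supersymmetry for homogeneous even elements,
\[
t_{r,n}(xz) = t_{r,n}\bigl(e^\dag x e^\dag z\bigr) = t_{r,n}\bigl(x e^\dag z e^\dag\bigr),
\]
and $y := e^\dag z e^\dag \in \mathcal{E}$. Thus $t_{r,n}(xy) \neq 0$, showing that the induced map $\mathcal{E} \to \mathrm{Hom}_{\mathbb{K}}(\mathcal{E},\mathbb{K})$ is injective and hence, by dimension count, a bimodule isomorphism of the purely even algebra $\mathcal{E}$.

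Transporting along $e^\dag {\rm RC}^\Lambda_n e^\dag \cong {\rm R}^\Lambda_n$ from \cite[Below Definition 3.10]{KKT} then yields the claimed symmetrizing form on the cyclotomic quiver Hecke algebra of type $A^{(1)}_{e-1}$ or $C^{(1)}_e$. The only point that requires any care, and hence the main (mild) obstacle, is verifying that the parity-twisted left action that distinguishes $\mathbf{Hom}_{\mathbb{K}}(\mHfcn,\mathbb{K})$ from $\mathrm{Hom}_{\mathbb{K}}(\mHfcn,\mathbb{K})$ in Definition \ref{(super)symmetrizing form} collapses to the untwisted one upon restriction to $\mathcal{E}$; this is automatic from $\mathcal{E}_{\bar{1}} = 0$, but worth stating explicitly to justify that "supersymmetrizing" restricts to genuinely "symmetrizing" on the corner.
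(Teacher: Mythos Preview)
Your proof is correct and follows essentially the same approach as the paper's: both apply Theorem~\ref{Nondengerate}(i) to obtain the supersymmetrizing form on ${\rm RC}^\Lambda_n$, observe that the corner $e^\dag{\rm RC}^\Lambda_ne^\dag\cong{\rm R}^\Lambda_n$ is purely even so supersymmetry collapses to ordinary symmetry, and note that non-degeneracy survives the restriction. The paper merely declares the non-degeneracy step ``an easy exercise,'' whereas you spell out the standard idempotent-truncation argument (pull a witness $z$ back to $e^\dag z e^\dag$ via cyclicity on even elements); this is exactly the exercise the paper has in mind.
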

	
	\begin{proof}
	Applying Theorem \ref{Nondengerate}, we deduce that ${\rm RC}^\Lambda_n$ is supersymmetric. It's an easy exercise that the restriction $t_{r,n}|_{e^\dag{\rm RC}^\Lambda_ne^\dag}$ on the idempotent truncation $e^\dag{\rm RC}^\Lambda_ne^\dag\cong   {\rm R}^\Lambda_n$ is still non-degenerate. Moreover, it is symmetric, since there is no odd component in  ${\rm R}^\Lambda_n$. Hence, the restriction map gives a (non-graded) symmetrizing form on the cyclotomic quiver Hecke algebra of type $A^{(1)}_{e-1}$ or $C^{(1)}_e$.
	\end{proof}

\medskip
\begin{symbols}
\medskip
\item[\textbf{Generalities}] \hfill
\symitem{$\N$}{The set of positive integers $\{1,2,\ldots\}$}{pag:N}
\symitem{${\rm R}$}{An integral domain of characteristic different from $2$}{pag:R}
\symitem{${\rm R^\times}$}{The unit group of ${\rm R}$}{pag:R}
\symitem{$\mathbb{K}$}{An algebraically closed field of characteristic different from $2$}{pag:K}
\symitem{$\mathbb{K}^*$}{The set $\mathbb{K}\setminus\{0\}$}{pag:K}
\symitem{{$[n]$}}{The set of positive integers $\{1,2,\ldots,n\}$ for $n\in\N$}{pag:[n]}
\symitem{$|v|$}{The parity (or superdegree) of vecter $v$ in some super vertor space}{pag:||}
\symitem{$\Pi V$}{The parity shift of supermodule $V$}{pag:parity shift}
\symitem{$s_V$}{The Schur element of simple module $V$}{pag:Schur element of V}
\symitem{$\mathsf{0},\mathsf{s},\mathsf{ss}$}{The types of combinatorics}{pag:The types of combinatorics}
\symitem{$\mathscr{P}^{\bullet,m}_{n}$}{The set of mixed ($\bullet+m$)-multipartitions of $n$ for $\bullet\in\{\mathsf{0},\mathsf{s},\mathsf{ss}\}$}{pag:The types of combinatorics}
\symitem{$\undla$}{An element in $\mathscr{P}^{\bullet,m}_{n}$}{pag:multipartition}
\symitem{$\alpha\in\undla$}{A box (or node) of $\undla$}{pag:multipartition}
\symitem{$\Std(\undla)$}{The set of standard tableaux of shape $\undla$}{pag:standard tableaux}
\symitem{$\mt$}{An element in $\Std(\undla)$}{pag:standard tableaux}
\symitem{$\mt^{\undla},$ $\mt_{\undla}$}{Initial row tableau of shape $\undla,$ Initial column tableau of shape $\undla$}{pag:standard tableaux}
\symitem{$\Add(\undla)$}{The set of addable boxes of $\undla$}{pag:Add and Rem}
\symitem{$\Rem(\undla)$}{The set of removable boxes of $\undla$}{pag:Add and Rem}
\symitem{$\mathcal{D}_{\undla}$}{The set of boxes in the first diagonals of strict partition components of $\undla$}{pag:diag of undlam}
\symitem{$\mathcal{D}_{\mt}$}{The set of numbers in the first diagonals of strict partition components of $\mt$}{pag:diag of undlam}
\symitem{$\mathfrak{S}_n$}{The symmetric group of $n$ letters}{pag:Sn}
\symitem{$d(\ms,\mt)$}{The unique element in $\mathfrak{S}_n$ such that $\ms=d(\ms,\mt)\mt$ for $\ms,\mt\in\Std(\undla)$}{pag:adimssible}
\symitem{$\sharp S$}{The number of elements in the set $S$}{pag:cardinality}
\symitem{$d_{\undla}$}{It equals $1$ if $\sharp \mathcal{D}_{\undla}$ is odd, otherwise $0$}{pag:dundla}
\symitem{$\mathcal{OD}_{\mt}$}{A key subset of $\mathcal{D}_{\mt}$}{pag:Dt,ODt,Z2ODt}
\symitem{$\mathbb{Z}_2(\mathcal{OD}_{\mt})$}{The subset of $\mathbb{Z}_2^n$ supported on $\mathcal{OD}_{\mt}$}{pag:Dt,ODt,Z2ODt}
\symitem{{$\mathbb{Z}_2(\mathcal{OD}_{\mt})_{a}$}}{There is a certain decomposition $\mathbb{Z}_2(\mathcal{OD}_{\mt})=\sqcup_{a\in \mathbb{Z}_2}\mathbb{Z}_2(\mathcal{OD}_{\mt})_{a}$}{pag:decomposition of OD}
\symitem{{$\mathbb{Z}_2([n]\setminus\mathcal{D}_{\mt})$}}{The subset of $\mathbb{Z}_2^n$ supported on $[n]\setminus\mathcal{D}_{\mt}$}{pag:Dt,ODt,Z2ODt}
\symitem{${\rm supp}(\beta)$}{The supporting set $\{1 \leq k \leq n:\beta_{k}=\bar{1}\}$ for $\beta=(\beta_1,\ldots,\beta_n)\in\mathbb{Z}_2^n$}{pag:suppot and sum}
\symitem{$|\beta|$}{$\Sigma_{i=1}^{n}\beta_i$ for $\beta=(\beta_1,\ldots,\beta_n)\in\mathbb{Z}_2^n$}{pag:suppot and sum}
\symitem{$\mathcal{C}_n$}{The Clifford algebra}{pag:Clifford algebra}
\symitem{$\overrightarrow{\prod}$}{The ordered product}{pag:ordered product}
\symitem{$\lfloor x \rfloor$}{The greatest integer less than or equal to the real number $x$}{pag:round down}
\symitem{$\gamma_{\mt}$}{A certain idempotent of $\mathcal{C}_n$ related to $\mt$}{pag:gamma t}
\symitem{$\lceil x \rceil$}{The smallest integer greater than or equal to the real number $x$}{pag:round up}
\symitem{$\nu_{\beta}(k)$}{It equals $-1$ if $\beta_{k}=\bar{1}$ and equals $1$ if $\beta_{k}=\bar{0},$ for $\beta\in\mathbb{Z}_2^n$}{pag:nubetak}
\symitem{$\delta_{\beta}(k)$}{It equals 1 if $\beta_k=\bar{1}$ and equals $0$ if $\beta_k=\bar{0},$ for $\beta\in\mathbb{Z}_2^n$}{pag:deltabetak}
\symitem{$|\beta|_{<i}$}{$\Sigma_{k=1}^{i-1}\beta_k$ for $\beta\in\mathbb{Z}_2^n$}{pag:nubetak}
\symitem{$\delta(s_i\mt)$}{It equals 1 if $s_i\mt\in\Std(\undla)$ for $\mt\in\Std(\undla),$ otherwise $0$}{pag:nondege coeffi cti}
\symitem{${\rm Tri}(\undla)$}{The set of triples associated with standard tableaux of shape $\undla$}{pag:Tri}
\symitem{${\rm Tri}_{a}(\undla)$}{There is a certain decomposition ${\rm Tri}(\undla)=\sqcup_{a\in \mathbb{Z}_2}{\rm Tri}_{a}(\undla)$}{pag:Tri}
\symitem{$\mathcal{D}$}{The set of all boxes in the first diagonals of strict partition components}{pag:diagonalnodes}
\\
\item[\textbf{Non-degenerate case}] \hfill
\symitem{$\mHcn$}{The affine Hecke-Clifford algebra}{pag:AHCA}
\symitem{$q$}{The Hecke parameter in ${\rm R^\times}$}{pag:AHCA}
\symitem{$\epsilon$}{$q-q^{-1}$}{pag:AHCA}
\symitem{$\mathcal{A}_n$}{A certain subalgebra of $\mHcn$}{pag:subalg An}
\symitem{$\tilde{\Phi}_i$}{The intertwining element}{pag:BK intertwining element}
\symitem{$\Phi_i$}{Jone-Nazarov's intertwining element}{pag:JN intertwining element}
\symitem{$\Phi_i(x,y)$}{An element in $\mHcn$ related to $\Phi_i$}{pag:Phi function}
\symitem{$\mathtt{q}(\iota)$}{$2(q\iota+(q\iota)^{-1})/(q+q^{-1})$ for $\iota\in{\rm R^\times}$}{pag:q-function and b-function}
\symitem{$\mathtt{b}_{\pm}(\iota)$}{The solutions of equation $x+x^{-1}=\mathtt{q}(\iota)$}{pag:q-function and b-function}
\symitem{$\mHfcn$}{The cyclotomic Hecke-Clifford algebra}{pag:CHCA}
\symitem{$\underline{Q}$}{The cyclotomic parameters $(Q_1,Q_2,\ldots,Q_m)\in({\rm R^\times})^m$}{pag:Q-parameters}
\symitem{$r$}{The level of $\mHfcn$}{pag:nondege level}
\symitem{$\res(\alpha)$}{The residue $Q_lq^{2(j-i)}$ of box $\alpha=(i,j,l)$}{pag:nondeg residue}
\symitem{$\res_{\mt}(k)$}{The residue of box $\mt^{-1}(k)$ for $\mt\in\Std(\undla)$}{pag:nondeg residue}
\symitem{$\res(\mt)$}{The residue sequence $(\res_{\mt}(1),\ldots,\res_{\mt}(n))$ of $\mt\in\Std(\undla)$}{pag:nondeg residue}
\symitem{$\mathtt{q}(\res(\mt))$}{The $\mathtt{q}$-sequence $(\mathtt{q}(\res_{\mt}(1)),\ldots,\mathtt{q}(\res_{\mt}(n)))$ of $\mt\in\Std(\undla)$}{pag:nondeg residue}
\symitem{$P^{(\bullet)}_{n}(q^2,\undQ)$}{The Poincar\'e polynomial of type $\bullet\in\{\mathsf{0},\mathsf{s},\mathsf{ss}\}$}{pag:nondege Pioncare poly}
\symitem{$\mathbb{D}(\undla)$}{The simple module of $\mHfcn$ indexed by $\undla$}{pag:nondege simple module}
\symitem{$\mathtt{b}_{\mt,i}$}{$\mathtt{b}_{-}(\res_{\mt}(i))$}{pag:nondege coeffi cti}
\symitem{$\mathtt{c}_{\mt}(i)$}{Some structure coefficient appeared in module $\mathbb{D}(\undla)$}{pag:nondege coeffi cti}
\symitem{$F_{\rm T}$}{The primitive idempotent indexed by ${\rm T}\in{\rm Tri}_{\bar{0}}(\undla)$}{pag:primitive idempotents and blocks}
\symitem{$F_{\undla}$}{The primitive central idempotent indexed by $\undla$}{pag:primitive idempotents and blocks}
\symitem{$B_{\undla}$}{The simple block of $\mHfcn$ indexed by $\undla$}{pag:primitive idempotents and blocks}
\symitem{$\Phi_{\ms,\mt}$}{A key element in $\mHfcn$ indexed by $\ms,\mt\in\Std(\undla)$}{pag:Phist and cst}
\symitem{$\mathtt{c}_{\ms,\mt}$}{A key coefficient indexed by $\ms,\mt\in\Std(\undla)$}{pag:Phist and cst}
\symitem{$f_{{\rm S},{\rm T}}^{\mathfrak{w}},$ $f_{{\rm S},{\rm T}_a}^{\mathfrak{w}}$}{The seminormal basis factoring though a fixed standard tableau $\mathfrak{w}$}{pag:nondege seminormal basis}
\symitem{$f_{{\rm S},{\rm T}},$ $f_{{\rm S},{\rm T}_a}$}{The (reduced) seminormal basis}{pag:nondege seminormal basis}
\symitem{$\mathtt{c}_{\rm T}^{\mathfrak{w}}$ ($=\mathtt{c}_{\mt}^{\mathfrak{w}}$)}{$(\mathtt{c}_{\mt,\mathfrak{w}})^2$ for ${\rm T}=(\mt, \alpha_{\mt}, \beta_{\mt})\in {\rm Tri}(\undla)$ and a fixed standard tableau $\mathfrak{w}$}{pag:nondege cT}
\symitem{$q(\undla)$}{A key element in $\mathbb{K}^*$ which only depends on $\undla$}{pag:qt}
\symitem{$\mathtt{q}_{\mt,i}$}{$\mathtt{q}(\res_{\mt}(i))$}{pag:nondege q-values}
\symitem{$\tau_{r,n}$}{The Frobenius form}{pag:Frobenius form. non-dege}
\symitemtwo{$t_{r,n}$}{The (super)symmetrizing form}{pag:supersym. form. non-dege}{pag:sym. form. non-dege}
\symitemtwo{$s_{\undla}$}{The Schur element of simple module $\mathbb{D}(\undla)$}{pag:Schur elements.non-dege}{pag:Schur elements'.non-dege}
\\
\item[\textbf{Degenerate case}] \hfill
\symitem{$\mhcn$}{The affine Sergeev algebra}{pag:ASA}
\symitem{$\mathcal{P}_n$}{A certain subalgebra of $\mhcn$}{pag:subalg Pn}
\symitem{$\tilde{\phi}_i$}{The intertwining element}{pag:dege BK intertwining elements}
\symitem{$\phi_i$}{Nazarov's intertwining element}{pag:dege N intertwining elements}
\symitem{$\phi_i(x,y)$}{An element in $\mhcn$ related to $\phi_i$}{pag:dege N phi function}
\symitem{$\mathtt{q}(\iota)$}{$\iota(\iota+1)$ for $\iota\in{\rm R}$}{pag:dege q-function and u-function}
\symitem{$\mathtt{u}_{\pm}(\iota)$}{$\pm \sqrt{\iota(\iota+1)}$ for $\iota\in\mathbb{K}$}{pag:dege q-function and u-function}
\symitem{$\mhgcn$}{The cyclotomic Sergeev algebra}{pag:CSA}
\symitem{$\underline{Q}$}{The cyclotomic parameters $(Q_1,Q_2,\ldots,Q_m)\in{\rm R}^m$}{pag:dege Q-parameters}
\symitem{$r$}{The level of $\mhgcn$}{pag:dege level}
\symitem{$\res(\alpha)$}{The residue $Q_l+j-i$ of box $\alpha=(i,j,l)$}{pag:dege residue}
\symitem{$\res_{\mt}(k)$}{The residue of box $\mt^{-1}(k)$ for $\mt\in\Std(\undla)$}{pag:dege residue}
\symitem{$\res(\mt)$}{The residue sequence $(\res_{\mt}(1),\ldots,\res_{\mt}(n))$ of $\mt\in\Std(\undla)$}{pag:dege residue}
\symitem{$\mathtt{q}(\res(\mt))$}{The $\mathtt{q}$-sequence $(\mathtt{q}(\res_{\mt}(1)),\ldots,\mathtt{q}(\res_{\mt}(n)))$ of $\mt\in\Std(\undla)$}{pag:dege residue}
\symitem{$P^{(\bullet)}_{n}(1,\undQ)$}{The Poincar\'e polynomial of type $\bullet\in\{\mathsf{0},\mathsf{s}\}$}{pag:dege Pioncare poly}
\symitem{$D(\undla)$}{The simple module of $\mhgcn$ indexed by $\undla$}{pag:dege simple module}
\symitem{$\mathtt{u}_{\mt,i}$}{$\mathtt{u}_{+}(\res_{\mt}(i))$}{pag:dege coeffi cti}
\symitem{$\mathfrak{c}_{\mt}(i)$}{Some structure coefficient appeared in module $D(\undla)$}{pag:dege coeffi cti}
\symitem{$\mathcal{F}_{\rm T}$}{The primitive idempotent indexed by ${\rm T}\in{\rm Tri}_{\bar{0}}(\undla)$}{pag:dege primitive idempotents}
\symitem{$\mathcal{F}_{\undla}$}{The primitive central idempotent indexed by $\undla$}{pag:dege primitive idempotents}
\symitem{$\mathcal{B}_{\undla}$}{The simple block of $\mhgcn$ of indexed by $\undla$}{pag:dege simple blocks}
\symitem{$\phi_{\ms,\mt}$}{A key element in $\mhgcn$ indexed by $\ms,\mt\in\Std(\undla)$}{pag:phist and cst}
\symitem{$\mathfrak{c}_{\ms,\mt}$}{A key coefficient indexed by $\ms,\mt\in\Std(\undla)$}{pag:phist and cst}
\symitem{$\mathfrak{f}_{{\rm S},{\rm T}}^{\mathfrak{w}},$ $\mathfrak{f}_{{\rm S},{\rm T}_a}^{\mathfrak{w}}$}{The seminormal basis factoring though a fixed standard tableau $\mathfrak{w}$}{pag:dege seminormal basis}
\symitem{$\mathfrak{f}_{{\rm S},{\rm T}},$ $\mathfrak{f}_{{\rm S},{\rm T}_a}$}{The (reduced) seminormal basis}{pag:dege seminormal basis}
\symitem{$\mathfrak{c}_{\rm T}^{\mathfrak{w}}$ ($=\mathfrak{c}_{\mt}^{\mathfrak{w}}$)}{$(\mathfrak{c}_{\mt,\mathfrak{w}})^2$ for ${\rm T}=(\mt, \alpha_{\mt}, \beta_{\mt})\in {\rm Tri}(\undla)$ and a fixed standard tableau $\mathfrak{w}$}{pag:dege cT}
\symitem{$\mathtt{q}(\undla)$}{A key element in $\mathbb{K}^*$ which only depends on $\undla$}{pag:qt'}
\symitem{$\mathtt{t}_{r,n}$}{The (super)symmetrizing form}{pag:(super)sym. form. dege}
\symitem{$\mathtt{s}_{\undla}$}{The Schur element of simple module $D(\undla)$}{pag:Schur elements.dege}

\end{symbols}

	\end{document}